\newcommand{\defendsymb}{$\qed$}
\newcommand{\defend}{\hfill\defendsymb}
\newcommand{\defenddisp}{\tag*{\defendsymb}}
 \newcommand{\statelargeell}{Consider any $r>1$. There is an $\ell_0$, depending on~$r$, such that the following holds for any $\ell\geq \ell_0$. }  
  \newcommand{\statelargen}{Consider any $r>1$. There is an $n_0$, depending on~$r$, such that the following holds. }   
\newcommand{\letlargen}{Let $n_0$ be an integer which is sufficiently large with respect to~$r$. }
\newcommand{\megastar}[1][k , \ell, m]{\ensuremath{{\mathcal M}_{#1}}}
\newcommand{\mf}[3]{{\mathcal{G}}_{#1,#2,#3}}
\newcommand\metaf{\mf{k}{\ell}{m}}
\renewcommand{\Pr}{\mathbb{P}}  
\newcommand{\E}{\mathbb{E}}
\newcommand{\Zzero}{\mathbb{Z}_{\geq 0}}  
\newcommand{\Zone}{\mathbb{Z}_{\geq 1}}
\newtheorem{theorem}{Theorem}
\newtheorem{lemma}[theorem]{Lemma}
\newtheorem{corollary}[theorem]{Corollary}
\newtheorem{proposition}[theorem]{Proposition}
\theoremstyle{definition}
\newtheorem{definition}[theorem]{Definition}
\newtheorem{observation}[theorem]{Observation}
\newtheorem{remark}[theorem]{Remark}
\newcommand\Z{\mathbb{Z}}
\newcommand\R{\mathbb{R}} 
\newcommand{\Tact}[2]{T_{\mathsf{a}}^{#1}(#2)}
\newcommand{\Tbirth}[1]{T_{\mathsf{b}}^{#1}}
\newcommand{\Ti}[1]{T^{t_0,j}_{\mathsf{c}}(#1)}  
 \newcommand{\Tdeath}[1]{T_{\mathsf{d}}^{#1}}
 \newcommand{\Tend}{T_{\mathsf{e}}}
\newcommand{\TTend}[1]{T_{\mathsf{end}}^{#1}}
\newcommand{\primetend}[1]{t_{\mathsf{end}}^{\prime}}
\newcommand{\Tinact}[2]{T_{\mathsf{in}}^{#1}(#2)} 
\newcommand{\Iabs}{h^{t_0,j}_{\mathsf{in}}}  
\newcommand{\Tabs}{T^{t_0,j}_{\mathsf{in}}} 
\newcommand{\Tabsnoh}[1]{T_{\mathsf{in}}^{#1}}
\newcommand{\TTmut}[1]{T_{\mathsf{m}}^{#1}}
\newcommand{\tmut}[1]{t_{\mathsf{m}}^{#1}} 
\newcommand{\Tmut}[3]{T_{\mathsf{m}}^{#2,#1}(#3)}
\newcommand{\tmax}{t_{\mathsf{max}}}
\newcommand{\TTnmut}[1]{T_{\mathsf{n}}^{#1}} 
\newcommand{\Tnmut}[3]{T_{\mathsf{n}}^{#2,#1}(#3)}
\newcommand{\tnmut}[3]{t_{\mathsf{n}}^{#2,#1}(#3)}
\newcommand\Tpabs{T_\mathsf{pa}} 
\newcommand\tpabs{t'_\mathsf{pa}}
\newcommand{\Tstart}{T_{\mathsf{s}}} 
\newcommand{\tspawn}{t_{\mathsf{sp}}} 
\newcommand{\Tvone}[1]{T_{v_1}^{#1}}
\newcommand{\tx}{t_{x_0}}
\newcommand{\W}[2]{W_{\mathsf{a}}^{#1}(#2)} 
\newcommand{\Win}[2]{W_{\mathsf{in}}^{#1}(#2)} 
\newcommand{\J}[2]{J^{#1}(#2)} 
\newcommand{\Wnoh}[1]{W_{\mathsf{a}}^{#1}}
\newcommand{\filt}{\mathcal{F}}
\newcommand{\head}{\mathcal{H}}
\newcommand{\transmatrix}{R}
\newcommand{\mc}[1]{M_{#1}}
\newcommand{\nc}[1]{N_{#1}}
\newcommand{\smc}[1]{M_{#1}^*}
\newcommand{\snc}[1]{N_{#1}^*}
\newcommand{\smbc}[1]{\overline{M}_{#1}^*}
\newcommand{\snbc}[1]{\overline{N}_{#1}^*}
\newcommand{\vmc}[2]{M_{(#1,#2)}}
\newcommand{\vnc}[2]{N_{(#1,#2)}}
\newcommand{\vsmc}[2]{M_{(#1,#2)}^*}
\newcommand{\vsnc}[2]{N_{(#1,#2)}^*}
\newcommand{\moranclocks}{\mathcal{C}}
\newcommand{\clocks}{\moranclocks}
\newcommand{\inclocks}{\mathcal{C}^*_{\text{mut}}}
\newcommand{\outclocks}{{\mathcal{C}}^*_{\text{nmut}}}
\newcommand{\iin}[3]{{i}_{\mathsf{m}}(#1,#2,#3)}
\newcommand{\iout}[3]{{i}_{\mathsf{n}}(#1,#2,#3)} 
\newcommand{\siin}[2]{{i}_{\mathsf{m}}(#1,#2)}
\newcommand{\siout}[2]{{i}_{\mathsf{n}}(#1,#2)}
\newcommand{\myref}[2]{(\textnormal{#1}\ref{#2})}
\newcommand{\sst}[3]{{\mathcal S}_{#1,#2,#3}}
\newcommand{\sstar}{\sst{k}{\ell}{m}}
\newcommand{\len}[1]{\mathrm{len}(#1)}
\newcommand{\extinct}{\zeta}
\newcommand{\extinctreg}{\extinct_{\mathrm{reg}}}
\newcommand{\extinctmeg}{\extinct_{\mathcal M}}
\newcommand{\rhoreg}{\rho_{\mathrm{reg}}}
\newcommand{\calC}{\mathcal{C}}
\newcommand{\calE}{\mathcal{E}}  
\newcommand{\calR}{\mathcal{R}}
\newcommand{\Y}{Y^{t_0,j}}
\newcommand{\floor}[1]{\lfloor #1 \rfloor}
\newcommand{\ceil}[1]{\lceil #1 \rceil}
\newcommand{\Floor}[1]{\left\lfloor #1 \right\rfloor}
\newcommand{\sP}[1]{\ensuremath{\mathcal{P}_{#1}}} 
\newcommand{\Pmutclock}{\mathcal{P}_1}
\newcommand{\Pnmutclock}{\mathcal{P}_2}
\newcommand{\Pcentremut}{\mathcal{P}_3}
\newcommand{\Pfastabsorb}{\mathcal{P}_4}
\newcommand{\Pvinitspawn}{\mathcal{P}_5}
\newcommand{\Pstrainfire}{\mathcal{P}_6}
\newcommand{\Pstrainlife}{\mathcal{P}_7}
\newcommand{\Pstraindanger}{\mathcal{P}_8}
\newcommand{\Ptotalmut}[1]{\ensuremath{\mathcal{P}_1(#1)}}
\newcommand{\Presmut}[1]{\ensuremath{\mathcal{P}_2(#1)}}
\newcommand{\Pcliquefull}[1]{\ensuremath{\mathcal{P}_3(#1)}}
\newcommand{\Pmostcliques}[1]{\ensuremath{\mathcal{P}_4(#1)}}
\let\phi=\varphi
\date{5 May 2016}
\title{Amplifiers for the Moran Process\thanks
{ A preliminary version of this paper will appear in Track A of ICALP 2016 (best paper prize).  The research leading to these results has received funding from the European Research Council under
  the European Union's Seventh Framework Programme (FP7/2007--2013) ERC grant agreement no.\ 334828. The paper  reflects only the authors' views and not the views of the ERC or the European Commission.
  The European Union is not liable for any use that may be made of the information contained therein.
  Department of Computer Science, University of Oxford, Wolfson Building, Parks Road, Oxford, OX1~3QD, UK.  }
}
\author{Andreas Galanis \quad Andreas G\"obel \quad Leslie Ann Goldberg \\
John Lapinskas \quad David Richerby}
\begin{document}

\maketitle

\begin{abstract}
The Moran process, as studied by Lieberman, Hauert and Nowak, 
is a  randomised algorithm modelling the spread of genetic mutations in populations. 
The  algorithm runs on  an underlying graph where individuals correspond to vertices.
Initially, one vertex (chosen uniformly at random) possesses a mutation, with
fitness $r>1$. All other individuals have fitness~$1$.
During each step of the algorithm, an individual is chosen with probability
proportional to its fitness, and its state (mutant or non-mutant) is passed on to an
out-neighbour which is chosen uniformly at random.
If the underlying graph is strongly connected then the algorithm will  eventually reach
\emph{fixation}, in which all individuals are mutants, or \emph{extinction}, in which 
no individuals are mutants.  
 An infinite family of directed graphs 
is said to be \emph{strongly amplifying} if, for every $r>1$, the extinction probability tends
to~$0$ as the number of vertices increases. A formal definition is provided in the paper.
Strong amplification is a rather surprising property --- it means that 
in such graphs, the fixation probability of a uniformly-placed initial mutant
tends to~$1$ even though the initial mutant only has a fixed
selective advantage of $r>1$ (independently of~$n$). The name ``strongly amplifying'' comes from the fact that this
selective advantage is ``amplified''.
Strong amplifiers have received quite a bit of attention, and
Lieberman et al.\ proposed two potentially strongly-amplifying families --- 
superstars and metafunnels.
Heuristic arguments have been published, arguing that there
are infinite families of superstars that are strongly amplifying.
The same has been claimed for metafunnels.
In this paper, we give the first rigorous proof that there is an infinite family of directed graphs
that is strongly amplifying. We call the graphs in the family ``megastars''.  When the algorithm is run on an $n$-vertex graph in this family, starting with a uniformly-chosen
mutant, the  extinction probability    is roughly
$n^{-1/2}$ (up to logarithmic factors).
We prove that all infinite families of superstars and metafunnels have
 larger extinction probabilities (as a function of~$n$). 
Finally, we prove that our analysis of megastars is fairly tight --- there is
no infinite family of megastars such that the Moran algorithm
gives a smaller extinction probability (up to logarithmic factors).
Also, we provide a counter-example which clarifies the literature concerning the isothermal theorem of Lieberman et al. 
 \end{abstract}
 
\section{Introduction}
\label{sec:intro}

This paper is about a randomised algorithm called the Moran process.
This algorithm was introduced in biology~\cite{Mor1958:Moran, LHN2005:EvoDyn}
to model the spread of genetic mutations in populations.
Similar algorithms have
been used to model the spread of epidemic diseases, the behaviour of
voters, the spread of ideas in social networks, strategic interaction
in evolutionary game theory, the emergence of monopolies, 
and cascading
failures in power grids and transport networks
\cite{ARLV2001:Influence, Ber2001:Monopolies, Gin2000:GT,
  KKT2003:Influence, Lig1999:IntSys}.

There has been past work about analysing the expected
convergence time of the algorithm~\cite{DGMRSS2014:approx, DGRS}.
In fact, the fast-convergence result of~\cite{DGMRSS2014:approx} implies
that when the algorithm is run on an undirected graph,
and the ``fitness'' of the initial mutation is some constant $r>1$,
there is an FPRAS for the   ``fixation probability'', which is the probability that
a randomly-introduced initial mutation spreads throughout the whole graph.

This paper answers an even more basic question, originally raised in~\cite{LHN2005:EvoDyn},
about the long-term behaviour of the algorithm when it is run on 
directed graphs. In particular, the question is
whether there even exists an infinite family of (directed) graphs 
such that, when the algorithm is run on an $n$-vertex graph in this family, the
fixation probability is $1-o(1)$, as a function of~$n$.
A heuristic argument that this is the case was given in~\cite{LHN2005:EvoDyn}, but
a counter-example to the argument (and to the hypothesized bound on the fixation probability) was
given in \cite{DGMRSS2013:Superstars}.
A further heuristic argument (with a revised bound) was given in \cite{JLH2015:Superstars}.
Here we give the first rigorous proof
that there is indeed a family of ``amplifiers'' with fixation probability $1-o(1)$.  
Before describing this, and the other results of this paper, we describe the model.

The Moran algorithm has a parameter~$r$ which is the fitness of
``mutants''. All non-mutants have fitness~$1$.  
The algorithm runs on a directed graph. In the initial state,
one vertex is chosen uniformly at random to become a mutant.
After this, the algorithm runs in discrete steps as follows.
At each step, a vertex
is selected at random, with probability proportional to its fitness.
Suppose that this is vertex~$v$. Next, an   out-neighbour $w$ of~$v$
is selected uniformly at random. Finally, the state of vertex~$v$ (mutant or non-mutant) 
is copied to vertex~$w$.

If the graph is finite and strongly connected then with probability~$1$, the
process will 
either reach the state where there are only mutants (known as
\emph{fixation}) or 
it will reach the state where there are only non-mutants (\emph{extinction}).  In this
paper, we are interested in the probability that fixation is reached,
as a function of the mutant fitness~$r$, given the topology of the
underlying graph.  If $r<1$ then the single initial mutant has lower
fitness than  the non-mutants that occupy every other vertex in the
initial configuration, so the mutation is overwhelmingly likely to go extinct.
If $r=1$,  
an easy symmetry argument shows that
the fixation
probability is~$\tfrac1n$ in any strongly connected graph on
$n$~vertices~\cite[Lemma~1]{DGMRSS2014:approx}.\footnote{The result is
  stated in~\cite{DGMRSS2014:approx} for undirected graphs but the
  proof goes through unaltered for strongly connected directed
  graphs.}  Because of this, we restrict attention to the case $r>1$.
Perhaps surprisingly, a single advantageous mutant can have a very
high probability of reaching fixation, despite being heavily
outnumbered in the initial configuration.

A directed graph is said to be \emph{regular}
if there is some positive integer~$d$ so that the in-degree and out-degree of
every vertex is~$d$.
In a strongly connected regular graph on $n$~vertices, the fixation
probability of a mutant with fitness $r>1$ when the Moran algorithm is run is given by
\begin{equation}\label{eq:fp-regular}
    \rhoreg(r,n) = \frac{1-\tfrac1r}{1-\tfrac1{r^n}},
\end{equation}
so the extinction probability of such a mutant is given by
\begin{equation}\label{eq:ep-regular}
\extinctreg(r,n) = \frac{\tfrac1r-\tfrac{1}{r^n}}{1-\tfrac1{r^n}}.\end{equation}
Thus, in the limit, as~$n$ tends to~$\infty$, the extinction probability tends to~$1/r$.
To see why~\eqref{eq:fp-regular} and~\eqref{eq:ep-regular} hold,  note that, for every configuration of mutants, the number of
edges from mutants to non-mutants is the same as the number of edges
from non-mutants to mutants.  Suppose that the sum of the individuals'
fitnesses is~$W$ and consider an edge $(u,v)$.  If $u$~is a mutant in
the current state, it is selected to reproduce with probability~$r/W$,
and, if this happens, the offspring is placed at~$v$ with
probability~$1/d$.  Similarly, if $u$~is not a mutant, reproduction
happens along $(u,v)$ with probability~$1/(dW)$.  So, in any state,
the number of mutants is $r$~times as likely to increase at the next
step of the process  as it is to decrease.
If we observe the number of mutants
every time it changes, the resulting stochastic process is a random
walk on the integers, that starts at~$1$, absorbs at $0$ and~$n$,
increases with probability~$\tfrac{r}{r+1}$ and decreases with
probability~$\frac1{r+1}$. 
It is well known that this walk absorbs 
at~$n$ with probability~\eqref{eq:fp-regular} and
at~$0$ with probability~\eqref{eq:ep-regular}.   In particular, 
the undirected $n$-vertex complete graph  is regular.  
Thus, by~\eqref{eq:ep-regular}, its extinction probability tends to~$1/r$. 

When the Moran process is run on 
non-regular graphs the  extinction probability
may be  quite a bit lower than~$1/r$.   
Consider the undirected $(n+1)$-vertex ``star'' graph,
which  consists of single centre vertex that is connected by edges to each of $n$~leaves. 
In the limit as
$n\to\infty$, the $n$-leaf star has  extinction probability $\tfrac1{r^2}$
\cite{LHN2005:EvoDyn, BR2008:FixProb}. 
Informally, the reason that the extinction probability is  so small
is that the initial mutant is likely to be placed in a leaf, and, at each step,
a mutation at a leaf is relatively unlikely to be overwritten.

Lieberman et al.~\cite{LHN2005:EvoDyn} refer to graphs which have smaller extinction probability than~\eqref{eq:ep-regular}
(and therefore have larger fixation probability than~\eqref{eq:fp-regular})
as \emph{amplifiers}.
The terminology comes from the fact that the selective 
advantage of the mutant is being ``amplified'' in such graphs.

The purpose of this paper is to explore the long-term behaviour of the Moran process by 
quantifying   how good amplifiers can be.
For this, it helps to have some more formal definitions.

\begin{definition}
Consider a function $\extinct(r,n)\colon \mathbb{R}_{>1} \times \Zone \rightarrow \mathbb{R}_{\geq 0}$.
An infinite family~$\Upsilon$ of directed graphs
is said to be \emph{up-to-$\extinct$ fixating}
if,  for every $r>1$, there is an $n_0$ (depending on~$r$)
so that, for  every  graph  $G\in \Upsilon$ with $n\geq n_0$ vertices, 
the following is true:
When the Moran process is run on~$G$,
starting from a uniformly-random initial mutant,
the extinction probability is at most $\extinct(r,n)$.
\defend{}
\end{definition}
 
 Equation~\eqref{eq:ep-regular} demonstrates that
 the infinite family of strongly-connected regular graphs
 is up-to-$\extinctreg$ fixating and 
 since $\extinctreg \leq 1/r$, this family 
  is also up-to-$1/r$ fixating.
Informally, an infinite family of graphs is said to be \emph{amplifying}
if it is up-to-$\extinct$ fixating for a
function $\extinct(r,n)$ which is
``smaller''  than $\extinctreg(r,n)$. Here is the formal definition.

\begin{definition} 
 An infinite family of directed graphs
is   \emph{amplifying} 
if it is up-to-$\extinct$ fixating 
for a function $\extinct(r,n)$ which, for every $r>1$, satisfies
$\lim_{n\rightarrow \infty} \extinct(r,n) < 1/r$.
\defend{}
\end{definition}

The infinite family of graphs containing all undirected stars
(which can be viewed as directed graphs with edges in both directions) is   
up-to-$\extinct(r,n)$ fixating for
a function $\extinct(r,n)$ satisfying
$\lim_{n\rightarrow \infty} \extinct(r,n) = 1/r^2$, so
this family of graphs is amplifying.

Lieberman et al.~\cite{LHN2005:EvoDyn} were interested in infinite families of
digraphs for which the extinction probability tends to~$0$, prompting the following definition.

\begin{definition}
An infinite family of directed graphs is
  \emph{strongly amplifying}
if it is up-to-$\extinct$ fixating 
for a function $\extinct(r,n)$ which, for every $r>1$, satisfies
$\lim_{n\rightarrow \infty} \extinct(r,n) =0$. \defend{}\end{definition}
  
Note that the infinite family of undirected stars is not strongly amplifying since the extinction probability 
of stars tends to~$1/r^2$ rather than to~$0$.

Prior to this paper,  
there was no (rigorous) proof that a strongly amplifying family of digraphs exists
(though there were heuristic arguments, as we explain later). Proving rigorously
that there is an infinite family of directed graphs that is strongly amplifying for the Moran algorithm is
one of our main contributions.
 
Lieberman et al.~\cite{LHN2005:EvoDyn} 
produced  
good intuition about strong amplification and
defined two infinite families of graphs --- superstars and metafunnels ---
from which it turns out that strongly amplifying families can be constructed.
It is extremely difficult to analyse the Moran process on these 
families, due mostly to the complexity of the graphs, and the
difficulty of dealing with 
issues of dependence and concentration.
Thus, all previous arguments have been heuristic.
For completeness, we discuss these heuristic arguments in Section~\ref{sec:JLH}. 

In this paper, we define a new family of digraphs called megastars.
The definition of megastars is heavily influenced by the superstars of
Lieberman et al.
Our main theorem is the following.

\begin{theorem}
\label{thm:mainone}
There exists an infinite family of  megastars that is strongly amplifying.
\end{theorem}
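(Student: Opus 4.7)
The plan is to construct megastars as directed graphs organised into three tiers---a small center, many ``reservoir'' substructures, and a very large collection of ``leaves''---with edges oriented so that mutations flow from leaves into reservoirs, from reservoirs into the center, and then back out from the center to fresh leaves. The parameters (in the spirit of the notation $\mf{k}{\ell}{m}$ already reserved in the preamble) would be chosen so that leaves vastly dominate the vertex count; this guarantees that a uniformly-chosen initial mutant lands on a leaf with probability $1-o(1)$. The crucial structural features, inherited from the superstar intuition but tuned to make analysis tractable, are (i)~leaves have small in-degree so a leaf-mutant is rarely overwritten, (ii)~leaves point into reservoirs that feed the center, and (iii)~a mutant center can seed many new leaf-mutants, producing a positive feedback that eventually sweeps the graph.

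The proof would then be organised as a sequence of stages, each shown to succeed with probability $1-o(1)$ conditional on the previous ones: (a)~the initial mutant is placed on a leaf; (b)~the leaf-mutant survives long enough to infect one of the reservoir vertices to which it points; (c)~the mutant population in the reservoirs grows past a quantitative threshold, handled by dominating the mutant-count process from below by a biased random walk analogous to the one giving $\rhoreg(r,n)$ in~\eqref{eq:fp-regular}; (d)~the center subsequently becomes filled with mutants; and (e)~conditional on a mutant center, fixation is reached, once again by comparison with a biased birth--death chain on the leaves. At each stage, the true process would be compared to a simpler birth--death chain whose absorption probability is explicitly computable, and the discrepancy bounded via standard concentration machinery (Chernoff and Azuma--Hoeffding on a filtration adapted to the sequence of Moran steps).

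The main technical obstacle is controlling the dependencies between different reservoir substructures: a mutant reservoir vertex can be overwritten by a non-mutant center firing, and the total fitness~$W$ governing reproduction probabilities drifts as the mutant count changes, so the relevant birth--death chains are not time-homogeneous. I would handle this by fixing an explicit ``good'' event---for example, that the fraction of mutant-filled reservoirs stays above a prescribed threshold throughout a suitably long time window---and proving it via martingale concentration on an adapted filtration, coupled with a stopping-time argument that keeps $W$ in a narrow band. This is precisely the step that heuristic analyses of superstars have glossed over, and making it rigorous forces a delicate joint choice of the parameters $k$, $\ell$, $m$: the number of reservoir substructures must be large enough for concentration to take effect, while each reservoir is small enough to fill rapidly and robustly before it can be wiped out. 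Finally, union-bounding the five stage failures yields $\extinct(r,n)=o(1)$ for every fixed $r>1$, establishing strong amplification.
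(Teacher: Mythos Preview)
Your proposal has a fundamental structural misunderstanding: the megastar $\megastar$ is already defined in Section~\ref{def:mega}, and it is not the three-tier ``center/reservoirs/leaves'' graph you describe. A megastar consists of a centre vertex $v^*$, $\ell$ reservoirs $R_j$ of size~$m$, $\ell$ feeder vertices $a_j$, and $\ell$ \emph{cliques} $K_j$ of size~$k$, with edges $v^* \to R_j \to a_j \to K_j \to v^*$ (and bidirectional edges inside each clique). There are no ``leaves,'' and in the family $\Upsilon_{\mathcal{M}}$ of Definition~\ref{def:newmegas} the dominant vertex class is the reservoirs. The clique is precisely what distinguishes megastars from superstars and is the mechanism behind the $n^{-1/2}$ extinction bound: a clique amplifies a single incoming mutant far more robustly than a path. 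Your outline omits cliques entirely, so you are not analysing megastars at all.

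Even setting aside the structural mismatch, the strategy you sketch---dominate the reservoir-mutant count by a biased birth--death chain and handle dependencies by martingale concentration on a ``good'' event---is exactly the kind of argument the paper warns against in Section~\ref{sec:extra}: the step-by-step domination fails because there are configurations from which the reservoir-mutant count is genuinely more likely to decrease than increase. The paper's route is different and more delicate. It (i)~passes to a continuous-time clock/star-clock formulation (Section~\ref{sec:processes}) to make dependencies explicit; (ii)~dominates the Moran process from below by the \emph{megastar process} $X'$ (Definition~\ref{def:megastar-process}), in which each feeder vertex is forced to be a non-mutant while its clique is active, thereby decoupling the clique dynamics from the rest of the graph; (iii)~analyses each active clique via a back-to-back gambler's ruin (Section~\ref{sec:cliques}); and (iv)~runs an explicit iteration scheme (Lemma~\ref{lem:main-iteration}) in which, once every clique is within $\Delta$ of full, the number of reservoir non-mutants drops by a factor of $(2\log n)^2$ per phase until fixation. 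The $X'$ domination and the clique analysis are the key ideas missing from your plan.
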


 Megastars
  are not easier to analyse than superstars or metafunnels.
The reason for our focus on  this class of graphs is that  it turns out to be provably 
better  amplifying than any of the previously-proposed families.
We will present several theorems along these lines.
Before doing so, we define the classes of graphs.

\subsection{Metafunnels, superstars and megastars} 
 
\label{sec:intro:graphs}

\subsubsection{Metafunnels}\label{def:meta}
 
We start by defining the metafunnels of~\cite{LHN2005:EvoDyn}.
 Let $k$, $\ell$ and $m$ be positive integers.
The \emph{$(k,\ell,m)$-metafunnel} is 
the directed graph $\metaf$ defined as follows. (See Figure~\ref{fig:mf}.)

The vertex set $V(\metaf)$  is
the union of $k+1$ disjoint sets $V_0,\ldots,V_k$.
The set $V_0$ contains the single vertex~$v^*$ which is called the \emph{centre vertex}.
For $i \in [k]$, $V_i$ is the union of
$\ell$ disjoint sets $V_{i,1},\ldots,V_{i,\ell}$, each of which has size $m^i$.
The edge set of $\metaf$ is
\[ (V_0 \times V_k) \cup (V_1 \times V_0) \cup \bigcup_{i\in[k-1]}\bigcup_{j\in[\ell]}  (V_{i+1,j} \times V_{i,j})\,.\]
 
Lieberman et al.\ refer to metafunnels with $\ell=1$ as ``funnels''. 
 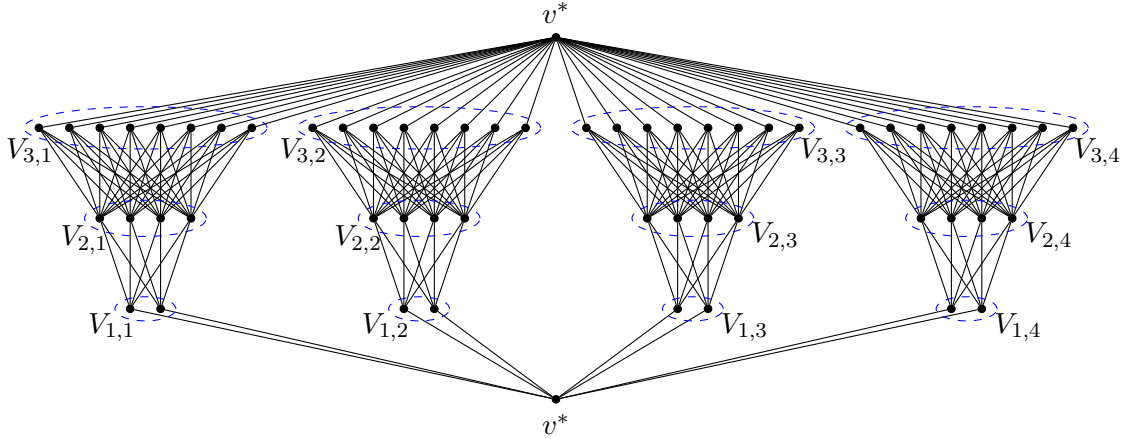
\begin{figure} 
\begin{center}
\begin{tikzpicture}[scale=.4, dedge/.style={ ->, >=stealth}]
\tikzstyle{vertex}=[fill=black, draw=black, circle, inner sep=1pt]

\node[vertex] (centt) at (5,1 )[label=90:{$v^*$}] {};
\node[vertex] (centb) at (5,-11)[label=-90:{$v^*$}] {};

%Middle Leaf

% Levels 
\foreach \x in {-3,-2,-1,0,1,2,3,4} \node[vertex] (\x A) at (\x,-2) {};
\foreach \x in {-1,0,1,2} \node[vertex] (\x B) at (\x,-5) {};
\foreach \x in {0,1} \node[vertex] (\x C) at (\x,-8) {};

%Connection of levels
\foreach \x in {-3,-2,-1,0,1,2,3,4} \draw (centt)--(\x A);
\foreach \x in {-3,-2,-1,0,1,2,3,4} \foreach \y in {-1,0,1,2} \draw (\x A)--(\y B);
\foreach \x in {-1,0,1,2} \foreach \y in {0,1} \draw (\x B)--(\y C);
\foreach \x in {0,1} \draw (\x C)--(centb);

%Labeling and marking
\draw[dashed,blue] (.5,-2) ellipse (4cm and .7cm);
\node at (-3.3,-2.8) {$V_{3,2}$};
\draw[dashed,blue] (.5,-5) ellipse (2cm and .6cm);
\node at (-1.5,-5.7) {$V_{2,2}$};
\draw[dashed,blue] (.5,-8) ellipse (1cm and .4cm);
\node at (-.6,-8.6) {$V_{1,2}$};

% Left Leaf

%Levels
\foreach \x in {-12,-11,-10,-9,-8,-7,-6,-5} \node[vertex] (\x A) at (\x,-2) {};
\foreach \x in {-10,-9,-8,-7} \node[vertex] (\x B) at (\x,-5) {};
\foreach \x in {-9,-8} \node[vertex] (\x C) at (\x,-8) {};

%Connection of levels
\foreach \x in {-12,-11,-10,-9,-8,-7,-6,-5} \draw (centt)--(\x A);
\foreach \x in {-12,-11,-10,-9,-8,-7,-6,-5} \foreach \y in {-10,-9,-8,-7} \draw (\x A)--(\y B);
\foreach \x in {-10,-9,-8,-7} \foreach \y in {-9,-8} \draw (\x B)--(\y C);
\foreach \x in {-9,-8} \draw (\x C)--(centb);

%Labeling and marking
\draw[dashed,blue] (-8.5,-2) ellipse (4cm and .7cm);
\node at (-12.3,-2.8) {$V_{3,1}$};
\draw[dashed,blue] (-8.5,-5) ellipse (2cm and .6cm);
\node at (-10.5,-5.7) {$V_{2,1}$};
\draw[dashed,blue] (-8.5,-8) ellipse (1cm and .4cm);
\node at (-9.6,-8.6) {$V_{1,1}$};

%Right Leaf

% Levels 
\foreach \x in {6,7,8,9,10,11,12,13} \node[vertex] (\x A) at (\x,-2) {};
\foreach \x in {8,9,10,11} \node[vertex] (\x B) at (\x,-5) {};
\foreach \x in {9,10} \node[vertex] (\x C) at (\x,-8) {};

%Connection of levels
\foreach \x in {6,7,8,9,10,11,12,13} \draw (centt)--(\x A);
\foreach \x in {6,7,8,9,10,11,12,13} \foreach \y in {8,9,10,11} \draw (\x A)--(\y B);
\foreach \x in {8,9,10,11} \foreach \y in {9,10} \draw (\x B)--(\y C);
\foreach \x in {9,10} \draw (\x C)--(centb);

\draw[dashed,blue] (9.5,-2) ellipse (4cm and .7cm);
\node at (13.8,-2.8) {$V_{3,3}$};
\draw[dashed,blue] (9.5,-5) ellipse (2cm and .6cm);
\node at (12.2,-5.6) {$V_{2,3}$};
\draw[dashed,blue] (9.5,-8) ellipse (1cm and .4cm);
\node at (11.2,-8.6) {$V_{1,3}$};

%Far right Leaf

% Levels 
\foreach \x in {15,16,17,18,19,20,21,22} \node[vertex] (\x A) at (\x,-2) {};
\foreach \x in {17,18,19,20} \node[vertex] (\x B) at (\x,-5) {};
\foreach \x in {18,19} \node[vertex] (\x C) at (\x,-8) {};

%Connection of levels
\foreach \x in {15,16,17,18,19,20,21,22} \draw (centt)--(\x A);
\foreach \x in {15,16,17,18,19,20,21,22} \foreach \y in {17,18,19,20} \draw (\x A)--(\y B);
\foreach \x in {17,18,19,20} \foreach \y in {18,19} \draw (\x B)--(\y C);
\foreach \x in {18,19} \draw (\x C)--(centb);

%Labeling and marking
\draw[dashed,blue] (18.5,-2) ellipse (4cm and .7cm);
\node at (22.8,-2.8) {$V_{3,4}$};
\draw[dashed,blue] (18.5,-5) ellipse (2cm and .6cm);
\node at (21.2,-5.6) {$V_{2,4}$};
\draw[dashed,blue] (18.5,-8) ellipse (1cm and .4cm);
\node at (20.2,-8.6) {$V_{1,4}$};
\end{tikzpicture}
\end{center}
\caption{The metafunnel $\mf{3}{4}{2}$.  All edges are directed downwards 
in the diagram and the centre vertex~$v^*$ is shown twice,  once at the top and once at the
bottom of the diagram.  There are $\ell=4$ copies of the basic unit, 
each of which consists of $k=3$ levels $V_{1,j}$, $V_{2,j}$ and~$V_{3,j}$, with $|V_{i,j}| = m^i = 2^i$.}
\label{fig:mf}
\end{figure}

 \subsubsection{Superstars}\label{def:super}
 
 We next define the superstars of~\cite{LHN2005:EvoDyn}. 
Let $k$, $\ell$ and $m$ be positive integers. The \emph{$(k,\ell,m)$-superstar} is the directed graph $\sstar$ defined as follows. (See Figure~\ref{fig:superstar}.) The vertex set $V(\sstar)$ of $\sstar$ is the disjoint union of $\ell$ size-$m$ sets $R_1, \dots, R_\ell$  
(called \emph{reservoirs})
together with $k\ell$ vertices $v_{1,1}, v_{1,2}, \dots, v_{\ell, k}$ and a single centre vertex~$v^*$.   The edge set of $\sstar$ is given by
\[E(\sstar) = \bigcup_{i=1}^{\ell} \Bigg((\{v^*\} \times R_i) \cup (R_i \times \{v_{i,1}\}) \cup \{(v_{i,j}, v_{i,j+1}) \mid j \in [k-1]\} \cup \{(v_{i,k}, v^*)\}\} \Bigg).\]
\begin{figure} 
\begin{center}
\begin{tikzpicture}[scale=.7,node distance = 1.5cm, dedge/.style={ ->, >=stealth'}]
\tikzstyle{vertex}=[fill=black, draw=black, circle, inner sep=1.5pt]

%Central Vertices
\node[vertex] (centt) at (3,1 )[label=90:{$v^*$}] {};
\node[vertex] (centb) at (3,-6.5)[label=-90:{$v^*$}] {};

%Middle Reservoir
\foreach \x in {1,2,3,4,5} \node[vertex] (\x) at (\x,-0.5) {};
\foreach \x in {1,2,3,4,5} \draw[dedge] (centt)--(\x) ;
\draw[dashed,blue] (3,-0.5) ellipse (2.7cm and .7cm);
\node at (.8,-1.3) {$R_2$};

%Middle path
\foreach \x in {1,2,3,4} \node[vertex] (\x M) at (3,-\x-1) [label=0:{$v_{2,\x}$}] {};
\foreach \x in {1,2,3,4,5} \draw[dedge] (\x)--(1M);
\draw[dedge] (1M)--(2M); 
\draw[dedge] (2M)--(3M);
\draw[dedge] (3M)--(4M);
\draw[dedge] (4M)--(centb);
%\draw[dashed] (3,-3.5) ellipse (.7cm and 2.1cm);

%Left Reservoir
\foreach \x in {-1,-2,-3,-4,-5} \node[vertex] (\x) at (\x,-0.5) {};
\foreach \x in {-1,-2,-3,-4,-5} \draw[dedge] (centt)--(\x) ;
\draw[dashed,blue] (-3,-0.5) ellipse (2.7cm and .7cm);
\node at (-5.2,-1.3) {$R_1$};

%Left path
\foreach \x in {1,2,3,4} \node[vertex] (\x L) at (-3,-\x-1)[label=180:{$v_{1,\x}$}] {};
\foreach \x in {-1,-2,-3,-4,-5} \draw[dedge] (\x)--(1L);
\draw[dedge] (1L)--(2L);
\draw[dedge] (2L)--(3L);
\draw[dedge] (3L)--(4L);
\draw[dedge] (4L)--(centb);
%\draw[dashed] (-3,-3.5) ellipse (.7cm and 2.1cm);

%Right Reservoir
\foreach \x in {7,8,9,10,11} \node[vertex] (\x) at (\x,-0.5) {};
\foreach \x in {7,8,9,10,11} \draw[dedge] (centt)--(\x) ;
\draw[dashed,blue] (9,-0.5) ellipse (2.7cm and .7cm);
\node at (6.8,-1.3) {$R_3$};

%Right Path
\foreach \x in {1,2,3,4} \node[vertex] (\x R) at (9,-\x-1)[label=0:{$v_{3,\x}$}] {};
\foreach \x in {7,8,9,10,11} \draw[dedge] (\x)--(1R);
\draw[dedge] (1R)--(2R);
\draw[dedge] (2R)--(3R);
\draw[dedge] (3R)--(4R);
\draw[dedge] (4R)--(centb);
%\draw[dashed] (9,-3.5) ellipse (.7cm and 2.1cm);
\end{tikzpicture}
\end{center}
\caption{The superstar $\sst{4}{3}{5}$, with $\ell=3$ reservoirs $R_1$, $R_2$ and~$R_3$, each of size $m=5$, connected by a path with $k=4$ vertices to~$v^*$.  The centre
vertex~$v^*$ is shown twice, at both the top and bottom of the diagram.}\label{fig:superstar}\end{figure}
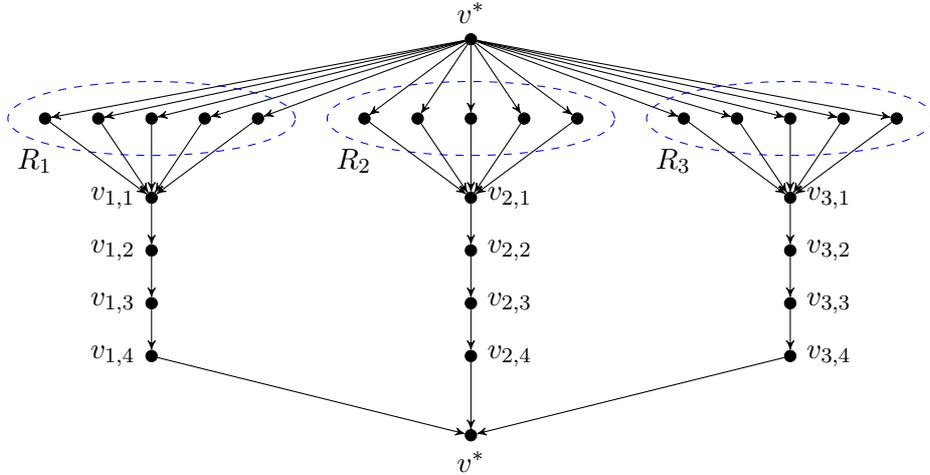

\subsubsection{Megastars}\label{def:mega}

Finally, we define the new class of megastars, which turn out to be provably-better
amplifiers than either metafunnels or superstars.
The intuition behind the design of this class of graphs is that the path $v_{i,1} v_{i,2} \dots v_{i,k}$ 
linking the $i$'th reservoir~$R_i$ of a superstar to the centre vertex~$v^*$ is good
for amplifying but that a clique is even better.

Let $k$, $\ell$ and $m$ be positive integers. 
The  $(k , \ell, m)$-megastar is 
the directed graph~$\megastar$ defined as follows. (See Figure~\ref{fig:ms}.)
The vertex set $V(\megastar)$ of $\megastar$ is the 
disjoint union of $\ell$ sets $R_1,\ldots, R_\ell$ of size $m$, called reservoirs,
$\ell$ sets $K_1, \ldots, K_\ell$ of size $k$, called cliques,
$\ell$ ``feeder vertices'' $a_1,\ldots,a_\ell$ and a single centre vertex $v^*$.
The edge set of $\megastar$    consists of the following edges:
\begin{itemize}
\item an edge from $v^*$ to every vertex in $R_1\cup \dots \cup R_{\ell}$,
\item for each $i\in[\ell]$, an edge from each vertex in~$R_i$ to $a_i$,
\item for each $i\in[\ell]$, an edge from $a_i$ to each vertex in $K_i$,
\item for each $i\in[\ell]$, 
edges in both directions between every pair of distinct vertices in~$K_i$,
\item an edge from every vertex in $K_1\cup\dots\cup K_{\ell}$ to~$v^*$.
 \end{itemize}

\begin{figure}
\begin{center}
\begin{tikzpicture}[scale=.7,node distance = 1.5cm, dedge/.style={ ->, >=stealth'}, nedge/.style={<->, 
>=stealth', <=stealth'}]
\tikzstyle{vertex}=[fill=black, draw=black, circle, inner sep=1.5pt]

%Central Vertices
\node[vertex] (centt) at (5,1.5 )[label=90:{$v^*$}] {};
\node[vertex] (centb) at (5,-5)[label=-90:{$v^*$}] {};

%Left Reservoir
\foreach \x in {1,2,3,4} \node[vertex] (\x) at (\x,0) {};
\foreach \x in {1,2,3,4} \draw[dedge] (centt)--(\x);

\node[vertex] (L) at (2.5,-1.5) {};
\foreach \x in {1,2,3,4} \draw[dedge] (\x)--(L);

\node at (2,-1.5) {$a_1$};

\draw[dashed,blue] (2.5,0) ellipse (2cm and .5cm);
\node at (.8,-.6) {$R_1$};
%Left Clique

\node[vertex] (LC1) at (1.5,-2.5) {};
\node[vertex] (LC2) at (3.5,-2.5) {};
\node[vertex] (LC3) at (2.5,-3) {};

\draw (LC1)--(LC2)--(LC3)--(LC1);
\foreach \x in {1,2,3} \draw[dedge] (L)--(LC\x);
\foreach \x in {2,3} \draw[dedge] (LC\x)--(centb);
\draw[dedge] (LC1) ..controls (2.5,-3.8) and (3.3,-4.5) ..(centb);
\draw[dashed,red] (2.5,-2.6) ellipse (1.4cm and .7cm);
\node at (1.3,-3.35) {$K_1$};

%Right Reservoir
\foreach \x in {6,7,8,9} \node[vertex] (\x) at (\x,0) {};
\foreach \x in {6,7,8,9} \draw[dedge] (centt)--(\x);

\node[vertex] (R) at (7.5,-1.5) {};
\foreach \x in {6,7,8,9} \draw[dedge] (\x)--(R);

\node at (8.2,-1.5) {$a_2$};

\draw[dashed,blue] (7.5,0) ellipse (2cm and .5cm);
\node at (9.2,-.6) {$R_2$};
%Right Clique

\node[vertex] (RC1) at (6.5,-2.5) {};
\node[vertex] (RC2) at (8.5,-2.5) {};
\node[vertex] (RC3) at (7.5,-3) {};

\draw (RC1)--(RC2)--(RC3)--(RC1);
\foreach \x in {1,2,3} \draw[dedge] (R)--(RC\x);
\foreach \x in {1,3} \draw[dedge] (RC\x)--(centb);
\draw[dedge] (RC2) ..controls (7.5,-3.8) and (6.7,-4.5) ..(centb);
\draw[dashed,red] (7.5,-2.6) ellipse (1.4cm and .7cm);
\node at (8.7,-3.35) {$K_2$};
\end{tikzpicture}
\end{center}
\caption{The megastar $\megastar[{3,2,4}]$, with $\ell=2$ reservoirs $R_1$ and~$R_2$, each of size $m=4$. Each reservoir~$R_i$ is attached, via the feeder vertex~$a_i$ to a clique of size $k=3$. The centre
vertex $v^*$ is shown twice, once at the top and once at the bottom of the diagram. The edges within the cliques $K_1$ and~$K_2$ are bidirectional.
}\label{fig:ms}
\end{figure}
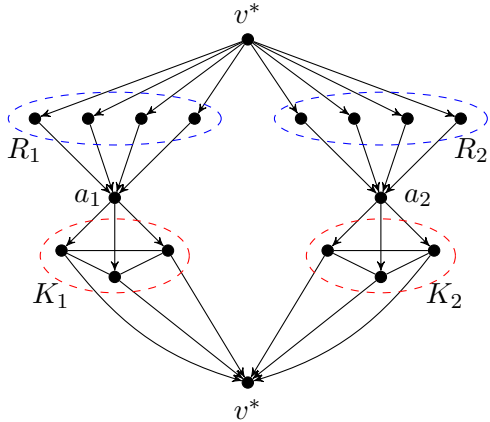

\subsection{Our results}\label{sec:ourresults}\label{sec:results}

Our main result is that there is an infinite family  of megastars that is strongly amplifying,
so we start by defining this family.
Although megastars are parameterised by three parameters, $k$, $\ell$ and $m$, 
the megastars in the family that we consider have a single parameter~$\ell$, so
we define $k$ and $m$ to be functions of~$\ell$.

\begin{definition} \label{def:newmegas}
Let $m(\ell) = \ell$ and $k(\ell) = \ceil{(\log \ell)^{23}}$. 
Let
\begin{equation*}
\Upsilon_{\mathcal{M}} = \{ \megastar[k(\ell),\ell,m(\ell)] \mid \ell \in \Z, \ell \ge 2\}.\defenddisp{}
\end{equation*}
\end{definition}

Our main result can then be stated as follows.

 \begin{theorem}
\label{thm:maintwo}
Let $\extinctmeg(r,n) = (\log n)^{23} n^{-1/2}$.
The family $\Upsilon_{\mathcal{M}}$ is  up-to-$\extinctmeg$ fixating.
\end{theorem}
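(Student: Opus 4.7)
The plan is to decompose the Moran process on $\megastar[k(\ell),\ell,m(\ell)]$ into phases and bound the cumulative extinction probability across them. Because $n = 1+\ell(m+k+1) = \Theta(\ell^2)$, we have $\ell = \Theta(\sqrt{n})$ and the target bound $\extinctmeg(r,n)$ corresponds to an error budget of order $(\log \ell)^{23}/\ell$. The structural intuition driving the proof is that each reservoir provides $m=\ell$ independent ``seeds'' that can try to light up their adjacent clique, and each clique acts as a local amplifier whose success gets transmitted through $v^*$ to all other reservoirs simultaneously, producing a cascade.

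First I would condition on the initial mutant's location. With probability $\ell m/n = 1-O(1/\ell)$ it lies in some reservoir $R_i$, and the remaining cases (centre, feeder, clique) are analogous or strictly easier. Starting from a mutant $u\in R_i$, the relevant local race is between $u$ being overwritten (which can happen only when $v^*$ reproduces to $u$, an event of rate $\tfrac{1}{W}\cdot\tfrac{1}{\ell m}$ per step while $v^*$ is non-mutant) and $u$ reproducing to the feeder $a_i$ (at rate $\tfrac{r}{W}$ per step). A standard gambler's-ruin comparison shows $u$ reaches $a_i$ with probability bounded below by a constant depending only on $r$. Repeating this with the $m=\ell$ mutants that appear in $R_i$ over time, together with the $\ell$ reservoirs themselves, provides $\Theta(\ell^2)$ roughly-independent trials to ignite a clique, which is where the $\sqrt n$ factor in the final bound comes from.

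Next, once $a_i$ becomes mutant it injects mutants into $K_i$, and the key claim is that the clique becomes fully mutant with probability $1-O((\log \ell)^{23}/\ell)$. Inside the bidirectional clique, mutants enjoy fitness advantage $r$ and perform a biased random walk on the number of mutants, but this walk is perturbed whenever $v^*$ is non-mutant and reproduces onto a clique vertex. Here the choice $k=\lceil(\log \ell)^{23}\rceil$ is crucial: it must be large enough for Chernoff-type concentration to absorb the occasional interference from $v^*$ during the time $u$ tries to sweep $K_i$, yet not so large that the clique itself becomes a bottleneck. A martingale/hitting-time argument on the number of mutants in $K_i$, coupled with a bound on the number of $v^*$-overwrite events within the relevant time window, should give the required per-attempt success probability.

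Finally, once some $K_i$ is fully mutant, it funnels mutants into $v^*$ at rate $\Theta(k/W)$, so $v^*$ becomes mutant quickly. Each time $v^*$ reproduces while mutant it seeds another reservoir vertex, producing $\Theta(\ell^2)$ fresh reservoir seeds that independently try to activate their cliques by the same mechanism as above; a union bound and Chernoff then give that every clique, feeder, and reservoir eventually becomes mutant. The main obstacle will be the \emph{bookkeeping of dependence and oscillation}: $v^*$ flips state many times, partially-mutant reservoirs and cliques can temporarily lose ground, and independent claims about different reservoirs hold only after careful conditioning on the state of $v^*$ and the history of clique activations. I expect this to be handled via a collection of carefully-designed potentials (tracking, e.g., the number of activated cliques, the size of the dominant mutant ``strain'', and the time spent by $v^*$ in each state), each supporting a submartingale or gambler's-ruin estimate that is tuned to consume only an $O((\log \ell)^{23}/\ell)$ share of the extinction budget --- which is precisely why the polylog slack $(\log \ell)^{23}$ appears in $k(\ell)$ and in the final bound.
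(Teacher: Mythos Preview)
Your high-level decomposition (reservoir seed $\to$ clique ignition $\to$ cascade through $v^*$) matches the paper's, but there is a genuine gap at the point where you analyse the clique. You write that the clique walk ``is perturbed whenever $v^*$ is non-mutant and reproduces onto a clique vertex,'' but there are no edges from $v^*$ into any clique: $v^*$ feeds only reservoirs. The actual perturbation comes from the feeder $a_i$, whose state depends on its reservoir, which in turn depends on $v^*$; this chain of dependence is exactly what makes the clique mutant-count non-Markovian and prevents a direct martingale/hitting-time argument of the kind you sketch.

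The paper resolves this not with potentials but with a structural reduction you are missing: it introduces an auxiliary \emph{megastar process} $X'$ in which each feeder $a_i$ is forced to be a non-mutant whenever $K_i$ is active (contains both mutants and non-mutants). One checks $X'_t\subseteq X_t$ pointwise, so fixation of $X'$ implies fixation of $X$. The payoff is that while $K_i$ is active in $X'$, its mutant count is a clean one-dimensional Markov chain independent of the rest of the graph, dominated below by an explicit back-to-back gambler's ruin. With the clique behaviour thus decoupled, the cascade is organised as an iteration: after all cliques are near-full (achieved by time $n^8$ via a crude argument), one runs phases $I_i$ equipped with a stopping time $T^i_{\mathsf{end}}$ that halts the phase the moment any clique drops below $k-2\Delta$ mutants or $v^*$ spawns too many non-mutants; one shows that with high probability $T^i_{\mathsf{end}}$ reaches the end of the phase and the number of non-mutant reservoir vertices contracts by a factor $(2\log n)^2$. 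Your ``carefully-designed potentials \ldots\ each supporting a submartingale or gambler's-ruin estimate'' is not a substitute for this machinery; without the $X'$ reduction or something equivalent, the dependence between cliques, feeders, reservoirs and $v^*$ is not controlled, and the proposal as written does not supply a way to control it.
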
 

 \begin{corollary}
\label{cor:maintwo}
The family $\Upsilon_{\mathcal{M}}$ is  strongly amplifying.
\end{corollary}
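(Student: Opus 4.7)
The plan is to derive Corollary \ref{cor:maintwo} as an immediate consequence of Theorem \ref{thm:maintwo}. By that theorem, the family $\Upsilon_{\mathcal{M}}$ is up-to-$\extinctmeg$ fixating, where $\extinctmeg(r,n) = (\log n)^{23} n^{-1/2}$. To conclude that $\Upsilon_{\mathcal{M}}$ is strongly amplifying, it suffices to verify that the function $\extinctmeg$ meets the hypothesis of the definition of strong amplification, namely that $\lim_{n\to\infty} \extinctmeg(r,n) = 0$ for every $r>1$.

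First, I would note that $\extinctmeg(r,n)$ has no dependence on $r$, so the limit is the same for every choice of $r>1$; it therefore suffices to compute a single limit. The required asymptotic, $\lim_{n\to\infty} (\log n)^{23} n^{-1/2} = 0$, is a standard fact of elementary analysis: for any positive constants $c$ and $\alpha$, one has $(\log n)^c = o(n^{\alpha})$ as $n\to\infty$, since $\log((\log n)^c) = c \log\log n = o(\alpha \log n)$. Taking $c=23$ and $\alpha = 1/2$ yields $(\log n)^{23} = o(n^{1/2})$, and hence $\extinctmeg(r,n) \to 0$.

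There is essentially no obstacle beyond this observation. All the mathematical substance lies in establishing Theorem \ref{thm:maintwo}; once that quantitative bound on the extinction probability is in hand, the corollary amounts to the remark that a polylogarithmic factor is dominated by any positive power of $n$. In particular, the rate of decay of the extinction probability along the family $\Upsilon_{\mathcal{M}}$ is $\widetilde{O}(n^{-1/2})$, which is considerably stronger than the qualitative requirement $\extinct(r,n) \to 0$ demanded by the definition.
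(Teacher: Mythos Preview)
Your proposal is correct and matches the paper's approach exactly: the paper states Corollary~\ref{cor:maintwo} immediately after Theorem~\ref{thm:maintwo} without further proof, since the only thing to check is that $\extinctmeg(r,n)=(\log n)^{23}n^{-1/2}\to 0$ as $n\to\infty$, which is the elementary observation you spell out.
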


The proof of Theorem~\ref{thm:maintwo} requires a complicated analysis, accounting for
dependencies and concentration.
The theorem, as stated here, follows directly from Theorem~\ref{thm:megastar} which is proved in 
Section~\ref{sec:megastar} (see Page~\pageref{thm:megastar}).
  
The reason that we studied megastars rather than the previously-introduced superstars and
metafunnels is that megastars turn out to be provably better amplifying than any
of the previously-proposed families.
To demonstrate this, we prove the following theorem about superstars.

\begin{theorem}
\label{thm:mainsuperstar}  
Let  $\extinct(r,n)$ be any function such that, for any $r>1$,
$$\lim_{n \rightarrow \infty}  \extinct(r,n) {(n \log n)}^{1/3} = 0.$$
Then there is no infinite family of superstars that is up-to-$\extinct$ fixating.
\end{theorem}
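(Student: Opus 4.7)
The plan is to show that for any infinite family of superstars $\Upsilon = \{\sst{k_n}{\ell_n}{m_n}\}_{n}$ (with $n = 1 + k_n \ell_n + \ell_n m_n$), there is a fixed $r>1$ and a constant $c(r)>0$ so that the extinction probability on the $n$-vertex element of $\Upsilon$ is at least $c(r)/(n\log n)^{1/3}$ for infinitely many $n$. This suffices, since any $\extinct$ satisfying the hypothesis $\lim_{n\to\infty}\extinct(r,n)(n\log n)^{1/3}=0$ must eventually fall below $c(r)/(n\log n)^{1/3}$, witnessing that $\Upsilon$ is not up-to-$\extinct$ fixating for this $r$. I would fix $r>1$ (say $r=2$) and case-split on the number of path vertices $k_n\ell_n$ compared to the threshold $\beta_n := n^{2/3}/(\log n)^{1/3}$.

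\emph{Case~A: $k_n\ell_n \ge \beta_n$ for infinitely many $n$.} For each such $n$, with probability at least $k_n\ell_n/n \ge (n\log n)^{-1/3}$ the initial mutant lies at some path vertex $v_{i,j}$, and I claim the conditional extinction probability from such a state is at least $1/(r+1)$. Indeed, while $v_{i,j}$ is the unique mutant, the only configuration-changing events involve $v_{i,j}$ or its neighbours: for $j\ge 2$, either (i) $v_{i,j}$ (fitness $r$) reproduces to $v_{i,j+1}$ (or to $v^*$ if $j=k_n$) at rate $r/W$, or (ii) the non-mutant $v_{i,j-1}$ (fitness $1$) reproduces onto $v_{i,j}$ at rate $1/W$, causing immediate extinction. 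Hence the first state-changing event is of type (ii) with conditional probability $1/(r+1)$. For $j=1$ the in-neighbourhood of $v_{i,1}$ is the whole reservoir $R_i$ of size $m_n$, which only increases the overwrite probability. Combining, the extinction probability is at least $(r+1)^{-1}(n\log n)^{-1/3}$, which suffices in Case~A.

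\emph{Case~B: $k_n\ell_n < \beta_n$ for all large $n$.} Then $\ell_n m_n \ge n/2$ for $n$ large, so almost all vertices lie in reservoirs and the Case~A bound alone is too weak. Here I would lower-bound the conditional extinction probability given an initial reservoir-vertex start. The idea is that from a single reservoir mutant, fixation requires the mutation to traverse the length-$k_n$ path to reach $v^*$ and then spread via $v^*$ to enough reservoir vertices to become self-sustaining, and each of these phases has non-trivial failure probability because non-mutants heavily outnumber mutants in the relevant in-neighbourhoods. I would track the coupled Markov chain on (the number of reservoir mutants, the state of $v^*$, and the mutant segments along each of the $\ell_n$ paths), and set up a potential function measuring the ``distance to fixation'' so that a gambler's-ruin-style comparison yields a lower bound of $\Omega((n\log n)^{-1/3})$ on the probability that the process loses all mutants before the reservoir reaches a critical mass.

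The main obstacle is Case~B: the superstar is highly non-symmetric, and there are significant cross-dependencies between the path and reservoir dynamics. A single reservoir mutant propagates only slowly along the path (since the initial copy at $v_{i,1}$ is typically overwritten by the remaining $m_n-1$ non-mutant reservoir vertices before it can reproduce), and once $v^*$ becomes mutant there are many routes both to fixation and to extinction, so one has to quantify the net drift of the process rather carefully. The $\log n$ factor in the final bound naturally arises from concentration inequalities (e.g., Chernoff/Hoeffding-type bounds on the number of reservoir mutants produced in a given time window) used to rule out atypical excursions, and the exponent $1/3$ is determined precisely so that at the threshold $\beta_n$ the Case~A and Case~B contributions balance.
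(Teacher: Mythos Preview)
Your Case~A is correct and matches the paper's easy regime (Lemma~\ref{lem:star-lower-die-instant}, which appears as Case~2 in the proof of Theorem~\ref{thm:star-lower}): when enough of the vertices lie on paths, the initial mutant lands there with probability $\Omega((n\log n)^{-1/3})$ and then dies before reproducing with constant probability.

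Case~B, however, is not a proof --- it is a plan, and the plan is missing the central ideas. You describe tracking a multi-dimensional state and constructing a potential function so that a gambler's-ruin comparison yields the desired bound, but you do not say what the potential is, why it has the right drift, or why the comparison is valid. As you yourself note, the state space is high-dimensional and the dependencies between path and reservoir dynamics are severe; a one-dimensional gambler's-ruin domination of the reservoir-mutant count is exactly the kind of heuristic the paper explicitly argues (Section~\ref{sec:extra}) cannot be made rigorous without further structure, because there are configurations from which the number of reservoir mutants is more likely to decrease than increase.

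What the paper actually does in this regime is quite different, and requires a further case split you do not make. First, if $\ell = O((n\log n)^{1/3})$ (Lemma~\ref{lem:star-lower-die-before-path}, Case~3 of Theorem~\ref{thm:star-lower}), there is a direct argument that the initial reservoir mutant $x_0$ dies before the mutation ever propagates past $v_1$ into $v_2$, giving extinction probability $\Omega(1/\ell)$. Second, when both $m$ and $\ell$ are large (Case~4, Lemma~\ref{lem:star-lower-hard}), the key idea is \emph{not} to track reservoir mutants at all, but to show that with probability $\Omega(1/\kappa)$ --- where $\kappa=\max\{3k, K r^4\log n\}$ --- the centre vertex $v^*$ never spawns a single mutant before $x_0$ dies, after which the process deterministically goes extinct. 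This is accomplished by partitioning time into length-$\kappa$ intervals, showing that in most intervals $v_2$ is ``protected'' so the path clears (events $\sP3$--$\sP5$), and hence that $v_k$ spawns onto $v^*$ at most $O(\ell)$ times; then one shows separately that $v^*$'s total time as a mutant over these $O(\ell)$ visits is less than $1/r$, so that with probability $1/e$ no star-clock $\smc{(v^*,v)}$ triggers (events $\sP1$--$\sP2$). The $\log n$ factor arises from the size of $\kappa$, not from concentration of reservoir-mutant counts. None of this structure is visible in your Case~B sketch, and your proposed potential-function approach would have to rediscover it in order to succeed.
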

  
The function $\extinctmeg(r,n)$ from Theorem~\ref{thm:maintwo} certainly
satisfies  
$\lim_{n \rightarrow \infty}  \extinctmeg(r,n) {(n \log n)}^{1/3} = 0$,
so Theorem~\ref{thm:mainsuperstar} 
shows that there is no infinite family of superstars that is up-to-$\extinctmeg$ fixating.  
More mundanely, it shows, for example, that
if
$\extinct(r,n) = n^{-1/3} {(\log n)}^{-1}$,  then no infinite family of superstars is up-to-$\extinct$ fixating.
 Theorem~\ref{thm:mainsuperstar} is a direct consequence of Theorem~\ref{thm:star-lower}
which is proved in 
Section~\ref{sec:superstar} (see Page~\pageref{thm:star-lower}).
It turns out that analysing superstars is a little bit easier than analysing 
megastars or metafunnels, so this is the first proof that we present.
 
Taken together, Theorems~\ref{thm:maintwo} and~\ref{thm:mainsuperstar} show that
superstars are worse amplifiers than megastars. We next show that metafunnels are
\emph{substantially} worse. We start with the following simple-to-state theorem.

\begin{theorem}\label{thm:mainfirstmetafunnel}
Fix any $\delta>0$ and let $\extinct(r,n) = n^{-\delta}$.
Then there is no infinite family of metafunnels that is up-to-$\extinct$ fixating.
\end{theorem}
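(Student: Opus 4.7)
The plan is to show that, for any infinite family $\{\mf{k_t}{\ell_t}{m_t}\}_t$ of metafunnels with $n_t\to\infty$, there exists $r>1$ (depending only on $\delta$) such that the extinction probability of the $t$-th metafunnel exceeds $n_t^{-\delta}$ for all sufficiently large~$t$. Since $n_t \leq O(\ell_t m_t^{k_t})$, at least one of $k_t$, $\ell_t$, $m_t$ must tend to infinity along the family, and a brief case analysis will be needed. The most interesting case is $k_t\to\infty$, which I focus on below; I plan to choose $r=1+\varepsilon$ for a small $\varepsilon>0$ depending only on~$\delta$.

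The main ingredient is a \emph{local bottleneck} observation. Consider a configuration with a single mutant at $v\in V_{i,j}$, $1\leq i\leq k-1$, and every other vertex non-mutant. The vertex $v$ has $|V_{i+1,j}| = m|V_{i,j}|$ in-neighbours, each of out-degree $|V_{i,j}|$, so the total rate at which $v$ is overwritten by a non-mutant is $m/W$, where $W$ is the total fitness; meanwhile $v$ itself reproduces at rate $r/W$. Thus the probability that the mutation is lost before $v$ can copy its state onward is $m/(r+m)$. I plan to chain these per-level bottlenecks and show that a single initial mutant placed at $V_{i,j}$ produces a mutant in $V_{1,j}$ with probability at most $(r/(r+m))^{i-1}$ up to a modest multiplicative factor. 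Since $v^*$'s only in-neighbours lie in $V_1$, this also bounds the probability that the mutation ever reaches $v^*$, and hence bounds the fixation probability. The initial mutant lies in level $V_i$ with probability at most $m^{i-k}$ (using $n\geq \ell m^k$), so summing the per-level fixation bounds yields a fixation bound of order $(r/m)^{k-1}$ times a polynomial in~$r$, which tends to $0$ as $k\to\infty$; hence extinction tends to~$1$ and in particular exceeds $n_t^{-\delta}$ for large~$t$. The cases where $k_t$ is bounded but $m_t$ or $\ell_t$ tends to infinity can be handled by standard ``shallow star/superstar'' arguments that readily yield extinction bounded below by a positive constant.

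\emph{Main obstacle.} The hardest part of the proof is handling the case where the initial mutant lies in $V_k$, which accounts for almost all placements. A mutant at $v\in V_{k,j}$ is nearly immortal, since its only in-neighbour is $v^*$, and so it can repeatedly seed new mutants in $V_{k-1,j}$. Each such seeded mutant has, by the local bottleneck, probability $m/(r+m)$ of being lost before it can propagate further, but the total number of seeding attempts from the persistent $V_k$-mutant is unbounded, so a naive first-event analysis yields nothing. To handle this, I expect to track the number of mutants in each level of the branch containing the initial mutant and couple the resulting process to a subcritical branching-style random walk whose ``birth'' rates are governed by~$r$ and ``death'' rates by~$m$. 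The aim is to show that, with the required lower-bound probability, this process is absorbed at the all-non-mutant configuration before the mutation ever crosses into~$v^*$; after which the mutation cannot spread to any other branch and fixation is impossible. Handling the interaction between branches (through $v^*$), particularly for large~$\ell$, requires further care.
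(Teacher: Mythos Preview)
Your level-by-level bottleneck analysis for $i<k$ is essentially the paper's ``strain'' analysis and is correct as far as it goes: a strain seeded in $V_{k-1}$ reaches $v^*$ with probability $O((r/m)^{k-1})$. But the conclusion you draw --- that fixation is $O((r/m)^{k-1})$ and hence extinction tends to $1$ --- is wrong, and the gap is exactly the $V_k$ case you flag as the main obstacle.

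Your proposed fix there cannot deliver the required bound. The mutant $x_0\in V_k$ dies at rate $1/(\ell m^k)$, so before dying it seeds on the order of $r\ell m^k$ strains; since each strain reaches $v^*$ with probability of order $(r/m)^{k-1}$, the expected number of strains that reach $v^*$ is of order $r^k\ell m$, which diverges. A subcritical branching coupling can therefore only show that the process is absorbed before any strain reaches $v^*$ with probability at most of order $1/(r^k\ell m)$, and for families with $\ell m$ polynomial in $n$ this is far smaller than $n^{-\delta}$ for small $\delta$, no matter how close $r$ is to $1$. More fundamentally, the target ``extinction $\to 1$'' is too strong: Lieberman et al.\ conjectured (and nothing in the paper contradicts) that suitable metafunnel families are strongly amplifying, so extinction may well tend to $0$. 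The theorem only asserts it cannot decay as fast as $n^{-\delta}$.

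The paper's argument (Theorem~\ref{thm:funnel-upper}) is quite different. It does not try to keep the mutation away from $v^*$. Instead it conditions on $x_0$ dying by time $\tx\approx \ell m^k/(r^k\cdot(\log n)^{O(1)})$, an event of probability roughly $r^{-k}/(\log n)^{O(1)}$. On this event only about $r\tx$ strains are ever born; the paper shows that these strains collectively keep $v^*$ a mutant for at most one unit of total time, so with probability $e^{-r}$ the centre spawns nothing, and the process then goes extinct. In the large-$m$ regime one has $k\le\sqrt{\log_r n}$, whence $r^{-k}\ge e^{-\sqrt{(\log r)(\log n)}}\gg n^{-\delta}$. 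The small-$m$ regime is handled by the elementary observation that $x_0$ lands outside $V_k$ with probability $\Omega(1/m)$ and then dies before spawning with constant probability. No special choice of $r$ is needed.
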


In fact, Theorem~\ref{thm:mainfirstmetafunnel} can be  strengthened by an exponential amount.

\begin{theorem}\label{thm:mainmetafunnel} 
Fix any $\epsilon < 1/2$ 
and let $\extinct(r,n) = n^{-1/{(\log n)}^{\epsilon}}$.
Then there is no infinite family of metafunnels that is up-to-$\extinct$ fixating.
\end{theorem}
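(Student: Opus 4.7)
The plan is to strengthen the argument used for Theorem~\ref{thm:mainfirstmetafunnel} by more carefully quantifying the extinction-probability lower bound as a function of the metafunnel parameters $(k, \ell, m)$. The target is to establish that, for any fixed $r > 1$, the extinction probability is at least $c(r) \cdot \max(1/m,\ 1/r^{k+1})$ for some $c(r) > 0$; this will in turn exceed $n^{-1/(\log n)^\epsilon}$ whenever $\epsilon < 1/2$ and $n$ is sufficiently large.

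First, I would decompose the fixation probability by conditioning on the initial mutant's placement. The initial mutant lies in $V_k$ with probability $|V_k|/n \ge 1 - 1/m$, and outside $V_k$ with probability at least $1/(2m)$ (using $n = \Theta(\ell m^k)$). For placements outside $V_k$, each such vertex has in-degree a factor of $m$ larger than its out-degree, so with constant probability the initial mutant is overwritten before it can cascade through the levels back to $v^*$, yielding an $\Omega(1/m)$ contribution to the total extinction. For placements in $V_k$, I would use a Gambler's-Ruin-type analysis on $x_k$, the number of mutants at $V_k$. The key claim is that the effective birth-to-death ratio for $x_k$ is approximately $r^{k+1}$, reflecting the accumulation of an $r$-fold fitness advantage across each of the $k+1$ level transitions in the cycle $V_k \to V_{k-1} \to \cdots \to V_1 \to v^* \to V_k$. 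Gambler's Ruin on the chain $\{0, 1, \ldots, \ell m^k\}$ with bias $r^{k+1}$ then shows that the conditional fixation probability from a $V_k$-placement is at most $1 - \Theta(1/r^{k+1})$, contributing $\Omega(1/r^{k+1})$ to the total extinction.

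To conclude: suppose for contradiction that both $1/m$ and $1/r^{k+1}$ were less than $n^{-1/(\log n)^\epsilon}/c(r)$. Taking logs, $\log m \gtrsim (\log n)^{1-\epsilon}$ and $(k+1) \log r \gtrsim (\log n)^{1-\epsilon}$. Multiplying these and using $k \log m \le \log n$ (since $|V_k| = \ell m^k \le n$) forces $\log r \gtrsim (\log n)^{1-2\epsilon}/2$; for $\epsilon < 1/2$ the right side tends to infinity, contradicting that $r$ is a fixed constant. Hence, for any fixed $r > 1$, at least one of the two contributions to the extinction probability is at least $n^{-1/(\log n)^\epsilon}$ eventually, and so the family cannot be up-to-$\extinct$ fixating.

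The main obstacle is making the Gambler's Ruin step rigorous. The quantity $x_k$ is not itself Markovian; its transition rates depend on the joint state of all lower levels, which only approximately track $x_k$ in a quasi-equilibrium manner. Since the lower-level equilibration time is comparable to that of $x_k$ itself, justifying the $r^{k+1}$ bias requires careful control over the joint fluctuations across all levels, likely via a coupling with an auxiliary birth-death chain or a multi-scale martingale argument tracking the joint mutant counts level by level. Handling corner cases such as small $\ell$ (where cross-arm interactions are negligible) versus large $\ell$ (where many arms compete through $v^*$) may also require separate treatment, though the core exponent $r^{k+1}$ should persist.
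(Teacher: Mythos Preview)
Your high-level decomposition matches the paper's exactly: Theorem~\ref{thm:funnel-upper} establishes that extinction is at least $e^{-\sqrt{\log r\cdot\log n}}(\log n)^{-c_r}$, which is precisely (up to the polylog) the minimum over parameters with $m^k\le n$ of $\max(1/m,\,r^{-k})$, and your concluding arithmetic (multiply the two log-inequalities and use $k\log m\le\log n$) is the same derivation. Your $\Omega(1/m)$ contribution from off-$V_k$ placements is Lemma~\ref{lem:funnel-die-instant}.

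The divergence is in the $\Omega(r^{-k})$ piece, where you propose a gambler's ruin on $|X_t\cap V_k|$ with quasi-equilibrium bias $r^{k+1}$. The paper does \emph{not} attempt this. Instead it restricts to $m\ge r^{\sqrt{\log_r n}}$ (forcing $k\le\sqrt{\log_r n}$) and directly lower-bounds the probability of the single event ``$x_0$ dies before $v^*$ ever spawns a mutant''. The mechanism is a \emph{strain} decomposition (Definition~\ref{definition:strains}): each mutant that $x_0$ spawns into $V_{k-1}$ seeds a strain, and each strain is shown to reach $v^*$ (be ``dangerous'') with probability at most roughly $(r/m)^{k-1}\,\mathrm{polylog}(n)$ via a jump-chain domination on an associated spawning chain (Lemmas~\ref{lem:funnel-firing-jump-chain}--\ref{lem:funnel-strain-bottom}). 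A Markov-inequality count then bounds the number of dangerous strains, so $v^*$ becomes a mutant at most $O(\ell m)$ times, and with probability $\sim r^{-k}/\mathrm{polylog}(n)$ it never spawns in that window (Lemma~\ref{lem:funnel-die-slow}). Crucially there is never more than one mutant in $V_k$ along the successful trajectory, so the non-Markovianity obstacle you flag never arises.

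Your gambler's-ruin route might be completable, but it would require a bias bound from state~$1$ that holds uniformly over lower-level configurations --- the multi-scale coupling you allude to, and likely at least as involved as the strain analysis. The paper's approach trades the global random-walk domination for a direct estimate of one concrete extinction scenario, which is how it dodges the obstacle.
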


Theorems~\ref{thm:mainfirstmetafunnel} and~\ref{thm:mainmetafunnel} 
are a direct consequence of Theorem~\ref{thm:funnel-upper} 
which is proved in 
Section~\ref{sec:metafunnel} (see Page~\pageref{thm:funnel-upper}).
In fact, Theorem~\ref{thm:funnel-upper} 
provides even tighter bounds, though these  are more difficult to state. 

 The theorems that we have already described (Theorem~\ref{thm:maintwo}, Theorem~\ref{thm:mainsuperstar}
 and Theorem~\ref{thm:mainmetafunnel}) are the main contributions of the paper.
 Together, they show that there is a family of megastars that is strongly amplifying, and
 that there are no families of superstars or metafunnels that amplify as well.
 For completeness, we present a theorem
showing that the analysis of Theorem~\ref{thm:maintwo} is fairly tight,
in the sense that there
are no infinite families of megastars
that amplify substantially better than~$\Upsilon_{\mathcal M}$ --- in particular,
our bound on extinction probability can only be improved by factors of $\log(n)$. It cannot
be improved  more substantially.

\begin{theorem}
\label{thm:mainothermegastar} 
Let $\extinct(r,n) = n^{-1/2} /(52 r^2)$. 
There is no infinite family of  megastars 
that is up-to-$\extinct$ fixating. 
\end{theorem}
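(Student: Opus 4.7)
The plan is to lower-bound the extinction probability $\rho$ of $\megastar[k,\ell,m]$ via a \emph{first-change} analysis applied to each kind of starting vertex, then case-analyse on $(k,\ell,m)$ subject to $n = 1 + \ell(m+k+1)$. For each starting vertex, I would compare the mutant's firing rate $r/W$ (with $W$ the total fitness) to the total rate $q/W$ at which in-neighbours fire onto it; then the probability that the very first mutant-count change is a drop to~$0$ is $q/(q+r)$. The key values are $q=\ell$ from $v^*$ (its $\ell k$ clique in-neighbours each have out-degree~$k$), $q=m$ from a feeder $a_i$ ($R_i$ has $m$ in-neighbours of out-degree~$1$), and $q=1$ from a clique vertex (the $k-1$ other clique vertices plus $a_i$, each of out-degree~$k$); the reservoir-vertex case contributes negligibly. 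Summing weighted by the initial placement probability,
\[
\rho \;\ge\; \tfrac{1}{n}\Bigl[\,\tfrac{\ell}{\ell+r} + \tfrac{\ell m}{m+r} + \tfrac{\ell k}{1+r}\,\Bigr].
\]

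Write $\extinct := n^{-1/2}/(52r^2)$. I would split into cases to verify $\rho \ge \extinct$ for $n$ large. \textbf{Case~A (feeder):} if $m \ge r$ and $\ell \ge \sqrt n/(26r^2)$, then $\tfrac{\ell m}{n(m+r)} \ge \ell/(2n) \ge \extinct$; if instead $m < r$ and $\ell m \ge \sqrt n/(26r)$, then $\tfrac{\ell m}{n(m+r)} \ge \ell m/(2nr) \ge \extinct$. \textbf{Case~B (clique):} if $\ell k \ge \sqrt n/(26 r)$, then $\tfrac{\ell k}{n(1+r)} \ge \ell k/(2nr) \ge \extinct$. \textbf{Case~C (residual):} otherwise. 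The negations, together with $\ell(m+k+1) = n-1$, force $m \ge r$, $\ell < \sqrt n/(26 r^2)$, $\ell k < \sqrt n/(26 r)$, and hence $\ell m > n/2$ and $m > 13 r^2 \sqrt n$ for all sufficiently large~$n$.

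The main obstacle is Case~C, where the first-change bounds fall short: almost all initial mutants land in reservoirs, but the first-change extinction probability from a reservoir vertex is only $\Theta(1/n)$. Instead, I would prove a \emph{constant} lower bound on the extinction probability from a reservoir-vertex start, independent of~$n$ (specifically $\Omega(1/r^2)$), which trivially exceeds $\extinct$ for $n$ large. The argument is a timescale comparison: a lone reservoir mutant $v \in R_i$ is overwritten only by $v^*$ firing onto $v$, at rate $1/(W\ell m)$ per step, while any fixation path must eventually take over a clique, which requires the concatenation of an ``attempt'' event ($a_i$ becoming mutant and then firing into $K_i$, which occurs only at aggregate rate $r^2/(Wm)$ per step) with a clique takeover from a single mutant (whose failure probability is at least $1/r - 1/r^k$, by gambler's ruin on the bidirectional clique~$K_i$). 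Embedding this into a three-state chain with transitions to ``alive'', ``clique hit'' and ``exit'' gives extinction probability at least $1/(r^2 - r + 1)$ when $\ell = 1$.

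The subtle point for $\ell > 1$ is that during a \emph{failed} clique-takeover attempt, some clique mutant may fire onto~$v^*$ (probability $1/k$ per firing), and a briefly-mutant $v^*$ may then spawn mutants in \emph{other} reservoirs, giving the mutation fresh lives in other arms. To handle this I would couple the process to an auxiliary chain tracking the number of arms currently harbouring a reservoir mutant, and argue — using the enormous value of~$m$ in Case~C to keep the creation rate of such arms small relative to the per-arm death rate $1/(W\ell m)$ — that this chain has probability bounded below by a constant (independent of $\ell$) of going extinct. This constant then becomes the desired lower bound on $\rho$. The main delicacy is to verify that the coupling is ``fair'' across all $\ell$, $m$, $k$ in Case~C; this is where the specific constant $52$ in the statement gets pinned down.
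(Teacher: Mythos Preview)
Your case decomposition and the first-change bounds in Cases~A and~B are essentially the paper's approach (its Lemma~\ref{lem:megastar-lower-1} and Case~2 of Theorem~\ref{lem:megastar-lower-final}), just with a slightly different parameterisation of the thresholds. So far so good.

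The problem is Case~C. You correctly derive that the residual case forces $\ell < \sqrt{n}/(26r^2)$, but then you aim for a \emph{constant} lower bound on extinction probability from a reservoir start, via a three-state chain and a vaguely specified coupling for $\ell>1$. This is both unnecessary and not clearly achievable. Your own rate comparison already gives what is needed: the initial reservoir mutant dies at rate $1/(W\ell m)$ and a successful injection of a mutant into $K_1$ happens at aggregate rate $\Theta(r^2/(Wm))$, so the probability that $x_0$ dies before any mutant ever enters the clique is $\Theta(1/(r^2\ell))$. Combined with $\ell < \sqrt{n}/(26r^2)$, this immediately yields extinction probability $\gtrsim 1/\sqrt{n} > \extinct$. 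No clique-takeover analysis, no tracking of ``other arms'', no coupling is required. This is exactly what the paper does in Lemma~\ref{lem:megastar-lower-2}: it fixes a time window of length $m/(4r^2)$, shows $\vnc{v^*}{x_0}$ triggers in it with probability $\Theta(1/(r^2\ell))$, and shows that within that window each of the $O(m/r)$ spawns of $x_0$ onto $a_1$ is overwritten before $a_1$ fires into $K_1$ with probability $1 - O(r/m)$.

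Your proposed constant bound, by contrast, is left as a sketch at its hardest step (the coupling for general~$\ell$), and it is not clear it can be made to work uniformly over all $(k,\ell,m)$ in Case~C: once a mutant reaches $v^*$, the creation rate of new ``live arms'' is $r\ell m$ times the per-arm death rate, so the auxiliary chain you describe is not obviously subcritical. You should drop the attempt at a constant bound and simply use the $\Theta(1/(r^2\ell))$ estimate together with your bound on~$\ell$.
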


Theorem~\ref{thm:mainothermegastar} follows from  Theorem~\ref{lem:megastar-lower-final}, which is 
straightforward, and is proved in  
Section~\ref{sec:megaUB} (see~Page~\pageref{lem:megastar-lower-final}).
We conclude the paper with a digression which perhaps clarifies the literature.
It is stated, and seems to be commonly believed, that an evolutionary
graph (a weighted version of the Moran process --- see Section~\ref{sec:isothermal} for details) is ``isothermal'' if and only if
the fixation probability of a mutant placed uniformly at random  is $\rhoreg(r,n)$.
This belief seems to have come from an informal statement 
of the ``isothermal theorem'' in the main body of \cite{LHN2005:EvoDyn} (the formal statement in the supplementary
material of~\cite{LHN2005:EvoDyn} is correct,  however)
and it has spread, for example, as Theorem~1 of \cite{SRJ2012:Review}.
In the final section of our paper, we clear this up 
by proving the following proposition, which says that there is a counter-example. 

\newcommand{\stateexamplethm}{There is an evolutionary graph that is not isothermal, but has fixation probability $\rhoreg(r,n)$.}
\begin{proposition}
\label{prop:counterexample}
\stateexamplethm
\end{proposition}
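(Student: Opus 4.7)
The plan is to prove the proposition by exhibiting an explicit evolutionary graph $G$ and directly verifying that (a) $G$ is not isothermal, and (b) the fixation probability of a uniformly-placed random mutant on $G$ equals $\rhoreg(r,n)$, where $n=|V(G)|$.

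I would construct $G$ as a weighted directed graph on a small number of vertices (say $n=3$ or $n=4$). The strategy is to start from an isothermal graph (e.g., a weighted directed cycle or a small regular graph with uniform edge weights) and modify the edge weights in a direction that breaks the isothermal condition without changing the fixation probability. To be useful, the resulting graph should retain enough symmetry that the fixation probabilities from each configuration can be computed cleanly.

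For part~(a), it suffices to compute the temperatures $t(v)=\sum_{u}w_{uv}$ of every vertex~$v$ and exhibit two vertices whose temperatures differ. For part~(b), I would set up the Markov chain on mutant configurations of $G$ explicitly. Letting $\phi_S$ denote the fixation probability from configuration $S\subseteq V(G)$, the $\phi_S$ satisfy the linear system
\begin{equation*}
\phi_S \;=\; \sum_{S'\subseteq V(G)} \Pr(S\to S')\,\phi_{S'},\qquad \phi_\emptyset=0,\ \phi_{V(G)}=1,
\end{equation*}
which for small $n$ can be solved by hand. I would then compute the average $\bar\phi = \frac{1}{n}\sum_{v\in V(G)}\phi_{\{v\}}$ and verify, as a rational function of~$r$, that $\bar\phi=\rhoreg(r,n)$.

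The main obstacle will be the construction of $G$: requiring $\bar\phi=\rhoreg(r,n)$ as an identity of rational functions in $r$ imposes several polynomial equations on the edge weights, and locating a non-isothermal solution requires care. A natural tactic is to exploit an automorphism group of $G$ that permutes vertices non-trivially but not transitively (so the graph is not vertex-transitive, and hence may fail to be isothermal), which forces enough coincidences among the per-vertex fixation probabilities $\phi_{\{v\}}$ to make their average match $\rhoreg(r,n)$ while still allowing individual temperatures to differ. Starting from a highly symmetric isothermal base graph and then asymmetrically redistributing weight among the out-edges of a single vertex in a way that preserves one of its automorphisms is a good candidate construction to try first.
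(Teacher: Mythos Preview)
Your plan is correct and is essentially the approach the paper takes: it exhibits a $3$-vertex evolutionary graph with an automorphism swapping two of the vertices (so not vertex-transitive), verifies the temperatures are unequal, and solves the resulting small linear system for the $\phi_S$ to check that the average over singletons equals $\rhoreg(r,3)$. The only thing missing from your proposal is the explicit choice of weights and the accompanying computation; the paper's example has out-weights $(\tfrac12,\tfrac12)$ from the fixed vertex and $(\tfrac34,\tfrac14)$ from each of the two exchanged vertices, so note that the symmetry-breaking is done at the \emph{pair} of symmetric vertices rather than at a single vertex as you tentatively suggest.
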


The definitions needed to prove Proposition~\ref{prop:counterexample} are deferred to 
Section~\ref{sec:isothermal} (see Page~\pageref{sec:isothermal}).

\subsection{Proof techniques}

As we have seen, it is easy to study the Moran process on a $d$-regular graph
by considering the transition matrix of the corresponding Markov chain (which looks like a one-dimensional random walk).
Highly symmetric graphs such as undirected stars can also be handled in a straightforward matter, by directly
analysing the transition matrix.
Superstars, metafunnels and megastars are more complicated,  
and the number of mutant-configurations is exponential,
so instead we resort to dividing the process into phases, as is typical in the study
of randomised algorithms and stochastic processes.

An essential and common trick in the area of stochastic processes
(for example, in work on the voter model)
is  moving to continuous time.
Instead of directly studying the discrete-time Moran process, one could consider the following
natural continuous-time model which was studied in~\cite{DGRS}: Given a set of mutants at time~$t$,
each vertex waits an amount of time before reproducing. For each vertex, the period
of time is chosen according to the exponential distribution with parameter equal to the
vertex's fitness, independently of the other vertices. If the first vertex to reproduce is~$v$ at time 
$t + \tau$ then, as in the standard, discrete-time version of the process, 
one of its out-neighbours~$w$ is chosen uniformly at random, the individual at~$w$ is replaced by a copy of the 
one at~$v$, and the time at which $w$~will next reproduce is exponentially distributed with parameter given by its new fitness. The discrete-time process is recovered by taking the sequence of configurations each time a vertex reproduces.
Thus, the fixation probability of the discrete-time process is
\emph{exactly the same} as the fixation probability of the continuous-time process.
So moving to the continuous-time model causes no harm.
As~\cite{DGRS} explains, analysis can be easier in the continuous-time model
because certain natural stochastic domination techniques apply in the continuous-time setting
but not in the discrete-time setting.

It turns out that moving to the model of \cite{DGRS} does not suffice for our purposes.
A major problem in our proofs is dealing with dependencies.
In order to make this feasible, we instead study a continuous-time model 
(see ``the clock process'' in Section~\ref{sec:clock})
in which every
edge of the underlying graph~$G$ is equipped with two Poisson processes, one of which is called a 
\emph{mutant clock} and the other of which is called a \emph{non-mutant clock}.
The clock process is a stochastic process in which  all of these clocks
run independently.
The continuous-time Moran process (Definition~\ref{def:contMoran})
can be recovered as a function of the times at which these clocks trigger.

Having all of these clocks available still does not give us the flexibility that
we need.
We say that a vertex~$u$ ``spawns a mutant'' in the Moran process
if, at some point in time, $u$ is a mutant, and it is selected for reproduction.
 We wish to be able to
discuss events such as the event that the vertex $u$ does not spawn a mutant until it has 
already been a mutant 
for some  particular amount of time.
In order to express such events in a clean way, 
making all conditioning explicit, we 
define additional stochastic processes called ``star-clocks'' (see Section~\ref{sec:starclock}).
All of the star-clocks run independently in the star-clock process.

In Section~\ref{sec:coupled} we provide a coupling of the star-clock process with the Moran process.
The coupling is valid in the sense that the two projections are correct --- the projection 
onto the Moran process runs according to the correct distribution and so does the projection onto the
star-clock process.
The point of the coupling is that the different star-clocks can be viewed 
as having their own ``local'' times. In particular,
there is a star-clock 
 $\smc{(u,v)}$ which controls reproductions from vertex~$u$ onto vertex~$v$ 
 \emph{during the time that $u$ is a mutant}. 
 The coupling enables us to focus on relevant parts of the stochastic process,
 making all conditioning explicit.
 
 The processes that we have described so far are all that we need to
 derive our upper bound on the fixation probability of superstars (Section \ref{sec:superstar}). This is the easiest
 of our main results.
  
 Analysing the Moran process on metafunnels is more difficult. 
 By design, the initial mutant~$x_0$ is likely to be placed
 in the ``top of a funnel'' (in the set~$V_k)$.
   In the analysis, it is useful to be
 able to create independence by considering a ``strain'' of mutants 
 which contains all of the descendants of a particular mutant spawned by~$x_0$.
Like the Moran process itself, a
strain can be viewed as a stochastic process depending
on the triggering of the clocks.
In order to facilitate the proof, we define a general notion of ``mutant process''
(Section~\ref{sec:mutantprocess}) --- so the Moran process is one example of a mutant process,
and a strain is another. The analysis of the Moran process on metafunnels involves both of
these and also a third mutant process which is essentially the bottom level of a strain
(called its head). Strains and heads-of-strains share some common properties, and they are analysed
together as ``colonies'' in Section~\ref{sec:colony}.
The analysis of the metafunnel is the technically most difficult of our results.

Fortunately, the analysis of the megastar in Section~\ref{sec:megastar}
does not require three different types of mutant processes --- it only requires one.
The process that is considered is not the Moran process itself. Instead, 
it is a modification of the Moran process called the megastar process.
The megastar process is similar to the Moran process except that the feeder vertices are forced
to be non-mutants,  except when their corresponding cliques are completely full or completely empty.
It is easy to show (see the proof of Theorem~\ref{thm:megastar}) 
that the fixation probability of the  Moran process is at least as high as the fixation probability of the  megastar process.
However, the megastar process is somewhat easier to analyse because the cliques evolve
somewhat independently.  The proof of the key lemma (Lemma~\ref{lem:megastar-fixates}) is
fairly long but it is not conceptually difficult. The point is to prove that, with high probability, the cliques fill up and
cause fixation.

\subsection{Comparison with previous work}
\label{sec:intro:previous-work}
\label{sec:JLH}

The Moran process is similar to a discrete version of
 directed percolation known as the contact process. There is a vast literature
 (e.g.,   \cite{Lig1999:IntSys,Durrett:NACS,Shah,DS}) on the contact process and other related infection processes
 such as the voter model and susceptible-infected-susceptible (SIS) epidemic models. 
  Often, the questions that are studied in these models are different from
 the question that we study here. For example, in voter systems~\cite{DS}
 the two states (mutant/non-mutant) are often symmetric (similar to our $r=1$ case)
 and the models are often studied on infinite graphs where the question
 is whether the process absorbs or not (both kinds of absorption, fixation 
 and extinction, are therefore called ``fixation'' in some of this work).
 The particular details
 of the Moran   process  
 are very important for us because the details of the algorithm determine the long-term behaviour.
 For example, unlike the Moran process, 
 in the contact process~\cite{Grimmett}, the rate at which a node becomes a non-mutant is
 typically taken to be~$1$, whereas the rate at which a node becomes a mutant is 
 proportional to the number of mutant neighbours. 
In the discrete-time versions of many commonly-studied models,
 a node is chosen randomly at each step for replacement, rather than (as in the Moran process)
 for reproduction.
  In any case, the important point for us is that the details of the algorithm are important --- results do not carry over
  from one algorithm to the other. 
  Therefore,
  we concentrate in this section on 
 previous work  about calculating the fixation probability
 of the Moran process itself.

Lieberman et al.~\cite{LHN2005:EvoDyn} studied the fixation probability of the
Moran process and introduced superstars and metafunnels.
Intuitively, a superstar is a good amplifier because
(as long as $m$ is sufficiently large) the initial mutation is likely to be placed in 
a reservoir and (as long as $\ell$ is sufficiently large) this is unlikely to
be killed quickly by the centre vertex.
Moreover, the paths of a superstar are good for amplifying the selective advantage of mutants
because,  after the infection spreads from a reservoir vertex to the beginning of
a path, it is likely to ``pick up momentum'' as it travels down the path,
arriving at the centre vertex as a chain of $\Theta(k)$ mutants
(which, taken together,  are more likely to cause the centre to spread the infection
than a single mutant arriving at the centre would be).
As we have seen (Theorems~\ref{thm:maintwo} and \ref{thm:mainsuperstar})
megastars are provably better for amplification than superstars.
The reason for this is that a clique is substantially better than a path
at doing this ``amplification''. Nevertheless, the amplifying properties of superstars
strongly influenced our decision to study megastars.

Lieberman et al.~\cite[Equation (2)]{LHN2005:EvoDyn} claimed\footnote{The reader who consults \cite{LHN2005:EvoDyn} might
wonder why ``$k$'' as written in Equation (2) of \cite{LHN2005:EvoDyn} has become $k+2$ here.
The reason is just that 
we use a slightly   different
parameterisation from that of \cite{LHN2005:EvoDyn}.
To allow appropriate comparison, we describe all previous work using the parameterisation that
we give in Section~\ref{sec:intro:graphs}.}
that for sufficiently large~$n$,
the fixation probability of a superstar with parameter~$k$
tends to  $1-r^{-(k+2)}$,
 and that ``similar results hold for the funnel and metafunnel''.
They provided a heuristic sketch-proof for the superstar, but not for the funnel or metafunnel.
Hauert~\cite[Equation (5)]{Hau2008:EvoDyn} claims specifically that
the fixation probability of funnels tends to~$1-r^{-(k+1)}$.
As far as we know, no heuristic arguments have been given for funnels or metafunnels.

In any event, D\'iaz, Goldberg, Mertzios, Richerby, Serna and
Spirakis~\cite{DGMRSS2013:Superstars} showed that the $1-r^{-(k+2)}$
claim for superstars is incorrect for the case $k=3$. In particular, for this case
they showed that the fixation probability is at most
$1 - \frac{r+1}{2r^5 + r+1}$, which is less than the originally
claimed value of $1 - r^{-5}$ for all $r\geq 1.42$.

Subsequently, Jamieson-Lane and Hauert~\cite[Equation (5)]{JLH2015:Superstars} made a
more detailed   but still heuristic\footnote{
A full discussion of the argument of Jamieson-Lane and Hauert 
(and of the obstacles to making it a rigorous proof)
are discussed in Section~\ref{sec:extra}. This section is not necessary for the rest of the paper.} 
analysis of the fixation probability
of superstars.  They claim
that for   superstars with parameter $k$ 
and with 
$\ell=m$, the fixation probability $\rho_k$ has the following bounds for fixed $r>1$,  \begin{equation}
\label{eq:JLH}
    1 -\frac{1}{r^4(k-1)(1-\tfrac1r)^2} -o(1) \leq \rho_k
        \leq 1 - \frac{1}{r^4(k - 1)}+o(1),
\end{equation} 
 where the $o(1)$ terms tend to~$0$ as $\ell\rightarrow \infty$.
They claim that their bounds are a good approximation as   long as
$k \ll \ell = m \sim \sqrt{n}$.
It is not clear exactly what ``$\ll$'' means in this context. 
Certainly there are parameter regimes
where $k = o(\ell)$ and $\ell=m\sim \sqrt{n}$
but nevertheless the
extinction probability is much larger than  the proposed upper bound  
$1/(r^4(k-1){(1-1/r)}^2)$ from~\eqref{eq:JLH}.
For example,  suppose that $\ell=m=k^{3/2}$.
In this case (see Lemma~\ref{lem:star-lower-die-instant}),
the extinction probability is at least 
$$\frac{k}{2r(m+k)}  = \frac{1}{ 2r(k^{1/2}+1)},$$
which  is larger than   
$1/(r^4(k-1){(1-1/r)}^2)$ for all sufficiently large~$k$.
Nevertheless, the bounds proposed by Jamieson-Lane and Hauert   \eqref{eq:JLH}
seem to be close to the truth when $k$ is very small compared to $\ell$ and $m$.
 
 Our Corollary~\ref{cor:superstark} 
identifies a wide class of parameters 
for which the extinction probability is provably
at least $1/(1470 r^4 k)$.  
This is weaker than the suggested bound of  Jamieson-Lane and Hauert
by a factor of $1470$.  
This constant factor is explained by the
fact that our rigorous proof needs to show concentration of all random variables.
We use lots of Chernoff bounds and other
bounds on probabilities. In  writing  the proof, we optimised readability rather than
optimising our constants, so our constants can presumably be improved.    

There is   recent work on other related aspects of the Moran process.
For example, \cite{Mertz1, Mertz2} give fixation probability bounds
 on connected \emph{undirected} graphs.  
   \cite{ACN2015:amplifiers} studies 
 amplification with respect to adversarial or ``temperature-based'' placement of the initial mutation,
 in which the ``temperature'' of a vertex is proportional to the sum of all incoming edge weights.
 Also, \cite{Mertz2} considers the extent to which the number of ``good starts'' for fixation can be bounded.

 \subsection{Outline of the paper} 
 
\begin{itemize} 
\item Section~\ref{sec:defprelim}, starting on Page~\pageref{sec:defprelim},
defines some notation and states some well-known probabilistic bounds
(Chernoff bounds and analysis of gambler's ruin) which will be used in the proof.
\item Section~\ref{sec:processes}, starting on Page~\pageref{sec:processes},
defines several stochastic processes which we use to study the Moran process.
This section is important. It is impossible to read any of the proofs without understanding
these processes.
\item Section~\ref{sec:superstar}, starting on Page~\pageref{sec:superstar},
gives an upper bound on the fixation probability of superstars.
The main result of the section is Theorem~\ref{thm:star-lower},
which immediately implies Theorem~\ref{thm:mainsuperstar}.
This is the technically easiest of our main proofs, so we present it first. 

\item Section~\ref{sec:metafunnel}, starting on Page~\pageref{sec:metafunnel},
gives a stronger upper bound on the fixation probability of metafunnels
(and hence of funnels). The main result of the section is Theorem~\ref{thm:funnel-upper},
which immediately implies 
Theorems~\ref{thm:mainfirstmetafunnel} and~\ref{thm:mainmetafunnel}. The proof of 
Theorem~\ref{thm:funnel-upper} has high-level similarity to the proof of Theorem~\ref{thm:star-lower}, but
it is much more difficult. Dependencies cause complications, and we must analyse several
mutant processes to deal with these.

\item Section~\ref{sec:megastar}, starting on Page~\pageref{sec:megastar},
establishes the existence of an infinite family of megastars which is strongly amplifying.
The main theorem is Theorem~\ref{thm:megastar}, which immediately implies
Theorem~\ref{thm:maintwo} and hence Theorem~\ref{thm:mainone}. In order to deal with dependencies, we study a
mutant process called a 
``megastar process''.
We show in the proof of Theorem~\ref{thm:megastar}
that this process is dominated by the Moran process. Thus, the main work of the section
is to prove the key lemma, Lemma~\ref{lem:megastar-fixates}, which analyses the megastar process.

\item Section~\ref{sec:megaUB}, starting on Page~\pageref{sec:megaUB}, 
gives an upper bound showing that the analysis in Section~\ref{sec:megastar} is fairly tight.
The main theorem, Theorem~\ref{lem:megastar-lower-final}, is straightforward and it immediately implies
Theorem~\ref{thm:mainothermegastar}.

\item Section~\ref{sec:isothermal}, starting on Page~\pageref{sec:isothermal},
gives a simple example of an evolutionary graph that is not isothermal but has fixation probability $\rhoreg(r,n)$ (Proposition~\ref{prop:counterexample}),
clearing up a misconception in the literature. 

\item Section~\ref{sec:extra}, starting on Page~\pageref{sec:extra},
discusses earlier heuristic analysis of superstars. 
\end{itemize}

\section{Definitions and preliminaries}
\label{sec:defprelim} 

\subsection{Notation}
 
We use $N^-(v)$ to refer to the set of in-neighbours of a vertex~$v$
and $N^+(v)$ to refer to the set of out-neighbours of~$v$.
We use $d^-(v) = |N^-(v)|$ and $d^+(v) = |N^+(v)|$.

We refer to the Lebesgue measure of a (measurable) subset $S \subseteq \R$ as the \emph{measure} of that set, and denote it by $\len{S}$.

We use base-$e$ for logarithms unless the base is given explicitly.

We write $\Zzero = \{0, 1, 2, \dots\}$, $\Zone = \{1, 2, \dots\}$, and $[n] = \{1, 2, \dots, n\}$.
 
If $b < a$, we consider the interval $[a,b]$ to be well-defined but empty. Likewise if $b \le a$, we consider the intervals $(a,b)$, $(a,b]$ and $[a,b)$ to be well-defined but empty.
We define empty sums, products, unions etc.\ to be the identities of the corresponding operations. For example, $\prod_{i=1}^0 i = 1$ and $\bigcup_{i=1}^0 A_i = \emptyset$.

  Throughout the paper, we use lower case $t$'s to denote fixed times and upper case $T$'s to denote stopping times.

\subsection{Chernoff bounds}
 
We often use the following simple bound 
which applies to any real number
$x\in[0,1]$.
\begin{equation}
\label{eq:ebounds}
x/2 \leq 1-e^{-x} \leq x.
\end{equation}
We will require the following well-known Chernoff bounds. The first appears as Theorem~5.4 of~\cite{MU}.
  
\begin{lemma}\label{lem:pchernoff} 
Let $Y$ be a Poisson random variable with parameter~$\rho\geq 0$.
 If $y > \rho$ and $z < \rho$, then
\[\Pr(Y \ge y) \le \frac{e^{-\rho}(e\rho)^y}{y^y}\qquad\text{and}\qquad
\Pr(Y \le z) \le \frac{e^{-\rho}(e\rho)^z}{z^z}\,.\]
\end{lemma}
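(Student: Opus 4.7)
The plan is to apply the standard exponential Markov / Chernoff argument, using the fact that the moment generating function of a Poisson random variable has a clean closed form. First I would recall (via a one-line series computation from the Poisson mass function) that for $Y\sim\mathrm{Poisson}(\rho)$ and any real $t$, $\mathbb{E}[e^{tY}] = \exp(\rho(e^t-1))$. The case $\rho=0$ is degenerate (both bounds hold trivially once one interprets $0^0=1$), so I will assume $\rho>0$.

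For the upper tail, I would fix $t>0$ and write $\Pr(Y\ge y) = \Pr(e^{tY}\ge e^{ty})$. Applying Markov's inequality gives
\[
\Pr(Y\ge y) \;\le\; e^{-ty}\,\mathbb{E}[e^{tY}] \;=\; \exp\bigl(\rho(e^t-1) - ty\bigr).
\]
I would then optimise the exponent over $t>0$: differentiating gives the critical point $\rho e^t = y$, i.e.\ $t=\log(y/\rho)$, which is strictly positive since $y>\rho$. Substituting back yields an exponent of $y-\rho - y\log(y/\rho)$, and exponentiating this rearranges to exactly $e^{-\rho}(e\rho)^y/y^y$, as claimed.

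For the lower tail, I would run the symmetric argument with $t<0$, so that $\{Y\le z\}=\{e^{tY}\ge e^{tz}\}$. Markov's inequality again gives $\Pr(Y\le z)\le\exp(\rho(e^t-1)-tz)$, and minimising over $t<0$ produces the critical point $t=\log(z/\rho)$, which is strictly negative because $z<\rho$. The same algebraic simplification then yields $e^{-\rho}(e\rho)^z/z^z$.

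There is really no obstacle here: both inequalities are obtained by the textbook Chernoff recipe, and the only nonroutine step is verifying that the optimiser lies in the correct half-line, which is exactly what the hypotheses $y>\rho$ and $z<\rho$ guarantee. The bound is often attributed to the standard references on randomised algorithms, and since the lemma statement already cites \cite{MU} for the upper-tail version, I would simply note that the lower-tail version follows by the identical argument with the sign of $t$ reversed.
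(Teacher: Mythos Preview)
Your argument is correct and is exactly the standard exponential-Markov/Chernoff derivation. The paper does not actually prove this lemma: it simply records it as Theorem~5.4 of~\cite{MU}, so your write-up supplies the textbook proof that the paper defers to the reference.
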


\begin{corollary}\label{cor:pchernoff}\label{cor:pchernoff-2}
Let $Y$ be a Poisson random variable with parameter~$\rho\geq 0$.
Then $\Pr(Y \ge 2\rho) \le e^{-\rho/3}$ and $\Pr(Y \le 2\rho/3) \le e^{-\rho/16}$.
\end{corollary}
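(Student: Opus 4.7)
The plan is to apply Lemma~\ref{lem:pchernoff} directly with $y=2\rho$ for the upper tail bound and $z=2\rho/3$ for the lower tail bound, and then reduce the resulting expression to the claimed exponential form by a short calculation. I would first handle the trivial boundary case $\rho=0$ separately: then $Y=0$ almost surely, so $\Pr(Y\ge 0)=1=e^{-\rho/3}$ and $\Pr(Y\le 0)=1=e^{-\rho/16}$, and both inequalities hold. For the remainder of the proof I may assume $\rho>0$, so that $y=2\rho>\rho$ and $z=2\rho/3<\rho$ and Lemma~\ref{lem:pchernoff} applies.

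For the upper tail, substituting $y=2\rho$ into the first inequality of Lemma~\ref{lem:pchernoff} gives
\[\Pr(Y\ge 2\rho)\le \frac{e^{-\rho}(e\rho)^{2\rho}}{(2\rho)^{2\rho}}=\left(\frac{e}{4}\right)^{\rho}.\]
So it suffices to verify $(e/4)^{\rho}\le e^{-\rho/3}$, i.e.\ $\log(e/4)\le-1/3$. This reduces to $\log 4\ge 4/3$, which is straightforward since $\log 4=2\log 2>2(0.69)>4/3$.

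For the lower tail, substituting $z=2\rho/3$ into the second inequality of Lemma~\ref{lem:pchernoff} gives
\[\Pr(Y\le 2\rho/3)\le e^{-\rho}\left(\frac{3e}{2}\right)^{2\rho/3}=e^{-\rho/3}\left(\frac{3}{2}\right)^{2\rho/3}.\]
Hence it suffices to show $(3/2)^{2\rho/3}\le e^{\rho/3-\rho/16}=e^{13\rho/48}$, i.e.\ $\log(3/2)\le 13/32$. This is the one place where numerical care is needed, since the constants are tight: $\log(3/2)=\log 3-\log 2\approx 0.4055$ while $13/32=0.40625$, so the inequality holds (one can justify it rigorously from standard estimates for $\log 2$ and $\log 3$).

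The only ``obstacle'' is just this last numerical check, which is a consequence of the mildly loose constant $16$ chosen in the statement; both bounds are direct specialisations of Lemma~\ref{lem:pchernoff} after the substitutions above.
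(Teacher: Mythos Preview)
Your proof is correct and takes essentially the same approach as the paper: both apply Lemma~\ref{lem:pchernoff} with $y=2\rho$ and $z=2\rho/3$ and then verify the numerical inequalities $\log 4\ge 4/3$ and $\log(3/2)\le 13/32$. You are slightly more careful in treating the boundary case $\rho=0$ separately (where the hypotheses $y>\rho$, $z<\rho$ of Lemma~\ref{lem:pchernoff} fail), which the paper glosses over.
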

\begin{proof}
Lemma~\ref{lem:pchernoff} applied with $y=2\rho$ and $z=2\rho/3$ implies that
\begin{gather*}
\Pr(Y \ge 2\rho) \le \frac{e^{-\rho}(e\rho)^{2\rho}}{(2\rho)^{2\rho}}  = e^{-\rho}\left(\frac{e^2}{4}\right)^\rho = e^{(1 - \log 4)\rho} \le e^{-\rho/3}\,, \\
\Pr(Y \le 2\rho/3) \le \frac{e^{-\rho}(e\rho)^z}{z^z} = e^{-\rho} \left(\frac{3e}{2}\right)^{2\rho/3} = e^{-(1 - 2/3 - 2\log(3/2)/3)\rho} \le e^{-\rho/16}\,.
\qedhere
\end{gather*}
\end{proof}

\begin{corollary}\label{cor:pchernoff-3}
Let $Y$ be a Poisson random variable with parameter $\rho > 0$. If $y \geq 8\rho$, then
\[\Pr(Y \ge y) \le e^{-y}.\]
\end{corollary}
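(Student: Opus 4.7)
The plan is a direct application of Lemma~\ref{lem:pchernoff}. Since $\rho > 0$ and $y \geq 8\rho$, we certainly have $y > \rho$, so the first inequality of Lemma~\ref{lem:pchernoff} applies and gives
\[
\Pr(Y \geq y) \;\leq\; \frac{e^{-\rho}(e\rho)^{y}}{y^{y}} \;=\; e^{-\rho}\left(\frac{e\rho}{y}\right)^{\!y}.
\]

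From here the proof is just arithmetic. First, using $e^{-\rho} \leq 1$ and $\rho/y \leq 1/8$ (which is exactly the hypothesis $y \geq 8\rho$), the right-hand side is at most $(e/8)^{y}$. Second, since $e^{2} < 8$ (numerically, $e^{2} \approx 7.389$), we have $e/8 < 1/e$, so $(e/8)^{y} \leq e^{-y}$. Combining these two observations yields $\Pr(Y \geq y) \leq e^{-y}$, as required. There is no genuine obstacle here; the only thing to notice is that the factor $8$ in the hypothesis is chosen precisely so that $e \cdot 8^{-1} \leq e^{-1}$, which is what makes the final step go through.
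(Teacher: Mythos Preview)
Your proof is correct and essentially identical to the paper's: both apply Lemma~\ref{lem:pchernoff}, drop the $e^{-\rho}$ factor, and use $8 > e^2$ (equivalently, $e/8 < e^{-1}$) to conclude $(e\rho/y)^y \le e^{-y}$. The paper just phrases the key numerical fact as ``$y > e^2\rho$'' rather than ``$e^2 < 8$'', but the arithmetic is the same.
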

\begin{proof}
Note that $y > e^2 \rho$. Thus by Lemma~\ref{lem:pchernoff}, we have
\[\Pr(Y \ge y) \le \frac{e^{-\rho}(e\rho)^y}{y^y} \le \left(\frac{e\rho}{y}\right)^y \le e^{-y}.\qedhere\]
\end{proof}

\begin{corollary}\label{cor:expsum}
Let $s$ be a positive integer and let $Y$ be the sum of $s$ i.i.d.\@ exponential random variables, each with parameter $\lambda$.
Then, 
for any $j\geq 3s/(2\lambda)$,
$\Pr(Y < j) \geq 1-e^{-\lambda j/16}$.
\end{corollary}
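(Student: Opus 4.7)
The plan is to exploit the classical duality between the Gamma distribution and the Poisson distribution, which allows the tail bound for $Y$ (a sum of exponentials) to be recast as a lower-tail bound for a suitable Poisson random variable, where Corollary~\ref{cor:pchernoff-2} can then be applied directly.

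Concretely, I would first set $\rho = \lambda j$ and let $N$ be a Poisson random variable with parameter $\rho$. The standard Poisson process interpretation gives that if we run a rate-$\lambda$ Poisson process on $[0,\infty)$, then $Y$ is distributed as the time of the $s$-th arrival, while $N$ is distributed as the number of arrivals in $[0,j]$. The event $\{Y > j\}$ (the $s$-th arrival occurs after time $j$) coincides with the event $\{N < s\}$ (fewer than $s$ arrivals in $[0,j]$). Since $Y$ is a continuous random variable, $\Pr(Y \geq j) = \Pr(Y > j) = \Pr(N \leq s-1)$, and hence
\[
    \Pr(Y < j) \;=\; 1 - \Pr(N \leq s-1) \;\geq\; 1 - \Pr(N \leq s).
\]

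Next, the hypothesis $j \geq 3s/(2\lambda)$ rearranges to $s \leq 2\lambda j/3 = 2\rho/3$. Because $N$ is integer-valued and $s \leq 2\rho/3$, we have the event inclusion $\{N \leq s\} \subseteq \{N \leq 2\rho/3\}$, so Corollary~\ref{cor:pchernoff-2} yields
\[
    \Pr(N \leq s) \;\leq\; \Pr(N \leq 2\rho/3) \;\leq\; e^{-\rho/16} \;=\; e^{-\lambda j/16}.
\]
Combining this with the previous inequality gives $\Pr(Y < j) \geq 1 - e^{-\lambda j/16}$, as required.

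There is no real obstacle here; the only thing to be careful about is invoking the Poisson–Gamma duality cleanly (making sure the direction of the inequality and the shift between $s-1$ and $s$ do not cause issues), and checking that the hypothesis $j \geq 3s/(2\lambda)$ is exactly what is needed to put $s$ into the regime $z = 2\rho/3$ where Corollary~\ref{cor:pchernoff-2} applies.
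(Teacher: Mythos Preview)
Your proposal is correct and follows essentially the same approach as the paper's proof: both use the Poisson process duality to rewrite $\Pr(Y < j)$ as $\Pr(N \geq s)$ for a Poisson variable $N$ with mean $\lambda j$, then invoke Corollary~\ref{cor:pchernoff-2} via the observation that $s \le 2\lambda j/3$. The paper's version is slightly terser (it does not explicitly route through $\Pr(N \le s-1) \le \Pr(N \le s)$), but the argument is the same.
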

\begin{proof}
First, note that 
$\Pr(Y<j) = \Pr(Y \leq j)$ since $\Pr(Y=j)=0$.
Then   $\Pr(Y\leq j)$    
is equal to the probability that   a Poisson process
with parameter~$\lambda$  triggers at least~$s$
times in  the interval $[0,j]$. This is the 
  same as the probability
 that a Poisson random variable
with parameter $\lambda j$ is at least~$s$.
Since $s \leq 2 \lambda j/3$, we can now use Corollary~\ref{cor:pchernoff-2}.
\end{proof}  
  
The following is Corollary~2.4 of~\cite{JLR}.

\begin{lemma}\label{lem:bchernoff}
Suppose that $Y$ follows the binomial distribution with $n$ Bernoulli trials, each with success probability~$p \in 
(0,1)$ and that $c>1$.  Then, for all $y \ge cnp$, $\Pr(Y \ge y) \le e^{-\varphi(c)y}$, where 
$\varphi(c)=\log{c}-1+1/c$. Note that $\varphi(2)>1/6$ and $\varphi(7)>1$.
\end{lemma}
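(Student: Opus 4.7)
The plan is to derive the bound via the standard exponential-moment (Cram\'er--Chernoff) method, then extend from the equality case $y=cnp$ to all $y\geq cnp$ using monotonicity of $\varphi$.

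First I would apply Markov's inequality to $e^{tY}$ for a parameter $t>0$ to be chosen. Using independence of the Bernoulli trials, the moment generating function factorises as $\E[e^{tY}] = (1-p+pe^{t})^{n}$, and the elementary inequality $1+x\le e^{x}$ applied with $x=p(e^{t}-1)$ gives
\[
\Pr(Y\geq y) \;\le\; e^{-ty}\,\E[e^{tY}] \;\le\; \exp\bigl(-ty + np(e^{t}-1)\bigr).
\]
Writing $y=c' n p$ with $c'\ge c>1$, I would then set $t=\log c'$, which is positive and balances the two terms. Substituting gives
\[
\Pr(Y\ge y) \;\le\; \exp\bigl(-np\,(c'\log c' - c' + 1)\bigr) \;=\; \exp\bigl(-y\,(\log c' - 1 + 1/c')\bigr) \;=\; e^{-\varphi(c')\,y}.
\]

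Next I would show that $\varphi$ is increasing on $(1,\infty)$, so that $\varphi(c')\ge\varphi(c)$ whenever $c'\ge c>1$; this is immediate from $\varphi'(c) = 1/c - 1/c^{2} = (c-1)/c^{2} > 0$. Combined with the previous display, this upgrades the bound from the single value $y=cnp$ to every $y\ge cnp$, yielding $\Pr(Y\ge y)\le e^{-\varphi(c) y}$ as claimed.

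Finally, I would verify the two numerical remarks by direct calculation: $\varphi(2) = \log 2 - 1/2 \approx 0.193 > 1/6$, and $\varphi(7) = \log 7 - 6/7 \approx 1.089 > 1$. No step here is a genuine obstacle: the moment-generating-function computation, the optimal choice $t=\log c'$, and the monotonicity of $\varphi$ are all routine. If anything, the only subtlety is remembering to exploit the monotonicity of $\varphi$ so that the bound holds uniformly for $y\ge cnp$ rather than just at $y=cnp$; without this observation one would only get $\Pr(Y\ge y)\le e^{-\varphi(y/(np))\,y}$, which is tighter but less convenient to apply in the form the paper uses.
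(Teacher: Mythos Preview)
Your proof is correct. The paper does not actually prove this lemma: it simply cites it as Corollary~2.4 of Janson, {\L}uczak and Ruci\'nski's \emph{Random Graphs}. Your argument is the standard Cram\'er--Chernoff derivation one would find behind that reference, including the monotonicity step needed to pass from $y=cnp$ to all $y\ge cnp$, so there is nothing to add.
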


We define the \emph{geometric distribution} as follows. Given a biased coin which comes up heads with probability $p>0$, imagine tossing it until it comes up heads. Then the total number of tosses which came up tails follows the geometric distribution with parameter $p$.

\begin{lemma}\label{lem:geo-chernoff}
Let $Y_1, \dots, Y_t$ be a sequence of i.i.d.\ geometric variables 
with parameter $p \ge 13/14$. Then
\[\Pr\big(Y_1 + \dots + Y_t \ge 14t(1-p)\big) \le e^{-14t(1-p)}\,.\]
\end{lemma}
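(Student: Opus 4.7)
The plan is to convert a tail bound on a sum of geometric random variables into an upper-tail bound on a binomial random variable, and then invoke Lemma~\ref{lem:bchernoff}. Recall that if $Y_1,\dots,Y_t$ are i.i.d.\ geometric with parameter $p$ (counting failures before each success), then $S := Y_1 + \cdots + Y_t$ has the same distribution as the number of failures observed before the $t$-th success in an infinite sequence of independent Bernoulli$(p)$ trials. Equivalently, writing $y = 14t(1-p)$ and $n = \lceil y \rceil + t - 1$, the event $\{S \ge y\}$ coincides with the event that the first $n$ Bernoulli trials contain fewer than $t$ successes, i.e.\ at least $n - t + 1$ failures. So if $Z \sim \mathrm{Bin}(n,1-p)$ counts the failures in these $n$ trials, then $\Pr(S \ge y) \le \Pr(Z \ge n - t + 1)$.

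Next I would use the assumption $p \ge 13/14$ to control $n$ and $E[Z]$. Since $1 - p \le 1/14$, we have $y = 14t(1-p) \le t$, and therefore $n \le y + t \le 2t$. Consequently $E[Z] = n(1-p) \le 2t(1-p)$, and also $n - t + 1 \ge \lceil y \rceil \ge y$, which gives
\[\Pr(Z \ge n-t+1) \;\le\; \Pr\!\big(Z \ge 14 t (1-p)\big).\]

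Now I would apply Lemma~\ref{lem:bchernoff} to $Z$ with the choice $c = 7$. Since $n(1-p) \le 2t(1-p)$, we have $14t(1-p) \ge 7 \cdot n(1-p)$, so the hypothesis of the lemma is satisfied. The lemma then yields
\[\Pr\!\big(Z \ge 14t(1-p)\big) \;\le\; e^{-\varphi(7)\cdot 14t(1-p)} \;\le\; e^{-14t(1-p)},\]
where the last step uses $\varphi(7) > 1$, as stated in Lemma~\ref{lem:bchernoff}. Combining the inequalities gives the desired bound on $\Pr(S \ge 14t(1-p))$.

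There is no real conceptual obstacle here; the proof is essentially bookkeeping. The one place requiring some care is ensuring that the constants work out so that a $c \ge 7$ is available in Lemma~\ref{lem:bchernoff}, which is precisely why the hypothesis $p \ge 13/14$ is built into the statement: it forces $n \le 2t$, which together with the threshold $14t(1-p)$ produces the clean factor $14$ in the exponent via $\varphi(7) > 1$.
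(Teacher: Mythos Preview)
Your proof is correct and follows essentially the same approach as the paper: both translate the geometric sum into a binomial tail via the coin-flipping interpretation, use $p \ge 13/14$ to ensure the number of relevant trials is at most $2t$, and then apply Lemma~\ref{lem:bchernoff} with $c=7$ (so that $\varphi(7)>1$ delivers the exponent $14t(1-p)$). The only cosmetic difference is that the paper bounds the tail for a $\mathrm{Bin}(2t,1-p)$ variable directly and then uses $2t \ge t + \lceil 14t(1-p)\rceil - 1$, whereas you work with $\mathrm{Bin}(n,1-p)$ and invoke $n \le 2t$ inside the Chernoff application; these are equivalent.
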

\begin{proof}
Consider a series of independent coin tosses, each with probability $p$ of coming up heads. Then the probability that $Y_1 + \dots + Y_t \ge 14t(1-p)$ is exactly the probability that at least $\ceil{14t(1-p)}$ of the first $t+\ceil{14t(1-p)}-1$ coin tosses come up tails. By Lemma~\ref{lem:bchernoff}, the probability that at least $\ceil{14t(1-p)}$ of the first $2t$ coin tosses come up tails is at most $e^{-14t(1-p)}$, and $2t \ge t + \ceil{14t(1-p)}-1$, so the result follows.
\end{proof}

\subsection{Gambler's ruin}\label{sec:gambler}

The following analysis of the classical gambler's ruin problem is well-known.
See, for example, \cite[Chapter~XIV]{Fel1968:Probability}.

\begin{lemma}[Classical gambler's ruin]\label{lem:gambler}
Consider a random walk on $\mathbb{Z}_{\geq 0}$ 
that absorbs at~$0$ and~$a$ (for some positive integer~$a$), starts at $z\in\{0,\dots,a\}$
and from each state in $\{1,\ldots,a-1\}$
has probability $p\neq 1/2$ of  increasing (by~$1$) and
probability $q=1-p$ of  decreasing (by~$1$).
\begin{enumerate}[(i)]
\item \label{ruin:one} The probability of   reaching state $a$ is 
\[\frac{(q/p)^{z}-1}{(q/p)^a-1}\,.\]
\item \label{ruin:2} The expected number of transitions  until absorption is
\[\frac{z}{q-p} - \left(\frac{a}{q-p} \right)\left( \frac{1-(q/p)^{z}}{1-(q/p)^a}\right)\,.\]
 \end{enumerate}
\end{lemma}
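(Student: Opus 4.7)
The plan is to proceed via the standard first-step analysis, deriving linear recurrences for each of the two quantities and solving them with the given boundary conditions. Since the walk is Markovian and the transitions are homogeneous in the interior $\{1,\dots,a-1\}$, I expect this to be largely a routine calculation; the result is classical (see~\cite{Fel1968:Probability}), so the aim is just to give a clean self-contained derivation.

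For part~\eqref{ruin:one}, let $P_z$ denote the probability of reaching state~$a$ starting from~$z$. Conditioning on the first step gives $P_z = p P_{z+1} + q P_{z-1}$ for $z \in \{1,\dots,a-1\}$, together with $P_0=0$ and $P_a=1$. I would rearrange this as $P_{z+1}-P_z = (q/p)(P_z - P_{z-1})$, iterate to get $P_{z+1}-P_z = (q/p)^z P_1$, and telescope to obtain
\[
P_z = P_1 \sum_{i=0}^{z-1} (q/p)^i = P_1 \cdot \frac{(q/p)^z - 1}{(q/p)-1}.
\]
Using $P_a=1$ to solve for $P_1$ and substituting then yields the stated formula. (The assumption $p\neq 1/2$, equivalently $q/p\neq 1$, is exactly what keeps these geometric sums nondegenerate.)

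For part~\eqref{ruin:2}, let $E_z$ be the expected number of transitions to absorption from~$z$. First-step analysis gives the inhomogeneous recurrence $E_z = 1 + p E_{z+1} + q E_{z-1}$ for $z\in\{1,\dots,a-1\}$, with $E_0=E_a=0$. I would find a particular solution by trying $E_z = cz$: substitution forces $c(p-q) = -1$, so $c = 1/(q-p)$. The general solution is then
\[
E_z = \frac{z}{q-p} + A + B\Bigl(\frac{q}{p}\Bigr)^z,
\]
and imposing $E_0=E_a=0$ determines $A,B$ uniquely. A short calculation (solve $A+B=0$ and $a/(q-p) + A + B(q/p)^a = 0$) produces the claimed closed form.

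The only mild obstacle is bookkeeping with signs when $p<q$ versus $p>q$; but since both formulas are stated in terms of the ratio $q/p$ and the difference $q-p$, the algebra is valid in either regime as long as $p\ne 1/2$. No step requires insight beyond the standard linear-recurrence toolkit, so the proof should be short.
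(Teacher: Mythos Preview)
Your proposal is correct and gives the standard derivation via first-step analysis. The paper does not actually prove this lemma: it simply cites it as well-known, referring to Feller~\cite{Fel1968:Probability}, so your writeup is strictly more detailed than what appears there.
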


\begin{corollary}[Gambler's ruin inequalities]\label{cor:gambler}
Consider a random walk on $\mathbb{Z}_{\geq 0}$ 
that absorbs at~$0$ and~$a$ (for some positive integer~$a$), starts at $z\in[0,a]$
and from each state in $\{1,\ldots,a-1\}$
has probability $p\neq 1/2$ of  increasing (by~$1$) and
probability $q=1-p$ of  decreasing (by~$1$).
\begin{enumerate}[(i)]
\item \label{ruin:three} If $p > q$ then the probability of 
reaching state~$a$ is  at least $1 - (q/p)^{z}$.
\item \label{ruin:four} If $q > p$ then the expected number of transitions 
until absorption  is at most $z/(q-p)$.
\item \label{ruin:five}
If $p > q$ then the expected number of transitions 
until absorption  is at most $(a-z)/(p-q)$.
\end{enumerate}

\end{corollary}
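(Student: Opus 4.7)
The plan is to derive each inequality by discarding non-negative terms in the exact formulas from Lemma~\ref{lem:gambler}, and then to obtain (iii) from (ii) by a reflection argument. No step is a serious obstacle; the whole exercise amounts to careful sign-tracking (keeping in mind that $z$ should be read as an integer in $\{0,\dots,a\}$, consistent with the setting of Lemma~\ref{lem:gambler}).

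For (\ref{ruin:three}), assume $p > q$, so $q/p \in (0,1)$. Rewrite the exact fixation probability from Lemma~\ref{lem:gambler}(\ref{ruin:one}) as $\frac{1-(q/p)^{z}}{1-(q/p)^{a}}$. Both the numerator and denominator are non-negative, and the denominator is at most $1$, so the ratio is bounded below by $1 - (q/p)^{z}$, as required. The edge cases $z=0$ and $z=a$ give equality.

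For (\ref{ruin:four}), assume $q > p$, so $q/p > 1$. The exact expected absorption time from Lemma~\ref{lem:gambler}(\ref{ruin:2}) is $\frac{z}{q-p}$ minus $\frac{a}{q-p}\cdot\frac{1-(q/p)^{z}}{1-(q/p)^{a}}$. Now $1-(q/p)^{z} \le 0$ and $1-(q/p)^{a} < 0$, so their ratio is non-negative; since $\frac{a}{q-p} > 0$ as well, the subtracted term is non-negative, and the bound $z/(q-p)$ follows.

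For (\ref{ruin:five}), apply the reflection sending state $i$ to $a-i$. This maps the given walk to an absorbing walk on $\{0,\dots,a\}$ starting at $z' := a-z$, in which the roles of $p$ and $q$ are swapped: what was formerly an ``up'' step is now a ``down'' step. The number of transitions until absorption is invariant under this relabelling, and the hypothesis $p > q$ for the original walk becomes the hypothesis of part (\ref{ruin:four}) for the reflected walk (its decreasing probability $p$ exceeds its increasing probability $q$). Applying (\ref{ruin:four}) to the reflected walk gives expected absorption time at most $z'/(p-q) = (a-z)/(p-q)$, which is the desired bound.
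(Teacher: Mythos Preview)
Your proof is correct. Parts (\ref{ruin:three}) and (\ref{ruin:four}) are handled exactly as the paper does (it simply declares them ``immediate'' from Lemma~\ref{lem:gambler}). For part (\ref{ruin:five}) you take a different route: you invoke the reflection $i\mapsto a-i$, which swaps $p$ and $q$ and sends the start state to $a-z$, and then apply (\ref{ruin:four}) to the reflected walk. The paper instead works directly with the exact formula from Lemma~\ref{lem:gambler}(\ref{ruin:2}), rewriting it for $p>q$ as
\[
\left(\frac{a}{p-q}\right)\left(\frac{1-(q/p)^{z}}{1-(q/p)^{a}}\right) - \frac{z}{p-q}
\]
and observing that the bracketed ratio is at most~$1$ (since $q/p<1$ and $z\le a$), giving the bound $(a-z)/(p-q)$. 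Both arguments are one-liners; yours has the virtue of explaining \emph{why} the bound in (\ref{ruin:five}) is the mirror image of that in (\ref{ruin:four}), while the paper's makes no appeal to symmetry and stays purely algebraic.
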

\begin{proof}
Items~(\ref{ruin:three}) and~(\ref{ruin:four}) are immediate.
To see (\ref{ruin:five}) for $p>q$, rewrite the expected number
of transitions as
\[\left(\frac{a}{p-q}\right)\left(  \frac{1-(q/p)^{z}}{1-(q/p)^a}\right) - \frac{z}{p-q} \leq \frac{a-z}{p-q}\,.\qedhere\]
\end{proof}

 We also consider a variant of the gambler's ruin in which the probability of upwards transitions depends on the current state. 
 \begin{lemma}\label{lem:backtoback}
 Let $a$, $b$, $c$ and $d$ be integers satisfying
  $a<b < c-1$ and $c+1<d$. 
  Consider $p_1 \in (1/2,1)$. 
  Consider the discrete Markov chain on states
 $\{a,\ldots,d\}$ with the following transition matrix.
 \begin{align*}
p_{a,a} &= 1,\\
p_{i,i+1} &= 
	\begin{cases}
		p_1   & \textnormal{ if }a+1\le i\le c,\\
		1/3   & \textnormal{ if }c < i \le d-1,\\
	\end{cases}\\
p_{i,i-1} &= 1 - p_{i,i+1} \textnormal{ for all }i\in \{a+1,\ldots,d-1\},\\
p_{d,d} &= 1.
\end{align*}
For integers $x$, $y$ and $z$ in $\{a,\ldots,d\}$
and subset $S$ of $\{a,\ldots,d\}$,
 let $p_{x\rightarrow y; z}$ 
denote the probability that, starting from state~$x$, the chain visits state~$y$
without passing through~$z$
and let $H_{x;S}$ be the   number of transitions that the chain takes to hit a state in~$S$,
starting from state~$x$.
\begin{enumerate}[(i)]
\item \label{backtobackone} $p_{c\rightarrow d; b} \geq 
1 - {\left(\frac{1-p_1}{p_1}\right)}^{c-b} 2^{d-c}$.
\item \label{backtobacktwo} 
$\E[H_{c;\{a,d\}}] \leq  
2^{d-c+1} \left(\frac{3p_1-1}{2 p_1-1}\right)$.
\end{enumerate}
 \end{lemma}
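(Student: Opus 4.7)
The plan is to decompose the walk starting at state~$c$ into i.i.d.\ \emph{excursions}, where each excursion begins at~$c$ and ends the next time the walk returns to~$c$ or is absorbed. Each excursion is either an \emph{upper excursion}, whose first step is $c\to c+1$ (probability $p_1$) and which then evolves as a down-biased walk on $\{c,\dots,d\}$ (up-probability $1/3$), or a \emph{lower excursion}, whose first step is $c\to c-1$ (probability $1-p_1$) and which then evolves as an up-biased walk on the lower region (up-probability $p_1$). By the strong Markov property the excursions are i.i.d., so both parts reduce to analysing a single excursion.

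For part~(i), I would treat $b$ as absorbing, which does not change the event in question. Applying Lemma~\ref{lem:gambler}(\ref{ruin:one}) with $q/p=2$ to the upper region started at $c+1$, the probability that an upper excursion absorbs at $d$ is $\alpha = p_1/(2^{d-c}-1)$. Applying Lemma~\ref{lem:gambler}(\ref{ruin:one}) with $q/p = r:=(1-p_1)/p_1<1$ to the walk on $\{b,\dots,c\}$ started at $c-1$, the probability that a lower excursion absorbs at $b$ is $\beta = (1-p_1)r^{c-b-1}(1-r)/(1-r^{c-b})$. Since $b$ and $d$ are the only absorbing states, the probability that the walk reaches $b$ before $d$ is $\beta/(\alpha+\beta)\le\beta/\alpha$. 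Using $(1-p_1)/p_1=r$ this simplifies to
\[\frac{\beta}{\alpha} \;=\; \frac{r^{c-b}(1-r)(2^{d-c}-1)}{1-r^{c-b}} \;\le\; r^{c-b}\cdot 2^{d-c},\]
where the last step uses $2^{d-c}-1\le 2^{d-c}$ together with $1-r\le 1-r^{c-b}$ (valid because $r\in(0,1)$ and $c-b\ge 2$). Subtracting from~$1$ gives the claimed lower bound on $p_{c\to d;b}$, with the bound being trivial if $r^{c-b}\cdot 2^{d-c}\ge 1$.

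For part~(ii), the same excursion decomposition is used with both $a$ and $d$ absorbing. Corollary~\ref{cor:gambler}(\ref{ruin:four}) applied to the upper region (where $q-p=1/3$) bounds the expected hitting time of $\{c,d\}$ from $c+1$ by $3$, so an upper excursion has expected length at most $4$. Corollary~\ref{cor:gambler}(\ref{ruin:five}) applied to the lower region (where $p-q=2p_1-1$) bounds the expected hitting time of $\{a,c\}$ from $c-1$ by $1/(2p_1-1)$, so a lower excursion has expected length at most $2p_1/(2p_1-1)$. Weighting by the type probabilities,
\[\E[L] \;\le\; 4p_1 + (1-p_1)\cdot\frac{2p_1}{2p_1-1} \;=\; \frac{2p_1(3p_1-1)}{2p_1-1}.\]
The per-excursion absorption probability is at least $\alpha \ge p_1/2^{d-c}$, so the number $N$ of excursions until absorption is geometric with $\E[N]\le 2^{d-c}/p_1$. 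As $N$ is a stopping time relative to the i.i.d.\ excursion outcomes, Wald's identity gives
\[\E[H_{c;\{a,d\}}] \;=\; \E[N]\,\E[L] \;\le\; \frac{2^{d-c}}{p_1}\cdot\frac{2p_1(3p_1-1)}{2p_1-1} \;=\; 2^{d-c+1}\cdot\frac{3p_1-1}{2p_1-1}.\]

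The main technical point is the algebraic simplification of $\beta/\alpha$ in part~(i); the rest is a routine combination of the classical gambler's ruin bounds recalled in Lemma~\ref{lem:gambler} and Corollary~\ref{cor:gambler} with the strong Markov property, plus Wald's identity in part~(ii). The fact that the bound in part~(ii) depends on $p_1$ only through $(3p_1-1)/(2p_1-1)$ is precisely the by-product of combining the $p_1$-free upper-excursion length bound with the $p_1$-dependent lower-excursion length bound in the same ratio that cancels the $p_1^{-1}$ factor coming from $\E[N]$.
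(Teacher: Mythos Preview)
Your proof is correct and, despite the different packaging, is essentially the same argument as the paper's. The paper carries out a first-step analysis at $c$ and solves for $p_{c\to d;b}$ and $\E[H_{c;\{a,d\}}]$ directly; your excursion decomposition yields the identical formulae (your $\alpha$ and $\beta$ are exactly the paper's $p_1\,p_{c+1\to d;c}$ and $(1-p_1)(1-p_{c-1\to c;b})$, and your $\E[N]\,\E[L]$ is precisely the ratio the paper obtains after solving its recurrence), and you then apply the same gambler's-ruin bounds from Lemma~\ref{lem:gambler} and Corollary~\ref{cor:gambler}. The only real difference is that you invoke Wald's identity by name where the paper derives the same relation by hand; both routes plug in the same estimates $\E[H_{c+1;\{c,d\}}]\le 3$, $\E[H_{c-1;\{a,c\}}]\le 1/(2p_1-1)$, and an absorption probability of at least $p_1\,2^{-(d-c)}$ to arrive at the identical final bounds.
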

 \begin{proof}
 
 We first prove (\ref{backtobackone}).
 It is immediate that
 \begin{align*}
 p_{c\rightarrow d; b} &= 
(1-p_1) p_{c-1\rightarrow d;b} + p_1 p_{c+1\rightarrow d;b} \\
&=
  (1-p_1) p_{c-1\rightarrow c;b} p_{c\rightarrow d;b} + 
 p_1 p_{c+1\rightarrow d;c} + 
 p_1
 (1-p_{c+1\rightarrow d;c})
 p_{c\rightarrow d;b}.\end{align*}
 Rearranging,
  \begin{equation}\label{eq:willkeep}
	p_{c\rightarrow d;b} = \frac
 {p_1 p_{c+1\rightarrow d;c}}
 {(1-p_1)  (1-p_{c-1\rightarrow c;b} )  + p_1  p_{c+1\rightarrow d;c}}.
\end{equation}
 Now   
 from   Lemma~\ref{lem:gambler}\eqref{ruin:one},
\begin{equation}\label{eq:willkeeptwo}
 p_{c+1\rightarrow d;c} = 
\frac{ 2-1}{{2}^{d-c}-1} \geq 2^{-(d-c)}\,.
\end{equation}
Also, from  Corollary~\ref{cor:gambler}\eqref{ruin:three},
  \[  p_{c-1\rightarrow c;b}  \geq 1- {\left(\tfrac{1-p_1}{p_1}\right)}^{c-b-1},
  \]
  so 
  \[1- p_{c-1\rightarrow c;b}  \leq {\left(\tfrac{1-p_1}{p_1}\right)}^{c-b-1}.\]
Plugging these bounds into~\eqref{eq:willkeep}, we get
 \[p_{c\rightarrow d;b} \geq \frac
 { 1}
 { {\left(\frac{1-p_1}{p_1}\right)}^{c-b} 2^{d-c}    + 1    }
 = 1 -
 \frac
 {{\left(\frac{1-p_1}{p_1}\right)}^{c-b} 2^{d-c}  }
 { {\left(\frac{1-p_1}{p_1}\right)}^{c-b} 2^{d-c}    + 1    }
 \geq 
 1 - {\left(\frac{1-p_1}{p_1}\right)}^{c-b} 2^{d-c}\,.
 \]    
     
We now prove (\ref{backtobacktwo}).
Clearly,
\[\E[H_{c;\{a,d\}}] = 1 + p_1 \E[H_{c+1;\{a,d\}}] + (1-p_1) \E[H_{c-1;\{a,d\}}].\]
But 
 \[\E[H_{c+1;\{a,d\}}] = \E[H_{c+1;\{c,d\}}] + p_{c+1\rightarrow c;d} \E[H_{c;\{a,d\}}]\]
and
\[
 \E[H_{c-1;\{a,d\}}] = \E[H_{c-1;\{a,c\}}] + p_{c-1 \rightarrow c;a} \E[H_{c;\{a,d\}}].
\]
So solving, we get
\[ \E[H_{c;\{a,d\}}]
= \frac{1 + p_1 \E[H_{c+1;\{c,d\}}] + (1-p_1) \E[H_{c-1;\{a,c\}}] }
{1 - p_1 p_{c+1\rightarrow c;d} - (1-p_1)p_{c-1 \rightarrow c;a}}.\]
Now $p_{c-1\rightarrow c;a} \leq 1$ and by Equation~\eqref{eq:willkeeptwo},
$p_{c+1\rightarrow c;d} \leq 1-2^{-(d-c)}$.
By   Corollary~\ref{cor:gambler}\eqref{ruin:four},
$\E[H_{c+1;\{c,d\}}] \leq 1/(\tfrac23-\tfrac13) = 3$. 
Finally, by   Corollary~\ref{cor:gambler}\eqref{ruin:five},
\[\E[H_{c-1;\{a,c\}}] \leq \frac{1}{p_1 - (1-p_1)} = \frac{1}{2p_1-1}.\]
Plugging all of these in, 
\[ \E[H_{c;\{a,d\}}]
\leq \frac{1 + 3 p_1   + \frac{1-p_1}{2 p_1 -1}  }
{1 - p_1  (1- 2^{-(d-c)}) - (1-p_1)} = 2^{d-c+1} \left(\frac{3p_1-1}{2 p_1-1}\right).\qedhere\]
\end{proof}

\section{Stochastic processes}
\label{sec:processes}
     
We will be concerned with the discrete-time  
 Moran process~\cite{Mor1958:Moran}, as adapted by Lieberman, Hauert and
Nowak~\cite{LHN2005:EvoDyn} and described in Section~\ref{sec:intro}.
This is a discrete model of evolution on an underlying directed graph~$G$
where the reproduction rate of mutants is a parameter~$r>0$ called the ``fitness''.

In this paper, we consider the situation $r>1$, which corresponds to the
situation in which a mutation is advantageous.
 The   fitness $r$ is a parameter of all of our processes.
Our results apply to any fixed $r>1$.
Since the value of $r$ is fixed, we simplify the presentation by not including
it in the explicit notation and terminology. Thus, from now on, we say ``Moran process''
to signify ``Moran process with fitness~$r$''.

Following \cite{DGRS} we will simplify our proofs by studying a
continuous-time version of the Moran process.  The continuous-time version 
is also parameterised by~$G$ and~$r$ and it 
has the same fixation probability as the discrete-time version, so our results will
carry over immediately to the discrete process.

In order to deal with conditioning in the proofs we will in fact define several general stochastic
processes, all of which depend on $G$ and~$r$ --- one of these will be equivalent to the continuous-time 
Moran process and others will be useful for dominations.

All of the processes that we study evolve over time.
For any process~$P$, we use $\filt(P)$ to denote the filtration of~$P$
so $\filt_t(P)$ captures the history of the process~$P$ up to and including time~$t$.

\subsection{The clock process}
\label{sec:clock} 
For each 
edge $e = (u, v) $ of~$G$ we define two Poisson processes --- a Poisson process
$\mc{e}$  with  parameter $r/d^+(u)$
and a Poisson process
$\nc{e}$   with parameter $1/d^+(u)$. 
We refer to these processes as \emph{clocks}, and when an event occurs in one of
 them, we say that the relevant clock \emph{triggers}.
We refer to $\mc{e}$ as a \emph{mutant clock with 
source~$u$ and target~$v$} and $\nc{e}$ as a 
\emph{non-mutant clock with source~$u$ and target~$v$}.

We use $\moranclocks(G)$ to denote the set
of all clocks so
$\moranclocks(G) = \bigcup_{e\in E(G)} \{\mc{e},\nc{e}\}$.
We use $P(G)$ to denote the Cartesian product of
all process in $\moranclocks(G)$. $P(G)$ is the stochastic process in which all clocks
in $\moranclocks(G)$ evolve simultaneously and independently, starting 
at time~$0$.  

With probability~$1$, the clocks trigger a countably infinite number of times
and these can be indexed by an increasing sequence $\tau_1,\tau_2,\ldots$.
Also, no clocks trigger simultaneously 
and the clocks trigger for an infinitely long period --- that is, for every clock and every $t$,
the clock triggers at some $\tau_i > t$.
For convenience, we take $\tau_0=0$. We will use the random variables
$\tau_0,\tau_1,\ldots$ 
(which depend on the process $P(G)$)
 in our arguments.

\subsection{Mutant processes}
\label{sec:mutantprocess}

A \emph{mutant process} $\mu $   has an underlying graph $G(\mu)$ and initial state $\mu_0$.
At every time $t$, the state $ \mu_t$ is a subset of $V(G(\mu))$,
which we sometimes refer to as the ``set of mutants'' at time~$t$.
Every mutant process satisfies the following two constraints.
\begin{enumerate}[(1)]
\item For all $t\geq 0$, $\mu_t$ is determined by $\filt_t(P(G(\mu)))$. \label{constone}
\item For all $t,t'\geq 0$, if there is  a non-negative integer~$i$ so that \label{consttwo}
$t$ and $t'$ are both in the range $[\tau_i,\tau_{i+1})$ then $\mu_t = \mu_{t'}$.
\end{enumerate}

We define some terminology associated with the mutant process~$\mu$.
\begin{itemize}
\item If the clock $\mc{(u,v)}$ triggers at time $t$ 
and $u\in \mu_t$
we say that
\emph{$u$ spawns a mutant onto $v$ in $\mu$ at time $t$}
and that   $\mu$ \emph{spawns a mutant} onto $v$ at time $\tau_i$.  
\item If the clock $\nc{(u,v)}$ triggers at time $t$ 
and $u\notin \mu_t$
we say that
\emph{$u$ spawns a non-mutant onto $v$ in $\mu$ at time~$t$}. We say that $\mu$ \emph{spawns a non-mutant} onto $v$ at time $\tau_i$.
\item If   $v\in \mu_{\tau_i}$
and $v\not\in \mu_{\tau_{i-1}}$ we say that 
 \emph{$v$ becomes a mutant in~$\mu$ at time $\tau_{i}$}. 
\item If  $v\in \mu_{\tau_{i-1}}$ 
and $v\not\in \mu_{\tau_i}$
we say that 
\emph{$v$ becomes a non-mutant in~$\mu$ at time $\tau_{i}$} or that \emph{$v$ dies 
in~$\mu$ at time $\tau_{i}$}.
\end{itemize}
When the mutant process is absolutely clear from the context, we sometimes drop the phrase
``in $\mu$''.
Note that $v$~does not necessarily become a mutant at time~$\tau_i$ when some $u$~spawns a mutant onto~$v$ at time~$\tau_i$ since $v$~may already be a mutant at that time.

For convenience, we include the filtration  $\filt_t(P(G))$
 in the filtration 
$\filt_t(\mu)$ of the  mutant process
so the sequence of trigger-times $\tau_0,\tau_1,\ldots$ up to time~$t$
can be determined from $\filt_t(\mu)$.

\begin{remark}Sometimes we  will consider a mutant process $\mu$ 
in which the initial state $\mu_0$ is a randomly chosen subset of $V(G(\mu))$.
When we do this, we assume that the choice of the initial state~$\mu_0$
is independent of the triggering of the   clocks in~$\moranclocks(G(\mu))$.
\end{remark}

We will define several mutant processes in the course of our proofs, but
the most fundamental is the Moran process itself, which is a particular mutant process.

\begin{definition}[\textbf{the Moran process}] \label{def:contMoran}
The (continuous-time) Moran process
on graph~$G$ 
with initial mutant $x_0\in V(G)$  
is a mutant process $X$ with $G(X)=G$ and 
$X_0=\{x_0\}$  
defined as follows. 
Recall that, for every positive integer~$i$,
a clock $C\in \clocks(G)$ triggers at $\tau_i$. 
For $t\in (\tau_{i-1},\tau_i)$, we set
$X_t = X_{\tau_{i-1}}$. Then
we define $X_{\tau_i}$ as follows.
\begin{enumerate}[(i)]
 \item If  $C = \mc{(u,v)}$ for some $(u,v)\in E(G)$ \label{spawneventone}
 and $u \in X_{\tau_{i-1}}$ then
 $X_{\tau_i} = X_{\tau_{i-1}} \cup \{v\}$.  
 \item If $C = \nc{(u,v)}$ for some $(u,v) \in E(G)$ \label{spawneventtwo}
 and $u\notin X_{\tau_{i-1}}$ then
   $X_{\tau_{i}} = X_{\tau_{i-1}} \setminus \{v\}$. 
 \item Otherwise, $X_{\tau_i} = X_{\tau_{i-1}}$. \label{otherevent}
\end{enumerate} 
Considering the positive integers~$i$ in order, this completes the definition of
the Moran process~$X_t$.\defend{}
\end{definition}

\begin{remark}
It is clear from Definition~\ref{def:contMoran} that the Moran process~$X_t$
is a mutant process.
In Definition~\ref{def:contMoran}, say that $\tau_i$ is a ``relevant trigger time''
if \eqref{spawneventone} or \eqref{spawneventtwo} occurs rather than \eqref{otherevent}.
 The discrete-time  
 Moran process~\cite{Mor1958:Moran}, as adapted by Lieberman, Hauert and
Nowak~\cite{LHN2005:EvoDyn} 
is the Markov chain 
$X_{\tau_{0}},X_{\tau_{i_1}},X_{\tau_{i_2}},\ldots$,
where  
$\tau_{i_1},\tau_{i_2},\ldots$ is the increasing sequence of
 relevant trigger times.
 Note that the fixation probability 
of the discrete-time Moran process is the same as the fixation probability of the continuous-time process
 $X_t$, so we will study the process $X_t$ in this paper.
\end{remark}

\begin{definition} We say that a mutant process is \emph{extinct by time~$t$}
if, for all $t'\geq t$, $\mu_{t'} = \emptyset$.
We say that it \emph{fixates by time~$t$} if, for all $t'\geq t$, $\mu_{t'} = V(G(\mu))$.
We say that it \emph{absorbs by time~$t$} if it is extinct by time~$t$ or it fixates by time~$t$.
The \emph{fixation probability} is the probability that, for some $t$, it fixates by time~$t$.
The \emph{extinction probability} is the probability that, for some $t$, it is extinct by time~$t$.\defend{} \end{definition}

\begin{remark} The Moran process $X_t$ is extinct by time~$t$ if $X_t = \emptyset$ and
fixates by time~$t$ if $X_t = V(G(X))$.
If $G$ is strongly connected then the fixation probability and the extinction probability sum to~$1$. 
\end{remark}

\begin{definition}\label{def:localtime} 
For any mutant process~$\mu$, 
any vertex $u \in V(G(\mu))$,
and any $t\geq 0$,
we define $\iin{\mu}{u}{t}$
to be the measure of the  set $\{t' \leq t \mid u\in \mu_{t'}\}$.
Similarly, we define $\iout{\mu}{u}{t}$
  to be the measure of the set $\{t' \leq t \mid u\notin \mu_{t'}\}$.
\defend{}
\end{definition}

The subscript ``$\mathsf m$'' stands for ``mutant'' 
since $\iin{\mu}{u}{t}$
is the amount of time that $u$ is a mutant in~$\mu$, up until time~$t$.
Similarly,
the subscript ``$\mathsf n$'' stands for
non-mutant.
The random variables 
$\iin{\mu}{u}{t}$ and $\iout{\mu}{u}{t}$ are  determined by $\filt_t(\mu)$.
Also,  
$\iin{\mu}{u}{t} + \iout{\mu}{u}{t} = t$.

\subsection{The  star-clock process}
\label{sec:starclock}
 
Consider a mutant process~$\mu$.
We wish to be able to
discuss events such as the event that a vertex $u$ does not spawn a mutant until it has been a mutant 
for time $t$.
In order to express such events in a clean way, 
making all conditioning explicit, we 
define additional stochastic processes.

For each 
edge $e = (u, v) $ of~$G$ we define 
four further Poisson processes --- Poisson processes
$\smc{e}$  
and $\smbc{e}$ each
with  parameter $r/d^+(u)$ and  Poisson processes
$\snc{e}$ and $\snbc{e}$  each  with parameter $1/d^+(u)$. 
We refer to these processes as \emph{star-clocks}.
We identify sources and targets of star-clocks in the same way that we did for   clocks.
For example, the star-clock $\smc{(u,v)}$ has source~$u$ and target~$v$.

We use $\inclocks(G)$ to denote the set
$\inclocks(G) = \bigcup_{e\in E(G)} \{\smc{e},\snbc{e}\}$.
We use $\outclocks(G)$ to denote the set
$\outclocks(G) = \bigcup_{e\in E(G)} \{\snc{e},\smbc{e}\}$.

The star-clock process  $P^*(G)$ is the stochastic
process where all star-clocks in $\inclocks(G) \cup \outclocks(G)$ evolve simultaneously and independently,
starting at time~$0$.

\subsection{A coupled process}
\label{sec:coupled}

Given a mutant process~$\mu$
let $G=G(\mu)$.
We will now define a stochastic process $\Psi(\mu)$
which is a coupling of $\mu$
(which includes the clock process $P(G)$)
with the newly-defined star-clock process $P^*(G)$. 
Intuitively, the idea
of the coupling
is that each clock $\mc{(u,v)}$ in $P(G)$ will 
evolve following $\smc{(u,v)}$ when $u$ is a mutant
and following $\smbc{(u,v)}$ when $u$ is a non-mutant.
Similarly, $\nc{(u,v)}$ will evolve following
$\snc{(u,v)}$ when $u$ is a non-mutant 
and $\snbc{(u,v)}$ when $u$ is a mutant.
In the coupling, 
we pause the star-clocks in $\inclocks(G) \cup \outclocks(G)$ while they are not being used to drive clocks in 
$\moranclocks(G)$, so that, e.g., the ``local time'' of a clock $\smc{(u,v)}$ at global time $t$ is $\iin{\mu}{u}{t}$.  
 
We will be able to deduce both $\filt_t(\mu)$
and $\filt_t(P^*(G))$ from the 
filtration $\filt_{T}(\Psi(\mu))$ 
of the coupled process at an appropriate stopping time~$T$ --- the details are given below. 
The fact that the coupling is valid (which we will show  below)
will ensure that both of  the marginal processes, 
$\mu$ and $P^*(G)$,  evolve according
to their correct distributions.
 
To construct the coupling 
we start with a 
copy
of the star-clock process~$P^*(G)$ and with the initial 
state $\mu_0$ of the mutant process~$\mu$.
We define 
$\tau_0 = 0$ 
(so  we have  implicitly  defined  $\filt_{\tau_0}(\mu)$).

Suppose that, for some non-negative integer~$j$, we have defined
$\filt_{\tau_j}(\mu)$.  Given this and 
the evolution of the star-clock process $P^*(G)$,
we will show how to   
define $\tau_{j+1}$
and $\filt_{\tau_{j+1}}(P(G))$ which determine
$\filt_{\tau_{j+1}}(\mu)$.
To do this,  
 let $t_j$ be  the minimum $t>0$ such that one of the following occurs.
\begin{itemize}
\item  For some $u\in \mu_{\tau_j}$, a star-clock
in $\inclocks(G)$ with source~$u$ triggers at time $\iin{\mu}{u}{\tau_j}+t$, or
\item for some $u\notin \mu_{\tau_j}$, a star-clock
in $\outclocks(G)$ with source~$u$  triggers at time $\iout{\mu}{u}{\tau_j}+t$.
\end{itemize}
We define $\tau_{j+1} = \tau_j+t_j$.
No clocks in $\moranclocks(G)$ trigger
in the interval $(\tau_j,\tau_{j+1})$.
We now determine which clock from $\moranclocks(G)$ triggers at time~$\tau_{j+1}$
by reconsidering each case.
\begin{itemize}
\item  If  $u\in \mu_{\tau_j}$ and
$\smc{(u,v)}$ triggers at time $\iin{\mu}{u}{\tau_j} + t_j$
then $\mc{(u,v)}$ triggers at time $\tau_{j+1}$. 
\item  If $u\in \mu_{\tau_j}$ and
$\snbc{(u,v)}$ triggers at time $\iin{\mu}{u}{\tau_j}+t_j$
then $\nc{(u,v)}$ triggers at time $\tau_{j+1}$.
\item  If $u\notin \mu_{\tau_j}$, and
$\snc{(u,v)}$ triggers at time $\iout{\mu}{u}{\tau_j}+t_j$
then $\nc{(u,v)}$ triggers at time $\tau_{j+1}$.
\item  If $u\notin \mu_{\tau_j}$ and
$\smbc{(u,v)}$ triggers at time $\iout{\mu}{u}{\tau_j}+t_j$
then $\mc{(u,v)}$ triggers at time $\tau_{j+1}$.
\end{itemize}
  This
fully defines 
$\filt_{\tau_{j+1}}(P(G))$ and hence
$\filt_{\tau_{j+1}}(\mu)$.
So we have fully defined the coupling
and therefore the process $\Psi(\mu)$.

Before showing that the coupling is valid, it will be helpful to state   exactly what information is
contained in $\filt_{t}(\Psi(\mu))$.
Certainly this includes $\filt_{t}(\mu)$ which itself includes $\filt_t(P(G))$.
Also, $\filt_t(P(G))$ defines a non-negative integer~$j$ so
that  $t \in [\tau_j,\tau_{j+1})$.
We will use~$j$ to state the information that $\filt_{t}(\Psi(\mu))$ contains about 
the evolution of~$P^*(G)$.
\begin{itemize}
\item For each star-clock $C\in \inclocks(G)$ with source~$u\in \mu_{\tau_j}$, 
$\filt_t(\Psi(\mu))$
includes a list of the times in $[0,\iin{\mu}{u}{\tau_j}+t-\tau_j]$ when  $C$ triggers.
\item For each star-clock $C\in \inclocks(G)$ with source $u\notin \mu_{\tau_j}$,
$\filt_t(\Psi(\mu))$
includes a list of the times in $[0,\iin{\mu}{u}{\tau_j}]$ when  $C$ triggers.
\item For each star-clock $C\in \outclocks(G)$ with source~$u\notin \mu_{\tau_j}$, 
$\filt_t(\Psi(\mu))$
includes a list of the times in $[0,\iout{\mu}{u}{\tau_j}+t-\tau_j]$ when  $C$ triggers.
\item For each star-clock $C\in \outclocks(G)$ with source $u\in \mu_{\tau_j}$,
$\filt_t(\Psi(\mu))$
includes a list of the times in $[0,\iout{\mu}{u}{\tau_j}]$ when  $C$ triggers. 
\end{itemize}

To show that the coupling is valid we must show that both of the marginal processes,
$\mu$ and $P^*(G)$, evolve according to their correct distributions.
The fact that $P^*(G)$ does so is by construction.
To show that $\mu$ does so, it suffices to prove that for all $j\in\Zzero$ and all possible values $f_j$ of $\filt_{\tau_j}(\mu)$, the distribution of $\filt_{\tau_{j+1}}(\mu)$ conditioned on $\filt_{\tau_j}(\mu) = f_j$ is correct. Note that the only information contained in $\filt_{\tau_{j+1}}(\mu)$ but not $\filt_{\tau_j}(\mu)$ is the value of $\tau_{j+1}$ and the identity of the clock in $\moranclocks(G)$ that triggers at time $\tau_{j+1}$.

Let $f_j'$ be an arbitrary possible value of $\filt_{\tau_j}(\Psi(\mu))$ consistent with the event $\filt_{\tau_j}(\mu) = f_j$, in the sense that the intersection of the events $\filt_{\tau_j}(\Psi(\mu)) = f_j'$ and $\filt_{\tau_j}(\mu)=f_j$ is non-empty. Recall from the definition of $\Psi(\mu)$ that, conditioned on $\filt_{\tau_j}(\Psi(\mu)) = f_j$, $\filt_{\tau_{j+1}}(\mu)$ depends only on particular star-clocks in particular intervals, as follows. 
\begin{itemize}
\item For each $u\in \mu_{\tau_j}$,
it depends on   the evolution of each star-clock  in $\inclocks(G)$ with source~$u$
only during the interval $(\iin{\mu}{u}{\tau_j},\infty)$.  
It does not depend on the evolution of star-clocks in $\outclocks(G)$ with source~$u$.
	
\item For each $u\notin\mu_{\tau_j}$,
it depends on the evolution 
of each star-clock  in $\outclocks(G)$ with source~$u$
only during the interval $(\iout{\mu}{u}{\tau_j},\infty)$.
It does not depend on the evolution of star-clocks in $\inclocks(G)$ with source~$u$.

\end{itemize}

For each star-clock, these intervals are disjoint from the intervals exposed in $f_j'$, and the start of each interval is determined by $f_j'$. Moreover, in the interval $(\tau_j, \tau_{j+1}]$, each clock in $\moranclocks(G)$ is triggered by a unique clock in $\inclocks(G) \cup \outclocks(G)$ with the same rate. Thus all clocks in $\moranclocks(G)$ trigger with the correct rates in this period and they are independent of each other (since all of the star-clocks in $P^*(G)$ evolve independently). We conclude that $\filt_{\tau_{j+1}}(P(G))$, and hence $\filt_{\tau_{j+1}}(\mu)$, has the appropriate distribution. The coupling is therefore valid.

By construction, we   have the following observation.
\begin{observation} \label{lem:clock-coupling}\label{obs:one}
Let $\mu$ be a mutant process and consider 
$\Psi(\mu)$. 
Let $(u,v)$ be an edge of~$G(\mu)$.
Given  $t> 0$, let
  $j$ be the maximum integer such that $\tau_j < t$.
Then  the following are true.
\begin{itemize}
\item $\mc{(u,v)}$ triggers at time $t$ if and only if either $u \in \mu_{\tau_j}$ and 
$\smc{(u,v)}$   triggers at time $\iin{\mu}{u}{t}$ or $u \notin \mu_{\tau_j}$ and $\smbc{(u,v)}$ triggers at time $\iout{\mu}{u}{t}$.
\item $\nc{(u,v)}$ triggers at time $t$ if and only if either $u \notin \mu_{\tau_j}$ and $\snc{(u,v)}$ triggers at time $\iout{\mu}{u}{t}$ or $u \in \mu_{\tau_j}$ and $\snbc{(u,v)}$ triggers at time $\iin{\mu}{u}{t}$.
\end{itemize}
\end{observation}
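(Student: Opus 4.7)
The plan is to verify the observation as a direct consequence of the construction of $\Psi(\mu)$, treating each biconditional as a pair of implications, with the main bookkeeping task being to translate global time $t$ into star-clock local time.

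First I would record the key auxiliary fact that the mutant set is constant on $[\tau_j, \tau_{j+1})$: by constraint~(\ref{consttwo}) in the definition of a mutant process, for every $t'\in[\tau_j, t]$ (with $t<\tau_{j+1}$) we have $\mu_{t'}=\mu_{\tau_j}$, and at the trigger time $\tau_{j+1}$ itself the state $\mu_{\tau_j}$ still reflects $u$'s status immediately before the event. Consequently $u\in\mu_{\tau_j}$ implies $\iin{\mu}{u}{t}=\iin{\mu}{u}{\tau_j}+(t-\tau_j)$ and $\iout{\mu}{u}{t}=\iout{\mu}{u}{\tau_j}$; symmetrically, $u\notin\mu_{\tau_j}$ implies $\iout{\mu}{u}{t}=\iout{\mu}{u}{\tau_j}+(t-\tau_j)$ and $\iin{\mu}{u}{t}=\iin{\mu}{u}{\tau_j}$. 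I would also observe that if any clock $C\in\moranclocks(G)$ triggers at time $t>\tau_j$, then by the construction no clock in $\moranclocks(G)$ triggers in $(\tau_j,\tau_{j+1})$, so $t=\tau_{j+1}$ and in particular $t-\tau_j=t_j$.

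For the $(\Leftarrow)$ direction of the first biconditional, I would just read off the four-case dispatch in the construction: if $u\in\mu_{\tau_j}$ and $\smc{(u,v)}$ triggers at $\iin{\mu}{u}{t}=\iin{\mu}{u}{\tau_j}+(t-\tau_j)$, then by minimality this event is (a candidate for) $t_j$, and the construction declares that $\mc{(u,v)}$ fires at $\tau_j+t_j=t$; the case $u\notin\mu_{\tau_j}$ with $\smbc{(u,v)}$ is identical, using $\iout{\mu}{u}{t}$ instead. Minimality here is automatic: any earlier trigger of some other star-clock (in the correct $\inclocks/\outclocks$ ``active'' set) would have produced a $\moranclocks(G)$-trigger before $t$, contradicting the choice of $j$. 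For the $(\Rightarrow)$ direction, if $\mc{(u,v)}$ triggers at $t$, then the four-case dispatch in the construction says the cause is exactly one of the two listed star-clock events, again after substituting $t-\tau_j=t_j$ into the local-time formulas. The second biconditional (for $\nc{(u,v)}$) is proved in the same way using the remaining two cases of the dispatch.

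The only subtle point is the minimality check in the $(\Leftarrow)$ direction; I would handle this by noting that the construction places every $\moranclocks(G)$-trigger at some $\tau_{j'+1}$, so any star-clock event in an ``active'' window before $\iin{\mu}{u}{t}$ (respectively $\iout{\mu}{u}{t}$) would produce a $\moranclocks(G)$-event in $(\tau_j,t)$, contradicting $\tau_{j+1}\ge t$. No separate obstacle arises beyond this bookkeeping, since the coupling was defined precisely to make the stated equivalence hold, so the proof is essentially a careful unwinding of definitions.
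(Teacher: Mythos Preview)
Your proposal is correct and is exactly the kind of definitional unwinding the paper has in mind; in fact, the paper offers no proof at all beyond the sentence ``By construction, we have the following observation,'' so your argument simply makes explicit the bookkeeping (local-time identities and the minimality check) that the paper leaves to the reader.
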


\section{An upper bound on the fixation probability of superstars}
\label{sec:superstar}
 
Recall the definition of a
$(k,\ell,m)$-superstar 
from Section~\ref{def:super}.
We use
$n = \ell(k + m)+1$ to denote the number of vertices
of a  
$(k,\ell,m)$-superstar.

Given any $i\in [\ell]$,
we say that $v_{i,1} v_{i,2} \dots v_{i,k}$ is the \emph{path associated with} 
the reservoir~$R_i$.
We will often consider the case that the initial mutant $x_0$ is in 
a reservoir.
When it is possible, we simplify the notation 
by dropping the index~$i$ of the reservoir.
Thus, we write  $R$ for the reservoir containing $x_0$ 
and we write $v_1 \dots v_k$ for the path associated with $R$. 
So if $R=R_i$ then for each $j\in[k]$, we  write $v_j$ as a synonym for $v_{i,j}$.
The main result of this section is the   following upper bound on the fixation probability of the superstar.

\newcommand{\statemainsuperthm}{Let $r > 1$. Then there exists $c_r>0$ such that the following holds for all positive 
integers $k$, $\ell$ and $m$. 
Choose $x_0$ uniformly at random from $V(\sstar)$.
Let $X$ be the Moran process (with fitness $r$) with 
$G(X)=\sstar$ and $X_0=\{x_0\}$.
Then the probability that $X$  goes extinct  is at least $1/(c_r(n\log n)^{1/3})$.}

\begin{theorem}\label{thm:star-lower}
\statemainsuperthm 
\end{theorem}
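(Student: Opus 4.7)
The plan is to establish the extinction lower bound by identifying three complementary ``failure modes'' of the Moran process on $\sstar$ and showing that, for every choice of parameters $(k,\ell,m)$, at least one mode has probability $\Omega_r(1/(n\log n)^{1/3})$. I work throughout with the continuous-time Moran process $X$ coupled to the star-clock process via $\Psi(X)$ from Section~\ref{sec:starclock}. The coupling is essential: the natural cascade events all feed through the central vertex $v^*$, and the star-clocks $\smc{e},\snc{e},\smbc{e},\snbc{e}$ expose each vertex's mutant- and non-mutant-clocks only over the relevant local-time intervals, so one can analyse each cascade conditionally without revealing the remaining randomness.

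The three bounds I aim to prove are:
(i) a ``path-death'' bound $\Pr[\text{extinction}]\ge \Omega(k/(r(m+k)))$ (Lemma~\ref{lem:star-lower-die-instant}, cited in the text). Heuristically, a mutant arriving at $v_{i,1}$ from its reservoir is overwritten at rate $\Theta(m)$ by the other reservoir non-mutants and fires at rate $r$ onto $v_{i,2}$, so it escapes the top of the path only with probability $\Theta(r/m)$; a gambler's-ruin analysis for the mutant-block size along the path (Lemma~\ref{lem:gambler}, Corollary~\ref{cor:gambler}) yields the stated bound.
(ii) An ``amplification-failure'' bound $\Pr[\text{extinction}]\ge \Omega_r(1/k)$ (Corollary~\ref{cor:superstark}), arising because each round trip $v^*\to R_i\to v_{i,1}\to\dots\to v_{i,k}\to v^*$ has a non-negligible chance of failing at some stage, and chaining the $k$ stages gives an aggregate $1/k$ bound.
(iii) A ``few-reservoir'' bound $\Pr[\text{extinction}]\ge \Omega_r(1/\ell)$, because $v^*$'s $\ell$ in-neighbours $v_{1,k},\dots,v_{\ell,k}$ overwrite a mutant at $v^*$ at total rate $\sim \ell$, leaving too little time to colonise enough reservoirs when $\ell$ is small.

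Combining, $\Pr[\text{extinction}]\ge \Omega_r(\max(k/(m+k),\,1/k,\,1/\ell))$. A short case split verifies that this is always $\Omega_r(1/n^{1/3})$: at the balancing point $\ell=k=\Theta(n^{1/3})$, $m=\Theta(n^{2/3})$ all three bounds equal $\Theta(1/n^{1/3})$; if $\ell\le n^{1/3}$ then bound~(iii) already gives $\ge 1/n^{1/3}$; otherwise $m\le n/\ell\le n^{2/3}$ and bounds~(i), (ii) combine to give $\max(k/(m+k),1/k)\ge 1/\sqrt m\ge 1/n^{1/3}$. The extra $(\log n)^{1/3}$ slack in the statement comes from the Chernoff and Poisson concentration bounds of Section~\ref{sec:defprelim}, which are needed to make the heuristic ``expected number of attempts'' estimates uniform in the parameters and to union-bound over the polynomially many sub-events in the cascade analysis.

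The main obstacle is bound~(ii). The round-trip attempts through $v^*$ are strongly dependent because they all share $v^*$, whose state fluctuates on a much faster timescale than the path dynamics, so naive independence arguments fail. The star-clock coupling is the technical device that rescues the argument: for each edge $(u,v)$, the clocks driving firings from $u$ are exposed only during the intervals when $u$ has the appropriate state (tracked by $\iin{\mu}{u}{\cdot}$ and $\iout{\mu}{u}{\cdot}$), so the clocks of one cascade attempt can be revealed without revealing the trajectory of any other attempt. Given that setup, each bound reduces to an iterated gambler's-ruin calculation (Lemma~\ref{lem:gambler}, Corollary~\ref{cor:gambler}, and the two-regime variant Lemma~\ref{lem:backtoback}) combined with Chernoff control (Lemma~\ref{lem:geo-chernoff} for the number of attempts per ``epoch'' of $v^*$'s life, and Corollaries~\ref{cor:pchernoff}--\ref{cor:pchernoff-3} for the Poisson counts of clock triggers within each epoch).
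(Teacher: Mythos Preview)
Your three-bound decomposition and case split mirror the paper's structure, but the mechanisms you describe are not those of the cited lemmas, and there is a genuine gap in invoking bound~(ii).

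Your bound~(ii) cites Corollary~\ref{cor:superstark}, which requires $k\ge (K/3)r^4\log n$, $\ell\ge 3Kr^4k\log n$ and $m\ge 18r^2k$; these are not implied by your case-split hypothesis $\ell>n^{1/3}$. Take for instance $k=1$, $\ell=\lfloor n^{0.4}\rfloor$, $m\approx n^{0.6}$: here bounds~(i) and~(iii) give only $\Theta(n^{-0.6})$ and $\Theta(n^{-0.4})$, both insufficient, and the Corollary does not apply since $k<(K/3)r^4\log n$. The paper covers this regime via the parent Lemma~\ref{lem:star-lower-hard}, whose bound is $\Omega_r(1/\kappa)$ with $\kappa=\max\{3k,Kr^4\log n\}$ --- weaker than your claimed $\Omega_r(1/k)$ when $k$ is small, but enough for the theorem. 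That lemma in turn requires $\ell\ge Kr^4\kappa\log n$ and $m\ge 6r^2\kappa$, and matching these preconditions is exactly why the paper's case-split thresholds carry $(n\log n)^{1/3}$ rather than your $n^{1/3}$. The logarithm in the theorem is not Chernoff slack; it is forced by this precondition-matching.

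Your mechanism sketches also do not match the cited results. Lemma~\ref{lem:star-lower-die-instant} involves no gambler's ruin on the path: it is the one-line observation that with probability at least $k/(m+k)$ the vertex~$x_0$ lands outside all reservoirs, and every non-reservoir vertex has an in-neighbour of out-degree~$1$, so $x_0$ dies before spawning with probability at least $1/(1+r)$. Lemma~\ref{lem:star-lower-die-before-path} is not about $v^*$ being overwritten; it shows that with probability $\Omega_r(1/\ell)$, $x_0$ dies within time $\Theta(m)$ while each mutant placed on~$v_1$ is killed by one of the $m-1$ reservoir non-mutants before it can spawn onto~$v_2$, so $v^*$ is never a mutant at all. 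Most importantly, the hard-regime bound is not ``chaining $k$ stages of a round trip''. The paper instead shows that $x_0$ dies within time $\tx=\Theta(\ell m/\kappa)$ with probability $\Theta(1/\kappa)$, and that by partitioning $[0,\tx]$ into length-$\kappa$ intervals and controlling when $v_2$ can become a mutant (events $\sP3$--$\sP5$), the number of spawns from $v_k$ onto~$v^*$ in $[0,\tx]$ is $O(\ell)$, which via~$\sP1$ ensures that $v^*$ never itself spawns. Bounding the spawn count onto~$v^*$ --- not any round-trip failure --- is the crux of the argument, and your proposal does not identify it.
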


\subsection{Proof Sketch}

In this Section, we give an informal sketch of the proof of Theorem~\ref{thm:star-lower}.
The presentation of the proof itself does not depend upon the sketch
so the reader may prefer to skip directly to the proof.
In  all of our proof sketches,  we use the word ``likely'' to mean ``sufficiently likely''. 
We leave the details of ``how likely'' to the actual proofs.

If $m$ is small relative to $k$
(in particular, if $m< k {(n \log n)}^{1/3}$) 
then 
the initial mutant~$x_0$ is  likely
to be placed in a path, rather than in a reservoir.
If this happens, then it is  likely to go extinct. 
This easy case is dealt with in Lemma~\ref{lem:star-lower-die-instant}
and corresponds to Case~2 in the proof of Theorem~\ref{thm:star-lower}.
(Case~1 is the trivial case where $n<n_0$.)

Another easy case arises if $\ell$ is sufficiently small relative to~$n$
(in particular, if $\ell = O( {(n \log n)}^{1/3})$).
This case is dealt with in Lemma~\ref{lem:star-lower-die-before-path}
and corresponds to Case~3 in the proof of Theorem~\ref{thm:star-lower}.
In this case,  even when $x_0$ is placed in a reservoir~$R$,
it is still likely that $x_0$ dies before $v_{2}$ ever becomes a mutant.
This is because it takes roughly $\Theta(m)$ time for the
mutation to spread from~$v_{1}$ to~$v_{2}$
since a mutant at~$v_{1}$ has only probability~$\Theta(1/m)$ of spawning a mutant
before it dies.
On the other hand, since $\ell$ is small, $x_0$ is sufficiently likely to die in $\Theta(m)$ time.
For details, see the proof of Lemma~\ref{lem:star-lower-die-before-path}.

The remaining case, Case~4 in the proof of  Theorem~\ref{thm:star-lower},
is deemed the ``difficult regime'' and is dealt with in Section~\ref{sec:supdifficult}.
In this case, it is 
easy to show that $\ell = \Omega(\kappa \log n)$
and $m = \Omega(\kappa)$ where $\kappa = \max \{3k, 70 r^4 \log n\}$.

It is likely that the initial mutant $x_0$ is placed in a reservoir~$R$, and the key
lemma, showing that it is sufficiently likely to go extinct, is Lemma~\ref{lem:star-lower-hard}.

 At a very high level, the argument proceeds as follows.
 Suppose that $v^*$ does not spawn a mutant before $x_0$ dies.
 Then it is very easy to see
 that, after $x_0$ dies, 
 the path of reservoir~$R$ is likely to go extinct quickly.
 
 Thus, the   crux of the argument is to show that $x_0$ is   likely to die   before $v^*$ spawns a mutant. Each 
time $v^*$ becomes a mutant it has an $O(1/\ell)$ chance of spawning a mutant   before dying, so 
roughly our goal is to show  that $x_0$ is  sufficiently  likely to die   before $v^*$ becomes a mutant 
$\Omega(\ell)$ times. 

Very roughly, our high-level approach is to
partition time into  intervals of length $\kappa=O(m)$. In each 
block of $O(m/\kappa)$
such intervals,
 $v_2$ is likely to become a mutant $O(1)$ times. 
 Each time this happens, 
 it is likely that $R$'s path will again fill with non-mutants  within $O(\kappa)$ time, 
 so it is likely that $v_k$ is   a mutant for at most $O(\kappa)$ time 
 during the block and it is likely that $v^*$ becomes a mutant at most $O(\kappa)$ times during the
 block. 
 Combining $O(\ell/\kappa)$  blocks, it is likely 
 that $v^*$ becomes a mutant at most $O(\ell)$ times by time~$ \ell m/\kappa$. 
 Since $\vnc{v^*}{x_0}$ has rate $1/(\ell m)$, 
 it is also likely that $x_0$ dies by time~$\ell m/\kappa$.

In more detail, the proof of Lemma~\ref{lem:star-lower-hard} shows that 
$x_0$ dies before $v^*$ spawns a mutant as long as
certain events called \sP1--\sP5 occur. These events are defined in the statement of Lemma~\ref{lem:star-masterlist}.
They  formalise the high-level approach that we have just  described.
It is important that most of  these events are defined in terms of clock-triggers
so that we can 
get good upper bounds on the probability that they fail
and thus prove (in  Lemma~\ref{lem:star-masterlist}) that they are likely to occur simultaneously.

The proof of Lemma~\ref{lem:star-lower-hard} tracks a quantity $\sigma(t)$ 
which is the number of times that $v_k$ 
(the end of the path of the reservoir containing~$x_0$)
spawns a mutant onto the centre vertex~$v^*$
by time~$t$.
The proof uses \sP1--\sP5  to show that $\sigma(t)$ stays $O(\ell)$
up to a fixed time $\tx = O(\ell m/\kappa)$.
As we noted, 
the analysis divides the period up to time~$\tx$ into intervals of size $\kappa$.
 Event~\sP5 ensures that
during most such intervals,
non-mutant clocks with target~$v_1$
and mutant clocks with targets~$v_1$ and~$v_2$ behave appropriately
so that,  if~$x_0$ is the only mutant in~$R$ during the interval, then $v_2$ does not become
a mutant during the interval.  
The fact that $x_0$ is indeed the only mutant in~$R$  
  follows from event~\sP1 which ensures that 
$v^*$ does not   spawn a mutant while $\sigma(t)$ is small.
Then since $v_2$ does not become a mutant during the interval,  event~\sP3 ensures
that the clocks along the path trigger in such a way  
that (unless $v_1$ or $v^*$ spawn a mutant) the only mutants remaining at the end of the interval 
are in $\{x_0,v_1\}$. This ensures that $\sigma(t)$ stays small through another interval.
 Event~\sP5 only ensures the above during ``most such intervals'' but  event~\sP4
ensures that the mutant clock with source~$v_k$  does not trigger
too often, so the remaining intervals are not too problematic.
Thus, events~\sP1, \sP3, \sP4 and \sP5,   taken together, ensure that $\sigma(\tx)$ is  $O(\ell)$.

Given that $\sigma(\tx)$ is  $O(\ell)$, it is easy to show that the initial mutant goes extinct 
during  the next two intervals (beyond time~$\tx$).  Event~\sP1 ensures that $v^*$ doesn't spawn any  
mutants.  Event~\sP2 ensures that the initial mutant $x_0$ 
has already died by time $\tx$.
Finally,  event~\sP3 ensures that any remaining mutants die in the path during the next two intervals.

The difficult part of the  proof is defining  events~\sP1--\sP5 in such a way
that we  can show (in Lemma~\ref{lem:star-masterlist}) that they are likely to occur simultaneously.
It turns out  (Lemma~\ref{lem:star-lower-path-clear}, Corollary~\ref{cor:star-lower-full-path-not-bad}
and Lemma~\ref{lem:star-lower-path-not-fill}) that  events \sP3--\sP5
are so unlikely to fail that we  bound this probability with a simple union bound, avoiding any
complicating conditioning. (Of course, for this it was necessary to express  these events in terms of clocks
rather than in terms of the underlying Moran process.)
In order to simplify the presentation, we deal with  \sP1 and \sP2 together, in 
Lemma~\ref{lem:star-lower-apply-centre-mut}.   Roughly, they correspond to the event that, as long as 
$\sigma(t) = O(\ell)$ then $v^*$ does not spawn a mutant at time~$t$ and, for $t=\tx$, 
$x_0$ dies by time~$t$.
 This event is
implied by the conjunction of three 
further events.
\begin{itemize}
\item
$\mathcal{E}_1$ is the event that
no star-clock 
$\vsmc{v^*}{v}$  (for any~$v$)
triggers in $[0,1/r]$.
\item
$\mathcal{E}_2$  is the event that the 
star-clock $\vsnc{v^*}{x_0}$ triggers in $[0,\tx-1]$. 
\item  
$\mathcal{E}_3$ corresponds informally to the event that
$v^*$ is a mutant for a period of time shorter than~$1/r$ during the first
 $O(\ell)$ times that it becomes a mutant (though the formal 
definition is expressed in terms of clocks, and is a little more complicated). Note the intention, though,
which is to ensure that $v^*$ is a mutant for a period of time shorter than~$1/r$, which makes
$\mathcal{E}_1$ relevant.
\end{itemize}
Lemma~\ref{lem:star-lower-centre-mut} shows that $\mathcal{E}_3$ is very  likely to hold.
In the proof of
Lemma~\ref{lem:star-lower-apply-centre-mut}, 
it is observed that $\mathcal{E}_1$ and $\mathcal{E}_2$ are independent
(by the definition of the star-clocks) and that $\Pr(\mathcal{E}_1) = 1/e$.
The proof demonstrates that $\mathcal{E}_2$ is sufficiently likely, giving the desired bound.

\subsection{Glossary}

\begin{longtable}{p{\linewidth-\widthof{Definition 00, Page 00}}@{}l@{ }l@{}}
clears before spawning \dotfill & Definition~\ref{def:clear-before},& Page~\pageref{def:clear-before} \\
clears within $I_i$ \dotfill & Definition~\ref{def:star-lower-path-clear},& Page~\pageref{def:star-lower-path-clear} \\
$c_r$  \dotfill & Theorem~\ref{thm:star-lower},& Page~\pageref{thm:star-lower} \\
$I_i=(i \kappa, (i+1)\kappa]$ \dotfill & Definition~\ref{def:tenth},& Page~\pageref{def:tenth} \\
$\iin{\mu}{u}{t}$ \dotfill & Definition~\ref{def:localtime},& Page~\pageref{def:localtime}\\
$\iout{\mu}{u}{t}$ \dotfill & Definition~\ref{def:localtime},& Page~\pageref{def:localtime}\\
$K=70$ \dotfill & Definition~\ref{def:star-lower-cr},& Page~\pageref{def:star-lower-cr} \\
$\kappa=\max\{3k,Kr^4 \log n\}$ \dotfill & Definition~\ref{def:star-lower-cr},& Page~\pageref{def:star-lower-cr} \\ 
$(\sP1),(\sP2),(\sP3),(\sP4),(\sP5)$ \dotfill & Lemma~\ref{lem:star-masterlist},& 
Page~\pageref{lem:star-masterlist} \\
protected \dotfill & Definition~\ref{def:protected},& Page~\pageref{def:protected} \\
$\Psi(X)$ \dotfill & Section~\ref{sec:coupled},& Page~\pageref{sec:coupled} \\
$\sigma(t)$ \dotfill & Definition~\ref{def:tenth}, & Page~\pageref{def:tenth} \\
$\sstar$ \dotfill & Section~\ref{def:super}, & Page~\pageref{def:super}\\
$\tmax=2\ell m$ \dotfill & Definition~\ref{def:tenth},& Page~\pageref{def:tenth} \\
$\TTmut{h}$  \dotfill & Definition~\ref{def:star-lower-tmut},& Page~\pageref{def:star-lower-tmut} \\
$\TTnmut{h}$ \dotfill & Definition~\ref{def:star-lower-tmut},& Page~\pageref{def:star-lower-tmut} \\
$\tx= \ell m /(K r^4 \kappa)$ \dotfill & Definition~\ref{def:tenth},& Page~\pageref{def:tenth} \\
$Y_h=\TTnmut{h}-\TTmut{h}$ \dotfill & Definition~\ref{def:star-lower-tmut},& Page~\pageref{def:star-lower-tmut}
\end{longtable}

\subsection{The easy regimes}
\label{sec:supeasy}

\begin{lemma}\label{lem:star-lower-die-instant}
Choose $x_0$ uniformly at random from $V(\sstar)$.
Let $X$ be the Moran process with $G(X)=\sstar$ and $X_0=\{x_0\}$.
The extinction probability of~$X$ is at least $k/(2r(m+k))$.
\end{lemma}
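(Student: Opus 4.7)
The strategy is to lower-bound the extinction probability by restricting attention to initial mutants placed on a ``vulnerable'' vertex (a path vertex or the centre $v^*$), and to show that for each such placement the first state-changing clock trigger has probability at least $1/(r+1)$ of being the death of $x_0$, which immediately gives extinction. There are $k\ell+1$ vulnerable vertices out of $n=\ell(k+m)+1$, so $x_0$ is vulnerable with probability $(k\ell+1)/n$.

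Fix any vulnerable $x_0$; since $X_0=\{x_0\}$, the only clock triggers that change the mutant set are (a)~triggers of $\mc{(x_0,w)}$ for $w\in N^+(x_0)$, with total rate $r$ (a spawn), and (b)~triggers of $\nc{(u,x_0)}$ for $u\in N^-(x_0)$, with total rate $\sum_{u\in N^-(x_0)} 1/d^+(u)$ (the death of $x_0$, which yields extinction). All other triggers leave the state unchanged by Definition~\ref{def:contMoran}, so by the competing-exponentials property the probability that the first state-changing trigger is a death equals $(\text{death rate})/(\text{death rate}+r)$. I would then check case by case that the death rate is at least~$1$: for $v_{i,j}$ with $j\geq 2$ the unique in-neighbour $v_{i,j-1}$ has out-degree~$1$ (rate~$1$); for $v_{i,1}$ the $m$ in-neighbours in $R_i$ each have out-degree~$1$ (rate $m\geq 1$); for $v^*$ the $\ell$ in-neighbours $v_{1,k},\ldots,v_{\ell,k}$ each have out-degree~$1$ (rate $\ell\geq 1$). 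In each case the probability of immediate extinction is at least $1/(r+1)$.

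Combining these two steps, $\Pr[\text{extinction}] \geq (k\ell+1)/(n(r+1))$. To reach the lemma's bound I would invoke the algebraic inequality $(k\ell+1)(k+m) \geq k(\ell(k+m)+1)=kn$, which rearranges to $m\geq 0$ and so holds, giving $(k\ell+1)/n \geq k/(k+m)$. Combined with $r+1<2r$ (since $r>1$) this yields
\[
\Pr[\text{extinction}] \;\geq\; \frac{k\ell+1}{n(r+1)} \;\geq\; \frac{k}{(k+m)(r+1)} \;>\; \frac{k}{2r(k+m)},
\]
as claimed. The only mildly subtle step is the competing-exponentials computation, but it is routine once one notes from Definition~\ref{def:contMoran} that triggers such as $\mc{(u,v)}$ for non-mutant $u$ or $\nc{(u,v)}$ for mutant $u$ leave the state unchanged and hence do not enter the race. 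Crucially, including $v^*$ among the vulnerable vertices is what promotes the weaker quantity $k\ell/n$ (which is strictly less than $k/(k+m)$) to the required $(k\ell+1)/n \geq k/(k+m)$; omitting $v^*$ costs a factor of roughly~$2$ and would yield only the weaker bound $k/(4r(k+m))$.
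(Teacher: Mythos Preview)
Your proof is correct and is essentially the same argument as the paper's: both show that with probability at least $k/(m+k)$ the initial mutant lands outside the reservoirs, and that any such vertex has an in-neighbour of out-degree~$1$, so the mutant dies before spawning with probability at least $1/(r+1)\ge 1/(2r)$. The paper reaches the inequality $(k\ell+1)/n\ge k/(k+m)$ via $1-\ell m/(\ell(m+k)+1)\ge 1-m/(m+k)$, which is the same computation in a slightly more compact form.

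One small inaccuracy in your closing commentary: omitting $v^*$ does \emph{not} cost a factor of roughly~$2$. The quantity $k\ell/n$ differs from $(k\ell+1)/n$ by only $1/n$, and $k\ell/n = k/(k+m)\cdot \ell(k+m)/(\ell(k+m)+1)$ is within a factor $1-1/n$ of $k/(k+m)$. So dropping $v^*$ would still give a bound asymptotically equal to $k/(2r(k+m))$; the only reason to include it is to make the clean inequality $(k\ell+1)(k+m)\ge kn$ go through exactly for all parameter values. This does not affect the correctness of your main argument.
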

\begin{proof}
We have
\[\Pr(x_0 \notin R_1 \cup \dots \cup R_\ell) = 1 - \frac{\ell m}{\ell(m+k)+1} \ge 1 - \frac{m}{m+k} = 
\frac{k}{m+k}.\]
Moreover, if $x_0 \notin R_1 \cup \dots \cup R_\ell$, then $x_0$ has an in-neighbour of 
out-degree~$1$
so,  with probability at least $1/(1+r) \ge 1/(2r)$,
$x_0$ dies 
before spawning a mutant. The result therefore follows.
\end{proof}

\begin{lemma}\label{lem:star-lower-die-before-path}
Suppose $m \ge 12r$ and $x_0 \in R_1 \cup \dots \cup R_\ell$. 
Let $X$ be the Moran process with $G(X)=\sstar$ and $X_0=\{x_0\}$.
The extinction probability of~$X$  is at least $1/(26 r^2\ell)$.
\end{lemma}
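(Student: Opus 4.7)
The plan is to lower bound the extinction probability by the probability that the path vertex $v_2$ never becomes a mutant. First, since the only in-neighbour of $v_j$ for $j\ge2$ is $v_{j-1}$, since the only in-neighbours of $v^*$ sit at the ends of paths, and since the only in-neighbour of any reservoir vertex other than $x_0$ is $v^*$, an easy induction shows that if $v_2$ never becomes a mutant in $X$ then no vertex of $V(\sstar)\setminus\{x_0,v_1\}$ ever becomes a mutant. Once mutants are confined to $\{x_0,v_1\}$ and $v^*$ stays non-mutant, the clock $\nc{(v^*,x_0)}$ fires almost surely (rate $1/(\ell m)$), after which $v_1$ is also overwritten at rate at least $m$, and extinction follows. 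Thus $\Pr(\text{extinction})\ge\Pr(v_2\text{ never mutant in }X)$.

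To estimate the latter I would introduce a reduced mutant process $Y$, driven by the same star-clocks as $X$ via the coupling of Section~\ref{sec:coupled}, in which every vertex outside $\{x_0,v_1\}$ is forced to remain non-mutant for all time. Because $v_2$ can only become a mutant in $X$ through a firing of $\mc{(v_1,v_2)}$ at a moment when $v_1$ is a mutant, $X$ and $Y$ agree up to precisely the first such firing; equivalently, the event ``$v_2$ is never a mutant in $X$'' coincides with the event ``$\mc{(v_1,v_2)}$ never fires during a mutant-episode of $v_1$ in $Y$.'' The process $Y$, enriched with a single absorbing \emph{fail} state for the latter event, is a five-state continuous-time Markov chain on $\{(0,0),(0,1),(1,0),(1,1),\text{fail}\}$, where the pair records whether $x_0$ and $v_1$ are mutants. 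The transition rates come directly from the clock rates of $\sstar$: for example $(1,1)\to(1,0)$ at rate $m-1+\epsilon$, $(1,1)\to(0,1)$ at rate $\epsilon$, $(1,1)\to\text{fail}$ at rate $r$, $(1,0)\to(1,1)$ at rate $r$, $(1,0)\to(0,0)$ at rate $\epsilon$, and $(0,1)\to(0,0)$ at rate $m+\epsilon$, where $\epsilon=1/(\ell m)$.

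The final step is a first-step analysis of this Markov chain. Letting $p_1,p_2,p_3$ denote the probabilities of reaching $(0,0)$ before \emph{fail} starting from $(1,0),(1,1),(0,1)$ respectively, one immediately reads off $p_3=(m+\epsilon)/(m+\epsilon+r)$, and solving the two linear equations for $p_1,p_2$ yields, after a short algebraic calculation,
\[
p_1 \;=\; \frac{\epsilon\bigl(rp_3+m-1+r+2\epsilon\bigr)}{r^2+2r\epsilon+\epsilon(m-1)+2\epsilon^2}\,.
\]
Using the hypotheses $m\ge12r$ and $r>1$, the numerator is at least $\epsilon(m-1)\ge 11/(12\ell)$; and each of $2r\epsilon$, $\epsilon(m-1)$, $2\epsilon^2$ in the denominator is $O(1/\ell)$, so the denominator is bounded by $r^2+O(1/\ell)$. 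Comparing these (with a separate trivial check for very small $\ell$) gives $p_1\ge 11/(36r^2\ell)>1/(26r^2\ell)$. The main obstacle is purely bookkeeping: the reduction to $Y$ must be justified carefully through the star-clock coupling so that the independence between $\mc{(v_1,v_2)}$ and everything else is fully explicit, and the low-order $\epsilon$-corrections in the denominator must be tracked just tightly enough for the final constant to stay above $1/26$.
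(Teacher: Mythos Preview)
Your approach is correct and takes a genuinely different route from the paper. The paper works over a fixed time window $J=[0,m/(4r^2)]$ and bounds (i) the probability that $\nc{(v^*,x_0)}$ fires in $J$, (ii) the probability that $\mc{(x_0,v_1)}$ fires at most $\lfloor m/(2r)\rfloor$ times in $J$, and (iii) conditioned on these, the probability that each time $v_1$ becomes a mutant it dies before $\mc{(v_1,v_2)}$ fires. This time-windowed argument with Chernoff-style estimates is the template reused later in the harder regimes. You instead observe that until $v_1$ first spawns onto $v_2$ the process is confined to $\{x_0,v_1\}$ and is therefore a five-state Markov chain with an absorbing ``fail'' state; a direct first-step solve then gives the extinction probability exactly.

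Your route is cleaner for this particular lemma and yields a sharper constant, whereas the paper's argument is the one that scales to Sections~\ref{sec:supdifficult} and~\ref{sec:metafunnel}, where exact Markov-chain computation is hopeless. One small slip: your rates $m-1+\epsilon$ and $m+\epsilon$ for the death of $v_1$ should be $m-1$ and $m$, since $v^*$ is \emph{not} an in-neighbour of $v_1$ (its out-edges go only to reservoir vertices). This error biases $p_1$ upward by $O(\epsilon^2)$, so it is harmless for the stated bound---with the correct rates one gets $p_1=\epsilon(m-1+\epsilon+r+rp_3)/(r^2+2r\epsilon+\epsilon(m-1)+\epsilon^2)$ and the same inequality $p_1\ge 1/(26r^2\ell)$ goes through with room to spare---but it should be fixed. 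Also, the star-clock coupling of Section~\ref{sec:coupled} is unnecessary here: working directly with the ordinary clocks in $\moranclocks(\sstar)$ already gives the Markov structure you need.
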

\begin{proof}
Let $R$ be the reservoir containing $x_0$, and let $v_1 \dots v_k$ be the path associated with $R$. 
Let $\xi=\lfloor m/(2r) \rfloor$, $t^* = m/(4r^2)$ and   $J = [0,t^*]$.
For all $t\geq0$,
let $\calE^1$, $\calE^2$ and $\calE^3_t$  be events defined as follows. 
\begin{description}
\item $\mathcal{E}^1$:  $\vnc{v^*}{x_0}$   triggers in   $J$.
\item $\mathcal{E}^2$:  $\vmc{x_0}{v_1}$ triggers at most $\xi$ times in~$J$.
\item $\mathcal{E}^3_t$:   
$\displaystyle \min \{ t' > t \mid \mbox{for some $v \ne x_0$, $\vnc{v}{v_1}$  triggers at $t'$} \}
 < \min \{t' > t \mid \mbox{$\vmc{v_1}{v_2}$  triggers at $t'$}\}$.
\end{description}
Finally, let 
 $\Tvone{i}$ be the $i$'th time at which the clock $\vmc{x_0}{v_1}$ triggers and define
$\mathcal{E}^3 =  \bigcap_{i=1}^{ \xi} \mathcal{E}^3_{\Tvone{i}}$.

Suppose that events~$\calE^1$, $\calE^2$ and $\calE^3$ occur. We will show that
$X$ goes extinct. 
Let $\xi'$ be the number of times 
  that $v_1$ becomes a mutant   in~$J$.
By $\mathcal{E}^2$, $\xi' \leq \xi$.
By $\mathcal{E}^3$, for each of  the first $\xi'$ times that $v_1$ becomes
a mutant, it dies before spawning a mutant.
Thus, for all $t\in J$, $X_t \subseteq \{x_0,v_1\}$.
Also, by $\mathcal{E}^1$, $x_0$ dies in $J$.
As soon as $x_0$ dies, and $v_1$ dies for the $(\xi')$'th time,
$X$ is extinct.

We   bound $\Pr(\mathcal{E}^1 \cap \mathcal{E}^2 \cap \mathcal{E}^3)$ below. Since $\vnc{v^*}{x_0}$ has rate 
$1/(\ell m)$, we have
\begin{equation}\label{eqn:star-lower-die-before-path-1}
\Pr(\mathcal{E}^1) = 1-e^{-t^*/(\ell m)} = 1 - e^{-m/(4r^2\ell m)} \ge 1 - \left(1 - \frac{1}{8r^2\ell}\right) = 
\frac{1}{8r^2\ell}.
\end{equation}
Here the inequality follows by~\eqref{eq:ebounds}. Moreover, since $\vmc{x_0}{v_1}$ has 
rate $r$, by Corollary~\ref{cor:pchernoff} we have
\begin{equation}\label{eqn:star-lower-die-before-path-2}
\Pr(\mathcal{E}^2) \ge 1 - e^{-m/(12r)} \ge 1 - e^{-1}.
\end{equation}

For any $t\in J$, let $f$   be a possible value of 
$\filt_{t }(X)$. Let~$\Phi$ be the random variable containing the list of times in~$J$
at which  $\vnc{v^*}{x_0}$ and $\vmc{x_0}{v_1}$ trigger. 
Let $\varphi$ be a possible value of $\Phi$ which is consistent with the events $\filt_{t}( X) = f$ 
and $\mathcal{E}^1 \cap \mathcal{E}^2$.
Note that $\varphi$ determines $\mathcal{E}^1 \cap \mathcal{E}^2$. By 
memorylessness and independence of clocks in $\moranclocks(\sstar)$, we have
\[\Pr\left(\mathcal{E}^3_{t} \mid \filt_{t}(X) = f, \Phi = \varphi \right) = 
\frac{m-1}{m-1+r} = 1 - \frac{r}{m-1+r} \ge 1 - \frac{r}{m}.\]
Thus for all $i\in [\xi]$,
$\Pr\big(\mathcal{E}^3_{\Tvone{i}} \mid \mathcal{E}^1 \cap \mathcal{E}^2\big) \ge 1 - r/m$. 
It follows by a union bound that
\begin{equation}\label{eqn:star-lower-die-before-path-3}
\Pr\left(\mathcal{E}^3 \mid \mathcal{E}^1 \cap \mathcal{E}^2\right) \ge 1 -  \xi  \left(\frac{r}{m}\right) 
\geq \frac{1}{2}.
\end{equation}

Since $\mathcal{E}^1$ and $\mathcal{E}^2$ depend entirely on distinct clocks in $\moranclocks(\sstar)$ in fixed 
intervals, the two events are independent. Thus by  
\eqref{eqn:star-lower-die-before-path-1}--\eqref{eqn:star-lower-die-before-path-3}, we have
\[\Pr\left(\mathcal{E}^1 \cap \mathcal{E}^2 \cap \mathcal{E}^3\right) = \Pr\left(\mathcal{E}^1\right)\, 
\Pr\left(\mathcal{E}^2\right)\, \Pr\left(\mathcal{E}^3 \mid \mathcal{E}^1 \cap \mathcal{E}^2\right) \geq
\left(\frac{1}{8r^2\ell}\right)\left(1-\frac{1}{e}\right) \frac{1}{2}\geq
 \frac{1}{26r^2\ell},\]
and the result follows.
\end{proof}

\subsection{The difficult regime}
\label{sec:supdifficult}

\begin{definition}\label{def:star-lower-cr}
Let   $K=70$ 
and
$\kappa = \max \{3k, K r^4 \log n\}$.
\defend{}
\end{definition}

\newcommand{\statestarlowerlem}{
\statelargen
Suppose that 
$\ell \ge  K r^4 \kappa \log n$, 
 $m \ge 6r^2\kappa $ 
and $n \ge n_0$.
Fix $x_0 \in  R_1 \cup \dots \cup R_\ell$.
Let $X$ be the Moran process with $G(X)=\sstar$ and $X_0=\{x_0\}$.
Then the extinction probability of~$X$ 
is at least $1/(7K r^4 \kappa )$.
}

\begin{lemma}\label{lem:star-lower-hard}
\statestarlowerlem
\end{lemma}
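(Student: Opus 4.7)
The plan is to follow the blueprint sketched in Section~\ref{sec:supdifficult}: work inside the coupled process $\Psi(X)$ so that all probabilistic statements can be phrased in terms of independent star-clocks, partition the interval $[0,\tmax]$ into pieces of length $\kappa$, and track the counter $\sigma(t)$ recording how often $v_k$ spawns a mutant onto $v^*$. Concretely, I would fix $R$ to be the reservoir containing $x_0$ with associated path $v_1,\dots,v_k$, set $\tx=\ell m/(Kr^4\kappa)$, and aim to prove that with probability at least $1/(7Kr^4\kappa)$ the following occurs: $\sigma(\tx)=O(\ell)$, the centre vertex $v^*$ spawns no mutants during the relevant window, $x_0$ has died by time $\tx$, and any residual mutants in the path die out within two further intervals of length $\kappa$.

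The next step is to introduce the five events $\sP1$--$\sP5$ of Lemma~\ref{lem:star-masterlist}, expressed purely in terms of star-clock triggers so that their probabilities can be bounded without circular conditioning. Informally: $\sP1$ says that $v^*$ does not spawn a mutant during any ``short'' period while $\sigma$ is still $O(\ell)$; $\sP2$ says $\vsnc{v^*}{x_0}$ triggers by time $\tx-1$, forcing $x_0$ to die on schedule; $\sP3$ says that whenever the path is in state $\{x_0,v_1\}$ the non-mutant clocks push the path back to empty within an interval $I_i$ (Lemma~\ref{lem:star-lower-path-clear}); $\sP4$ bounds the total number of triggers of the mutant clock sourced at $v_k$ (Corollary~\ref{cor:star-lower-full-path-not-bad}); and $\sP5$ ensures that within most intervals $I_i$ the mutant clock $\smc{(v_1,v_2)}$ does not trigger before the non-mutant clock targeting $v_1$ does (Lemma~\ref{lem:star-lower-path-not-fill}). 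I would then carry out the deterministic bookkeeping argument: inductively on $i$, using $\sP1$ to rule out new infections coming from $v^*$, $\sP5$ to keep $v_2$ non-mutant in each ``good'' interval, and $\sP3$ to clear the path, one shows $\sigma$ increments at most $O(1)$ times per interval except on the few ``bad'' intervals, which are handled by $\sP4$. This yields $\sigma(\tx)=O(\ell)$, whereupon $\sP1$ prevents $v^*$ from spawning before $x_0$ dies (which is guaranteed by $\sP2$), and two more applications of $\sP3$ take the process to extinction.

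The main obstacle is proving the $\sP1\wedge\sP2$ part of Lemma~\ref{lem:star-masterlist}, because $\sP1$ refers to the stochastic quantity $\sigma$ and thus interacts with the Moran dynamics in a way $\sP3$--$\sP5$ do not. My plan is the three-event reduction hinted at in the sketch: introduce $\mathcal{E}_1$ (no $\vsmc{v^*}{\cdot}$ star-clock triggers in $[0,1/r]$), $\mathcal{E}_2$ ($\vsnc{v^*}{x_0}$ triggers in $[0,\tx-1]$), and $\mathcal{E}_3$ (the first $O(\ell)$ activations of $v^*$ each last less than $1/r$ of \emph{local} time on $v^*$'s in-clocks). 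Events $\mathcal{E}_1$ and $\mathcal{E}_2$ are independent by construction of the star-clocks, $\Pr(\mathcal{E}_1)=1/e$, $\Pr(\mathcal{E}_2)\ge 1-e^{-(\tx-1)/(\ell m)}\gtrsim 1/(Kr^4\kappa)$ by \eqref{eq:ebounds}, and $\mathcal{E}_3$ is handled by Lemma~\ref{lem:star-lower-centre-mut}. The subtle point is that $\mathcal{E}_1\cap\mathcal{E}_3$ implies that throughout the first $O(\ell)$ mutant-epochs of $v^*$, no outgoing mutant star-clock of $v^*$ fires during any such epoch, which by Observation~\ref{obs:one} is exactly $\sP1$.

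Finally I would assemble the bounds: by Lemma~\ref{lem:star-masterlist} the intersection $\sP1\wedge\cdots\wedge\sP5$ holds with probability at least a constant multiple of $1/(Kr^4\kappa)$, and on this event the deterministic argument above forces extinction. Combining these constants and absorbing the losses from the union bounds over the $\tmax/\kappa=O(\ell m/\kappa)$ intervals (which only contribute polynomially small failure probabilities thanks to $\kappa\ge Kr^4\log n$ with $K=70$) yields the claimed lower bound of $1/(7Kr^4\kappa)$. The numerical factor $7$ is where I would be careful to track the constants from Corollaries~\ref{cor:pchernoff}--\ref{cor:pchernoff-3} and Lemma~\ref{lem:geo-chernoff} so as to leave enough slack to absorb the failure probabilities of $\sP3$--$\sP5$ and of $\mathcal{E}_3$.
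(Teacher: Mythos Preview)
Your proposal is correct and follows essentially the same approach as the paper: invoke Lemma~\ref{lem:star-masterlist} to obtain $\sP1\cap\dots\cap\sP5$ with probability at least $1/(7Kr^4\kappa)$, then run the deterministic bookkeeping on $\sigma$ through the intervals $I_i$ (using $\sP5$ for good intervals where $\sigma$ stays constant, $\sP4$ to cap the increment on bad intervals by $\lfloor 2r\kappa\rfloor$, $\sP1$ to keep $v^*$ from spawning while $\sigma$ is small, $\sP2$ to kill $x_0$ by time $\tx$, and two final applications of $\sP3$ to clear the path). A couple of small descriptive slips to be aware of when you write it out: $\sP3$--$\sP5$ are stated in terms of the ordinary clocks $\mc{e},\nc{e}$ rather than the star-clocks, the increment bound in a bad interval is $\lfloor 2r\kappa\rfloor$ rather than $O(1)$, and Lemma~\ref{lem:geo-chernoff} plays no role here (the tail bounds you need are the Poisson Chernoff bounds of Corollaries~\ref{cor:pchernoff}--\ref{cor:pchernoff-3}).
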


The following corollary, which applies to the regime in which $\kappa=3k$, is immediate.
 
\begin{corollary}\label{cor:superstark}
\statelargen
Suppose that 
$k\geq  (K/3) r^4 \log n$,
$\ell \ge  3 K r^4 k \log n$, $m \ge 18 r^2 k $ and $n \ge n_0$.
Fix $x_0 \in  R_1 \cup \dots \cup R_\ell$.
Let $X$ be the Moran process with $G(X)=\sstar$ and $X_0=\{x_0\}$.
Then the extinction probability of~$X$ 
is at least $1/(21K r^4  k )$.

\end{corollary}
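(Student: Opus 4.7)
The plan is to show that Corollary~\ref{cor:superstark} is a direct specialization of Lemma~\ref{lem:star-lower-hard} to the regime in which $\kappa=3k$, so the proof amounts to a brief check of hypotheses.

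First I would unpack the definition of $\kappa$. By Definition~\ref{def:star-lower-cr}, $\kappa=\max\{3k,Kr^4\log n\}$. The hypothesis $k\ge (K/3)r^4\log n$ is exactly equivalent to $3k\ge Kr^4\log n$, so under the corollary's hypotheses we have $\kappa=3k$. This is the only substantive observation; everything else is arithmetic.

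Next I would verify the three numerical hypotheses of Lemma~\ref{lem:star-lower-hard} in this regime. The lemma requires $\ell\ge Kr^4\kappa\log n$, which with $\kappa=3k$ becomes $\ell\ge 3Kr^4 k\log n$, matching the corollary's assumption on $\ell$. The lemma requires $m\ge 6r^2\kappa$, which becomes $m\ge 18r^2k$, matching the corollary's assumption on $m$. Finally, the hypotheses $n\ge n_0$ and $x_0\in R_1\cup\cdots\cup R_\ell$ transfer verbatim, and $n_0$ is the same integer (depending only on $r$) supplied by Lemma~\ref{lem:star-lower-hard}.

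Having checked the hypotheses, I would apply Lemma~\ref{lem:star-lower-hard} to conclude that the Moran process $X$ has extinction probability at least
\[
\frac{1}{7Kr^4\kappa}=\frac{1}{7Kr^4\cdot 3k}=\frac{1}{21Kr^4k},
\]
which is exactly the bound claimed in Corollary~\ref{cor:superstark}.

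There is no genuine obstacle here; the corollary is stated precisely so that the ``$\kappa=3k$ branch'' of Lemma~\ref{lem:star-lower-hard} can be read off cleanly, and all the real work lives in the proof of Lemma~\ref{lem:star-lower-hard} itself. The only thing to be a little careful about is the direction of the inequality $k\ge (K/3)r^4\log n$ and matching $\kappa$ to $3k$ rather than to $Kr^4\log n$; once that identification is made, the remaining hypotheses line up termwise by substituting $\kappa=3k$.
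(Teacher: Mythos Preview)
Your proposal is correct and matches the paper's approach exactly: the paper states that the corollary ``applies to the regime in which $\kappa=3k$'' and ``is immediate,'' and your verification that $k\ge (K/3)r^4\log n$ forces $\kappa=3k$, followed by termwise substitution into the hypotheses and conclusion of Lemma~\ref{lem:star-lower-hard}, is precisely the intended argument.
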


The crux of our proof of Lemma~\ref{lem:star-lower-hard}  is Lemma~\ref{lem:star-masterlist}. In order to state this 
lemma, we require the following additional definitions. 

\begin{definition}\label{def:tenth}
Let  $\tx = \ell m/(K r^4 \kappa )$,  and  $\tmax = 2\ell m$. For all $i \in \Zzero$, let $I_i = (i\kappa , 
(i+1)\kappa ]$.
For all $t \in [0, \tmax]$, let
\begin{equation*}
\sigma(t) = \big|\{t' \le t \mid v_k \textnormal{ spawns a mutant onto $v^*$ 
  at time }t'\}\big|\,.\defenddisp{}
\end{equation*}
\end{definition}

The reason that we give $\tx$ its name is
that we will be most concerned with the case in which $x_0$ dies  in the interval $[0,\tx]$.

\begin{definition}\label{def:clear-before}
Let $I \subseteq [0, \infty)$ be an interval, let $R \in \{R_1, \dots, R_\ell\}$, suppose $x_0 \in R$, 
and let $v_1 \dots v_k$ be the path associated with $R$. We say that \emph{$v_1$ clears before spawning a mutant 
within $I$} if at least one of the following statements holds:
\begin{enumerate}[(i)]
\item $\vmc{v_1}{v_2}$ does not trigger in $I$, or
\item for some $v \ne x_0$, $\vnc{v}{v_1}$ triggers in $I$ before $\vmc{v_1}{v_2}$ first triggers in $I$.\defend
\end{enumerate}
\end{definition}

\begin{definition}\label{def:protected}
Let $i \in \Zzero$, let $R \in \{R_1, \dots, R_\ell\}$, suppose $x_0 \in R$, and let $v_1 \dots v_k$ 
be the path associated with $R$. We say that \emph{$v_2$ is protected in $I_i$} if both of the following properties 
hold.
\begin{enumerate}[(i)]
\item $v_1$ clears before spawning a mutant within $I_i$.
\item For all $t \in I_i$ such that $\vmc{x_0}{v_1}$ triggers at time $t$, $v_1$ clears before spawning a mutant 
within $(t,(i+1)\kappa ]$.\defend{}
\end{enumerate}
\end{definition}

In particular, suppose that $v_2$ is protected in $I_i$ and that $x_0$ is the only mutant in $R$ for the duration 
of $I_i$. Then as we will see in the proof of Lemma~\ref{lem:star-lower-hard}, $v_2$ does not become a mutant in $I_i$.

\begin{definition}\label{def:star-lower-path-clear}
Let $i \in \Zzero$, let $R \in \{R_1, \dots, R_\ell\}$, suppose $x_0 \in R$, and let $v_1 \dots v_k$ 
be the path associated with $R$. We say that \emph{$v_1 \dots v_k$ clears within $I_i$} if there exist $v_0 \in 
R \setminus \{x_0\}$ and times 
$ i\kappa < 
t_1 <  \dots < t_{k+1} \leq (i+1)\kappa$ 
satisfying 
both
of the following properties.
\begin{enumerate}[(i)]
\item  
For all $j \in [k]$, 
$\vnc{v_{j-1}}{v_j}$ triggers at time $t_j$,
and $\vnc{v_k}{v^*}$ triggers at time $t_{k+1}$.
\item $\vmc{x_0}{v_1}$ does not trigger in the interval $[t_1, t_2]$.\defend
\end{enumerate}
\end{definition}

The purpose of this definition is the following. Suppose
that $X_{i\kappa } \subseteq \{x_0, v_1, \dots, v_k, v^*\}$, that neither $v_1$ nor $v^*$ spawns a mutant 
within $I_i$, and that $v_1 \dots v_k$ clears within $I_i$. Then, as we will see in the proof of 
Lemma~\ref{lem:star-lower-hard}, we will have $X_{(i+1)\kappa } \subseteq \{x_0,v_1\}$.

Our main task will be to prove the following lemma.

\newcommand{\statestarlowermaster}{
\statelargen  Suppose 
$\ell \ge K r^4 \kappa \log n$, 
 $m \ge 6r^2\kappa $ 
and $n \ge n_0$. Let $R \in \{R_1, \dots, R_\ell\}$, 
and let $v_1 \dots v_k$ be the path associated with $R$.
Fix $x_0 \in R$.
Let $X$ be the Moran process with $G(X)=\sstar$ and $X_0=\{x_0\}$.  
Then, with probability at least $1/(7K r^4 \kappa )$, the following events
occur simultaneously.
\begin{description}
\item \sP1:  $\forall t\leq \tmax$,
$\sigma(t) \geq \lfloor \ell/(2r) \rfloor + 1$ or 
$v^*$ does not spawn a mutant at time~$t$. 
\item \sP2: $\sigma(\tx) \geq \lfloor \ell/(2r) \rfloor + 1$ or $x_0 \notin X_{\tx}$. 
\item \sP3:  For all integers $i$ with $0 \le i \le \tx$, $v_1 \dots v_k$ clears within $I_i$.
\item \sP4:  For all integers $i$ with $0 \le i \le \tx$, the clock $\vmc{v_k}{v^*}$ triggers at most $\lfloor 
2r\kappa \rfloor$ times 
within~$I_i$.
\item \sP5:  For all but at most $8r^2\tx/m$ integers $i$ with $0 \le i \le \tx/\kappa $, $v_2$ is protected in $I_i$.
\end{description}
}

\begin{lemma}\label{lem:star-masterlist}
\statestarlowermaster
\end{lemma}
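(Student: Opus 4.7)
The plan is to split the five events into two groups. The ``clock-only'' events \sP3, \sP4, \sP5 depend only on which clocks in $\moranclocks(\sstar)$ trigger during fixed real-time intervals, and I would show each holds with probability $1-o(1)$ by Chernoff and union-bound arguments. The ``process-coupled'' events \sP1, \sP2 are subtler; their joint probability is of order $1/(Kr^4\kappa)$ and I would bound it below by passing to the coupled process $\Psi(X)$ and introducing auxiliary star-clock events. Combining the two estimates gives the claimed $1/(7Kr^4\kappa)$, where $\Pr(\sP3\cap\sP4\cap\sP5)$ contributes only a factor close to $1$ and all of the order-of-magnitude comes from the star-clock analysis for \sP1, \sP2.

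For \sP3, I fix an interval $I_i$ and observe that path-clearance asks that $k+1$ specified non-mutant clocks trigger in order within $I_i$ and that $\vmc{x_0}{v_1}$ does not trigger between the first two such triggers. Using Corollary~\ref{cor:expsum} to control the waiting times (the relevant clocks have rate at least $1/m$, and $\kappa\geq 3k$ provides ample slack), the per-interval failure probability is $e^{-\Omega(\kappa)}$; a union bound over the $O(\tmax/\kappa)$ intervals yields $\Pr(\sP3)=1-o(1)$ since $\kappa\geq Kr^4\log n$. For \sP4, the number of triggers of $\vmc{v_k}{v^*}$ in $I_i$ is Poisson with mean $r\kappa$, so Corollary~\ref{cor:pchernoff-2} gives failure probability $e^{-r\kappa/3}$ per interval, and a union bound suffices. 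For \sP5, a short calculation bounds by $q=O(r/m)$ the probability that $v_2$ is unprotected within a given $I_i$; since disjoint intervals involve disjoint clock windows, the number of unprotected intervals is stochastically dominated by a binomial variable, and Lemma~\ref{lem:bchernoff} applied with $c$ of order $m/(r\kappa)$ (which exceeds $2$ thanks to $m\geq 6r^2\kappa$) gives the required tail estimate.

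For \sP1 and \sP2, I would work in the coupled process $\Psi(X)$. Let $\calE_1$ be the event that no star-clock of the form $\vsmc{v^*}{v}$ triggers in the local-time interval $[0,1/r]$, let $\calE_2$ be the event that $\vsnc{v^*}{x_0}$ triggers in the local-time interval $[0,\tx-1]$, and let $\calE_3$ be the event that the cumulative real time spent by $v^*$ in the mutant state during its first $\lfloor\ell/(2r)\rfloor+1$ stints is strictly less than $1/r$. Since the total rate of mutant star-clocks out of $v^*$ is $r$, $\Pr(\calE_1)=e^{-1}$. Since $\vsnc{v^*}{x_0}$ has rate $1/(\ell m)$, the bound \eqref{eq:ebounds} gives $\Pr(\calE_2)\geq (\tx-1)/(2\ell m)=\Theta(1/(Kr^4\kappa))$. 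The event $\calE_3$ I would show is likely via a separate concentration argument (analogous to the one sketched for \sP5) using that an incoming non-mutant rate of order $1$ hits $v^*$ once a reservoir path has cleared, which happens by \sP3. On $\calE_1\cap\calE_2\cap\calE_3\cap\sP3\cap\sP4\cap\sP5$, an induction on $i$ over the intervals $I_0, I_1,\dots,I_{\lfloor\tx/\kappa\rfloor}$ shows that $\sigma(t)$ never exceeds $\lfloor\ell/(2r)\rfloor$ for $t\leq\tx$: whenever $v^*$ becomes a mutant its stint is short by $\calE_3$, and $\calE_1$ then prevents any mutant spawn out of $v^*$ during the combined mutant local-time ${<}1/r$, giving \sP1; for \sP2, the non-mutant local-time of $v^*$ up to real time $\tx$ is then at least $\tx-1$, so $\calE_2$ forces $\vnc{v^*}{x_0}$ to fire by real time $\tx$, killing $x_0$.

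\textbf{Main obstacle.} The delicate part is the conditional-independence bookkeeping needed to multiply all these probabilities. By construction of $\Psi(X)$, the star-clocks appearing in $\calE_1$ and $\calE_2$ act on disjoint local-time windows of $v^*$ (mutant versus non-mutant) and so are independent of each other; they are also independent of the star-clocks driving \sP3, \sP4, \sP5 (which live on edges between vertices other than $v^*$) and of those driving $\calE_3$ (which concern in-edges to $v^*$). Carefully tracking which star-clocks are exposed in each event lets me justify the near-factorisation of probabilities without ever conditioning on random values of $\iin{X}{v^*}{\cdot}$ in an uncontrolled way. Putting everything together, the intersection of all five events occurs with probability at least $(1-o(1))\,e^{-1}\,(\tx-1)/(2\ell m)$, and substituting $\tx=\ell m/(Kr^4\kappa)$ yields the claimed lower bound of $1/(7Kr^4\kappa)$.
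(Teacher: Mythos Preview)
Your overall strategy matches the paper's: show \sP3, \sP4, \sP5 each hold with probability $1-O(1/n)$ via clock-based Chernoff and union-bound arguments, show $\Pr(\sP1\cap\sP2)\ge 1/(6Kr^4\kappa)$ via star-clock events $\calE_1,\calE_2,\calE_3$, and combine by the union bound $\Pr(\sP1\cap\cdots\cap\sP5)\ge\Pr(\sP1\cap\sP2)-\Pr(\overline{\sP3})-\Pr(\overline{\sP4})-\Pr(\overline{\sP5})$. However, your plan for \sP1 and \sP2 has a genuine gap.

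Your argument for $\calE_3$ is wrong as stated. You say ``an incoming non-mutant rate of order~$1$ hits $v^*$ once a reservoir path has cleared, which happens by \sP3''. This misidentifies both the rate and its source. The clocks that terminate each mutant stint of $v^*$ are the $\ell-1$ non-mutant clocks $\vnc{v_{i',k}}{v^*}$ coming from the \emph{other} paths, and before $v^*$ first spawns one automatically has $X_t\subseteq\{x_0,v_1,\dots,v_k,v^*\}$, so those $\ell-1$ path-ends are non-mutant throughout, independently of \sP3. Their total rate is $\ell-1$, not order~$1$; this is exactly what makes each $Y_h$ dominated by an exponential of mean $1/(\ell-1)$, so that $\sum_{h\le\lfloor\ell/(2r)\rfloor}Y_h<1/r$ with probability $1-1/n^2$ (the paper's Lemma~\ref{lem:star-lower-centre-mut}). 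With a rate of order~$1$ the expected sum would be $\Theta(\ell)$ and the concentration would fail entirely. Relatedly, the induction you describe is unnecessary and conflates this lemma with Lemma~\ref{lem:star-lower-hard}: the paper shows $\calE_1\cap\calE_3\Rightarrow\sP1$ directly (if $v^*$ first spawns at $\tspawn$ then $\iin{X}{v^*}{\tspawn}\ge 1/r$ by $\calE_1$, forcing $\sigma(\tspawn)>\lfloor\ell/(2r)\rfloor$ by $\calE_3$), with no appeal to \sP3--\sP5; the interval-by-interval induction is what Lemma~\ref{lem:star-lower-hard} later carries out \emph{using} \sP1. Finally, your independence bookkeeping is both inaccurate (\sP3 and \sP4 do involve clocks with target $v^*$) and unneeded: the paper combines everything with a plain union bound, avoiding any factorisation.
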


Note that the definition of  \sP5 considers $i$ up to $\tx/\kappa$,
because it corresponds to at most $\tx/\kappa$ intervals of length~$\kappa$.
The definitions of  \sP3 and  \sP4 consider larger values of~$i$.
In fact, it is only necessary to take $i$ up to $\tx/\kappa+2$ in  \sP3 and  \sP4 but
we state the lemma as we did to avoid clutter. 
As a first step towards proving Lemma~\ref{lem:star-masterlist},  
  we prove Lemmas~\ref{lem:star-lower-centre-mut} and~\ref{lem:star-lower-apply-centre-mut} which 
give a lower bound on the probability that  \sP1 and  \sP2 hold.

\begin{definition}\label{def:star-lower-tmut}
Let $R \in \{R_1, \dots, R_\ell\}$, let $v_1 \dots v_k$ be the path associated with $R$, and suppose 
$x_0 \in R$. 
Let $X$ be the Moran process with $G(X)=\sstar$ and $X_0=\{x_0\}$.
We give two mutual recurrences to 
define stopping times $\TTnmut{h}$ for all $h\in\Zzero$ and $\TTmut{h}$ for all $h \in \Zone$. 
The subscript ``$\mathsf n$'' stands for ``non-mutant'' and the subscript ``$\mathsf m$'' stands for ``mutant''.
\begin{align*}\TTnmut{h} &= \begin{cases}
 0, & \mbox{if $h=0$,}\\
\min\{t > \TTmut{h} \mid
\mbox{$t=\tmax$ or some clock $\vnc{v}{v^*}$ with $v \ne v_k$ triggers at~$t$} 
\}, & \mbox{otherwise.}
\end{cases}\\
\TTmut{h} &= 
\min\{t > \TTnmut{h-1} \mid
\mbox{$t=\tmax$ or $v_k$ spawns a mutant onto~$v^*$ at~$t$} 
\}.\end{align*}
Finally, for all $h\in\Zone$,  let $Y_h = \TTnmut{h} - \TTmut{h}$.\defend{}
\end{definition}

\begin{lemma}\label{lem:star-lower-centre-mut}
\statelargen Suppose $\ell \ge K r^4\kappa \log n$
and $n \ge n_0$. Let $R \in \{R_1, \dots, 
R_\ell\}$ and let $v_1 \dots v_k$ be the path associated with $R$. Let $x_0 \in R$.
Let $X$ be the Moran process with $G(X)=\sstar$ and $X_0=\{x_0\}$. Then
\[\Pr\left(\sum_{i=1}^{\lfloor\ell/(2 r) \rfloor} Y_i < \frac1r \right) \ge 1-\frac{1}{n^2}.\]
\end{lemma}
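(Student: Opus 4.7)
The plan is to stochastically dominate each $Y_h$ by an independent exponential of rate $\ell-1$, and then apply Corollary~\ref{cor:expsum} to the resulting sum of iid exponentials. The key structural observation is that, for every $i \in [\ell]$, the vertex $v_{i,k}$ has out-degree $1$ in $\sstar$, so the clock $\vnc{v_{i,k}}{v^*}$ is a Poisson process of rate $1$. These $\ell$ clocks are independent of each other and of every other clock in $\moranclocks(\sstar)$, so the superposition of those $\ell-1$ of them with $v_{i,k}\ne v_k$ is a Poisson process of rate $\ell-1$, independent of the rest of $P(\sstar)$.

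Next I would establish the stochastic domination. Since $\TTmut{h}$ is a stopping time of $X$ (and the filtration of the clock process $P(\sstar)$ is included in $\filt_\cdot(X)$), the strong Markov property of the superposed Poisson process gives that, conditional on $\filt_{\TTmut{h}}(X)$, the residual waiting time $E_h$ from $\TTmut{h}$ until the next trigger of any $\vnc{v}{v^*}$ with $v\ne v_k$ is exponentially distributed with rate $\ell-1$, independent of the past. By the definition of $\TTnmut{h}$, we have $Y_h = \TTnmut{h} - \TTmut{h} \le E_h$ deterministically (with $Y_h=0$ if $\TTmut{h}=\tmax$). Iterating this conditioning over $h=1,\dots,s$, where $s=\lfloor\ell/(2r)\rfloor$, yields $\sum_{h=1}^s Y_h \le \sum_{h=1}^s E_h$ stochastically, with $E_1,\dots,E_s$ iid exponentials of rate $\ell-1$.

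To finish, I would apply Corollary~\ref{cor:expsum} with $\lambda=\ell-1$, $s=\lfloor\ell/(2r)\rfloor$, and $j=1/r$. The hypothesis $j\ge 3s/(2\lambda)$ reduces to $2(\ell-1)\ge 3r\lfloor\ell/(2r)\rfloor$, which follows from $\ell/(2r) \ge \lfloor\ell/(2r)\rfloor$ and the algebraic inequality $\ell \ge 4$; this certainly holds for $n\ge n_0$ with $n_0$ large. The corollary then gives
$$\Pr\!\left(\sum_{h=1}^{s} Y_h < \tfrac{1}{r}\right) \;\ge\; \Pr\!\left(\sum_{h=1}^{s} E_h < \tfrac{1}{r}\right) \;\ge\; 1 - e^{-(\ell-1)/(16r)}.$$
Under the hypothesis $\ell \ge Kr^4\kappa\log n \ge 70\, r^4 \log n$ (using $\kappa \ge 1$ and $K=70$), choosing $n_0$ large enough makes $(\ell-1)/(16r) \ge 2\log n$, so $e^{-(\ell-1)/(16r)} \le 1/n^2$ as required.

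The only substantive step is the stochastic domination, and I expect it to be entirely routine: because the clocks $\{\vnc{v}{v^*} : v\ne v_k\}$ are mutually independent Poisson processes that do not depend on any other part of $P(\sstar)$, the strong Markov property applied at the stopping time $\TTmut{h}$ delivers a genuine conditional exponential distribution for $E_h$, and the iterated conditioning handles all interdependencies between the $Y_h$'s. The calculation itself is just an application of the Chernoff-type tail bound already packaged as Corollary~\ref{cor:expsum}.
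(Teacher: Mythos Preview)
Your proposal is correct and matches the paper's proof essentially line for line: the paper also conditions on $\filt_{\TTmut{h}}(X)$, observes that $\TTnmut{h}-\TTmut{h}$ is governed by the next trigger of the $\ell-1$ rate-$1$ non-mutant clocks $\vnc{v}{v^*}$ with $v\ne v_k$ (or is cut off at $\tmax$), deduces the stochastic domination by i.i.d.\ $\mathrm{Exp}(\ell-1)$ variables, and finishes with Corollary~\ref{cor:expsum} and the same arithmetic check $1/r\ge 3\lfloor\ell/(2r)\rfloor/(2(\ell-1))$.
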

\begin{proof} \letlargen
We claim that $Y_1, \dots, Y_{\lfloor \ell/(2r) \rfloor}$ are stochastically dominated above by 
$\lfloor\ell/(2r)\rfloor$ independent exponential variables, each with parameter $\ell-1$. To see this claim, fix $i 
\in \Zone$,
 $t \ge 0$, and $y_1, \dots, y_{i-1}, \tmut{i} > 0$. Let $f_i$ be a possible value of $\filt_{\tmut{i}}( X)$. 
Suppose that the events $Y_1 = y_1, \dots, Y_{i-1} = y_{i-1}$, $\TTmut{i} = \tmut{i}$ and $\filt_{\tmut{i}}( X) 
= f_i$ are consistent, and note that in this case $\filt_{\tmut{i}}( X) = f_i$  determines the other 
events. 

  If $t\geq 0$ satisfies
  $\tmut{i} + t \ge \tmax$, it follows that $\tmut{i}+t \ge \TTnmut{i}$ and hence 
  if $\filt_{\tmut{i}}(X) = f_i$ then $Y_i = \TTnmut{i} - 
\tmut{i} \le t$. Hence, for  all such~$t$,
\begin{equation}\label{eqn:star-lower-centre-mut-1}
\Pr\big(Y_i \le t\mid \filt_{\tmut{i}}( X)=f_i\big) = 1 \ge 1 - e^{-(\ell-1)t}.
\end{equation}
Suppose instead that $\tmut{i}+t < \tmax$.  
If $\filt_{\tmut{i}}(X)=f_i$ then $Y_i \le t$ if and only if some clock $\vnc{v}{v^*}$ with $v \ne 
v_k$ triggers in the interval $(\tmut{i}, \tmut{i}+t]$. These clocks have total rate $\ell-1$, and so by 
memorylessness we have
\begin{equation}\label{eqn:star-lower-centre-mut-2}
\Pr\big(Y_i \le t\mid \filt_{\tmut{i}}( X) = f_i\big) = 1 - e^{-(\ell-1)t}.
\end{equation}

Since \eqref{eqn:star-lower-centre-mut-1} and \eqref{eqn:star-lower-centre-mut-2} apply to every value of $f_i$ 
consistent with $Y_1 = y_1, \dots, Y_{i-1} = y_{i-1}$ and $\TTmut{i} = \tmut{i}$, it follows that 
\[\Pr\big(Y_i \le t\mid Y_1 = y_1, \dots, Y_{i-1} = y_{i-1}\big) \ge 1 - 
e^{-(\ell-1)t}.\]
Thus $\sum_{i=1}^{\lfloor\ell/(2r)\rfloor} Y_i$ is stochastically dominated above by a sum $S$ of $\lfloor \ell/(2r) 
\rfloor$ i.i.d.\ exponential variables with parameter $\ell-1$. 
Corollary~\ref{cor:expsum} applies since 
\[\frac{1}{r} \geq \frac{3 \lfloor \ell/(2r) \rfloor}{2(\ell-1)},\] so we have
\[
\Pr\left(\sum_{i=1}^{\lfloor \ell/(2 r) \rfloor} Y_i < \frac{1}{r}\right) 
\geq 
\Pr\left( S < \frac{1}{r}\right) \geq 1 - e^{- (\ell-1) /(16 r)}\geq 1-\frac{1}{n^2},\]
 as required.  
\end{proof}

We are now in a position to prove that \sP1 and \sP2 occur with reasonable probability.

\begin{lemma}\label{lem:star-lower-apply-centre-mut}
\statelargen  Suppose $\ell \ge K r^4\kappa \log n$, $m \ge 2$ and $n \ge n_0$. Let $R \in \{R_1, \dots, 
R_\ell\}$ and let $v_1 \dots v_k$ be the path associated with $R$. Let $x_0 \in R$.
Let $X$ be the Moran process with $G(X)=\sstar$ and $X_0=\{x_0\}$. 
 Then
$\Pr(\sP1 \cap \sP2)   \ge 
1/( 6 K r^4 \kappa )$.\end{lemma}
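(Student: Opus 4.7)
The plan is to introduce three events whose joint occurrence forces both $\sP1$ and $\sP2$, and then bound their joint probability. The events will be:
\begin{itemize}
\item $\mathcal{E}_1$: no star-clock $\vsmc{v^*}{v}$ with $v \in N^+(v^*)$ triggers in $[0, 1/r]$;
\item $\mathcal{E}_2$: the star-clock $\vsnc{v^*}{x_0}$ triggers in $[0, \tx - 1]$;
\item $\mathcal{E}_3$: $\sum_{i=1}^{\lfloor\ell/(2r)\rfloor} Y_i < 1/r$.
\end{itemize}
Lemma~\ref{lem:star-lower-centre-mut} already gives $\Pr(\mathcal{E}_3) \ge 1 - 1/n^2$.

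The main conceptual step is to show $\mathcal{E}_1\cap\mathcal{E}_2\cap\mathcal{E}_3\Rightarrow \sP1\cap\sP2$. The key observation is that as long as $v^*$ has never spawned a mutant, the mutant set stays within $\{x_0,v_1,\dots,v_k,v^*\}$, since every other vertex is reachable from $x_0$ only through an out-spawn of $v^*$. In this regime, the only vertex that can switch $v^*$ from non-mutant to mutant is $v_k$, so by Definition~\ref{def:star-lower-tmut} each ``major'' mutant-episode of $v^*$ begins at some $\TTmut{h}$ and ends by $\TTnmut{h}$; the total mutant time of $v^*$ inside $(\TTmut{h},\TTnmut{h}]$ is at most $Y_h$. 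Hence, before $v^*$'s first spawn, $\iin{X}{v^*}{t}\leq \sum_{i=1}^{\sigma(t)} Y_i$, and if additionally $\sigma(t)\leq \lfloor\ell/(2r)\rfloor$ then $\mathcal{E}_3$ gives $\iin{X}{v^*}{t}<1/r$. By Observation~\ref{obs:one}, a spawn from $v^*$ at real time $t$ corresponds precisely to a trigger of some $\vsmc{v^*}{v}$ at local time $\iin{X}{v^*}{t}$, which $\mathcal{E}_1$ forbids whenever $\iin{X}{v^*}{t}\leq 1/r$. Consequently, the first time $T^{**}\leq \tmax$ at which $v^*$ spawns, if it exists, must satisfy $\sigma(T^{**})\geq \lfloor\ell/(2r)\rfloor+1$; by monotonicity of $\sigma$ this yields $\sP1$. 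For $\sP2$, if $\sigma(\tx)\leq \lfloor\ell/(2r)\rfloor$ then $v^*$ does not spawn in $[0,\tx]$ and $\iout{X}{v^*}{\tx}=\tx-\iin{X}{v^*}{\tx}>\tx-1$. Event $\mathcal{E}_2$ produces a trigger of $\vsnc{v^*}{x_0}$ at some local time $\tau\leq \tx-1$; continuity of $\iout{X}{v^*}{\cdot}$ together with its strict increase on intervals where $v^*$ is non-mutant allows us to locate, via the coupling of Section~\ref{sec:coupled}, a real time $t\leq \tx$ at which $v^*$ is non-mutant and $\nc{(v^*,x_0)}$ triggers, killing $x_0$. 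Since $v^*$ never spawns in $[0,\tx]$, $x_0$ remains non-mutant through time $\tx$, giving $\sP2$.

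For the probability bound, $\mathcal{E}_1$ and $\mathcal{E}_2$ depend on disjoint collections of star-clocks in fixed intervals and are therefore independent. The combined rate of $\vsmc{v^*}{v}$ over $v\in N^+(v^*)$ equals $r$, so $\Pr(\mathcal{E}_1)=1/e$; the clock $\vsnc{v^*}{x_0}$ has rate $1/(\ell m)$, so $\Pr(\mathcal{E}_2)=1-e^{-(\tx-1)/(\ell m)}$. Writing $(\tx-1)/(\ell m)=1/(Kr^4\kappa)-1/(\ell m)$ and using $\ell m\geq 2Kr^4\kappa\log n$, a routine expansion gives $\Pr(\mathcal{E}_2)\geq (1-o(1))/(Kr^4\kappa)$ as $n\to\infty$. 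Therefore
\[\Pr(\mathcal{E}_1\cap\mathcal{E}_2\cap\mathcal{E}_3) \geq \Pr(\mathcal{E}_1)\Pr(\mathcal{E}_2) - \Pr(\overline{\mathcal{E}_3}) \geq \frac{1-o(1)}{e\,Kr^4\kappa} - \frac{1}{n^2} \geq \frac{1}{6Kr^4\kappa}\]
for $n_0$ large enough, since $1/e>1/6$. The main obstacle is the bookkeeping inside the first step: within a single window $(\TTmut{h},\TTnmut{h}]$ the vertex $v^*$ may flip between mutant and non-mutant several times (whenever $v_k$ is non-mutant and $\vnc{v_k}{v^*}$ triggers, followed by a later re-infection from $v_k$), and one must verify that the cumulative mutant time in that window is still bounded by $Y_h$ and that the relationship with $\sigma(t)$ survives this complication.
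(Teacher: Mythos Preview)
Your proof is correct and follows essentially the same route as the paper: the same three events $\mathcal{E}_1,\mathcal{E}_2,\mathcal{E}_3$, the same implication $\mathcal{E}_1\cap\mathcal{E}_3\Rightarrow\sP1$ via the bound on $\iin{X}{v^*}{t}$ in terms of the~$Y_h$'s, the same deduction of $\sP2$ from $\sP1\cap\mathcal{E}_2\cap\mathcal{E}_3$ via Observation~\ref{obs:one}, and the same probability estimate using independence of $\mathcal{E}_1,\mathcal{E}_2$ and Lemma~\ref{lem:star-lower-centre-mut} for~$\mathcal{E}_3$.

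The ``obstacle'' you flag at the end is not one: the cumulative mutant time of $v^*$ inside $(\TTmut{h},\TTnmut{h}]$ is trivially bounded by the length $Y_h$ of that window, irrespective of how many times $v^*$ flips within it; and since each $\TTmut{h}$ is by definition a time at which $v_k$ spawns a mutant onto~$v^*$, the number of indices $h$ with $\TTmut{h}\le t$ is at most $\sigma(t)$, which is all you need for your inequality $\iin{X}{v^*}{t}\le\sum_{i=1}^{\sigma(t)}Y_i$. The paper phrases the contrapositive slightly differently (showing $\TTmut{\lfloor\ell/(2r)\rfloor+1}<\tspawn$ directly), but the content is the same.
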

\begin{proof}
\letlargen 
Consider the process $\Psi(X)$.  Define the following three events.
\begin{description}
\item $\calE_1$:  
no star-clock $\vsmc{v^*}{v}$  (for any~$v$)
triggers in $[0,1/r]$.
\item  $\mathcal{E}_2$: the
star-clock $\vsnc{v^*}{x_0}$ triggers in $[0,\tx-1]$. 
\item  $\mathcal{E}_3$:    $\sum_{i=1}^{\lfloor\ell/(2r)\rfloor} Y_i < 1/r$. 
\end{description}
We will prove that $\Pr(\mathcal{E}_1 \cap \mathcal{E}_2 \cap \mathcal{E}_3) \ge 1/(6 K r^4 \kappa )$, and that
if
$\mathcal{E}_1 \cap \mathcal{E}_2 \cap \mathcal{E}_3$  occurs then so does
\sP1 and \sP2.

We  first bound $\Pr(\mathcal{E}_1 \cap \mathcal{E}_2 \cap \mathcal{E}_3)$ below. The sum of the parameters of the 
star-clocks in $\{\vsmc{v^*}{v}  \}$ is $r$, so $\Pr(\mathcal{E}_1) = e^{-1}$. We have $\tx = \ell m/(K r^4\kappa ) 
\ge m\log n \ge 2\log n_0$ by hypothesis so, by choice of~$n_0$, we  may assume $\tx\ge 25$.  The parameter of the star-clock 
$\vsnc{v^*}{x_0}$ is $1/(\ell m)$, so using \eqref{eq:ebounds} we have
\[\Pr(\mathcal{E}_2) = 1 - e^{-(\tx-1)/(\ell m)} \ge 1-e^{-24\tx/(25\ell m)} \ge \frac{12\tx}{25\ell m} = 
\frac{12}{25 K r^4 \kappa }.\]

Note that $\mathcal{E}_1$ and $\mathcal{E}_2$ 
are independent of each other by the definition of the
star-clock process $P^*(\sstar)$
and the fact that the intervals in the definitions of $\mathcal{E}_1$ and $\mathcal{E}_2$ 
are fixed: $\tx = \ell m/(K r^4 \kappa )$ does not depend on the
evolution of $\Psi(X)$.
So we have $\Pr(\mathcal{E}_1 \cap \mathcal{E}_2) \ge 
12/(25 e K r^4 \kappa)$. Finally, by Lemma~\ref{lem:star-lower-centre-mut} together with the fact that $\kappa  \le n$, 
it follows that
\begin{equation}\label{eqn:star-lower-apply-centre-mut}
\Pr(\mathcal{E}_1 \cap \mathcal{E}_2 \cap \mathcal{E}_3) \ge \Pr(\mathcal{E}_1 \cap \mathcal{E}_2) - 
\Pr(\overline{\mathcal{E}_3}) \ge \frac{12}{25 e K r^4\kappa } - \frac{1}{n^2} \ge \frac{1}{6 K r^4 \kappa }.
\end{equation}

We  next show that $\mathcal{E}_1$ and $\mathcal{E}_3$ together imply that 
\sP1 occurs.  
If $v^*$ does not spawn a mutant before time $\tmax$ then \sP1 occurs, so suppose
instead that
$v^*$ spawns a mutant   for the first time at some time $\tspawn \le \tmax$. 
( The ``$\mathsf s$'' subscript in~$\tspawn$ stands for ``spawn''.)
We must show that $\sigma(\tspawn) \geq \lfloor \ell/(2r) \rfloor + 1$.
This will ensure that \sP1 occurs since $\sigma(t)$ is monotonically increasing.

Since $v^*$ spawns no mutants   before time $\tspawn$, we have $X_t \subseteq \{x_0, v_1, \dots, 
v_k, v^*\}$ for all $t < \tspawn$, and so (recalling Definition~\ref{def:localtime})
\begin{equation}\label{eqn:star-lower-apply-centre-mut-2}
\iin{X}{v^*}{ \tspawn} \leq \sum_{\substack{i \geq 1, \\ \TTmut{i} < \tspawn}} Y_i.
\end{equation}

Since $\mathcal{E}_3$ occurs, we have 
\begin{equation}\label{eqn:star-lower-apply-centre-mut-3}
  \sum_{\substack{1 \leq i \leq \lfloor \ell/(2r)\rfloor,  \\ \TTmut{i} <  \tspawn}} Y_i < \frac1r.
\end{equation}
However, since $\mathcal{E}_1$ occurs and $v^*$ spawns a mutant   at $\tspawn$, we have 
$\iin{X}{v^*}{\tspawn} \ge 1/r$. Hence by \eqref{eqn:star-lower-apply-centre-mut-2} and 
\eqref{eqn:star-lower-apply-centre-mut-3}, 
\[  \sum_{\substack{i \geq 1, \\ \TTmut{i} < \tspawn}} Y_i
>  \sum_{\substack{1 \leq i \leq \lfloor \ell/(2r)\rfloor,  \\ \TTmut{i} <  \tspawn}} Y_i.\]
Hence $\TTmut{\lfloor \ell/(2r)\rfloor +1} < \tspawn$. 
Thus, $\sigma( \tspawn) \geq \lfloor \ell/(2r)\rfloor +1$, and \sP1 occurs.

Finally, we  show that \sP1,  $\mathcal{E}_2$ and $\mathcal{E}_3$ together imply that 
\sP2 occurs.  Suppose 
$\sigma(\tx) \leq \lfloor \ell/(2r) \rfloor$.  We have $\tx \le \ell m \le \tmax$, so 
 by \sP1, $v^*$ spawns no mutants   in $[0,\tx]$. Hence as in~\eqref{eqn:star-lower-apply-centre-mut-2}, we have
\[\iin{X}{v^*}{\tx}\ \  \le 
\sum_{\substack{i \ge 1, \\ \TTmut{i} < \tx}} Y_i =
\sum_{\substack{i \ge 1, \\ \TTmut{i} \le \tx}} Y_i.\]
Since $\sigma(\tx) \leq \lfloor \ell/(2r) \rfloor$, it follows by $\mathcal{E}_3$ that $\iin{X}{v^*}{\tx} < 1/r$ and 
hence $\iout{X}{v^*}{\tx} \geq \tx-1/r > \tx -1$. Since $\mathcal{E}_2$ occurs, it follows that $v^*$ spawns a 
non-mutant onto $x_0$   at some time $t \le \tx$. Since $v^*$ spawns no mutants    in $[0,\tx]$, $x_0$ cannot 
become a mutant in $(t,\tx]$, so $x_0 \notin X_{\tx}$ and  \sP2 occurs.

Thus $\mathcal{E}_1 \cap \mathcal{E}_2 \cap \mathcal{E}_3$ implies that 
\sP1 and \sP2 occur, and so the result follows from~\eqref{eqn:star-lower-apply-centre-mut}.
\end{proof}
 
Lower bounds for the probabilities that properties \sP3--\sP5   hold follow from Chernoff bounds without too much 
difficulty.

\begin{lemma}\label{lem:star-lower-path-clear}
\statelargen Suppose $\ell \ge K r^4 \kappa \log n$, $m \ge 2$ and $n \ge n_0$. Let $R \in \{R_1, \dots, 
R_\ell\}$ and let $v_1 \dots v_k$ be the path associated with $R$. Let $x_0 \in R$.
Let $X$ be the Moran process with $G(X)=\sstar$ and $X_0=\{x_0\}$. Then
$\Pr(\sP3)  \ge 1-1/n$.
\end{lemma}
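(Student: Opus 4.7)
The plan is, for each fixed integer $i$ with $0 \le i \le \tx$, to bound the probability that $v_1 \dots v_k$ fails to clear within $I_i$ by $n^{-c}$ for some constant $c > 2$ depending on $r$, and then to take a union bound over the at most $\tx+1 \le 2n/\log n$ values of $i$ (using $\tx \le \ell m/(Kr^4\kappa) \le n/(K^2 r^8 \log n)$). For each fixed $i$, I fix any $v_0 \in R \setminus \{x_0\}$, which exists because $m \ge 2$, and split $I_i$ into $I_i^{(1)}=(i\kappa,i\kappa+\kappa/2]$ and $I_i^{(2)}=(i\kappa+\kappa/2,(i+1)\kappa]$. The argument is two-phase: first find $t_1<t_2 \in I_i^{(1)}$ such that $\vnc{v_0}{v_1}$ triggers at $t_1$, $\vnc{v_1}{v_2}$ triggers at $t_2$, and $\vmc{x_0}{v_1}$ does not trigger in $[t_1,t_2]$; then given $t_2$, take each successive $t_j$ for $j = 3, \dots, k+1$ to be the first trigger of $\vnc{v_{j-1}}{v_j}$ (with $\vnc{v_k}{v^*}$ for $j=k+1$) after $t_{j-1}$.

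The key Phase~1 observation is that $\vnc{v_0}{v_1}$, $\vnc{v_1}{v_2}$ and $\vmc{x_0}{v_1}$ are independent Poisson processes of rates $1,1,r$, whose superposition is a Poisson process of rate $2+r$ in which each event is independently labelled $C_0$, $C_1$ or $C_m$ with probabilities $1/(2+r), 1/(2+r), r/(2+r)$. Whenever two consecutive events of the superposition carry labels $(C_0,C_1)$, setting $t_1$ and $t_2$ to be their times works, since no $C_m$ event can occur strictly between consecutive events of the superposition. Examining only non-overlapping pairs of consecutive events (positions $1$--$2$, $3$--$4$, $\dots$) gives independent trials, each succeeding with probability $1/(2+r)^2$. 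By Corollary~\ref{cor:pchernoff-2}, $I_i^{(1)}$ contains at least $(2+r)\kappa/3$ events of the superposition except with probability $e^{-(2+r)\kappa/32}$, so Phase~1 fails with probability at most
\[
e^{-(2+r)\kappa/32} + \bigl(1-1/(2+r)^2\bigr)^{(2+r)\kappa/6} \;\le\; 2\,e^{-\kappa/(6(2+r))}.
\]

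For Phase~2, memorylessness and the independence of the clocks $\vnc{v_2}{v_3},\dots,\vnc{v_{k-1}}{v_k},\vnc{v_k}{v^*}$ (all rate $1$, and disjoint from those used in Phase~1) imply that conditional on $t_2$, the difference $t_{k+1}-t_2$ is distributed as a sum of $k-1$ i.i.d.\ $\Exp(1)$ variables. Since $t_2 \le i\kappa+\kappa/2$ I need this sum to be at most $\kappa/2$; because $\kappa \ge 3k \ge 3(k-1)$, Corollary~\ref{cor:expsum} bounds the Phase~2 failure probability by $e^{-\kappa/32}$. Combining, and using $\kappa \ge Kr^4\log n = 70r^4\log n$ together with $r>1$, the per-interval failure probability is at most $n^{-c}$ where $c=\min\{70r^4/(6(2+r)),\,70r^4/32\} > 2$, and the union bound over $\le 2n/\log n$ intervals then yields $\Pr(\sP3) \ge 1 - 1/n$ for $n_0$ large enough. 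The main obstacle is Phase~1: a naive ``first trigger'' chain would force $t_2-t_1 \sim \Exp(1)$, and $\vmc{x_0}{v_1}$ would intervene with constant probability $r/(1+r)$; amplifying past this constant requires many (nearly-)independent trials, which the i.i.d.\ labelling of the superposed Poisson stream provides cleanly once we restrict to non-overlapping consecutive pairs.
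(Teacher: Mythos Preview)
Your proof is correct and follows the same two-phase structure as the paper's: find $t_1,t_2$ in the first half of $I_i$, then chain $t_3,\dots,t_{k+1}$ through the second half using Corollary~\ref{cor:expsum}, and union-bound over~$i$. Phase~2 and the final union bound are essentially identical to the paper's argument.

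The one point of difference is your Phase~1 mechanism. The paper slices the first half of $I_i$ into unit-length subintervals and, in each, looks for a trigger of $\vnc{v_0}{v_1}$ in the first half-unit followed by $\vnc{v_1}{v_2}$ beating $\vmc{x_0}{v_1}$ in the remainder; these events are independent across subintervals and each has probability at least $1/(10r)$. You instead superpose the three clocks into a single rate-$(2+r)$ Poisson stream with i.i.d.\ labels and look for the pattern $(C_0,C_1)$ among disjoint consecutive pairs, which automatically excludes an intervening $C_m$. Both devices manufacture $\Theta(\kappa)$ independent Bernoulli trials with success probability $\Theta(1/r)$ (or $\Theta(1/r^2)$), so the resulting tail bounds are of the same order. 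Your labelling argument is arguably cleaner, since it avoids the somewhat ad hoc half-unit split; the paper's time-slicing has the minor advantage that it does not require first conditioning on a lower bound for the number of events in the stream.
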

\begin{proof}
\letlargen Fix $i \in \Zzero$ --- we will bound the probability that $v_1 \dots v_k$ clears 
 within $I_i$ (as in Definition~\ref{def:star-lower-path-clear}).
 Let $v_0 \in R \setminus \{x_0\}$ be arbitrary.  
For all $h\in\Zzero$, let
\begin{align*}
T_{1,h} &= \min
\{ t \geq i \kappa + h \mid \mbox{$t=i \kappa + h + 1/2$ or $\vnc{v_0}{v_1}$ triggers at $t$}\}\\
    T_{2,h} &= 
    \min\{t > T_{1,h} \mid \mbox{
$t= i\kappa+h+1$ or  ($\vmc{x_0}{v_1}$ does not trigger in $[T_{1,h},t)$ and
$\vnc{v_1}{v_2}$ triggers at $t$)}\}.
\end{align*}
Let $\calE_h$ be the event that $T_{1,h} < i\kappa+h+1/2$ and $T_{2,h} < i\kappa+h+1$.

The probability that the clock $\vnc{v_0}{v_1}$ triggers in $[i\kappa +h,i\kappa +h+1/2)$ 
is $1-e^{-1/2}$.
For any $t_1 \in  [i\kappa +h,i\kappa +h+1/2)$ 
the probability that there is a $t_2 \in (t_1,i\kappa+h+1)$
such that 
 $\vnc{v_1}{v_2}$ triggers 
at $t_2$, and $\vmc{x_0}{v_1}$ does not trigger in $[t_1,t_2]$ is
$(1-e^{-(r+1)(i\kappa+h+1-t_1)})/(r+1)$. To see this, note that the
$1-e^{-(r+1)(i\kappa+h+1-t_1)}$ factor corresponds to the probability that either $\vnc{v_1}{v_2}$ or 
$\vmc{x_0}{v_1}$  
triggers in the relevant interval (together, they correspond to a Poisson process with rate $r+1$). 
The $1/(r+1)$ factor corresponds to the probability that it is actually $\vnc{v_1}{v_2}$ rather than $\vmc{x_0}{v_1}$
that triggers first. Since the relevant interval has length at least~$1/2$,
the product of these two factors is at least
$(1-e^{-(r+1)/2})/(r+1)$.
So
\[\Pr(\calE_h) \geq \big(1-e^{-1/2}\big) \big(1-e^{-(r+1)/2}\big)/\big(r+1\big) \geq (1-e^{-1/2})(1-e^{-1})/(r+1) \geq 
\frac{1}{5(r+1)}\geq \frac{1}{10r}\,.\]

Moreover, the events $\{\calE_h \mid h \in \Zzero\}$ are mutually independent, as they depend only on the behaviour of 
clocks in $\moranclocks(\sstar)$ in fixed disjoint intervals. Thus
\begin{align}\label{eqn:star-lower-path-clear-1}
\Pr\big(\calE_h\textnormal{ holds for some }0 \le h \le (K r^4 \log n)/3\big) 
	&\ge 1 - \left(1-\frac{1}{10r}\right)^{\lfloor (K r^4 \log n)/3\rfloor +1} \nonumber \\
	&\ge 1 - \left(1-\frac{1}{10r}\right)^{(K r^4 \log n)/3} \nonumber \\
	&\ge 1 - e^{-(K r^3 \log n)/30} \nonumber\\
	&\ge 1 - e^{-2 \log n} = 1 - \frac{1}{n^2}\,.
\end{align}
 
Now, for all integers $j$ with $3 \le j \le k+1$, define
\[T_j = 
\begin{cases}
\min \{t > i\kappa +\kappa /2    \mid \vnc{v_2}{v_3}  \textnormal{ triggers at }t\}
	& \textnormal{ if } j = 3,\\
\min \{t > T_{j-1} \mid \vnc{v_{j-1}}{v_j}  \textnormal{ triggers at }t\}
    & \textnormal{ if } 4 \le j \le k,\\
\min \{t > T_{k} \mid \vnc{v_k}{v^*} \textnormal{ triggers at }t\} 
	& \textnormal{ if } j = k+1.\\
\end{cases}\]
Note that if $T_{k+1} < (i+1)\kappa $, then $T_{3}, \dots, T_{k+1} \in I_i$.
By memorylessness and independence of distinct clocks in $\moranclocks(\sstar)$, the random variables $T_{3} - (i\kappa 
+\kappa /2), T_{4} - T_{3}, \dots, T_{k+1} - T_{k}$ are $k-1$ i.i.d.\ exponential variables with rate~$1$, and 
$T_{k+1}-(i\kappa +\kappa /2)$ is their sum. 
Corollary~\ref{cor:expsum} applies because $\kappa/2\geq 3(k-1)/2$, so
\begin{align}\label{eqn:star-lower-path-clear-2}\Pr\big(T_{k+1} < (i+1)\kappa \big) &= \Pr\big(T_{k+1} - (i\kappa 
+\kappa /2) < \kappa /2\big) \nonumber\\
&\geq 1 - e^{- \kappa/32}\ge 1-e^{-(K r^4\log n)/32} \ge 1 - \frac{1}{n^2}.
\end{align}

Suppose $\calE_{h}$ holds for some $0 \le h \le (K r^4\log n)/3$. Note that $T_{2,h} \le i\kappa + \kappa/2$. Suppose  
further that $T_{k+1} < (i+1)\kappa $. 
Then setting $t_1 = T_{1,h}$, $t_2 = T_{2,h}$ and $t_j = T_{j}$
for all $j\in\{3, \dots, k+1\}$,
we see that $t_1, \dots, t_{k+1}$ satisfy the requirements of Definition~\ref{def:star-lower-path-clear} and so $v_1 
\dots v_k$ clears within $I_i$. It therefore follows by \eqref{eqn:star-lower-path-clear-1}, 
\eqref{eqn:star-lower-path-clear-2}, and a union bound that
\[\Pr(v_1 \dots v_k \textnormal{ clears within }I_i) \ge 1 - \frac{2}{n^2}.\]
Hence by a union bound over all integers $i$ with $0 \le i \le \tx$ and by $\tx\leq n/4$ it follows that
\[\Pr(\sP3)  \ge 1 - 2\tx\cdot \frac{2}{n^2} \ge 1-\frac{n}{2} \cdot 
\frac{2}{n^2} \ge 1-\frac{1}{n}\,.\qedhere\]
\end{proof}

\begin{lemma}\label{lem:star-lower-clocks-trigger-rarely}
\statelargen Suppose 
 $n \ge n_0$. Fix $x_0 \in R_1 \cup \dots \cup 
R_\ell$.
Let $X$ be the Moran process with $G(X)=\sstar$ and $X_0 = \{x_0\}$.  With probability at least $1 - 1/n^2$, for all 
integers $i$ with $0 \le i \le \tx$, 
every  clock in $\moranclocks(\sstar)$ triggers at most $\lfloor 2r\kappa \rfloor$ times in $I_i$.
\end{lemma}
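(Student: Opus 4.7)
The plan is to bound the number of triggers of each clock individually via a Chernoff bound, then union-bound over all clocks and all relevant intervals. The key observation is that $\kappa \geq K r^4 \log n$ is large enough to make each individual failure probability polynomially small in $n$, while the total number of (clock, interval) pairs is only polynomial.

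First I would observe that every clock in $\moranclocks(\sstar)$ has rate at most $r$. Indeed, for each edge $(u,v) \in E(\sstar)$ we have $d^+(u) \geq 1$, so the mutant clock $\mc{(u,v)}$ has rate $r/d^+(u) \leq r$ and the non-mutant clock $\nc{(u,v)}$ has rate $1/d^+(u) \leq 1 \leq r$. Since the number of triggers of a Poisson process with rate $\lambda$ in an interval of length $\kappa$ is Poisson with parameter $\lambda\kappa$, and since smaller parameter is stochastically dominated by larger, the number of triggers $Y$ of any fixed clock in any fixed interval $I_i$ (of length $\kappa$) is stochastically dominated above by a Poisson random variable with parameter $\rho = r\kappa$.

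Next, I would apply Corollary~\ref{cor:pchernoff-2} with $\rho = r\kappa$ to get
\[\Pr(Y \geq 2r\kappa) \leq e^{-r\kappa/3}.\]
Since the event $\{Y > \lfloor 2r\kappa \rfloor\}$ is contained in $\{Y \geq 2r\kappa\}$ (as $Y$ is integer-valued), the same bound applies. Using the hypothesis $\kappa \geq K r^4 \log n$ with $K = 70$, we have $r\kappa \geq 70 r^5 \log n \geq 70 \log n$, so the bound becomes $e^{-r\kappa/3} \leq n^{-70/3}$.

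Finally, I would take a union bound. The number of edges of $\sstar$ is $|E(\sstar)| = 2\ell m + k\ell \leq 3n$, so $|\moranclocks(\sstar)| \leq 6n$. The number of integers $i$ with $0 \leq i \leq \tx$ is at most $\tx + 1 \leq \ell m + 1 \leq n$. Hence the probability that some clock triggers more than $\lfloor 2r\kappa \rfloor$ times in some such $I_i$ is at most $6n \cdot n \cdot n^{-70/3} = 6 n^{2 - 70/3}$, which is at most $1/n^2$ for $n$ sufficiently large (in terms of $r$). There is no real obstacle here — the work is purely a Chernoff bound plus counting — so the proof should be just a few lines.
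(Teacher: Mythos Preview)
Your proposal is correct and follows essentially the same approach as the paper: bound the tail of a single Poisson clock in a single interval via Corollary~\ref{cor:pchernoff}, then union-bound over all clocks and all intervals $I_i$ with $0\le i\le \tx$. The only difference is in how the variable clock rates are handled: the paper works with the actual rate $a\le r$ and goes through $\Pr(Y\le 2r\kappa)\ge \Pr(Y\le 2a\kappa)\ge 1-e^{-a\kappa/3}$, whereas you dominate every clock uniformly by a Poisson$(r\kappa)$ variable and apply the Chernoff bound once with parameter $r\kappa$. Your route is slightly cleaner, since $r\kappa\ge Kr^5\log n$ holds uniformly, while the paper's displayed bound $1-e^{-a\kappa/3}\ge 1-e^{-(Kr^4\log n)/3}$ tacitly uses $a\ge 1$, which fails for the clocks with source $v^*$ (though the conclusion for those clocks is of course still true and easily recovered).
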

\begin{proof}
\letlargen Fix a given clock $C \in \moranclocks(\sstar)$, fix $i \in \Zzero$ with $i \le \tx$, and write $a \le r$ for 
the rate of $\moranclocks(\sstar)$. The number of times that $C$ triggers in $I_i$ follows the Poisson distribution with 
parameter $a\kappa $. Since the number of triggers is an integer and $2r\kappa  \ge 2a\kappa $, by 
Corollary~\ref{cor:pchernoff} we have
\begin{align*}
\Pr(C\textnormal{ triggers at most }\lfloor 2r\kappa \rfloor\textnormal{ times in }I_i) &= \Pr(C\textnormal{ triggers at 
most }2r\kappa \textnormal{ times in }I_i)\\
&\ge \Pr(C\textnormal{ triggers at most }2a\kappa \textnormal{ times in }I_i)\\
&\ge 1 - e^{-a\kappa /3} \ge 1 - e^{-(K r^4\log n)/3}.
\end{align*}
There are at most $2n^2$ clocks in $\moranclocks(\sstar)$ and at most $\tx+1\le 2n^2$ choices of $i$. Thus by a union 
bound, with probability at least $1 - 4n^4e^{-K r^4\log n/3} \ge 1 - 1/n^2$, no single clock in $\moranclocks(\sstar)$ 
triggers more than $\lfloor 2r\kappa \rfloor$ times in any interval $I_i$ with $0 \le i \le \tx$.
\end{proof}

The following corollary follows immediately from Lemma~\ref{lem:star-lower-clocks-trigger-rarely}.
Of course, the probability bound in the corollary can be strengthened to $1-1/n^2$, but we state what we 
will later use.

\begin{corollary}\label{cor:star-lower-full-path-not-bad}
\statelargen  Suppose 
  $n \ge n_0$. Fix $x_0 \in R_1 \cup \dots 
\cup R_\ell$.
Let $X$ be the Moran process with $G(X)=\sstar$ and $X_0=\{x_0\}$.  Then
 $
\Pr(\sP4)  \ge 1 -  1/n$.\hfill \qedsymbol\end{corollary}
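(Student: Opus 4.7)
The plan is very short, because the corollary is essentially a specialisation of Lemma~\ref{lem:star-lower-clocks-trigger-rarely}. The event \sP4 asks only that the single specific clock $\vmc{v_k}{v^*}$ trigger at most $\lfloor 2r\kappa \rfloor$ times in each interval $I_i$ with $0 \le i \le \tx$. Lemma~\ref{lem:star-lower-clocks-trigger-rarely} establishes the stronger statement that, with probability at least $1-1/n^2$, \emph{every} clock in $\moranclocks(\sstar)$ satisfies this trigger bound on \emph{every} such interval simultaneously.

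Since $\vmc{v_k}{v^*} \in \moranclocks(\sstar)$, the event in Lemma~\ref{lem:star-lower-clocks-trigger-rarely} is contained in \sP4, so the probability bound on \sP4 is at least as large. Hence
\[\Pr(\sP4) \ge 1 - 1/n^2 \ge 1 - 1/n,\]
as required. The weaker $1/n$ bound in the corollary statement is likely chosen to line up cleanly with the other tail bounds combined in Lemma~\ref{lem:star-masterlist}.

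There is no real obstacle here: all the probabilistic work (the Poisson Chernoff bound in Corollary~\ref{cor:pchernoff} and the union bound over the $O(n^2)$ pairs of clocks and intervals $I_i$) has already been carried out in Lemma~\ref{lem:star-lower-clocks-trigger-rarely}. The only thing to verify is the set-theoretic inclusion between the two events, which is immediate from the definitions of \sP4 and of $\moranclocks(\sstar)$.
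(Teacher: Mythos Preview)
Your proposal is correct and matches the paper's approach exactly: the paper states that the corollary follows immediately from Lemma~\ref{lem:star-lower-clocks-trigger-rarely}, and even remarks that the bound could be strengthened to $1-1/n^2$ but that $1-1/n$ is stated because that is what is used later. Your observation about the inclusion of events and the reason for the weaker bound are both spot on.
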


The following lemma gives 
a lower bound on the probability that \sP5 occurs.
In this lemma,
 we require that 
$m \ge 6r^2\kappa$,  
rather than $m \ge 2$, which we have so far been assuming.

\begin{lemma}\label{lem:star-lower-path-not-fill}
\statelargen Suppose $\ell \ge K r^4 \kappa \log n$, 
 $m \ge 6r^2\kappa $ 
 and $n \ge n_0$. Let $R \in \{R_1, 
\dots, R_\ell\}$ and let $v_1 \dots v_k$ be the path associated with $R$. Fix $x_0 \in R$.
Let $X$ be the Moran process with $G(X)=\sstar$ and $X_0=\{x_0\}$. Then
$\Pr(\sP5) \geq 1-1/n$. 
\end{lemma}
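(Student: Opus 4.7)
My plan is to express $\sP5$ as a concentration statement for a sum of i.i.d.\ Bernoulli indicators and then apply the Chernoff bound in Lemma~\ref{lem:bchernoff}. For each integer $i$ with $0 \le i \le \tx/\kappa$, let $W_i$ be the indicator that $v_2$ is \emph{not} protected in $I_i$, so $\sP5 = \{\sum_i W_i \le 8r^2\tx/m\}$ and there are $M = \lfloor\tx/\kappa\rfloor+1$ indicators.

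For independence, inspection of Definitions~\ref{def:clear-before} and~\ref{def:protected} shows that whether $v_2$ is protected in $I_i$ depends only on triggers \emph{within} the interval $I_i$ of three independent (families of) Poisson clocks: $\vmc{v_1}{v_2}$ (rate $r$, since $v_1$ has out-degree one), $\vmc{x_0}{v_1}$ (rate $r$), and $\{\vnc{v}{v_1} : v \in R \setminus \{x_0\}\}$ (combined rate $m-1$). No other clocks are involved, so by disjointness of the $I_i$'s and independence of Poisson increments, the $W_i$ are i.i.d.\ Bernoulli with a common parameter~$p$.

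To bound $p$, split $\{W_i = 1\}$ via a union bound into the two failure modes of Definition~\ref{def:protected}. Part~(i) fails exactly when the first trigger in $I_i$ among $\vmc{v_1}{v_2}$ and the family $\{\vnc{v}{v_1} : v \neq x_0\}$ comes from $\vmc{v_1}{v_2}$, which, by competition between independent Poisson processes, has probability at most $r/(m-1+r)$. For part~(ii), let $N_i \sim \mathrm{Poisson}(r\kappa)$ count the triggers of $\vmc{x_0}{v_1}$ in $I_i$; for each such trigger at a time $t$, the probability that $v_1$ fails to clear within $(t,(i+1)\kappa]$ is again at most $r/(m-1+r)$ by the same argument applied to the subinterval, and this is independent of the trigger time since it involves disjoint clocks. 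Summing via linearity yields $\Pr(\text{(ii) fails}) \le \E[N_i]\cdot r/(m-1+r) = r^2\kappa/(m-1+r)$. Combining and using $r>1$ (so $m-1+r \ge m$) with $r\kappa \ge 1$ (so $1+r\kappa \le 2r\kappa$) gives $p \le r(1+r\kappa)/m \le 2r^2\kappa/m$.

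Finally, the mean $\mu := Mp \le (\tx/\kappa+1)\cdot 2r^2\kappa/m = 2r^2\tx/m + 2r^2\kappa/m$. The hypotheses $\ell \ge Kr^4\kappa\log n$ and $m \ge 6r^2\kappa$ force $\tx/\kappa = \ell/(Kr^4\kappa^2)\cdot (m/\kappa) \ge 6r^2\log n$, so the second term is negligible compared to the first and $\mu \le 3r^2\tx/m$ for $n$ sufficiently large; in particular $y := 8r^2\tx/m$ satisfies $y \ge 2\mu$. Lemma~\ref{lem:bchernoff} with $c=2$ then yields $\Pr(\sum_i W_i \ge y) \le e^{-\varphi(2)y} \le e^{-y/6}$, and since $\tx/m = \ell/(Kr^4\kappa) \ge \log n$ by hypothesis, $y/6 = 4r^2\tx/(3m) \ge \log n$, so the right-hand side is at most $1/n$, as required. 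The principal subtlety is constant-tracking: the mean $\mu$ and the target $y$ are of the same order of magnitude, so the factor-of-$2$ gap demanded by Chernoff must come from the margin between the hypotheses and the failure threshold, and the generous choice $K=70$ (with the extra $\log n$ factor in the hypothesis $\ell \ge Kr^4\kappa\log n$) provides this comfortably.
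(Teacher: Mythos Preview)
Your proof is correct and follows essentially the same approach as the paper: establish that the indicators $W_i$ are i.i.d.\ (since ``protected in $I_i$'' depends only on the restrictions to $I_i$ of three fixed Poisson-clock families), bound the common parameter $p$, and apply the binomial Chernoff bound in Lemma~\ref{lem:bchernoff}.

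The one genuine difference is in how you bound $p$. The paper first caps the number of triggers of $\vmc{x_0}{v_1}$ in $I_i$ at $\lfloor 2r\kappa\rfloor$ via an auxiliary event $\mathcal{E}_i^2$ (invoking Lemma~\ref{lem:star-lower-clocks-trigger-rarely}), and then union-bounds over those $\lfloor 2r\kappa\rfloor+1$ potential failure points, arriving at $p \le 3r^2\kappa/m + 1/n^2$. You instead bound the expected number of failures directly by $\E[N_i]\cdot r/(m-1+r)=r^2\kappa/(m-1+r)$ and apply Markov, which is cleaner and yields the slightly sharper $p \le 2r^2\kappa/m$ without any appeal to Lemma~\ref{lem:star-lower-clocks-trigger-rarely}. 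This works precisely because the clearing event on $(t,(i+1)\kappa]$ uses the clocks $\vmc{v_1}{v_2}$ and $\{\vnc{v}{v_1}:v\ne x_0\}$, which are disjoint from $\vmc{x_0}{v_1}$, so conditioning on $T_j=t$ leaves the failure probability unchanged. One small typo: your displayed expression ``$\tx/\kappa = \ell/(Kr^4\kappa^2)\cdot(m/\kappa)$'' has an extra $\kappa$ in the denominator; the correct identity is $\tx/\kappa = \ell m/(Kr^4\kappa^2)$, from which your conclusion $\tx/\kappa \ge 6r^2\log n$ follows by substituting $\ell\ge Kr^4\kappa\log n$ and $m\ge 6r^2\kappa$.
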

\begin{proof}
\letlargen Fix $i \in \Zzero$. For all $t \in I_i$, define the following events $\calE_t^1$ and $\calE_i^2$.
\begin{description}
\item $\calE_t^1$:  
	$\min \{ t' > t \mid
\mbox{for some $v \ne x_0$, $\vnc{v}{v_1}$ triggers at $t'$} \} < 
	\min \{t' > t \mid \mbox{$\vmc{v_1}{v_2}$   triggers at $t'$}\}$.
\item $\mathcal{E}_i^2$:
	$\vmc{x_0}{v_1}$  triggers at most $\lfloor 2r\kappa \rfloor$   times in $I_i$.
\end{description}
Thus $\mathcal{E}^1_t$ occurs if and only if $v_1$ clears before spawning a mutant within $(t,\infty)$. Let $T_{i,0} = 
i\kappa $, and let $T_{i,h}$ be the $h$'th time in $I_i$ at which the clock $\vmc{x_0}{v_1}$ triggers, or 
$(i+1)\kappa $ if no such time exists. Note that if $\bigcap_{h=0}^{\lfloor 2r\kappa \rfloor} \mathcal{E}^1_{T_{i,h}} 
\cap \mathcal{E}^2_i$ occurs, then $v_2$ is protected in $I_i$.

Now consider any $t \in I_i$ and let $f_t$ be a possible value of $\filt_t( X)$. By memorylessness, we have
\[\Pr\big(\mathcal{E}_t^1 \mid \filt_t( X) = f_t\big) = \frac{m-1}{m-1+r} = 1 - \frac{r}{m-1+r} \ge 1 - \frac{r}{m}.\]
In particular, since the event $T_{i,h} = t$ is determined by $\filt_t( X)$, it follows by a union bound that
\[\Pr\left(\bigcap_{h=0}^{\lfloor 2r\kappa \rfloor}\mathcal{E}_{T_{i,h}}^1\right) \ge 1 - \frac{(\lfloor 2r\kappa\rfloor 
+1)r}{m} \ge 1 - \frac{3r^2\kappa }{m}.\]
By Lemma~\ref{lem:star-lower-clocks-trigger-rarely} we have $\Pr(\mathcal{E}_i^2) \ge 1-1/n^2$, so 
it 
follows by a union bound that
\begin{equation}\label{eqn:star-lower-path-reset}
\Pr(v_2 \textnormal{ is protected in } I_i) \ge \Pr\left(\bigcap_{h=0}^{\lfloor 2r\kappa\rfloor 
}\mathcal{E}_{T_{i,h}}^1 
\cap \mathcal{E}_i^2 \right) \ge 1 - \frac{3r^2\kappa }{m} - \frac{1}{n^2}.
\end{equation}

Since $I_0, I_1, \dots$ are disjoint intervals, the events that $v_2$ is or is not protected in  these
intervals 
are independent by memorylessness. Thus the number of intervals $I_i$ with $0 \le i \le \tx /\kappa $ in which $v_2$ is 
not protected is stochastically dominated above by a binomial distribution consisting of
$\lfloor \tx /\kappa\rfloor+1$  Bernoulli trials, each with success probability $3r^2\kappa /m+1/n^2$. This 
distribution has expectation 
\[\left(\left\lfloor \frac{\tx}{\kappa}\right\rfloor+1\right) \left( \frac{3r^2\kappa}{m}+\frac{1}{n^2} \right)\le 
\frac{4r^2\tx}{m} = \frac{4r^2\ell}{K r^4 \kappa},\]
so by Lemma~\ref{lem:bchernoff} we have
\[\Pr(\overline{\sP5})\le  e^{-(1/6)8r^2 \tx/m} 
=
 e^{-(4/3)r^2 \ell/(K r^4 \kappa )} \le 
e^{-(4/3)r^2 \log n} \le \frac{1}{n}.\]
Here the penultimate inequality follows since $\ell \ge K r^4 \kappa \log n$ by hypothesis. The result therefore 
follows.
\end{proof}

Now that we have proved lower bounds on the probability that each of 
\sP1--\sP5 occur, Lemma~\ref{lem:star-masterlist} follows easily.

{\renewcommand{\thetheorem}{\ref{lem:star-masterlist}}
\begin{lemma}
\statestarlowermaster
\end{lemma}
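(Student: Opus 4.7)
The plan is to derive Lemma~\ref{lem:star-masterlist} by a straightforward union bound, combining the four probability estimates already established: Lemma~\ref{lem:star-lower-apply-centre-mut} (which gives $\Pr(\sP1 \cap \sP2) \ge 1/(6Kr^4\kappa)$), Lemma~\ref{lem:star-lower-path-clear} ($\Pr(\sP3) \ge 1 - 1/n$), Corollary~\ref{cor:star-lower-full-path-not-bad} ($\Pr(\sP4) \ge 1 - 1/n$), and Lemma~\ref{lem:star-lower-path-not-fill} ($\Pr(\sP5) \ge 1 - 1/n$). All of the hypotheses of Lemma~\ref{lem:star-masterlist} match the hypotheses required by these four results, so we may apply them directly.

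First I would write
\[
\Pr(\sP1 \cap \sP2 \cap \sP3 \cap \sP4 \cap \sP5) \ge \Pr(\sP1 \cap \sP2) - \Pr(\overline{\sP3}) - \Pr(\overline{\sP4}) - \Pr(\overline{\sP5}) \ge \frac{1}{6Kr^4\kappa} - \frac{3}{n}\,.
\]
Then the remaining task is to show that $3/n \le 1/(6Kr^4\kappa) - 1/(7Kr^4\kappa) = 1/(42Kr^4\kappa)$, i.e.\ that $n \ge 126Kr^4\kappa$, which would yield the claimed bound of $1/(7Kr^4\kappa)$.

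For this final inequality I would use the hypotheses of the lemma to lower-bound $n$ in terms of $\kappa$. Recall $n = \ell(k+m) + 1 \ge \ell m$, and by hypothesis $\ell \ge Kr^4\kappa\log n$ and $m \ge 6r^2\kappa$, so
\[
n \ge \ell m \ge 6Kr^6 \kappa^2 \log n\,.
\]
For $n_0$ sufficiently large with respect to $r$, this clearly forces $n \ge 126Kr^4\kappa$ (indeed, $\kappa \log n \ge 21/r^2$ suffices, which holds for large $n_0$ since $\kappa \ge Kr^4 \log n$). Combining these estimates completes the proof.

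There is essentially no main obstacle here: all the real work has been done in the four supporting lemmas. The only thing to be careful about is checking that the hypotheses on $\ell$, $m$ and $n_0$ in the master lemma are strong enough to feed into each of the four supporting results (they are, by direct comparison) and that they force $n$ to dominate $Kr^4\kappa$ by the required factor, which follows easily from $n \ge \ell m$.
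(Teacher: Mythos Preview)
Your proposal is correct and follows essentially the same approach as the paper: a union bound combining Lemma~\ref{lem:star-lower-apply-centre-mut}, Lemma~\ref{lem:star-lower-path-clear}, Corollary~\ref{cor:star-lower-full-path-not-bad}, and Lemma~\ref{lem:star-lower-path-not-fill} to get $\Pr(\sP1\cap\cdots\cap\sP5)\ge 1/(6Kr^4\kappa)-3/n$, followed by checking that $3/n$ is small enough. The paper justifies the last step via $\kappa \le \ell/(Kr^4\log n) \le n/\log n$, whereas you use $n \ge \ell m \ge 6Kr^6\kappa^2\log n$; both arguments are fine and equally short.
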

\addtocounter{theorem}{-1}
}

\begin{proof} 
$\Pr(\sP1 \cap \cdots \cap \sP5) \geq
\Pr(\sP1 \cap \sP2) -    \Pr(\overline{\sP3}) - \Pr(\overline{ \sP4}) - \Pr(\overline{ \sP5})$.
\letlargen Then we
bound each term on the right-hand side   by applying (in order) Lemma~\ref{lem:star-lower-apply-centre-mut}, 
Lemma~\ref{lem:star-lower-path-clear}, Corollary~\ref{cor:star-lower-full-path-not-bad} and 
Lemma~\ref{lem:star-lower-path-not-fill} to obtain
\[\Pr( \sP1 \cap \dots \cap  \sP5) \ge 
\frac{1}{6 K r^4 \kappa } - \frac{3}{n} \ge \frac{1}{7 K r^4 \kappa},\]
as required. The final inequality follows since, by hypothesis, $\kappa  \le \ell/(K r^4 \log n) \le n/\log n$.
\end{proof}

We are now at last in a position to prove Lemma~\ref{lem:star-lower-hard}, which we will then use to prove 
Theorem~\ref{thm:star-lower}.  

{\renewcommand{\thetheorem}{\ref{lem:star-lower-hard}}
\begin{lemma}
\statestarlowerlem 
\end{lemma}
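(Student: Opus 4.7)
The plan is to combine Lemma~\ref{lem:star-masterlist}, which gives $\Pr(\sP1 \cap \cdots \cap \sP5) \ge 1/(7Kr^4\kappa)$, with a deterministic implication: assuming $\sP1$--$\sP5$ all hold, I would show that $X$ goes extinct, which immediately yields the lemma.

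The first step is to establish by induction on the interval index $i$ that, so long as $\sigma(i\kappa) \le \lfloor\ell/(2r)\rfloor$, the mutant set $X_t$ is contained in $\{x_0, v_1, \ldots, v_k, v^*\}$ for all $t \in [0, (i+1)\kappa]$. The base case is trivial since $X_0 = \{x_0\}$. For the inductive step, $\sP1$ guarantees that whenever $\sigma(t) \le \lfloor\ell/(2r)\rfloor$, $v^*$ does not spawn; since every vertex outside $\{x_0, v_1, \ldots, v_k, v^*\}$ can only become a mutant through a chain that begins with a $v^*$-spawn, the containment is preserved.

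Next I would bound $\sigma(\tx)$ by partitioning the intervals $I_0, \ldots, I_{\lceil \tx/\kappa\rceil}$ into those where $v_2$ is protected and those where it is not. For a protected interval $I_i$, assume inductively that at its start the only mutant on $R$'s path is possibly $x_0$; then the definition of ``protected'' ensures that each time $v_1$ becomes a mutant within $I_i$ (either because it was one at the start of $I_i$, or because $\vmc{x_0}{v_1}$ fires), some $\vnc{v}{v_1}$ with $v \ne x_0$ fires before $\vmc{v_1}{v_2}$ does. Hence $v_1$ clears before spawning onto $v_2$, and since $v_2$'s only in-neighbour is $v_1$, $v_2$ never becomes a mutant in $I_i$; inductively $v_3, \ldots, v_k$ also remain non-mutants, so no firing of $\vmc{v_k}{v^*}$ during $I_i$ contributes to $\sigma$. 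For an unprotected interval, $\sP4$ caps the contributions to $\sigma$ at $\lfloor 2r\kappa\rfloor$. With at most $8r^2\tx/m$ unprotected intervals by $\sP5$, and using $\tx = \ell m/(Kr^4\kappa)$,
\[
\sigma(\tx) \;\le\; \frac{8r^2 \tx}{m}\cdot 2r\kappa \;=\; \frac{16 r^3 \tx\kappa}{m} \;=\; \frac{16\ell}{Kr}.
\]
Since $K = 70$, this is strictly below $\ell/(2r)$, so $\sigma(\tx) \le \lfloor\ell/(2r)\rfloor$. Then $\sP1$ gives that $v^*$ spawns no mutant throughout $[0, \tx]$, and $\sP2$ gives $x_0 \notin X_{\tx}$.

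For $t > \tx$ the same inductive argument continues: $\sigma$ stays $\le \lfloor\ell/(2r)\rfloor$, so $v^*$ still never spawns and $X_t \subseteq \{v_1, \ldots, v_k, v^*\}$. Applying $\sP3$ to some interval $I_i$ with $i$ just beyond $\tx/\kappa$ (such intervals are available because $\sP3$ ranges up to $i \le \tx$, covering times up to about $\tx\kappa \gg \tx$), the path $v_1 \ldots v_k$ clears: the sequence of $\vnc{v_{j-1}}{v_j}$ firings propagates non-mutant status along the path, which is valid because $x_0$ is dead and $v^*$ does not spawn, so the source of each such firing is non-mutant. Once the path is wiped out, since $x_0$ is dead and $v^*$ never becomes a mutant again, the path cannot be re-infected, so $X$ is extinct. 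The main obstacle is the second-paragraph induction: rigorously tracking the mutant status of $v_1, \ldots, v_k$ from the clock-firing definitions of ``protected'' and ``clears'' without inadvertently letting $\sigma$ spike in intermediate sub-intervals, which is what makes the clock-level formulation of $\sP3$--$\sP5$ (rather than a direct process-level condition) essential.
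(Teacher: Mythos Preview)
Your overall strategy matches the paper's: invoke Lemma~\ref{lem:star-masterlist} and then argue deterministically that $\sP1$--$\sP5$ force extinction. The gap is in your inductive bound on $\sigma(\tx)$.

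You write: ``For a protected interval $I_i$, assume inductively that at its start the only mutant on $R$'s path is possibly $x_0$.'' But this hypothesis is not maintained. If $I_{i-1}$ is unprotected, the path may well contain mutants in $v_2,\dots,v_k$ at time $i\kappa$; then even though $I_i$ is protected, those mutants can propagate and $v_k$ can spawn onto $v^*$ during $I_i$. Protection of $I_i$ only stops \emph{new} mutants from entering via $v_1 \to v_2$; it says nothing about mutants already sitting in $v_2,\dots,v_k$. So your claim that a protected interval contributes zero to $\sigma$ is false in general, and your count of ``bad'' intervals (at most $8r^2\tx/m$) is too small.

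The paper's fix is exactly the point you flag as ``the main obstacle'' but do not resolve: declare an interval $I_i$ \emph{good} only when \emph{both} $I_{i-1}$ and $I_i$ are protected. In a good interval, protection of $I_{i-1}$ ensures $v_2$ never becomes a mutant during $I_{i-1}$; then $\sP3$ (the path \emph{clears} within $I_{i-1}$) is used to show $X_{i\kappa}\subseteq\{x_0,v_1\}$, after which protection of $I_i$ keeps $\sigma$ constant. This doubles the number of bad intervals to at most $1 + 2\cdot 8r^2\tx/m$, giving $\sigma(\tx)\le 34\ell/(Kr)$ rather than your $16\ell/(Kr)$, which is still comfortably below $\lfloor\ell/(2r)\rfloor - \lfloor 2r\kappa\rfloor$. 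Note in particular that $\sP3$ is essential \emph{inside} the main induction, not only for the final cleanup after $\tx$.
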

\addtocounter{theorem}{-1}
}

\begin{proof}
\letlargen 
Let $R$ be a reservoir in $\{R_1,\ldots,R_\ell\}$
and let $v_1\dots v_k$ be the path associated with~$R$. Suppose $x_0\in R$.
By Lemma~\ref{lem:star-masterlist}, it suffices to 
assume that 
\sP1 -- \sP5 occur and to prove that $X$ goes extinct.

Recall the definition of~$\sigma(t)$ from Definition~\ref{def:tenth}.
Note that $\sigma(0)=0$ and
$\sigma(t)$ is monotonically increasing in~$t$. We will first bound $\sigma(\tx)$ from above (assuming that \sP1--\sP5 
occur).
Consider an interval $I_i$ with $0 \le i \le \tx$ (technically, we need only consider 
$0\leq i \leq \tx/\kappa$, but the extra generality does no harm
and  we will later need to consider slightly larger~$i$). 
Note that $I_i \subseteq [0,\tmax]$ since $(i+1)\kappa  \le 2\tx\kappa = \tmax/(K r^4)$. Suppose that $\sigma(i\kappa ) 
\le \lfloor\ell/(2r) \rfloor - \lfloor 2r\kappa \rfloor$.  We will derive an upper bound on $\sigma((i+1)\kappa )$ by 
splitting into cases.

\textbf{Case 1: $\boldsymbol{i > 0}$ and $\boldsymbol{v_2}$ is protected in $\boldsymbol{I_{i-1}}$ and 
$\boldsymbol{I_i}$.}
First note that 
since \sP1 occurs and $\sigma(i\kappa ) \le \lfloor\ell/(2r)\rfloor$, $v^*$ does not spawn a mutant   over the 
course of $[0,i\kappa]$ and so 
\[X_t \subseteq \{x_0, v_1, \dots, v_k, v^*\}\textnormal{ for all } t \in [0,i\kappa].\]

Now, suppose for a contradiction that $v_2$ becomes a mutant at some time 
$\widehat{t}_2 \in I_{i-1}$. 
Then $v_1$ must have become a mutant beforehand. Let 
$\widehat{t}_1$ be the latest time in $[0,\widehat{t}_2]$ at which this occurs, and note that $\vmc{x_0}{v_1}$ must 
have triggered at time $\widehat{t}_1$. Since $v_2$ is protected in $I_{i-1}$, if it were the case that $\widehat{t}_1  
\in I_{i-1}$, then $v_1$ would clear before spawning within $(\widehat{t}_1,i\kappa]$ and so $v_1$ would die 
in $(\widehat{t}_1, \widehat{t}_2)$. This is impossible since $v_1$ spawns a mutant 
  at time $\widehat{t}_2$, and $v_1$ does not become a mutant in $(\widehat{t}_1,\widehat{t}_2]$ by the definition of 
  $\widehat{t}_1$. We therefore have $\widehat{t}_1  \notin I_{i-1}$, so $\widehat{t}_1  \le (i-1)\kappa$. Since $v_2$ 
is protected in $I_{i-1}$, $v_1$ clears before spawning a mutant within $I_{i-1}$, so $v_1$ dies   in 
$((i-1)\kappa,\widehat{t}_2)$ --- again contradicting the fact that $v_1$ spawns a mutant  at time $\widehat{t}_2$. Thus 
we can conclude that $v_2$ does not become a mutant in $I_{i-1}$.

Since \sP3 occurs,  $v_1 \dots v_k$ clears within $I_{i-1}$. Let $v_0 \in R \setminus \{x_0\}$ and $t_1, \dots, 
t_{k+1} \in I_{i-1}$ be as in Definition~\ref{def:star-lower-path-clear}. Since $\vnc{v_0}{v_1}$ triggers at time 
$t_1$, it follows that $v_1 \notin X_{t_1}$. Since $\vmc{x_0}{v_1}$ does not trigger in $[t_1, t_2]$, $v_1$ does not 
become a mutant in $[t_1, t_2]$ and so $v_1 \notin X_{t_2}$. Since $\vnc{v_1}{v_2}$ triggers at time $t_2$, it follows 
that $v_2 \notin X_{t_2}$. We have already seen that $v_2$ does not become a mutant in $I_{i-1}$, so it follows that 
$v_2 \notin X_t$ for all $t \in [t_2, i\kappa]$. 

Now, $v_3 \notin X_{t_3}$ since $\vnc{v_2}{v_3}$ triggers at time $t_3 \in [t_2, i\kappa]$. Since $v_2$ is a non-mutant 
throughout $[t_2, i\kappa ]$, it follows that $v_3 \notin X_t$ for all $t \in [t_3, i\kappa ]$. Repeating the argument 
for $t_4, \dots, t_{k+1}$, we see that $v_2, \dots, v_k, v^* \notin X_{i\kappa }$ and hence $X_{i\kappa } 
\subseteq \{x_0, v_1\}$. 

Since $X_{i\kappa } \subseteq \{x_0, v_1\}$, no mutants can be spawned   in $I_i$ until $v_2$ next becomes a mutant.  
However, by the same argument as above, the fact that $v_2$ is protected in $I_i$ implies that $v_2$ does not become a 
mutant in $I_i$. Hence $X_t \subseteq \{x_0, v_1\}$ for all $t \in I_i$, and in particular $v_k$ does not spawn a 
mutant onto $v^*$   in $I_i$. Thus $\sigma((i+1)\kappa ) = \sigma(i\kappa )$  This gives the desired upper bound on 
$\sigma((i+1)\kappa )$.

\textbf{Case 2: Case 1 does not hold.}
Suppose for a contradiction that $\sigma((i+1)\kappa ) \ge \sigma(i\kappa ) + \lfloor 2r\kappa \rfloor + 1$.
Then $v_k$ spawns a mutant onto~$v^*$ at least $\lfloor 2 r \kappa \rfloor + 1$ times in~$I_i$, contradicting
\sP4.
Thus $\sigma((i+1)\kappa ) \le \sigma(i\kappa )+\lfloor 2r\kappa \rfloor$. 
Again, we have the desired upper bound on $\sigma((i+1)\kappa)$. 

Combining Cases 1 and 2, we have proved that whenever $0 \le i \le \tx$ and $\sigma(i\kappa ) \le 
\lfloor\ell/(2r)\rfloor - \lfloor 2r\kappa \rfloor$,
\begin{align}\label{eqn:star-lower-hard-3}
\begin{split}
\sigma((i+1)\kappa ) &=   \sigma(i\kappa ), \mbox{ if $i>0$ and $v_2$ is protected in $I_{i-1}$ and $I_i$, and}\\
\sigma((i+1)\kappa ) &\le \sigma(i\kappa ) + \lfloor 2r\kappa \rfloor,  \mbox{ otherwise.}
\end{split}
\end{align}
Since \sP5 occurs and
$\ell \ge K r^4 \kappa \log n$, 
the number of intervals $I_i$ such that $0 \le i \le \lfloor \tx/\kappa \rfloor$ and Case 1 does not hold is at most
\begin{align*}
1  + 2 \left|\left\{i \in \Zzero \,\,\middle|\,\, i \le \left\lfloor \frac{\tx }{\kappa } \right\rfloor,\, v_2 
\textnormal{ is not protected in }I_i\right\}\right| 
	&\le 1+ 2\cdot \frac{8r^2\tx }{m}\\
	&= 1+\frac{16r^2\ell}{K r^4 \kappa} \le \frac{17r^2\ell}{K r^4 \kappa}.
\end{align*}
Moreover, again using the fact that $\ell \ge K r^4 \kappa\log n$,
\[\lfloor 2r\kappa\rfloor \cdot \frac{17 r^2\ell}{K r^4 \kappa} \le \frac{34 r^3\ell}{K r^4} = \frac{34 \ell} {K 
r} \le \left\lfloor\frac{\ell}{2 r}\right\rfloor - \lfloor 2r\kappa \rfloor.\]
Since $\sigma(0) = 0$, it therefore follows by repeated application of~\eqref{eqn:star-lower-hard-3} that
\begin{align}\label{eqn:star-lower-hard-2}
\sigma(\tx ) \le \sigma\left(\left\lfloor \frac{\tx }{\kappa } +1 \right\rfloor \kappa \right) \le 
\left\lfloor\frac{\ell}{2r}\right\rfloor - \lfloor 2r\kappa \rfloor.
\end{align}

Now consider the  behaviour of the process in the interval $(\tx, \lfloor \tx /\kappa  + 2 \rfloor \kappa ]$. From 
\eqref{eqn:star-lower-hard-2}, we have that $\sigma(\lfloor \tx /\kappa  + 1\rfloor \kappa ) \le 
\lfloor\ell/(2r)\rfloor - \lfloor 2r\kappa\rfloor$, so by \eqref{eqn:star-lower-hard-3} it follows that 
$\sigma(\lfloor \tx /\kappa  + 2\rfloor \kappa ) \le \lfloor\ell/(2r)\rfloor$ and so,
since \sP1 occurs,  $v^*$ does not spawn a mutant in the interval $[0, \lfloor \tx /\kappa  + 2 \rfloor \kappa ]$. 

Since $\sP2$ occurs  and \eqref{eqn:star-lower-hard-2} holds, we have $x_0 \notin X_{\tx} $, so 
for all $t \in (\tx , \lfloor \tx /\kappa  + 2 \rfloor \kappa ]$, we have
$X_t \subseteq \{v_1, \dots, v_k, v^*\}$. 
Since $\sP3$ occurs, $v_1 \dots v_k$ clears within $I_{\lfloor\tx/\kappa+1\rfloor}$. 
Let   $v_0 \in R \setminus \{x_0\}$ and 
 the sequence of times
$t_1, \dots, t_{k+1} \in I_{\lfloor\tx/\kappa+1\rfloor}$  be as in Definition~\ref{def:star-lower-path-clear}. Then 
for all $i \in [k]$, $\vnc{v_{i-1}}{v_i}$ triggers at time $t_i$ and so $v_i \notin X_t$ for all $t \in 
[t_i, \lfloor \tx/\kappa+2 \rfloor]\kappa$. 
Likewise, $v^* \notin X_t$ for all $t \in [t_{k+1}, \lfloor 
\tx/\kappa+2 \rfloor]\kappa$.
In particular, $ 
X_{\lfloor \tx/\kappa+2 \rfloor \kappa} 
= \emptyset$, so $ X$ goes extinct and the result holds.
\end{proof}

\subsection{Proving the main theorem (Theorem~\ref{thm:star-lower})}

We now have everything we need to prove Theorem~\ref{thm:star-lower}, which follows relatively easily from 
Lemmas~\ref{lem:star-lower-die-instant}, \ref{lem:star-lower-die-before-path} and \ref{lem:star-lower-hard}.

{\renewcommand{\thetheorem}{\ref{thm:star-lower}}
\begin{theorem}
\statemainsuperthm
\end{theorem}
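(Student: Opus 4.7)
Set $B := (n \log n)^{1/3}$. The plan is to combine the three preceding lemmas via a case analysis on the parameters $k$, $\ell$, $m$. I would choose an integer $n_0 = n_0(r)$ large enough that the various lower-order corrections below are dominated, handling the trivial case $n < n_0$ by taking $c_r$ large enough to cover the finitely many superstars on fewer than $n_0$ vertices (each of which has strictly positive extinction probability). For $n \ge n_0$, I would first split on whether $m < kB$ or $m \ge kB$.

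In the first regime ($m < kB$), Lemma~\ref{lem:star-lower-die-instant} directly yields extinction probability at least $k/(2r(m+k)) \ge k/(2r \cdot k(B+1)) \ge 1/(4rB)$. In the second regime ($m \ge kB$), I would further split on $\ell$ using a constant $\alpha = \alpha(r)$ to be fixed below. If $\ell \le \alpha B$, then $m \ge kB \ge B \ge 12r$ for $n_0$ large, so Lemma~\ref{lem:star-lower-die-before-path} gives conditional extinction probability at least $1/(26r^2\ell)$ whenever $x_0$ lands in a reservoir. Since $m \ge k$ gives $\Pr(x_0 \in R_1 \cup \dots \cup R_\ell) = \ell m/n \ge \ell m/(2\ell m + 1) \ge 1/3$, the overall extinction probability is at least $1/(78 r^2 \alpha B)$.

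The substantive case, and the main obstacle, is $m \ge kB$ together with $\ell \ge \alpha B$, where I would apply Lemma~\ref{lem:star-lower-hard} with $\kappa = \max\{3k, Kr^4\log n\}$. Checking $m \ge 6r^2\kappa$ is easy: $m \ge kB$ handles the $\kappa = 3k$ subcase (since $B \ge 18r^2$ for $n$ large), and $m \ge B$ handles the $\kappa = Kr^4\log n$ subcase for $n$ large. The delicate hypothesis is $\ell \ge Kr^4\kappa\log n$ in the subcase $\kappa = 3k$: using $n \ge \ell m \ge \ell k B$ to bound $k \le n/(\ell B)$, the requirement reduces to $\ell^2 B \ge 3Kr^4 n\log n$, which (given $B^3 = n\log n$) is equivalent to $\ell^2 \ge 3Kr^4 B^2$. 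Choosing $\alpha \ge r^2\sqrt{3K}$ ensures this holds whenever $\ell \ge \alpha B$; the other subcase $\kappa = Kr^4\log n$ is immediate for $n$ large. With the hypotheses verified, Lemma~\ref{lem:star-lower-hard} gives conditional extinction probability at least $1/(7Kr^4\kappa)$. Multiplying by $\Pr(x_0 \in \text{reservoir}) \ge 1/3$ and noting that $\kappa \le B$ for $n$ large (since $3k \le 3n/(\ell B) \le 3B/(\alpha \log n) \le B$ and $Kr^4\log n \le B$) yields extinction probability at least $1/(21Kr^4 B)$. Taking $c_r$ to be the maximum of the constants from each case --- together with the constant absorbing the $n < n_0$ regime --- completes the proof.
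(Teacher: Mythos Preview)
Your proposal is correct and follows essentially the same four-case decomposition as the paper's proof (small $n$; $m$ small relative to $k$; $\ell$ small; the main case where Lemma~\ref{lem:star-lower-hard} applies). The only differences are cosmetic: you use a slightly smaller threshold $\alpha = r^2\sqrt{3K}$ in place of the paper's $3Kr^4$, and you bound $\Pr(x_0 \in \bigcup R_j) \ge 1/3$ rather than $1/2$, but these do not affect the structure or correctness of the argument.
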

\addtocounter{theorem}{-1}
}

\begin{proof}

Fix $r>1$ as in the statement of the theorem.
Recall from Definitions \ref{def:star-lower-cr} and~\ref{def:tenth} that
$K = 70$
and $\kappa  = \max\{3k,K r^4 \log n\}$. 
Let $n_0$ be
the smallest integer 
 such that, for $n\geq n_0$, 
 Lemma~\ref{lem:star-lower-hard} and Lemma~\ref{lem:star-lower-apply-centre-mut}
  applies  and also
\begin{equation}
\label{eq:nlogn-bound}
(n \log n)^{1/3} \geq n^{1/3} \geq
\max\{18 r^2, 6 K r^6 \log n,Kr^4 (\log n)^2\}\,.\end{equation}

We split into cases depending on the values of $k$, $\ell$, $m$ and $n$.
We show that in each case, the statement of the theorem holds, provided $c_r \ge \max \{2rn_0, 156r^6K\}$.

\textbf{Case 1: $\boldsymbol{n < n_0}$.}
We show that with probability at least $1/2rn_0$, $x_0$ dies 
  before spawning a single mutant. Indeed, at the start of the process $x_0$ spawns a mutant with rate $r$, and every 
choice of $x_0 \in V(\sstar)$ has an in-neighbour so $x_0$ dies 
  with rate at least $1/n$. Thus $ X$ goes extinct with probability at least
\[\frac{\frac{1}{n}}{\frac{1}{n}+r} \ge \frac{1}{2rn} \ge \frac{1}{2rn_0},\]
so the result follows since $c_r \geq 2rn_0$.

\textbf{Case 2: $\boldsymbol{n \ge n_0}$, $\boldsymbol{m < k (n\log n)^{1/3}}$.} 
By Lemma~\ref{lem:star-lower-die-instant}, $ X$ goes extinct with probability at least
\[\frac{k}{2r(m+k)} \ge \frac{k}{2r(k (n\log n)^{1/3} + k)} \ge \frac{1}{4r(n\log n)^{1/3}},\]
where the final inequality holds
since $(n \log n)^{1/3} \geq 1$.
The result follows since $c_r \geq 4 r$.

\textbf{Case 3: $\boldsymbol{n \ge n_0}$, $\boldsymbol{m \ge k (n\log n)^{1/3}}$ and $\boldsymbol{\ell < 3K r^4 
(n\log n)^{1/3}}$.}
Note that
\begin{equation}\label{eqn:star-lower-1}
\Pr(x_0 \in R_1 \cup \dots \cup R_\ell) = \frac{\ell m}{\ell(m+k)+1} \ge \frac{m}{m+k+1} \ge 
\frac{1}{2},
\end{equation}
where the final inequality is valid since
$m\geq k (n \log n)^{1/3} \geq 2k$.
We  will therefore condition on $x_0 \in R_1 \cup \dots \cup R_\ell$. Moreover, we have $m \ge k (n\log 
n)^{1/3} \ge 12r$. Thus by Lemma~\ref{lem:star-lower-die-before-path} and~\eqref{eqn:star-lower-1}, $X$ goes extinct 
with probability at least
\[\frac{1}{2} \cdot \frac{1}{26r^2\ell} \ge \frac{1}{156r^2 K r^4 (n\log n)^{1/3}},\]
so the statement holds since $c_r \geq 156 K r^6$.

\textbf{Case 4: $\boldsymbol{n \ge n_0}$, $\boldsymbol{m \ge k (n\log n)^{1/3}}$ and $\boldsymbol{\ell \ge 3K r^4 
(n\log n)^{1/3}}$.}
Note that 
\[m \ge k (n\log n)^{1/3} \ge 6r^2\max \{3 k, K r^4 \log n\} = 6r^2\kappa.\]
We  will also show that $\ell \ge K r^4 \kappa \log n$, in order to apply Lemma~\ref{lem:star-lower-hard}. Since 
$\ell \ge 3K r^4 (n\log n)^{1/3}$ and $n = \ell(m+k)+1 \ge \ell m$, we have $m \le n/\ell \le (n^2/\log 
n)^{1/3}$.
It is also immediate from \eqref{eq:nlogn-bound} and the hypothesis on~$\ell$ that
\begin{equation}\label{eqn:star-lower-2}
\ell \ge K^2 r^8   (\log n)^2.
\end{equation}
Therefore,
\begin{equation}\label{eqn:star-lower-3}
3k \le \frac{3m}{(n\log n)^{1/3}} \le \frac{3n^{1/3}}{(\log n)^{2/3}} = \frac{3K r^4 (n\log n)^{1/3}}{K r^4 \log n} 
\le \frac{\ell}{K r^4 \log n}.
\end{equation}
It therefore follows from \eqref{eqn:star-lower-2}, \eqref{eqn:star-lower-3} and the definition of $\kappa $ 
(Definition~\ref{def:star-lower-cr}) that $\ell \ge K r^4 \kappa \log n$, and so we may apply 
Lemma~\ref{lem:star-lower-hard}.

As in Case 3, \eqref{eqn:star-lower-1} holds. Thus by~\eqref{eqn:star-lower-1} and Lemma~\ref{lem:star-lower-hard}, $ 
X$ 
goes extinct with probability at least $1/(14K r^4 \kappa )$. By \eqref{eqn:star-lower-3} we have $3k \le 3n^{1/3}$, 
and so
\begin{equation*}
\frac{1}{14K r^4 \kappa } = \frac{1}{14K r^4 \max \{3k, K r^4 \log n\}} \ge \frac{1}{14K r^4 \max \{3n^{1/3}, K r^4 
\log n\}}
= \frac{1}{42K r^4 n^{1/3}} ,
\end{equation*}
and the result follows since $c_r \geq 156Kr^6$.
\end{proof}

\section{An upper bound on the fixation probability of metafunnels}
\label{sec:metafunnel}

The $(k,\ell,m)$-metafunnel is defined in Section~\ref{def:meta}.
We use $n = 1 + \ell \sum_{i=1}^{k} m^i$
to denote the number of vertices.

The main result of this section is the following upper bound on the fixation probability of
the metafunnel.
 
\newcommand{\statemainthm}{Let $r > 1$. Then there  is a constant~$c_r>0$, depending on~$r$,
such that the following holds
for all $k, \ell, m\in\Zone$
such that the $(k,\ell,m)$-metafunnel $\metaf$ has
$n\geq 3$ vertices. 
Suppose that the initial state $X_0$ of the Moran process
with fitness~$r$
is chosen uniformly at random from all 
singleton subsets of $V(\metaf)$.
The probability that the Moran process goes extinct is at least 
$e^{-\sqrt{\log r \cdot \log n}}(\log n)^{-c_r}$.
}

\begin{theorem}\label{thm:funnel-upper}
\statemainthm 
\end{theorem}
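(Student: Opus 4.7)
The plan is to lower-bound the extinction probability by conditioning on the (highly likely) placement of the initial mutant $x_0$ in the top layer $V_k$ of some funnel, then tracking the descendants of $x_0$ level by level through the funnel to $v^*$. The stated exponent $\sqrt{\log r \cdot \log n}$ will emerge from balancing the time for $x_0$ to die (of order $\ell m^k = \Theta(n)$) against the time for any strain of descendants to reach $v^*$ and seed a takeover of a second funnel; the extremal metafunnels with $n$ vertices have $k \approx \sqrt{\log n/\log r}$.

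First I would dispose of easy regimes. If $n < n_0(r)$ for a suitable threshold, then $x_0$ has at least one in-neighbour, so some neighbour overwrites $x_0$ at rate $\geq 1/n_0$ while $x_0$ spawns at rate $r$; thus $x_0$ dies before its first spawn with probability $\Omega(1/(rn_0))$, absorbable into $(\log n)^{-c_r}$ for large enough $c_r$. If $m = 1$ the metafunnel is $\ell$ length-$k$ paths attached to $v^*$, essentially a degenerate superstar, to which an adaptation of the arguments of Section~\ref{sec:superstar} applies with a stronger bound. For the main case ($m \geq 2$ and $n$ large), the inequality $n = 1 + \ell(m + m^2 + \cdots + m^k) \leq 1 + 2\ell m^k$ gives $|V_k|/n \geq 1/3$, so I condition on $x_0 \in V_{k,j}$ for a fixed funnel~$j$, losing only a constant factor.

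The heart of the argument is a \emph{strain decomposition} built on the star-clock coupling of Section~\ref{sec:coupled}. For each triggering of a mutant clock whose source is in the funnel-$j$ path, I would define a \emph{strain}: a mutant process tracking the descendants of that spawning event, restricted to the levels $V_{k-1,j}, V_{k-2,j}, \ldots, V_{1,j}$ and finally to $v^*$. At level $V_{i,j}$ with $i < k$, any individual mutant has death rate $\Theta(m)$ (from $m^{i+1}$ non-mutants in $V_{i+1,j}$ firing uniformly into $V_{i,j}$) and spawn rate $r$, making the strain's descendant process a subcritical chain of birth-death processes with per-level offspring mean $O(r/m)$; a first-moment bound gives that a single strain reaches $V_{1,j}$ with probability at most $O((r/(r+m))^{k-2})$, with an additional $O(r/m)$ factor for subsequently firing at $v^*$. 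Meanwhile, $x_0$ has in-degree~$1$, giving death rate $1/(\ell m^k) = \Theta(1/n)$ whenever $v^*$ is non-mutant, so $x_0$ lives in expectation for time $\Theta(n)$ and seeds strains at rate~$r$.

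To obtain the quantitative lower bound on extinction I would combine three routes to extinction: (i) $x_0$ dies before any strain reaches $v^*$, controlled by a Poisson tail bound on the number of successful strains within a chosen cutoff time $T_0$; (ii) even when $v^*$ transiently becomes a mutant, its very high death rate $\Theta(\ell m)$ and spawn rate~$r$ cause it almost always to flip back before spawning a mutant in another funnel's~$V_k$; (iii) if a few mutant roots do appear in other funnels' top levels, the count of mutant roots follows a random walk which is handled via the gambler's-ruin inequalities of Corollary~\ref{cor:gambler} (and Lemma~\ref{lem:gambler}). Balancing $T_0$ against $n \approx \ell m^k$ and optimizing over the funnel parameters yields the bound $e^{-\sqrt{\log r \cdot \log n}}(\log n)^{-c_r}$, with polylogarithmic slack absorbed into $c_r$. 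The main obstacle will be dependencies: distinct strains share the parent $x_0$ at $V_k$ and compete for non-mutant targets at intermediate levels, so they are not independent. The paper's ``colony'' abstraction (Section~\ref{sec:colony}), which unifies strains with their ``heads'' (lowest-level propagation fronts) into mutant processes for which clean concentration can be established, together with careful conditional exposure of star-clocks, will form the technical core of the proof.
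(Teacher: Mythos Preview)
Your high-level strain decomposition matches the paper's approach for the large-$m$ regime, but there is a genuine gap: the small-$m$ case. You condition on $x_0 \in V_k$ for every $m \ge 2$ and then run the strain argument. Even granting the strain analysis, the resulting extinction bound is of order $r^{-k}$ (this is the probability that the star-clock $\snc{(v^*,x_0)}$ triggers within the cutoff $T_0 \approx \ell m^k/r^k$, which is where the paper's $r^{-k}$ in Lemma~\ref{lem:funnel-die-slow} comes from). But $n \ge m^k$ only gives $k \le \log_m n$, so when $m$ is small --- say $m = 2$ --- one can have $k$ of order $\log n$, and then $r^{-k}$ is polynomially small in $n$, far below the target $e^{-\sqrt{\log r \cdot \log n}}$. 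Worse, for $m \le r$ your ``subcritical'' claim fails outright: the per-level offspring mean is $r/m$, not $r/(r+m)$, so the first-moment bound is $(r/m)^{k-2}$, which is vacuous when $r \ge m$. The phrase ``optimizing over the funnel parameters'' cannot help, since the parameters are given and you must prove the bound for every choice.

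The paper resolves this with a case split at the threshold $m = r^{\sqrt{\log_r n}}$. For $m$ above the threshold one has $k \le \sqrt{\log_r n}$, and the strain machinery (your outline, made rigorous via the colony/spawning-chain apparatus) yields Lemma~\ref{lem:funnel-die-slow}: extinction probability at least $r^{-k}(\log n)^{-(C_r+7)} \ge e^{-\sqrt{\log r \cdot \log n}}(\log n)^{-c_r}$. For $m$ below the threshold the paper uses the \emph{opposite} conditioning: by Lemma~\ref{lem:funnel-die-instant}, $x_0$ lands \emph{outside} $V_k$ with probability $\Omega(1/m)$, and in any lower layer $x_0$ has many in-neighbours of small out-degree, so it dies before its first spawn with probability $\Omega(1)$. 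This gives extinction probability $\Omega(1/m) \ge \Omega(e^{-\sqrt{\log r \cdot \log n}})$ directly, with no strain analysis at all. Your $m=1$ detour through superstar arguments is also unnecessary --- the same lemma handles it.
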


\subsection{Proof sketch} 

If $k=1$ then $\metaf$ is a star and has extinction probability roughly $1/r^2$
so Theorem~\ref{thm:funnel-upper} follows easily.
So for most of the proof (and the rest of this sketch) we assume $k\geq 2$.
To prove the theorem, we divide  the parameter space into two regimes. 

In the first regime,
$m \le r^{\sqrt{\log_r n}} $. Since $m$ is small,
  $V_k$ is not too large compared to $V_0 \cup \dots \cup V_{k-1}$. 
Thus, it is fairly likely that    $x_0$ is born outside $V_k$, and dies  before it can spawn a single mutant. 
This straightforward analysis is contained in the short Section~\ref{sec:smallm}.

Most of the proof (Section~\ref{sec:largem}) focusses on the second regime, where 
$m \geq  r^{\sqrt{\log_r n}}$
which, since $n \geq \ell m^k$, implies $k \le \sqrt{\log_r n}$.
In this regime it is likely that  
a uniformly-chosen initial mutant
$x_0$ is   born in $V_k$
(Lemma~\ref{prop:funnel-vinit}) so we assume that this is the case in most of the proof 
(and the rest of this sketch). 
The key lemma is Lemma~\ref{lem:funnel-die-slow}
which shows that, in this case, it is (sufficiently) likely that $x_0$ dies  before $v^*$ spawns a mutant.

In more detail, Definition~\ref{def:Tme}
defines a stopping time $\Tpabs$ which is the first time $t$
that one of the following occurs.
  \begin{enumerate}[({A}1)]
\item $X_{t} = \emptyset$, or  
\item $|X_{t}|$ exceeds  a given  threshold $m^*$ which is a polynomial in $\log n$, or
\item By time $t$, $v^*$ has already become a mutant in~$X$ 
more than $b^*$ times, where $b^*$ is about half as large as its number $\ell m$ of in-neighbours, or
 \item $t$ exceeds some  threshold $\tmax$ which is (very) exponentially large in~$n$.
\end{enumerate}  
The subscript ``$\mathsf {pa}$'' is for ``pseudo-absorption time''
because (A1)   implies that the Moran process absorbs by going extinct and 
(A2) is a prerequisite for absorbing by fixating.  
The proof of Lemma~\ref{lem:funnel-die-slow} 
shows that, with sufficiently high probability, (A2)--(A4)
 do not hold, and so the Moran process~$X$ must go extinct by~$\Tpabs$.

Conditioning makes it difficult to prove that (A2)--(A4) fail. To alleviate this, we divide the mutants into groups called ``strains'' which are easier to analyse. In particular, a strain contains all of the descendants of a particular mutant spawned by~$x_0$.
Formally, for each positive integer~$i$, the $i$'th strain $S^i$
is defined as a mutant process in Definition~\ref{definition:strains}.
Informally, $S^i$ is ``born'' at the $i$'th time at which $x_0$ spawns a mutant in~$X$.
It ``dies'' when all of the descendants of this spawn have died.
It is ``dangerous'' if one (or more) of these descendants spawns a mutant onto~$v^*$
before~$\Tpabs$.

Lemma~\ref{lem:funnel-masterlist} defines eight events \sP1--\sP8.
These are defined in such a way that we can show (in the proof of Lemma~\ref{lem:funnel-die-slow})
that if \sP1--\sP8 simultaneously occur, then (A2)--(A4) do not hold.
The definitions are engineered in such a way that we can also  
show that it is fairly likely that they do hold simultaneously --- this takes up most of the proof.
Informally, the events are defined as follows.
\begin{description}
\item \sP1: No star-clock $\smc{(v^*,v)}$ triggers in $[0,1]$.  
\item \sP2: 
For some threshold $\tx < n$, the star-clock $\snc{(v^*,x_0)}$ triggers in $[0,\tx-2]$. 
\item \sP3: $v^*$ is a mutant for at most one unit of time up to time $\Tpabs$.
\item \sP4: The Moran process absorbs (either fixates or goes extinct) by time $\tmax/2$. 
\item \sP5: Break   $[0,\tx]$ into intervals of length ${(\log n)}^2$.
During each interval, $x_0$ spawns at most  $2r(\log n)^2$ mutants in   $X$.  
\item \sP6:  Define~$s$ to be around $3r \tx$.
Each of the strains $S^1, \dots, S^s$ spawns at most $  \log n  $ mutants before $\Tpabs$.  
\item \sP7: Each of the strains $S^1,\ldots,S^s$ dies within ${(\log n)}^2$ steps. 
\item \sP8: At most $b^*/\log n$ of $S^1, \dots, S^s$ are dangerous.  
\end{description}
The rough sketch of Lemma~\ref{lem:funnel-die-slow} is as follows.
\sP1 and \sP3 guarantee that $v^*$ does not spawn a mutant in~$X$ until after $\Tpabs$.
This together with \sP2 and \sP3 guarantees that the only mutants
in the process before time $\Tpabs$ are part of strains that are born before $\tx$.
By \sP5, there are at most $s$ such strains.
By \sP6,  each of these strains only has about $\log n$ mutants.
Together with \sP7, this implies that (A2) does not hold at $t=\Tpabs$.
\sP8 and \sP6 imply that (A3) does not hold at $t=\Tpabs$. Finally, \sP4 implies that (A4) does not hold at $t=\Tpabs$.

The bulk of the proof involves showing (Lemma~\ref{lem:funnel-masterlist}) that \sP1--\sP8 are sufficiently likely to simultaneously occur.
Of these, \sP3--\sP7 are all so likely to occur that the probability that they do not occur
can be subtracted off using a union bound (so conditioning on the other   \sP{i}'s is not an issue).
The majority of the failure probability comes from the probability that \sP2 does not occur.
 This is handled in the  straightforward Lemma~\ref{prop:funnel-clocks-behave}
 which gives a lower bound on the probability that \sP1 and \sP2 both occur.
 The remaining event, \sP8, is sufficiently unlikely to occur
 that careful conditioning is required. This is (eventually)
 handled in Lemma~\ref{lem:funnel-few-dangerous-strains}, which shows that it is fairly likely to
 occur, conditioned on the fact that both \sP1 and \sP2 occur.

 In order to get a good estimate on the probability that a strain is dangerous (in \sP8), we need
 to consider the number of mutants spawned from the ``layer'' of the strain closest to the centre vertex~$v^*$.
 In order to do this, we define  a new mutant process called the ``head'' of a strain.
Strains and heads-of-strains share some common properties, and they are analysed
together as ``colonies'' in Section~\ref{sec:colony}. 
Informally (see Definition~\ref{defn:colony}) 
a ``colony'' is a mutant process~$Z$ whose mutants are in $V_1 \cup \cdots \cup V_{k-1}$
(and not in $V_0$ or $V_k$). Once a colony becomes empty, it stays empty.
Since a colony is a mutant process but not necessarily a Moran process, vertices
may enter and/or leave whenever a clock triggers but we say that the colony is \emph{hit} when
a vertex leaves a colony specifically because a non-mutant is spawned onto it in the underlying Moran process.
We define the ``spawning chain'' $Y^Z$ of a colony and show that it
increases whenever the colony spawns a mutant and that 
it only decreases when the colony is hit. By analysing the jump chain of a spawning chain  
we are able to obtain the desired bounds on the probability that \sP6, \sP7 and \sP8 fail to occur.

\subsection{Glossary}

\begin{longtable}{p{\linewidth-\widthof{Definition 00, Page 00}}@{}l@{ }l@{}}
$\mathcal{A} = \{\mc{(v^*,u)}   \mid u \in V_{k}\} \cup \{\nc{(v^*,x_0)}\}$ \dotfill & Definition~\ref{def:Upsilon},&
Page~\pageref{def:Upsilon} \\
$\mathcal{A}^* = \{\smc{(v^*,u)} \mid u \in V_k\} \cup \{\smbc{(v^*,u)} \mid u \in V_k\} \cup 
\{\snc{(v^*,x_0)},\snbc{(v^*,x_0)}\}$ \dotfill & Definition~\ref{def:Upsilon},&
Page~\pageref{def:Upsilon} \\
\myref{A}{A:extinct}, \myref{A}{A:fill}, \myref{A}{A:spread}, \myref{A}{A:timeout} \dotfill & Definition~\ref{def:Tme},& Page~\pageref{def:Tme} \\
$b^* = \lfloor \ell m /2\rfloor$ \dotfill & Definition~\ref{defn:funnel-constants},& 
Page~\pageref{defn:funnel-constants} \\
$C_r = \lceil 2\log_r 20 \rceil$ \dotfill & Definition~\ref{defn:funnel-constants},& 
Page~\pageref{defn:funnel-constants} \\
$c_r$  \dotfill & Theorem~\ref{thm:funnel-upper},& Page~\pageref{thm:funnel-upper} \\
dangerous strain \dotfill & Definition~\ref{definition:strains},& Page~\pageref{definition:strains} \\
$\metaf$ \dotfill & Section~\ref{def:meta}, & Page~\pageref{def:meta}\\
hit \dotfill & Definition~\ref{defn:colony},& Page~\pageref{defn:colony} \\
$\head^i_t$ (head of a strain) \dotfill & Definition~\ref{defn:head}, & Page~\pageref{defn:head} \\
$\iin{\mu}{u}{t}$ \dotfill & Definition~\ref{def:localtime},& Page~\pageref{def:localtime}\\
$\iout{\mu}{u}{t}$ \dotfill & Definition~\ref{def:localtime},& Page~\pageref{def:localtime}\\
$I_j = [(j-1)(\log n)^2, j(\log n)^2)$ \dotfill & Definition~\ref{defn:funnel-constants},&
Page~\pageref{defn:funnel-constants} \\
$m^* = \lceil 5 r (\log n)^3 \rceil$ \dotfill & Definition~\ref{defn:funnel-constants},&
Page~\pageref{defn:funnel-constants} \\
$m' = m - m^*$ \dotfill & Definition~\ref{defn:funnel-constants},&
Page~\pageref{defn:funnel-constants} \\
$\Pmutclock,\dots,\Pstraindanger$ \dotfill & Lemma~\ref{lem:funnel-masterlist},&
Page~\pageref{lem:funnel-masterlist} \\
$\Phi$ \dotfill & Definition~\ref{def:Upsilon},& Page~\pageref{def:Upsilon} \\
$\Psi(X)$ \dotfill & Section~\ref{sec:coupled},& Page~\pageref{sec:coupled} \\
$\Psi(X,Z)$ \dotfill & Definition~\ref{def:theprocessPsiXZ},& Page~\pageref{def:theprocessPsiXZ} \\
$\transmatrix_{a,b}$ \dotfill &  Definition~\ref{defn:spawn-chain},& Page~\pageref{defn:spawn-chain} \\
$s = \lceil 3r\tx \rceil$ \dotfill & Definition~\ref{defn:funnel-constants},&
Page~\pageref{defn:funnel-constants} \\
$S^i_t$ (strain) \dotfill & Definition~\ref{definition:strains},& Page~\pageref{definition:strains} \\
$\Tbirth{i}$ \dotfill & Definition~\ref{definition:strains},& Page~\pageref{definition:strains} \\
$\Tdeath{i}$ \dotfill & Definition~\ref{definition:strains},& Page~\pageref{definition:strains} \\
$\Tend(Z)$ \dotfill & Definition~\ref{defn:colony},& Page~\pageref{defn:colony} \\
$\tmax = n^{3n}$ \dotfill & Definition~\ref{defn:funnel-constants},& Page~\pageref{defn:funnel-constants} \\
$\Tpabs$ \dotfill & Definition~\ref{def:Tme},& Page~\pageref{def:Tme} \\
$\Tstart(Z)$ \dotfill & Definition~\ref{defn:colony},& Page~\pageref{defn:colony} \\
$\tx = \ell m^k/(r^k(\log n)^{C_r+5})$ \dotfill & Definition~\ref{defn:funnel-constants},&
Page~\pageref{defn:funnel-constants} \\
$\tau_i$ \dotfill & Section~\ref{sec:clock}, & Page~\pageref{sec:clock}\\
$Y^Z$ (spawning chain) \dotfill & Definition~\ref{defn:spawn-chain},& Page~\pageref{defn:spawn-chain} \\
$\widehat{Y}^Z$ (jump chain of $Y^Z$)\dotfill & Definition~\ref{defn:spawnjumpchain},& 
Page~\pageref{defn:spawnjumpchain} \\
$Z$ (colony) \dotfill & Definition~\ref{defn:colony},& Page~\pageref{defn:colony} \\
(Z\ref{Z:one}), (Z\ref{Z:two}) \dotfill & \dotfill& Page~\pageref{Z:one}\\
\end{longtable}

\subsection{The small $\boldsymbol{m}$ case}\label{sec:smallm}
 
We first show that if $m$ is small, then $x_0$ is likely to be born outside $V_k$ and die before spawning a mutant. This is relatively easy.

\begin{lemma}\label{lem:funnel-die-instant}
Suppose $k\geq 2$. 
Choose $x_0$ uniformly at random from $V(\metaf)$.
Let $X$ be the Moran process  with $G(X) = \metaf$ and $X_0=\{x_0\}$.
The extinction probability of~$X$ is  at least $1/(2(m+r))$. 
\end{lemma}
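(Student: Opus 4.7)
My plan is to show that, with probability at least $1/(2(m+r))$, the initial mutant is overwritten before it ever spawns a mutant, which forces $X$ to go extinct immediately. At time~$0$ the only triggers that change $X$ are non-mutant clocks with target~$x_0$ (total rate $\alpha_{x_0}$, where I write $\alpha_v$ for this quantity) and mutant clocks with source~$x_0$ (total rate $r$, summing $r/d^+(v)$ over all out-neighbours). By standard competing-Poisson-clocks,
\[
\Pr\bigl(x_0 \text{ is overwritten before spawning any mutant} \,\big|\, x_0 = v\bigr) \;=\; \frac{\alpha_v}{\alpha_v + r},
\]
and this event forces immediate extinction of $X$. So it suffices to find a sufficiently large set of vertices $v$ with $\alpha_v$ comparable to~$m$.

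A direct degree count on $\metaf$ does the job. For $v \in V_i$ with $1 \leq i \leq k-1$, the in-neighbours are exactly the $m^{i+1}$ vertices of $V_{i+1,j}$ (where $v \in V_{i,j}$), each of out-degree $m^i$, so $\alpha_v = m^{i+1}\cdot m^{-i} = m$. For $v = v^*$ the in-neighbours are the $\ell m$ vertices of $V_1$, each of out-degree~$1$, so $\alpha_{v^*} = \ell m \geq m$. This is where I would use the hypothesis $k \geq 2$: it guarantees that the middle layers $V_1,\dots,V_{k-1}$ are non-empty. Consequently, $\Pr(\text{extinction}\mid x_0 = v) \geq m/(m+r)$ for every $v \in \{v^*\} \cup V_1 \cup \cdots \cup V_{k-1}$, and averaging over the uniform choice of $x_0$ gives
\[
\Pr(\text{extinction}) \;\geq\; \frac{1+\ell\sum_{i=1}^{k-1} m^i}{n}\cdot\frac{m}{m+r}.
\]

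It then remains to prove the purely algebraic inequality $\bigl(1+\ell\sum_{i=1}^{k-1} m^i\bigr)/n \geq 1/(2m)$, where $n = 1+\ell\sum_{i=1}^k m^i$. Clearing denominators and rearranging, this is equivalent to
\[
2m - 1 \;\geq\; \ell\Bigl(m^k - (2m-1)\sum_{i=1}^{k-1} m^i\Bigr).
\]
Since $(2m-1)\sum_{i=1}^{k-1} m^i \geq (2m-1)\,m^{k-1} \geq m^k$ for every $m \geq 1$ (the second step being $2m-1 \geq m$, and the first relying on $k \geq 2$ so the sum is non-empty), the bracketed quantity on the right-hand side is non-positive, and the inequality is trivial. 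Substituting this bound back gives $\Pr(\text{extinction}) \geq (1/(2m)) \cdot (m/(m+r)) = 1/(2(m+r))$, as required.

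No serious obstacle arises: everything reduces to a local rate calculation and a one-line algebraic inequality. Because the whole argument concerns only the first relevant clock-trigger after time~$0$, I would not need the star-clock coupling, concentration bounds, or any of the more elaborate machinery developed elsewhere in the paper — the only thing to watch is that the casework does not silently fail at the boundary cases $k=2$ or $m=1$, both of which are handled uniformly by the bound $(2m-1)m^{k-1} \geq m^k$.
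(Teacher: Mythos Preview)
Your proof is correct and follows essentially the same strategy as the paper: bound below the probability that $x_0$ lands outside $V_k$ by $1/(2m)$, observe that any such $x_0$ has incoming non-mutant rate at least $m$ and hence dies before spawning with probability at least $m/(m+r)$, and multiply. The only difference is cosmetic: the paper splits into the cases $m=1$ and $m\geq 2$ (the latter using the closed-form geometric series $\sum_{i=1}^{k-1}m^i/\sum_{i=1}^k m^i=(m^k-m)/(m(m^k-1))$ to get the $1/(2m)$ bound), whereas your direct inequality $(2m-1)\sum_{i=1}^{k-1}m^i\geq (2m-1)m^{k-1}\geq m^k$ handles all $m\geq 1$ uniformly and so avoids the case split.
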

\begin{proof}
 
Note that $\Pr(x_0 \in V_k) = \ell m^k/n$. 
 First suppose $m=1$ and $k \ge 2$. We have
\[\Pr(x_0 \in V_k) = \frac{\ell}{\ell k + 1} \le \frac{\ell}{2\ell+1} < \frac{1}{2}.\]
Moreover, if $x_0 \notin V_k$, then $x_0$ has an in-neighbour of out-degree~$1$.
In this case, with probability at least $1/(1+r)$, $x_0$ dies
   before spawning a mutant. It follows that $X$ goes extinct with probability at least $1/(2(1+r))$, as required.

 Now suppose $m,k \ge 2$. We have
\begin{equation*}
    \Pr(x_0\notin V_k)
        = \frac{1 + \ell \sum_{i=1}^{k-1} m^i}{1 + \ell \sum_{i=1}^{k} m^i}
        \geq \frac{\sum_{i=1}^{k-1} m^i}{\sum_{i=1}^{k} m^i}
        = \frac{m^k-m}{m(m^k-1)} \geq \frac1{2m}.
\end{equation*}
If $x \in V_i$ for some $i \in\{0,\ldots,k-1\}$, then $x$ has at least $m^{i+1}$ in-neighbours with out-degree $m^i$, so 
with probability at least $m/(m + r)$,
$x_0$ dies   
before spawning a mutant. Hence $X$ has extinction probability at least
\[\frac{1}{2m}\cdot \frac{m}{m+r} = \frac{1}{2(m+r)}\,.\qedhere\]
\end{proof}

\subsection{The large $\boldsymbol{m}$ case}\label{sec:largem}

We now consider the case where $m$ is large. 
 
\begin{lemma}\label{prop:funnel-vinit}
Suppose $m \ge 2$. 
Choose $x_0$ uniformly at random from $V(\metaf)$.
Then $\Pr(x_0 \in V_k) \ge 1/2$.
\end{lemma}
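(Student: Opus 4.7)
The plan is to prove this purely by a direct counting argument, with no probabilistic content beyond the definition of a uniform distribution. Since $|V_k| = \ell m^k$ and $|V(\metaf)| = n = 1 + \ell \sum_{i=1}^{k} m^i$, the probability is
\[
\Pr(x_0 \in V_k) \;=\; \frac{\ell m^k}{1 + \ell \sum_{i=1}^{k} m^i}\,,
\]
so it suffices to show $2\ell m^k \geq 1 + \ell \sum_{i=1}^{k} m^i$. After subtracting one copy of $\ell m^k$ from both sides, this becomes the clean inequality
\[
\ell\Bigl(m^k - \sum_{i=1}^{k-1} m^i\Bigr) \;\geq\; 1,
\]
where the sum on the left is interpreted as $0$ when $k = 1$ (using the paper's empty-sum convention).

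The core step is to observe that under the hypothesis $m \geq 2$, the top-level term $m^k$ dominates the combined total of all strictly smaller powers by at least an additive constant. Using the geometric sum formula
\[
\sum_{i=1}^{k-1} m^i \;=\; \frac{m^k - m}{m - 1},
\]
a one-line manipulation gives
\[
m^k - \sum_{i=1}^{k-1} m^i \;=\; \frac{(m-2)m^k + m}{m-1}\,,
\]
which is $\geq 1$ for every $m \geq 2$ and $k \geq 1$: if $m = 2$ the expression equals $2$, and if $m \geq 3$ both summands in the numerator are positive and the numerator exceeds the denominator. Since $\ell \geq 1$, multiplying by $\ell$ preserves the inequality.

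There is no real obstacle here; the only things to watch are the edge case $k = 1$ (handled by the empty-sum convention) and the boundary $m = 2$ (handled separately since $(m-2)m^k$ vanishes). The full write-up should be no more than a few lines.
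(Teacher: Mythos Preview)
Your proof is correct and takes essentially the same approach as the paper: both reduce to comparing $\ell m^k$ against the geometric sum $1+\ell\sum_{i=1}^k m^i$ and use the closed form for the geometric series to see that the top layer dominates when $m\ge 2$. The only cosmetic difference is that the paper first relaxes the denominator to $\ell\sum_{i=0}^k m^i$ and obtains the slightly sharper intermediate bound $\Pr(x_0\in V_k)\ge 1-1/m$ before concluding $\ge 1/2$, whereas you work with the exact denominator and prove the inequality $2\ell m^k \ge n$ directly; neither refinement is needed elsewhere.
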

\begin{proof}
We have
\begin{equation*}
\Pr(x_0 \in V_k) 
	= \frac{\ell m^k}{\ell \sum_{i=0}^k m^i - (\ell-1)} 
	\ge \frac{m^k}{\sum_{i=0}^k m^i} 
	= \frac{m^k (m-1)}{m^{k+1}-1} 
	\ge 1 - \frac{1}{m} 
	\ge \frac{1}{2}\,.\qedhere
\end{equation*}
\end{proof}

For the remainder of  Section~\ref{sec:metafunnel}, we  will
fix an arbitrary vertex $x_0 \in V_k$ and
let $X$ be the Moran process with $G(X)=\metaf$ and
$X_0 = \{x_0\}$.  We  first define some constants. Then, we define a  ``pseudo-absorption time'' 
$\Tpabs$ which, by \myref{A}{A:extinct}  and \myref{A}{A:fill}
  in the definition below, is at most the absorption time of the Moran process~$X$.

\begin{definition}[\textbf{Constants}]
\label{def:constants}
\label{defn:funnel-constants}
We will use the following definitions for the rest of  Section~\ref{sec:metafunnel}. 
\begin{itemize} 
\item $b^* = \lfloor \ell m /2\rfloor$,
\item $C_r = \lceil 2\log_r 20 \rceil$,
\item for each $j \in \Zone$, $I_j = [(j-1)(\log n)^2, j(\log n)^2)$. 
\item $m^* = \lceil 5 r (\log n)^3 \rceil$, 
\item $m' = m - m^*$,  
\item $\tmax = n^{3n}$,
\item $\tx = \ell m^k/(r^k(\log n)^{C_r+5})$, and
\item $s = \lceil 3r\tx \rceil$.\defend{}
\end{itemize}
\end{definition}

\begin{definition}[\textbf{The stopping time $\boldsymbol{\Tpabs}$}]
\label{def:Tme}
We define the stopping time $\Tpabs$ to be the 
first time $t$ that  one of the following occurs:
\begin{enumerate}[({A}1)]
\item $X_{t} = \emptyset$, or \label{A:extinct}
\item $|X_{t}| \geq m^*$, or \label{A:fill}
\item $v^*$ becomes a mutant in~$X$ at time $t$ for the $(b^*+1)$'st time, or \label{A:spread}
\item $t \ge \tmax$.\label{A:timeout}\defend{}
\end{enumerate}
\end{definition}

The definition of $\Tpabs$ is motivated as follows. Certainly  \myref{A}{A:extinct} must hold when the process~$X$ goes extinct, and 
\myref{A}{A:fill} must hold before $X$~fixates. If  \myref{A}{A:spread} holds, we expect $v^*$ to spawn a mutant in~$X$, which makes the process significantly harder to analyse (so we will stop the analysis before this). Actually, this is also why we stop at $m^*$ mutants in \myref{A}{A:fill} --- if the process contains too many mutants then it becomes harder to analyse. Finally,  \myref{A}{A:timeout} ensures that $\Tpabs < \infty$.

We will prove that, with sufficiently high probability, \myref{A}{A:fill}--\myref{A}{A:timeout}
do not hold, and so the Moran process~$X$ must go extinct by~$\Tpabs$. To do this, we will group the descendants of each mutant spawned by $x_0$ in~$X$ together and analyse each group 
as a separate mutant process.

\begin{definition}[\textbf{Strains}]
\label{definition:strains}
Consider the Moran process $X$ with  
$G(X) = \metaf$ and
$X_0 = \{x_0\}$ for
some  $x_0\in V_k$.
For each positive integer~$i$, we define a mutant process
$S^i$ 
(called the \emph{$i$'th strain}) with
$G(S^i)=\metaf$.
Let $\Tbirth{i}$ be the $i$'th time at which $x_0$ spawns a mutant in $X$, or~$\infty$ if $x_0$~spawns fewer than $i$~mutants.
The subscript ``$\mathsf b$'' stands for the ``birth'' of the strain.
Clearly, $\Tbirth{i}$ is a function of the evolution of the process~$X$. We let
$S^i_t = \emptyset$ for all $t < \Tbirth{i}$. If $\Tbirth{i} < \infty$, then we have $\Tbirth{i} = \tau_j$ for some $j$. Let $u_i$ be the vertex onto which the mutant is spawned in~$X$, and let $S^i_{\tau_j} = \{u_i\}$. The process~$S^i$ now evolves discretely as follows. Suppose we are given $S^i_{\tau_a}$ for some $a \ge j$. We define $S^i_t = S^i_{\tau_a}$ for all $t \in (\tau_a,\tau_{a+1})$. We then define $S^i_{\tau_{a+1}}$ by dividing into cases.

\medskip\noindent
\textbf{Case 1: Some vertex $\boldsymbol{u \in S^i_{\tau_a}}$ spawns a mutant onto some vertex $\boldsymbol{v}$ in $\boldsymbol{X}$ at time $\boldsymbol{\tau_{a+1}}$.}
If  $v \notin V_0 \cup V_k$, then we set $S^i_{\tau_{a+1}} = S^i_{\tau_a} \cup \{v\}$.
Otherwise, we set $S^i_{\tau_{a+1}} = S^i_{\tau_a}$.

\medskip\noindent
\textbf{Case 2: Some vertex $\boldsymbol{v \in S^i_{\tau_a}}$ dies in $\boldsymbol{X}$  
at time $\boldsymbol{\tau_{a+1}}$.} 
We set $S^i_{\tau_{a+1}} = S^i_{\tau_a} \setminus \{v\}$.

\medskip\noindent
\textbf{Case 3: Neither Case 1 nor Case 2 holds.} 
We set $S^i_{\tau_{a+1}} = S^i_{\tau_a}$.

If $\Tbirth{i}=\infty$ then we define $\Tdeath{i}=\infty$.
Otherwise, 
we define $\Tdeath{i} = \sup\{t \mid S^i_t \ne \emptyset\}$. 
The subscript ``$\mathsf d$'' stands for the ``death'' of the strain.
Note that the definition maintains the invariant that $S^i_t \subseteq X_t$.
Finally, we define the notion of a \emph{dangerous} strain. The strain $S^i$ is said to be \emph{dangerous} if it spawns a mutant onto $v^*$  
 during the interval $[0,\Tpabs]$.\defend{}
\end{definition}

Note that we allow $S^i_t$ and $S^{i'}_t$ to intersect for $i \ne i'$. Intuitively, $S^i_t$ is the set of all living descendants 
at time~$t$
(within $V_1 \cup \dots \cup V_{k-1}$) of the $i$'th mutant spawned by $x_0$ in $X$. 

We now set out a list of events $\Pmutclock, \dots, \Pstraindanger$  which, as we will see in the proof of Lemma~\ref{lem:funnel-die-slow}, together imply extinction. We state these events and claim they hold with reasonable probability in Lemma~\ref{lem:funnel-masterlist}. 

\newcommand{\statemainlemma}{There exists $n_0>0$, depending on $r$, such that the following holds. Suppose $n \ge n_0$, $m \ge r^{\sqrt{\log_r n}}$ and $2\le k \le \sqrt{\log_r n}$. 
Suppose $x_0 \in V_k$.
Let $X$ be the Moran process  
with $G(X) = \metaf$ and 
$X_0 = \{x_0\}$. 
With probability at least $r^{-k}/(\log n)^{C_r + 7}$, 
all of the following events occur in $\Psi(X)$.
\begin{description}
\item [$\Pmutclock$:] no star-clock $\smc{(v^*,v)}$ triggers in $[0,1]$.
\item[$\Pnmutclock$:] the star-clock $\snc{(v^*,x_0)}$ triggers in $[0,\tx-2]$.
\item[$\Pcentremut$:] $\iin{X}{v^*}{\Tpabs}\leq 1$.
\item[$\Pfastabsorb$:] $X_{\tmax/2} \in \{\emptyset, V(\metaf)\}$.
\item[$\Pvinitspawn$:] for all $j \le \lceil \tx/(\log n)^2 \rceil$, $x_0$ spawns at most $2r(\log n)^2$ mutants in $I_j$ in $X$.
\item[$\Pstrainfire$:] each of $S^1, \dots, S^s$ spawns at most $  \log n  $ mutants  in $(0,\Tpabs]$.
\item[$\Pstrainlife$:] for all $i \in [s]$, $\min\{\Tdeath{i},\Tpabs\} \le \Tbirth{i} + (\log n)^2$.
\item[$\Pstraindanger$:] at most $b^*/\log n$ of $S^1, \dots, S^s$ are dangerous.
\end{description}}
\begin{lemma}\label{lem:funnel-masterlist}
\statemainlemma
\end{lemma}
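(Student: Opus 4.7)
The plan is to prove Lemma~\ref{lem:funnel-masterlist} by first establishing a lower bound on $\Pr(\Pmutclock \cap \Pnmutclock)$ of the right order (essentially matching the target probability up to constants and a couple of $\log n$ factors), and then subtracting the failure probabilities of the remaining six events via a union bound. The events $\Pmutclock$ and $\Pnmutclock$ depend only on star-clocks with source $v^*$, in particular on the disjoint sets of star-clocks $\{\smc{(v^*,v)} : v \in V_k\}$ and $\{\snc{(v^*,x_0)}\}$ on the fixed intervals $[0,1]$ and $[0,\tx-2]$, so they are independent by construction of $P^*(\metaf)$. The total rate of the former set is $r$, giving $\Pr(\Pmutclock)=e^{-r}$; the latter has rate $1/(\ell m^k)$, and the regime assumptions $m \geq r^{\sqrt{\log_r n}}$, $k \leq \sqrt{\log_r n}$ ensure $\tx \geq 2$ and $\tx-2 \geq \tx/2$, so by \eqref{eq:ebounds} we get $\Pr(\Pnmutclock) \geq (\tx-2)/(2\ell m^k) = \Theta(r^{-k}(\log n)^{-C_r-5})$. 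Hence for $n_0$ large enough in $r$ one obtains a lower bound on $\Pr(\Pmutclock \cap \Pnmutclock)$ of at least $2 r^{-k}/(\log n)^{C_r+6}$.

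Next I would show that each of $\Pcentremut, \Pfastabsorb, \Pvinitspawn, \Pstrainfire, \Pstrainlife$ fails with probability at most, say, $r^{-k}/(6(\log n)^{C_r+10})$, using the Chernoff-type tools in Section~\ref{sec:defprelim} together with the colony/spawning-chain machinery of Section~\ref{sec:colony}. The event $\Pfastabsorb$ is essentially free because $\tmax=n^{3n}$ is exponentially large: with overwhelming probability any Moran process on an $n$-vertex strongly connected graph absorbs well before $\tmax/2$. The event $\Pvinitspawn$ is a direct Chernoff bound (Corollary~\ref{cor:pchernoff-2}) on the Poisson counts of the mutant clocks leaving $x_0$ in each of the $O(\tx/(\log n)^2)$ intervals $I_j$; the expected count per interval is $r(\log n)^2$. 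The events $\Pstrainfire$ and $\Pstrainlife$ concern individual strains, which once born are mutant processes dominated (while they remain subpolylogarithmic in size) by colonies whose spawning chains $Y^Z$ have a persistent negative drift; the jump chain $\widehat{Y}^Z$ is then a gambler's-ruin-type walk (handled by Lemma~\ref{lem:backtoback}), and for each strain the probability of spawning $\log n$ mutants or of living for $(\log n)^2$ time is at most $n^{-\Omega(1)}$, so a union bound over $s = O(r\tx)$ strains suffices. The event $\Pcentremut$ is bounded by noting that, up to $\Tpabs$, $v^*$ becomes a mutant at most $b^*+1$ times by \myref{A}{A:spread}, and each residence at $v^*$ is an exponential waiting time with rate at least $\ell m/2$ (the non-mutant in-neighbours in $V_1$), so $\iin{X}{v^*}{\Tpabs}$ stochastically dominates a sum of $O(\ell m)$ i.i.d.\ exponentials with mean $O(1/(\ell m))$, and Corollary~\ref{cor:pchernoff-2} (via the Poisson-process interpretation of Corollary~\ref{cor:expsum}) gives the required concentration below $1$.

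The main obstacle is $\Pstraindanger$, which has to be proved \emph{conditional on} $\Pmutclock \cap \Pnmutclock$: the estimate on $\Pr(\Pmutclock \cap \Pnmutclock)$ is so small that we cannot afford to union-bound $\Pr(\overline{\Pstraindanger})$ unconditionally against it. The plan is to isolate the star-clocks on which the conditioning acts. The event $\Pmutclock \cap \Pnmutclock$ is measurable with respect to the set $\mathcal{A}^*$ of star-clocks with source $v^*$, whereas the evolution of each strain $S^i$ up to time $\min\{\Tdeath{i},\Tpabs\}$ is measurable with respect only to star-clocks \emph{not} of the form $\smc{(v^*,\cdot)}$ or $\snc{(v^*,\cdot)}$, because no mutant in $S^i$ ever sits at $v^*$ and by definition $\Tpabs$ freezes $X$ before $v^*$ acts on $\mathcal{A}$ too many times. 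Thus, conditioned on $\Pmutclock \cap \Pnmutclock$, the joint distribution of the strains is unchanged. It then suffices to show the unconditional bound $\Pr(\overline{\Pstraindanger}) \leq r^{-k}/(6(\log n)^{C_r+10})$, which will follow from analysing the head $\head^i$ of each strain: a strain is dangerous only if its head's spawning chain reaches $V_1$ and fires into $v^*$, and the relevant jump chain (Lemma~\ref{lem:backtoback}, Corollary~\ref{cor:gambler}) makes this probability $O(r^{-k})$ per strain. Summing over the $s = O(r\tx) = O(\ell m^k/(r^{k-1}(\log n)^{C_r+5}))$ strains gives expected $O(b^*/(\log n)^{C_r+5})$ dangerous strains, and Lemma~\ref{lem:bchernoff} pushes the probability of exceeding $b^*/\log n$ below $r^{-k}/(\log n)^{C_r+10}$. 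Combining the pieces by a union bound gives
\[\Pr\!\Big(\bigcap_{i=1}^{8}\sP{i}\Big) \;\geq\; \frac{2 r^{-k}}{(\log n)^{C_r+6}} - \frac{6 r^{-k}}{(\log n)^{C_r+10}} \;\geq\; \frac{r^{-k}}{(\log n)^{C_r+7}},\]
as required.
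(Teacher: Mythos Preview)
Your decomposition matches the paper's, but your treatment of $\Pstraindanger$ has two real gaps.

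First, the independence claim is false. The birth time $\Tbirth{i}$ is the $i$-th time $x_0$ spawns a mutant in~$X$; this requires $x_0$ to be alive, which is governed by $\nc{(v^*,x_0)}$ and hence (via the coupling of Section~\ref{sec:coupled}) by $\snc{(v^*,x_0)}$. More broadly, the entire trajectory of~$X$ --- and therefore $\Tpabs$ and the set of non-mutant in-neighbours available to hit each strain vertex --- depends on all clocks with source~$v^*$. The paper does not claim independence; Lemma~\ref{lem:funnel-firing-jump-chain} instead conditions on the \emph{entire value} $\Phi=\varphi$ of the $\mathcal{A}^*$-clocks and shows only that the jump chain $\widehat{Y}^Z$ of a single colony's spawning chain is unaffected, which is a much narrower statement than the one you need.

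Second, and more damaging to your final inequality, you cannot apply Lemma~\ref{lem:bchernoff} to the number of dangerous strains. Strains may share vertices (as noted immediately after Definition~\ref{definition:strains}); a spawn or a hit at a shared vertex moves several spawning chains at once, so the indicators $\{S^i\text{ is dangerous}\}_{i\le s}$ are not dominated by independent Bernoullis, and nothing in the paper establishes such a domination. With only per-strain bounds one controls the expectation, and hence only Markov's inequality --- which is exactly what the paper uses in Lemma~\ref{lem:funnel-few-dangerous-strains}, obtaining merely $\Pr(\Pstraindanger\mid\Pmutclock\cap\Pnmutclock)\ge\tfrac12$. This forces $\Pstraindanger$ onto the positive side of the final estimate rather than into the union bound: the paper writes $\Pr(\bigcap_j\sP{j})\ge\Pr(\Pmutclock\cap\Pnmutclock)\,\Pr(\Pstraindanger\mid\Pmutclock\cap\Pnmutclock)-\sum_{j=3}^{7}\Pr(\overline{\sP{j}})\ge\tfrac12 r^{-k}(\log n)^{-C_r-6}-6/n$, which suffices. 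Your union bound, by contrast, would have to absorb a term of size $\tfrac12$ and becomes negative. (A minor slip: the per-strain danger probability is $O((r/m)^{k-1})$ up to polylog factors, not $O(r^{-k})$; your subsequent arithmetic in fact uses the former.)
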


The majority of the failure probability in Lemma~\ref{lem:funnel-masterlist} comes from~$\Pnmutclock$.
In addition, $\Pmutclock$ and $\Pstraindanger$ may fail  with reasonably high probability, so we  will need to be careful with conditioning for these events. The remaining events each occur with high enough probability that we can apply a union bound. 

We first show that $\Pmutclock\cap\Pnmutclock$ occurs with reasonable probability.

\begin{lemma}\label{prop:funnel-clocks-behave}
There exists $n_0>0$, depending on $r$, such that the following holds. Suppose $n \ge n_0$, $m \ge r^{\sqrt{\log_r n}}$ and $2\le k \le \sqrt{\log_r n}$. 
Suppose $x_0 \in V_k$.
Let $X$ be the Moran process  
with $G(X) = \metaf$ and 
$X_0 = \{x_0\}$.  Then, in the process $\Psi(X)$,
 \[\Pr(\Pmutclock\cap\Pnmutclock) \ge \frac{1}{r^k (\log n)^{C_r+6}}.\]
\end{lemma}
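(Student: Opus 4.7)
The plan is to exploit the independence of star-clocks in the star-clock process $P^*(\metaf)$: by construction, every clock in $\inclocks(\metaf) \cup \outclocks(\metaf)$ evolves independently, so in particular $\Pmutclock$ (defined in terms of the star-clocks $\{\smc{(v^*,v)} : v \in V_k\} \subseteq \inclocks(\metaf)$) and $\Pnmutclock$ (defined in terms of $\snc{(v^*,x_0)} \in \outclocks(\metaf)$) depend on disjoint sets of star-clocks. Hence these two events are independent, and it suffices to bound $\Pr(\Pmutclock)$ and $\Pr(\Pnmutclock)$ separately.

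For $\Pmutclock$: since $v^*$ has out-degree $d^+(v^*) = |V_k| = \ell m^k$, each star-clock $\smc{(v^*,v)}$ with $v \in V_k$ has rate $r/(\ell m^k)$. The superposition of these $\ell m^k$ independent Poisson processes is a Poisson process of rate~$r$, so the probability that none triggers in $[0,1]$ is exactly $e^{-r}$.

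For $\Pnmutclock$: the star-clock $\snc{(v^*,x_0)}$ has rate $1/(\ell m^k)$, so the probability that it triggers in $[0,\tx-2]$ equals $1 - e^{-(\tx-2)/(\ell m^k)}$. Plugging in $\tx = \ell m^k / (r^k (\log n)^{C_r+5})$ from Definition~\ref{defn:funnel-constants}, the exponent is approximately $1/(r^k (\log n)^{C_r+5})$, which lies in $[0,1]$ for all sufficiently large~$n$; the bound $1 - e^{-x} \ge x/2$ from \eqref{eq:ebounds} then gives $\Pr(\Pnmutclock) \ge 1/(3 r^k (\log n)^{C_r+5})$, say, once $n$ is large enough to absorb the ``$-2$'' in $\tx - 2$.

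Combining by independence yields $\Pr(\Pmutclock \cap \Pnmutclock) \ge e^{-r}/(3 r^k (\log n)^{C_r+5})$, and the desired bound $1/(r^k(\log n)^{C_r+6})$ follows provided $\log n \ge 3 e^r$, which holds for $n$ large enough in terms of~$r$ (i.e.\ by choice of $n_0$). There is no genuine obstacle here: this lemma is a pure computation about independent Poisson clocks, and all the difficulty of the paper is hidden in the choice of constants (in particular, the fact that $C_r$ is chosen generously enough that the factor $e^{-r}$ is absorbed into a single extra $\log n$).
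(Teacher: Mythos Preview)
Your proposal is correct and essentially identical to the paper's proof: both exploit independence of the star-clocks to factor $\Pr(\Pmutclock\cap\Pnmutclock)$, compute $\Pr(\Pmutclock)=e^{-r}$ from the total rate~$r$, bound $\Pr(\Pnmutclock)$ via $1-e^{-x}\ge x/2$ applied to $x=(\tx-2)/(\ell m^k)$, and absorb the $e^{-r}$ factor into the extra $\log n$. The only differences are cosmetic constant-tracking (the paper uses $\tx-2\ge \tx/2$ from $\tx\ge 4$ and arrives at $1/(4r^k(\log n)^{C_r+5})$ before absorbing~$e^{-r}$).
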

\begin{proof}
Let $n_0$ be a large integer relative to $r$. Note that $\Pmutclock$ and~$\Pnmutclock$ depend  only on the star-clock process $P^*(G)$, and so they are independent by the definition of $P^*(G)$.  The
sum of the parameters of the star-clocks in $\{\smc{(v^*,u)} \mid u \in V_k\}$
is~$r$, so the
 definition of Poisson processes ensures that
$\Pr(\Pmutclock) = e^{-r}$. 

The assumptions in the statement of the lemma guarantee that $\tx\geq 4$.
The parameter of the star-clock $\snc{(v^*,x_0)}$
is $1/(\ell m^k)$, so
\[\Pr(\Pnmutclock) = 
1 - e^{-(\tx-2)/(\ell m^k)} \geq
1-e^{-\tx/(2\ell m^k)} 
.\]
Using \eqref{eq:ebounds}, we get
\[\Pr(\Pnmutclock) \geq  
1-e^{-\tx/(2\ell m^k)} \ge \frac{\tx}{4\ell m^k} = \frac{1}{4r^k(\log n)^{C_r+5}} \ge \frac{e^r}{r^k(\log n)^{C_r+6}}.
\]

Since $\Pmutclock$ and~$\Pnmutclock$ are independent, the result follows.
\end{proof}

\begin{lemma}\label{lem:funnel-many-infections-needed}
There exists $n_0>0$, depending on $r$, such that the following holds. Suppose 
$n \ge n_0$, $m \ge r^{\sqrt{\log_r n}}$ and $k\geq 2$.
Suppose $x_0 \in V_k$.
Let $X$ be the Moran process  
with $G(X) = \metaf$ and 
$X_0 = \{x_0\}$.  
Then $\Pr(\Pcentremut) \ge 1-1/n$.
\end{lemma}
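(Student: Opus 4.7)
The plan is to bound $\iin{X}{v^*}{\Tpabs}$ by decomposing it into the durations of the successive visits of $v^*$ as a mutant. For each $h \in \Zone$, let $\TTmut{h}$ be the $h$'th time at which $v^*$ becomes a mutant in $X$ (with $\TTmut{h} = \infty$ if $v^*$ becomes a mutant fewer than $h$ times), and let $\TTnmut{h}$ be the first time after $\TTmut{h}$ at which $v^*$ becomes a non-mutant again (with $\TTnmut{h} = \infty$ if this never happens). Define $Y_h = \max\{0,\ \min\{\TTnmut{h}, \Tpabs\} - \min\{\TTmut{h}, \Tpabs\}\}$, so that $\iin{X}{v^*}{\Tpabs} = \sum_{h \geq 1} Y_h$. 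Condition~\myref{A}{A:spread} in the definition of $\Tpabs$ ensures that $\TTmut{h} \ge \Tpabs$ for every $h > b^*$, hence $Y_h = 0$ for such~$h$; it thus suffices to show that $\sum_{h=1}^{b^*} Y_h \le 1$ with probability at least $1 - 1/n$.

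The heart of the argument is a stochastic domination: I claim that $Y_1, \ldots, Y_{b^*}$ are jointly dominated above by $b^*$ i.i.d.\ exponential variables of parameter $\ell m - m^*$. The in-neighbors of $v^*$ are precisely the $\ell m$ vertices of $V_1$, each of out-degree~$1$, so each clock $\nc{(u, v^*)}$ with $u \in V_1$ has rate~$1$. During any portion of a visit taking place strictly before $\Tpabs$, condition~\myref{A}{A:fill} gives $|X_t| < m^*$, so at least $\ell m - m^*$ of these in-neighbors are non-mutants, and any one of them triggering its $\nc{(u, v^*)}$ clock would immediately end the visit. Working in the coupled process $\Psi(X)$, these triggers are driven by the independent star-clocks $\snc{(u, v^*)}$ running in local (non-mutant) time for each~$u$, so a standard comparison in the style of Lemma~\ref{lem:star-lower-centre-mut} yields $\Pr(Y_h > t \mid \filt_{\TTmut{h}}(X)) \le e^{-(\ell m - m^*) t}$ on the event $\{\TTmut{h} < \Tpabs\}$; from this the joint domination follows.

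Given the domination, I would apply Corollary~\ref{cor:expsum} with $s = b^*$, $\lambda = \ell m - m^*$ and $j = 1$, obtaining $\Pr\bigl(\sum_{h=1}^{b^*} Y_h < 1\bigr) \ge 1 - e^{-(\ell m - m^*)/16}$ as soon as $1 \ge 3b^*/(2(\ell m - m^*))$. For $n_0$ large enough, the hypotheses $m \ge r^{\sqrt{\log_r n}}$ and $m^* = \lceil 5r(\log n)^3\rceil$ force $m^* \le \ell m/4$, which verifies the inequality above and also gives $\ell m - m^* \ge r^{\sqrt{\log_r n}}/2$, which dwarfs $16\log n$; hence the failure probability $e^{-(\ell m - m^*)/16}$ is at most $1/n$, as required. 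The main technical obstacle is the formal verification of the stochastic domination: one must translate the event $\{Y_h > t\}$ into the non-triggering, in the local non-mutant times of the vertices $u \in V_1$, of the star-clocks $\snc{(u, v^*)}$, and ensure that the comparison with rate $\ell m - m^*$ is honest despite the fluctuating set of non-mutant in-neighbors over the course of the visit. The remainder of the proof is a routine application of the concentration machinery already in use in the paper.
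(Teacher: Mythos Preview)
Your proposal is correct and follows essentially the same approach as the paper: decompose $\iin{X}{v^*}{\Tpabs}$ into at most $b^*$ visit durations via \myref{A}{A:spread}, dominate each duration above by an exponential using that at least $\ell m - m^*$ in-neighbours in $V_1$ are non-mutants before $\Tpabs$ by \myref{A}{A:fill}, and finish with Corollary~\ref{cor:expsum}. The only noteworthy difference is in how the domination is verified: the paper does not pass to $\Psi(X)$ and star-clocks, but instead conditions directly on the behaviour $\Xi$ of all clocks in $\moranclocks(\metaf)$ except the non-mutant clocks with target $v^*$; with $\Xi$ fixed, the set of non-mutant in-neighbours at each moment is deterministic, and the remaining randomness is just those rate-$1$ clocks, giving the exponential tail cleanly. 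This avoids the awkwardness you flag about translating between global time and the separate local non-mutant times of each $u\in V_1$ --- the style of Lemma~\ref{lem:star-lower-centre-mut} does not transfer directly here because there $\TTnmut{h}$ was defined by a clock trigger rather than an actual non-mutant spawn. (A minor point: the paper records the rate as $\ell m' = \ell(m-m^*)$ rather than your slightly sharper $\ell m - m^*$; either suffices.)
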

\begin{proof} 
    
For  every positive integer~$i$,  
we define $\TTmut{i}$ and $\TTnmut{i}$ as follows.
If~$v^*$ becomes a mutant at least $i$ times in $[0,\Tpabs]$ then
$\TTmut{i}$ is the $i$'th time that it does so. Otherwise, $\TTmut{i}=\Tpabs$.
If~$v^*$ becomes a non-mutant at least $i$ times in $[0,\Tpabs]$ then    
$\TTnmut{i}$ is the $i$'th time that it does so. Otherwise, $\TTnmut{i}=\Tpabs$.

 By
item
\myref{A}{A:spread} in
the definition of $\Tpabs$ (Definition~\ref{def:Tme}),
$v^*$ may become a mutant at most $b^*$ times in the interval $[0,\Tpabs)$, so 
$$\iin{X}{v^*}{\Tpabs}  =  \sum_{i=1}^{b^*} (\TTnmut{i} - \TTmut{i}).$$
 
Now consider any $i\in [b^*]$ and any $t_0 \geq 0$.
Consider any possible value $f$ for $\filt_{t_0}(X)$
that is consistent with $\TTmut{i} = t_0$.
We will show 
\begin{equation}
\label{eq:xxgen}
\forall y\geq 0,\quad \Pr(\TTnmut{i} - t_0 \leq y \mid \filt_{t_0}(X)=f) \geq 1 - \exp(-\ell m' y).
\end{equation}

The event
$\TTmut{i}=t_0$ is determined by~$f$.
Since $\TTmut{i} \leq \Tpabs$, we have $t_0\leq \Tpabs$.
The value~$f$ also determines whether or not $t_0=\Tpabs$.
If so, then \eqref{eq:xxgen} is trivial since
$\TTnmut{i} = \Tpabs=t_0$, so $\Pr(\TTnmut{i} - t_0 \leq y\mid \filt_{t_0}(X)=f)=1$.

From now on, we assume that $f$ implies that $\Tpabs>t_0$.
Since $\TTmut{i}=t_0$,
this implies that $v^*\in X_{t_0}$.
Let $\mathcal{B}$ be the set of all non-mutant clocks with target~$v^*$
and let $\Xi$ encapsulate the behaviour of every clock in $\moranclocks(\metaf)\setminus \mathcal{B}$ over the
interval $(t_0,\tmax]$. Consider any value $\xi$ of $\Xi$ which is consistent with $\filt_{t_0}(X)=f$.

We now define a time $\tpabs$ which depends only on the values~$f$ and~$\xi$.
To do so, consider the situation in which $\filt_{t_0}(X)=f$, $\Xi=\xi$ and no clock in $\mathcal{B}$
triggers in $(t_0,\tmax]$, so that the evolution of~$X$ in this interval is entirely determined by~$f$ and~$\xi$.
Let $\tpabs$ be the time at which $\Tpabs$ would occur in this situation.

It is easy to see that  $\TTnmut{i} \leq \tpabs$. 
If a non-mutant is spawned onto $v^*$ in~$X$ at some time $t'\in(t_0,\tpabs]$
then $\TTnmut{i}\leq t' \leq \tpabs$. Otherwise, the evolution of $X$ in $(t_0,\tpabs]$
is  exactly the same as it would be if no clocks in $\mathcal{B}$ triggered,
so $\Tpabs=\tpabs=\TTnmut{i}$.

  We will now prove \eqref{eq:xxgen}.
 First, if $y\geq \tpabs - t_0$,
 then since $\TTnmut{i} \leq \tpabs$, we have
 \begin{equation}
  \label{eq:Dec3first}
\Pr(\TTnmut{i} - t_0 \leq y \mid \filt_{t_0}(X)=f ,\Xi=\xi)  
  \geq
 \Pr( \tpabs - t_0 \leq y \mid \filt_{t_0}(X)=f,\Xi=\xi )    = 1. 
 \end{equation}  
 So suppose $y < \tpabs - t_0$.
 Let $t_1 < \cdots < t_z$ be the times in $(t_0,t_0+y]$ at which clocks in 
 $\moranclocks(\metaf)\setminus \mathcal{B}$ trigger and let $t_{z+1} = t_0+y$.
 Thus $t_0 < \cdots < t_z \leq t_{z+1} < \tpabs$.
 For all $h\in \{0,\ldots,z\}$, let $\chi(h)$ be the value that $X_{t_h}$ would take in the situation where no clock in 
$\mathcal{B}$ triggers in $(t_0,t_h]$, $\filt_{t_0}(X)=f$ and $\Xi=\xi$.
Thus $\tpabs$, $z$, $t_0,\ldots,t_{z+1}$ and $\chi(0),\ldots,\chi(z)$ are all uniquely determined by~$f$ and~$\xi$.

 For each $h \in [z+1]$, let $\mathcal{E}_h$ be the event that a non-mutant is spawned onto $v^*$ in the interval $(t_{h-1}, t_h)$. Note that with probability 1, no non-mutant is spawned onto $v^*$ at any time $t_h$. Thus
\begin{align}\label{eqn:ker}
\Pr(\TTnmut{i}-t_0 \le y \mid \filt_{t_0}(X)=f, \Xi=\xi) &= 1 - \Pr(\overline{\mathcal{E}_1} \cap \dots \cap \overline{\mathcal{E}_{z+1}} \mid \filt_{t_0}(X)=f, \Xi=\xi)\\\nonumber
&= 1 - \prod_{h=1}^{z+1} \Pr(\overline{\mathcal{E}_h} \mid \filt_{t_0}(X)=f, \Xi=\xi, \overline{\mathcal{E}_1} \cap \dots \cap \overline{\mathcal{E}_{h-1}}).
\end{align}

Now fix $h \in [z+1]$, and consider any possible value $f_{h-1}$ of $\filt_{t_{h-1}}(X)$ which implies that 
$\filt_{t_0}(X)=f$ and $\overline{\mathcal{E}_1} \cap \dots \cap \overline{\mathcal{E}_{h-1}}$ 
and is consistent with $\Xi = \xi$. 
Consider the evolution of~$X$ given 
 $\filt_{t_{h-1}}(X) = f_{h-1}$ and $\Xi=\xi$. Since $\overline{\mathcal{E}_1} \cap \dots \cap \overline{\mathcal{E}_{h-1}}$ occurs, no non-mutant  is spawned onto $v^*$ in the interval $(t_0, t_{h-1}]$ and so $X_{t_{h-1}} = \chi(h-1)$. Moreover, $X$ remains constant in $[t_{h-1}, t_{h})$ unless a non-mutant is spawned onto $v^*$. Thus, 
given 
the condition that $\filt_{t_{h-1}}(X)=f_{h-1}$ and $\Xi=\xi$,   $\mathcal{E}_{h}$ occurs if and only if a non-mutant clock  whose source is in $V_1 \setminus \chi(h-1)$   triggers in the interval $(t_{h-1}, t_{h})$. 
Since $t_{h-1} < \tpabs$, property (A2) in the definition of $\Tpabs$ (Definition~\ref{def:Tme})
ensures that $|\chi(h-1)| < m^*$
so $|V_1 \setminus  \chi(h-1)| \geq \ell m - m^* \geq \ell m'$.
Thus,
$$\Pr( \overline{\mathcal{E}_{h}} \mid \filt_{t_{h-1}}(X) = f_{h-1},\Xi=\xi) \leq 
\exp(-\ell m'(t_{h}-t_{h-1})).$$ 
 Combining this with \eqref{eqn:ker} (by multiplying over all $h\in[z+1]$), we get
 \begin{equation} \label{eq:Dec3second}
 \Pr(\TTnmut{i} - t_0 \leq y \mid \filt_{t_0}(X)=f,  \Xi=\xi)
\geq 1-  \exp \left(
- \ell m' y 
\right).\end{equation}
Equation~\eqref{eq:xxgen} follows from \eqref{eq:Dec3first} and \eqref{eq:Dec3second}.
Equation~\eqref{eq:xxgen} shows that 
$\sum_{i=1}^{b^*} (\TTnmut{i} - \TTmut{i})$ is dominated from above by a sum    $S$ of $b^*$ i.i.d.\ exponential random variables with rate $ \ell m'$.  
It follows that
$$\Pr(\sP3) = \Pr (\iin{X}{v^*}{\Tpabs} \leq 1) =  
\Pr \left( \sum_{i=1}^{b^*} (\TTnmut{i} - \TTmut{i}) \leq 1 \right)   \geq \Pr(S \le 1).$$

The hypothesis of the lemma guarantees that $m \geq 4 m^*$ so 
$b^* = \lfloor \ell m /2 \rfloor \leq 2 \ell (m - m^*)/3 = 2 \ell m'/3$.
Therefore, by Corollary~\ref{cor:expsum},
$\Pr(S <1) \geq 1-e^{-\ell m'/16} 
\geq 1-1/n$,
where the last inequality  holds since $\log n \leq \ell (m - m^*)/16$ by the hypothesis of the lemma.
\end{proof}
 
\begin{lemma}\label{lem:funnel-eventually-absorb}
There exists $n_0>0$, depending on $r$, such that the following holds. Suppose 
$n \ge n_0$ and $k\geq 2$. 
Suppose $x_0 \in V_k$.
Let $X$ be the Moran process  
with $G(X) = \metaf$ and 
$X_0 = \{x_0\}$.  
Then $\Pr(\Pfastabsorb) \ge 1 - 1/n$.
\end{lemma}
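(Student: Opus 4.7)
The plan is to argue that $\tmax/2 = n^{3n}/2$ is so much larger than any natural time scale of the Moran process on $\metaf$ that even a very crude lower bound on the absorption probability within a single unit interval suffices, by iterating over the $\lfloor \tmax/2 \rfloor$ unit intervals and applying the Markov property.

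First, I would establish the following short-time absorption claim: for every state $S \subseteq V(\metaf)$ with $0 < |S| < n$,
\[
\Pr\bigl(X_1 \in \{\emptyset, V(\metaf)\} \bigm| X_0 = S\bigr) \;\geq\; p_0 \;:=\; \tfrac{1}{2}\bigl((r+1)n^2\bigr)^{-n}.
\]
The idea is to exploit strong connectivity of $\metaf$: for any non-empty proper subset $T \subsetneq V(\metaf)$ there is an edge from $V(\metaf) \setminus T$ into $T$. Greedily peeling off mutants one at a time, I can construct a sequence of edges $(u_1,v_1),\ldots,(u_{|S|},v_{|S|})$ with $\{v_1,\ldots,v_{|S|}\} = S$, such that at step $i$ the tail $u_i$ is a non-mutant and the head $v_i$ is a mutant in the state $S \setminus \{v_1,\ldots,v_{i-1}\}$. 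If the first $|S|$ triggers in $\moranclocks(\metaf)$ are, in order, the non-mutant clocks $\nc{(u_1,v_1)},\ldots,\nc{(u_{|S|},v_{|S|})}$, and they all occur within $[0,1]$, then at each step the source $u_i$ is still a non-mutant (nothing else has triggered), so $v_i$ dies, and after $|S|$ triggers the process is at $\emptyset$.

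To lower bound this event, I use two facts. The total rate of clocks is $\Lambda = (r+1)n$ (since $\sum_{(u,v)\in E}(r+1)/d^+(u) = (r+1)n$), and each individual clock has rate at least $1/n$ (since $d^+(u) \leq \ell m^k < n$ for every $u$). By the thinning property of superposed independent Poisson processes, the identities of successive triggers are i.i.d.\ with each clock selected with probability $\lambda/\Lambda \geq 1/((r+1)n^2)$, and these identities are independent of the arrival times. Hence the probability that the first $|S| \leq n$ triggers match the specified sequence is at least $((r+1)n^2)^{-n}$. Meanwhile, the number of triggers in $[0,1]$ is Poisson with parameter $\Lambda = (r+1)n$, which by Corollary~\ref{cor:pchernoff-2} (applied with $\rho = (r+1)n$) exceeds $2(r+1)n/3 \geq n$ (using $r > 1/2$) with probability at least $1/2$ once $n$ is sufficiently large relative to $r$. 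Multiplying these two independent quantities yields $p_0$.

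Second, I would iterate over integer times using the (strong) Markov property of the continuous-time Moran chain: conditional on $X_{i-1} = S$ with $S$ non-absorbing, Step~1 gives $\Pr(X_i \in \{\emptyset, V(\metaf)\} \mid X_{i-1} = S) \geq p_0$, and absorbing states stay absorbing. Hence inductively $\Pr(X_i \notin \{\emptyset, V(\metaf)\}) \leq (1-p_0)^i$, so
\[
\Pr\bigl(\overline{\Pfastabsorb}\bigr) \;\leq\; (1-p_0)^{\lfloor \tmax/2 \rfloor} \;\leq\; \exp\!\left(-\tfrac{p_0 \tmax}{4}\right) \;=\; \exp\!\left(-\tfrac{n^n}{8 (r+1)^n}\right).
\]
The rest is a routine arithmetic check: for $n \geq n_0(r)$ with $n_0(r) \geq 4(r+1)$, one has $n^n/(r+1)^n \geq 4^n$, which comfortably exceeds $8\log n$, so the final bound is at most $1/n$. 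The only real ``obstacle'' is just keeping track of these crude constants; the enormous choice $\tmax = n^{3n}$ in Definition~\ref{defn:funnel-constants} was made precisely so that $p_0 \cdot \tmax$ is still exponentially large in $n$, and no deeper probabilistic machinery is required beyond memorylessness of Poisson processes and the Markov property.
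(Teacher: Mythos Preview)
Your proof is correct and takes essentially the same approach as the paper: partition $[0,\tmax/2]$ into intervals, lower-bound the per-interval absorption probability by the probability of one specific sequence of clock triggers, and iterate via memorylessness. The only cosmetic differences are that the paper uses intervals of length $n^2$ and drives the process to fixation (via mutant clocks along a BFS tree from a mutant vertex), whereas you use unit intervals and drive it to extinction (via non-mutant clocks peeling off mutants using strong connectivity); neither choice affects the argument materially.
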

\begin{proof}
Let $n_0$ be a large constant. For all $i \in \Zone$, let 
\[J_i^{-}=(i-1)n^2,\quad J_i^{+}=i n^2,\]
and $J_i$ be the interval $(J_i^{-}, J_i^{+}]$. Let $v_1, \dots, v_n$ be a fixed ordering of $V(\metaf)$.  Define an event $\mathcal{E}_i$ as follows. If $X_{J_i^-} = \emptyset$, then $\mathcal{E}_i$ holds. Otherwise, let $j(i) = \min \{j \mid v_j \in X_{J_i^-}\}$. Let $e(i,1), \dots, e(i,{n-1})$ be the sequence of edges returned by a breadth-first search of $\metaf$ starting from $v_{j(i)}$. Then $\mathcal{E}_i$ holds if and only if clocks in $\moranclocks(\metaf)$ trigger at least $n-1$ times in $J_i$, and the first $n-1$ such trigger events correspond to $\mc{e(i,1)}, \dots, \mc{e(i,{n-1})}$, in that order. Note that if $\mathcal{E}_i$ holds for some $i$ then the Moran process reaches absorption no later than $J_i^+$. 

Now let $f_i$ 
be any possible value
for the filtration $\filt_{J_i^-}(\Psi(X))$.
The event 
$\filt_{J_i^-}(\Psi(X))=f_i$
contains all information about $\mathcal{E}_1, \dots, \mathcal{E}_{i-1}$ and $j(i)$.
We will show that 
\begin{equation}
\label{eq:twelth}
\Pr\big({\mathcal{E}_i} \,\big|\, \filt_{J_i^-}(\Psi(X))=f_i\big) \ge \frac{1}{(2n^{2})^n}.
\end{equation}

First, if $\filt_{J_i^-}(\Psi(X))=f_i$
implies that $X_{J_i^-}=\emptyset$, then the probability in~\eqref{eq:twelth} is~$1$.
Otherwise, $f_i$ implies that $X_{J_i^-}$ is non-empty and
the first $n-1$ triggers of clocks in $\moranclocks(\metaf)$ in the interval $(J_i^-,\infty)$ are as in $\mathcal{E}_i$ with probability at least 
\[\left(\frac{(r/n)}{(1+r)n}\right)^{n-1}\ge \frac{1}{(2n^{2})^{n-1}}.\]
By Corollary~\ref{cor:pchernoff-2}, the probability that clocks in $\moranclocks(\metaf)$ trigger at least $n$ times in $J_i$ is at least $1-e^{-n^2/16}$. Thus by a union bound, we have 
established~\eqref{eq:twelth}.

  Let $i' = \min \{i \mid \mathcal{E}_i \textnormal{ holds}\}$. Then $i'-1$ is dominated below by a geometric distribution with parameter $1/(2n^{2})^n$, so 
\[\Pr(\overline{\Pfastabsorb}) \le \Pr(i' > n{(2n^2)}^n) \le \left(1 - \frac{1}{(2n^{2})^n} \right)^{n(2n^{2})^n} \le e^{-n} \le \frac{1}{n}.\qedhere\]
\end{proof}

\begin{lemma}\label{lem:funnel-few-strains-born}
There exists $n_0>0$, depending on $r$, such that the following holds. Suppose 
$n \ge n_0$ and $k\geq 2$. 
Suppose $x_0 \in V_k$.
Let $X$ be the Moran process  
with $G(X) = \metaf$ and 
$X_0 = \{x_0\}$. 
Then $\Pr(\Pvinitspawn) \ge 1-1/n$.
\end{lemma}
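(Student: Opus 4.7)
The plan is as follows. The key observation is that $x_0$ can only spawn a mutant when the clock $\mc{(x_0,v)}$ triggers for some out-neighbour $v$, and this is true whether or not $x_0$ is currently a mutant in~$X$. Hence the number of mutants spawned by $x_0$ in any interval is dominated above by the total number of triggers of the clocks $\{\mc{(x_0,v)} \mid v \in N^+(x_0)\}$ in that interval. Since $x_0 \in V_k$ has out-degree $m^{k-1}$ and each clock $\mc{(x_0,v)}$ has rate $r/m^{k-1}$, the aggregate of these clocks is a Poisson process of rate~$r$.

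Therefore, for a fixed $j$, the number of mutants spawned by $x_0$ in the interval $I_j$ is dominated above by a Poisson random variable $Y_j$ with parameter $\rho = r(\log n)^2$. By Corollary~\ref{cor:pchernoff-2} (with $y = 2\rho$),
\begin{equation*}
\Pr\big(Y_j \ge 2r(\log n)^2\big) \le e^{-r(\log n)^2/3}.
\end{equation*}

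Now apply a union bound over $j \in \{1, \dots, \lceil \tx/(\log n)^2 \rceil\}$. Since $\tx = \ell m^k/(r^k(\log n)^{C_r+5}) \le n$, the number of intervals is at most $n/(\log n)^2 + 1 \le n$ for $n_0$ sufficiently large relative to~$r$. Hence
\begin{equation*}
\Pr(\overline{\Pvinitspawn}) \le n \cdot e^{-r(\log n)^2/3} \le \frac{1}{n},
\end{equation*}
where the final inequality holds once $n \ge n_0$ for $n_0$ sufficiently large. This yields the claimed bound. No step in this plan is conceptually difficult; the mild subtlety is simply noticing that we do not need to condition on $x_0$'s status as a mutant, because domination by the full aggregate of $x_0$'s outgoing mutant clocks suffices for an upper bound on the number of spawns.
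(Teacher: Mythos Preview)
Your proof is correct and follows essentially the same approach as the paper: dominate the number of mutants spawned by $x_0$ in each interval $I_j$ by the total number of triggers of the rate-$r$ aggregate of mutant clocks with source $x_0$, apply the Poisson Chernoff bound, and take a union bound over the at most $n$ intervals. The only cosmetic differences are that the paper bounds $\lceil \tx/(\log n)^2 \rceil \le \ell m^k \le n$ directly and drops the factor of $r$ in the exponent before the final estimate.
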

\begin{proof}
Let $n_0$ be large relative to $r$. For each $j \in \Zone$, 
 the number of times in $I_j$ that $x_0$ spawns a mutant in $X$ 
is dominated from above
by the number of times in $I_j$ that mutant clocks  with source $x_0$ trigger, which follows a Poisson distribution with parameter $r\len{I_j} = r(\log n)^2$.
By Corollary~\ref{cor:pchernoff}, we have
\[\Pr(x_0\textnormal{ spawns  at least }2r(\log n)^2\textnormal{ mutants in }I_j\textnormal{ in }X) \le e^{-r(\log n)^2 /3} \le e^{-(\log n)^2 /3}.\]
We have $\lceil{\tx/(\log n)^2}\rceil \le \ell m^k \le n$, so the result follows by a union bound over the $I_j$'s with $j \le \lceil \tx/(\log n)^2 \rceil$.
\end{proof}

\subsubsection{Colonies and spawning chains}
\label{sec:colony}

To deal with $\Pstrainfire$, $\Pstrainlife$ and~$\Pstraindanger$, we will need to analyse mutant spawns and deaths in 
strains and in the ``bottom layers'' of strains. We  will use  similar ideas for both cases, so to avoid redundancy we introduce the following  
general definitions.

\begin{definition}[\textbf{Colonies}]\label{defn:colony}
Fix $\metaf$ and fix $x_0 \in V_k$.
Consider the Moran process $X$ with $G(X)=\metaf$ and $X_0 = \{x_0\}$.
A \emph{colony} is a mutant process 
$Z$ with $G(Z)=\metaf$
satisfying the following conditions.
\begin{itemize}
\item For all~$t\geq 0$,
$Z_t \subseteq X_t \cap (V_1 \cup \dots \cup V_{k-1})$.
  
\item If for some $t<t'$, $Z_t$ is non-empty and $Z_{t'}=\emptyset$ then
for all $t''\geq t'$, $Z_{t''} = \emptyset$.
\end{itemize} 
 
We define the 
start and end times of any colony~$Z$ as follows. 
The subscript ``$\mathsf s$'' stands for ``start'' and the subscript ``$\mathsf e$''
stands for ``end''.
\begin{align*}
\Tstart(Z) &=  \min \{t \ge 0 \mid 
\mbox{$Z_t \ne \emptyset$ or $t=\Tpabs$}\}.
\\
\Tend(Z) &= \min \{t \ge \Tstart(Z) \mid 
\mbox{$Z_t = \emptyset$ or $t=\Tpabs$}
\}.
\end{align*} 

Further, we say that a colony~$Z$ in a Moran process~$X$ is \emph{hit}
at time~$t$ if $t=\tau_j$ for some~$j\ge 1$ and there is a vertex $v\in
Z_{\tau_{j-1}}$ such that some vertex $u$ spawns a non-mutant onto~$v$ in $X$ at time~$\tau_j$.\defend{}
\end{definition}

Since a colony~$Z$ is a mutant process but not necessarily a Moran
process, vertices may enter and/or leave~$Z$ at any time~$\tau_j$, as
long as the conditions of Definition~\ref{defn:colony} are respected.
A colony being hit at a particular time means that a vertex left the
colony at that time specifically because a non-mutant was spawned onto
it in the underlying Moran process.

Note that a strain (Definition~\ref{definition:strains}) is a colony.
Also,
$\Tstart(S^i) = \min\{\Tbirth{i},\Tpabs\}$ and $\Tend(S^i)= \min\{\Tdeath{i},\Tpabs\}$.

\begin{definition}[\textbf{Spawning chains}]\label{defn:spawn-chain}
\label{spawningchainpageref}
Fix $\metaf$ with $m > m^*$ and fix $x_0 \in V_k$. Consider the Moran process $X$ with $G(X) = \metaf$ and $X_0 = \{x_0\}$. The \emph{spawning chain} $Y^Z$  of the colony~$Z$ 
is a continuous-time stochastic process with states in $\Z$ which
evolves   as follows.  
First, for all $t \in [0,\Tstart(Z)]$, we define $Y^Z_t=1$. 
We next define $Y^Z_t$ for all $t \in (\Tstart(Z), \Tend(Z)]$. 
If $\Tstart(Z) \ge \Tend(Z)$ there is nothing to define. 
Suppose instead that $\Tstart(Z) < \Tend(Z)$, so that $\Tstart(Z) = \tau_i$ for some $i$. 

Now for any $j\geq i$ with $\tau_j < \Tend(Z)$, suppose that we are given $Y^Z_{\tau_j}$.
If $\tau_{j+1} > \Tend(Z)$, then 
for all $t\in (\tau_j,\Tend(Z)]$,
we set $Y^Z_t = Y^Z_{\tau_j}$.
Otherwise,  for all $t\in (\tau_j,\tau_{j+1})$, we set $Y^Z_t=Y^Z_{\tau_j}$ 
and we define $Y^Z_{\tau_{j+1}}$ according to the following cases.
 
\medskip\noindent
\textbf{Case 1: $\boldsymbol{Z}$ spawns a mutant at time $\boldsymbol{\tau_{j+1}}$.} 
We set $Y^Z_{\tau_{j+1}} = Y^Z_{\tau_j} + 1$.

\medskip\noindent
\textbf{Case 2: $\boldsymbol{Z}$ is hit at time $\boldsymbol{\tau_{j+1}}$.} 
With probability
\begin{equation}\label{eq:pcolony}
\frac
{m'|Z_{\tau_j}|}
{\sum_{v \in Z_{\tau_j}} \sum_{u \in N^-(v) \setminus X_{\tau_j}} (1/d^+(u))}
\end{equation}
(independently of all other events)
we set
$Y^Z_{\tau_{j+1}} = Y^Z_{\tau_j} - 1$; with the remaining probability, we set $Y^Z_{\tau_{j+1}} = Y^Z_{\tau_j}$.
We will show below that the probability in~\eqref{eq:pcolony} is well-defined.

\medskip\noindent
\textbf{Case 3: Neither Case 1 nor Case 2 holds.}
We set $Y^Z_{\tau_{j+1}} = Y^Z_{\tau_j}$.

We have now defined $Y^Z_t$ for all $t \le \Tend(Z)$. Finally,   for  $t > \Tend(Z)$ the spawning chain
$Y^Z_t$ evolves independently of $\Psi(X)$
as a continuous-time Markov chain on 
$\Z$ with 
start state $Y^Z_{\Tend(Z)}$ and
the following transition rate matrix. 
\[
 \transmatrix_{a,b}  = 
	\begin{cases}
		r 	& \textnormal{if }b=a+1,\\
		m'  & \textnormal{if }b=a-1,\\
		0 					& \textnormal{otherwise.}
	\end{cases}
\]
\defend{}
\end{definition}

\begin{definition}[\bf The process $\boldsymbol{\Psi(X,Z)}$ and its filtration]\label{def:theprocessPsiXZ}
Fix $\metaf$ with $m > m^*$ and fix $x_0 \in V_k$.
Consider the Moran process $X$ with $G(X)=\metaf$ and $X_0 = \{x_0\}$.
Consider a colony~$Z$.
Let $\Psi(X,Z)$   be the stochastic process consisting of
$\Psi(X)$ together with the spawning chain $Y^Z$
whose evolution is coupled with that of $\Psi(X)$ in the manner described above.
The filtration  $\filt_t(\Psi(X,Z))$  of $\Psi(X,Z)$ consists of
$\filt_t(\Psi(X))$ together with all information about the transitions of $Y^Z$ up to time~$t$.\defend{}
\end{definition}

\begin{definition}[\bf The jump chain $\boldsymbol{\widehat{Y}^Z}$]\label{defn:spawnjumpchain}
Fix $\metaf$ with $m > m^*$ and fix $x_0 \in V_k$.
Consider the Moran process $X$ with $G(X)=\metaf$ and $X_0 = \{x_0\}$.
Consider a colony~$Z$. Then the \emph{jump chain}
$\widehat{Y}^Z$ of $Y^Z$ is defined straightforwardly.
$\widehat{Y}^Z(0) = Y^Z_0=1$, and 
if $Y^Z$ makes its $i$'th transition at time $ t$ then
$\widehat{Y}^Z(i) = Y^Z_{ t}$.\defend{}
\end{definition}
 
We now show that the probability in~\eqref{eq:pcolony} is well-defined. First note that since $m > m^*$, the numerator of \eqref{eq:pcolony} is positive. Moreover, since $Z$~is hit at time~$\tau_{j+1}$, there must be a vertex $v\in Z_{\tau_j}$ and a vertex $u \in N^-(v) \setminus X_{\tau_j}$ that spawned onto~$v$ at time~$\tau_{j+1}$.  So, certainly, the denominator of~\eqref{eq:pcolony} is nonzero.  Now, 
let $v \in Z_{\tau_j}$ be arbitrary, and 
let $i$ be the integer in $[k-1]$ such that $v\in V_i$. Then since 
$\tau_j < \Tend(Z) \le \Tpabs$, by \myref{A}{A:fill} we have $|X_{\tau_j}|\le m^*$. It follows that
\begin{align*}
\sum_{u \in N^-(v) \setminus X_{\tau_j}} \frac{1}{d^+(u)}
	  = \frac{|N^-(v) \setminus X_{\tau_j}|}{m^i}
	  \ge \frac{m^{i+1} - |X_{\tau_j}|}{m^i} 
	  \ge \frac{m^{i+1} - m^*}{m^i} 
	  \ge m'.
\end{align*}
It follows that
\[\sum_{v \in Z_{\tau_j}} \sum_{u \in N^-(v) \setminus X_{\tau_j}} 
\frac{1}{d^+(u)} \ge m'|Z_{\tau_j}|,\]
so \eqref{eq:pcolony} is, as claimed, a probability.

Note that either $\Tend(Z) = \Tpabs$ or $Z_t = \emptyset$ for all $t \in [\Tend(Z), \Tpabs]$. In either case, we see that spawning chains as defined above satisfy two important properties. 

\begin{enumerate} [(Z1)]
\item  $\{t \le \Tend(Z) \mid Y^Z \textnormal{ increases at }t\} = \{t \le \Tpabs \mid Z\textnormal{ spawns a mutant at }t\}$.\label{Z:one}
\item  $\{t \le \Tend(Z) \mid Y^Z \textnormal{ decreases at }t\}
    \subseteq \{t \le \Tpabs \mid Z\textnormal{ is hit at }t\}$.\label{Z:two}
\end{enumerate}

Intuitively, we expect that for all $t \in (\Tstart(Z), \Tend(Z)]$, 
the spawning chain
$Y^Z_t$ should behave similarly to a continuous-time Markov chain on $\Z$ which increments with rate $r |Z_t|$ and decrements with rate $m' |Z_t|$. For technical convenience we will not prove this. Instead, we will prove that the jump chain 
$\widehat{Y}^Z$
 evolves as a random walk on $\Z$ with 
 appropriate probabilities.

\begin{definition}\label{def:Upsilon} Let
\begin{align*}
\mathcal{A} &= \{\mc{(v^*,u)}   \mid u \in V_{k}\} \cup \{\nc{(v^*,x_0)}\},\textnormal{ and}\\
\mathcal{A}^* &= \{\smc{(v^*,u)} \mid u \in V_k\} \cup \{\smbc{(v^*,u)} \mid u \in V_k\} \cup \{\snc{(v^*,x_0)},\snbc{(v^*,x_0)}\}.
\end{align*}
Let $\Phi$ 
contain, for each star-clock $C \in \mathcal{A}^*$, a list of the times at which $C$ triggers in $[0,\tmax]$. 

\defend{}
\end{definition}

The star-clocks in $\mathcal{A}^*$ are   part of the star-clock process~$P^*(\metaf)$, so of course the
times in $\Phi$ 
are ``local'' and don't  necessarily correspond to the times that clocks in~$\mathcal{A}$ trigger.
However, these are related by Observation~\ref{obs:one}.

In order to prove Lemma~\ref{lem:funnel-masterlist}, we will need to show that $\Pstraindanger$~is reasonably likely to occur, conditioned on $\Pmutclock\cap\Pnmutclock$. The following lemma (among others) will be used for this purpose, so we prove it conditioned on 
$\Phi$.

\begin{lemma}\label{lem:funnel-firing-jump-chain} 
Let $k\geq 2$ and $m \ge 5m^*$. Fix $\metaf$ and fix $x_0 \in V_k$.
Consider the Moran process $X$   with $G(X)=\metaf$ and $X_0 = \{x_0\}$.
Let $Z$ be a colony. 
Let $t_0$ be a non-negative real number, and let 
$f_0$ be a possible value of the filtration $\filt_{t_0}(\Psi(X,Z))$.
Let $\varphi$ be a possible value of $\Phi$.
If the three events
$\Tstart(Z)=t_0$, $\filt_{t_0}(\Psi(X,Z))=f_0$ and $\Phi=\varphi$ are
consistent, 
then conditioned on these three events,  
the jump chain
$\widehat{Y}^Z$ evolves 
as a random walk on $\Z$ with initial state $1$ and the following transition matrix.
\[
\widehat{Y}^Z(a,b) = 
	\begin{cases}
		r/(r+m') 		& \textnormal{if }b=a+1, \\
		m'/(r+m')  	& \textnormal{if }b=a-1,\\
		0 				& \textnormal{otherwise.}
	\end{cases}
\]
\end{lemma}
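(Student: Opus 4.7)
The plan is to prove the claim by induction on the step $i \ge 1$ of the jump chain $\widehat{Y}^Z$, showing that at each step the transition is up with probability $r/(r+m')$, independently of the past. The key structural observation underlying everything is that the star-clocks whose trigger times are recorded in $\Phi$ (namely those in $\mathcal{A}^*$) are disjoint from the star-clocks that drive transitions of $Y^Z$. Indeed, by Definition~\ref{defn:spawn-chain}, a Case~1 event at time $t$ is triggered by a mutant clock $\mc{(u,v)}$ with $u \in Z_t \subseteq V_1 \cup \dots \cup V_{k-1}$, and a Case~2 event is triggered by a non-mutant clock $\nc{(u,v)}$ with $v \in Z_t$ and $u \in N^-(v) \setminus X_t \subseteq V_2 \cup \dots \cup V_k$. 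In particular, none of these clocks has source~$v^*$, so none lies in $\mathcal{A}$; by Observation~\ref{obs:one} they are driven by star-clocks in $(\inclocks(\metaf) \cup \outclocks(\metaf)) \setminus \mathcal{A}^*$, which are independent of $\mathcal{A}^*$ (and hence of $\Phi$) by construction of $P^*(\metaf)$. The coin flip in Case~2 is also independent of everything by construction.

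For the induction step I would let $T_{i-1}$ denote the time of the $(i-1)$-th transition of $Y^Z$ (with $T_0 = t_0$), and condition on $\filt_{T_{i-1}}(\Psi(X,Z))$ together with $\Phi = \varphi$. Two regimes then arise. If $T_{i-1} \ge \Tend(Z)$, then $Y^Z$ evolves (by the last clause of Definition~\ref{defn:spawn-chain}) as an independent continuous-time Markov chain with up-rate $r$ and down-rate $m'$, whose jump chain trivially has up-probability $r/(r+m')$ by competing exponentials. Otherwise, let $\tau^{+}$ be the first time after $T_{i-1}$ at which a Case~1 event fires, and $\tau^{-}$ the first time at which a Case~2 event fires \emph{and} the accompanying coin flip gives a decrement; then the $i$-th transition of $Y^Z$ occurs at $\min(\tau^{+}, \tau^{-}) \wedge \Tend(Z)$ and is an up-transition iff $\tau^{+} < \tau^{-}$ (the boundary case $\min(\tau^{+}, \tau^{-}) \ge \Tend(Z)$ being absorbed into the CTMC regime above).

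The heart of the argument will be a rate calculation. At any instant $s \in (T_{i-1}, \Tend(Z))$ with $Z_s \ne \emptyset$, the total instantaneous rate of Case~1 events is $\sum_{u \in Z_s} \sum_{v \in N^+(u)} r/d^+(u) = r|Z_s|$, while the instantaneous rate of Case~2-with-decrement events, obtained by multiplying the hitting rate by the coin probability in~\eqref{eq:pcolony}, equals
\[
\left(\sum_{v \in Z_s}\sum_{u \in N^-(v)\setminus X_s}\frac{1}{d^+(u)}\right) \cdot \frac{m'|Z_s|}{\sum_{v\in Z_s}\sum_{u\in N^-(v)\setminus X_s} 1/d^+(u)} \;=\; m'|Z_s|.
\]
The two event streams are driven by disjoint sets of clocks, independent of each other and, by the first paragraph, of $\Phi$. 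A standard thinning calculation for competing (time-inhomogeneous) Poisson processes then yields $\Pr(\tau^{+} < \tau^{-}) = r/(r+m')$, since the instantaneous ratio $r|Z_s| : m'|Z_s|$ collapses to the constant $r : m'$ regardless of $|Z_s|$. Invoking the strong Markov property at $T_i$ completes the induction.

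The main obstacle is not the rate computation itself but the careful treatment of the conditioning on $\Phi$: the star-clocks in $\mathcal{A}^*$ influence $X_t$ globally, and $X_t$ enters the Case~2 rate through the set $N^-(v) \setminus X_t$. The resolution is that this dependence flows only through the current state $X_s$, which is measurable with respect to $\filt_s(\Psi(X,Z))$; and crucially, the \emph{ratio} of up-rate to down-rate is $r/(r+m')$ for every possible value of $Z_s$ and $X_s$, so whatever $\Phi$ does to the future trajectories, the up-probability of each jump chain step remains $r/(r+m')$.
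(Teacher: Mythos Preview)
Your outline matches the paper's proof: induction on the jump index, the split into the post-$\Tend(Z)$ CTMC regime and the active regime, the rate computation showing that the coin-flip probability~\eqref{eq:pcolony} is engineered so the effective down-rate is exactly $m'|Z_s|$, and the observation that the driving star-clocks lie outside $\mathcal{A}^*$.

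The paper differs only in how it makes rigorous the point you yourself flag in your last paragraph. Your ``competing time-inhomogeneous Poisson processes with constant ratio'' heuristic is the right intuition, but it is not a standard thinning situation: the rates $r|Z_s|$ and $m'|Z_s|$ depend on a state that is influenced both by $\Phi$ and by the non-transition clocks firing in $(T_{i-1},T_i)$, so one cannot simply treat them as deterministic rate functions. The paper sidesteps this by conditioning, in addition to $\filt_{t_i}(\Psi(X,Z))=f_i$ and $\Phi=\varphi$, on $\Xi$ (the full record of all clocks in $\moranclocks(\metaf)$ on the open interval $(t_i,t_{i+1})$) and on $T_{i+1}=t_{i+1}$. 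This freezes the state $(\chi_{t^-},\zeta_{t^-})$ immediately before the jump, so the question reduces to a single-instant competing-exponentials calculation over \emph{fixed} clock sets $\mathcal{B}_1=\{\vmc{u}{v}:u\in\zeta_{t^-}\}$ and $\mathcal{B}_2=\{\vnc{u}{v}:u\notin\chi_{t^-},\,v\in\zeta_{t^-}\}$. Translating these to star-clock sets $\mathcal{B}_1^*,\mathcal{B}_2^*$ disjoint from $\mathcal{A}^*$ then removes the $\Phi$-conditioning cleanly by independence in $P^*(\metaf)$, and integrating over $\xi$ and $t_{i+1}$ recovers the statement. Your sketch would need an equivalent device to be complete.
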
 
\begin{proof}

The definition of the  jump chain $\widehat{Y}^Z$ implies
 that $\widehat{Y}^Z(0) = 1$. 

 Let $T_0 =  t_0$. 
 For $i \in \Zone$, let $T_i$ be the 
 random variable that is the time of $Y^Z$'s $i$'th transition. 
 
 Now consider 
 an $i\in\Zzero$ and
 a non-negative real number~$t_i$.  (If $i=0$ then $t_0$ is already defined in the statement of the lemma. 
 Otherwise, consider any $t_i\geq t_0$.)
  Suppose that $f_i$ is a possible value for the filtration $\filt_{t_i}(\Psi(X,Z))$  and that the events
  $\Tstart(Z)=t_0$,
 $\filt_{t_0}(\Psi(X,Z))=f_0$,
 $\Phi=\varphi$,
  $\filt_{t_i}(\Psi(X,Z))=f_i$, 
and $T_i=t_i$
 are consistent.   
 Note that
 all of these events are determined by $\filt_{t_i}(\Psi(X,Z))=f_i$ and
 $\Phi=\varphi$, which also determine $\widehat{Y}^Z(0), \dots, \widehat{Y}^Z(i)$.
    We therefore wish to show that
 \begin{equation}\label{eqn:funnel-firing-jump-chain-0}
\begin{aligned}
\Pr\big(\widehat{Y}^Z(i+1) = \widehat{Y}^Z(i) + 1 \,\big|\,
\filt_{t_i}(\Psi(X,Z)) = f_i,\Phi=\varphi \big) &= \frac{r}{r+m'},\mbox{ and}\\
\Pr\big(\widehat{Y}^Z(i+1) = \widehat{Y}^Z(i) - 1 \,\big|\, \filt_{t_i}(\Psi(X,Z))=f_i, \Phi=\varphi\big) &= \frac{m'}{r+m'}.
\end{aligned}
\end{equation}

Let $t_{i+1} > t_i$ be arbitrary. Let $\Xi$ 
contain, for each clock $C \in \moranclocks(\metaf)$, a list of the times at which $C$ triggers in $(t_i, t_{i+1})$.  Suppose that $\xi$ is a possible value of $\Xi$ such that the event $\Xi=\xi$ is consistent with the events $\filt_{t_i}(\Psi(X,Z)) = f_i$, $T_{i+1} = t_{i+1}$, and
$\Phi = \varphi$.
Let $\mathcal{F}$ be the intersection of these four events.
Namely, $\mathcal{F}$ is the intersection of 
\begin{itemize}
\item $\filt_{t_i}(\Psi(X,Z)) = f_i$,
\item $\Xi=\xi$,
\item $T_{i+1}=t_{i+1}$, and
\item $\Phi = \varphi$.
\end{itemize} 
Let $\mathcal{E}_1$ be the event that $Y^Z_{t_{i+1}} = Y^Z_{t_i}+1$ and
let $\mathcal{E}_2$ be the event that $Y^Z_{t_{i+1}}=Y^Z_{t_i}-1$.
By integrating over all choices of~$t_{i+1}$ and~$\xi$,
Equation~\eqref{eqn:funnel-firing-jump-chain-0} will follow from
 \begin{equation}\label{eq:bettername}
\begin{aligned}
\Pr\left(  \mathcal{E}_1 \mid \mathcal{F} \right)&= \frac{r}{r+m'},\mbox{ and}\\
\Pr\left( \mathcal{E}_2  \mid  \mathcal{F}\right) &= \frac{m'}{r+m'}.
\end{aligned}
\end{equation}
Since $\mathcal{F}$ implies $T_i=t_i$ and $T_{i+1}=t_{i+1}$, it implies
  $\mathcal{E}_1 \cup \mathcal{E}_2$.
Also, $\mathcal{E}_1 \cap \mathcal{E}_2$ is empty.
$\mathcal{F}$ determines   the evolution of $\Psi(X,Z)$ throughout $[0,t_{i+1})$. In particular, 
it determines whether 
the event $\Tend(Z) < t_{i+1}$ occurs. We split into cases accordingly.

\medskip\noindent\textbf{Case 1:  $\mathcal{F}$ implies that $\boldsymbol{t_{i+1} > \Tend(Z)}$.} 
In this case, conditioned on 
$\mathcal{F}$, the behaviour of $Y_t^Z$ at $t_{i+1}$ is governed entirely by the transition rate matrix $R$ and is therefore independent of $\Phi$ and $\Psi(X)$. 
The definition of the spawning chain $Y^Z$ gives Equation~\eqref{eq:bettername}.

\medskip\noindent\textbf{Case 2:  $\mathcal{F}$ implies that $\boldsymbol{t_{i+1} \le \Tend(Z)}.$} 
Let $T^- = \max\{\tau_j \mid \tau_j < t_{i+1}\}$, and let $t^-$ be the unique value of $T^-$ consistent with $\filt$. Let $\chi_{t^-}$ be the unique value of $X_{t^-}$ consistent with $\filt$, and let $\zeta_{t^-}$ be the unique value of $Z_{t^-}$ consistent with $\filt$. Define
\begin{align*}
S &= \sum_{v \in \zeta_{t^-}} \sum_{u \in N^-(v) \setminus \chi_{t^-}} (1/d^+(u)),\\
\mathcal{B}_1 &= \{\vmc{u}{v} \mid u \in \zeta_{t^-}\},\\
\mathcal{B}_2 &= \{\vnc{u}{v} \mid u \in V(\metaf) \setminus \chi_{t^-} \textnormal{ and }v \in \zeta_{t^-}\}.
\end{align*}
Consider the following events.
\begin{itemize}
\item $\widehat{\mathcal{E}}_1$ is the event that a clock in $\mathcal{B}_1$ triggers at $t_{i+1}$, and
\item $\widehat{\mathcal{E}}_2$ is the event that
\begin{itemize}
\item a clock in $\mathcal{B}_2$ triggers at $t_{i+1}$, and
\item an (independent) coin toss (part of the spawning chain), with probability $m'|\zeta_{t^-}|/S$ of coming up heads, comes up heads. 
\end{itemize}
\end{itemize}
Note that, conditioned on~$\mathcal{F}$, event~$\widehat{\mathcal{E}}_1$ coincides with~$\mathcal{E}_1$
and $\widehat{\mathcal{E}}_2$ coincides with~$\mathcal{E}_2$.
It is easy to see, using the definition of a Poisson process, that
 \begin{align*}
 \Pr\left( \widehat{\mathcal{E}}_1 \mid 
 \widehat{\mathcal{E}}_1 \cup \widehat{\mathcal{E}}_2 \right)  &= \frac{r|\zeta_{t^-}|}{r|\zeta_{t^-}| + S(m'|\zeta_{t^-}|/S)} = \frac{r}{r+m'},\mbox{ and}\\
  \Pr\left( \widehat{\mathcal{E}}_2 \mid 
 \widehat{\mathcal{E}}_1 \cup \widehat{\mathcal{E}}_2 \right)   &= \frac{m'}{r+m'}.
\end{align*}
In order to establish~\eqref{eq:bettername}, we would like to  show that conditioning on~$\mathcal{F}$ is equivalent
to conditioning on~$ \widehat{\mathcal{E}}_1 \cup \widehat{\mathcal{E}}_2$.
This is straightforward, apart from the event
$\Phi=\phi$, which is part of~$\mathcal{F}$.
Unfortunately, we need the result of the lemma to be conditioned on $\Phi=\phi$ and
not merely on the rest of~$\mathcal{F}$, so the rest of this proof is 
merely technical, and is to deal with this.

To proceed, we consider the four events making up~$\mathcal{F}$.
\begin{itemize}
\item
First, consider the event $\filt_{t_i}(\Psi(X,Z)) = f_i$.  
Let $\hat{f}_i$ be the induced value of $\filt_{t_i}(\Psi(X))$.
The value~$f_i$ consists of $\hat{f}_i$, together
with   the extra
 information about the transitions of~$Y^Z$ up to time~$t_i$ 
 (giving the outcomes of the independent coin tosses that are part of the spawning chain $Y^Z$).
 The value~$\hat{f}_i$ contains, 
 for each clock  
 $C \in \moranclocks(\metaf)$, a list of the times at which $C$ triggers 
 in $[0,t_i]$.
 It also contains information about the times that 
 the star-clocks trigger, according to the coupling in Section~\ref{sec:coupled}.
Using Observation~\ref{obs:one}, we could translate $\hat{f}_i$
into a  (unique) equivalent event  
which is 
a list of times at which certain star-clocks trigger.
We will  therefore 
write $\filt^*_{t_i}=f^*_i$ to denote the event
$\filt_{t_i}(\Psi(X,Z)) = f_i$, expressed entirely in terms of star-clock triggers
and outcomes of spawning-chain coin tosses.
\item Similarly, given 
$\filt^*_{t_i}=f^*_i$, we can 
uniquely 
express
$\Xi=\xi$ as an event 
which is 
a list of times at which certain star-clocks trigger. 
We will denote this event as $\Xi^*=\xi^*$.
The definitions of $t^-$, $\chi_{t^-}$ and $\zeta_{t^-}$ 
can be deduced from $f^*_i$ and $\xi^*$.
\item 
Now let  
$\mathcal{B}^*_1 = \{\vsmc{u}{v} \mid u \in \zeta_{t^-}\}$ and
$\mathcal{B}^*_2 = \{\vsnc{u}{v} \mid u \in V(\metaf) \setminus \chi_{t^-} \textnormal{ and }v \in \zeta_{t^-}\}$.
Note that for every star-clock with source~$u$ in $\mathcal{B}^*_1 \cup \mathcal{B}^*_2$
the quantities $\iin{u}{X}{t_{i+1}}$
and $\iout{u}{X}{t_{i+1}}$
can be deduced  from $f^*_i$ and $\xi^*$.
Let $\mathcal{E}^*_1$ be the event that 
a star-clock with source~$u$ in $\mathcal{B}^*_1$ triggers at $\iin{u}{X}{t_{i+1}}$.
Note that by Observation~\ref{obs:one}, $\filt^*_{t_i}=f^*_i$ and
$\Xi^*=\xi^*$ implies that
$\mathcal{E}^*_1$ is equivalent to $\widehat{\mathcal{E}}_1$.
Similarly, let $\mathcal{E}^*_2$ be the event that
a star-clock with source $u$ in $\mathcal{B}^*_2$ triggers at $\iout{u}{X}{t_{i+1}}$ and the
appropriate coin toss comes up heads, so that 
$\filt^*_{t_i}=f^*_i$ and
$\Xi^*=\xi^*$ implies that
$\mathcal{E}^*_2$ is  equivalent to $\widehat{\mathcal{E}}_2$.
 \end{itemize}
 We now have 
 \begin{align*}
 \Pr\left(  {\mathcal{E}}^*_1 \mid 
 \filt^*_{t_i}=f^*_i, \Xi^*=\xi^*, 
  \ {\mathcal{E}}^*_1 \cup  {\mathcal{E}}^*_2 \right)  &=   \frac{r}{r+m'},\mbox{ and}\\
  \Pr\left(  {\mathcal{E}}^*_2 \mid 
  \filt^*_{t_i}=f^*_i, \Xi^*=\xi^*,   
  {\mathcal{E}}^*_1 \cup  {\mathcal{E}}^*_2 \right)   &= \frac{m'}{r+m'}.
\end{align*} 
Finally, 
since $\mathcal{B}^*_1 \cup \mathcal{B}^*_2$ is disjoint from $\mathcal{A}^*$,
we conclude that,
conditioned on $\filt^*_{t_i}=f^*_i$, $\Xi^*=\xi^*$, and
   ${\mathcal{E}}^*_1 \cup  {\mathcal{E}}^*_2 $, the event
  $\mathcal{E}^*_1$ is independent of $\Phi$ (by independence of star-clocks in the 
  star-clock process).
  Thus, we obtain 
\begin{align*}
 \Pr\left(  {\mathcal{E}}^*_1 \mid 
 \filt^*_{t_i}=f^*_i, \Xi^*=\xi^*, 
  \ {\mathcal{E}}^*_1 \cup  {\mathcal{E}}^*_2 ,\Phi=\varphi \right)  &=  \frac{r}{r+m'},\mbox{ and}\\
  \Pr\left(  {\mathcal{E}}^*_2 \mid 
  \filt^*_{t_i}=f^*_i, \Xi^*=\xi^*,   
  {\mathcal{E}}^*_1 \cup  {\mathcal{E}}^*_2 ,\Phi=\varphi\right)   &= \frac{m'}{r+m'},
\end{align*} 
which implies~\eqref{eq:bettername} by translating the events back to their original formulation. 
\end{proof}

We next prove that, with high probability, $Y^Z$ transitions many times shortly after $\Tstart(Z)$. 

\begin{lemma}\label{lem:funnel-transition-fast}
There exists $n_0>0$ such that the following holds for all $n \ge n_0$, $m \ge 5m^*$ and $k\geq 2$.
Fix $\metaf$ with $n$ vertices and fix $x_0 \in V_k$.
Consider the Moran process $X$  with $G(X)=\metaf$ and $X_0 = \{x_0\}$.
Let $Z$ be a colony. 
Let $t_0$ be a non-negative real number, and let 
$f_0$ be a possible value of the filtration $\filt_{t_0}(\Psi(X,Z))$.
If the  events
$\Tstart(Z)=t_0$ and $\filt_{t_0}(\Psi(X,Z))=f_0$  are
consistent  
then, conditioned on these events,  
 with probability at least $1-e^{-(\log n)^2 /16}$,
 $Y^Z$ increases at least $2\log n$ times in the interval $[t_0, t_0+(\log n)^2]$ .
\end{lemma}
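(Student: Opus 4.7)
The plan is to show that the number $N$ of up-transitions of $Y^Z$ in the interval $[t_0, t_0+(\log n)^2]$ stochastically dominates a Poisson random variable with parameter $r(\log n)^2$, conditional on $\Tstart(Z)=t_0$ and $\filt_{t_0}(\Psi(X,Z))=f_0$. Since each up-transition increases $Y^Z$ by exactly $1$, a lower bound on $N$ yields a lower bound on the number of increases.

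To prove the Poisson domination, I would split $[t_0, t_0 + (\log n)^2]$ at the random time $T_e := \min\{\Tend(Z), t_0+(\log n)^2\}$ and argue that on either side the instantaneous rate of up-transitions of $Y^Z$ is at least $r$. On $[t_0, T_e)$: by the definition of $\Tstart(Z)$ and $\Tend(Z)$, $Z_t \ne \emptyset$ throughout this interval (unless it is empty, in which case $T_e = t_0$ and the interval contributes nothing). Case~1 in the definition of $Y^Z$ (Definition~\ref{defn:spawn-chain}) says that $Y^Z$ increases whenever $Z$ spawns a mutant in $X$, which by definition of ``$Z$~spawns a mutant'' occurs exactly when some mutant clock with source in $Z_t$ triggers. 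These clocks together form a Poisson process of rate $r|Z_t| \ge r$, and by memorylessness this behaviour is unaffected by conditioning on $\filt_{t_0}(\Psi(X,Z))=f_0$. On $[T_e, t_0+(\log n)^2]$: by Definition~\ref{defn:spawn-chain}, $Y^Z$ evolves as a continuous-time Markov chain on $\Z$ with up-rate exactly $r$ and down-rate $m'$, independently of $\Psi(X)$; so its up-transitions form a Poisson process of rate $r$. Gluing the two pieces via the strong Markov property at $T_e$, a standard thinning/coupling argument (build an auxiliary Poisson($r$) process on the whole interval and mark each of its points with an up-transition of $Y^Z$) shows that $N$ dominates Poisson$(r(\log n)^2)$.

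Given the Poisson domination, the conclusion is a direct Chernoff bound. Setting $\rho = r(\log n)^2 \ge (\log n)^2$, for all sufficiently large $n$ we have $2\log n \le 2\rho/3$, so Corollary~\ref{cor:pchernoff-2} yields
\[\Pr(N < 2\log n) \le \Pr(N \le 2\rho/3) \le e^{-\rho/16} \le e^{-(\log n)^2/16},\]
as required.

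The main obstacle is cleanly handling the stochastic domination across the random time $T_e$: one must carefully justify that neither the conditioning on $\filt_{t_0}(\Psi(X,Z))=f_0$ and $\Tstart(Z)=t_0$, nor the possibly complicated joint dependence of $Z$ on the underlying clock process, damages the ``rate $\ge r$'' lower bound. In the style of the paper this is most easily handled by enumerating the trigger times $\tau_i$ after $t_0$, showing for each fixed history that the conditional probability of an up-transition in an infinitesimal interval is at least $r\,dt$, and then comparing the resulting counting process to Poisson($r$) via the strong Markov/memorylessness property at each $\tau_i$ and at $T_e$.
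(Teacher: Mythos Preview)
Your proposal is correct and follows essentially the same approach as the paper: dominate the number of up-transitions of $Y^Z$ in $[t_0,t_0+(\log n)^2]$ below by a Poisson$(r(\log n)^2)$ variable, then apply Corollary~\ref{cor:pchernoff-2}. The paper makes the coupling slightly more concrete than your thinning sketch: rather than arguing ``rate $\ge r$'' and thinning, it picks a \emph{single} vertex $u\in Z_{t_i}$ at each step of the enumeration (including $\Tend(Z)$ among the enumerated times) and lets the auxiliary Poisson$(r)$ process~$P$ drive exactly the mutant clocks with source~$u$, which have total rate precisely~$r$; this sidesteps the need to formalise thinning from the varying rate $r|Z_t|$.
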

\begin{proof}
Let $P$ be a Poisson process with rate~$r$.
Conditioned on $\Tstart(Z)=t_0$ and $\filt_{t_0}(\Psi(X,Z))=f_0$,
we will couple the evolution of $\Psi(X,Z)$  from time~$t_0$
with that of~$P$.
The coupling will have the property that every time $P$ triggers,
$Y^Z$ increases.

Given the coupling,
we can conclude that the probability that 
$Y^Z$ increases at least $2\log n$ times in the interval $[t_0, t_0+(\log n)^2]$ 
is at least the probability that 
$P$ triggers at least $2\log n$ times in an interval of length~$(\log n)^2$.
This is the probability that a Poisson random variable~$W$ with parameter
$\rho = r (\log n)^2$ has value at least
$2 \log n$. We have $2\log n < 2 \rho/3$, so by Corollary~\ref{cor:pchernoff-2},
$\Pr(W\leq 2\log n)\leq \exp(-\rho/16) \leq \exp(- (\log n)^2/16)$.

It remains to give the details of the coupling.
Roughly, the coupling will be constructed using the sequence
$\tau_1,\tau_2,\ldots$.
However, one technical detail arises,
since $\Tend(Z)$ might not occur at one of the instants
$\tau_1,\tau_2,\ldots$.
So, for the purposes of the proof, let
$\hat{\tau}_1,\hat{\tau}_2,\ldots$ be the increasing sequence containing $\Tend(Z)$ and all $\tau_1, \tau_2, \dots$ from $\Psi(X)$ (and nothing else).  This random sequence is a function of the
evolution of $\Psi(X,Z)$.

Now, conditioned on $\Tstart(Z)=t_0$ and $\filt_{t_0}(\Psi(X,Z))=f_0$  
there is a non-negative integer~$j$ such that $\hat{\tau}_j=t_0$.
We will define the coupling from each $\hat{\tau}_i$ for $i\geq j$.

So consider $i\geq j$ and suppose that for some time $t_i$
and some filtration value $f_i$,
we have $\hat{\tau}_i= t_i$ and $\filt_{t_i}(\Psi(X,Z)) = f_i$.
To continue the coupling   in the open interval 
from~$t_i$ there are two cases.
It is easy to determine which case applies, using~$f_i$.

If $\Tend(Z)\leq t_i$, then the evolution of $Y^Z$ after time $t_i$ is  
a continuous-time Markov chain with transition matrix~$\transmatrix$, evolving independently of  $\Psi(X)$.
The rate of an upwards transition in~$\transmatrix$ is~$r$
so use the triggering of~$P$ to dictate these upwards transitions.

If $\Tend(Z) > t_i$, then $Z_{t_i}$ is non-empty,
so choose some $u\in Z_{t_i}$.
Now use the triggering of~$P$ to dictate the triggering of the
mutant clocks with source~$u$ (which together have rate~$r$). 
Thus, every time $P$ triggers, a mutant clock in $\moranclocks(\metaf)$ with source~$u$
is chosen uniformly at random to trigger.
\end{proof}

\subsubsection{Continuing the proof of Lemma~\ref{lem:funnel-masterlist}}

We are now in a position to bound the probabilities with which events $\Pstrainfire$--$\Pstraindanger$ occur, using Lemmas~\ref{lem:funnel-firing-jump-chain} and~\ref{lem:funnel-transition-fast}. 
We start by showing that, up until $\Tend(S^i)$,
the state of the spawning chain
 $Y^{S^i}$ is at least as large as the size
 of the strain~$S^i$ (which is a colony).

\begin{lemma}\label{prop:funnel-strain-dominated}
Let $k\geq 2$ and $m \ge 5m^*$. Fix $x_0 \in V_k$.  Consider the Moran process $X$  with $G(X)=\metaf$ and $X_0 = \{x_0\}$.
Let $i\in\Zone$. For all 
$t \le \Tpabs$, $|S^i_t| \le 
\max\{Y^{S^i}_t,0\}$.
 \end{lemma}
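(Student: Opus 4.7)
The plan is to prove that the gap $D_t := Y^{S^i}_t - |S^i_t|$ stays non-negative throughout $[0,\Tend(S^i)]$ by induction on the global transition times $\tau_j$, and then to handle $(\Tend(S^i),\Tpabs]$ separately using the fact that a strain cannot resurrect once it empties.

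First I would dispose of the base cases. For $t \le \Tstart(S^i)$ we have $Y^{S^i}_t = 1$ by the initial definition of the spawning chain, and $|S^i_t| \in \{0,1\}$, with $|S^i_t| = 1$ only possible at $t = \Tbirth{i}$ when $\Tbirth{i} \le \Tpabs$; in every subcase $D_t \ge 0$. The inductive step is the heart of the argument and consists of a side-by-side comparison of the three cases in Definition~\ref{definition:strains} with the three cases in Definition~\ref{defn:spawn-chain}, both of which branch on exactly the same event at the common trigger time $\tau_{j+1}$. In Case~1 ($S^i$ spawns a mutant), $Y^{S^i}$ goes up by exactly $1$, while $|S^i|$ goes up by $0$ or $1$ (it fails to grow if the target lies in $V_0 \cup V_k$ or was already in $S^i$), so $D$ changes by $0$ or $+1$. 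In Case~2 ($S^i$ is hit in the sense of Definition~\ref{defn:colony}, i.e.\ a non-mutant is spawned in $X$ onto some $v \in S^i_{\tau_j}$), the vertex $v$ dies in $X$ and is removed from $S^i$, so $|S^i|$ drops by exactly $1$, while $Y^{S^i}$ drops by $0$ or $1$ according to the coin flip governed by~\eqref{eq:pcolony} (the precise probability is irrelevant here); hence $D$ again changes by $0$ or $+1$. In Case~3, neither quantity changes.

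So $D$ is non-decreasing along the increasing sequence of $\tau_j$'s in $[\Tstart(S^i),\Tend(S^i)]$ and starts at $D_{\Tstart(S^i)} \ge 0$, giving $D_{\tau_j} \ge 0$ for every such $\tau_j$; since both $|S^i_t|$ and $Y^{S^i}_t$ are constant on each open interval $(\tau_j,\tau_{j+1})$ by the second mutant-process condition, this extends to every $t$ in the range. Finally, for $t \in (\Tend(S^i),\Tpabs]$, either the interval is empty (when $\Tend(S^i) = \Tpabs$), or $\Tend(S^i) = \Tdeath{i}$, and in the latter case $|S^i_t| = 0$: Case~1 of Definition~\ref{definition:strains} requires a vertex \emph{already} in $S^i$ to spawn in order to enlarge $S^i$, so once $S^i_t = \emptyset$ at time $\Tdeath{i}$ the set stays empty, and $0 \le \max\{Y^{S^i}_t,0\}$ trivially. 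No step in this proof presents a real obstacle---Definitions~\ref{definition:strains} and~\ref{defn:spawn-chain} were engineered precisely so that their increments and decrements line up in the favourable direction, and the lemma amounts to checking this transition by transition.
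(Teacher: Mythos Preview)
Your proof is correct and follows essentially the same approach as the paper: an induction over the transition times $\tau_j$ in $[\Tstart(S^i),\Tend(S^i))$ comparing the case analyses in Definitions~\ref{definition:strains} and~\ref{defn:spawn-chain}, together with the observation that $|S^i_t|=0$ for $t\in[\Tend(S^i),\Tpabs]$. The only cosmetic difference is that the paper organises the cases by the sign of the change in $|S^i|$ and $Y^{S^i}$ rather than by event type, and proves the bare inequality $|S^i_{\tau_j}|\le Y^{S^i}_{\tau_j}$ rather than your (slightly stronger but equally easy) monotonicity of $D_t$.
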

\begin{proof}
For $t \le \Tstart(S^i)$ we have $|S^i_t| \le 1 = Y^{S^i}_t\!$, and for 
$\Tend(S^i) \le t \le \Tpabs$ we have $|S^i_t| = 0 \leq 
\max\{Y^{S^i}_t,0\}$. 
Since $S^i$ and $Y^{S^i}$ remain constant except at 
$\tau_1, \tau_2, \dots$, it suffices to prove that 
$|S^i_{\tau_j}| \le Y^{S^i}_{\tau_j}$ for 
all $\tau_j \in [\Tstart(S^i),\Tend(S^i))$.
We  will do so by induction on $j$. 

If $\Tstart(S^i) = \Tend(S^i) = \Tpabs$ then there is nothing to prove, so suppose 
$\Tstart(S^i) = \tau_x$ for some $x$. 
We have already seen that the claim holds for $j=x$. 
Suppose that the claim holds for some $j \ge x$, and that 
$\tau_{j+1} < \Tend(S^i)$. 
We will now prove the claim for $j+1$ by dividing into cases.

\medskip\noindent
\textbf{Case 1: $\boldsymbol{|S^i_{\tau_{j+1}}| > |S^i_{\tau_j}|}$.}
This case may arise only if $S^i$ spawns a mutant at time $\tau_{j+1}$. 
Thus by (Z\ref{Z:one}), we have $Y^{S^i}_{\tau_{j+1}} = Y^{S^i}_{\tau_j}+1 
\ge |S^i_{\tau_j}|+1 = |S^i_{\tau_{j+1}}|$.

\medskip\noindent
\textbf{Case 2: $\boldsymbol{Y^{S^i}_{\tau_{j+1}} < Y^{S^i}_{\tau_j}}$.}
This case may arise only if the spawning chain $Y^{S^i}$ is decremented at time $\tau_{j+1}$, 
which by (Z\ref{Z:two}) may happen only if $S^i$ is hit at time $\tau_{j+1}$. Thus we have 
$Y^{S^i}_{\tau_{j+1}} = Y^{S^i}_{\tau_j} - 1 \ge |S^i_{\tau_j}| - 1 = |S^i_{\tau_{j+1}}|$.

\medskip\noindent
\textbf{Case 3: $\boldsymbol{Y^{S^i}_{\tau_{j+1}} \ge Y^{S^i}_{\tau_j}}$ and $\boldsymbol{|S^i_{\tau_{j+1}}| \le |S^i_{\tau_j}|}$.}
In this case, we have $Y^{S^i}_{\tau_{j+1}} \ge Y^{S^i}_{\tau_j} \ge |S^i_{\tau_j}| \ge |S^i_{\tau_{j+1}}|$.

Thus the claim, and therefore the result, holds in all cases.
\end{proof}
\begin{corollary}\label{cor:funnel-tend-dom}
Let $k\geq 2$ and $m \ge 5m^*$. Fix $x_0 \in V_k$.  Consider the Moran process $X$  with $G(X)=\metaf$ and $X_0 = \{x_0\}$.
Let $i$ be a positive integer. Suppose that 
$t\geq \Tstart(S^i)$ and
$Y^{S^i}_t = 0$. Then $t\geq \Tend(S^i)$.
\end{corollary}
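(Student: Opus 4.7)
The plan is to deduce the corollary directly from Lemma~\ref{prop:funnel-strain-dominated} by a short contradiction argument; no new probabilistic machinery is required.

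Concretely, I would assume for contradiction that $t < \Tend(S^i)$. Combined with the hypothesis $t \geq \Tstart(S^i)$, this puts $t$ in the half-open interval $[\Tstart(S^i), \Tend(S^i))$. I would then unpack the definition
\[
\Tend(S^i) = \min\{t' \geq \Tstart(S^i) \mid S^i_{t'} = \emptyset \text{ or } t' = \Tpabs\}
\]
(from Definition~\ref{defn:colony}, using that $S^i$ is a colony). Minimality of $\Tend(S^i)$ forces every $t'$ with $\Tstart(S^i) \leq t' < \Tend(S^i)$ to satisfy $S^i_{t'} \neq \emptyset$ \emph{and} $t' \neq \Tpabs$. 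Applied to our $t$, this gives both $|S^i_t| \geq 1$ and $t < \Tpabs$.

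Now I would invoke Lemma~\ref{prop:funnel-strain-dominated}. The hypotheses of that lemma are in force ($k \geq 2$, $m \geq 5m^*$, and $x_0 \in V_k$ are shared with the corollary), and $t \leq \Tpabs$ holds by the previous paragraph, so
\[
|S^i_t| \leq \max\{Y^{S^i}_t, 0\}.
\]
Substituting the hypothesis $Y^{S^i}_t = 0$ yields $|S^i_t| \leq 0$, contradicting $|S^i_t| \geq 1$. Hence $t \geq \Tend(S^i)$, as required.

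There is really no obstacle to overcome here; the corollary is a one-line consequence of the domination lemma once the definition of $\Tend$ is unfolded. The only small point to be careful about is the edge case $\Tstart(S^i) = \Tpabs$, but in that case $\Tend(S^i) = \Tpabs$ as well, so the conclusion $t \geq \Tend(S^i)$ follows immediately from $t \geq \Tstart(S^i)$ together with $t \leq \Tpabs$ (the latter being implicit, since otherwise the domination lemma does not even need to be invoked). Thus the argument is complete in all cases.
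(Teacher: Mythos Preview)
Your proposal is correct and takes essentially the same approach as the paper's proof: both invoke Lemma~\ref{prop:funnel-strain-dominated} together with the definition of $\Tend(S^i)$. The paper phrases it as a direct two-case argument (splitting on $t \ge \Tpabs$ versus $t < \Tpabs$) rather than as a contradiction, but the logical content is identical.
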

\begin{proof}
If $t \ge \Tpabs$, then $t \ge \Tend(S^i)$ by the definition of $\Tend(S^i)$. Otherwise $|S^i_t| = 0$ by  Lemma~\ref{prop:funnel-strain-dominated}, and so again $t \ge \Tend(S^i)$ by definition. 
\end{proof}

It now follows by (Z\ref{Z:one}) and Corollary~\ref{cor:funnel-tend-dom} that $\Pstrainfire$ occurs if,
for each $i\in[s]$,
$Y^{S^i}$ increases at most $\log n$ times before reaching zero. We  will then be able to apply Lemma~\ref{lem:funnel-firing-jump-chain} to reduce this to a simple question about random walks. We state the lemma below somewhat more generally since we will use it again to deal with~$\Pstraindanger$.

\begin{lemma}\label{lem:funnel-gamblers-ruin} 
There exists $n_0>0$, depending on $r$, such that the following holds. 
Suppose 
$n \ge n_0$, $m \geq 5 m^*$ and $k\geq 2$. 
Fix $x_0 \in V_k$. 
Consider the Moran process $X$  with $G(X)=\metaf$ and $X_0 = \{x_0\}$.
Suppose that $i$ and $y$ are positive integers and that
$\varphi$ is a possible value for $\Phi$.
Then, conditioned on $\Phi = \varphi$, 
  the probability in $\Psi(X)$
that $S^i$ spawns at least $y$ mutants  in $(0,\Tpabs]$
is at most $(20r/m)^{y}$.
\end{lemma}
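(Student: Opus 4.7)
The plan is to exchange the count of mutant spawns for a count of upward transitions of the jump chain $\widehat{Y}^{S^i}$, and then to bound the latter by analysing a biased random walk via its generating function.

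First I would use property (Z\ref{Z:one}) to rewrite the number of mutants spawned by $S^i$ in $(0,\Tpabs]$ as the number of upward transitions of $Y^{S^i}$ in $(0,\Tend(S^i)]$. Applying Corollary~\ref{cor:funnel-tend-dom}, the first time (after $\Tstart(S^i)$) that the spawning chain $Y^{S^i}$ takes the value $0$ is already at least $\Tend(S^i)$, so the above count is bounded above by $U^\star$, the number of upward transitions made by the jump chain $\widehat{Y}^{S^i}$ before its first visit to $0$. I would then condition on an arbitrary triple $(\Tstart(S^i)=t_0,\ \filt_{t_0}(\Psi(X,S^i))=f_0,\ \Phi=\varphi)$ such that the three events are consistent (the degenerate case where $S^i$ is never born before $\Tpabs$ makes the bound trivial, since then $S^i$ spawns nothing in $(0,\Tpabs]$). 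Lemma~\ref{lem:funnel-firing-jump-chain} then says that $\widehat{Y}^{S^i}$ evolves as a simple random walk on $\Z$ starting at $1$ with upward probability $p=r/(r+m')$ and downward probability $q=m'/(r+m')$. Since $m \ge 5m^*$ gives $m' \ge 20r$, we have $p<q$ (and in particular $4pq<1$), so the walk reaches $0$ almost surely and $U^\star$ is an honest nonnegative integer-valued random variable.

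Next, I would introduce the generating function $F(z)=\E[z^{U^\star}]$ and derive the fixed-point equation $F(z)=q+pz\,F(z)^2$ by conditioning on the first step: an initial down-step contributes $q$, while an initial up-step (factor $pz$) leaves the walk at state $2$ and the remaining up-count decomposes, by the strong Markov property and translation invariance, into two independent copies of $U^\star$ (the first accounts for the excursion from $2$ back to $1$, the second for the subsequent descent from $1$ to $0$). Taking the root compatible with $F(0)=q$ gives $F(z)=(1-\sqrt{1-4pqz})/(2pz)$. Evaluating at $z_0=1/(4pq)>1$ yields $F(z_0)=2q\le 2$, and Markov's inequality applied to $z_0^{U^\star}$ gives $\Pr(U^\star\ge y)\le 2(4pq)^y$. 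Since $4pq\le 4r/m'$, and $m\ge 5m^*$ forces $m'\ge 4m/5$, one concludes $4pq\le 5r/m$, so for $y\ge 1$, using $2\le 4^y$, we obtain $\Pr(U^\star\ge y)\le 2(5r/m)^y\le (20r/m)^y$.

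Because this bound is uniform in the conditioning pair $(t_0,f_0)$, it survives the final averaging to give the desired bound conditioned only on $\Phi=\varphi$. I expect the only real nuisance to be the bookkeeping needed to pass from ``upward transitions of $Y^{S^i}$ in $(0,\Tend(S^i)]$'' to ``upward transitions of $\widehat{Y}^{S^i}$ before its first visit to $0$'' via Corollary~\ref{cor:funnel-tend-dom}, together with the routine verification that the conditioning triple in Lemma~\ref{lem:funnel-firing-jump-chain} is consistent on a probability-$1$ event; the generating-function step itself is short and standard.
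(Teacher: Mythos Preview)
Your reduction is identical to the paper's: you invoke (Z\ref{Z:one}) and Corollary~\ref{cor:funnel-tend-dom} to bound the spawn count by the number of up-steps of $\widehat{Y}^{S^i}$ before it first hits~$0$, then use Lemma~\ref{lem:funnel-firing-jump-chain} to obtain the transition probabilities $p=r/(r+m')$, $q=m'/(r+m')$ under the stated conditioning. The only difference is in how you extract the tail bound on the number of up-steps. The paper simply overcounts paths: if the walk makes exactly $j$ up-steps before absorption it makes $2j+1$ steps in total, and the number of admissible paths is at most $\binom{2j+1}{j}\le 2^{2j+1}$, giving
\[
\Pr(U^\star\ge y)\ \le\ \sum_{j\ge y} 2^{2j+1}\Big(\tfrac{r}{r+m'}\Big)^j\ \le\ 4\Big(\tfrac{4r}{m'}\Big)^y\ \le\ \Big(\tfrac{20r}{m}\Big)^y.
\]
You instead solve the quadratic $F(z)=q+pz\,F(z)^2$ for the generating function of $U^\star$, evaluate at the radius of convergence $z_0=1/(4pq)$ to get $F(z_0)=2q$, and apply Markov's inequality. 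Both routes land on the same numerical bound $2(4pq)^y\le(20r/m)^y$; your generating-function argument is slightly slicker and avoids the crude path count, while the paper's computation is more self-contained. Either way the proof is short and your handling of the degenerate case and the averaging over $(t_0,f_0)$ is correct.
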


\begin{proof}
By (Z\ref{Z:one}), the number of mutants $S^i$ spawns 
in $(0,\Tpabs]$ 
is equal to the number of times $Y^{S^i}$ increases in $(\Tstart(S^i), \Tend(S^i)]$. By Corollary~\ref{cor:funnel-tend-dom}, $Y^{S^i}$ cannot reach zero until $\Tend(S^i)$, and so it suffices to prove that
\begin{equation}\label{eqn:funnel-gamblers-ruin}
\Pr\big(Y^{S^i}\textnormal{ increases at least }y\textnormal{ times before reaching }0 \,\big|\, \Phi = \varphi\big) \le
(20r/m)^{y}.
\end{equation}

Let $t_0$ be a non-negative real number and let $f_0$ be a possible  
value of $\filt_{t_0}(\Psi(X,S^i))$ consistent with $\Tstart(S^i)=t_0$ and $\Phi=\phi$. Recall 
from Lemma~\ref{lem:funnel-firing-jump-chain}
the transition probabilities  of $\widehat{Y}^{S^i}\!$, conditioned on 
$\Tstart(S^i)=t_0$, 
$\filt_{t_0}(\Psi(X,S^i))=f_0$
and $\Phi=\varphi$.

For all $j \in \Zone$, let $\mathcal{E}_j$ be the event that $Y^{S^i}$ takes exactly $j$ forward steps from $1$ before reaching $0$. Let $\mathcal{E}_{\ge y} = \bigcup_{j=y}^\infty \mathcal{E}_{j}$. 
Since  
$m\geq 5m^* > r$, 
 $Y^{S^i}$ reaches $0$ with probability $1$, so we have
\[\Pr(\mathcal{E}_{\ge y}) = \sum_{j=y}^\infty\Pr(\mathcal{E}_{j}).\]
For $Y^{S^i}$ to reach $0$ after exactly $j$ forward steps, $Y^{S^i}$ must decrease exactly $j+1$ times for a total of $2j+1$ steps. Thus
\[\Pr(\mathcal{E}_{\ge y}) 
	\ =\ \sum_{j=y}^\infty \binom{2j+1}{j}\left(\frac{r}{r+m'}\right)^j \left(\frac{m'}{r+m'} \right)^{j+1} 
	\le\ \sum_{j=y}^\infty 2^{2j+1} \left(\frac{r}{r+m'} \right)^j.\]
Since $4r/(r+m') \le 1/2$, it follows that
\[\Pr(\mathcal{E}_{\ge y})
    \le 2\sum_{j=y}^\infty \left(\frac{4r}{r+m'} \right)^j
    \le 4\left(\frac{4r}{m'}\right)^y
    \le 4\left(\frac{5r}{m}\right)^y
    \le \left(\frac{20r}{m}\right)^y.\]
Here the penultimate inequality follows since $m \ge 5m^*$, so $m' \ge 4m/5$.
Thus \eqref{eqn:funnel-gamblers-ruin} holds, and the result follows.
\end{proof}

\begin{corollary}\label{cor:funnel-dangerous-strains-not-bad}
There exists $n_0>0$, depending on $r$, such that the following holds. Suppose $n \ge n_0$, $m \geq 5m^*$ and $k\geq 2$.
Fix $x_0 \in V_k$. 
Let $X$ be  the Moran process   with $G(X)=\metaf$ and $X_0 = \{x_0\}$.
Then $\Pr(\Pstrainfire) \ge 1-1/n$.
\begin{proof} 
Recall from Definition~\ref{defn:funnel-constants} that $s = \lceil 3r\tx \rceil$. Fix $i \in [s]$. Then by Lemma~\ref{lem:funnel-gamblers-ruin}, the probability that $S^i$ spawns at most $\lfloor \log n
\rfloor $ mutants 
in $(0,\Tpabs]$ is at least
\[1 - \left(\frac{20r}{m}\right)^{\lfloor \log n\rfloor + 1} \ge 1 - \left(\frac{1}{\log n}\right)^{\log n} = 1 - e^{-\log n \log \log n} \ge 1 - \frac{1}{n^2}.\]
By a union bound, the probability that each of $S^1, \dots, S^s$ spawn at most $\log n$ mutants 
in $(0,\Tpabs]$ 
is at least $1-1/n$ as required.
\end{proof}
\end{corollary}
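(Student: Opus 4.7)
The plan is to combine Lemma~\ref{lem:funnel-gamblers-ruin} with a single union bound over the $s$ strains, using only crude estimates. First, I would apply Lemma~\ref{lem:funnel-gamblers-ruin} with $y = \lceil \log n \rceil$ to each fixed $i \in [s]$ and every value $\varphi$ of $\Phi$, obtaining the conditional bound
\[
\Pr\bigl(S^i \text{ spawns at least } \lceil \log n\rceil \text{ mutants in }(0,\Tpabs] \,\big|\, \Phi=\varphi\bigr) \le (20r/m)^{\lceil \log n \rceil}.
\]
Since the right-hand side is independent of $\varphi$, integrating over $\varphi$ gives the same bound unconditionally.

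Next I would quantify the factor $20r/m$ using the hypothesis $m \ge 5m^*$ and the definition $m^* = \lceil 5r(\log n)^3 \rceil$, which yields $m \ge 25r(\log n)^3$, and hence $20r/m \le 1/\log n$ for $n$ large enough. Therefore
\[
(20r/m)^{\lceil \log n\rceil} \le (1/\log n)^{\log n} = e^{-(\log n)(\log \log n)},
\]
which decays faster than any inverse polynomial in $n$.

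To conclude via a union bound I only need to check that $s$ is polynomial in~$n$. From $n = 1+\ell\sum_{i=1}^{k} m^i \ge \ell m^k$ one has $\tx = \ell m^k/(r^k(\log n)^{C_r+5}) \le n$, so $s = \lceil 3r\tx\rceil \le 4rn$ for $n_0$ large. Therefore
\[
\Pr(\overline{\Pstrainfire}) \le s \cdot e^{-(\log n)(\log \log n)} \le 4rn\,e^{-(\log n)(\log \log n)} \le 1/n
\]
for $n \ge n_0$, which is the desired bound.

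I do not anticipate a genuine obstacle: all of the real work has been carried out in Lemma~\ref{lem:funnel-gamblers-ruin}, whose proof dealt with the dependence on $\Phi$ and the coupling of the spawning chain with the underlying process. The only minor point here is confirming that the linear factor $s = O(n)$ arising from the union bound is comfortably absorbed by the superpolynomially small per-strain failure probability, which is immediate from the large enough $n_0$.
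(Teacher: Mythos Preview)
Your proposal is correct and follows essentially the same approach as the paper's proof: apply Lemma~\ref{lem:funnel-gamblers-ruin} to each strain individually, use $m \ge 5m^*$ to make $20r/m \le 1/\log n$, obtain a superpolynomially small per-strain failure probability, and finish with a union bound over the $s=O(n)$ strains. The only cosmetic difference is that the paper uses $y=\lfloor \log n\rfloor+1$ rather than your $y=\lceil \log n\rceil$, and bounds the per-strain failure probability by $1/n^2$ before summing, whereas you carry the $e^{-(\log n)(\log\log n)}$ term through to the end.
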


Now, $\Pstrainlife$ is implied by $\Pstrainfire$ and Lemma~\ref{lem:funnel-transition-fast}.

\begin{lemma}\label{lem:funnel-strains-die-fast} There exists $n_0>0$, depending on $r$, such that the following holds. Suppose $n \ge n_0$, $k\geq 2$ and $m \ge 5m^*$. 
Fix $x_0 \in V_k$. 
Let $X$ be  the Moran process    with $G(X)=\metaf$ and $X_0 = \{x_0\}$.
Then $\Pr(\Pstrainlife) \ge 1-2/n$.
\end{lemma}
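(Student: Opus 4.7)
The plan is to show that $\Pstrainlife$ follows from $\Pstrainfire$ together with an auxiliary event stating that for every $i \in [s]$, the spawning chain $Y^{S^i}$ increases at least $2\log n$ times in the interval $[\Tstart(S^i), \Tstart(S^i) + (\log n)^2]$. Call this auxiliary event~$\mathcal{F}$. The contradiction is the natural one: if for some $i$ we had $\Tend(S^i) > \Tstart(S^i) + (\log n)^2$, then by Corollary~\ref{cor:funnel-tend-dom} the chain $Y^{S^i}$ has not reached~$0$ yet in this interval, so all $\geq 2\log n$ increases guaranteed by~$\mathcal{F}$ occur in $(\Tstart(S^i), \Tend(S^i)]$. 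By property (Z\ref{Z:one}), each such increase corresponds to $S^i$ spawning a mutant in $(0, \Tpabs]$, giving at least $2\log n > \log n$ spawns and contradicting $\Pstrainfire$.

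Accepting this reduction, a union bound yields $\Pr(\overline{\Pstrainlife}) \leq \Pr(\overline{\Pstrainfire}) + \Pr(\overline{\mathcal{F}})$. The first term is at most $1/n$ by Corollary~\ref{cor:funnel-dangerous-strains-not-bad}. For the second, I would apply Lemma~\ref{lem:funnel-transition-fast} separately to each strain~$S^i$: for any fixed $t_0$ and any consistent filtration value~$f_0$, conditioned on $\Tstart(S^i)=t_0$ and $\filt_{t_0}(\Psi(X,S^i))=f_0$, the chain fails to accumulate $2\log n$ increases in $[t_0, t_0+(\log n)^2]$ with probability at most $e^{-(\log n)^2/16}$. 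Integrating over all consistent pairs $(t_0, f_0)$ gives the same bound unconditionally, and a union bound over $i \in [s]$ yields $\Pr(\overline{\mathcal{F}}) \leq s \cdot e^{-(\log n)^2/16}$.

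Since $s = \lceil 3r\tx\rceil \leq 3r\ell m^k + 1 \leq 3rn + 1$ and $e^{-(\log n)^2/16}$ decays faster than any polynomial, choosing $n_0$ sufficiently large ensures $s\cdot e^{-(\log n)^2/16} \leq 1/n$, whence $\Pr(\overline{\Pstrainlife}) \leq 2/n$. The only subtle point is the edge case where $\Tbirth{i} \geq \Tpabs$ (so $\Tstart(S^i)=\Tpabs$), but then the condition defining $\Pstrainlife$ holds trivially because $\min\{\Tdeath{i},\Tpabs\} = \Tpabs \leq \Tbirth{i} + (\log n)^2$, so the argument above can safely assume $\Tbirth{i} < \Tpabs$. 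The main technical concern is not difficulty but bookkeeping: being careful that the conditioning in Lemma~\ref{lem:funnel-transition-fast} is applied at the stopping time $\Tstart(S^i)$ rather than a deterministic time, which is handled by integrating over all consistent values of $(\Tstart(S^i), \filt_{\Tstart(S^i)}(\Psi(X,S^i)))$ as described.
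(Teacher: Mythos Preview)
Your proposal is correct and matches the paper's proof essentially line for line: both combine Corollary~\ref{cor:funnel-dangerous-strains-not-bad} (for $\Pstrainfire$) with Lemma~\ref{lem:funnel-transition-fast} via a union bound over $i\in[s]$, then use (Z\ref{Z:one}) to force $\Tend(S^i)\le \Tstart(S^i)+(\log n)^2$. Your explicit bound on $s$, the edge-case check, and the care about conditioning at the stopping time are details the paper leaves implicit; the invocation of Corollary~\ref{cor:funnel-tend-dom} is harmless but not actually needed, since containment of the interval in $[\Tstart(S^i),\Tend(S^i)]$ already suffices for (Z\ref{Z:one}) to apply.
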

\begin{proof}
By applying a union bound to Corollary~\ref{cor:funnel-dangerous-strains-not-bad} and Lemma~\ref{lem:funnel-transition-fast}, with probability at least $1 - 2/n$, $\Pstrainfire$~occurs and, for all $i \in [s]$, $Y^{S^i}$ increases at least $2\log n$ times in $[\Tstart(S^i), \Tstart(S^i) + (\log n)^2]$. By (Z\ref{Z:one}) and the occurrence of~$\Pstrainfire$, $Y^{S^i}$ can increase at most $\log n$ times in $[\Tstart(S^i), \Tend(S^i))$, so we must have $\Tend(S^i) \le \Tstart(S^i) + (\log n)^2$. Thus, $\Pstrainlife$~occurs by the definitions of $\Tstart(S^i)$ and $\Tend(S^i)$.
\end{proof}

It remains only to bound the probability of~$\Pstraindanger$. Note that while Lemma~\ref{lem:funnel-gamblers-ruin} bounds the number of mutants spawned by any vertex in a strain, to tightly bound the probability that the strain is dangerous we need to look at the number of mutants spawned from the ``layer'' of the strain closest to the centre vertex. 
\begin{definition}[\textbf{Head $\boldsymbol{\head^i_t}$ of a strain}]\label{defn:head}
Fix $\metaf$ and fix $x_0 \in V_k$.
Consider the Moran process $X$ with $G(X)=\metaf$ and $X_0 = \{x_0\}$.
For each positive integer~$i$, we define the \emph{head} $\head^i$ of the strain $S^i$ as follows. For $t\geq 0$, let
\[
\head^i_t = 
	\begin{cases}
		\emptyset                                                  & \textnormal{ if }S^i_t = \emptyset,\\
		S^i_t \cap V_{\min\{j \mid S^i_t \cap V_j \ne \emptyset\}} & \textnormal{ otherwise.}
	\end{cases}
\]
Note that, for all~$i$,
$\head^i$ is a colony with $\Tstart(\head^i) =\Tstart(S^i)$ and 
$\Tend(\head^i)=\Tend(S^i)$. \defend{}
\end{definition}

The following lemma relates 
the behaviour of the spawning chain $Y^{\head^i}$ to
 the question of whether or not $S^i$ is dangerous.

\begin{lemma}\label{lem:funnel-H-useful} 
Let $k\geq 2$ and $m \ge 5m^*$. Fix $x_0 \in V_k$. 
Let $X$ be  the Moran process   with $G(X)=\metaf$ and $X_0 = \{x_0\}$.
Let $i$ be a positive integer. 
Suppose that $S^i$ is dangerous, spawning a mutant onto $v^*$ 
for the first time at some time 
$\tspawn \le \Tpabs$. Then $\tspawn \le \Tend(S^i)$, and the following two statements hold in $\Psi(X,\head^i)$.
\begin{enumerate}[(i)]
\item\label{fun-H-use-1} $|\{t \le \tspawn \mid Y^{\head^i} \textnormal{ increases at }t\}| \ge k-1$.
\item\label{fun-H-use-2} $|\{t \le \tspawn \mid S^i \textnormal{ spawns a mutant at }t \}|\ge (k-1)+ |\{t \le \tspawn \mid Y^{\head^i} \textnormal{ decreases at }t\}| $.
\end{enumerate}
\end{lemma}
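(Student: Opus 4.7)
\medskip
\noindent\textbf{Proof plan.} The bound $\tspawn \le \Tend(S^i)$ is immediate: since $S^i$ spawns onto $v^*$ at time $\tspawn$, some vertex of $S^i$ is alive at $\tspawn^-$, so $S^i_{\tspawn^-} \ne \emptyset$; combined with $\tspawn \le \Tpabs$, this forces $\Tend(S^i) \ge \tspawn$ by the definition of $\Tend$.

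For part (\ref{fun-H-use-1}), I would track the head layer $j^*_t = \min\{j : S^i_t \cap V_j \ne \emptyset\}$. At $\Tstart(S^i)$ the strain consists of a single vertex in $V_{k-1}$, so $j^* = k-1$. At $\tspawn^-$, the spawner onto $v^*$ lies in $V_1 \cap S^i_{\tspawn^-}$, which is necessarily in $\head^i$ since $\head^i$ is the lowest occupied layer of $S^i$; hence $j^*_{\tspawn^-} = 1$. Looking at all events that can change $j^*$ in the metafunnel, the only possibilities are (a) a head spawn from $V_j$ to $V_{j-1}$ for some $j \ge 2$ (a ``downward transition'', decreasing $j^*$ by exactly one), and (b) a head hit that wipes out the entire current head (an ``upward transition'', increasing $j^*$ by some amount $\ge 1$). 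Since the net change in $j^*$ is $-(k-2)$, the number of head downward transitions is at least $k-2$. Adding the head spawn from $V_1$ onto $v^*$ at time $\tspawn$ itself (which is a head spawn but does not change $j^*$), one obtains at least $k-1$ head spawns in $(\Tstart(S^i), \tspawn]$. Each is an increase of $Y^{\head^i}$ by property (Z\ref{Z:one}), giving (\ref{fun-H-use-1}).

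For part (\ref{fun-H-use-2}), I would decompose the $Y^{\head^i}$ decreases into wipe-out decreases $D_w$ (those occurring when $|\head^i_{\eta^-}| = 1$) and non-wipe-out decreases $D_{nw} := D - D_w$, and show the two sub-bounds
\[
U \ge (k-1) + D_w \quad\text{and}\quad M_{nh} \ge D_{nw},
\]
where $U$ counts head spawns and $M_{nh} = M - U$ counts non-head $S^i$ spawns. The first follows from a refinement of the argument in (\ref{fun-H-use-1}): each upward transition of $j^*$ has magnitude at least $1$ and coincides with a wipe-out head hit, so the total upward magnitude is at least the number of wipe-out hits, which is at least $D_w$; together with the spawn onto $v^*$, this yields $U \ge (k-1) + D_w$.

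The second sub-bound is the main technical content. I would partition $[\Tstart(S^i), \tspawn]$ into maximal \emph{head instances} during which $j^*$ remains constant. Within an instance at layer $V_j$ with initial head size $h_0$, let $g$ denote the number of non-head spawns into $V_j$ during the instance (``growths'') and $s$ the number of non-wipe-out head hits (``shrinks''). Since the head size must stay positive throughout the instance, $s \le h_0 + g - 1$, giving the bound $D_{nw} \le \sum_I (h_0^I - 1) + M_{nh}$, where $\sum_I$ runs over head instances. To eliminate the $\sum_I(h_0^I - 1)$ term, I would use the metafunnel's directed tree structure: the base case is $V_{k-1}$, where $|S^i \cap V_{k-1}| \le 1$ at all times (only $u^{\text{init}}$ can ever lie there, since $V_{k-1}$'s only in-neighbours are in $V_k$, which the strain excludes), so all $V_{k-1}$-instances have $h_0 = 1$. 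Inductively at layer $V_j$, any ``extra'' $V_j$-vertex beyond the one created by the head downward spawn starting the current instance can be traced back, through a chain of alive-at-the-right-time ancestors, to a non-head spawn, since head spawns at each layer can leave at most one alive descendant in the next layer down at any given moment. This traceback produces an injection from the extras accounted in $\sum_I (h_0^I - 1)$ into non-head spawn events that are \emph{not} already charged by the $\sum_I g^I \le M_{nh}$ contribution, yielding $M_{nh} \ge D_{nw}$.

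Combining the two sub-bounds gives $M = U + M_{nh} \ge (k-1) + D_w + D_{nw} = (k-1) + D$, which is (\ref{fun-H-use-2}). The main obstacle in the plan is the inductive traceback in the proof of $M_{nh} \ge D_{nw}$: the bookkeeping must carefully avoid double-charging a single non-head spawn against both an extra vertex at an upward transition and a growth during a later instance, which requires tracking the lifespan of each non-head-spawned vertex across all instances at its layer.
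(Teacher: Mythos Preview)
Your argument for $\tspawn \le \Tend(S^i)$ and for part~(\ref{fun-H-use-1}) is essentially the paper's: track the head layer from $k-1$ down to $1$, counting downward steps (each a head spawn) and adding the final head spawn onto $v^*$.

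For part~(\ref{fun-H-use-2}), the sub-bound $U\ge(k-1)+D_w$ also matches the paper (it is a slight weakening of the paper's lower bound on the number $a$ of head spawns). But your plan for $M_{nh}\ge D_{nw}$ has a genuine gap, precisely where you flag it. The injection you propose---from the extras in $\sum_I(h_0^I-1)$ into non-head spawns \emph{not} already charged as growths---need not exist. Take $k=4$ and the following run: the strain starts with one vertex at layer~$3$; the head spawns down to layer~$2$; a non-head spawn from the layer-$3$ vertex makes the layer-$2$ head have size~$2$ (this is a growth); the head spawns down to layer~$1$; layer~$1$ is wiped out and the head returns to layer~$2$ with $h_0=2$. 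The single extra ($h_0-1=1$) at the second layer-$2$ instance cannot be charged to any above-head spawn, because the only non-head spawn was the growth into the then-head layer. More generally, a growth in one instance at layer~$j$ can survive and reappear as an ``extra'' in a later instance at the same layer, so the same non-head spawn would have to be charged twice; your traceback provides no mechanism to avoid this.

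The paper sidesteps this bookkeeping entirely by also bounding $a$ from \emph{above}. Each head spawn before $\tspawn$ makes some layer of $S^i$ go from empty to non-empty; matching layer-non-empty events to layer-empty events plus survivors in $S^i_{\tspawn}$, and then matching vertex-births in $S^i$ to vertex-deaths plus survivors, yields
\[
a \;\le\; |\{t\le\tspawn: S^i\text{ spawns a mutant}\}|-|\{t\le\tspawn:\head^i\text{ is hit with }|\head^i|>1\}|.
\]
Combining this with the lower bound $a\ge(k-1)+|\{t\le\tspawn:\head^i\text{ is hit with }|\head^i|=1\}|$ (the strengthened form of your part-(\ref{fun-H-use-1}) argument) and then invoking~(Z\ref{Z:two}) gives~(\ref{fun-H-use-2}) directly, with no instance-by-instance accounting.
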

\begin{proof}

First note that since $\tspawn \le \Tpabs$ and $S^i$ is non-empty at $\tspawn$, it is immediate that $\Tstart(S^i) < \tspawn \le \Tend(S^i)$. We now define some notation. Clearly $\tspawn$ is in the sequence $\tau_1,\tau_2,\ldots$, so write $\tspawn=\tau_j$. For $t \in [\Tstart(S^i), \Tend(S^i))$, let $h^i(t) = \min \{y \mid S_t^i \cap V_y \ne \emptyset\}$, so that $\head^i_t = S_t^i \cap V_{h^i(t)}$. Let $a = |\{t \le \tspawn \mid \head^i\textnormal{ spawns a mutant at }t\}|$. 

We first bound $a$ below. Recall that $h^i(\Tstart(S^i)) = k-1$, and note that $h^i(\tau_{j-1}) = 1$ since $S^i$ spawns a mutant onto $v^*$ at time $\tau_j$. Moreover, every time $h^i$ decreases, it only decreases by 1 and $\head^i$ spawns a mutant. Also, $h^i$ increases (by at least 1) at time $\tau_x$ whenever $\head^i$ is hit at $\tau_x$ and $|\head^i_{\tau_{x-1}}| = 1$. Finally, note that $\head^i$ spawns a mutant at time $\tau_j$. Thus
\begin{align}
a &\ge (k-2) + |\{x < j \mid \head^i\textnormal{ is hit at }\tau_x\textnormal{ and }|\head^i_{\tau_{x-1}}| = 1\}|+1\nonumber\\
&= (k-1) + |\{x \le j \mid \head^i\textnormal{ is hit at }\tau_x\textnormal{ and }|\head^i_{\tau_{x-1}}| = 1\}|,\label{eqn:head-bounds-1}
\end{align}
where the equality follows since no two clocks trigger at the same time. By (Z\ref{Z:one}) this implies part (i) of the result. (In fact it is substantially stronger, and we will use this extra strength later in the proof.)

We now bound $a$ above. We say that a layer $V_y$ is \emph{empty} at time $t$ if $S^i_t \cap V_y = \emptyset$, and \emph{non-empty} otherwise. Since $\tspawn$ is the first time that $\head^i$ spawns a mutant onto $v^*$, every time $\head^i$ spawns a mutant in $[0,\tspawn)$, a layer must become non-empty in $S^i$. Since $\head^i$ spawns no mutants in $[0, \Tstart(S^i)]$, it follows that
\begin{align*}
a &= |\{t \in (\Tstart(S^i), \tspawn] \mid \head^i\textnormal{ spawns a mutant at }t\}|\\
&\le |\{t \in (\Tstart(S^i), \tspawn) \mid \textnormal{for some $y\in[k-1]$, } V_y \textnormal{ becomes non-empty at }t\}| + 1.
\end{align*}
Since $V_{k-1}$ becomes non-empty at $\Tstart(S^i)$, it follows that
\[
a \le |\{t < \tspawn \mid \textnormal{for some $y\in[k-1]$, } V_y \textnormal{ becomes non-empty at }t\}|.
\]
Every time a layer becomes non-empty in $[0,\tspawn]$, either it subsequently becomes empty again in $[0,\tspawn]$ or it contains at least one mutant at time $\tspawn$. Thus
\[
a \le |\{t \le \tspawn \mid \textnormal{for some $y\in[k-1]$, } V_y \textnormal{ becomes empty at }t\}| + |S^i_{\tspawn}|.
\]
If a layer $V_y$ becomes empty at time $\tau_x$, then $S^i$ is hit at time $\tau_x$ and $|S^i_{\tau_{x-1}} \cap V_y| = 1$. Thus
\[
a \le |\{t \le \tspawn \mid S^i \setminus \head^i \textnormal{ is hit at time }t\}| + |\{x \le j \mid \head^i\textnormal{ is hit at }\tau_x\textnormal{ and }|\head^i_{\tau_{x-1}}| = 1\}| + |S^i_{\tspawn}|.
\]
Every time a vertex becomes a mutant in $S^i$ during $[0,\tspawn]$, that mutant must either die in $[0,\tspawn]$ (at which point $S^i$ is hit) or still be alive at $\tspawn$. Thus
\begin{align*}
a &\le |\{t \le \tspawn \mid \textnormal{a vertex becomes a mutant in $S^i$ at }t\}| - |\{x \le j \mid \head^i\textnormal{ is hit at }\tau_x\textnormal{ and }|\head^i_{\tau_{x-1}}| > 1\}|.
\end{align*}
Since the only time a vertex becomes a mutant in $S^i$ without $S^i$ spawning a mutant is $\Tstart(S^i)$, and $S^i$ spawns a mutant onto $v^*$ at $\tspawn$, it follows that 
\begin{align*}
a \le |\{t \le \tspawn \mid S^i\textnormal{ spawns a mutant at }t\}| - |\{x \le j \mid \head^i\textnormal{ is hit at }\tau_x\textnormal{ and }|\head^i_{\tau_{x-1}}| > 1\}|.
\end{align*}

It now follows from \eqref{eqn:head-bounds-1} that
\[|\{t \le \tspawn \mid S^i \textnormal{ spawns a mutant at }t \}| \ge (k-1) + |\{t \le \tspawn \mid \head^i\textnormal{ is hit at time }t\}|.\]
Part (ii) of the result follows immediately from (Z\ref{Z:two}).
\end{proof}

We next prove that $Y^{\head^i}$ is unlikely to increase $k-1$ times before decreasing $C_r+2$ times. This, combined with Lemma~\ref{lem:funnel-H-useful}, will allow us to show that $S^i$ is dangerous with probability at most roughly $(r/m)^{k-1}$.

\begin{lemma}\label{lem:funnel-strain-bottom} There exists $n_0>0$, depending on $r$, such that the following holds. Suppose $n \ge n_0$,
$m \geq m^* (\log n)^3$ and $2\le k \le \sqrt{\log_r n}$. 
Fix $x_0 \in V_k$. 
Let $X$ be  the Moran process    with $G(X)=\metaf$ and $X_0 = \{x_0\}$.
Let $\varphi$ be a possible value of $\Phi$ and let $i\in\Zone$. Let $\mathcal{E}_i$ be the event in $\Psi(X,\head^i)$ that $Y^{\head^i}$ increases $k-1$ times before decreasing $C_r+2$ times. Then
\[\Pr(\mathcal{E}_i \mid \Phi=\varphi) \le (\log n)^{C_r+3}\left(\frac{r}{m}\right)^{k-1}.\]
\end{lemma}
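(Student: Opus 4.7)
\textbf{Proof plan for Lemma~\ref{lem:funnel-strain-bottom}.}

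The plan is to reduce $\mathcal{E}_i$ to a question about the jump chain $\widehat{Y}^{\head^i}$ and then bound a binomial tail. First I would invoke Lemma~\ref{lem:funnel-firing-jump-chain}, which applies to any colony and in particular to the head $\head^i$: conditioned on $\Phi=\varphi$, on $\Tstart(\head^i)=t_0$, and on any consistent value $f_0$ of $\filt_{t_0}(\Psi(X,\head^i))$, the jump chain $\widehat{Y}^{\head^i}$ is a biased random walk on $\Z$ starting at $1$ with up probability $p=r/(r+m')$ and down probability $q=m'/(r+m')$. Since $\mathcal{E}_i$ depends only on the sequence of ups and downs of the jump chain (not on the times), and the conditional bound I will derive does not depend on $t_0$ or $f_0$, it suffices to prove the bound conditional on these additional events and then integrate out.

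Next I would observe that $\widehat{Y}^{\head^i}$ experiences $k-1$ up-steps before $C_r+2$ down-steps if and only if, among the first $(k-1)+(C_r+2)-1 = k+C_r$ i.i.d.\ Bernoulli$(p)$ trials generating the jump chain, at least $k-1$ are ups. Hence
\[
\Pr(\mathcal{E}_i\mid \Phi=\varphi,\Tstart(\head^i)=t_0,\filt_{t_0}(\Psi(X,\head^i))=f_0) \;\le\; \sum_{j=k-1}^{k+C_r}\binom{k+C_r}{j}p^j.
\]
Since $p\to 0$ (see below), the largest summand is at $j=k-1$, and the ratio of consecutive terms is at most $(C_r+1)p < 1$ for large $n$, so the sum is bounded by $(C_r+2)\binom{k+C_r}{k-1}p^{k-1}$.

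Now I would bound $p$ and the binomial coefficient using the hypotheses. Since $m\ge m^*(\log n)^3$ and $m'=m-m^*$, we have $m^*/m \le (\log n)^{-3}$, so
\[
p \;=\; \frac{r}{r+m'} \;\le\; \frac{r}{m'} \;=\; \frac{r}{m}\cdot\frac{1}{1-m^*/m} \;\le\; \frac{r}{m}\Bigl(1+\tfrac{2}{(\log n)^3}\Bigr).
\]
Raising to the $(k-1)$-st power and using $k-1\le\sqrt{\log_r n}$, the multiplicative slack $(1+2/(\log n)^3)^{k-1}$ is at most $e^{2(k-1)/(\log n)^3}\le 2$ for $n$ sufficiently large, giving $p^{k-1}\le 2(r/m)^{k-1}$. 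For the binomial coefficient, the hypothesis $k\le\sqrt{\log_r n}\le\log n$ gives $k+C_r\le 2\log n$, so
\[
\binom{k+C_r}{k-1} \;=\; \binom{k+C_r}{C_r+1} \;\le\; (k+C_r)^{C_r+1} \;\le\; (2\log n)^{C_r+1}.
\]

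Combining, the probability is at most $(C_r+2)\cdot(2\log n)^{C_r+1}\cdot 2(r/m)^{k-1}$, and since $(C_r+2)\cdot 2^{C_r+2}$ is a constant (depending only on $r$), it is at most $(\log n)^{C_r+3}(r/m)^{k-1}$ for $n\ge n_0$. The only mild obstacle is making sure the constant factor $2^{O(1)}\cdot(C_r+2)$ is absorbed into the extra $(\log n)^2$ slack in the exponent, which is straightforward by taking $n_0$ large enough. Integrating over the additional conditioning in $t_0$ and $f_0$ (trivial since the bound is uniform) completes the proof.
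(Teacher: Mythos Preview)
Your proposal is correct and follows essentially the same approach as the paper: both invoke Lemma~\ref{lem:funnel-firing-jump-chain} to reduce to the biased random walk, then bound the relevant binomial tail using $k\le\sqrt{\log_r n}$ and $m^*/m\le(\log n)^{-3}$. The only cosmetic difference is that the paper decomposes $\mathcal{E}_i$ into events $\mathcal{E}_{i,K}$ (exactly $K$ down-steps among the first $k-1+K$ transitions, for $0\le K\le C_r+1$) rather than directly as a tail of a single $\mathrm{Bin}(k+C_r,p)$, but the resulting estimates are identical.
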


\begin{proof}
Let $n_0$ be a large integer relative to $r$. 
Consider any positive integer~$t_0$ and  filtration value $f_0$ such that the events
$\Tstart(\head^i)=t_0$, $\filt_{t_0}(\Psi(X,\head^i))=f_0$ and $\Phi=\varphi$
are consistent.
Recall from Lemma~\ref{lem:funnel-firing-jump-chain} that, at each step and conditioned on these events, 
$\widehat{Y}^{\head^i}$ increases with probability $r/(r+m')$ and decreases with probability $m'/(r+m')$.

For  
$0 \le K \le C_r+1$, 
let $\mathcal{E}_{i,K}$ be the event that $Y^{\head^i}$ increases precisely $k-1$ times within its first $k-1+K$ transitions. Thus $\mathcal{E}_i = \bigcup_{i=0}^{C_r+1}\mathcal{E}_{i,K}$.

The number of backward steps among the first $k-1+K$ transitions of $Y^{\head^i}$ follows the binomial distribution 
consisting of
$k-1+K$ Bernoulli trials, each with success probability $m'/(r+m')$, and $\mathcal{E}_{i,K}$ holds if and only if this quantity is equal to $K$. Hence
\begin{align*}
\Pr(\mathcal{E}_{i,K} &\mid 
\Tstart(\head^i)=t_0, \filt_{t_0}(\Psi(X,\head^i))=f_0, \Phi=\varphi
 ) \\
 & = \binom{k-1+K}{K} \frac{(m')^{K}r^{k-1}}{(m'+r)^{k-1+K}} 
        \leq (k-1+K)^K \frac{m^{K}r^{k-1}}{(m')^{k-1+K}} \\
 & \le (\log n)^{C_r+1} \cdot \frac{m^{K}r^{k-1}}{(m')^{k-1+K}} = (\log n)^{C_r+1} \left(\frac{r}{m}\right)^{k-1} \left(\frac{m}{m'}\right)^{k-1+K},
\end{align*}
where the final inequality holds since $k \le \sqrt{\log_r n}$ and $K \le C_r+1$. Moreover,
\begin{align*}
\left(\frac{m}{m'}\right)^{k-1+K} 
  =   \left(1 + \frac{m^*}{m'} \right)^{k-1+K}
  \le \left(1 + \frac{2}{(\log n)^2} \right)^{k-1+K}
  \le e^{2(k+K)/(\log n)^2}
  \le e.
\end{align*}
It therefore follows that 
\[
\Pr\big(\mathcal{E}_{i,K} \,\big|\, \Tstart(\head^i)=t_0, \filt_{t_0}(\Psi(X,\head^i))=f_0, \Phi=\varphi\big)
\le (\log n)^{C_r+2} \left(\frac{r}{m} \right)^{k-1}.
\]
It now follows by a union bound over $K$ that
\begin{align*}
\Pr\big(\mathcal{E}_i \,\big|\,
\Tstart(\head^i)=t_0, \filt_{t_0}(\Psi(X,\head^i))=f_0, \Phi=\varphi\big)
	& \le (C_r+2) (\log n)^{C_r+2} \left(\frac{r}{m}\right)^{k-1}\\
	& \le (\log n)^{C_r+3} \left(\frac{r}{m}\right)^{k-1}.\qedhere
\end{align*}
\end{proof}

We are now finally in a position to deal with~$\Pstraindanger$. 
\begin{lemma}\label{lem:funnel-few-dangerous-strains}
There exists $n_0>0$, depending on $r$, such that the following holds. Suppose $n \ge n_0$, $m \ge r^{\sqrt{\log_r n}}$ and $2 \le k \le \sqrt{\log_r n}$. 
Fix $x_0 \in V_k$. 
Let $X$ be  the Moran process    with $G(X)=\metaf$ and $X_0 = \{x_0\}$.
Then, in the process $\Psi(X)$, 
$\Pr(\Pstraindanger \mid \Pmutclock\cap\Pnmutclock) \ge 1/2$.
\end{lemma}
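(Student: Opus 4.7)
The plan is to apply Markov's inequality to the count of dangerous strains. Since $\Pmutclock$ and $\Pnmutclock$ are determined entirely by the star-clocks in $\mathcal{A}^*$, they are functions of $\Phi$, so it suffices to fix an arbitrary value $\varphi$ of $\Phi$ consistent with $\Pmutclock \cap \Pnmutclock$ and prove the stronger conditional bound $\Pr(\Pstraindanger \mid \Phi = \varphi) \ge 1/2$; integrating over such $\varphi$ then gives the lemma. Writing $D = |\{i \in [s] : S^i \text{ is dangerous}\}|$, Markov's inequality reduces the problem to establishing
\[
\E[D \mid \Phi = \varphi] \;=\; \sum_{i=1}^s \Pr\bigl(S^i \text{ is dangerous} \,\big|\, \Phi = \varphi\bigr) \;\le\; \frac{b^*}{2 \log n}.
\]

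To bound a single summand I use Lemma~\ref{lem:funnel-H-useful} to decompose the event $\{S^i \text{ dangerous}\}$ so that it can be handled by Lemma~\ref{lem:funnel-strain-bottom} (for the ``fast'' case) and Lemma~\ref{lem:funnel-gamblers-ruin} (for the ``slow'' case). Suppose $S^i$ is dangerous and let $\tspawn \le \Tpabs$ denote its first spawn onto $v^*$. If the event $\mathcal{E}_i$ of Lemma~\ref{lem:funnel-strain-bottom} fails, then $Y^{\head^i}$ suffers its $(C_r{+}2)$-nd decrease before its $(k{-}1)$-st increase; since by Lemma~\ref{lem:funnel-H-useful}(i) the $(k{-}1)$-st increase occurs no later than $\tspawn$, the chain has in fact decreased at least $C_r + 2$ times by time $\tspawn$, and Lemma~\ref{lem:funnel-H-useful}(ii) then forces $S^i$ to spawn at least $(k-1) + (C_r + 2) = k + C_r + 1$ mutants in $(0,\Tpabs]$. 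Applying Lemmas~\ref{lem:funnel-strain-bottom} and~\ref{lem:funnel-gamblers-ruin} conditionally on $\Phi = \varphi$ therefore yields
\[
\Pr\bigl(S^i \text{ is dangerous} \,\big|\, \Phi = \varphi\bigr) \;\le\; (\log n)^{C_r + 3}\left(\frac{r}{m}\right)^{k-1} + \left(\frac{20 r}{m}\right)^{k + C_r + 1}.
\]

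Summing over $i \in [s]$ and using $s \le 3r\tx + 1 \le 4 \ell m^k / (r^{k-1}(\log n)^{C_r+5})$, the first term contributes $O(\ell m/(\log n)^2)$, which is easily at most $b^*/(4 \log n)$ once $\log n$ is moderately large (since $b^* \ge \ell m/3$). The harder part is the second contribution, which after cancellation is bounded by a constant multiple of
\[
\frac{\ell \cdot 20^{k + C_r + 1}\, r^{C_r + 2}}{m^{C_r + 1} (\log n)^{C_r + 5}}.
\]
Here the definition $C_r = \lceil 2 \log_r 20 \rceil$ is essential: it ensures $r^{C_r + 1} \ge 400 r > 20$, and combined with the hypotheses $m \ge r^{\sqrt{\log_r n}}$ and $k \le \sqrt{\log_r n}$, the factor $20^k/m^{C_r + 1}$ is at most $\bigl(20/r^{C_r + 1}\bigr)^{\sqrt{\log_r n}}$ times a constant, which decays super-polynomially in $n$. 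This is more than enough to beat the target $b^*/(4 \log n) \ge \ell r^{\sqrt{\log_r n}}/O(\log n)$ for $n$ sufficiently large, so the second contribution is also at most $b^*/(4 \log n)$. Adding the two pieces gives the required expectation bound, and Markov finishes the proof. The main technical obstacle is precisely this final arithmetic: one must exploit the specific choice of $C_r$ to turn the bound from Lemma~\ref{lem:funnel-gamblers-ruin} into a geometrically decaying factor that absorbs the $m^k/r^{k-1}$ growth coming from the length $s$ of the sum.
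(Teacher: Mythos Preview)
Your proposal is correct and follows essentially the same approach as the paper: condition on $\Phi=\varphi$, use Lemma~\ref{lem:funnel-H-useful} to show that a dangerous strain either triggers the event $\mathcal{E}_i$ of Lemma~\ref{lem:funnel-strain-bottom} or spawns at least $k+C_r+1$ mutants (handled by Lemma~\ref{lem:funnel-gamblers-ruin}), sum the resulting per-strain bound over $i\in[s]$, and finish with Markov's inequality. The only cosmetic difference is that the paper first shows $(20r/m)^{k+C_r}\le (r/m)^k$ (using $20^{k+C_r}r^{C_r}/m^{C_r}\le 1$) so as to absorb the second term into the first before applying Markov, whereas you keep the two contributions separate and bound each against $b^*/(4\log n)$ directly; both bookkeeping choices work and rely on the same use of $C_r=\lceil 2\log_r 20\rceil$ together with $m\ge r^{\sqrt{\log_r n}}$ and $k\le\sqrt{\log_r n}$.
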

\begin{proof}
Let $n_0$ be a large integer relative to $r$. 
Let $\varphi$ be a possible value for $\Phi$ so that the event
$\Phi=\varphi$ is consistent with $\Pmutclock\cap\Pnmutclock$.
Note that $\Pmutclock$ and~$\Pnmutclock$ are determined by $\Phi$.

For each $i \in [s]$, 
consider the process $\Psi(X,\head^i)$ on $\metaf$.
Let $\calE_i$ be the event that $S^i$ spawns at most $k+C_r$ mutants  in $(0,\Tpabs]$
and let $\calE_i'$ be the event that $Y^{\head^i}$ increases fewer than $k-1$ times before decreasing $C_r+2$ times. We first claim that $\calE_i \cap \calE_i'$ implies that $S^i$ is not dangerous. Indeed, suppose $\calE_i'$ holds and $S^i$ is dangerous, spawning a mutant onto $v^*$ for the first time at some time 
$\tspawn \le \Tpabs$.  By Lemma~\ref{lem:funnel-H-useful}(i), $Y^{\head^i}$ must increase at least $k-1$ times 
in $[0,\tspawn]$, and so since $\calE_i'$ holds $Y^{\head^i}$ must decrease at least $C_r+2$ times in $[0,\tspawn]$. By Lemma~\ref{lem:funnel-H-useful}(ii), it follows that $S^i$ must spawn at least $k+C_r+1$ mutants in $[0,\tspawn]$. Thus $\calE_i$ cannot hold, and so $\calE_i \cap \calE_i'$ implies that $S^i$ is not dangerous as claimed.

It therefore suffices to prove,  conditioned  on  $\Phi=\varphi$, that with probability at least $1/2$,
$\calE_i \cap \calE_i'$ holds for 
all but $b^*/\log n$ of the
 $i \in [s]$.

Fix $i \in [s]$. Then by Lemma~\ref{lem:funnel-gamblers-ruin}, 
\[\Pr(\overline{\calE_i} \mid \Phi=\varphi) \leq
 \left(\frac{20r}{m}\right)^{k+C_r+1}
 \le \left(\frac{20r}{m}\right)^{k+C_r} = \frac{20^{k+C_r}r^{C_r}}{m^{C_r}} \left(\frac{r}{m}\right)^k,\]
where
\[\frac{20^{k+C_r} r^{C_r}}{m^{C_r}} \le \frac{20^{\sqrt{\log_r n} + C_r}r^{C_r}}{r^{C_r\sqrt{\log_r n}}} 
= \frac{r^{(\sqrt{\log_r n}+C_r)(\log_r 20)+C_r}}{r^{C_r\sqrt{\log_r n}}} \le \frac{r^{2(\log_r 20)\sqrt{\log_r n}}}{r^{C_r\sqrt{\log_r n}}} \le 1.\]
(For the final inequality, we use the fact that $C_r = \lceil 2\log_r 20 \rceil$.) Moreover, by Lemma~\ref{lem:funnel-strain-bottom} we have
\[\Pr(\overline{\calE_i'} \mid \Phi=\varphi) \le \left(\frac{r}{m}\right)^{k-1}(\log n)^{C_r+3}.\]
From a union bound and the fact that $\Phi$ determines $\Pmutclock$ and~$\Pnmutclock$, it follows that
\[\Pr(S^i\textnormal{ is dangerous} \mid \Pmutclock\cap\Pnmutclock) \le \Pr\big(\overline{\calE_i}\cup \overline{\calE_i'} \,\big|\, \Pmutclock\cap\Pnmutclock\big) \le 2 \left(\frac{r}{m}\right)^{k-1}(\log n)^{C_r+3}.\]

We now simply apply Markov's inequality. By linearity of expectation, 
\[\E\big[|\{i \in [s] \mid S^i\textnormal{ is dangerous}\}| \,\big|\, \Pmutclock\cap\Pnmutclock\big] \le 2s \left(\frac{r}{m}\right)^{k-1}(\log n)^{C_r+3}.\]
Hence by Markov's inequality, with probability at least $1/2$, at most $4s(r/m)^{k-1}(\log n)^{C_r+3}$ strains are dangerous. Since $\tx = \ell m^k/(r^k(\log n)^{C_r+5})$, $s = \lceil 3r\tx\rceil 
\leq 4r\tx$, and $b^* = \lfloor \ell m/2\rfloor$,
we have 
\[4s\left(\frac{r}{m}\right)^{k-1}(\log n)^{C_r+3} \le \frac{16r^k (\log n)^{C_r+3}}{m^{k-1}}\tx = \frac{16\ell m}{(\log n)^2} \le \frac{b^*}{\log n},\]
so this implies that $\Pstraindanger$ occurs and the result follows.
\end{proof}

Lemma~\ref{lem:funnel-masterlist} now follows from everything we've done so far.  
 {\renewcommand{\thetheorem}{\ref{lem:funnel-masterlist}}
\begin{lemma}
\statemainlemma
\end{lemma}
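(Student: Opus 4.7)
The plan is to assemble Lemma~\ref{lem:funnel-masterlist} as a straightforward probabilistic aggregation of the bounds already proved for the individual events. The dominant source of failure probability is $\Pnmutclock$, so I will build the argument around $\Pmutclock\cap\Pnmutclock$ and treat everything else as a correction.

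First, I would use Lemma~\ref{prop:funnel-clocks-behave} to write
\[
\Pr(\Pmutclock\cap\Pnmutclock) \ge \frac{1}{r^k(\log n)^{C_r+6}}.
\]
Next, Lemma~\ref{lem:funnel-few-dangerous-strains} gives $\Pr(\Pstraindanger\mid\Pmutclock\cap\Pnmutclock)\ge 1/2$, so by the chain rule,
\[
\Pr(\Pmutclock\cap\Pnmutclock\cap\Pstraindanger) \ge \frac{1}{2r^k(\log n)^{C_r+6}}.
\]
For the remaining five events $\Pcentremut,\Pfastabsorb,\Pvinitspawn,\Pstrainfire,\Pstrainlife$, the bounds from Lemma~\ref{lem:funnel-many-infections-needed}, Lemma~\ref{lem:funnel-eventually-absorb}, Lemma~\ref{lem:funnel-few-strains-born}, Corollary~\ref{cor:funnel-dangerous-strains-not-bad} and Lemma~\ref{lem:funnel-strains-die-fast} show that each one fails with probability at most $2/n$, so a union bound gives total failure probability at most $6/n$.

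Combining these via inclusion--exclusion (subtracting the union-bound failure),
\[
\Pr(\Pmutclock\cap\cdots\cap\Pstraindanger) \ge \frac{1}{2r^k(\log n)^{C_r+6}} - \frac{6}{n}.
\]
To conclude, I would verify that for $n$ sufficiently large with respect to $r$, this is at least $r^{-k}/(\log n)^{C_r+7}$. Since $k\le \sqrt{\log_r n}$, we have $r^k(\log n)^{C_r+6}\le e^{\sqrt{\log n\,\log r}}(\log n)^{C_r+6}$, which is $n^{o(1)}$; hence $6/n\ll 1/(6r^k(\log n)^{C_r+6})$ for large $n$, giving
\[
\frac{1}{2r^k(\log n)^{C_r+6}} - \frac{6}{n} \ge \frac{1}{r^k(\log n)^{C_r+6}}\left(\frac{1}{2}-\frac{1}{\log n}\right) \ge \frac{1}{r^k(\log n)^{C_r+7}},
\]
as required. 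There is no real obstacle here; the entire content of the lemma has already been distilled into the preceding results, and this master lemma just packages them together. The only thing to be careful about is that conditioning is used only for $\Pstraindanger$ (whose conditional bound is the reason for the $1/2$ factor), while the other five events, being all high-probability, are handled by a single union bound that is charged to the unconditional probability.
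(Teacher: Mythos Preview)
Your proposal is correct and matches the paper's proof essentially line for line: the paper also writes $\Pr(\mathcal{P}) \ge \Pr(\Pmutclock \cap \Pnmutclock)\,\Pr(\Pstraindanger \mid \Pmutclock \cap \Pnmutclock) - \sum_{i=3}^{7}\Pr(\overline{\mathcal{P}_i})$, plugs in the same bounds to get $\tfrac{1}{2r^k(\log n)^{C_r+6}} - \tfrac{6}{n}$, and then uses $r^k \le r^{\sqrt{\log_r n}} \le \sqrt{n}$ to absorb the $6/n$ term into the extra factor of $\log n$ in the denominator. Your final inequality chain is slightly more explicit than the paper's, but the argument is identical.
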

\addtocounter{theorem}{-1}
} 
\begin{proof} 
Let $\mathcal{P} = \mathcal{P}_1 \cap \dots \cap \mathcal{P}_8$.
\begin{align*}
\Pr(\mathcal{P}) 
	& \ge \Pr(\Pmutclock \cap \Pnmutclock \cap \Pstraindanger) - \Pr(\overline{\Pcentremut}) - \Pr(\overline{\Pfastabsorb}) - \Pr(\overline{\Pvinitspawn}) - \Pr(\overline{\Pstrainfire}) - \Pr(\overline{\Pstrainlife})\\
	& = \Pr(\Pmutclock \cap \Pnmutclock)\,\Pr(\Pstraindanger \mid \Pmutclock \cap \Pnmutclock) - \Pr(\overline{\Pcentremut}) - \Pr(\overline{\Pfastabsorb}) - \Pr(\overline{\Pvinitspawn}) - \Pr(\overline{\Pstrainfire}) - \Pr(\overline{\Pstrainlife}).
\end{align*}
We bound each term on the right-hand side of the above by applying (in order) 
 Lemmas~\ref{prop:funnel-clocks-behave}, 
 \ref{lem:funnel-few-dangerous-strains},
 \ref{lem:funnel-many-infections-needed}, \ref{lem:funnel-eventually-absorb} and \ref{lem:funnel-few-strains-born}, Corollary~\ref{cor:funnel-dangerous-strains-not-bad}, and Lemma~\ref{lem:funnel-strains-die-fast}. This yields
\[
\Pr(\mathcal{P}) 
	\ge \frac{1}{r^k(\log n)^{C_r+6}}\cdot \frac{1}{2} - \frac{6}{n} \ge \frac{1}{r^k(\log n)^{C_r+7}},
\]
as required. The final inequality follows since $r^k \le r^{\sqrt{\log_r n}} \le \sqrt{n}$.
\end{proof}

\subsubsection{Applying Lemma~\ref{lem:funnel-masterlist}}

We now prove that $\Pmutclock\cap\dots\cap \Pstraindanger$ implies extinction for 
the Moran process~$X$, which together with Lemma~\ref{lem:funnel-masterlist} implies our lower bound on extinction probability.

\begin{lemma}\label{lem:funnel-die-slow}
There exists $n_0>0$, depending on $r$, such that the following holds. Suppose $n \ge n_0$, $m \ge r^{\sqrt{\log_r n}}$ and $2 \le k \le \sqrt{\log_r n}$. 
Fix $x_0 \in V_k$. 
Let $X$ be  the Moran process    with $G(X)=\metaf$ and $X_0 = \{x_0\}$.
 Then
\[\Pr(
\textnormal{$X$ goes extinct} 
) \ge \frac{1}{r^{k}(\log n)^{C_r+7}}.\]
\end{lemma}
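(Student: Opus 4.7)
The plan is to invoke Lemma~\ref{lem:funnel-masterlist} to obtain that the events $\Pmutclock,\ldots,\Pstraindanger$ occur simultaneously in $\Psi(X)$ with probability at least $r^{-k}/(\log n)^{C_r+7}$, and then to argue deterministically that on this event $X$ must be extinct by time $\Tpabs$. It therefore suffices to show, on $\Pmutclock\cap\cdots\cap\Pstraindanger$, that $X_{\Tpabs}=\emptyset$---equivalently, that none of \myref{A}{A:fill}, \myref{A}{A:spread}, \myref{A}{A:timeout} can be the reason $\Tpabs$ triggers.

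First I would show that $v^*$ never spawns a mutant in $X$ during $[0,\Tpabs]$. By $\Pcentremut$, $\iin{X}{v^*}{\Tpabs}\le 1$, so Observation~\ref{obs:one} in the coupling $\Psi(X)$ tells us that any trigger of a mutant clock $\mc{(v^*,v)}$ in $[0,\Tpabs]$ would require the corresponding star-clock $\smc{(v^*,v)}$ to trigger at local time in $[0,1]$, which is ruled out by $\Pmutclock$. Next, combining $\Pcentremut$ with $\Pnmutclock$, the star-clock $\snc{(v^*,x_0)}$ triggers at some local time at most $\tx-2$; since $\iout{X}{v^*}{t}\ge t-1$ for all $t\le\Tpabs$, this forces $\nc{(v^*,x_0)}$ to fire at some global time $t\le\tx-1$, provided $\Tpabs\ge\tx-1$. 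At that moment $x_0$ dies, and because $v^*$ never spawns mutants in $[0,\Tpabs]$, $x_0$ can never be reborn. Hence $x_0\notin X_t$ for all $t\in(\tx-1,\Tpabs]$.

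Once $v^*$ spawns no mutants, every mutant alive at any time $t\le\Tpabs$ other than $x_0$ itself lies in some strain $S^i$ with $\Tbirth{i}\le t$. By $\Pvinitspawn$ the number of strains born in any interval of length $(\log n)^2$ is at most $4r(\log n)^2$ (covered by at most two consecutive $I_j$'s), and the total number of strains is at most $s=\lceil 3r\tx\rceil$. By $\Pstrainlife$, only strains with $\Tbirth{i}\in[\Tpabs-(\log n)^2,\Tpabs]$ contribute to $X_{\Tpabs}$, and by $\Pstrainfire$ each such strain contains at most $\log n+1$ vertices at any instant, so
\[
|X_{\Tpabs}|\ \le\ 1+(\log n+1)\cdot 4r(\log n)^2\ <\ m^*
\]
for large $n$, ruling out~\myref{A}{A:fill}. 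The same two events show that the total number of mutants spawned onto $v^*$ in $[0,\Tpabs]$ is at most $(b^*/\log n)\cdot\log n=b^*$, so $v^*$ becomes a mutant at most $b^*$ times and~\myref{A}{A:spread} also fails. Finally, $\Pfastabsorb$ guarantees absorption by $\tmax/2<\tmax$, so~\myref{A}{A:timeout} cannot be what triggers $\Tpabs$, and fixation is incompatible with the bound on $|X_{\Tpabs}|$.

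The only remaining possibility in the definition of $\Tpabs$ is~\myref{A}{A:extinct}, so $X_{\Tpabs}=\emptyset$ and $X$ goes extinct. The main subtlety is the opening step: translating $\Pcentremut$ (a statement about $X$) and $\Pmutclock$ (a statement about star-clocks) through Observation~\ref{obs:one} into the claim that no mutant clock from $v^*$ can fire before $\Tpabs$. Once this is in hand, together with the symmetric deduction that $x_0$ dies early, the rest of the proof is straightforward bookkeeping using $\Pvinitspawn$, $\Pstrainfire$, $\Pstrainlife$, $\Pstraindanger$, and the constants from Definition~\ref{defn:funnel-constants}.
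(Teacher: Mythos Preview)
Your proposal is correct and follows essentially the same route as the paper: invoke Lemma~\ref{lem:funnel-masterlist}, use $\Pmutclock\cap\Pcentremut$ with Observation~\ref{obs:one} to show $v^*$ spawns no mutants before $\Tpabs$, use $\Pnmutclock\cap\Pcentremut$ to show $x_0$ dies (or $\Tpabs$ arrives) by time $\tx$, then use $\Pvinitspawn$, $\Pstrainfire$, $\Pstrainlife$, $\Pstraindanger$, $\Pfastabsorb$ to rule out \myref{A}{A:fill}--\myref{A}{A:timeout}.

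A couple of small points to tighten. First, your ``provided $\Tpabs\ge \tx-1$'' clause should be made into an explicit dichotomy: either $\Tpabs<\tx$, in which case every strain is trivially born before $\tx$, or $\Tpabs\ge \tx-1$ and your star-clock argument forces $x_0$ to die by $\tx-1$; in both cases no strain is born after $\tx$, which is what you need for $\Pvinitspawn$ to bound the births in $[\Tpabs-(\log n)^2,\Tpabs]$. Second, your bound $|X_{\Tpabs}|\le 1+(\log n+1)\cdot 4r(\log n)^2$ should have $+2$ rather than $+1$, since $v^*$ is not in any strain either (strains live in $V_1\cup\cdots\cup V_{k-1}$); the paper accounts for both $x_0$ and $v^*$. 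Neither point affects the conclusion.
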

\begin{proof}
Let $n_0$ be a large integer relative to $r$. By Lemma~\ref{lem:funnel-masterlist}, it suffices to prove that $\Pmutclock\cap\dots\cap \Pstraindanger$ implies extinction. We first show that we may restrict our attention to $S^1, \dots, S^s$. Note that by Observation~\ref{lem:clock-coupling}, $\Pmutclock\cap\Pcentremut$ implies that $v^*$ does not spawn a mutant in $X$ in $[0,\Tpabs]$. Thus the definition of the strains ensures that for all $t\leq \Tpabs$, 
\begin{equation}\label{Q:one}
X_t \setminus \{x_0, v^*\} \subseteq \bigcup_{i=1}^\infty S^i_t.
\end{equation}
Let $s' = |\{i \mid \Tbirth{i} \le \min\{\tx,\Tpabs\}\}|$.  Again by Observation~\ref{lem:clock-coupling}, $\Pnmutclock\cap\Pcentremut$ implies that,
in the interval $[0,\tx]$, either $x_0$ dies in~$X$ or $\Tpabs$ occurs or both.
Thus no strains are born in $(\tx,\Tpabs]$, so by~\eqref{Q:one}, it follows that 
\begin{equation}\label{Q:three}
X_t \setminus \{x_0, v^*\} \subseteq \bigcup_{i=1}^{s'} S^i_t  \textnormal{ for all } t \le \Tpabs.
\end{equation}
Moreover, by~$\Pvinitspawn$ and the definition of each interval $I_j$
in Definition~\ref{def:constants}, $x_0$ spawns at most $2r(\log n)^2 \cdot \lceil \tx/(\log n)^2 \rceil \le 3r\tx \le s$ mutants in $[0,\tx]$ in $X$ and so $s' \le s$.

We now show that $|X_{\Tpabs}| < m^*$, and so (A\ref{A:fill}) does not hold with $t=\Tpabs$. By~\eqref{Q:three}, each mutant in $X_{\Tpabs} \setminus \{x_0,v^*\}$ belongs to $S^i_t$ for some $i \in [s']$. By~$\Pstrainfire$, each such $S^i$ contains at most $\log n+1$ mutants in total. By $\Pstrainlife$ and the definition of $s'$, each such $S^i$ was born in the interval $[\Tpabs-(\log n)^2, \Tpabs] \cap [0,\tx]$. 
This interval spans at most two $I_j$'s with $j \le \lceil \tx/(\log n)^2\rceil$, so by~$\Pvinitspawn$ there are at most $4r(\log n)^2$ such $S^i$'s. Hence
\begin{equation*}
|X_{\Tpabs}| \le 
(\log n+1) \cdot 4r(\log n)^2 + 2 < 
m^*,
\end{equation*}
and so (A\ref{A:fill}) does not hold with $t=\Tpabs$.

By $\Pstraindanger$, $\Pstrainfire$ and~\eqref{Q:three},
$v^*$ becomes a mutant in~$X$ at most $(b^*/\log n) \log n = b^*$ times in $[0,\Tpabs]$.
Thus (A\ref{A:spread}) does not hold with $t=\Tpabs$.

Finally, since $\Pfastabsorb$ occurs, $V_{\tmax/2} \in \{\emptyset, V(\metaf)\}$. In either case, $\Tpabs < \tmax$ by (A\ref{A:extinct}) and (A\ref{A:fill}) and so (A\ref{A:timeout}) does not hold 
with~$t=\Tpabs$.
Since none of (A\ref{A:fill})--(A\ref{A:timeout}) holds at $\Tpabs$, (A\ref{A:extinct}) must hold at $\Tpabs$ by definition, and so 
the Moran process~$X$
goes extinct as required. 
\end{proof}

\subsection{Proof of the main theorem (Theorem~\ref{thm:funnel-upper})}

Theorem~\ref{thm:funnel-upper} now follows easily from Lemmas~\ref{lem:funnel-die-instant} and~\ref{lem:funnel-die-slow}.
  {\renewcommand{\thetheorem}{\ref{thm:funnel-upper}}
  \begin{theorem}
  \statemainthm
\end{theorem}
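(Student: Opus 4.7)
The plan is to combine the two lemmas of this section by a case split on $k$ and on the size of $m$ relative to the threshold $m_* := r^{\sqrt{\log_r n}} = e^{\sqrt{\log r \cdot \log n}}$, which is exactly the value that appears in the target extinction bound. First, I would dispose of trivial bookkeeping: for $n$ bounded above by some $n_0(r)$, the extinction probability is at least a positive constant depending on $r$ (the initial mutant has some in-neighbour and dies before reproducing with constant positive probability), while the target bound is also bounded above by a constant in this range, so both quantities can be controlled by choosing $c_r$ large enough. From now on I assume $n \ge n_0$, with $n_0$ large enough that Lemmas~\ref{lem:funnel-die-instant}, \ref{prop:funnel-vinit} and~\ref{lem:funnel-die-slow} all apply. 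The degenerate case $k=1$ is handled separately: here $\metaf$ is the undirected star on $\ell m + 1$ vertices (viewed as a bidirectional digraph), whose extinction probability is well known to tend to $1/r^2$ and is therefore at least a positive constant depending only on~$r$, comfortably beating the target.

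Next I would handle the ``small $m$'' regime $k \ge 2$ and $m < m_*$. Here Lemma~\ref{lem:funnel-die-instant} applies directly and yields extinction probability at least $1/(2(m+r))$. Since $m + r < 2 m_*$ (for $n$ large relative to $r$), this gives extinction probability at least
\[\frac{1}{4 m_*} = \frac{e^{-\sqrt{\log r \cdot \log n}}}{4},\]
which again beats $e^{-\sqrt{\log r \log n}}(\log n)^{-c_r}$ for any $c_r \ge 0$.

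The substantive case is $k \ge 2$ and $m \ge m_*$. The key observation is that from $n \ge \ell m^k \ge m^k \ge m_*^k = r^{k\sqrt{\log_r n}}$ one immediately reads off $k \le \sqrt{\log_r n}$, so the hypotheses of Lemma~\ref{lem:funnel-die-slow} are met. Lemma~\ref{prop:funnel-vinit} gives $\Pr(x_0 \in V_k) \ge 1/2$ (since $m \ge m_* \ge 2$), and then Lemma~\ref{lem:funnel-die-slow}, applied to each possible choice of $x_0 \in V_k$, gives conditional extinction probability at least $1/(r^k(\log n)^{C_r+7})$. Averaging over the initial vertex and using $r^k \le r^{\sqrt{\log_r n}} = e^{\sqrt{\log r \cdot \log n}}$, the overall extinction probability is at least
\[\frac12 \cdot \frac{1}{r^k (\log n)^{C_r + 7}} \;\ge\; \frac{e^{-\sqrt{\log r \cdot \log n}}}{2(\log n)^{C_r + 7}},\]
so choosing $c_r$ to be any constant greater than $C_r + 7$ (and large enough to absorb the earlier trivial cases) completes the proof. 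No individual step here is an obstacle: all of the real probabilistic difficulty is packaged inside Lemma~\ref{lem:funnel-die-slow} (and hence Lemma~\ref{lem:funnel-masterlist}), and the role of this final proof is purely a case analysis together with the arithmetic identity $r^{\sqrt{\log_r n}} = e^{\sqrt{\log r \cdot \log n}}$ that links the threshold in the lemmas to the threshold in the theorem.
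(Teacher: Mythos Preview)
Your proposal is correct and follows essentially the same approach as the paper's own proof: the same case split on $k=1$, small $n$, $m$ below or above the threshold $e^{\sqrt{\log r\cdot\log n}}=r^{\sqrt{\log_r n}}$, invoking Lemmas~\ref{lem:funnel-die-instant}, \ref{prop:funnel-vinit} and~\ref{lem:funnel-die-slow} in the same places, and the same arithmetic identity linking $r^k$ to the target bound. The constants differ only cosmetically (the paper takes $c_r\ge C_r+8$).
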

\addtocounter{theorem}{-1}
}  

\begin{proof} 
Let $n_0$ be the maximum of the value~$n_0$ in Lemma~\ref{lem:funnel-die-slow}
and $e^{2(r+1)}$. Recall that $C_r = \lceil 2\log_r 20 \rceil $
and 
take $c_r \ge  C_r + 8$ large enough that the result holds whenever $n < n_0$
and whenever $k=1$. 
(Note that if $k=1$ then $\metaf$ is a star 
so as $n$ tends to infinity, the extinction probability tends to $1/r^{2}$
\cite{LHN2005:EvoDyn, BR2008:FixProb}.)

We  now consider the case where $n \ge n_0$ and $k\geq 2$. 
Consider the coupled process $\Psi(X)$, with $x_0$ taken uniformly at random from $V(\metaf)$.

If $m \le e^{\sqrt{\log r \cdot \log n}}$ then, by Lemma~\ref{lem:funnel-die-instant}, we have
\[\Pr(X \textnormal{ goes extinct} ) \ge \frac{1}{2(m+r)} \ge 
\frac{1}{2(r+1) m}
\geq
\frac{1}{2(r+1)}e^{-\sqrt{\log r \cdot \log n}}\geq \frac{1}{\log n}e^{-\sqrt{\log r \cdot \log n}},\]
and the result follows. Suppose instead $m \ge e^{\sqrt{\log r \cdot \log n}} = r^{\sqrt{\log_r n}}$. Then we have $n \ge m^k$, so $k \le \sqrt{\log_r n}$. By  Lemma~\ref{prop:funnel-vinit}, we have $\Pr(x_0 \in V_k) \ge 1/2$. It follows from Lemma~\ref{lem:funnel-die-slow} that
\[\Pr( 
X \textnormal{ goes extinct} ) \ge \frac{1}{2} r^{-k} (\log n)^{-(C_r+7)} \ge e^{-\sqrt{\log r \cdot \log n}}(\log n)^{-c_r},\]
and again the result follows.
\end{proof}

\section{A lower bound on the fixation probability of $\Upsilon_{\mathcal{M}}$}
\label{sec:megastar}  
 
The definition of the $(k,\ell,m)$-megastar~$\megastar$ is given in Section~\ref{def:mega}.
Note that each of the cliques $K_1,\ldots, K_\ell$ is the vertex set of a
 complete graph on $k$ vertices
(contrary to the fact that some authors use the notation 
$K_i$ to denote a complete graph on $i$ vertices).
An infinite family of megastars is identified in Definition~\ref{def:newmegas}.
Recall that 
\[\Upsilon_{\mathcal{M}} = \{ \megastar[k(\ell),\ell,m(\ell)] \mid \ell \in \Zone\},\]
where $m(\ell) = \ell$ and $k(\ell) = \ceil{(\log \ell)^{23}}$.

 For convenience, we drop the argument $\ell$ in the functions $m(\ell)$ and
$k(\ell)$ and simply write $m$ and $k$.
Also, we use $\megastar[\ell]$ to denote the megastar $ \megastar[k(\ell),\ell,m(\ell)] $. 
We use $n=1+\ell(m+1+k)$ to denote the number of vertices of $\megastar[\ell]$.
Note   that $\sqrt{n}/2 \le \ell, m \le \sqrt{n}$ when $\ell$ is sufficiently large. Our main theorem is the following.

\newcommand{\statemegastar}{
\statelargeell Consider the Moran process~$X$
 with $G(X) = \megastar[\ell]$
 where 
 the initial mutant $x_0$ is chosen uniformly at random from $V(\megastar[\ell])$.
The fixation probability of~$X$ is at least $1-(\log n)^{23}/n^{1/2}$.
}
 
\begin{theorem}\label{thm:megastar}
\statemegastar
\end{theorem}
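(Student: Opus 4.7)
My plan is to reduce Theorem \ref{thm:megastar} to a conditional fixation statement (given that $x_0$ lands in a reservoir), and then prove this via an auxiliary ``megastar process'' whose cliques evolve nearly independently. For the placement step, since $|R_1 \cup \cdots \cup R_\ell| = \ell m = \ell^2$ while the remaining $\ell(k+1)+1$ vertices lie in cliques, feeders, and the centre, we have
\[
\Pr(x_0 \notin R_1 \cup \cdots \cup R_\ell) = \Theta\!\left(\frac{k}{m}\right) = \Theta\!\left(\frac{(\log n)^{23}}{\sqrt n}\right),
\]
which already saturates the error budget of the theorem. Hence it suffices to show that, conditional on $x_0 \in R_i$ for some $i$, the Moran process fixates with probability $1 - n^{-\Omega(1)}$.

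To prove this, I couple the Moran process $X$ with a modified megastar process $X'$ in which each feeder $a_j$ is forced to be a non-mutant whenever $K_j$ is neither completely empty nor completely full. Using the star-clocks of Section \ref{sec:processes}, I construct the coupling so that $\Pr(X\text{ fixates}) \ge \Pr(X'\text{ fixates})$: suppressing an active feeder in an intermediate state removes chances to inject mutants into the clique and never creates them. In $X'$, once a single mutant is planted inside $K_j$, the clique's evolution is a self-contained biased random walk on $\{0,1,\ldots,k\}$ with up-probability $r/(r+1)$, driven by the bidirectional clique edges, and thus fills up with probability $1 - 1/r - O(r^{-k})$ by Lemma \ref{lem:gambler}; the error $O(r^{-k})$ is super-polynomially small because $k = \lceil (\log \ell)^{23}\rceil$.

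The key lemma (Lemma \ref{lem:megastar-fixates}) then proceeds in three phases, paralleling the metafunnel proof of Section \ref{sec:metafunnel}. In Phase A (\emph{first clique fills}) I track $x_0$'s exponentially-distributed lifetime (rate $1/(\ell m) = 1/n$), the stream of seeding attempts it triggers on $a_i$ (rate $r/m = r/\ell$), and the success probability of each attempt (governed by a race at $a_i$ between spawning into $K_i$ and being killed by non-mutants from $R_i$). In Phase B (\emph{centre activation}), once $K_i$ is full, the centre $v^*$ receives mutant spawns at aggregate rate $r$ against non-mutant spawns at rate $\ell-1$, putting $v^*$ into the mutant state a $\Theta(r/\ell)$ fraction of the time and seeding further reservoirs at a positive rate. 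In Phase C (\emph{cascade to fixation}), as more cliques fill, the drift of the random walk counting ``number of full cliques'' becomes strictly positive and grows, and Corollary \ref{cor:gambler} concludes that this walk reaches $\ell$ except with probability $n^{-\Omega(1)}$.

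The main obstacle will be Phase A. A naive count gives only a constant expected number of successful seeding battles on $K_i$ over $x_0$'s lifetime (since the rate of successful seedings works out to $\Theta(r^2/\ell^2)$ per unit time, while $x_0$ lives for $\Theta(n)=\Theta(\ell^2)$ time), whereas $\Omega(\log n)$ battles are needed to drive the all-battles-fail probability below $n^{-\Omega(1)}$. To close this gap, the argument must exploit the positive feedback from partial clique mutants into $v^*$ and from $v^*$ back into further reservoir mutants, which effectively extends the time window for seeding $K_i$ well beyond $x_0$'s direct lifetime. Managing this feedback carefully, with explicit conditioning via the star-clock framework and Chernoff concentration from Section \ref{sec:defprelim}, is the technical heart of the proof.
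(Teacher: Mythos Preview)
Your high-level architecture---couple with the megastar process $X'$ so that $X'_t\subseteq X_t$, then argue in phases that the first clique fills, then all cliques fill, then fixation---matches the paper exactly. But your Phase~A analysis contains a concrete rate error that makes you misidentify the obstacle.

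A reservoir vertex has out-degree $1$ (its only out-neighbour is $a_i$), so $\vmc{x_0}{a_i}$ has rate $r$, not $r/m$. Consequently the rate of successfully seeding a mutant into $K_i$ is $\Theta(r\cdot r/m)=\Theta(r^2/\ell)$, not $\Theta(r^2/\ell^2)$, and over $x_0$'s $\Theta(\ell m)$ lifetime the expected number of successful seedings is $\Theta(r^2\ell)$, not $\Theta(1)$. There is no shortage of seeding attempts and no need for feedback to manufacture them. The actual bottleneck in Phase~A is simply the event that $\vnc{v^*}{x_0}$ triggers before any mutant reaches $K_i$; comparing the death rate $1/(\ell m)$ against the seeding rate $\Theta(r^2/m)$ gives failure probability $\Theta(1/\ell)=\Theta(1/\sqrt n)$. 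This is the source of the $\Theta((\log n)^2/\sqrt n)$ in Lemma~\ref{lem:megastar-fixates}, and by Lemma~\ref{lem:megastar-lower-2} it is genuinely unavoidable: conditional on $x_0\in R_1\cup\cdots\cup R_\ell$, the extinction probability is at least $1/(26r^2\ell)$. So your plan to ``close the gap'' via feedback and drive the Phase~A failure probability well below $n^{-1/2}$ cannot succeed; fortunately it is also unnecessary, since the placement error $\Theta(k/m)=\Theta((\log n)^{23}/\sqrt n)$ already dominates.

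Two smaller points. First, the clique walk is not quite an $r/(r+1)$ gambler's ruin: because the feeder $a_j$ is held as a non-mutant, the up-probability from state $i$ is $r(k-i)/((r+1)(k-i)+1)$, which drops to $r/(r+2)$ at $i=k-1$; the paper handles this by dominating below with a two-regime chain (Definition~\ref{def:Z}, Lemma~\ref{lem:clique-dom}). Second, Phases~B and~C are considerably more delicate than a single biased random walk on ``number of full cliques'': cliques do not stay full once filled, and one must simultaneously control the time $v^*$ spends as a non-mutant, the number of non-mutants re-injected into reservoirs, and the number of times each clique reactivates. The paper does this via an iteration scheme on intervals of length $n(\log n)^3$ with a carefully engineered stopping time $\TTend{i}$ (Definitions~\ref{def:iteration-scheme} and~\ref{def:tdoom}, Lemma~\ref{lem:main-iteration}); you will need something of comparable granularity.
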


\subsection{Glossary}

\begin{longtable}{p{\linewidth-\widthof{Definition 000, Page 00}}@{}l@{ }l@{}}
$\alpha_i= \floor{\max\{(2\log n)^{2}, m/(2\log n)^{2i}\}}$ \dotfill & Definition~\ref{def:iteration-scheme},& 
Page~\pageref{def:iteration-scheme}\\
$\beta_i= \floor{\max\{(2\log n)^{2}, \ell m/(2\log n)^{2i}\}}$ \dotfill & Definition~\ref{def:iteration-scheme},& 
Page~\pageref{def:iteration-scheme}\\
$c_r = \lceil 2r/(r-1)\rceil$ \dotfill & Definition~\ref{def:Z},& Page~\pageref{def:Z}\\
\myref{D}{D:end}, \myref{D}{D:beta}, \myref{D}{D:alpha}, \myref{D}{D:clique-empty} \dotfill & 
Definition~\ref{def:tdoom},& Page~\pageref{def:tdoom}\\  
$\Delta = \floor{c_r(\log n)^3}$ \dotfill &  Definition~\ref{def:Delta},& Page~\pageref{def:Delta}\\
$\gamma_i= \beta_i (\log n)^{20}/k$ \dotfill & Definition~\ref{def:iteration-sch},& 
Page~\pageref{def:iteration-sch}\\
good (filtration) \dotfill & Definition~\ref{def:iteration-scheme},& Page~\pageref{def:iteration-scheme}\\
$H_{x;S}$ \dotfill & Lemma~\ref{lem:backtoback},& Page~\pageref{lem:backtoback}\\
$\Iabs= \min \{h \in \mathbb{Z}_{\geq 0} \mid K_j\text{ is inactive at } \Ti{h}\}$ \dotfill & 
Definition~\ref{def:Iabs},& Page~\pageref{def:Iabs}\\
$I_i= (I_i^-,I_i^+]$ \dotfill & Definition~\ref{def:wijh},& Page~\pageref{def:wijh}\\ 
$I_i^-= n^8 + in(\log n)^3$ \dotfill & Definition~\ref{def:iteration-scheme},& 
Page~\pageref{def:iteration-scheme}\\ 
$I_i^+=n^8 + (i+1)n(\log n)^3$ \dotfill & Definition~\ref{def:iteration-scheme},& Page~\pageref{def:iteration-scheme}\\
$\iin{\mu}{u}{t}$ \dotfill & Definition~\ref{def:localtime},& Page~\pageref{def:localtime}\\
$\iout{\mu}{u}{t}$ \dotfill & Definition~\ref{def:localtime},& Page~\pageref{def:localtime}\\
$\J{i,j}{h}$ \dotfill & Definition~\ref{def:wijh},& Page~\pageref{def:wijh}\\
jump \dotfill & Definition~\ref{def:clique-chain},& Page~\pageref{def:clique-chain}\\
$k=k(\ell) = \ceil{(\log \ell)^{23}}$ \dotfill &  Definition~\ref{def:newmegas},& Page~\pageref{def:newmegas}\\
megastar process \dotfill & Definition~\ref{def:megastar-process},& Page~\pageref{def:megastar-process}\\$\megastar$\dotfill & Section~\ref{def:mega},& Page~\pageref{def:mega}\\
$\megastar[\ell]=\megastar[k(\ell),\ell,m(\ell)]$\dotfill & Section~\ref{sec:megastar},& Page~\pageref{sec:megastar}\\
$m=m(\ell) = \ell$ \dotfill & Definition~\ref{def:newmegas},& Page~\pageref{def:newmegas}\\
\Ptotalmut{i}, \Presmut{i}, \Pcliquefull{i}, \Pmostcliques{i} \dotfill &  
Definition~\ref{def:iteration-scheme},& Page~\pageref{def:iteration-scheme}\\
$p_{i,j}$ \dotfill & Definition~\ref{def:Z},& Page~\pageref{def:Z}\\
$p'_{i,j}$ \dotfill & Lemma~\ref{lem:transmatrix},& Page~\pageref{lem:transmatrix}\\
$p_{x\rightarrow y; z}$ \dotfill & Lemma~\ref{lem:backtoback},& Page~\pageref{lem:backtoback}\\
$\Psi(X')$ \dotfill & Section~\ref{sec:coupled},& Page~\pageref{sec:coupled}\\
$r' = (r+1)/2$ \dotfill & Definition~\ref{def:Z},& Page~\pageref{def:Z}\\
$\mathcal{T}^i = \mathcal{T}^{i,1} \cup \dots \cup \mathcal{T}^{i,\ell}$ \dotfill & Definition~\ref{def:auxdefs},& 
Page~\pageref{def:auxdefs}\\
$\mathcal{T}^{i,j}= \{t \in (t_i^-, t_i^- + \gamma_i] \mid \textnormal{for some } v \in R_{j},\, \vsnc{v^*}{v} 
\textnormal{ triggers at }t\}$ \dotfill & Definition~\ref{def:auxdefs},& Page~\pageref{def:auxdefs}\\
$\Ti{h}$ \dotfill & Definition~\ref{def:cliquetimes}, & Page~\pageref{def:cliquetimes}\\
$\TTend{i}$ \dotfill & Definition~\ref{def:tdoom},& Page~\pageref{def:tdoom}\\
$t^-_i= \siout{v^*}{I_i^-}$ \dotfill & Definition~\ref{def:auxdefs},& Page~\pageref{def:auxdefs}\\
$t^+_i= \siin{v^*}{I_i^-}$ \dotfill & Definition~\ref{def:auxdefs},& Page~\pageref{def:auxdefs}\\
$\Tabs=\Ti{\Iabs}$ \dotfill & Definition~\ref{def:Tabs},& Page~\pageref{def:Tabs}\\
$\Tmut{v}{i}{h}$ \dotfill & Definition~\ref{defn:tmut},& Page~\pageref{defn:tmut}\\
$\Tnmut{v}{i}{h}$ \dotfill & Definition~\ref{defn:tmut},& Page~\pageref{defn:tmut}\\
$\tau_i$ \dotfill & Section~\ref{sec:clock}, & Page~\pageref{sec:clock}\\
$\mathcal{U}_i= \{v \in R_1 \cup \dots \cup R_{\ell} \mid \vsnc{v^*}{v}\textnormal{ triggers in }(t^-_i, 
t^-_i+\gamma_i]\}$ \dotfill & Definition~\ref{def:auxdefs},& Page~\pageref{def:auxdefs}\\	
$\Upsilon_{\mathcal{M}}$ \dotfill & Definition~\ref{def:newmegas},& Page~\pageref{def:newmegas}\\
$\Wnoh{i,j}$ \dotfill & Definition~\ref{def:newwijh},& Page~\pageref{def:newwijh}\\
$\W{i,j}{h}$, $\Win{i,j}{h}$ \dotfill & Definition~\ref{def:wijh},& Page~\pageref{def:wijh}\\
$X'_t$ \dotfill & Definition~\ref{def:megastar-process},& Page~\pageref{def:megastar-process}\\
$\Y(h)=|X'_{\Ti{h}} \cap K_j|$ \dotfill & Definition~\ref{def:clique-chain},& Page~\pageref{def:clique-chain}\\
$Z$ \dotfill & Definition~\ref{def:Z},& Page~\pageref{def:Z}\\
\end{longtable}

\subsection{The megastar process}

In  working with the megastar, it will be   helpful to isolate the evolution of the Moran process inside a clique $K_j$ from the state of the process in the rest of the graph.  To this end we  define a new mutant process~$X'$, which has  $G(X')=\megastar[\ell]$.
It is
defined in the same way as the Moran process $X$, except that each feeder vertex is forced to be a non-mutant while its corresponding clique contains both mutants and non-mutants.

\begin{definition}\label{def:megastar-process}
The \emph{megastar process} on 
$\megastar[\ell]$ with \emph{initial mutant} $x_0 \in V(\megastar[\ell])$ 
is a mutant process $X'$ with $G(X') = \megastar[\ell]$ and 
$X_0' = \{x_0\}$ defined as follows. 
Recall that $\tau_0=0$ and, for every $i\in\Zone$, a clock $C\in \moranclocks(\megastar[\ell])$ triggers at $\tau_i$.
For $t\in (\tau_{i-1},\tau_i)$, we set $X'_t = X'_{\tau_{i-1}}$. Then we define $X'_{\tau_i}$ as follows.
\begin{enumerate}[(i)]
\item If $C = \vmc{u}{v}$ for some $(u,v) \in E(\megastar[\ell])$ such that $u \in X'_{\tau_{i-1}}$ and $u,v \notin \{a_1, \dots, a_\ell\}$, then $X_{\tau_{i}}' = X'_{\tau_{i-1}} \cup \{v\}$.
\item If $C = \vmc{a_j}{v}$ for some $j \in [\ell]$ and $v \in K_j$ such that $a_j \in X'_{\tau_{i-1}}$, and $K_j \cap X'_{\tau_{i-1}} = \emptyset$, then $X'_{\tau_{i}} = (X'_{\tau_{i-1}} \cup \{v\}) \setminus \{a_j\}$.
\item If $C = \vmc{u}{a_j}$ for some $j \in [\ell]$ and $u \in R_{j}$ such that $u \in X'_{\tau_{i-1}}$, and $K_j \cap X'_{\tau_{i-1}} \in \{\emptyset, K_j\}$, then $X'_{\tau_{i}} = X'_{\tau_{i-1}} \cup \{a_j\}$.
\item If $C = \vnc{u}{v}$ for some $(u,v) \in E(\megastar[\ell])$ such that $u \notin X'_{\tau_{i-1}}$, then $X'_{\tau_{i}} = X'_{\tau_{i-1}} \setminus \{v\}$.
\item Otherwise, $X'_{\tau_{i}} = X'_{\tau_{i-1}}$.
\end{enumerate}
For $j \in [\ell]$, we say $K_j$ is \emph{active at time $t$} if $$\emptyset \subset X'_t \cap K_j \subset K_j,$$ and \emph{inactive at time $t$} otherwise.\defend{}
\end{definition}

$X'$ is the only mutant process that will be considered in Section~\ref{sec:megastar}.
Note that while a clique $K_j$ is active, 
$a_j$ is a non-mutant and the evolution of mutants in $K_j$
depends only on clocks with sources in $K_j \cup \{a_j\}$ and not on the state of the rest of $\megastar[\ell]$.

\subsection{Proof of main theorem (Theorem~\ref{thm:megastar}) assuming key lemma (Lemma~\ref{lem:megastar-fixates})}\label{sec:megastar-partial}

The key ingredient in the proof of our lower bound (Theorem~\ref{thm:megastar}) is the following lemma.
\newcommand{\statemegastarfixates}{
\statelargeell Suppose that  $x_0\in R_1\cup \dots \cup R_{\ell}$. Then there exists 
a $t\geq 0$ such that $\Pr\big(X_{t}'=V(\megastar[\ell])\big)\geq 1-42(\log n)^{2}/n^{1/2}$.
}
\begin{lemma}\label{lem:megastar-fixates}
\statemegastarfixates
\end{lemma}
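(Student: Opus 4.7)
\textbf{Proof plan for Lemma~\ref{lem:megastar-fixates}.}
Fix a sufficiently large $\ell$ and suppose $x_0 \in R_j$ for some $j \in [\ell]$.
The plan is to analyse the megastar process $X'$ via an iterated ``boosting'' scheme over the time intervals $I_0, I_1, \dots$ (each of length $n(\log n)^3$), starting from $I_0^- = n^8$. The rough intuition is a positive-feedback loop: once a single clique $K_j$ fills with mutants it spawns mutants onto $v^*$ at total rate $r$, which in turn spawns mutants into reservoirs at total rate $r$; each new reservoir mutant has a good chance of activating a further clique, and by the clique dynamics that clique is very likely to fill rather than go extinct. Iterated, this cascade drives every reservoir and clique to be all-mutant, at which point $X'$ fixates.

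\textbf{Bootstrap.} First I would show that, with probability $1-o(1)$, by time $I_0^-$ at least one clique is full. The initial mutant $x_0 \in R_j$ has out-degree $1$, so $\vmc{x_0}{a_j}$ has rate $r$, while $x_0$ dies only at the tiny rate $1/(\ell m)$; with probability $1-O(1/(\ell m))$, $x_0$ spawns onto $a_j$ first. Once $a_j$ is a mutant, the dynamics inside $K_j \cup \{a_j\}$ are well-approximated by a biased random walk on the clique (the feeder $a_j$ is forced to be a non-mutant while $K_j$ is active). Using Lemma~\ref{lem:backtoback} on this walk, with probability $1 - O(r^{-k+1}) = 1 - n^{-\omega(1)}$ the clique $K_j$ becomes fully mutant long before $n^8$.

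\textbf{Boosting step.} The heart of the proof is the inductive step. Suppose at time $I_i^-$ there are at least $\alpha_i$ full cliques (and correspondingly many reservoir mutants, measured by $\beta_i$). During $I_i$, the star-clocks $\vsmc{v^*}{\cdot}$ and $\vsnc{v^*}{\cdot}$ determine how long $v^*$ is a mutant: with $\alpha_i$ full cliques, $v^*$ accrues mutant clock-time proportional to $\alpha_i/(\alpha_i+1)$ of each visit, and during that time it spawns mutants onto $\Theta(\alpha_i/\ell) \cdot n(\log n)^3$ fresh reservoir vertices in expectation. I would formalise this through the four events $\Ptotalmut{i}$, $\Presmut{i}$, $\Pcliquefull{i}$, $\Pmostcliques{i}$, each phrased purely in terms of star-clock triggers (using the coupling $\Psi(X')$), so that Chernoff/Poisson concentration (Corollary~\ref{cor:pchernoff}--\ref{cor:pchernoff-3}, Lemma~\ref{lem:bchernoff}) applies without tricky conditioning. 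Each newly mutated reservoir vertex then activates its clique via its feeder, and Lemma~\ref{lem:backtoback} again guarantees that all but a $1/r^{\Omega(k)}$ fraction of activations end in a fully mutant clique before any of them becomes empty. The parameter $\gamma_i = \beta_i(\log n)^{20}/k$ appears to be the clock-time needed for this cascade to complete within $I_i$. The scheme is calibrated so that $\alpha_{i+1}$ and $\beta_{i+1}$ improve geometrically (by a factor $(2\log n)^2$) on $\alpha_i,\beta_i$, and the failure probability at each stage is $O(1/n)$.

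\textbf{Termination and main obstacle.} After $O(\log \ell / \log \log \ell)$ stages the thresholds $\beta_i, \alpha_i$ saturate at $(2\log n)^2$, meaning at most $(2\log n)^2$ non-mutant reservoir vertices and at most $(2\log n)^2$ non-full cliques remain. A final clean-up argument (essentially one more application of the cascade, plus a union bound that no remaining clique goes extinct before filling, using that $k = \lceil(\log\ell)^{23}\rceil$ makes per-clique failure $n^{-\omega(1)}$) shows that some time $t$ exists with $X'_t = V(\megastar[\ell])$. The total failure probability across all $O(\log\log n)$ stages, plus the bootstrap and clean-up, is bounded by $42(\log n)^2/n^{1/2}$; the dominant contribution is from the first stage where $\beta_0 \asymp \ell m = \Theta(n)$ and concentration of the number of reservoirs hit by $\vsnc{v^*}{\cdot}$ is only $\sqrt{\ell m}$ tight. \textbf{The main technical obstacle} is precisely this management of dependencies: the state of distinct cliques becomes correlated through $v^*$, so the iteration must be expressed in terms of star-clock events (the sets $\mathcal{T}^{i,j}$, $\mathcal{U}_i$, and the intervals $(t_i^-, t_i^- + \gamma_i]$) which are independent across cliques conditional on suitable past information, while the clique chains $Y^{t_0,j}$ are analysed via their jump chains so that Lemma~\ref{lem:backtoback} can be applied uniformly.
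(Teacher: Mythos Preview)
Your high-level picture (a cascade in which mutants spread through $v^*$ and fill cliques) matches the paper's, but there are two genuine gaps.

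\textbf{The bootstrap is wrong, and this is where the main error term lives.} When $x_0$ spawns onto $a_j$, the feeder $a_j$ dies at rate $m-1$ (from the $m-1$ non-mutant reservoir vertices) but spawns into $K_j$ at rate only $r$, so a single attempt plants a mutant in $K_j$ with probability $\Theta(1/m)$, not $1-O(1/(\ell m))$. Moreover, once $K_j$ holds a \emph{single} mutant, the clique walk reaches state $k$ with only constant probability (roughly $1-1/r'$ by Corollary~\ref{cor:gambler}(\ref{ruin:three})), not $1-O(r^{-k})$; Lemma~\ref{lem:backtoback}(\ref{backtobackone}) gives an exponentially small failure only from state $c=k-c_r$, not from state $1$. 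The paper therefore allows $\Theta(m(\log n)^2)$ time for repeated attempts (Lemmas~\ref{lem:into-clique} and~\ref{lem:fill-first-clique}), and the probability that $\vnc{v^*}{x_0}$ fires during that window is $\Theta((\log n)^2/\ell)=\Theta((\log n)^2/\sqrt{n})$. This is precisely the dominant contribution to the $42(\log n)^2/\sqrt{n}$ bound, not the star-clock concentration you point to.

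\textbf{The iteration scheme is inverted.} The parameters $\alpha_i,\beta_i$ are \emph{decreasing} upper bounds on the number of non-mutants (per reservoir and in total), not lower bounds on the number of full cliques; $\Pcliquefull{i}$ asserts that \emph{every} clique has at most $\Delta$ non-mutants. This must already hold at $I_0^-$, and the paper devotes all of Section~\ref{sec:fill-cliques} (culminating in Lemma~\ref{lem:fill-all-cliques}) to filling \emph{all} $\ell$ cliques before the iteration even starts. Your plan of starting with one full clique and growing does not support Lemma~\ref{lem:main-iteration}: the bound on the time $v^*$ spends as a non-mutant (Lemma~\ref{lem:vcent-time}) and hence the control of $\mathcal{U}_i$ rely on condition \myref{D}{D:clique-empty}, which needs every clique to stay within $2\Delta$ of full throughout $(I_i^-,\TTend{i}]$. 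Likewise $\gamma_i$ is not ``the clock-time needed for the cascade'' but the upper bound on $v^*$'s non-mutant time used to control $|\mathcal{T}^i|$ and $|\mathcal{U}_i|$.
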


The precise value of~$t$ is not important, but in fact, our proof will work
with $t=2n^{8}$. Most of Section~\ref{sec:megastar} will be devoted to the proof of Lemma~\ref{lem:megastar-fixates}.
Before giving the proof, we show how to use Lemma~\ref{lem:megastar-fixates}
to prove Theorem~\ref{thm:megastar}, which we restate here for convenience.

{\renewcommand{\thetheorem}{\ref{thm:megastar}}
\begin{theorem}
\statemegastar
\end{theorem}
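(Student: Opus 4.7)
The plan is to reduce Theorem~\ref{thm:megastar} to the key lemma (Lemma~\ref{lem:megastar-fixates}) in two steps: first, a monotonicity coupling between the Moran process $X$ and the megastar process $X'$ on $\megastar[\ell]$; second, a short estimate on the chance that the uniform initial mutant $x_0$ lies outside the reservoirs.

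For the coupling, I will drive both $X$ and $X'$ by a single instance of the clock process $P(\megastar[\ell])$ from Section~\ref{sec:clock}. Since each process is deterministic given the clock triggers, this automatically produces a coupling with the correct marginals. I then propose to prove, by induction on the trigger times $\tau_j$, that $X'_t \subseteq X_t$ for all $t \geq 0$. This reduces to a case check on the five update rules of Definition~\ref{def:megastar-process}: rules (i), (iv), and (v) preserve containment immediately; rule (iii) is even more restrictive on $X'$ than the Moran update is on $X$, so that case is fine. The only mildly delicate case is rule (ii): when $\vmc{a_j}{v}$ triggers with $a_j \in X'$ and $K_j \cap X' = \emptyset$, the megastar update replaces $X'$ by $(X' \cup \{v\}) \setminus \{a_j\}$, while the Moran update replaces $X$ by $X \cup \{v\}$. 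Since $a_j \in X' \subseteq X$, we still have $(X' \cup \{v\}) \setminus \{a_j\} \subseteq X \cup \{v\}$, preserving the invariant. Because both processes stay at $V(\megastar[\ell])$ once they reach it, fixation of $X'$ at any time implies fixation of $X$, and using strong connectedness of $\megastar[\ell]$ this yields
\[
\Pr(X \text{ extinct}) \leq \Pr(X'_t \neq V(\megastar[\ell]))
\]
for any fixed $t \geq 0$.

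Next I will estimate the placement probability. With $m = \ell$ and $k = \lceil (\log \ell)^{23}\rceil$, we have $|R_1 \cup \cdots \cup R_\ell| = \ell^2$ and $n = 1 + \ell^2 + \ell + \ell k$, so
\[
\Pr(x_0 \notin R_1 \cup \cdots \cup R_\ell) = \frac{1 + \ell + \ell k}{n}.
\]
From $n \geq \ell^2$ we get $\ell \leq \sqrt{n}$ and hence $(\log \ell)^{23} \leq 2^{-23}(\log n)^{23}$; together with $\ell k = o(\ell^2)$ (which gives $\ell \geq (1-o(1))\sqrt{n}$) this shows that the placement probability is $o\bigl((\log n)^{23}/n^{1/2}\bigr)$ as $\ell \to \infty$.

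Finally, taking $t$ as in Lemma~\ref{lem:megastar-fixates} and conditioning on whether $x_0$ is a reservoir vertex gives
\[
\Pr(X \text{ extinct}) \leq \Pr(x_0 \notin R_1 \cup \cdots \cup R_\ell) + 42(\log n)^2/n^{1/2}.
\]
Both summands are $o\bigl((\log n)^{23}/n^{1/2}\bigr)$, so for $\ell$ (and hence $n$) sufficiently large, the right-hand side is at most $(\log n)^{23}/n^{1/2}$. I do not anticipate a serious obstacle here: all the real work has been done in Lemma~\ref{lem:megastar-fixates}, the coupling is a routine case check, and the placement bound is elementary; the only bookkeeping task is to choose $\ell_0$ large enough that all absolute constants and lower-order terms are absorbed into the single factor $(\log n)^{23}/n^{1/2}$.
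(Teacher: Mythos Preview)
Your proposal is correct and follows essentially the same approach as the paper: couple $X$ and $X'$ via the shared clock process to get $X'_t\subseteq X_t$ (the paper states this as ``immediate'' while you spell out the case check, including the only nontrivial rule~(ii)), bound the probability that $x_0$ misses the reservoirs, and invoke Lemma~\ref{lem:megastar-fixates}. One small slip: the placement probability is $\Theta\bigl((\log n)^{23}/n^{1/2}\bigr)$ with leading constant roughly $2^{-23}$, not $o\bigl((\log n)^{23}/n^{1/2}\bigr)$ as you write; this does not affect the conclusion since $2^{-23}(\log n)^{23}/n^{1/2} + 42(\log n)^2/n^{1/2} \le (\log n)^{23}/n^{1/2}$ for large $n$.
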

\addtocounter{theorem}{-1}
}
\begin{proof}
Let $X'$ be the megastar process on $\megastar[\ell]$ with $X_0'=X_0$. Recall that both $X$ and $X'$ are defined in terms of the same clock process $\moranclocks(\megastar[\ell])$. It is therefore immediate that for all $t \ge 0$, $X'_t \subseteq X_t$.  Thus, if $X'$ fixates at time $t$, then $X_t = X'_t = V(\megastar[\ell])$ and so $X$ must also fixate at or before time $t$. Thus, $\Pr(X\textnormal{ fixates}) \ge \Pr(X'\textnormal{ fixates})$.

Let $\calR$ be the event that the initial mutant $x_0$ is in a reservoir. Clearly,
\begin{equation*}
\Pr(\calR)= \frac{\ell m}{\ell(k +m+1)+1}> \frac{m}{k  + m + 2}> 
1 - \frac{k+2}{m}\geq 1-\frac{(\log \ell)^{23}+3}{m}\,.
\end{equation*}
By Lemma~\ref{lem:megastar-fixates}, we have 
\begin{equation*}
\Pr(X'\textnormal{ fixates}\mid \calR)\geq 1-\frac{42(\log n)^{2}}{\sqrt{n}}\,.
\end{equation*}
Therefore,
\begin{align*}
\Pr(X'\textnormal{ fixates})
    &\geq \Pr(\calR)\,\Pr(X'\textnormal{ fixates}\mid \calR) \\
    &\geq 1 - \frac{(\log\ell)^{23} + 3}{m} - \frac{42(\log n)^{2}}{\sqrt{n}} \\
    &\geq 1 - \frac{2^{-23}(\log n)^{23} + 3}{\sqrt{n}/2} - \frac{42(\log n)^{2}}{\sqrt{n}}\,,
\end{align*}
and the result follows.
\end{proof}

The rest of  Section~\ref{sec:megastar} is
devoted to  the proof of Lemma~\ref{lem:megastar-fixates}. 

\subsection{Sketch of the proof of the key lemma (Lemma~\ref{lem:megastar-fixates})}

In this Section, we give an informal sketch of the proof of Lemma~\ref{lem:megastar-fixates}. The presentation of the proof itself does not depend upon the sketch so the reader may prefer to skip directly to the proof. Throughout, we assume that $n$ is ``large'' relative to $r$, leaving the details of how large to the actual proof.

At a very high level, the argument proceeds as follows. We set out some preliminary results concerning cliques in Section~\ref{sec:cliques}. With our choice of parameters, $x_0$ is very likely to spawn inside a reservoir, say $R_1$. Let $\Delta = \Theta((\log n)^3)$ (see the Glossary for the precise definition), and note that $\Delta$ is much smaller than $k$. In Section~\ref{sec:first-clique}, we prove that $K_1$ is likely to fill with mutants before $x_0$ dies, and likely to contain at most $\Delta$ non-mutants at time $n$. In Section~\ref{sec:other-cliques}, we prove that $K_1, \dots, K_\ell$ are all likely to contain at most $\Delta$ non-mutants at time $n^8$. Finally, in Section~\ref{sec:megastar-fixates} we prove that the process is likely to fixate by time $2n^8$.

We now discuss each part of the argument in more detail. We say that a clique is \emph{active} if it contains both mutants and non-mutants. The key idea of Section~\ref{sec:cliques} is that since we are working with the megastar process rather than the Moran process, the behaviour of an active clique is governed by a simple random walk on $\{0, \dots, k\}$ (see Lemma~\ref{lem:transmatrix}). This walk is forward-biased for almost its entire length, so we dominate it below by two back-to-back gambler's ruins (see Definition~\ref{def:Z} and Lemma~\ref{lem:clique-dom}). This, together with the fact that any clique containing both mutants and non-mutants changes state with rate at least $r$, allows us to prove several key properties of cliques in the megastar process which we state here in simplified form (see Corollary~\ref{cor:clique-absorb-fast-cts} and Lemma~\ref{lem:iteration-cliques}):
\begin{enumerate}[(C1)]
\item If a clique contains at least one mutant, then with at least constant probability it fills with mutants within time $k\log n$.
\item If a clique contains at most $\Delta$ non-mutants, then with very high probability it fills with mutants within time $(\log n)^7$.
\item Let $I$ be an interval with $(\log n)^7 \le \len{I} \le e^{(\log n)^2}$. Then if a clique contains at most $\Delta$ non-mutants at the start of $I$, with very high probability it contains at most $\Delta$ non-mutants at the end of $I$ and contains at most $2\Delta$ non-mutants at any time in $I$.
\end{enumerate}
Finally, we use (C2) and (C3) together with a careful domination to prove upper bounds on the likelihood of non-mutants being spawned onto $v^*$ from an active clique (see Lemma~\ref{lem:clique-nm-spawns}). 

We now discuss Section~\ref{sec:first-clique}. Heuristically, the argument is quite simple. Consider the interval $J = [0,\sqrt{n}(\log n)^3]$. With probability at least $1-O((\log n)^3/\sqrt{n})$, $\vnc{v^*}{x_0}$ does not trigger in $J$ and so $v^*$ remains a mutant throughout. Conditioned on this event, by Chernoff bounds $x_0$ is very likely to spawn $\Omega(\sqrt{n}(\log n)^3)$ mutants onto $a_1$ in $J$. Each time a mutant is spawned onto $a_1$, either $K_1$ already contains a mutant or there is an $\Omega(1/m) = \Omega(1/\sqrt{n})$ chance of $a_1$ spawning a mutant into $K_1$ before dying. Whenever $K_1$ contains a mutant, by (C1) there is an $\Omega(1)$ chance that $K_1$ will fill with mutants, so in expectation $K_1$ will fill with mutants $\Omega((\log n)^3)$ times over the course of $J$. Finally, when $K_1$ has filled with mutants, by (C3) it is likely to contain at most $\Delta$ non-mutants at time $n$ (see Lemma~\ref{lem:fill-first-clique}). Unfortunately, these events are not independent --- for example, a mutant may be spawned onto $a_j$ while $K_j$ is already active from a previous spawn --- so concentration is not guaranteed. To make the argument rigorous, we therefore divide $J$ into sub-intervals and apply domination.

Section~\ref{sec:other-cliques} is now relatively easy. (C3) tells us that $K_1$ is very likely to remain almost full of mutants for a superpolynomial length of time. While we could fill each subsequent clique with a similar argument to that used in Section~\ref{sec:first-clique}, we have enough wiggle room that we can instead use a substantially simpler argument to prove that $K_1, \dots, K_\ell$ each contain at most $\Delta$ non-mutants by time $n^8$ (see Lemma~\ref{lem:fill-all-cliques}). A side effect of this is that our bound on $t$ in the statement of Lemma~\ref{lem:megastar-fixates} is very loose.

The meat of the proof is in Section~\ref{sec:megastar-fixates}. Suppose that $K_1, \dots, K_\ell$ each contain at most $\Delta$ non-mutants. Since $\Delta$ is much smaller than $k$, it is tempting to simply dominate the number of mutants in reservoirs below by a random walk on $\{0, \dots, \ell m\}$. We could argue that by (C3), for superpolynomial time most of $v^*$'s in-neighbours will be mutants, and so $v^*$ will spawn far more mutants than non-mutants in this interval. While this is true, it will only take us so far --- even if each clique only contained one non-mutant, we should still expect $v^*$ to be a non-mutant for an $\Omega(1/k)$ proportion of the time, leaving us with $\Omega(m/k)$ non-mutants in each reservoir. However, all is not lost. Intuitively, when there are many mutants in a reservoir, the corresponding feeder vertex is more likely to be a mutant and so frequently its clique will contain no non-mutants at all. Developing this idea yields Lemma~\ref{lem:main-iteration}, the main result of the section.

For all $i\in\Zzero$, let
\begin{align*}
I_i^-    &= n^8 + in(\log n)^3,\\
I_i^+    &= n^8 + (i+1)n(\log n)^3,\\
I_i      &= (I_i^-, I_i^+],\\
\alpha_i &= \floor{\max\{(2\log n)^{2}, m/(2\log n)^{2i}\}},\\
\beta_i  &= \floor{\max\{(2\log n)^{2}, \ell m/(2\log n)^{2i}\}}.
\end{align*}
We say that the filtration at time $I_i^-$ is \emph{good} if the following events occur.
\begin{itemize}
\item \Ptotalmut{i}: $|(R_1 \cup \dots \cup R_{\ell}) \setminus X'_{I_i^-}| \le \beta_i$.
\item \Presmut{i}: For all $j \in [\ell]$, $|R_{j} \setminus X'_{I_i^-}| \le \alpha_i$.
\item \Pcliquefull{i}: For all $j \in [\ell]$, $|K_j \setminus X'_{I_i^-}| \le \Delta$.
\item \Pmostcliques{i}: For all but at most $\beta_i$ choices of $j\in[\ell]$, $R_{j} \cup \{a_j\} \cup K_j \subseteq X'_{I_i^-}$.
\end{itemize} 
Lemma~\ref{lem:main-iteration} implies that if the filtration at $I_i^-$ is good, then with very high probability so is the filtration at $I_{i+1}^-$. Thus the number of non-mutants in each reservoir drops by a factor of at least $(2\log n)^2$ to a minimum of $\floor{(2\log n)^2}$, as does the total number of non-mutants across all reservoirs. Moreover, if $i$ is sufficiently large, then Lemma~\ref{lem:main-iteration} also implies that the process fixates by time $I_{i+1}^-$ with probability at least $1/2$. 

Note that the filtration at $I_0^-$ is good; indeed, $\alpha_0 = m$ and $\beta_0 = \ell m$, so \Ptotalmut{0}, \Presmut{0} and \Pmostcliques{0} trivially occur, and \Pcliquefull{0} is very likely to occur by Lemma~\ref{lem:fill-all-cliques}. It is therefore relatively easy to prove Lemma~\ref{lem:megastar-fixates} using Lemmas~\ref{lem:fill-all-cliques} and~\ref{lem:main-iteration} (see Section~\ref{sec:iteration}).

The linchpin of the proof of Lemma~\ref{lem:main-iteration} is a stopping time $\TTend{i}$ defined to be the first time $t \ge I_i^-$ such that one of the following holds.
\begin{enumerate}[\normalfont(D1)]
\item $t = I_i^+$.
\item $v^*$ spawns $\beta_{i+1}$ non-mutants in the interval $(I_i^-,t]$.
\item For some $j \in [\ell]$, $v^*$ spawns $\alpha_{i+1}$ non-mutants onto vertices in $R_{j}$ in the interval $(I_i^-,t]$.
\item For some $j \in [\ell]$, $|K_j \setminus X'_t| > 2\Delta$.
\end{enumerate}
Note that the definition of $\TTend{i}$ guarantees, without any need for conditioning, that throughout $(I_i^-, \TTend{i}]$ our cliques remain almost full of mutants and not too many non-mutants are spawned onto reservoirs. We will therefore work in $(I_i^-, \TTend{i}]$ for most of the proof to facilitate dominations, with the eventual goal of proving that $(I_i^-, \TTend{i}] = I_i$.

In Section~\ref{sec:clique-active}, we prove an upper bound on the number of times cliques are likely to become active over the interval $(I_i^-, \TTend{i}]$ (see Lemma~\ref{lem:clique-active-bound}). In Section~\ref{sec:vcent}, we apply this together with Lemma~\ref{lem:clique-nm-spawns} to prove an upper bound on the length of time for which $v^*$ is likely to be a mutant over the interval $(I_i^-, \TTend{i}]$ (see Lemma~\ref{lem:centre-nm-short-time}). Unfortunately, due to the use of $\TTend{i}$, these proofs require a fairly technical series of dominations. More details can be found in the relevant sections.

In Section~\ref{sec:megastar-final}, we put all of this together to prove Lemma~\ref{lem:main-iteration} and hence Lemma~\ref{lem:megastar-fixates}. The key observation is that Lemma~\ref{lem:centre-nm-short-time} combines with Chernoff bounds on star-clocks to give strong upper bounds on the number of non-mutants spawned by $v^*$ in $(I_i^-, \TTend{i}]$. These bounds, together with (C3), imply that none of (D2)--(D4) are likely to hold at $\TTend{i}$ --- in which case $(I_i^-, \TTend{i}] = I_i$. Additional Chernoff bounds on star-clocks then imply that $v^*$ is likely to spawn a mutant onto every reservoir vertex over the first half of $I_i$, which implies that \Ptotalmut{i+1} and \Presmut{i+1} are likely to occur. Then \Pcliquefull{i+1} is likely to occur by (C3), and \Pmostcliques{i+1} is likely to occur by (C2) combined with a relatively simple argument. This implies that the filtration at $I_{i+1}^-$ is likely to be good, as required by Lemma~\ref{lem:main-iteration}. This part of the argument is mostly contained in Lemmas~\ref{lem:star-clock-bound} and~\ref{lem:branches-clear}.

It remains only to prove that if $i$ is sufficiently large, $X$ fixates with probability at least $1/2$. In this case, we use similar arguments to the above to show that with probability at least $5/6$, $v^*$ spawns no non-mutants at all over the course of $I_i$. In this case, similar arguments to those used to deal with \Pmostcliques{i+1} work to show that $X$ is likely to fixate as required.

\subsection{The behaviour of mutants within cliques}\label{sec:cliques}

 In order to describe the behaviour of mutants in cliques in the megastar process, we require the following definitions.

\begin{definition}\label{def:cliquetimes}
Given a clique $j\in [\ell]$ and
a time 
$t_0\geq 0$, define the following stopping times for $h\in\Zzero$.
\[\Ti{h} = \begin{cases}
t_0 & \mbox{if $h=0$,}\\
\min\{t > \Ti{h-1} \mid 
X'_t \cap K_j \neq 
X'_{\Ti{h-1}} \cap K_j
\} & \text{if $h>0$ and $K_j$ is}\\
   & \text{active at $\Ti{h-1}$,}\\
\Ti{h-1} & \mbox{otherwise.} 
\end{cases}\]
\defend{}
\end{definition}

The subscript ``$\mathsf c$'' in $\Ti{h}$ stands for ``change'' because if $K_j$ is active at $t_0$, then $\Ti{1}, \Ti{2}, \dots$ are the times at which the number of mutants in $K_j$ changes after $t_0$ until $K_j$ next becomes inactive. We also use the following definition.

\begin{definition}\label{def:Iabs}
Let $\Iabs = \min \{h \in \mathbb{Z}_{\geq 0} \mid K_j\text{ is inactive at } \Ti{h}\}$.
\defend{}
\end{definition}

The subscript ``$\mathsf{in}$'' in $\Iabs$ stands for ``inactive''.
Note that $\Iabs$ is finite with probability~$1$.
Thus, with probability~$1$, the following is well-defined.

\begin{definition}\label{def:Tabs}
Let $\Tabs = \Ti{\Iabs}$.\defend{}
\end{definition}

\begin{definition} \label{def:clique-chain}
For every $h\in\Zzero$,  let $\Y(h) = |X'_{\Ti{h}} \cap K_j|$.
For $h\in\{1,\ldots,\Iabs\}$, we say that $\Y$ \emph{jumps} at time~$\Ti{h}$.
\defend{}
\end{definition}
  
  We first show that $\Y$ evolves as a Markov chain.

 \begin{lemma}\label{lem:transmatrix}  
 Suppose $\ell \ge 3$, and consider any $j\in [\ell]$ and $t_0\geq 0$.
 Let $f$ be any possible value of $\filt_{t_0}(X')$. 
 Conditioned on the event $\filt_{t_0}(X') = f$,
 $\Y$ evolves as a discrete-time Markov chain on  states $\{0,\ldots,k\}$ 
starting from state $\Y(0)$
with
 the following transition matrix. 
\begin{align*}
p'_{0,0} &= 1,\\
p'_{i,i+1} &= \frac{r(k-i)}{(r+1)(k-i)+ 1} \textnormal{ for all }i\in [k-1],\\
p_{i,i-1}' &= 1 - p'_{i,i+1} \textnormal{ for all }i\in [k-1],\\
p_{k,k}' &= 1,\\
p'_{i,j} &=0, \textnormal{ otherwise.}
\end{align*}
\end{lemma}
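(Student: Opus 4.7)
The plan is to reduce the lemma to a race between two families of independent Poisson clocks via the strong Markov property applied at the stopping times $\Ti{h-1}$. The initial state $\Y(0)=|X'_{t_0}\cap K_j|$ is determined by $f$, and the absorbing identities $p'_{0,0}=p'_{k,k}=1$ are immediate from Definition~\ref{def:cliquetimes}, since the convention there sets $\Ti{h}=\Ti{h-1}$ whenever $K_j$ is inactive at $\Ti{h-1}$ (so once $\Y$ hits $0$ or $k$ it freezes). The whole content of the lemma is therefore the one-step transition probability out of an active state $\Y(h-1)=i\in[k-1]$.

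For this, I would condition on $\filt_{\Ti{h-1}}(X')$ with $|X'_{\Ti{h-1}}\cap K_j|=i$ (so $a_j\notin X'_{\Ti{h-1}}$, since $K_j$ is active), apply the strong Markov property, and classify which clock triggers on $(\Ti{h-1},\infty)$ can alter $X'\cap K_j$. Inspecting the five cases of Definition~\ref{def:megastar-process}: cases (ii) and (iii) are excluded because both require $K_j$ to be inactive at the moment of the trigger, and case (v) is a no-op, so only cases (i) and (iv) are relevant. Using the fact that every $K_j$-vertex has out-degree $k$ and in-neighbourhood $(K_j\setminus\{w\})\cup\{a_j\}$, I would then classify the effective clocks into two types: Type~A, the mutant clocks $\mc{(u,w)}$ with $u$ a mutant in $K_j$ and $w$ a non-mutant in $K_j$, each of rate $r/k$; and Type~B, the non-mutant clocks $\nc{(u,w)}$ with $w$ a mutant in $K_j$ and $u$ a non-mutant in-neighbour of $w$, each of rate $1/k$. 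All other clock triggers leave $X'\cap K_j$ invariant and are therefore invisible to the chain~$\Y$.

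The key bookkeeping step, and the only place where the argument could go wrong, is Type~B: because $K_j$ is active, the feeder vertex $a_j$ is forced to be a non-mutant, so for each of the $i$ mutants $w\in K_j$ the non-mutant in-neighbours are the $k-i$ non-mutants in $K_j\setminus\{w\}$ \emph{together with} $a_j$, giving $k-i+1$ Type~B clocks per mutant. Totalling gives rates $i(k-i)r/k$ and $i(k-i+1)/k$ for Types~A and~B respectively, and by memorylessness and independence of distinct Poisson processes the probability that the first trigger altering $X'\cap K_j$ is of Type~A is
\[
\frac{i(k-i)r/k}{i(k-i)r/k+i(k-i+1)/k}=\frac{r(k-i)}{(r+1)(k-i)+1},
\]
which matches $p'_{i,i+1}$; the complementary probability is $p'_{i,i-1}$. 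Since this ratio depends only on $i$ and not on the identities of the mutants inside $K_j$ or on the state of $X'$ outside $K_j\cup\{a_j\}$, the strong Markov property yields the claimed Markov chain. I do not foresee any deeper obstacle than remembering the ``$+1$'' contribution from $a_j$ which produces the extra constant in the denominator.
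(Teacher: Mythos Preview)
Your proposal is correct and takes essentially the same approach as the paper's proof: both condition on the filtration at $\Ti{h-1}$ (the paper writes $t'=\Ti{h}$ instead, but this is just an index shift), observe that $a_j$ is forced to be a non-mutant while $K_j$ is active, count $i(k-i)$ rate-$r/k$ mutant clocks versus $i(k-i+1)$ rate-$1/k$ non-mutant clocks (the extra $+1$ coming from $a_j$), and compute the ratio $u/(u+d)$. Your explicit walk through cases (i)--(v) of Definition~\ref{def:megastar-process} to rule out interference from other clock triggers is a bit more detailed than the paper's version, but the substance is identical.
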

\begin{proof}  

Consider any $t' \geq t_0$, 
any integer $h \ge 0$, any $y_0, \dots, y_h \in \{0, \dots, k\}$  
and 
any possible value $f'$ of $\filt_{t'}(X')$ 
that implies that
$\filt_{t_0}(X')=f$,
$\Y(0) = y_0, \dots, \Y(h) = y_h$,
and 
$\Ti{h} = t'$.
We will show that, conditioned on $\filt_{t'}(X')=f'$,
the distribution of $\Y(h+1)$  
 is as claimed in the lemma statement.

First, suppose that $y_h\in \{0,k\}$.
In this case, Definition~\ref{def:cliquetimes} guarantees that $\Ti{h+1}=\Ti{h}$
so $\Y(h+1)=\Y(h)$,
  which is consistent with $p'_{0,0}=p'_{k,k}=1$.

Next, suppose that $y_h \in [k-1]$.  
Then the clique $K_j$ is active throughout the interval $[t_0,\Ti{h+1})$.
By the definition of the megastar process~$X'$, 
the feeder vertex $a_j$ is a non-mutant throughout this interval. 

The out-degree of~$a_j$ and every vertex in $K_j$ is~$k$.
There are $y_h(k-y_h)$ mutant clocks whose sources are in 
$K_j\cap X'_{t'}$ and targets are in 
$K_j \setminus X'_{t'}$.
Similarly, there are $(k-y_h+1) y_h$ non-mutant clocks whose
sources are in 
$(K_j \cup \{a_j\}) \setminus X'_{t'}$
and targets are in $K_j \cap X'_{t'}$. 
Thus after $t'$, the number of mutants in $K_j$ increases with rate 
$u := r y_h(k-y_h)/k$ and decreases with rate 
$d:= (k-y_h+1)y_h/k$. 
 It follows that
\begin{align*}
\Pr(\Y(h+1)=y_h+1 \mid \filt_{t'}(X') = f') &= 
\frac{u}
{u+d}
  = p'_{y_h, y_h+1},\\
\Pr(\Y(h+1)=y_h-1 \mid \filt_{t'}(X') = f') &= p'_{y_h, y_h-1},
\end{align*}
as required. \end{proof}  
   
 We will dominate the Markov chain $\Y(h)$ 
 in terms of a simpler Markov chain~$Z$, which is defined as follows.

\begin{definition}\label{def:Z}
Let $c_r = \lceil 2r/(r-1)\rceil$ and let $r' = (r+1)/2$. 
Let $Z$ be the discrete-time Markov chain on states $\{0, \dots, k\}$ 
with the following transition matrix.
\begin{align*}
p_{0,0} &= 1,\\
p_{i,i+1} &= 
	\begin{cases}
		r'/(r'+1)   & \textnormal{ if }1\le i\le k-c_r,\\
		1/3         & \textnormal{ if }k-c_r < i \le k-1,\\
	\end{cases}\\
p_{i,i-1} &= 1 - p_{i,i+1} \textnormal{ for all }i\in [k-1],\\
p_{k,k} &= 1,\\
p_{i,j} &= 0, \textnormal{ otherwise.}\defenddisp{}
\end{align*}
\end{definition}
 
Note that the value~$k$ used in Definition~\ref{def:Z} is
the same as~$k=k(\ell)$ in our parameterisation of the megastar (Definition~\ref{def:newmegas}). We  now show that $\Y$ is  dominated below by 
$Z$, starting from state $\Y(0)$.

\begin{lemma}\label{lem:clique-dom}  
Suppose $\ell \ge 3$, and consider any $j \in [\ell]$ and $t_0 \geq 0$.
Let $f$ be any possible value of $\filt_{t_0}(X')$. 
Conditioned on the event $\filt_{t_0}(X') = f$, 
$\Y$ is dominated below by 
the Markov chain~$Z$, starting from state $\Y(0)$. \end{lemma}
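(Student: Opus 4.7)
The plan is to reduce stochastic domination to a pointwise inequality on upward transition probabilities, and then invoke a standard monotone coupling of birth-death chains.

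First, I would note that by Lemma~\ref{lem:transmatrix}, conditioned on $\filt_{t_0}(X') = f$, the process $\Y$ is a birth-death Markov chain on $\{0,\ldots,k\}$ with $0$ and $k$ absorbing, starting from $\Y(0)$. The chain $Z$ (Definition~\ref{def:Z}) is also a birth-death chain on $\{0,\ldots,k\}$ with $0$ and $k$ absorbing. Hence it suffices to show the pointwise bound
\[
p'_{i,i+1} \;\ge\; p_{i,i+1} \qquad \text{for every } i \in [k-1],
\]
and then build a coupling $(\Y,Z)$ starting from $Z(0)=\Y(0)$ such that $Z(h)\le \Y(h)$ for all $h\in\Zzero$.

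The pointwise inequality splits into two cases according to the definition of $p_{i,i+1}$. Let $x=k-i$. In the regime $1\le i\le k-c_r$, so $x\ge c_r$, I would need $\frac{rx}{(r+1)x+1}\ge \frac{r'}{r'+1}=\frac{r+1}{r+3}$, which after cross-multiplying reduces to $(r-1)x\ge r+1$, i.e.\ $x\ge (r+1)/(r-1)$. This holds because $c_r=\lceil 2r/(r-1)\rceil\ge 2r/(r-1)\ge (r+1)/(r-1)$ for $r>1$. In the regime $k-c_r<i\le k-1$, so $x\ge 1$, I would need $\frac{rx}{(r+1)x+1}\ge \frac13$, which reduces to $(2r-1)x\ge 1$ and holds for $r>1$ since $x\ge 1$. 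These are two short algebraic checks, not an obstacle.

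Given the pointwise inequality, I would construct the coupling using a single i.i.d.\ sequence $U_1,U_2,\ldots$ of uniforms on $[0,1]$, independent of~$f$. Define $Z(0)=\Y(0)$, and at step $h$ proceed inductively assuming $Z(h)\le \Y(h)$. If $Z(h)=\Y(h)=i$ with $i\in\{0,k\}$, both chains stay. If $Z(h)=\Y(h)=i\in[k-1]$, use $U_{h+1}$ to update both: move $\Y$ up iff $U_{h+1}\le p'_{i,i+1}$ and move $Z$ up iff $U_{h+1}\le p_{i,i+1}$, moving down otherwise. Since $p_{i,i+1}\le p'_{i,i+1}$, the three possible outcomes are (both up), (both down), or ($Z$ down, $\Y$ up), each preserving $Z(h+1)\le \Y(h+1)$. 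If $Z(h)<\Y(h)$, evolve the two chains using independent fresh randomness; since $\Y$ changes by at most $1$ per step and the same is true for $Z$, $Z$ cannot overtake $\Y$ in a single step. The marginals are correct by construction (the conditional marginal of $\Y$ is governed by Lemma~\ref{lem:transmatrix}, and the marginal of $Z$ is governed by Definition~\ref{def:Z}), so this gives the required coupling and hence stochastic domination.

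There is no real obstacle: the only content is the two algebraic inequalities above, and the coupling is entirely standard. The slight subtlety to watch is that $\Y$ is conditioned on $\filt_{t_0}(X')=f$ and $Z$ is unconditional, but this is handled by drawing the coupling uniforms independently of $f$ and appealing to the Markov property supplied by Lemma~\ref{lem:transmatrix}.
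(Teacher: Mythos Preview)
Your algebraic verification of the pointwise inequality $p'_{i,i+1}\ge p_{i,i+1}$ is correct, and reducing the problem to a monotone coupling of birth--death chains is the right idea. However, the coupling argument has a genuine gap. You write that when $Z(h)<\Y(h)$ you evolve the chains with independent fresh randomness, and justify preservation of the order by saying ``since $\Y$ changes by at most $1$ per step and the same is true for $Z$, $Z$ cannot overtake $\Y$ in a single step.'' This is false when $\Y(h)-Z(h)=1$: with independent moves, $Z$ can go up and $\Y$ can go down simultaneously, giving $Z(h+1)>\Y(h+1)$.

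The gap is repairable: under your coupling, the parity of $\Y(h)-Z(h)$ is preserved while both chains are in $[k-1]$ (at a common state the difference becomes $0$ or $2$; at distinct interior states each moves by $\pm 1$, so the difference changes by $0$ or $\pm 2$). Since $Z(0)=\Y(0)$, the chains are never at adjacent interior states, so the bad case does not arise; once either chain is absorbed, domination is trivial. The paper sidesteps this issue by proving, in addition to your condition~(i), the crossing inequality $p'_{i+1,i+2}\ge p_{i,i+1}$ for $0<i<k-1$ (its condition~(ii)). With both (i) and~(ii) in hand, using the \emph{same} uniform at every step---whether the chains are equal, adjacent, or farther apart---yields a valid monotone coupling directly, without any parity bookkeeping.
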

\begin{proof}
Given Lemma~\ref{lem:transmatrix}, the two things to show are
\begin{enumerate}[(i)]
\item for all $i\in [k-1]$, $p'_{i,i+1} \geq p_{i,i+1}$, and
\item for $0<i<i+1<k$,
$p'_{i+1,i+2} \geq p_{i,i+1}$. 
\end{enumerate}
Since $p'_{i+1,i+2} \leq p'_{i,i+1}$, it suffices to prove the second of these,
together with $p'_{k-1,k} \geq p_{k-1,k}$.

We start with the latter. We have
\[p'_{k-1,k} = \frac{r}{(r+1)+ 1} \geq   \frac{1}{3},\]
which gives the desired bound since, from $c_r\geq 2$,
we have $k-c_r < k-1$ and so $p_{k-1,k}=1/3$.

So now we must consider $i$ satisfying
$0<i<k-1$ and we must show
$p'_{i+1,i+2} \geq p_{i,i+1}$.

\medskip\noindent
\textbf{Case 1: $\boldsymbol{i \le k-c_r}$.} 
We have
\[p'_{i+1,i+2} - p_{i,i+1} = \frac{r(k-(i+1))}{(r+1)(k-(i+1))+ 1} - \frac{r'}{r'+1}.\]
Since $k > i+1$,
the common denominator of these fractions is positive; the numerator of the difference is easily calculated to be $(k-i)(r-r')  -r  
 \geq c_r(r-1)/2-r \geq 0$.

\medskip\noindent
\textbf{Case 2: $\boldsymbol{i > k-c_r}$.} Since $i$ and~$k$ are integers, $i+1< k$ implies that $1 \leq k-(i+1)$.
Thus,
\[p'_{i+1,i+2} = \frac{r(k-(i+1))}{(r+1)(k-(i+1))+ 1} \geq   \frac{r(k-(i+1))}{(r+2)(k-(i+1))} = \frac{r}{r+2} \geq \frac{1}{3}.\]
As $i > k-c_r$, $p_{i,i+1} = 1/3$, so this gives the desired bound.
\end{proof}

We now use our observations about the gambler's ruin problem in Section~\ref{sec:gambler} to derive some simple bounds on the
behaviour of the Markov chain~$Z$.

\begin{definition}\label{def:Delta}
Let $\Delta = \floor{c_r(\log n)^3}$.\defend{}
\end{definition}
 
Note that $k \ge 2\Delta$ as long as $\ell$ is sufficiently large, and that 
having $\ell$ sufficiently large also implies a lower bound on $n$.
 
\begin{lemma}\label{lem:Z-k-c-r}  
\statelargeell Suppose that $Z$ is started in state $k-c_r$. Then the following statements hold.
\begin{enumerate}[(i)]
\item \label{it:Z-k-c-r-1} The probability of reaching state $k$ without passing through state $k-\Delta$ is at least $1-e^{-{(\log n)}^3}$.
\item  \label{it:Z-k-c-r-2}The expected number of transitions 
that it takes to reach
state $k$ or state $k-2\Delta$ is at most $\log n$.
\end{enumerate}
\end{lemma}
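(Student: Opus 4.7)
The plan is to invoke Lemma~\ref{lem:backtoback} (the back-to-back gambler's ruin analysis) with the parameter choices $p_1 = r'/(r'+1)$, $c = k - c_r$, $d = k$, $b = k-\Delta$, and $a = k-2\Delta$. With these choices, the transition matrix of the chain in Lemma~\ref{lem:backtoback} coincides, on the non-absorbing states, with that of $Z$: on states in $\{a+1,\dots,c\} = \{k-2\Delta+1,\dots,k-c_r\}$ the forward probability is $p_1 = r'/(r'+1)$ as in $Z$ (note $k-c_r \leq k-c_r$ lies in the forward-biased region), and on states in $\{c+1,\dots,d-1\} = \{k-c_r+1,\dots,k-1\}$ the forward probability is $1/3$ again as in $Z$. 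For $\ell$ sufficiently large, we have $\Delta = \lfloor c_r(\log n)^3 \rfloor \geq c_r + 2$, so the hypotheses $a < b < c-1$ and $c+1 < d$ of Lemma~\ref{lem:backtoback} hold.

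The (standard) observation that justifies importing these bounds is that both quantities we wish to estimate depend only on the trajectory of $Z$ up until it first visits the relevant target set. Thus, modifying $Z$ outside that set — for part~(ii), declaring $k-2\Delta$ absorbing — has no effect on either the hitting probability in~(i) or the hitting time in~(ii), and the resulting chain is exactly the one considered in Lemma~\ref{lem:backtoback}.

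For part~(i), Lemma~\ref{lem:backtoback}\eqref{backtobackone} gives
\[
p_{c\to d;\,b} \;\geq\; 1 - \left(\frac{1-p_1}{p_1}\right)^{c-b}\! 2^{d-c} \;=\; 1 - (r')^{-(\Delta - c_r)}\, 2^{c_r}.
\]
Taking logarithms, it suffices to show that $(\Delta - c_r)\log r' - c_r \log 2 \geq (\log n)^3$. Since $r' = (r+1)/2 > 1$, $\log r'$ is a positive constant (depending on $r$), and $\Delta - c_r = \lfloor c_r(\log n)^3 \rfloor - c_r$, the left-hand side is $c_r(\log r')(\log n)^3 - O_r(1)$, which exceeds $(\log n)^3$ once $\ell$ is sufficiently large relative to $r$. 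This yields the desired bound $1 - e^{-(\log n)^3}$.

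For part~(ii), Lemma~\ref{lem:backtoback}\eqref{backtobacktwo} gives directly
\[
\E[H_{c;\{a,d\}}] \;\leq\; 2^{d-c+1}\cdot\frac{3p_1 - 1}{2p_1 - 1} \;=\; 2^{c_r + 1}\cdot\frac{2r'-1}{r'-1},
\]
a finite constant depending only on $r$, hence at most $\log n$ once $\ell$ (and therefore $n$) is large enough. There is no substantive obstacle here: the technical work has already been done in Lemma~\ref{lem:backtoback}, and the proof reduces to parameter bookkeeping plus the routine observation about hitting times above.
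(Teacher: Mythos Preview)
Your proof is correct and follows essentially the same approach as the paper: both apply Lemma~\ref{lem:backtoback} with the parameters $p_1 = r'/(r'+1)$, $a = k-2\Delta$, $b = k-\Delta$, $c = k-c_r$, $d = k$, and then verify that the resulting bounds are strong enough for large $\ell$. The only difference is cosmetic bookkeeping in part~(i): you write the error term as $(r')^{-(\Delta-c_r)}2^{c_r}$ and argue via $c_r\log r' > 1$, whereas the paper rewrites $(1-p_1)/p_1 = 1-(r-1)/(r+1)$ and uses $c_r(r-1)/(r+1) > 1$; these are equivalent since $\log r' > (r-1)/(r+1)$ for $r>1$.
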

\begin{proof} 
Recall the notation $H_{x;S}$ and $p_{x\rightarrow y;z}$ from Lemma~\ref{lem:backtoback}. We first prove Item~(\ref{it:Z-k-c-r-1}).
Apply Lemma~\ref{lem:backtoback} with
$p_1 = r'/(r'+1) =(r+1)/(r+3)>1/2$,
$a = k-2\Delta$,  
$b = k-\Delta$,
$c = k-c_r$ and
$d = k$. By Item~(\ref{backtobackone}) of Lemma~\ref{lem:backtoback},
\[p_{c\rightarrow d; b} \geq 
1 - {\left(\frac{1-p_1}{p_1}\right)}^{c-b} 2^{d-c}.\]
Note that $(1-p_1)/p_1=2/(r+1)=1-(r-1)/(r+1)$ and $\Delta\geq c_r(\log n)^3-1$, so, for all $n$ sufficiently large with respect to~$r$, it holds that
\begin{align*} 
{\left(\frac{1-p_1}{p_1}\right)}^{\Delta-c_r} 2^{c_r}&\leq {\left(1-\frac{r-1}{r+1}\right)}^{c_r(\log n)^3} (r+1)^{c_r+1}\\
&\leq \exp\left(-c_r \frac{r-1}{r+1}(\log n)^3+(c_r+1)\log (r+1)\right)\leq  e^{ -{(\log n)}^3},
\end{align*}
where in the 
second inequality we used~\eqref{eq:ebounds} and
in the last inequality we used the fact that $c_r \frac{r-1}{r+1}>1$. Item~(\ref{it:Z-k-c-r-1}) thus follows.

We next prove Item~(\ref{it:Z-k-c-r-2}). Apply Lemma~\ref{lem:backtoback} with 
the same parameters as before.  
By Lemma~\ref{lem:backtoback}(\ref{backtobacktwo}), 
$\E[H_{c,\{a,d\}}] \leq  
2^{d-c+1} \left(\frac{3p_1-1}{2 p_1-1}\right) $ 
so it suffices to show 
\[ 2^{c_r+1} \left(\frac{3p_1-1}{2 p_1-1}\right) \leq \log n\,,\]
which again holds as long as $n$ is sufficiently large with respect to~$r$.
\end{proof}

Next, we use the domination of $\Y$ by $Z$ (Lemma~\ref{lem:clique-dom}) and
our observations about $Z$ (Lemma~\ref{lem:Z-k-c-r})
to derive some conclusions about mutants in cliques. 

\begin{lemma}\label{lem:clique-fill}
\statelargeell Let $j \in [\ell]$, $t_0 \ge 0$, and $y_0 \in \{0, \dots, k\}$. Let $f$ be a possible value of $\filt_{t_0}(X')$ which implies that $\Y(0) = y_0$. Then the following statements hold.
\begin{enumerate}[(i)]
\item \label{it:clique-fill-2} If $y_0 \ge k-1$, then 
\[\Pr\Big(
\forall t\in [t_0, \Tabs], |K_j \setminus X'_t| \le \Delta
\,\Big|\, \filt_{t_0}(X') = f \Big)\geq 1-e^{-{(\log n)}^3}.\]
 \item \label{it:clique-fill-3} If $y_0 \ge k - \Delta$, then 
\[\Pr\Big( 
\forall t \in [t_0, \Tabs], |K_j \setminus X'_t| \le 2\Delta
\,\Big|\, \filt_{t_0}(X') = f\Big) \geq 1-2e^{-(\log n)^3}.\]
\end{enumerate}
\end{lemma}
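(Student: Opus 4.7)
The plan is to leverage Lemma~\ref{lem:clique-dom}, which provides a coupling of $\Y$ with the simpler random walk $Z$ under which $\Y(h) \geq Z(h)$ for all $h \geq 0$, together with the gambler's ruin estimates for $Z$ from Lemma~\ref{lem:Z-k-c-r}. Observe that the number of non-mutants in $K_j$ is constant between consecutive jump times and remains fixed once $K_j$ has become inactive, so the event in part~(\ref{it:clique-fill-2}) is equivalent to $\Y(h) \geq k - \Delta$ for all $h \leq \Iabs$, and the event in part~(\ref{it:clique-fill-3}) is equivalent to $\Y(h) \geq k - 2\Delta$ for all $h \leq \Iabs$. Under the coupling, if $Z$ reaches state $k$ without first dropping below a threshold $L \in \{k-\Delta, k-2\Delta\}$, then $\Y$ reaches $k$ no later (so $\Iabs$ is at most the corresponding hitting time for $Z$) and $\Y(h) \geq Z(h) \geq L$ throughout, which is exactly what is needed.

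For part~(\ref{it:clique-fill-2}), the case $y_0 = k$ is trivial since $\Tabs = t_0$. Otherwise $y_0 = k - 1 \geq k - c_r$ (as $c_r \geq 2$), and a standard monotonicity argument for birth-death chains bounds the probability that $Z$ reaches $k$ without passing through $k - \Delta$ starting from $y_0$ below by the same probability starting from $k - c_r$, which is at least $1 - e^{-(\log n)^3}$ by Lemma~\ref{lem:Z-k-c-r}(\ref{it:Z-k-c-r-1}).

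For part~(\ref{it:clique-fill-3}) I would split into two subcases. If $y_0 \geq k - c_r$, monotonicity and Lemma~\ref{lem:Z-k-c-r}(\ref{it:Z-k-c-r-1}) again give that $Z$ reaches $k$ before $k - \Delta \geq k - 2\Delta$ with probability at least $1 - e^{-(\log n)^3}$. Otherwise $k - \Delta \leq y_0 \leq k - c_r - 1$, and the plan is to chain two gambler's ruin estimates. Phase~1 applies Corollary~\ref{cor:gambler}(\ref{ruin:three}) inside the upward-biased region $[k-2\Delta,\, k-c_r]$ (where $p = (r+1)/(r+3)$ and $q = 2/(r+3)$) to show that $Z$ reaches $k - c_r$ before $k - 2\Delta$ with probability at least $1 - (2/(r+1))^{y_0 - (k - 2\Delta)} \geq 1 - (2/(r+1))^{\Delta}$; Phase~2 invokes Lemma~\ref{lem:Z-k-c-r}(\ref{it:Z-k-c-r-1}) from state $k - c_r$. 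A union bound then yields the claimed $1 - 2e^{-(\log n)^3}$.

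The only quantitative check, and the main obstacle in making the argument rigorous, is that $(2/(r+1))^{\Delta} \leq e^{-(\log n)^3}$ for $n$ sufficiently large; this reduces to $c_r \log((r+1)/2) > 1$, which follows from $c_r = \lceil 2r/(r-1)\rceil$ together with the elementary inequality $\log((r+1)/2) > (r-1)/(2r)$ valid for $r > 1$. All remaining steps are routine transcriptions of first-passage estimates for $Z$ into statements about $\Y$ via the coupling, with care taken at the boundary case $y_0 = k$ in part~(\ref{it:clique-fill-2}).
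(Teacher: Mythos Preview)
Your proposal is correct and follows essentially the same route as the paper: reduce to the dominating walk $Z$ via Lemma~\ref{lem:clique-dom}, handle part~(\ref{it:clique-fill-2}) by monotonicity plus Lemma~\ref{lem:Z-k-c-r}(\ref{it:Z-k-c-r-1}), and handle part~(\ref{it:clique-fill-3}) by splitting at $k-c_r$, chaining Corollary~\ref{cor:gambler}(\ref{ruin:three}) with Lemma~\ref{lem:Z-k-c-r}(\ref{it:Z-k-c-r-1}), and taking a union bound. The only cosmetic difference is the verification of $(2/(r+1))^{\Delta}\le e^{-(\log n)^3}$: the paper bounds $(r')^{-\Delta}\le e^{-\Delta(r'-1)/r'}$ and uses $c_r(r-1)/(r+1)>1$, whereas you use $c_r\log((r+1)/2)>1$ via $\log((r+1)/2)>(r-1)/(2r)$; both reduce to $c_r\ge 2r/(r-1)$.
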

\begin{proof}
Recall the notation $p_{x \rightarrow y;z}$ from Lemma~\ref{lem:backtoback}. The Markov chain $Z$ is the same as the chain in Lemma~\ref{lem:backtoback} 
with $p_1 = r'/(r'+1)$,
$a=0$,  $c=k-c_r$ and $d=k$. We start with simple lower bounds concerning~$Z$.
\begin{itemize}
\item From any state $i\in\{k-c_r, \dots, k\}$, $Z$~must reach either state~$k$ or state $k-c_r$ before reaching state $k-\Delta$.  By Lemma~\ref{lem:Z-k-c-r}(\ref{it:Z-k-c-r-1}), $p_{k-c_r\rightarrow k;k-\Delta} \ge 1 - e^{-(\log n)^3}$, and we have
\begin{equation}\label{eqn:p_2}
p_{i \rightarrow k; k-\Delta}\ge p_{k-c_r\rightarrow k;k-\Delta}
\ge 1 - e^{-(\log n)^3}.
\end{equation}

\item Consider $i \in\{k-\Delta,\ldots,k-c_r-1\}$. By Corollary~\ref{cor:gambler}(\ref{ruin:three}), \[p_{i\rightarrow k-c_r;k-2\Delta} \geq 1 - (r')^{-\Delta}\geq 1-e^{-\Delta(r'-1)/r'} \geq 1 - e^{-(\log n)^3},\]
where the last inequality holds for all $n$ sufficiently large with respect to~$r$, using the fact that $c_r\frac{r'-1}{r'}=c_r\frac{r-1}{r+1}>1$. It follows from  Lemma~\ref{lem:Z-k-c-r}(\ref{it:Z-k-c-r-1}) that 
\begin{equation}\label{eqn:p_3}
p_{i\rightarrow k;k - 2 \Delta}  \geq p_{i \rightarrow k-c_r;k-2\Delta} - p_{k-c_r \rightarrow k-2\Delta; k} \geq 1 - 2e^{-(\log n)^3}.
\end{equation}
\end{itemize}

The result now follows easily. By Lemma~\ref{lem:clique-dom}, $\Y$ is dominated below by $Z$ with initial state $y_0$ conditioned on $\filt_{t_0}(X')=f$. Thus Item (\ref{it:clique-fill-2}) follows from \eqref{eqn:p_2}
and Item (\ref{it:clique-fill-3}) follows from~\eqref{eqn:p_2}
and~\eqref{eqn:p_3}.
\end{proof}

Note that in the process of proving Lemma~\ref{lem:clique-fill}, we proved the following (see~\eqref{eqn:p_2} and~\eqref{eqn:p_3}).

\begin{corollary}\label{cor:stupid} \statelargeell Let
$y_0$ be an integer satisfying $k - \Delta \leq y_0  \leq k$.  The probability that $Z$, when started from~$y_0$, 	
reaches state $k$ without passing through 
state $k- 2 \Delta$,  is at least $1-2e^{-(\log n)^3}$.\hfill\qedsymbol
\end{corollary}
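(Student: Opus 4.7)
The plan is to observe that this corollary is essentially a reorganisation of the bounds already established in the course of proving Lemma~\ref{lem:clique-fill}; the only new content is to allow the starting state to range over the whole interval $[k-\Delta, k]$ rather than being fixed. I would proceed by case analysis on whether $y_0 \ge k - c_r$ or $k - \Delta \le y_0 < k - c_r$, piecing together the two gambler's-ruin-style estimates from the proof of Lemma~\ref{lem:clique-fill}.

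First, suppose $y_0 \in \{k-c_r, \ldots, k\}$. By the strong Markov property applied to $Z$, any trajectory from $y_0$ that reaches $k-\Delta$ must first pass through $k-c_r$ (since $y_0 \ge k-c_r > k-\Delta$ and $Z$ moves in unit steps), so $p_{y_0 \rightarrow k; k-\Delta} \ge p_{k-c_r \rightarrow k; k-\Delta}$. The right-hand side is at least $1 - e^{-(\log n)^3}$ by Lemma~\ref{lem:Z-k-c-r}(i) (equivalently, this is exactly~\eqref{eqn:p_2}). Excluding $k-\Delta$ is stronger than excluding $k-2\Delta$, so the required bound holds with room to spare in this case.

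Next, suppose $y_0 \in \{k-\Delta, \ldots, k-c_r-1\}$. On this range the chain $Z$ has forward-bias $p_1 = r'/(r'+1)$, so by Corollary~\ref{cor:gambler}(i) applied on $\{k-2\Delta, \ldots, k-c_r\}$ we have $p_{y_0 \rightarrow k-c_r; k-2\Delta} \ge 1 - (p_1/(1-p_1))^{-\Delta} = 1 - (r')^{-\Delta}$, which is at least $1 - e^{-(\log n)^3}$ for $n$ large (using $\Delta \ge c_r (\log n)^3 - 1$ and $c_r(r-1)/(r+1) > 1$, exactly as in the derivation of~\eqref{eqn:p_3}). Composing this with the previous case via the strong Markov property and a union bound over the two possible failure events (reaching $k-2\Delta$ before $k-c_r$, or reaching $k-\Delta$ from $k-c_r$ before $k$) yields $p_{y_0 \rightarrow k; k-2\Delta} \ge 1 - 2 e^{-(\log n)^3}$.

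Since both cases give at least the stated bound, this covers all $y_0 \in \{k-\Delta, \ldots, k\}$ and completes the proof. There is no real obstacle here: the two numerical estimates \eqref{eqn:p_2} and \eqref{eqn:p_3} from the proof of Lemma~\ref{lem:clique-fill} are already exactly what is needed, and the only minor point to verify is the claim that every path from $y_0 \ge k-c_r$ to $k-\Delta$ must pass through $k-c_r$, which is immediate from the unit-step structure of $Z$.
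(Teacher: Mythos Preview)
Your proposal is correct and follows essentially the same route as the paper: the corollary is just a restatement of the two bounds \eqref{eqn:p_2} and \eqref{eqn:p_3} already derived inside the proof of Lemma~\ref{lem:clique-fill}, split according to whether $y_0 \ge k-c_r$ or $k-\Delta \le y_0 < k-c_r$. Your explicit invocation of the strong Markov property and the union-bound decomposition is exactly what underlies those two displayed inequalities.
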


We now use Lemma~\ref{lem:Z-k-c-r} and Corollary~\ref{cor:stupid} to give lower bounds on the probability that $Z$ reaches $k$ in a relatively short time.

 \begin{lemma}\label{lem:Z-fast-hit-k}
 \statelargeell Consider the Markov chain~$Z$, starting from
 $y_0\in[k-1]$.
 \begin{enumerate}[(i)]
\item  If $k-\Delta \le y_0 \le k-1$, then 
$\Pr(Z_{\ceil{15c_r^2(\log n)^6}} = k)\ge 1 - 3e^{-(\log n)^3}$.

 \item  If $1 \le y_0 \le k-\Delta$, then 
 $\Pr(Z_{\lceil 5c_r^2k \rceil } = k)  \ge \tfrac{r-1}{ 5r}  $.
\end{enumerate} 
 \end{lemma}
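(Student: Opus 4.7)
The plan is to analyse both statements by decomposing the chain's trajectory at the threshold $k-c_r$, above which the upward transition probability is $1/3$ (so the chain is biased down) and below which it is $p_1=(r+1)/(r+3)>1/2$ (so the chain is biased up). The main tools will be Corollaries~\ref{cor:gambler} and~\ref{cor:stupid} together with Lemma~\ref{lem:backtoback}, which are designed precisely for this two-region setup.

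For part~(i), fix $y_0 \in [k-\Delta,k-1]$ and let $\tau$ be the hitting time of $\{k,k-2\Delta\}$. I would first bound $\E[\tau \mid Z_0=y']$ uniformly over all $y' \in [k-2\Delta+1,k-1]$: from any $y'$ in the biased-up region $[k-2\Delta+1,k-c_r]$, Corollary~\ref{cor:gambler}(v) bounds the time to exit the region by $\Delta(r+3)/(r-1)$; from any $y'$ in the biased-down region $(k-c_r,k-1]$, Corollary~\ref{cor:gambler}(iv) bounds the time to exit by $3c_r$; and Lemma~\ref{lem:backtoback}(ii) contributes a further $O_r(1)$ upon restarting from $k-c_r$. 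Combined with the elementary inequality $(r+3)/(r-1) \le c_r^2$ (which follows from $c_r \ge 2r/(r-1)$), this gives $\E[\tau \mid Z_0=y'] \le M$ with $M=O_r(\Delta)$. By Markov's inequality, from any such $y'$ the chain hits $\{k,k-2\Delta\}$ within $2M$ transitions with probability at least $1/2$, so by applying the strong Markov property across $\lfloor T/(2M)\rfloor$ consecutive blocks of $2M$ steps, $\Pr(\tau>T \mid Z_0=y_0) \le 2^{-\lfloor T/(2M)\rfloor}$. For $T=\lceil 15c_r^2(\log n)^6\rceil$ and $M=O_r((\log n)^3)$, one checks that this exponent exceeds $(\log n)^3$, yielding $\Pr(\tau>T) \le e^{-(\log n)^3}$. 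Finally, Corollary~\ref{cor:stupid} applied to $y_0 \in [k-\Delta,k-1]$ gives $\Pr(Z_\tau=k-2\Delta) \le 2e^{-(\log n)^3}$, so a union bound yields $\Pr(Z_T \ne k) \le 3e^{-(\log n)^3}$.

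For part~(ii), fix $y_0 \in [1,k-\Delta]$. I would first lower-bound the hitting probability $\Pr(\tau_k<\tau_0 \mid Z_0=y_0)$. Since the chain moves by unit steps and every path from $y_0<k-c_r$ to $k$ passes through $k-c_r$, the strong Markov property gives
\[
    \Pr(\tau_k<\tau_0 \mid Z_0=y_0) \ge \Pr(\tau_{k-c_r}<\tau_0 \mid Z_0=y_0)\cdot \Pr(\tau_k<\tau_1 \mid Z_0=k-c_r).
\]
By Lemma~\ref{lem:gambler}(i) applied to the gambler's ruin on $\{0,\ldots,k-c_r\}$, the first factor is at least $1-(2/(r+1))^{y_0} \ge (r-1)/(r+1)$. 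By Lemma~\ref{lem:backtoback}(i) with $a=0$, $b=1$, $c=k-c_r$, $d=k$, the second factor is at least $1-(2/(r+1))^{k-c_r-1}2^{c_r}$, which is $1-o(1)$ for large $\ell$. For the time estimate, the strong Markov property together with Corollary~\ref{cor:gambler}(v) and Lemma~\ref{lem:backtoback}(ii) yields $\E[\tau_{\{0,k\}} \mid Z_0=y_0] \le k(r+3)/(r-1)+O_r(1)$, so Markov's inequality gives $\Pr(\tau_{\{0,k\}}>T \mid Z_0=y_0) \le (r+3)/(5c_r^2(r-1))(1+o(1))$ for $T=\lceil 5c_r^2 k\rceil$. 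Combining,
\[
    \Pr(Z_T=k \mid Z_0=y_0) \ge \frac{r-1}{r+1} - \frac{r+3}{5c_r^2(r-1)} - o(1).
\]
Using $c_r \ge 2r/(r-1)$, so that $c_r^2(r-1) \ge 4r^2/(r-1)$, the target inequality $(r-1)/(5r)$ reduces after elementary algebra to $15r^2-8r-3 \ge 0$, which holds for all $r>1$.

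The main obstacle is balancing constants in part~(ii): the Markov tail bound on $\tau_{\{0,k\}}$ loses a factor proportional to its expectation $\Theta(k)$, and the factor $5c_r^2$ in the horizon $T$ must be large enough to absorb this loss uniformly in $r>1$. The delicate case is $r\to 1^+$, where both $c_r$ and $(r+3)/(r-1)$ blow up, but the scaling $c_r \sim 2/(r-1)$ ensures that $c_r^2(r-1) \sim 4/(r-1)$ grows fast enough to dominate the Markov loss, as captured by the polynomial inequality above. Part~(i) is conceptually more routine once the correct expected hitting-time bound is in hand, with the only care being to choose the chunk size $2M$ so that the geometric tail bound yields the required $e^{-(\log n)^3}$ decay.
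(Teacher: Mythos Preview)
Your approach is essentially the paper's: for (i), uniformly bound $\E[H_{y';\{k-2\Delta,k\}}]$ by cases at/below/above $k-c_r$, iterate Markov's inequality across blocks, and union-bound with Corollary~\ref{cor:stupid}; for (ii), the paper first reaches $k-c_r$ via Markov on the biased-up region and then invokes part~(i) from $k-c_r$, which is only a cosmetic rearrangement of your decomposition (your final inequality $15r^2-8r-3\ge 0$ checks out).

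There is one constant to repair in~(i). The inequality $(r+3)/(r-1)\le c_r^2$ you cite gives only $M\approx 2c_r^2\Delta$, hence $\lfloor T/(2M)\rfloor\approx (15/4c_r)(\log n)^3$, which drops below $(\log n)^3/\log 2$ once $c_r>5$; since $c_r=\lceil 2r/(r-1)\rceil\to\infty$ as $r\downarrow 1$, ``one checks'' fails as written. Use instead the equally elementary bound $(r+3)/(r-1)\le 2c_r$ (from $c_r\ge 2r/(r-1)$ and $r+3\le 4r$), which yields $M\le 5c_r\Delta$ and makes $T/(2M)\ge 1.5(\log n)^3>(\log n)^3/\log 2$, so your geometric tail gives $e^{-(\log n)^3}$ as needed. (Also, the exit-time bound for the biased-up region $[k-2\Delta+1,k-c_r]$ is $2\Delta(r+3)/(r-1)$, not $\Delta(r+3)/(r-1)$, since the region has length about $2\Delta$; this factor of~$2$ is already absorbed in the paper's $4c_r\Delta$.)
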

 \begin{proof}
 
 Recall the notation $H_{x;S}$ and $p_{x\rightarrow y;z}$ from Lemma~\ref{lem:backtoback}.
The Markov chain~$Z$ is the same as the Markov chain in Lemma~\ref{lem:backtoback}
with $p_1 = r'/(r'+1)$, $a=0$,
$c = k-c_r$ and $d=k$.
Let $\eta = \ceil{5ec_r\Delta}\,\ceil{(\log n)^3}$ and $\eta' = 
\ceil{4c_r^2  k}$. 

We first establish Item~(i). Note that $\ceil{15c_r^2(\log n)^6} \ge \eta$, so it suffices to establish the stronger statement that for all $y_0$ satisfying $k-\Delta \le y_0 \le k-1$, we have $\Pr(Z_{\eta} = k)\ge 1 - 3e^{-(\log n)^3}$. So suppose $k-\Delta \leq y_0 \leq k-1$.
Note that 
\[\Pr (Z_{ \eta} = k) \geq \Pr(H_{y_0;\{k-2\Delta,k\}} \leq \eta) - p_{y_0 \rightarrow k-2\Delta;k}.\]
Corollary~\ref{cor:stupid} shows that
$ p_{y_0 \rightarrow k-2\Delta;k} \leq 2e^{-(\log n)^3}$.
So, to show (i), we will show
\begin{equation}\label{eq:star}
\Pr(H_{y_0;\{k-2\Delta,k\}} \leq \eta) \geq 1 - e^{-(\log n)^3}.\end{equation}
To establish~\eqref{eq:star}, we first show the following.
\begin{equation}\label{eq:starstar}
\mbox{For every integer~$y$ satisfying $k-2\Delta < y < k$, }
\E[H_{y;\{k-2\Delta,k\}}] \leq 5c_r \Delta.
\end{equation} 
To see~\eqref{eq:starstar}, there are three cases to consider.

\medskip\noindent
\textbf{Case 1: $\boldsymbol{y=k-c_r}$.} By  Lemma~\ref{lem:Z-k-c-r}\eqref{it:Z-k-c-r-2}, we have the (tighter) bound
\begin{equation}\label{eqn:H-y-1}
\E[H_{y;\{k-2\Delta,k\}}] \le \log n.
\end{equation}

\medskip\noindent
\textbf{Case 2: $\boldsymbol{k-2\Delta < y < k-c_r}$.} By  Corollary~\ref{cor:gambler}\eqref{ruin:five}, we have
\begin{equation*}
\E[H_{y;\{k-2\Delta,k-c_r\}}]\le \frac{2\Delta(r'+1)}{r'-1}=\frac{2\Delta(r+3)}{r-1}\leq 4c_r\Delta .
\end{equation*}
Combining this and \eqref{eqn:H-y-1}, we obtain
\begin{equation*}
\E[H_{y;\{k-2\Delta,k\}}] \le \E[H_{y;\{k-2\Delta,k-c_r\}}] + \E[H_{k-c_r;\{k-2\Delta,k\}}]  \le 5c_r\Delta .
\end{equation*}

\medskip\noindent
\textbf{Case 3: $\boldsymbol{k-c_r < y < k}$.} By Corollary~\ref{cor:gambler}\eqref{ruin:four} we have $\E[H_{y;\{k-c_r,k\}}] \le 3c_r$, and so 
\begin{equation*}
\E[H_{y;\{k-2\Delta,k\}}] \le \E[H_{y;\{k-c_r,k\}}] + \E[H_{k-c_r;\{k-2\Delta,k\}}] \le 3c_r + \log n \le 2\log n.
\end{equation*}

These three cases establish~\eqref{eq:starstar}.  Applying Markov's inequality gives the following.
\begin{equation}\label{eq:blah}
\mbox{For every integer~$y$ satisfying $k-2\Delta < y < k$,}\quad
\Pr(H_{y;\{k-2\Delta,k\}} \geq \ceil{5ec_r\Delta}) \leq  \frac{1}{e}.
\end{equation}
Now \eqref{eq:star} follows by subdividing the set $[\eta]$ (indexing $\eta$ transitions from the initial state $y_0$) into $\ceil{(\log n)^3}$ disjoint sets of contiguous indices, each of size $\ceil{5ec_r\Delta}$, then applying \eqref{eq:blah} to each subset.

We next establish item~(ii), so 
suppose $1 \le y_0 \le k-\Delta$.
 By Corollary~\ref{cor:gambler}\eqref{ruin:five}, for all $y_0\in\{1, \dots, k-c_r\}$, we have
\begin{equation*}
\E[H_{y_0;\{0,k-c_r\}}] \le \frac{k(r'+1)}{r'-1} = \frac{k(r+3)}{r-1} \le 2c_rk.
\end{equation*}
Applying Markov's inequality, we obtain
\[\Pr(H_{y_0;\{0,k-c_r\}} \leq \ceil{4c_r^2 k}) \geq 1-\frac{1}{2c_r}\geq 1-\frac{r-1}{4r}.\]
Now by  Corollary~\ref{cor:gambler}\eqref{ruin:three}, 
$p_{y_0 \rightarrow 0;k-c_r} \leq  \frac{1}{r'}$.  
So 
\[\Pr (H_{y_0;\{k-c_r\}}\leq\eta') \geq 
\Pr(H_{y_0;\{ 0,k-c_r\}} \leq \eta') - p_{y_0 \rightarrow  0;k-c_r}
\geq   1-\frac{r-1}{4r}-\frac{2}{r+1}.
\]
Now, using the fact that, starting from 
$Z_0=k-c_r$, we have $\Pr(Z_{\eta} = k)\ge 1 - 3e^{-(\log n)^3}$,
which we proved in the derivation of~(i),
we have, starting from $Z_0=y_0$,
\[\Pr(Z_{\eta + \eta'} =k) \geq 1- 3 e^{-(\log n)^3} -\frac{r-1}{4r}-\frac{2}{r+1} =(r-1)\left(\frac{1}{r+1}-\frac{1}{4r}\right)-3 e^{-(\log n)^3}\geq \frac{r-1}{5r}\,,\]
which completes the proof, since $\eta+\eta' \leq \ceil{5c_r^2k}$.
\end{proof}
 
The following corollary follows immediately from Lemma~\ref{lem:clique-dom} and Lemma~\ref{lem:Z-fast-hit-k}.

 \begin{corollary}\label{cor:clique-absorb-fast-disc}
\statelargeell Let $j \in [\ell]$, let $t_0 \geq 0$, and let $y_0 \in [k-1]$. Let $f$ be a possible value of $\filt_{t_0}(X')$ which implies that $\Y(0) = y_0$. Then the following statements hold.
\begin{enumerate}[(i)]
\item \label{it:clique-absorb-fast-disc-1} 
If $k-\Delta \le y_0 \le k-1$, then 
\[\Pr\Big(
\Y(\lceil 15c_r^2{(\log n)}^6\rceil ) = k \,\Big|\, \filt_{t_0}(X') = f\Big) \ge 1 - 3e^{-(\log n)^3}.\]
\item \label{it:clique-absorb-fast-disc-2} If $1 \le y_0 \le k-\Delta$, then 
\[\Pr\Big(
\Y(\lceil 5c_r^2 k \rceil ) = k \,\Big|\, \filt_{t_0}(X') = f\Big) \ge  \frac{r-1}{5r}.\]
\end{enumerate} \end{corollary}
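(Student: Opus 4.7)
The plan is to observe that this corollary really does follow almost directly from Lemma~\ref{lem:clique-dom} and Lemma~\ref{lem:Z-fast-hit-k}, so I would write a very short proof. First I would fix $j \in [\ell]$, $t_0 \geq 0$, $y_0 \in [k-1]$ and a value $f$ of $\filt_{t_0}(X')$ implying $\Y(0)=y_0$, and condition on the event $\filt_{t_0}(X') = f$ throughout. By Lemma~\ref{lem:clique-dom} applied at time $t_0$, conditioned on this event, $\Y$ is stochastically dominated below by $Z$ started from state $y_0$. In particular, there exists a coupling of $\Y$ and $Z$ under which $\Y(h) \geq Z(h)$ for every $h \in \Zzero$.

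The key observation is that $k$ is an absorbing state for both chains: for $Z$ this is immediate from Definition~\ref{def:Z} (the transition rule $p_{k,k}=1$), and for $\Y$ it follows from Lemma~\ref{lem:transmatrix} (the transition rule $p'_{k,k}=1$), i.e.\ once $\Y$ hits $k$ the clique $K_j$ becomes inactive and, by Definition~\ref{def:cliquetimes}, $\Y$ remains constant. Therefore, under the coupling, if $Z(h) = k$ then $\Y(h) \geq k$, and since $\Y$ takes values in $\{0,\ldots,k\}$ we must have $\Y(h) = k$. Consequently, for any $h \in \Zzero$,
\[
\Pr\bigl(\Y(h) = k \,\big|\, \filt_{t_0}(X') = f\bigr) \,\geq\, \Pr\bigl(Z_h = k \mid Z_0 = y_0\bigr).
\]

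To finish, I would invoke Lemma~\ref{lem:Z-fast-hit-k}: for part~(i), take $h = \lceil 15 c_r^2 (\log n)^6 \rceil$ and apply Lemma~\ref{lem:Z-fast-hit-k}(i) for $y_0 \in \{k-\Delta,\ldots,k-1\}$ to obtain the bound $1 - 3 e^{-(\log n)^3}$; for part~(ii), take $h = \lceil 5 c_r^2 k \rceil$ and apply Lemma~\ref{lem:Z-fast-hit-k}(ii) for $y_0 \in \{1,\ldots,k-\Delta\}$ to obtain the bound $(r-1)/(5r)$. I do not foresee any real obstacle; the only point that needs a brief sentence of justification is why domination of $\Y$ below by $Z$, combined with $k$ being absorbing for both chains, gives a lower bound on the probability that $\Y$ \emph{equals} $k$ at a specific step $h$. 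Apart from that, the statement is a direct translation of Lemma~\ref{lem:Z-fast-hit-k} through the coupling of Lemma~\ref{lem:clique-dom}.
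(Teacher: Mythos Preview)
Your proposal is correct and matches the paper's approach exactly: the paper states that the corollary follows immediately from Lemma~\ref{lem:clique-dom} and Lemma~\ref{lem:Z-fast-hit-k}, which is precisely your argument. Your extra sentence explaining why domination below by $Z$ translates into a lower bound on $\Pr(\Y(h)=k)$ (via $k$ being absorbing for both chains) is a helpful clarification the paper leaves implicit.
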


To translate Corollary~\ref{cor:clique-absorb-fast-disc} into a bound on the time it takes $K_j$ to fill with mutants, we will require the following lemma.

\begin{lemma}\label{lem:clique-disc-cts-interface}
\statelargeell Let $j \in [\ell]$ and $t_0 > 0$. Let $f$ be a possible value of $\filt_{t_0}(X')$ which implies that $K_j$ is active at $t_0$. Let $t^* \ge 16(\log n)^3$. Then, conditioned on the event $\filt_{t_0}(X') = f$, with probability at least $1-e^{-(\log n)^3}$, 
$\Ti{\ceil{t^*/2}} < t_0 + t^*$. 
\end{lemma}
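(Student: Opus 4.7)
The plan is to couple the jump times of $\Y$ with an independent rate-$1$ Poisson process and then apply a Chernoff bound on the Poisson distribution. The key observation is that, while $K_j$ is active, transitions of $\Y$ occur at a total rate bounded below by $1$, regardless of the current state.

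First, I would verify the rate bound: while $K_j$ is active at time $t$, with $y := |X'_t \cap K_j| \in [k-1]$, the total rate at which clocks in $\moranclocks(\megastar[\ell])$ whose triggering alters $X'_t \cap K_j$ fire equals
\[u(t) + d(t) = \frac{y[(r+1)(k-y)+1]}{k} \;\ge\; \frac{(r+1)(k-1)+1}{k} \;=\; r+1 - \frac{r}{k} \;\ge\; 1,\]
where $u(t) = ry(k-y)/k$ accounts for mutant clocks from $K_j \cap X'_t$ into $K_j \setminus X'_t$ and $d(t) = y(k-y+1)/k$ accounts for non-mutant clocks from $(K_j \setminus X'_t)\cup\{a_j\}$ into $K_j \cap X'_t$. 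Crucially, the inclusion of $a_j$ uses the defining property of the megastar process that $a_j$ is a non-mutant whenever $K_j$ is active.

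Next, I would set up the following coupling. Conditional on $\filt_{t_0}(X')=f$, let $T_0=t_0<T_1<T_2<\cdots$ be the successive jump times of $\Y$ up to $T_{\Iabs}$. By memorylessness, $T_i - T_{i-1}$ conditional on $\filt_{T_{i-1}}(X')$ is $\mathrm{Exp}(u(T_{i-1})+d(T_{i-1}))$, so, setting $E_i := (T_i - T_{i-1})(u(T_{i-1})+d(T_{i-1}))$ and using the rescaling property of exponentials, the $E_i$'s form an i.i.d.\ sequence of $\mathrm{Exp}(1)$ variables (extended by additional independent exponentials past $\Iabs$ if needed). Then $\tilde T_i := t_0 + E_1 + \cdots + E_i$ are the arrival times after $t_0$ of a rate-$1$ Poisson process $P$. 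Since $u+d \ge 1$, we obtain $T_i - T_{i-1} = E_i/(u+d) \le E_i = \tilde T_i - \tilde T_{i-1}$, and hence $T_i \le \tilde T_i$ for all $i \le \Iabs$.

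Finally, $\tilde T_{\ceil{t^*/2}} < t_0+t^*$ implies $\Ti{\ceil{t^*/2}} < t_0+t^*$: if $\Iabs \ge \ceil{t^*/2}$, then $\Ti{\ceil{t^*/2}} = T_{\ceil{t^*/2}} \le \tilde T_{\ceil{t^*/2}}$; if $\Iabs < \ceil{t^*/2}$, then by Definition~\ref{def:cliquetimes} we have $\Ti{\ceil{t^*/2}} = \Tabs = T_{\Iabs} \le \tilde T_{\Iabs} \le \tilde T_{\ceil{t^*/2}}$. Hence it suffices to bound $\Pr(\tilde T_{\ceil{t^*/2}} \ge t_0+t^*)$, which equals the probability that $P$ has at most $\ceil{t^*/2}-1$ arrivals in $[t_0,t_0+t^*)$; this count follows $\mathrm{Poisson}(t^*)$. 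Since $\ceil{t^*/2}-1 \le t^*/2 \le 2t^*/3$, Corollary~\ref{cor:pchernoff-2} yields
\[\Pr\big(\mathrm{Poisson}(t^*) \le \ceil{t^*/2}-1\big) \;\le\; e^{-t^*/16} \;\le\; e^{-(\log n)^3},\]
where the last inequality uses the hypothesis $t^* \ge 16(\log n)^3$. The main obstacle is formalising the coupling so that the $E_i$'s genuinely form an i.i.d.\ $\mathrm{Exp}(1)$ sequence whose partial sums form a rate-$1$ Poisson process; once this is established, the domination $T_i \le \tilde T_i$ and the Chernoff estimate are routine.
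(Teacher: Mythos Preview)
Your proposal is correct and follows essentially the same approach as the paper: both bound the inter-jump rate below by $1$ while $K_j$ is active, dominate the increments $\Ti{h+1}-\Ti{h}$ above by i.i.d.\ $\mathrm{Exp}(1)$ variables, and finish with the Poisson/exponential Chernoff bound (you via Corollary~\ref{cor:pchernoff-2}, the paper via the equivalent Corollary~\ref{cor:expsum}). The paper handles the post-absorption regime slightly differently---it simply notes that $\Ti{h+1}-\Ti{h}=0$ there, which is trivially dominated by $\mathrm{Exp}(1)$---whereas you extend by fresh exponentials and split into cases on $\Iabs$; both are fine.
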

\begin{proof}
We will first show that $\Ti{1} - \Ti{0}, \dots, \Ti{\lceil t^*/2 \rceil} - \Ti{\lceil t^*/2 \rceil - 1}$ are dominated above by i.i.d.\ exponential variables with rate 1.

Fix $0 \le h \le \lceil t^*/2 \rceil - 1$, let $x\ge 0$ and let $t_0, \dots, t_h \ge 0$. Suppose $f_h$ is a possible value of $\filt_{t_h}(X')$ which implies that $\Ti{0} = t_0, \dots, \Ti{h} = t_h$ and $\filt_{t_0}(X') = f$. Fix $t \ge 0$. Note that $f_h$ determines the event $\Tabs \le t_h$, as well as the value of $\Y(h)$ --- write $y_h = \Y(h)$. If $f_h$ is such that $\Tabs \le t_h$, then we have $\Ti{h+1} - t_h = 0$ and so
\begin{equation}\label{eqn:Tip-1}
\Pr\big(\Ti{h+1} - t_h \le x \mid \filt_{t_h}(X') = f_h\big) = 1 \ge 1 - e^{-x}.
\end{equation}

Suppose instead that $f_h$ is such that $\Tabs > t_h$, so that  $1 \le y_h \le k-1$. Then $\Ti{h+1} - t_h$ is the amount of time it takes after $t_h$ for a vertex in $K_j \cup \{a_j\}$ to spawn either a mutant onto a non-mutant in $K_j$ or a non-mutant onto a mutant in $K_j$. Thus, conditioned on $\filt_{t_h}(X') = f_h$, 
(see the proof of Lemma~\ref{lem:transmatrix})
$\Ti{h+1} - t_h$ is an exponential variable with rate
\[\frac{ry_h(k-y_h)}{k} + \frac{y_h(k-y_h)}{k} + \frac{y_h}{k} \ge \frac{(r+1)y_h(k-y_h)}{k} \ge \frac{(r+1)(k-1)}{k}\ge \frac{2(k-1)}{k} \ge 1.\]
In particular,  we have shown that
\begin{equation}\label{eqn:Tip-2}
\Pr\big(\Ti{h+1} - t_h \le x \mid \filt_{t_h}(X') = f_h\big) \ge 1 - e^{-x}.
\end{equation}

By \eqref{eqn:Tip-1} and \eqref{eqn:Tip-2} we have 
\[\Pr\big(\Ti{h+1}-t_h \le x \mid \Ti{0} = t_0, \dots, \Ti{h} = t_h,
\filt_{t_0}(X') = f
\big) \ge 1 - e^{-x},\]
and so $\Ti{1} - \Ti{0}, \dots, \Ti{\lceil t^*/2 \rceil} - \Ti{\lceil t^*/2 \rceil - 1}$ are dominated above by i.i.d.\ exponential variables with rate 1 as claimed. 

It follows that
$\Ti{\lceil t^*/2 \rceil} - t_0$ is dominated above by a sum of $\lceil t^*/2 \rceil$ i.i.d.\ exponential variables with rate 1. We have $t^* \ge 3\lceil t^*/2 \rceil/2$, so by Corollary~\ref{cor:expsum} we have
\begin{equation*}
\Pr\big(\Ti{\ceil{t^*/2}} < t_0 + t^*\big) = \Pr\big(\Ti{\ceil{t^*/2}} - t_0 < t^*\big) \ge 1 - e^{-t^*/16} \ge 1 - e^{-(\log n)^3}.\qedhere
\end{equation*}
\end{proof}

We now use Lemma~\ref{lem:clique-fill},  Corollary~\ref{cor:clique-absorb-fast-disc}
and Lemma~\ref{lem:clique-disc-cts-interface}, to prove three results which contain all the properties of cliques 
that we will need to prove our key lemma, Lemma 
\ref{lem:megastar-fixates}.

\begin{corollary}\label{cor:clique-absorb-fast-cts}
\statelargeell Let $j \in [\ell]$,
$t_0\geq 0$,
and $y_0 \in [k]$. Let $f$ be a possible value of $\filt_{t_0}(X')$ which implies that $\Y(0)=y_0$. Then the following statements hold.
\begin{enumerate}[(i)]
\item \label{it:clique-absorb-fast-cts-1}If $k-\Delta \le y_0 \le k$, then 
\[\Pr\big(\Tabs \le t_0 + 30c_r^2(\log n)^6 \textrm{ and }K_j \subseteq X_{\Tabs}' \,\big|\, \filt_{t_0}(X') = f\big) \ge 1 - 4e^{-(\log n)^3}.\]
\item \label{it:clique-absorb-fast-cts-2}If $1 \le y_0 \le k-\Delta$, then 
\[\Pr\big(\Tabs \le t_0 + 10c_r^2k \textrm{ and }K_j \subseteq X_{\Tabs}' \,\big|\, \filt_{t_0}(X') = f\big) \ge \frac{r-1}{6r}.\]
\end{enumerate}
\end{corollary}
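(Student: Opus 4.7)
\medskip

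\noindent\textbf{Proof proposal.}
The plan is to combine the discrete-step bounds from Corollary~\ref{cor:clique-absorb-fast-disc} (which bound the number of jumps $\Y$ needs to reach state~$k$) with the real-time bound from Lemma~\ref{lem:clique-disc-cts-interface} (which converts a number of jumps into a bound on elapsed time). A union bound then gives the statement.

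First dispose of the edge case $y_0 = k$ in part~(i): in this case $K_j$ is already inactive at~$t_0$, so by the definitions of $\Iabs$ and $\Tabs$ we have $\Iabs = 0$ and $\Tabs = t_0$, and $K_j \subseteq X'_{t_0}$, so part~(i) holds with probability~$1$. For the remainder, assume $K_j$ is active at~$t_0$, so that Lemma~\ref{lem:clique-disc-cts-interface} is applicable.

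For part~(i) with $k-\Delta \le y_0 \le k-1$, set $h^* = \lceil 15 c_r^2 (\log n)^6 \rceil$ and $t^* = 30 c_r^2 (\log n)^6$, so that $\lceil t^*/2\rceil \ge h^*$. By Corollary~\ref{cor:clique-absorb-fast-disc}(i), conditioned on $\filt_{t_0}(X') = f$, with probability at least $1 - 3e^{-(\log n)^3}$ we have $\Y(h^*) = k$; when this occurs, since $k$ is absorbing for $\Y$, we have $\Iabs \le h^*$ and $K_j \subseteq X'_{\Tabs}$. By Lemma~\ref{lem:clique-disc-cts-interface} (whose hypothesis $t^* \ge 16(\log n)^3$ holds for $\ell$ sufficiently large with respect to~$r$), with probability at least $1 - e^{-(\log n)^3}$ we have $\Ti{\lceil t^*/2\rceil} < t_0 + t^*$. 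On the intersection of these two events, $\Tabs = \Ti{\Iabs} \le \Ti{\lceil t^*/2\rceil} < t_0 + t^*$ and $K_j \subseteq X'_{\Tabs}$. A union bound gives the claimed probability $1 - 4e^{-(\log n)^3}$.

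For part~(ii), take $h^* = \lceil 5 c_r^2 k\rceil$ and $t^* = 10 c_r^2 k$, so again $\lceil t^*/2\rceil \ge h^*$. Corollary~\ref{cor:clique-absorb-fast-disc}(ii) gives $\Pr(\Y(h^*) = k \mid \filt_{t_0}(X')=f) \ge (r-1)/(5r)$, and Lemma~\ref{lem:clique-disc-cts-interface} (whose hypothesis $t^* \ge 16(\log n)^3$ holds for $\ell$ sufficiently large, since $k \ge (\log\ell)^{23}$) gives $\Pr(\Ti{\lceil t^*/2\rceil} < t_0 + t^*) \ge 1 - e^{-(\log n)^3}$. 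The same reasoning as in part~(i) shows that on the intersection, $\Tabs \le t_0 + t^*$ and $K_j \subseteq X'_{\Tabs}$. A union bound yields probability at least $(r-1)/(5r) - e^{-(\log n)^3}$, and for $\ell$ sufficiently large with respect to~$r$ this exceeds $(r-1)/(6r)$, as required. The only ``obstacle'' is purely bookkeeping: choosing the constants $h^*$, $t^*$ so that Lemma~\ref{lem:clique-disc-cts-interface} applies and aligns with the discrete-step bound, and absorbing the negligible $e^{-(\log n)^3}$ loss into the constant factor in the denominator.
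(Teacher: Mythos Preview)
Your proposal is correct and follows the same approach as the paper: handle the trivial case $y_0=k$, then combine Corollary~\ref{cor:clique-absorb-fast-disc} with Lemma~\ref{lem:clique-disc-cts-interface} via a union bound, using $t^*=30c_r^2(\log n)^6$ for part~(i) and $t^*=10c_r^2k$ for part~(ii). The paper's own proof is a terse two-sentence version of exactly what you wrote; the only detail you omit is the observation that $y_0\in[k-1]$ forces $t_0>0$ (needed for the hypothesis of Lemma~\ref{lem:clique-disc-cts-interface}), which follows since $X'_0=\{x_0\}$ with $x_0$ in a reservoir.
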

\begin{proof}
If $y_0=k$, then $\Tabs = t_0$ and the result follows immediately, so suppose instead that $y_0\in[k-1]$. 
This implies $t_0>0$.
Part~(\ref{it:clique-absorb-fast-cts-1}) now follows on applying a union bound to Corollary~\ref{cor:clique-absorb-fast-disc}(\ref{it:clique-absorb-fast-disc-1}) and Lemma~\ref{lem:clique-disc-cts-interface}, taking $t^* =  30c_r^2(\log n)^6$. Part~(\ref{it:clique-absorb-fast-cts-2}) likewise follows immediately by applying a union bound to Corollary~\ref{cor:clique-absorb-fast-disc}(\ref{it:clique-absorb-fast-disc-2}) and Lemma~\ref{lem:clique-disc-cts-interface}, taking $t^* = 10c_r^2k$.
\end{proof}

We combine Lemma~\ref{lem:clique-fill} and Corollary~\ref{cor:clique-absorb-fast-cts} to prove the following.

\begin{lemma}\label{lem:iteration-cliques}
\statelargeell Let $j \in [\ell]$. Let $I = (I^-, I^+]$ be a time interval with $30c_r^2(\log n)^{6} <\len{I} \le e^{(\log n)^2}$, and let $f$ be a possible value of $\filt_{I^-}(X')$ which implies that $|K_j \setminus X_{I^-}'| \le \Delta$. Then, conditioned on $\filt_{I^-}(X') = f$, with probability at least $1-e^{-\frac{1}{2}(\log n)^3}$ the following statements all hold.
\begin{enumerate}[(i)]
\item \label{it:iteration-cliques-1}For all $t \in I$, $|K_j \setminus X'_t| \le 2\Delta$.
\item \label{it:iteration-cliques-2}$|K_j \setminus X'_{I^+}| \le \Delta$.
\item \label{it:iteration-cliques-3}For all $t_0 \in [I^-, I^+ - 30c_r^2(\log n)^{6})$, there exists $t_1 \in [t_0, t_0 + 30c_r^2(\log n)^{6}]$ such that $K_j \subseteq X'_{t_1}$.
\end{enumerate}
\end{lemma}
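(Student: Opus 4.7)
The plan is to partition the time interval $I$ into epochs determined by whether $K_j$ is active or inactive, and apply the clique lemmas already established (Lemma~\ref{lem:clique-fill} and Corollary~\ref{cor:clique-absorb-fast-cts}) within each epoch. Since we begin with $|K_j \setminus X'_{I^-}| \le \Delta$, our initial state is either inactive-full or active with at least $k-\Delta$ mutants. Throughout the proof we will want to maintain the invariant that $K_j$ is never empty, so that every inactive state is the full state; this will hold automatically on the good event because $k \ge 2\Delta + 1$ for $\ell$ sufficiently large and the clique-fill lemma guarantees at least $k - 2\Delta$ mutants during any active epoch.

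The key combinatorial ingredient is a bound on the number of active epochs. Whenever $K_j$ is inactive-full, the only way it can become active is for some clock $\vnc{a_j}{v}$ with $v \in K_j$ to trigger (the megastar process forbids a mutant $a_j$ from spawning non-mutants, so non-mutants can enter $K_j$ only from $a_j$). The $k$ clocks $\{\vnc{a_j}{v} : v \in K_j\}$ have total rate~$1$, so the number $N$ of their triggers in $I$ is Poisson with parameter $\len{I} \le e^{(\log n)^2}$. Corollary~\ref{cor:pchernoff-3}, applied with $y = 8\len{I}$, gives $N \le 8e^{(\log n)^2}$ except with probability at most $e^{-8e^{(\log n)^2}}$, which is negligible compared to our target. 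Call this event $\calE_0$. Given $\calE_0$, there are at most $N + 1$ active epochs in $I$: at most one initial epoch (if $K_j$ is already active at $I^-$) starting from some $y_0 \in [k-\Delta, k-1]$, plus at most $N$ subsequent epochs each starting from $y_0 = k-1$ (since an epoch begins exactly when one non-mutant has just been spawned into the full clique).

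For each epoch, I apply the existing lemmas conditioned on the state at the epoch's start. For the initial epoch, Lemma~\ref{lem:clique-fill}(ii) and Corollary~\ref{cor:clique-absorb-fast-cts}(i) together guarantee, except with probability $6e^{-(\log n)^3}$, that $|K_j \setminus X'_t| \le 2\Delta$ throughout and that the epoch ends within $30c_r^2(\log n)^6$ time with $K_j$ full. For each subsequent epoch (starting from $y_0 = k-1$), Lemma~\ref{lem:clique-fill}(i) and Corollary~\ref{cor:clique-absorb-fast-cts}(i) give the strictly stronger conclusion $|K_j \setminus X'_t| \le \Delta$ throughout the epoch, and again the epoch closes within $30c_r^2(\log n)^6$ time. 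Union-bounding over the at most $8e^{(\log n)^2} + 1$ epochs yields a total failure probability of at most $6(8e^{(\log n)^2}+1)e^{-(\log n)^3} + e^{-8e^{(\log n)^2}}$, which is below $e^{-\frac{1}{2}(\log n)^3}$ once $\ell$ is large enough that $(\log n)^3$ dominates $(\log n)^2$ by a comfortable margin.

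Finally I verify that (i)--(iii) follow from these epoch-level guarantees. For (i), active epochs contribute at most $2\Delta$ non-mutants and inactive-full epochs contribute $0$. For (ii), note that since $\len{I} > 30c_r^2(\log n)^6$, the initial epoch (if any) ends strictly before $I^+$; thus at time $I^+$ we are either inactive-full (count $0$) or in a subsequent epoch (count $\le \Delta$). For (iii), given $t_0 \in [I^-, I^+ - 30c_r^2(\log n)^6)$, if $K_j$ is inactive-full at $t_0$ take $t_1 = t_0$; otherwise $t_0$ lies in some active epoch of length at most $30c_r^2(\log n)^6$, so that epoch ends at some $t_1 \in (t_0, t_0 + 30c_r^2(\log n)^6]$ with $K_j$ full, and $t_1 \le I^+$ by the choice of $t_0$. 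The main obstacle is bookkeeping: making sure the bound on $N$ is strong enough for the union bound and identifying precisely which of (i) versus (ii) applies in each epoch, since the initial epoch tolerates up to $2\Delta$ non-mutants while the count at $I^+$ must be at most $\Delta$ --- the resolution is that the initial epoch is too short to reach $I^+$.
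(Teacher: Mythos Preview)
Your approach is correct and essentially matches the paper's: both partition $I$ into alternating active/inactive epochs, bound the number of epochs via triggers of clocks with source $a_j$, apply Lemma~\ref{lem:clique-fill} and Corollary~\ref{cor:clique-absorb-fast-cts} within each epoch, and union-bound. One small point of difference: the paper counts \emph{all} clocks with source $a_j$ (total rate $1+r$), which makes the epoch-count bound unconditional; your count of only the non-mutant clocks $\vnc{a_j}{v}$ is valid only once you know each epoch ends full, so the statement ``given $\calE_0$, there are at most $N+1$ active epochs'' should be folded into the inductive conditioning on the per-epoch events rather than asserted from $\calE_0$ alone.
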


\begin{proof}
We start with the following mutually recursive definitions.
Let $\Tinact{}{-1}= I^-$.
Then, for $h\in \Zzero$,
\begin{align*}
\Tact{}{h} &=  \min \{ t\geq \Tinact{}{h-1}\mid
\mbox{$K_j$ is active at $t$  or $t=I^+$} \},  \\
\Tinact{}{h} &= \min \{t\geq \Tact{}{h}\mid 
\mbox{$K_j$ is inactive at $t$  or $t=I^+$} \}.
\end{align*}
These definitions of $\Tinact{}{h}$ and $\Tact{}{h}$ are local to this proof.
The subscript ``$\mathsf a$'' stands for ``active'' and the subscript ``$\mathsf{in}$'' stands for ``inactive''.
The notation must not be confused with the global variable $\Tabs$.
Let $\xi = \ceil{4r\len{I}}$ and let $\kappa =30c_r^2(\log n)^{6}$.   

We define the following events.
\begin{itemize}
\item Let $\mathcal{E}_1$ be the event that  $\Tact{}{ \xi} = I^+$. 
\item For $h\in \Zzero$, let $\mathcal{E}_2(h)$ be the event that $\Tinact{}{h} \le \Tact{}{h} +   \kappa$.
\item The event $\filt_{I^-}(X') = f$
determines whether $K_j$ is active at time $I^-$. 
\begin{itemize}
\item If so, let  $\mathcal{E}_3(0)$ be the event that,   for all $t \in [\Tact{}{0}, \Tinact{}{0}]$, $|K_j \setminus X'_t| \le 2 \Delta$.
\item If not, let $\mathcal{E}_3(0)$ be the event that,   for all $t \in [\Tact{}{0}, \Tinact{}{0}]$, $|K_j \setminus X'_t| \le \Delta$.
\end{itemize}
\item For $h\in \Zone$, let $\mathcal{E}_3(h)$ be the event that,   for all $t \in [\Tact{}{h}, \Tinact{}{h}]$, $|K_j \setminus X'_t| \le \Delta$.  
\item For $h\in {\mathbb Z}_{\geq -1}$, 
  let $\mathcal{E}_4(h)$ be the event that 
for all $t\in [\Tinact{}{h},\Tact{}{h+1})$, 
$K_j \subseteq X'_t$.
\end{itemize}

Note that
for all $h\geq 0$, $\mathcal{E}_3(h)$ implies $\mathcal{E}_4(h)$.
This is easy to see as long as $K_j$ is inactive at $\Tinact{}{h}$. In this case,
$\mathcal{E}_3(h)$ implies that $K_j \subseteq X'_{\Tinact{}{h}}$ -- since the number
of non-mutants is at most~$2\Delta$, but it is either~$0$ or~$k$, it must be~$0$.
On the other hand, if $K_j$ is active at $\Tinact{}{h}$ then $\Tinact{}{h}=I^+$
so $\Tact{}{h+1}=I^+$ so the interval in $\mathcal{E}_4(h)$ is empty.

We next observe that $\filt_{I^-}(X')=f$ implies that
$\mathcal{E}_4(-1)$ occurs.
 From the statement of the lemma,
 $\filt_{I^-}(X')=f$ implies that  $|K_j \setminus X_{I^-}'| \le \Delta$.
 If $K_j$ is inactive at $I^-$ then this implies that 
 $K_j \subseteq X'_{I^-}$, which implies
 $\mathcal{E}_4(-1)$.
 If instead $K_j$ is active at $I^-$ then the interval in $\mathcal{E}_4(-1)$ is empty
 so $\mathcal{E}_4(-1)$ occurs vacuously. 
 
For any integer~$q$, let
$\mathcal{E}_2^q = \bigcap^{q}_{h=0} \mathcal{E}_2(h)$,
$\mathcal{E}_3^q = \bigcap^{q}_{h=0} \mathcal{E}_3(h)$, and
$\mathcal{E}_4 ^q= \bigcap^{q}_{h=-1} \mathcal{E}_4(h)$. Let $\mathcal{E}_2 = \mathcal{E}_2^{\xi}$,
$\mathcal{E}_3 = \mathcal{E}_3^{\xi}$ and
$\mathcal{E}_4 = \mathcal{E}_4^{\xi}$.

We first show that if $\filt_{I^-}(X')=f$   and
$\mathcal{E}_1$, $\mathcal{E}_2$ and $\mathcal{E}_3$   all
occur, then statements (i), (ii) and (iii) hold. 
As we have just observed,  
$\filt_{I^-}(X')=f$   and  $\mathcal{E}_3$ imply
that $\mathcal{E}_4$ also occurs.
Then $\mathcal{E}_3$, $\mathcal{E}_4$ and $\mathcal{E}_1$ imply  (i).
They also imply (ii) except in the case 
where $K_j$ is active at $I^-$ and remains active for  
all of~$I$. This case is ruled out by $\mathcal{E}_2(0)$ since
$\len{I} > \kappa$.
We now turn to statement~(iii).  Consider any  $t_0 \in [I^-, I^+-\kappa)$.   
Suppose first that $K_j$ is inactive at time~$t_0$.
Since (i) holds, $K_j\subseteq X'_{t_0}$, so it suffices to take $t_1=t_0$.
Suppose instead that $K_j$ is active at time~$t_0$. 
Then, for some $h\geq 0$, $t_0 \in [\Tact{}{h},\Tinact{}{h}]$.
By $\mathcal{E}_1$, $h\leq \xi$. By $\mathcal{E}_2(h)$, we may assume $\Tinact{}{h} \leq t_0 + \kappa$
so we can choose $t_1 = \Tinact{}{h}$.
Since $t_0+\kappa < I^+$,  $t_1 < I^+$
so  $t_1 \in [\Tinact{}{h},\Tact{}{h+1})$ and
$\mathcal{E}_4(h)$ guarantees that $K_j \subseteq X'_{t_1}$.

During the remainder of the proof, we will show that
\begin{equation}\label{eqn:iteration-clique-events}
\Pr(\mathcal{E}_1 \cap \mathcal{E}_2 \cap \mathcal{E}_3  \mid \filt_{I^-}(X')=f) \ge 1-e^{-\frac{1}{2}(\log n)^3}.
\end{equation}

It is clear that $\mathcal{E}_1$ occurs if clocks with source~$a_j$
trigger (in total) fewer than $\xi$ times in~$I$. These
clocks have  total rate $1+r \le 2r$, so by Corollary~\ref{cor:pchernoff}  it follows that
\begin{equation}\label{eqn:iteration-clique-E1}
\Pr(\mathcal{E}_1 \mid \filt_{I^-}(X') = f) \ge 1 -  e^{-(1+r)\len{I}/3} \ge 1 - e^{-(\log  n)^3}.
\end{equation}

Now consider any $h\in \{0,\ldots,\xi\}$  and
any $t_h\in [I^-,I^+]$.
If the events 
$\Tact{}{h}=t_h$,
$\filt_{I^-}(X')=f$, $\mathcal{E}_2^{h-1}$,
$\mathcal{E}_3^{h-1}$ 
and $\mathcal{E}_4^{h-1}$
are consistent
then let $f_h$ be any value of $\filt_{t_h}(X')$
 such that $\filt_{t_h}(X')= f_h$
  implies all of these events.
   Suppose that  $\filt_{t_h}(X')= f_h$
and consider how many non-mutants $K_j$ can have at time~$t_h$.
\begin{description}
\item[Case 1.] If $h=0$ and $K_j$ is active at~$I^-$ then $t_h=I^-$ and  it follows from 
the assumption in the statement of the lemma that $K_j$ has at most $\Delta$ non-mutants at time~$t_h$.
\item[Case 2.] Otherwise, if  $\Tinact{}{h-1} < I^+$, then
$[\Tinact{}{h-1},\Tact{}{h})$ is non-empty,  so it follows from $\mathcal{E}_4(h-1)$
that $K_j$ has at most one non-mutant at time~$t_h$.
\item[Case 3.] Otherwise,   $\Tinact{}{h-1} = t_h = I^+$  so it follows from $\mathcal{E}_3(h-1)$ 
that $K_j$ has at most $\Delta$ non-mutants at $\Tinact{}{h-1} = t_h$.
\end{description}
 In any of these three cases,
 Corollary~\ref{cor:clique-absorb-fast-cts}(\ref{it:clique-absorb-fast-cts-1})
implies that 
 \begin{equation}
 \label{eq:tuesA} 
\Pr(\mathcal{E}_2(h) \mid \filt_{t_h}(X') = f_h) \geq 1-4 e^{-{(\log n)}^3}.
\end{equation}
 In Case~1, Lemma~\ref{lem:clique-fill}(ii) 
implies that
\begin{equation}
\label{eq:tuesB}
\Pr(\mathcal{E}_3(h)  \mid \filt_{t_h}(X') = f_h) \geq 1- 2e^{-{(\log n)}^3}.
\end{equation} 
 In Case~2, Lemma~\ref{lem:clique-fill}(i)
implies that 
\begin{equation*}
\Pr(\mathcal{E}_3(h)  \mid \filt_{t_h}(X') = f_h) \geq 1- e^{-{(\log n)}^3}.
\end{equation*}
In Case~3, $\Tinact{}{h-1}= \Tact{}{h} = \Tinact{}{h}$ so $\mathcal{E}_3(h)$
is implied by $\mathcal{E}_3(h-1)$ hence
\begin{equation*}
\Pr(\mathcal{E}_3(h)  \mid \filt_{t_h}(X') = f_h)  =1.
\end{equation*} 
 
Equation~\eqref{eq:tuesB} gives the worst bound of the three cases.
Combining this with  \eqref{eq:tuesA} 
using a union bound, we have
$$\Pr(\overline{\mathcal {E}_2(h)} \cup \overline{\mathcal{E}_3(h)} \mid  \filt_{I^-}(X')=f, \mathcal{E}_2^{h-1},
\mathcal{E}_3^{h-1} ) \leq  6 e^{-{(\log n)}^3},$$
so 
$$\Pr(\mathcal{E}_2 \cap \mathcal{E}_3 \mid  \filt_{I^-}(X')=f) \geq 1 - (1+\xi)
6 e^{-{(\log n)}^3},$$
which, together with \eqref{eqn:iteration-clique-E1} gives  \eqref{eqn:iteration-clique-events}.
\end{proof}

The final result of Section~\ref{sec:cliques}
  shows that if an active clique is almost full of mutants, then with high probability it spawns no non-mutants at all onto $v^*$ before becoming inactive, and with even higher probability it doesn't spawn too many non-mutants onto $v^*$ before becoming inactive.

\begin{lemma}\label{lem:clique-nm-spawns}
\statelargeell Let $j \in [\ell]$ and $t_0\geq0$. Let $f$ be a possible value of $\filt_{t_0}(X')$ which implies that $k-\Delta \le |X'_{t_0} \cap K_j| \le k-1$. Let $S$ be the total number of non-mutants spawned onto $v^*$ by vertices in $K_j$ within $(t_0, \Tabs]$. Then the following statements hold.
\begin{enumerate}[(i)]
\item \label{it:clique-nm-spawns-1}$\Pr(S=0\mid \filt_{t_0}(X') = f)\geq 1-(\log n)^{10}/k$.
\item \label{it:clique-nm-spawns-2}$\Pr(S\leq (\log n)^{3}\mid \filt_{t_0}(X') = f)\geq1 - 7e^{-(\log n)^3}$.
\end{enumerate}
\end{lemma}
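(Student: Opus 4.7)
The plan is to reduce the statement to a Poisson tail bound by isolating the clocks responsible for non-mutant spawns from $K_j$ onto $v^*$ and bounding the total ``non-mutant time'' over which they can act. First, I would combine Corollary~\ref{cor:clique-absorb-fast-cts}(\ref{it:clique-absorb-fast-cts-1}) and Lemma~\ref{lem:clique-fill}(\ref{it:clique-fill-3}) via a union bound to conclude that, conditionally on $\filt_{t_0}(X') = f$, the event $\mathcal{G}$ that $\Tabs \le t_0 + 30c_r^2(\log n)^6$ and that $|K_j \setminus X'_t| \le 2\Delta$ for every $t \in [t_0, \Tabs]$ holds with probability at least $1 - 6e^{-(\log n)^3}$. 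On $\mathcal{G}$, the total non-mutant time
\[
T \;=\; \int_{t_0}^{\Tabs} |K_j \setminus X'_s|\,\mathrm{d}s
\]
is at most $60 c_r^2 \Delta (\log n)^6 \le 60 c_r^3 (\log n)^9$.

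Next, I would use the star-clock coupling of Section~\ref{sec:coupled} to identify $S$ as a conditionally Poisson count. For each $u \in K_j$, the clock $\nc{(u,v^*)}$ has target $v^*$, and inspection of Definition~\ref{def:megastar-process} shows that no clock with target $v^*$ ever changes the state of $K_j$ in $X'$. Hence $X'_t \cap K_j$ for $t \in (t_0,\Tabs]$---and in particular $\Tabs$ itself, the event $\mathcal{G}$, and the non-mutant local times $T_u$ of each $u \in K_j$---is measurable with respect to a family of clocks disjoint from $\{\nc{(u,v^*)} : u \in K_j\}$. Equivalently, using Observation~\ref{obs:one}, the non-mutant spawns counted by $S$ correspond exactly to triggers of the independent star-clocks $\snc{(u,v^*)}$ during their local non-mutant intervals of length $T_u$. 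Consequently, conditional on the within-$K_j$ trajectory, $S$ is a sum of independent Poisson variables with total parameter $T/k$.

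Given this, part (i) follows from Markov's inequality: on $\mathcal{G}$ we have $\Pr(S \ge 1 \mid \text{trajectory}) \le T/k \le 60 c_r^3 (\log n)^9/k$, so
\[
\Pr(S \ge 1) \;\le\; 6e^{-(\log n)^3} + 60 c_r^3 (\log n)^9 / k,
\]
which is comfortably below $(\log n)^{10}/k$ once $\ell$ is large enough, since $k = \Theta((\log n)^{23})$ dominates both error terms. For part (ii), on $\mathcal{G}$ the Poisson parameter $T/k$ satisfies $8T/k = O((\log n)^{-14}) \ll (\log n)^3$, so Corollary~\ref{cor:pchernoff-3} applied with $y = (\log n)^3$ yields $\Pr(S > (\log n)^3 \mid \mathcal{G}) \le e^{-(\log n)^3}$; adding $\Pr(\overline{\mathcal{G}}) \le 6e^{-(\log n)^3}$ gives the claimed $7e^{-(\log n)^3}$ bound.

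The main technical obstacle will be making the measurability/independence claim in the second paragraph fully rigorous: one must argue inside $\Psi(X')$ that the entire within-$K_j$ evolution (including the random stopping time $\Tabs$ and the random non-mutant intervals of each $u \in K_j$) is determined by star-clocks disjoint from $\{\snc{(u,v^*)} : u \in K_j\}$, so that these remaining star-clocks still behave like independent rate-$1/k$ Poisson processes on the corresponding random local intervals. Once this independence is cleanly set up, both tail estimates are immediate Poisson calculations.
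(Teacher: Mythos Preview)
Your proposal is correct and follows essentially the same line as the paper's proof. Both arguments (a) combine Corollary~\ref{cor:clique-absorb-fast-cts}(\ref{it:clique-absorb-fast-cts-1}) and Lemma~\ref{lem:clique-fill}(\ref{it:clique-fill-3}) to get the good event $\mathcal{G}$ with failure probability $6e^{-(\log n)^3}$, (b) observe that the within-$K_j$ trajectory on $[t_0,\Tabs]$ is determined by clocks with source and target in $K_j\cup\{a_j\}$, hence independent of the clocks $\nc{(u,v^*)}$ producing the spawns counted by $S$, (c) conclude that $S$ is conditionally Poisson with parameter at most $60c_r^3(\log n)^9/k$, and (d) read off both bounds.

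The only notable difference is how the independence in step~(b) is packaged. The paper conditions on the random variable $\Phi$ recording all triggers of the internal clocks $\mathcal{A}$ over $(t_0,\Tabs]$, decomposes $S=\sum_{h=1}^y S_h$ along the jump times $t_1,\dots,t_y$, and shows by memorylessness that each $S_h$ is dominated by an independent Poisson with parameter $2\Delta(t_h-t_{h-1})/k$; summing recovers your parameter $T/k$. You instead invoke the star-clock coupling $\Psi(X')$ and Observation~\ref{obs:one} to match $S$ directly with triggers of the $\snc{(u,v^*)}$ in their local non-mutant intervals. Your route is slightly more conceptual but, as you rightly flag, requires care that the starting local times $\iout{X'}{u}{t_0}$ are already fixed by $\filt_{t_0}(\Psi(X'))$ and that the remaining $\snc{(u,v^*)}$-process is untouched by the conditioning; the paper sidesteps this by working with the original clocks and step-by-step memorylessness. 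Either packaging yields the same Poisson bound and the same final estimates.
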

\begin{proof}
Let $\mathcal{E}_1$ be the event that for all $t \in [t_0, \Tabs]$, $|X'_t \cap K_j| \ge k-2\Delta$, and let $\mathcal{E}_2$ be the event that $\Tabs \le t_0 +  30c_r^2(\log n)^{6}$. 
Let $\mathcal{A}$ be the set of all clocks which have both source and target in 
$K_j \cup \{a_j\}$. Let $\Phi$ contain, for each clock $C \in \mathcal{A}$, a list of the times at which $C$ triggers in $(t_0, \Tabs]$. Note that by 
the definition of the megastar process
(Definition~\ref{def:megastar-process}), for all $t \in [t_0, \Tabs]$, $a_j \notin X'_t$. It follows that for all $t \in [t_0, \Tabs]$, $\Phi$ and $\filt_{t_0}(X')$ together uniquely determine $K_j \cap X'_t$. They therefore determine 
$\Ti{h}$ and $\Y(h)$ for all $h\geq 0$ and hence they determine whether
$\mathcal{E}_1$ and $\mathcal{E}_2$ occur. 

Let $\chi_0 = K_j \cap X_{t_0}'$. Consider any integer $y \ge 0$, any sets $\chi_1, \dots, \chi_y \subseteq K_j$ and any $t_1, \dots, t_y > t_0$. Suppose that $\varphi$ is a possible value of $\Phi$ such that $f$ and $\varphi$ together imply that $\mathcal{E}_1 \cap \mathcal{E}_2$ occurs, that $\Iabs = y$, and that for all $h\in\{0, \dots, y\}$, we have $\Ti{h} = t_h$ and $K_j \cap X_{t_h}' = \chi_h$. If $\filt_{t_0}(X') = f$ and $\Phi = \varphi$, then $\mathcal{E}_1 \cap \mathcal{E}_2$ occurs and so
\begin{align}\label{eqn:S_i}
|K_j \setminus \chi_h| &\le 2\Delta \textnormal{ for all }h\in \{0, \dots, y\},\\\label{eqn:t_x}
t_y &\le t_0 + 30c_r^2(\log n)^{6}.
\end{align}

For all $h \in \{1, \dots, y\}$, let 
\[S_h = |\{t \in (t_{h-1},t_h) \mid \vnc{u}{v^*} \textnormal{ triggers at time }t\textnormal{ for some }u \in K_j \setminus \chi_{h-1}\}|.\] 
We will show that, conditioning on $\Phi=\phi$ and $\filt_{t_0}(X')=f$,
$S_1, \dots, S_y$ are dominated above by independent Poisson variables $S_1', \dots, S_y'$, where $S_h'$ has parameter $\lambda_h = 2\Delta(t_h - t_{h-1})/k$. Indeed, consider any $h \in [y]$. Consider any integers $s_1, \dots, s_{h-1}$, and any possible value $f_{h-1}$ of $\filt_{t_{h-1}}(X')$ which is consistent with $\Phi = \varphi$ and which implies that $\filt_{t_0}(X') = f$ and $S_1 = s_1, \dots, S_{h-1} = s_{h-1}$.  

$S_h$ is independent of $\Phi$ by the definition of $\moranclocks(\megastar[\ell])$, since no clocks with target $v^*$ are contained in $\mathcal{A}$, $(t_{h-1},t_h)$ is a fixed time interval, and $\chi_{h-1}$ is a fixed set. Moreover, $S_h$ is independent of $\filt_{t_{h-1}}(X')$ by memorylessness. Thus, conditioned on $\filt_{t_{h-1}}(X') = f_{h-1}$ and $\Phi=\phi$, $S_h$ is simply a Poisson variable with parameter $(t_h-t_{h-1})(k-|\chi_{h-1}|)/k$, which is at most $\lambda_h$ by \eqref{eqn:S_i}. It therefore follows that for all $a \ge 0$,
\[\Pr(S_h \le a \mid \filt_{t_{h-1}}(X') = f_{h-1}, \Phi = \varphi) \ge \Pr(S_h' \le a),\]
and hence
\[\Pr(S_h \le a \mid \filt_{t_0}(X') = f, \Phi = \varphi, S_1 = s_1, \dots, S_{h-1} = s_{h-1}) \ge \Pr(S_h' \le a).\]
Thus conditioned on $\filt_{t_0}(X') = f$ and $\Phi = \varphi$, $S_1, \dots, S_y$ are dominated above by $S_1', \dots, S_y'$ as claimed.

Note that with probability $1$, no non-mutants are spawned onto $v^*$ at times $t_1, \dots, t_y$, and so $S = S_1 + \dots + S_y$. It follows from \eqref{eqn:t_x} that conditioned on $\filt_{t_0}(X') = f$ and $\Phi = \varphi$, $S$ is dominated above by a Poisson variable $S'$ with parameter 
\[\lambda_1 + \dots + \lambda_y = \frac{2\Delta}{k}\sum_{h=1}^y (t_h - t_{h-1}) = \frac{2\Delta}{k}(t_y-t_0) \le \frac{60c_r^3(\log n)^{9}}{k}.\]
By a union bound applied to Lemma~\ref{lem:clique-fill}(\ref{it:clique-fill-3}) and Corollary~\ref{cor:clique-absorb-fast-cts}(\ref{it:clique-absorb-fast-cts-1}), $\Pr(\mathcal{E}_1 \cap \mathcal{E}_2 \mid \filt_{t_0}(X')=f) \ge 1 - 6e^{-(\log n)^3}$. Thus for all $a \ge 0$ we have
\begin{align}\label{eqn:W-W'-2}
\Pr(S \le a \mid \filt_{t_0}(X') = f) &\ge \Pr(S \le a \mid \filt_{t_0}(X') = f, \mathcal{E}_1 \cap \mathcal{E}_2) - \Pr(\overline{\mathcal{E}_1 \cap \mathcal{E}_2} \mid \filt_{t_0}(X')=f)\nonumber\\
& \ge \Pr(S' \le a) - 6e^{-(\log n)^3}.
\end{align}

It is immediate from \eqref{eqn:W-W'-2} using~\eqref{eq:ebounds} that 
\begin{align*}
\Pr(S = 0 \mid \filt_{t_0}(X') = f) &\ge e^{-60c_r^3(\log n)^{9}/k} - 6e^{-(\log n)^3} \ge 1 - \frac{(\log n)^{10}}{k},
\end{align*}
and so part~(\ref{it:clique-nm-spawns-1}) of the result holds. Moreover, by \eqref{eqn:W-W'-2} combined with Corollary~\ref{cor:pchernoff-3}, we have
\[\Pr(S \le (\log n)^{3} \mid \filt_{t_0}(X') = f) \ge \Pr(S' \le (\log n)^{3}) - 6e^{-(\log n)^3} \ge 1 - 7e^{-(\log n)^3},\]
and so part~(\ref{it:clique-nm-spawns-2}) of the result holds.
\end{proof}

\subsection{Filling cliques}\label{sec:fill-cliques}

Recall from Definition~\ref{def:megastar-process} that $X'_0$ is the set containing a single initial mutant, and write $X_0' = \{x_0\}$. Because of the
megastar's symmetry, without loss of
generality we may assume that $x_0\in R_1\cup K_1 \cup \{a_1, v^*\}$. In Section~\ref{sec:fill-cliques}, we will further restrict our attention to the case where $x_0$ is in a reservoir, i.e., $x_0\in R_1$.

\subsubsection{The first clique fills with mutants}\label{sec:first-clique}

In Section~\ref{sec:first-clique}, we will show that if the initial mutant of $X'$ lies in the reservoir $R_1$, then with high probability $K_1$ is almost full of mutants at time $n$ (see Lemma~\ref{lem:fill-first-clique}). We first prove an ancillary lemma.

\begin{lemma}\label{lem:into-clique}
\statelargeell Suppose $X_0' \subseteq R_1$, and write $X_0' = \{x_0\}$. Let $\mathcal{E}_1$ be the event that $\vnc{v^*}{x_0}$ does not trigger in $[0, 17m(\log n)^2]$. Let $t_0 \in [0, 17m((\log n)^2 - 1)]$, and let $f$ be a possible value of $\filt_{t_0}(X')$ which is consistent with $\mathcal{E}_1$. Then
\[\Pr(\textnormal{there exists } t_1 \in [t_0, t_0+17m] \textnormal{ such that }K_1 \subseteq X_{t_1}' \mid \mathcal{E}_1, \filt_{t_0}(X')=f) \ge \frac{r-1}{
12r}.\]
\end{lemma}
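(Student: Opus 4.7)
The plan is a two-stage strategy: first show that $K_1$ is non-empty within $O(m)$ time with at least constant probability, then apply Corollary~\ref{cor:clique-absorb-fast-cts} to conclude $K_1$ fills. The key preliminary observation is that $\mathcal{E}_1$ guarantees $x_0 \in X'_t$ for all $t \in [0, 17m(\log n)^2]$, since the only in-neighbour of $x_0$ in $\megastar[\ell]$ is $v^*$ and the only clock in $\moranclocks(\megastar[\ell])$ that can delete $x_0$ from $X'$ is $\vnc{v^*}{x_0}$. If $K_1 \cap X'_{t_0} \ne \emptyset$ already, then Corollary~\ref{cor:clique-absorb-fast-cts} applied at $t_0$ (part~(i) or~(ii) depending on whether $|K_1 \cap X'_{t_0}| \ge k-\Delta$) yields that $K_1$ fills within time $\max\{30c_r^2(\log n)^6,\ 10c_r^2 k\} \le 17m$ with probability at least $(r-1)/(6r) \ge (r-1)/(12r)$, using $k = \ceil{(\log \ell)^{23}}$, $m = \ell$, and $\ell$ sufficiently large relative to $r$.

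In the remaining case $K_1 \cap X'_{t_0} = \emptyset$, I would define the stopping time $T_1 = \min\{t \ge t_0 : K_1 \cap X'_t \ne \emptyset\}$ and dominate the $(a_1, K_1)$ sub-process (while $K_1$ is still empty) from above in hitting-time distribution by a three-state continuous-time Markov chain $W$ on $\{0,1,2\}$: state~$0$ encodes $K_1 = \emptyset$ and $a_1 \notin X'$, state~$1$ encodes $K_1 = \emptyset$ and $a_1 \in X'$, and state~$2$ (absorbing) encodes $K_1 \cap X' \ne \emptyset$. The transition rates of $W$ are: $0 \to 1$ at rate $r$, contributed by $\vmc{x_0}{a_1}$ alone (which has rate $r/d^+(x_0) = r$ and takes effect via rule~(iii) of Definition~\ref{def:megastar-process} since $K_1$ is empty); $1 \to 0$ at rate $m$, the worst-case total rate $|R_1 \setminus X'_t| \le m$ of non-mutants in $R_1$ overwriting $a_1$; and $1 \to 2$ at rate $r$, the total rate of the mutant clocks $\vmc{a_1}{v}$ for $v \in K_1$ (each of rate $r/k$). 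A short first-step calculation gives $\E[T^W_0] = (m+2r)/r^2$ for the hitting time of state~$2$ from state~$0$, so Markov's inequality yields $\Pr(T_1 \le t_0 + 3m \mid \mathcal{E}_1, \filt_{t_0}(X') = f) \ge 1/2$ for $\ell$ large enough relative to $r$, since $2\E[T^W_0] \le 2(m+2r) \le 3m$ once $m \ge 4r$.

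Conditioning on $\filt_{T_1}(X')$ and applying Corollary~\ref{cor:clique-absorb-fast-cts}(ii) at $T_1$ (with starting state $y_0 = 1$) then gives that $K_1$ fills within an additional $10c_r^2 k$ time with probability at least $(r-1)/(6r)$. Since $3m + 10c_r^2 k \le 17m$ for $\ell$ sufficiently large (as $k = o(m)$), multiplying the two stages yields the claimed bound $\tfrac{1}{2} \cdot \tfrac{r-1}{6r} = \tfrac{r-1}{12r}$. The main technical obstacle will be justifying the stochastic domination by $W$ cleanly in the presence of the conditioning on $\mathcal{E}_1 \cap \{\filt_{t_0}(X') = f\}$; fortunately $\mathcal{E}_1$ constrains only the single clock $\vnc{v^*}{x_0}$, which is independent of every clock driving the $(a_1, K_1)$ sub-process, so by memorylessness and independence of distinct clocks in $\moranclocks(\megastar[\ell])$ the required rate bounds hold uniformly over the unconditioned evolution of the rest of the graph.
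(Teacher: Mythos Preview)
Your approach is correct and reaches the same $(r-1)/(12r)$ bound via the same two-stage structure---first get a mutant into $K_1$ with probability $\geq 1/2$, then apply Corollary~\ref{cor:clique-absorb-fast-cts}---but the first stage is done differently. The paper does not use a CTMC domination; instead it partitions $(t_0,t_0+16m]$ into $8m$ sub-intervals of length~$2$ and in each defines an explicit event ($\vmc{x_0}{a_1}$ triggers, then some $\vmc{a_1}{v}$ triggers before any $\vnc{u}{a_1}$) depending only on fixed clocks in that fixed sub-interval. These events each have probability $\geq 1/(8m)$ and are mutually independent and independent of $\mathcal{E}_1$ and $\filt_{t_0}(X')$ by construction, so the probability that at least one occurs is $\geq 1-(1-1/(8m))^{8m}\geq 1/2$. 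The paper then argues that any such event, together with $\mathcal{E}_1$, forces a mutant into $K_1$. (Your separate treatment of the case $K_1\cap X'_{t_0}\ne\emptyset$ is subsumed there: the paper's stopping time $T$ simply equals $t_0$ in that case.)

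Your three-state chain $W$ is more direct, and the rate bounds you identify ($0\to 1$ at rate $\geq r$ via $x_0$ alone, $1\to 0$ at rate $\leq m$, $1\to 2$ at rate exactly $r$) are correct under $\mathcal{E}_1$; the expected-hitting-time computation and Markov step are fine. The price is that the domination has to be established against a process whose projected rates are state-dependent and time-varying (since $|R_1\cap X'_t|$ evolves), and under conditioning on a future event. The paper's interval-splitting sidesteps this entirely for the first stage because its events are phrased purely in terms of clock triggers with no reference to~$X'$. Your final paragraph correctly identifies the obstacle and the fact that resolves it (independence of $\vnc{v^*}{x_0}$ from every clock driving the $(a_1,K_1)$ dynamics); the paper invokes exactly this independence when it applies Corollary~\ref{cor:clique-absorb-fast-cts} at the random time $T$ under the additional conditioning on $\mathcal{E}_1$, and you would need the analogous argument in both your domination step and your second stage.
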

\begin{proof} Let 
    \[\calC_{a_1} = \{\vnc{u}{a_1} \mid u \in R_1\} \cup \{\vmc{a_1}{v} \mid v \in K_1\}.\]
    For $h\in\{0, \dots, 8m-1\}$, let
    \begin{equation*}
        T_h = \min\{t > t_0+2h\mid \vmc{x_0}{a_1}
                                      \text{ triggers at } t \mbox{ or } t=t_0+2h+1\}.
    \end{equation*}
    Let $\calE_2(h)$ be the event that $\vmc{x_0}{a_1}$ triggers at time $T_h$.  We have
    \begin{equation}\label{eqn:into-clique-3}
    \Pr(\calE_2(h)) = 1 - e^{-r} >  \tfrac12.
    \end{equation}
    
    Let
    \begin{equation*}
        T'_h = \min
                  \{t > T_h\mid \text{a clock in }\calC_{a_1}
                                      \text{ triggers at } t \mbox{ or } t=T_h+1\}.
    \end{equation*}
    Let $\calE'_2(h)$ be the event that some mutant clock with source $a_1$ triggers at time $T'_h$.
    Note that $\calE'_2(h)$ is independent of $\calE_2(h)$. The probability that some clock in $\calC_{a_1}$ triggers in $(T_h, T_h+1]$ is $1-e^{-r-m}$, and the probability that the first clock in $\calC_{a_1}$ to trigger in $(T_h, \infty)$ is a mutant clock with source $a_1$ is $r/(r+m)$. Hence by \eqref{eqn:into-clique-3} and a union bound,
    \begin{equation}\label{eqn:into-clique-2}
        \Pr(\calE_2(h) \cap \calE'_2(h))
            \geq \frac{1}{2}\left(\frac{r}{r+m} - e^{-r-m}\right)
            \geq \frac{1}{4(r+m)}
            \geq \frac{1}{8m}.
    \end{equation}

	Note that for all $h$, the event $\calE_2(h)\cap \calE_2'(h)$ depends only on fixed clocks in
$\calC_{a_1}\cup \{\vmc{x_0}{a_1}\}$ 
over the fixed interval $(t_0+2h, t_0+2h+2]$. As such, the events $\calE_2(h) \cap \calE'_2(h)$ are independent from each other, from $\mathcal{E}_1$ and from $\filt_{t_0}(X')$. It follows from \eqref{eqn:into-clique-2} that
	\begin{align}\label{eqn:into-clique-1}
	\Pr\left(\bigcup_{h=0}^{8m-1}(\mathcal{E}_2(h) \cap \mathcal{E}'_2(h)) \,\Bigg|\, \mathcal{E}_1, \filt_{t_0}(X')=f \right) 
	&\ge 1 - \left(1 - \frac{1}{8m} \right)^{8m}
	\ge 1-e^{-1} \ge \frac{1} 
{2}.
	\end{align}

	Let $\mathcal{E}_2$ be the event that there exists $t \in (t_0, t_0+16m]$ such that $|K_1 \cap X_t'| \ge 1$. 	
	We now show that if $\mathcal{E}_1 \cap \mathcal{E}_2(h) \cap \mathcal{E}'_2(h)$ occurs for some $h \in \{0, \dots, 8m-1\}$, then $\calE_2$ occurs. Since $\calE_1$ occurs and $T_h \le 17m(\log n)^2$, we have $x_0 \in X_{T_h}'$. Since $\calE_2(h)$ occurs, by the definition of $X'$, either $a_1$ is a mutant at time $T_h$ or $K_1$ is active at time $T_h \le t_0+16m$ (and hence $\mathcal{E}_2$ occurs). If $a_1$ is a mutant at time $T_h$, then since $\calE'_2(h)$ occurs, $a_1$ spawns a mutant at time $T_h' \le t_0+16m$, and so $\calE_2$ occurs. Thus in all cases, if $\calE_1 \cap \calE_2(h) \cap \calE'_2(h)$ occurs then $\mathcal{E}_2$ occurs. Hence by \eqref{eqn:into-clique-1},
	\begin{equation}\label{eqn:into-clique-4}
	\Pr(\mathcal{E}_2 \mid \mathcal{E}_1, \filt_{t_0}(X')=f) \ge \Pr\left(\bigcup_{h=0}^{8m-1}(\mathcal{E}_2(h) \cap \mathcal{E}'_2(h)) \,\Bigg|\, \mathcal{E}_1, \filt_{t_0}(X')=f \right) \ge \frac{1} 
{2}.	
	\end{equation}

Let $$T = \min\{t \geq t_0 \mid \mbox{$t=t_0+16m$ or $|K_1 \cap X_t'| \ge 1$}\}.$$
Let $\mathcal{E}_3$ be the event that $\Tabsnoh{T,1} \le T+m$ and
$K_1$ is full of mutants at time $\Tabsnoh{T,1}$.
Let $\mathcal{A}$ be the set of all clocks which have both source and target in $K_1 \cup \{a_1\}$.
For every $t\geq 0$, let $\Phi_t$ be a random variable which contains, for each clock $C\in \mathcal{A}$,
a list of the times at which $C$ triggers in $(t, t+m]$.
Now consider any $t\geq t_0$. Let $f'$
be any possible value of $\filt_{t}(X')$ such that the following events are consistent ---
$\filt_{t_0}(X')=f$, $\filt_{t}(X')=f'$, $\mathcal{E}_2$, $T=t$, and $\mathcal{E}_1$.
Note that the first four of these events are determined by $\filt_{t}(X')=f'$.
Conditioned on $\filt_{t}(X')=f'$, $\mathcal{E}_1$ and $\Phi_t$
are independent.
Also, conditioned on $\filt_{t}(X')=f'$, $\mathcal{E}_3$ is determined by $\Phi_t$
(since the definition of the megastar process ensures that $a_1$ is a non-mutant throughout $[t, \Tabsnoh{t,1}]$),
so, conditioned on $\filt_{t}(X')=f'$, $\mathcal{E}_3$ is independent of~$\mathcal{E}_1$.
Now, applying
 	Corollary~\ref{cor:clique-absorb-fast-cts}, we have 
	\begin{equation*}
	\Pr(\mathcal{E}_3 \mid \mathcal{E}_1, \filt_{t}(X')=f') = \Pr(\mathcal{E}_3 \mid \filt_{t}(X')=f') \ge \frac{r-1}{6r}.
	\end{equation*}
Thus, 
\begin{equation}
\label{eq:inter}
 \Pr(\mathcal{E}_3 \mid \mathcal{E}_1, \filt_{t_0}(X')=f, \mathcal{E}_2)   \ge \frac{r-1}{6r}.
 \end{equation}
 
	It therefore follows from \eqref{eqn:into-clique-4} and \eqref{eq:inter} that
	\begin{align*}
	\Pr(\mathcal{E}_3 \mid \mathcal{E}_1, \filt_{t_0}(X')=f) 
	&\ge \Pr(\mathcal{E}_3 \mid \mathcal{E}_1 \cap \mathcal{E}_2, \filt_{t_0}(X')=f) \cdot \Pr(\mathcal{E}_2 \mid \mathcal{E}_1, \filt_{t_0}(X')=f)
	\ge \frac{r-1} 
{12r},
	\end{align*}
	and so the result follows.
\end{proof}

We are now able to prove Lemma~\ref{lem:fill-first-clique}.
\begin{lemma}
\label{lem:fill-first-clique}
	\statelargeell Suppose $X_0' \subseteq R_1$. Then with probability at least $1 - 
19	
	(\log n)^2/\ell$, $K_1$ contains at most $\Delta$ non-mutants at time $n$.
\end{lemma}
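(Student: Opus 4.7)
The plan is to combine Lemma~\ref{lem:into-clique}, applied iteratively along a partition of time, with Lemma~\ref{lem:iteration-cliques}(ii), via a strong-Markov-style argument at the first time $K_1$ becomes full of mutants. First, I would fix the event $\mathcal{E}_1$ from Lemma~\ref{lem:into-clique} that $\vnc{v^*}{x_0}$ does not trigger in $[0,17m(\log n)^2]$. Since this clock has rate $1/(\ell m)$, we have $\Pr(\mathcal{E}_1) = e^{-17(\log n)^2/\ell} \ge 1 - 17(\log n)^2/\ell$ by \eqref{eq:ebounds}. On $\mathcal{E}_1$ the initial mutant $x_0$ survives throughout $[0,17m(\log n)^2]$, which is exactly the hypothesis that Lemma~\ref{lem:into-clique} requires.

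Next, I would partition $[0,17m(\log n)^2]$ into $(\log n)^2$ consecutive subintervals $J_i = [17mi, 17m(i+1)]$ and apply Lemma~\ref{lem:into-clique} with $t_0 = 17mi$ to each. Conditional on $\mathcal{E}_1$ and on any filtration value at time $17mi$, the lemma guarantees that $K_1$ fills with mutants somewhere inside $J_i$ with probability at least $(r-1)/(12r)$. Iterating this conditional bound across the $(\log n)^2$ subintervals, the probability that $K_1$ fails to fill at any time before $17m(\log n)^2$ is at most $\bigl(1 - (r-1)/(12r)\bigr)^{(\log n)^2}$, which is super-polynomially small in $\log n$.

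Now let $T^* = \inf\{t \ge 0 \mid K_1 \subseteq X'_t\}$. On the intersection of $\mathcal{E}_1$ with the filling event, $T^* \le 17m(\log n)^2$. I would then invoke Lemma~\ref{lem:iteration-cliques}(ii) on the interval $I = (T^*, n]$: since $m \le \sqrt{n}$ and $\ell$ is large, $\len{I} \ge n/2 > 30c_r^2(\log n)^6$ and $\len{I} \le n \le e^{(\log n)^2}$, and the hypothesis $|K_1 \setminus X'_{T^*}| \le \Delta$ is trivial because $K_1 \subseteq X'_{T^*}$. For each deterministic value $t^* \le 17m(\log n)^2$ and each filtration value $f$ of $\filt_{t^*}(X')$ consistent with $T^* = t^*$, the lemma yields $\Pr\bigl(|K_1 \setminus X'_n| \le \Delta \,\bigm|\, \filt_{t^*}(X') = f\bigr) \ge 1 - e^{-(\log n)^3/2}$, and summing over $t^*$ and $f$ transfers the bound to the unconditional event $\{T^* \le 17m(\log n)^2\}$.

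A union bound over the three failure modes then gives the claim: the first contributes $17(\log n)^2/\ell$, while the other two are super-polynomially small in $\log n$ and, for $\ell$ large enough, together no larger than $2(\log n)^2/\ell$, yielding the target $19(\log n)^2/\ell$. The step requiring the most care is the strong-Markov-style application of Lemma~\ref{lem:iteration-cliques}(ii) at the random time $T^*$, since that lemma is stated for a deterministic interval $I$; one must decompose over the possible values of $T^*$ (and over consistent filtrations at those values) before summing, rather than directly conditioning on $T^*$. Everything else reduces to routine arithmetic.
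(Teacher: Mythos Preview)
Your proposal is correct and follows essentially the same approach as the paper's proof: the paper defines the same event $\mathcal{E}_1$, partitions $[0,17m(\log n)^2]$ into $\lfloor(\log n)^2\rfloor$ blocks of length $17m$, applies Lemma~\ref{lem:into-clique} iteratively to get a filling event with conditional failure probability at most $(1-(r-1)/(12r))^{\lfloor(\log n)^2\rfloor}$, and then applies Lemma~\ref{lem:iteration-cliques}(\ref{it:iteration-cliques-2}) on the interval from the first fill time to $n$, decomposing over values of that stopping time exactly as you describe. The only cosmetic difference is that the paper caps its stopping time at $17m(\log n)^2$ (defining $T=\min\{t\ge 0 \mid t=17m(\log n)^2 \text{ or } K_1\subseteq X'_t\}$), which avoids having to explicitly intersect with the filling event before decomposing.
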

\begin{proof}
	Let $\mathcal{E}_1$ be the event that $\vnc{v^*}{x_0}$ does not trigger in $[0, 17m(\log n)^2]$. For all $i\in\Zzero$, let $\mathcal{E}_2(i)$ be the event that there exists 
$t \in [17im, 17(i+1)m]$ 
such that $K_1 \subseteq X_t'$. Let $\mathcal{E}_2 = \mathcal{E}_2(0) \cup \dots \cup \mathcal{E}_2(\floor{(\log n)^2}-1)$. By Lemma~\ref{lem:into-clique}, for all integers $i$ with $0 \le i \le \floor{(\log n)^2}-1$ and all possible values $f_i$ of $\filt_{17im}(X')$ consistent with $\mathcal{E}_1$, we have
	\[\Pr(\mathcal{E}_2(i) \mid \mathcal{E}_1, \filt_{17im}(X')=f_i) \ge 
\frac{r-1}{12r}.\]
	Since $f_i$ determines $\mathcal{E}_2(0), \dots, \mathcal{E}_2(i-1)$, it follows that
	\begin{equation}\label{eqn:fill-first-clique-1}
	\Pr(\mathcal{E}_2 \mid \mathcal{E}_1) \ge 1 - \left(1 - \frac{r-1}{
12r} \right)^{\floor{(\log n)^2}}.
	\end{equation}
	Moreover,
by \eqref{eq:ebounds},
	\[\Pr(\mathcal{E}_1) = e^{-17(\log n)^2/\ell} \ge 1 - \frac{17(\log n)^2}{\ell}.\]
	It therefore follows by~\eqref{eqn:fill-first-clique-1} that
	\begin{equation}\label{eqn:fill-first-clique-2}
	\Pr(\mathcal{E}_2) \ge \Pr(\mathcal{E}_2 \mid \mathcal{E}_1)\,\Pr(\mathcal{E}_1) \ge 1 - \frac{18(\log n)^2}{\ell}.
	\end{equation}
	
Now let $\mathcal{E}_3$ be the event that $|K_1 \setminus X'_n| \le \Delta$. 
Let $$T = \min\{t \geq 0 \mid \mbox{$t =17m(\log n)^2$ or $K_1 \subseteq X'_t$} \}.$$
Consider any $t \in [0, 17m(\log n)^2]$ and any possible value $f'$ of $\filt_t(X')$ which
is consistent with $\mathcal{E}_2$ and $T=t$.
Then by Lemma~\ref{lem:iteration-cliques}(\ref{it:iteration-cliques-2}) applied to the interval $(t,n]$, with probability at least $1-e^{-\frac{1}{2}(\log n)^3}$ conditioned on $\filt_t(X')=f'$, $\mathcal{E}_3$ occurs. Thus, using \eqref{eqn:fill-first-clique-2}, we obtain
	\[\Pr(\mathcal{E}_3) \ge \Pr(\mathcal{E}_3 \mid \mathcal{E}_2)\, \Pr(\mathcal{E}_2) \ge 1 - 
\frac{19(\log n)^2}{\ell},\]
	as required.
\end{proof}

\subsubsection{The other cliques become almost full}\label{sec:other-cliques}

In Section~\ref{sec:other-cliques}, we will show that if the initial mutant of $X'$ lies in a reservoir, without loss of generality $R_1$, then with high probability $K_1, \dots, K_\ell$ are all almost full of mutants at time $n^8$ (see Lemma~\ref{lem:fill-all-cliques}). The following lemma will be the linchpin of the proof.
\begin{lemma}\label{lem:try-fill-other-clique}
	\statelargeell Let $t_0 \ge 0$, and let $j \in [\ell-1]$. Let $f$ be a possible value of $\filt_{t_0}(X')$ which implies that for all $j' \in [j]$, $K_{j'}$ contains at most $\Delta$ non-mutants at time $t_0$. Then
	\[\Pr(\textnormal{for all }j' \in [j], |K_{j'} \setminus X'_{t_0+20c_r^2 k}| \le \Delta \mid \filt_{t_0}(X')=f) \ge 1-e^{-(\log n)^2}.\]
	Moreover,
	\[\Pr(|K_{j+1} \setminus X'_{t_0+20c_r^2 k}| \le \Delta \mid \filt_{t_0}(X')=f) \ge 
1/n^6.\] \end{lemma}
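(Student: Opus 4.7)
The proof splits along the two bounds. For the first bound, the plan is to apply Lemma~\ref{lem:iteration-cliques}(\ref{it:iteration-cliques-2}) separately to each $K_{j'}$ with $j' \in [j]$, using the interval $I = (t_0, t_0+20c_r^2 k]$. I first verify the hypothesis $30c_r^2(\log n)^6 < 20c_r^2 k \le e^{(\log n)^2}$: this holds for $\ell$ large since $k = \lceil(\log \ell)^{23}\rceil$ and $\ell \ge \sqrt{n}/2$ imply $k \ge ((\log n)/3)^{23}$ eventually. Each application gives $|K_{j'}\setminus X'_{t_0+20c_r^2 k}|\le \Delta$ with probability at least $1 - e^{-\frac{1}{2}(\log n)^3}$, and a union bound over the at most $\ell - 1 \le \sqrt{n}$ choices of $j'$ yields the first claim, since $\sqrt{n}\,e^{-\frac{1}{2}(\log n)^3} \le e^{-(\log n)^2}$ for $\ell$ large.

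For the second bound, the plan is to construct a specific short chain of four mutant-clock triggers that plants a single mutant in $K_{j+1}$, and then use Corollary~\ref{cor:clique-absorb-fast-cts}(\ref{it:clique-absorb-fast-cts-2}) and Lemma~\ref{lem:iteration-cliques}(\ref{it:iteration-cliques-2}) to finish. Fix $u_1 \in K_1 \cap X'_{t_0}$ (possible since $|K_1 \setminus X'_{t_0}| \le \Delta < k$), fix arbitrary $v_2 \in R_{j+1}$ and $v_4 \in K_{j+1}$, set $\tau = 1/\sqrt{n}$, and let $\mathcal{E}$ be the event that within $J=[t_0, t_0+\tau]$ the four mutant clocks $\vmc{u_1}{v^*}$, $\vmc{v^*}{v_2}$, $\vmc{v_2}{a_{j+1}}$, $\vmc{a_{j+1}}{v_4}$ trigger in this order at times $\tau_1 < \tau_2 < \tau_3 < \tau_4$, together with the side conditions that no non-mutant clock targeting $u_1$ triggers in $[t_0,\tau_1]$, no non-mutant clock targeting $v^*$ triggers in $[\tau_1,\tau_4]$, no non-mutant clock from $R_{j+1}\setminus\{v_2\}$ targeting $a_{j+1}$ triggers in $[\tau_3,\tau_4]$, and no mutant clock from $a_{j+1}$ targeting $K_{j+1}\setminus\{v_4\}$ triggers in $(\tau_3,\tau_4)$. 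I will check inductively that these conditions force $u_1$ to remain a mutant up to $\tau_1$, $v^*$ to become and remain a mutant on $[\tau_1,\tau_4]$, $v_2$ to become a mutant at $\tau_2$ and remain so through $\tau_3$, $a_{j+1}$ to become a mutant at $\tau_3$ via rule~(iii) of Definition~\ref{def:megastar-process} (applicable because $K_{j+1}$ remains empty throughout $[t_0,\tau_3]$), and finally $v_4$ to become a mutant at $\tau_4$ via rule~(ii), so that $|K_{j+1}\cap X'_{\tau_4}|=1$.

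For the probability of $\mathcal{E}$: by independence of the Poisson clocks in $\moranclocks(\megastar[\ell])$, the ordered-trigger contribution is $(r/k)(r/(\ell m))(r)(r/k)\tau^4/4! = \Theta(r^4/(k^2 n^3))$. The disruptive rates are dominated by the rate at most $\ell \approx \sqrt{n}$ for non-mutants targeting $v^*$ and the rate at most $m - 1 \approx \sqrt{n}$ for non-mutants in $R_{j+1}$ targeting $a_{j+1}$, both of which, multiplied by $\tau$, are $O(1)$; all other disruptive contributions (for $u_1$, and for the $k-1$ other mutant clocks out of $a_{j+1}$) are $O(1)$ after multiplication by $\tau$ as well, so the ``no bad event'' factor is $e^{-O(1)} = \Omega(1)$. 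Hence $\Pr(\mathcal{E}\mid\filt_{t_0}(X')=f) = \Omega(1/(k^2 n^3))$, which exceeds $1/n^4$ for $\ell$ large. Given $\mathcal{E}$, applying Corollary~\ref{cor:clique-absorb-fast-cts}(\ref{it:clique-absorb-fast-cts-2}) at $\tau_4$ with $y_0=1$ gives probability at least $(r-1)/(6r)$ that $K_{j+1}$ fills at some time $t_1 \le \tau_4 + 10c_r^2 k \le t_0 + 10c_r^2 k + 1$; afterwards Lemma~\ref{lem:iteration-cliques}(\ref{it:iteration-cliques-2}) applied to $(t_1, t_0+20c_r^2 k]$ (of length at least $10c_r^2 k - 1 > 30c_r^2(\log n)^6$) gives $|K_{j+1}\setminus X'_{t_0+20c_r^2 k}| \le \Delta$ with probability $1 - e^{-\frac{1}{2}(\log n)^3}$. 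Multiplying yields the claimed $1/n^6$ bound.

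The main technical delicacy will be the careful specification of the disruptive-clock conditions in $\mathcal{E}$ and the inductive verification that they genuinely guarantee the intended state transitions; rule~(iii) of Definition~\ref{def:megastar-process} in particular requires that $K_{j+1}$ still be empty at $\tau_3$, which is why the ``no mutant clocks into $K_{j+1}$'' condition and the requirement that $a_{j+1}$ first become a mutant at $\tau_3$ both need to be built into $\mathcal{E}$.
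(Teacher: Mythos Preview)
Your first part (the union bound via Lemma~\ref{lem:iteration-cliques}(\ref{it:iteration-cliques-2})) matches the paper exactly and is fine.

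For the second part, your overall strategy---plant a mutant in $K_{j+1}$ via a chain of four mutant-clock triggers, then invoke Corollary~\ref{cor:clique-absorb-fast-cts} and Lemma~\ref{lem:iteration-cliques}---is the same as the paper's. However, there is a genuine gap: you assert that rule~(iii) applies at $\tau_3$ ``because $K_{j+1}$ remains empty throughout $[t_0,\tau_3]$'', but the hypothesis of the lemma places \emph{no} constraint on $K_{j+1}$ at time~$t_0$, and your event~$\mathcal{E}$ does not control the intra-clique clocks of $K_{j+1}$ at all. So $K_{j+1}$ may well be active at $\tau_3^-$, in which case rule~(iii) does not fire and $a_{j+1}$ does not become a mutant; and since your $\mathcal{E}$ does not freeze $K_{j+1}$, it could then (in principle) empty out before $\tau_4$, leaving $|K_{j+1}\cap X'_{\tau_4}|=0$. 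Your claimed conclusion $|K_{j+1}\cap X'_{\tau_4}|=1$ and your subsequent application of Corollary~\ref{cor:clique-absorb-fast-cts}(\ref{it:clique-absorb-fast-cts-2}) with $y_0=1$ both rely on this unjustified emptiness.

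The paper sidesteps the issue elegantly: rather than specifying individual clocks and side conditions in a short window, it first waits (via Corollary~\ref{cor:clique-absorb-fast-cts}(\ref{it:clique-absorb-fast-cts-1})) for $K_1$ to fill, and then asks that the \emph{next four triggers in all of $\moranclocks(\megastar[\ell])$} be exactly the desired four. This automatically freezes $K_{j+1}$, so its state at the third trigger equals its state at $T_1$; a brief case split (empty / full / active) then shows $|K_{j+1}\cap X'_{T_2}|\ge 1$ in every case. The cost is that each ``correct next trigger'' has probability only $\Theta(1/n)$, giving $\Pr(\mathcal{E}_2)\approx 1/n^5$ rather than your $\Omega(1/(k^2 n^3))$, but this is still comfortably above the target $1/n^6$. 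Your approach can be repaired---either add a freezing condition on $K_{j+1}$ to $\mathcal{E}$ (rate $O(k)$, so a harmless $e^{-O(k/\sqrt{n})}$ factor) and then do the same three-way case split, or simply argue $|K_{j+1}\cap X'_{\tau_4}|\ge 1$ rather than $=1$---but as written the verification step you promised would fail.
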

\begin{proof}
	The first part of the result follows immediately from Lemma~\ref{lem:iteration-cliques}(\ref{it:iteration-cliques-2}) and a union bound over all $j \in [j']$ by taking $I = (t_0, t_0+20c_r^2 k]$. 
	
	We now define some stopping times. Let
	\[T_1 = \min \{t \ge t_0 \mid K_1 \subseteq X'_t \mbox{ or } t=t_0 + 
30c_r^2 (\log n)^{6}\}.\]
	Let $T_2$ be the fourth time after $T_1$ at which a clock in $\moranclocks(\megastar[\ell])$ triggers. Let
	\[T_3 = \min  \{t \ge T_2 \mid K_{j+1} \subseteq X'_t \mbox{ or } t=T_2 + 10c_r^2k\}.\]
 	In addition, we define the following events.
	\begin{itemize}
		\item $\mathcal{E}_1$: $K_1 \subseteq X'_{T_1}$.
		\item $\mathcal{E}_2$: for some $v_1 \in K_1$, $v_2 \in R_{j+1}$ and $v_3 \in K_{j+1}$, the first four clocks in $\moranclocks(\megastar[\ell])$ to trigger in $(T_1, \infty)$ are $\vmc{v_1}{v^*}$, $\vmc{v^*}{v_2}$, $\vmc{v_2}{a_{j+1}}$ and $\vmc{a_{j+1}}{v_3}$, in that order.
		\item $\mathcal{E}_2'$: $T_2 \le T_1 + 1$. 
		\item $\mathcal{E}_3$: $K_{j+1} \subseteq X'_{T_3}$. 
		\item $\mathcal{E}_4$: $|K_{j+1} \setminus X'_{t_0+20c_r^2 k}| \le \Delta$.
	\end{itemize}
Our goal is to prove $\Pr(\mathcal{E}_4 \mid \filt_{t_0} = f) \geq 1/n^5$.

	By 
Corollary~\ref{cor:clique-absorb-fast-cts}(i), 
we have
	\begin{equation}\label{eqn:try-fill-other-clique-1}
	\Pr(\mathcal{E}_1 \mid \filt_{t_0}(X')=f) \ge 1-
4e^{-(\log n)^3}.	
	\end{equation}
	It is immediate that
	\begin{equation}\label{eqn:try-fill-other-clique-2}
	\Pr(\mathcal{E}_2 \mid \mathcal{E}_1, \filt_{t_0}(X')=f) = \frac{r}{n(1+r)} \cdot \frac{r}{\ell n(1+r)} \cdot \frac{r}{n(1+r)} \cdot \frac{r}{n(1+r)} \ge \frac{1}{n^5}.
	\end{equation}
For example, the first $r/(n(1+r))$ factor comes from the fact that 
the total rate of all clocks is $n(1+r)$ but the total rate of all mutant clocks with 
source in~$K_1$ and target~$v^*$
is $r$ since there are $k$ such clocks, each with rate $r/k$.

	Moreover, $\mathcal{E}_2'$ occurs if for all $h \in \{0,1,2,3\}$, at least one clock in $\moranclocks(\megastar[\ell])$ triggers in the interval $(T_2+h/4, T_2+(h+1)/4]$. Hence 
	\begin{equation}\label{eqn:try-fill-other-clique-3}
	\Pr(\mathcal{E}_2' \mid \mathcal{E}_1, \filt_{t_0}(X')=f) \ge 1 - 4e^{-(1+r)n/4}.
	\end{equation}
	It follows from \eqref{eqn:try-fill-other-clique-1}--\eqref{eqn:try-fill-other-clique-3} and a union bound that
	\begin{align}\label{eqn:fill-a-clique-1}
	\Pr(\mathcal{E}_1 \cap \mathcal{E}_2 \cap \mathcal{E}_2' \mid \filt_{t_0}(X')=f) &\ge \Pr(\mathcal{E}_1 \mid \filt_{t_0}(X')=f)\cdot \nonumber\\
	&\qquad \qquad  (\Pr(\mathcal{E}_2 \mid \mathcal{E}_1, \filt_{t_0}(X')=f) - \Pr(\overline{\mathcal{E}_2'} \mid \mathcal{E}_1, \filt_{t_0}(X')=f))\nonumber\\
	&\ge \big(1-
4 e^{-{(\log n)}^3}\big)
(1/n^5 
- 4e^{-(1+r)n/4}) \ge 1/(2n^5).
	\end{align}
	
	Now consider any $t_2 > t_0$ and any possible value $f_2$ of $\filt_{t_2}(X')$ which implies that $\filt_{t_0}(X')=f$, $T_2 = t_2$, and that $\mathcal{E}_1 \cap \mathcal{E}_2 \cap \mathcal{E}_2'$ occurs. 
Note that, if $\filt_{t_2}(X') = f_2$ then, since $\mathcal{E}_1 \cap \mathcal{E}_2 \cap \mathcal{E}_2'$ occurs, we must have $|K_{j+1} \cap X'_{t_2}| \ge 1$. It follows from Corollary~\ref{cor:clique-absorb-fast-cts} that
	\[\Pr(\mathcal{E}_3 \mid \mathcal{E}_1 \cap \mathcal{E}_2 \cap \mathcal{E}_2', \filt_{t_0}(X')=f) \ge \frac{r-1}{6r}.\]
	It therefore follows from \eqref{eqn:fill-a-clique-1} that
	\begin{equation}\label{eqn:fill-a-clique-2}
	\Pr(\mathcal{E}_1 \cap \mathcal{E}_2 \cap \mathcal{E}_2' \cap \mathcal{E}_3 \mid \filt_{t_0}(X') = f) 
	\ge \frac{r-1}{12rn^5}.
	\end{equation}
	
	Finally, consider any $t_3 \ge t_0$ and any possible value $f_3$ of $\filt_{t_3}(X')$ which implies that $\filt_{t_0}(X') = f$, that $T_3 = t_3$, and that $\mathcal{E}_1 \cap \mathcal{E}_2 \cap \mathcal{E}_2' \cap \mathcal{E}_3$ occurs. 
Since $\filt_{t_3}(X') = f_3$ implies that $\mathcal{E}_2'$ occurs, we have 
	\[t_3 \le t_0 +  
	30c_r^2(\log n)^{6} 
	+ 1 + 10c_r^2k\le t_0 + 20c_r^2k - (\log n)^{7}.\] 
If $\filt_{t_3}(X')=f_3$ then $\mathcal{E}_3$ occurs so $K_{j+1} \subseteq X'_{t_3}$, which 
obviously implies  $|K_{j+1} \setminus X'_{t_3}| \le \Delta$. 
It therefore follows from Lemma~\ref{lem:iteration-cliques}(\ref{it:iteration-cliques-2}) applied to $(t_3, t_0 + 20c_r^2 k]$ that
	\[\Pr(\mathcal{E}_4 \mid \mathcal{E}_1 \cap \mathcal{E}_2 \cap \mathcal{E}_2' \cap \mathcal{E}_3, \filt_{t_0}(X') = f) \ge 1 - e^{-\frac{1}{2}(\log n)^3},\]
	and therefore by \eqref{eqn:fill-a-clique-2},
	\[\Pr(\mathcal{E}_4 \mid \filt_{t_0}(X')=f) \ge \Pr(\mathcal{E}_1 \cap \mathcal{E}_2 \cap \mathcal{E}_2' \cap \mathcal{E}_3 \cap \mathcal{E}_4 \mid \filt_{t_0}(X')=f)\ge 1/n^6.\]
	The second part of the result therefore follows.
\end{proof}

We now apply Lemma~\ref{lem:try-fill-other-clique} repeatedly to prove the following.

\begin{lemma}\label{lem:fill-other-clique}
	\statelargeell Let $t_0 \ge 0$, and let $j\in [\ell-1]$. Let $f$ be a possible value of $\filt_{t_0}(X')$ which implies that for all $j' \in [j]$, $K_{j'}$ contains at most $\Delta$ non-mutants at time $t_0$. Then
	\[\Pr\big(\textnormal{for all }j' \in [j+1],\, |K_{j'} \setminus X'_{t_0+20c_r^2 n^7k}| \le \Delta \mid \filt_{t_0}(X')=f\big) \ge 1-n^8e^{-(\log n)^2}.\]
\end{lemma}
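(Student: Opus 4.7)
\smallskip
\noindent\textbf{Proof plan for Lemma~\ref{lem:fill-other-clique}.}
The plan is to iterate Lemma~\ref{lem:try-fill-other-clique} over a sequence of $N := n^7$ consecutive time intervals, each of length $20c_r^2 k$. Specifically, for $i \in [N]$, let $J_i = (t_0 + (i-1)\cdot 20c_r^2 k, t_0 + i\cdot 20c_r^2 k]$, and let $T^* = \min\{t \geq t_0 \mid |K_{j+1} \setminus X'_t| \leq \Delta\}$ (or $\infty$). The idea is to split into two phases: a \emph{filling phase} before $T^*$, during which we wait for $K_{j+1}$ to first become filled, and a \emph{preservation phase} after $T^*$, during which all of $K_1, \ldots, K_{j+1}$ need to remain filled.

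For the filling phase, I would argue inductively that conditioned on any filtration value at the start of $J_i$ consistent with $T^* > $ (start of $J_i$) and with $K_1, \ldots, K_j$ each having at most $\Delta$ non-mutants at that time, the second part of Lemma~\ref{lem:try-fill-other-clique} (applied with parameter $j$) gives probability at least $1/n^6$ that $T^*$ falls within $J_i$, while the first part (again with parameter $j$) gives probability at least $1 - e^{-(\log n)^2}$ that $K_1, \ldots, K_j$ continue to satisfy the hypothesis at the start of $J_{i+1}$. Chaining these over $i = 1, \ldots, N-1$, the probability that $T^* > t_0 + (N-1)\cdot 20c_r^2 k$ without any preservation failure is at most $(1 - 1/n^6)^{N-1} \le e^{-n}$, while the accumulated preservation-failure probability is at most $(N-1) e^{-(\log n)^2}$.

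For the preservation phase, once $T^*$ has occurred in some $J_{i_0}$ with $i_0 \le N - 1$, we continue iterating Lemma~\ref{lem:try-fill-other-clique}, but now with parameter $j+1$: as soon as all of $K_1, \ldots, K_{j+1}$ simultaneously have at most $\Delta$ non-mutants at the start of an interval, the first part of the lemma gives preservation over the next interval with probability at least $1 - e^{-(\log n)^2}$. Applying this iteratively from the end of $J_{i_0}$ through $J_N$ (possibly first waiting until the end of the first subsequent interval to combine with the freshly-filled $K_{j+1}$, which is why we use $N-1$ intervals for the filling phase), the total preservation failure probability in this phase is at most $N e^{-(\log n)^2}$.

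Taking a union bound across both phases yields total failure probability at most $e^{-n} + 2N e^{-(\log n)^2} \leq n^8 e^{-(\log n)^2}$ for all sufficiently large~$\ell$, as desired. The main subtlety I would need to handle carefully is the measurability of $T^*$ and the conditioning when the filling and preservation phases meet: introducing $T^*$ as a stopping time (so that the event ``$T^* \in J_i$'' is determined by $\filt_{t}(X')$ at the end of $J_i$) and applying the strong Markov property implicit in Lemma~\ref{lem:try-fill-other-clique} makes this clean, but writing out the conditioning explicitly is the only routine-but-fiddly part of the argument.
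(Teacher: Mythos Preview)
Your proposal is correct and follows essentially the same strategy as the paper: iterate Lemma~\ref{lem:try-fill-other-clique} over $n^7$ consecutive intervals of length $20c_r^2k$, with a ``filling'' component (geometric waiting, failure probability at most $(1-1/n^6)^{n^7}\le e^{-n}$) and a ``preservation'' component (union bound giving $O(n^7)e^{-(\log n)^2}$).

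The paper's write-up differs from yours in two small but convenient ways. First, rather than introducing a continuous-time stopping time $T^*$, the paper works entirely at interval endpoints, defining $\mathcal{E}_1(i)$ (``$K_1,\dots,K_j$ have $\le\Delta$ non-mutants at $t_i$'') and $\mathcal{E}_2(i)$ (``$K_{j+1}$ has $\le\Delta$ non-mutants at $t_i$''); this sidesteps the synchronisation issue you flag about combining the freshly-filled $K_{j+1}$ with $K_1,\dots,K_j$ at a common time. Second, for the preservation of $K_{j+1}$ after it first fills, the paper applies Lemma~\ref{lem:iteration-cliques}(\ref{it:iteration-cliques-2}) in a single shot over the interval $(t_{i'},t_{n^7}]$, rather than iterating Lemma~\ref{lem:try-fill-other-clique} with parameter $j+1$. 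This is cleaner because Lemma~\ref{lem:try-fill-other-clique} is only stated for $j\in[\ell-1]$, so your plan to invoke it with parameter $j+1$ technically falls outside its range when $j=\ell-1$; of course the first part of that lemma is just Lemma~\ref{lem:iteration-cliques}(\ref{it:iteration-cliques-2}) plus a union bound, which works for $j=\ell$ too, so this is a cosmetic rather than substantive point.
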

\begin{proof}
	For all $i\in\Zone$, let $t_i = t_0 + 20c_r^2 ki$. Let $\mathcal{E}_1(i)$ be the event that for all $j' \in [j]$, $|K_{j'} \setminus X'_{t_i}| \le \Delta$. Let $\mathcal{E}_2(i)$ be the event that $|K_{j+1} \setminus X'_{t_i}| \le \Delta$. For convenience, let $\mathcal{F}$ be the event that $\filt_{t_0}(X') = f$. By a union bound, we have
	\begin{align}\label{eqn:fill-other-clique-1}
	\Pr\big(\cup_{i=1}^{n^7}\, \mathcal{E}_2(i) \mid \mathcal{F} \big) 
		&\ge 1 - \Pr\left( \cap_{i=1}^{n^7} \big(\overline{\mathcal{E}_2(i)} \cap \mathcal{E}_1(i) \big)\,\big|\, \mathcal{F} \right)
		- \Pr\left( \cup_{i=1}^{n^7} \overline{\mathcal{E}_1(i)} \mid \mathcal{F} \right).
	\end{align}
	Moreover,
	\begin{align*}
	\Pr\Big( \cap_{i=1}^{n^7} \big(\overline{\mathcal{E}_2(i)} \cap \mathcal{E}_1(i) \big)  \,\big|\, \mathcal{F}\Big)
		&= \prod_{i=1}^{n^7}\Pr\left(\overline{\mathcal{E}_2(i)} \cap \mathcal{E}_1(i) \,\big|\, \cap_{s=1}^{i-1} \big(\overline{\mathcal{E}_2(s)} \cap \mathcal{E}_1(s)\big) \cap \mathcal{F} \right)\\
		&\le \prod_{i=1}^{n^7}\Pr\left(\overline{\mathcal{E}_2(i)} \,\big|\, \cap_{s=1}^{i-1} \big(\overline{\mathcal{E}_2(s)} \cap \mathcal{E}_1(s)\big) \cap \mathcal{F} \right).
	\end{align*}
	Since for all $i$, the event $\bigcap_{s=1}^{i-1} (\overline{\mathcal{E}_2(s)} \cap \mathcal{E}_1(s)) \cap \filt$ is determined by $\filt_{t_{i-1}}(X')$, by Lemma~\ref{lem:try-fill-other-clique} it follows that
	\begin{equation}\label{eqn:fill-other-clique-2}
	\Pr\Big( \cap_{i=1}^{n^7} \big(\overline{\mathcal{E}_2(i)} \cap \mathcal{E}_1(i) \big)  \,\big|\, \mathcal{F} \Big) \le \left(1 - \frac{1}{n^6}\right)^{n^7} \le e^{-n}.
	\end{equation}
	We also have
	\begin{equation*}
	\Pr\big( \cup_{i=1}^{n^7} \overline{\mathcal{E}_1(i)}  \,\big|\, \mathcal{F} \big) 
	\le \sum_{i=1}^{n^7}\Pr\big( \overline{\mathcal{E}_1(i)} \,\big|\, \cap_{s=1}^{i-1} \mathcal{E}_1(s) \cap \mathcal{F}\big).
	\end{equation*}
	Since for all $i$, the event $\cap_{s=1}^{i-1}\, \mathcal{E}_1(s) \cap \filt$ is determined by $\filt_{t_{i-1}}(X')$, it follows from Lemma~\ref{lem:try-fill-other-clique} that
	\begin{equation}\label{eqn:fill-other-clique-3}
	\Pr\big( \cup_{i=1}^{n^7} \overline{\mathcal{E}_1(i)}  \,\big|\, \mathcal{F} \big) \le n^7e^{-(\log n)^2}.
	\end{equation}
	Hence by \eqref{eqn:fill-other-clique-1}, \eqref{eqn:fill-other-clique-2} and \eqref{eqn:fill-other-clique-3}, we have
	\begin{equation}\label{eqn:fill-other-clique-4}
	\Pr\big(\cup_{i=1}^{n^7} \mathcal{E}_2(i)  \,\big|\, \mathcal{F} \big) \ge 1 - 2n^7e^{-(\log n)^2}.
	\end{equation}
	
	Now, let $i' \in [n^7-1]$. Since $\mathcal{E}_2(i')\cap\filt$ is determined by 
$\filt_{t_{i'}}(X')$, 
it follows by Lemma~\ref{lem:iteration-cliques}(ii) applied to the interval 
$(t_{i'}, t_{n^7}]$ 
that
	\[\Pr(\mathcal{E}_2(n^7) \mid \mathcal{E}_2(i') \cap \filt) \ge 1 - e^{-\frac{1}{2}(\log n)^3}.\]
	Hence by \eqref{eqn:fill-other-clique-4} we have $\Pr(\mathcal{E}_2(n^7) \mid \mathcal{F}) \ge 1 - 3n^7e^{-(\log n)^2}$. In addition, by \eqref{eqn:fill-other-clique-3}, we have $\Pr(\mathcal{E}_1(n^7)\mid \mathcal{F}) \ge 1 - n^7e^{-(\log n)^2}$. By a union bound, it follows that
	\[\Pr(\mathcal{E}_1(n^7) \cap \mathcal{E}_2(n^7)\mid \mathcal{F}) \ge 1 - n^8e^{-(\log n)^2},\]
	and so the result follows.
\end{proof}

The proof of Lemma~\ref{lem:fill-all-cliques}, the goal of Section~\ref{sec:fill-cliques}, now follows easily from Lemma~\ref{lem:fill-other-clique}.

\begin{lemma}\label{lem:fill-all-cliques}
	\statelargeell Suppose $X_0' \subseteq R_1$. Then with probability at least 
$1-20(\log n)^2/\ell$, 
for all $j \in [\ell]$, $K_j$ contains at most $\Delta$ non-mutants at time $n^8$.
\end{lemma}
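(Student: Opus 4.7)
The plan is to chain together Lemmas~\ref{lem:fill-first-clique}, \ref{lem:fill-other-clique}, and~\ref{lem:iteration-cliques}(\ref{it:iteration-cliques-2}) in an obvious way, bootstrapping from the one filled clique $K_1$ at time $n$ up to all $\ell$ cliques, and then carrying the state of ``at most $\Delta$ non-mutants in each clique'' forward from that moment to time $n^8$.

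More concretely, set $T_0 = n$ and $T_j = T_0 + j\cdot 20 c_r^2 n^7 k$ for $j\in[\ell-1]$, and let $\mathcal{E}_j$ be the event that for every $j'\in[j+1]$, $|K_{j'}\setminus X'_{T_j}|\le \Delta$. By Lemma~\ref{lem:fill-first-clique}, $\Pr(\mathcal{E}_0)\ge 1-19(\log n)^2/\ell$. By Lemma~\ref{lem:fill-other-clique} applied at time $T_{j-1}$, for every possible value of $\filt_{T_{j-1}}(X')$ consistent with $\mathcal{E}_{j-1}$ we have $\Pr(\mathcal{E}_j\mid\filt_{T_{j-1}}(X'))\ge 1 - n^8 e^{-(\log n)^2}$. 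A standard chain-and-union-bound argument (essentially the computation used in the proof of Lemma~\ref{lem:fill-other-clique}) then gives
\[
\Pr(\mathcal{E}_{\ell-1}) \ \ge\ 1 - \frac{19(\log n)^2}{\ell} - (\ell-1) n^8 e^{-(\log n)^2}.
\]

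The second step is to transport the conclusion from time $T_{\ell-1}$ forward to time $n^8$. Since $m=\ell$ so $\ell\le\sqrt{n}$ and $k=\lceil(\log\ell)^{23}\rceil$, we have
\[
T_{\ell-1} \ \le\ n + 20 c_r^2\,\sqrt{n}\cdot n^7 \cdot (\log n)^{23} \ \le\ n^8
\]
for all $\ell$ sufficiently large, so the interval $I := (T_{\ell-1}, n^8]$ satisfies $30 c_r^2(\log n)^6 < \len{I}\le e^{(\log n)^2}$ (the upper bound uses $8\log n \le (\log n)^2$ for large $n$). On the event $\mathcal{E}_{\ell-1}$, each $K_j$ has at most $\Delta$ non-mutants at the left endpoint of $I$, so Lemma~\ref{lem:iteration-cliques}(\ref{it:iteration-cliques-2}), applied separately to each $K_j$ with the interval $I$ and combined with a union bound over $j\in[\ell]$, yields
\[
\Pr\bigl(\forall j\in[\ell],\ |K_j\setminus X'_{n^8}|\le\Delta \,\bigm|\, \mathcal{E}_{\ell-1}\bigr) \ \ge\ 1 - \ell\,e^{-\frac{1}{2}(\log n)^3}.
\]

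A final union bound then gives a total failure probability at most
\[
\frac{19(\log n)^2}{\ell} + (\ell-1)\,n^8 e^{-(\log n)^2} + \ell\, e^{-\frac{1}{2}(\log n)^3} \ \le\ \frac{20(\log n)^2}{\ell},
\]
where the last inequality holds for all sufficiently large $\ell$, since both of the last two error terms are $O(n^{9}e^{-(\log n)^2})$ and are therefore super-polynomially small, hence dominated by $(\log n)^2/\ell$. There is no real obstacle here: the dominant $19(\log n)^2/\ell$ error coming from Lemma~\ref{lem:fill-first-clique} (which itself originates in the fact that the initial mutant at a random reservoir vertex only has rate $1/(\ell m)$ of survival against $v^*$) absorbs everything else with a single extra unit in the constant. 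Essentially the only thing to verify carefully is the arithmetic that $T_{\ell-1}\le n^8$ with enough slack to apply Lemma~\ref{lem:iteration-cliques}, which follows from the parameter choices $m=\ell=\Theta(\sqrt{n})$ and $k=\Theta((\log n)^{23})$.
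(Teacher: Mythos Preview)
Your proof is correct and follows essentially the same approach as the paper's: define a sequence of times $n, n+20c_r^2 n^7 k, n+2\cdot 20c_r^2 n^7 k, \ldots$, use Lemma~\ref{lem:fill-first-clique} to handle $K_1$ at the first time, then apply Lemma~\ref{lem:fill-other-clique} iteratively $\ell-1$ times to fill all cliques, and finally use Lemma~\ref{lem:iteration-cliques}(\ref{it:iteration-cliques-2}) with a union bound over $j\in[\ell]$ to carry the conclusion forward to time $n^8$. Your indexing differs by a shift of one from the paper's, and you include the explicit verification that $T_{\ell-1}\le n^8$ with room to spare, but the argument is the same.
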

\begin{proof}
	For each positive integer $i$, let $t_i = n + (i-1)20c_r^2 n^7k$. Let $\mathcal{E}_i$ be the event that at time $t_i$, for all $j \in [i]$ we have $|X'_{t_i} \setminus K_j| \le \Delta$. By Lemma~\ref{lem:fill-first-clique}, we have $\Pr(\mathcal{E}_1) \ge 1- 
	19(\log n)^2/\ell$. 
For all $i \in \{2, \dots, \ell\}$, by Lemma~\ref{lem:fill-other-clique}  
(applied with $j=i-1$ starting at $t_{i-1}$), 
we have
	\[\Pr(\mathcal{E}_i \mid \mathcal{E}_1 \cap \dots \cap \mathcal{E}_{i-1}) \ge 1 - n^8e^{-(\log n)^2}.\]
	It follows that
	\begin{align*}
	\Pr(\mathcal{E}_{\ell}) 
	&\ge 1 - \sum_{i=1}^{\ell} \Pr\big(\overline{\mathcal{E}_i} \mid \mathcal{E}_1 \cap \dots \cap \mathcal{E}_{i-1}\big)
	\ge 1 -  \frac{19(\log n)^2}{\ell} 
- \ell n^8e^{-(\log n)^2}. 
 	\end{align*}
	It therefore follows by Lemma~\ref{lem:iteration-cliques}(\ref{it:iteration-cliques-2}) applied to the interval $(t_{\ell}, n^8]$ combined with a union bound that with probability at least 
$$ 1 - \frac{19(\log n)^2}{\ell} 
- \ell n^8e^{-(\log n)^2}	 - \ell e^{-\frac{1}{2}(\log n)^3} \ge 1 - 20(\log n)^2/\ell,$$
 for all $j \in [\ell]$, $K_j$ contains at most $\Delta$ mutants at time $n^8$ as required.
\end{proof}
 
\subsection{Filling reservoirs from cliques}\label{sec:megastar-fixates}
\subsubsection{Setting up an iteration scheme --- Proof of Lemma~\ref{lem:megastar-fixates}}\label{sec:iteration}

In this section, we outline an iterative argument which, together with Lemma~\ref{lem:fill-all-cliques}, will allow us to prove our key lemma, Lemma~\ref{lem:megastar-fixates}.

\begin{definition}\label{def:iteration-scheme}
For all $i\in\Zzero$, let
\begin{align*}
I_i^-    &= n^8 + in(\log n)^3,\\
I_i^+    &= n^8 + (i+1)n(\log n)^3,\\ 
\alpha_i &= \floor{\max\{(2\log n)^{2}, m/(2\log n)^{2i}\}},\\
\beta_i  &= \floor{\max\{(2\log n)^{2}, \ell m/(2\log n)^{2i}\}}.
\end{align*}
Consider any $t\geq n^8$. 
Let $i$ be the integer such that $t\in [I_i^-,I_i^+)$.
Let $f$ be a possible value of $\filt_t(X')$. 
We say that $f$ is \emph{good}   if
the event $\filt_t(X')=f$ implies that the following events occur.
\begin{itemize}
\item  \Ptotalmut{i}: $|(R_1 \cup \dots \cup R_{\ell}) \setminus X'_{I_i^-}| \le \beta_i$.
\item  \Presmut{i}: For all $j \in [\ell]$, $|R_{j} \setminus X'_{I_i^-}| \le \alpha_i$.
\item \Pcliquefull{i}: For all $j \in [\ell]$, $|K_j \setminus X'_{I_i^-}| \le \Delta$.
\item \Pmostcliques{i}: For all but at most $\beta_i$ choices of $j\in[\ell]$, $R_{j} \cup \{a_j\} \cup K_j \subseteq X'_{I_i^-}$. \defend{}
\end{itemize} 
\end{definition}

Each interval $(I_i^-,I_i^+]$  
corresponds to a phase of our iterative argument, which we state in Lemma~\ref{lem:main-iteration}. At the end of each interval, the  number of non-mutants in each reservoir should drop by a factor of at least $(2\log n) ^{2}$ to a minimum of $\floor{(2\log n)^{2}}$, as should the   number of non-mutants in all reservoirs. In addition, every clique should remain almost full of mutants, and if there are fewer than $\ell$ non-mutants left in reservoirs then many branches should be completely full of mutants.  

\newcommand{\statemainiteration}{
\statelargeell Let $i$ be a non-negative integer, and let $f$ be a good possible value of $\filt_{I_i^-}(X')$.  Then
\[\Pr( \mbox{$\filt_{I_{i}^+}(X')$ is good}
 \mid \filt_{I_i^-}(X') = f) \ge 1 - 15e^{-(\log n)^2}.\]
Moreover, if $\beta_i = \floor{(2\log n)^{2}}$, then
\[\Pr(X_{I_i^+}' = V(\megastar[\ell]) \mid \filt_{I_i^-}(X') = f) \ge 1/2.\]
}
\begin{lemma}\label{lem:main-iteration}
\statemainiteration
\end{lemma}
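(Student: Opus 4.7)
The plan is to prove the two parts in parallel, both resting on a careful analysis of the stopping time $\TTend{i}$ introduced in the sketch. First I would dispose of the trivial condition: by hypothesis $\filt_{I_i^-}(X')=f$ implies that \Pcliquefull{i} holds, i.e.\ every clique has at most $\Delta \ll 2\Delta$ non-mutants at $I_i^-$, so $\TTend{i}>I_i^-$ and the definition bites only at some later time. The entire argument then takes place on the random interval $(I_i^-,\TTend{i}]$, on which (D2)--(D4) automatically fail, so the cliques stay within $2\Delta$ non-mutants and $v^*$ does not spawn too many non-mutants onto any reservoir. The goal is to show that with probability at least $1-15e^{-(\log n)^2}$ the defining conditions (D2)--(D4) are \emph{not} triggered before $I_i^+$, so in fact $\TTend{i}=I_i^+$, and then to read off $\Ptotalmut{i+1},\Presmut{i+1},\Pcliquefull{i+1},\Pmostcliques{i+1}$ from what happened on this interval.

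The core technical step is to control $\iin{X'}{v^*}{\TTend{i}}$. I would first use the auxiliary Lemma~\ref{lem:clique-active-bound} to bound, with exponentially small failure probability, the total number of times any clique $K_j$ becomes active during $(I_i^-,\TTend{i}]$: by (D3) the centre vertex can spawn only $\alpha_{i+1}$ non-mutants into $R_j$, the only way (modulo a negligible initial contribution from \Pcliquefull{i}) to make $K_j$ active is to first push a non-mutant through $R_j\to a_j\to K_j$. Then, for each activation, Lemma~\ref{lem:clique-nm-spawns} says (a) the clique spawns zero non-mutants onto $v^*$ with probability $1-(\log n)^{10}/k$ and (b) at most $(\log n)^3$ non-mutants with probability $1-7e^{-(\log n)^3}$. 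Summing the $(\log n)^3$ bound over the at most $\sum_j \alpha_{i+1}\le \ell\alpha_{i+1}$ activations, and pairing the resulting "mutant lifetime" of $v^*$ with a Chernoff bound on the star-clocks $\snbc{(v^*,u)}$, gives the desired upper bound on the number of non-mutants spawned by $v^*$ in $(I_i^-,\TTend{i}]$. This will be the main obstacle: one has to show that this upper bound is strictly smaller than $\beta_{i+1}$ and than $\alpha_{i+1}$ (per reservoir), using the definitions $\alpha_{i+1}=\alpha_i/(2\log n)^2$ and $\beta_{i+1}=\beta_i/(2\log n)^2$, so that (D2) and (D3) are not the first of the conditions to fire. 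Combined with Lemma~\ref{lem:iteration-cliques}\eqref{it:iteration-cliques-1} (applied to each clique with the \Pcliquefull{i} hypothesis), which rules out (D4) with probability $1-\ell e^{-\frac12(\log n)^3}$, a union bound gives $\TTend{i}=I_i^+$ with probability at least $1-15e^{-(\log n)^2}$.

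Granted $\TTend{i}=I_i^+$, I would verify the four events defining a good filtration at $I_{i+1}^-=I_i^+$. \Ptotalmut{i+1} and \Presmut{i+1} require showing that with high probability every reservoir vertex is spawned onto by $v^*$ during the first half of $I_i$, and then survives; this is Lemma~\ref{lem:star-clock-bound}-type reasoning using that the star-clock $\smc{(v^*,u)}$ triggers in a window of length $n(\log n)^3/2$ with probability $\geq 1-\exp(-\Omega((\log n)^3))$, together with the already-established bound on $v^*$'s mutant time (ensuring few non-mutant spawns onto any particular reservoir vertex). \Pcliquefull{i+1} follows directly from Lemma~\ref{lem:iteration-cliques}\eqref{it:iteration-cliques-2} applied to the whole interval $I_i$, starting from the \Pcliquefull{i} hypothesis. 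For \Pmostcliques{i+1}, Lemma~\ref{lem:branches-clear} (or the argument behind it) shows that for every $j$ with $R_j$ already mutant-heavy (all but at most $\beta_{i+1}$ indices), the branch $R_j\cup\{a_j\}\cup K_j$ becomes entirely mutant and stays so: the reservoir fills by the same star-clock argument, the clique stays almost full by \eqref{it:iteration-cliques-1}--\eqref{it:iteration-cliques-3}, and a $30c_r^2(\log n)^6$-time window during which $a_j$'s non-mutant in-degree is zero lets $a_j$ flip to a mutant (and stay there, since only non-mutant reservoir vertices can kill it). A union bound over the at most $O(\ell)$ such branches keeps the failure probability well inside $15e^{-(\log n)^2}$.

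For the second assertion, when $\beta_i=\floor{(2\log n)^2}$ the number of non-mutants in reservoirs is already so small that the exact same argument shows: with probability at least $5/6$, $v^*$ spawns \emph{no} non-mutants at all on $(I_i^-,\TTend{i}]$. Indeed, the total mutant-time of $v^*$ on this interval is $O(\ell\beta_i (\log n)^3/k)$ by the clique-activation count, which is $o(1)$ given our parameters $k=\lceil(\log\ell)^{23}\rceil$; combined with part~(i) of Lemma~\ref{lem:clique-nm-spawns} (the "with probability $1-(\log n)^{10}/k$ zero non-mutants spawned" bound) and the Poisson rate of $\snbc{(v^*,\cdot)}$, the probability of any non-mutant spawn is $O((\log n)^c/k) \le 1/6$. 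Given that no non-mutant is spawned, every non-mutant reservoir vertex dies the next time the corresponding $\smc{(v^*,\cdot)}$ star-clock triggers, which happens within the first half of $I_i$ with probability $1-\exp(-\Omega((\log n)^3))$ per vertex; then each remaining non-mutant feeder and clique vertex is mopped up by the same branch-clearing argument that established \Pmostcliques{i+1}, pushing the process to full fixation with probability at least $1/2$ overall.
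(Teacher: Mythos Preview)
Your overall strategy---show $\TTend{i}=I_i^+$ by ruling out (D2)--(D4), then read off $\Pnoname{1}{i+1}$--$\Pnoname{4}{i+1}$---matches the paper's approach, and the ingredients you cite (Lemmas~\ref{lem:clique-active-bound}, \ref{lem:clique-nm-spawns}, \ref{lem:iteration-cliques}, \ref{lem:star-clock-bound}, \ref{lem:branches-clear}) are exactly the right ones. However, there is a genuine gap in your core estimate.

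You claim that each clique $K_j$ becomes active at most $\alpha_{i+1}$ times in $(I_i^-,\TTend{i}]$, arguing that ``the only way to make $K_j$ active is to first push a non-mutant through $R_j\to a_j\to K_j$'' and that by (D3) only $\alpha_{i+1}$ non-mutants enter $R_j$. This is wrong: a \emph{single} non-mutant vertex in $R_j$ can spawn a non-mutant onto $a_j$ many times during $I_i$ (at rate~$1$ over an interval of length $n(\log n)^3$), each such spawn potentially re-activating $K_j$ once it has refilled. Combined with the up-to-$2\alpha_i$ non-mutants in $R_j$ (from \Presmut{i} and (D3)), the correct bound, as Lemma~\ref{lem:clique-active-bound} gives, is $\Wnoh{i,j}\le \alpha_i\sqrt{n}(\log n)^4$---larger than your claim by a factor of roughly $\sqrt{n}(\log n)^6$. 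With this many activations, simply summing the per-activation cap of $(\log n)^3$ non-mutant spawns onto $v^*$ (Lemma~\ref{lem:clique-nm-spawns}\eqref{it:clique-nm-spawns-2}) is far too crude; the paper instead uses the stronger statement \eqref{it:clique-nm-spawns-1} that each activation yields \emph{zero} such spawns with probability $1-(\log n)^{10}/k$, which salvages the count (see Lemma~\ref{lem:clique-nm-spawns-2} and Corollary~\ref{cor:total-nm-spawns}).

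There are also two smaller confusions. First, the quantity to control is $\iout{X'}{v^*}{\cdot}$ (the \emph{non}-mutant time of $v^*$), not $\iin{X'}{v^*}{\cdot}$, and the relevant star-clocks are $\vsnc{v^*}{v}$, not $\snbc{(v^*,u)}$. Second, you have not accounted for the step that turns ``few non-mutants are spawned \emph{onto} $v^*$'' into ``$v^*$ spends little time as a non-mutant'': this requires Lemma~\ref{lem:centre-nm-short-time}, which says that each visit of $v^*$ to the non-mutant state is short because by (D4) most clique vertices are mutants and so mutants are spawned onto $v^*$ at rate $\Omega(\ell)$. Only after this step does the star-clock Chernoff bound on $\vsnc{v^*}{v}$ (Lemma~\ref{lem:star-clock-bound}) apply over the window of length~$\gamma_i$.
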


Assuming Lemma~\ref{lem:main-iteration} for the moment, we give the proof of our key lemma, Lemma~\ref{lem:megastar-fixates}, which we restate here for convenience.
{\renewcommand{\thetheorem}{\ref{lem:megastar-fixates}}
\begin{lemma}
\statemegastarfixates
\end{lemma}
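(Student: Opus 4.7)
The plan is to take $t = 2n^8$ and run the iteration scheme of Lemma~\ref{lem:main-iteration} (starting at $I_0^- = n^8$), using Lemma~\ref{lem:fill-all-cliques} to seed a good filtration at that time. By the natural symmetry of $\megastar[\ell]$ under permutation of $\{1,\ldots,\ell\}$, we may assume $x_0 \in R_1$. At $i=0$ we have $\alpha_0 = m$ and $\beta_0 = \ell m$, so $\Ptotalmut{0}$, $\Presmut{0}$ and $\Pmostcliques{0}$ hold vacuously from the definitions (the first two because there are only $m$ (resp.\ $\ell m$) reservoir vertices; the last because there are only $\ell \le \beta_0$ indices $j$ available to be exceptions). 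The only non-trivial event is $\Pcliquefull{0}$, which is precisely the conclusion of Lemma~\ref{lem:fill-all-cliques}; thus $\filt_{I_0^-}(X')$ is good except on an event of probability at most $20(\log n)^2/\ell \le 40(\log n)^2/\sqrt n$, using $\ell \ge \sqrt n/2$.

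Next, let $i^* = \min\{i : \beta_i = \floor{(2\log n)^2}\}$, which satisfies $i^* = O(\log n/\log\log n)$, and let $K = \lceil \tfrac12 \log_2 n\rceil$. For $0 \le i < i^*$, the first clause of Lemma~\ref{lem:main-iteration} gives, conditional on any good $\filt_{I_i^-}(X')$, probability at least $1 - 15e^{-(\log n)^2}$ that $\filt_{I_{i+1}^-}(X')$ is good. For $i^* \le i < i^* + K$, both clauses apply: either $X'$ fixates by $I_{i+1}^-$ (probability $\ge 1/2$) or goodness is preserved (probability $\ge 1 - 15e^{-(\log n)^2}$). The crucial observation for iterating is that fixation implies goodness (all the events in Definition~\ref{def:iteration-scheme} hold when $X' = V(\megastar[\ell])$), so writing $F_i$ for the event ``$X'_{I_i^-} = V(\megastar[\ell])$'' and $G_i$ for ``$\filt_{I_i^-}(X')$ is good'', we have $F_i \subseteq G_i$ and a clean induction yields
\[
\Pr\!\big(\overline{F_{i^*+K}} \cap G_{i^*+K-1} \cap \cdots \cap G_{i^*}\big)
    \;\le\; (1/2)^K \,\Pr(G_{i^*}) \;\le\; 1/\sqrt n.
\]

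A union bound over the finitely many iterations then gives
\[
\Pr\!\big(\overline{F_{i^*+K}}\big)
    \;\le\; \Pr\!\big(\overline{G_0}\big) + 15(i^* + K)\,e^{-(\log n)^2} + (1/2)^K
    \;\le\; 40(\log n)^2/\sqrt n + o(1/\sqrt n) + 1/\sqrt n,
\]
which is at most $42(\log n)^{2}/\sqrt n$ for $\ell$ sufficiently large. The time budget is easily met since $I_{i^*+K}^+ = n^8 + (i^*+K+1)\,n(\log n)^3 \le 2 n^8$ once $\ell$ is large; and because the configuration $V(\megastar[\ell])$ is absorbing for $X'$ (no transition rule in Definition~\ref{def:megastar-process} removes a vertex when every vertex is a mutant, since every $\vnc{u}{v}$ requires $u$ non-mutant and the special rule (ii) requires $K_j \cap X' = \emptyset$), we have $\Pr(X'_{2n^8} = V(\megastar[\ell])) \ge \Pr(F_{i^*+K})$, as required.

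There is no single hard step: given Lemmas~\ref{lem:fill-all-cliques} and~\ref{lem:main-iteration}, the argument is essentially bookkeeping. The only subtlety is the conditioning at the ``floor'' iterations, where we must combine the fixation probability $\ge 1/2$ with the goodness-preservation probability $\ge 1 - 15e^{-(\log n)^2}$ from the two clauses of Lemma~\ref{lem:main-iteration}; this is handled cleanly via the inclusion $F_i \subseteq G_i$, which lets the geometric $(1/2)^K$ decay pass through the telescoping conditional bound.
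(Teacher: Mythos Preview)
Your proposal is correct and follows essentially the same approach as the paper: seed a good filtration at time $I_0^-=n^8$ via Lemma~\ref{lem:fill-all-cliques} (using that \Ptotalmut{0}, \Presmut{0}, \Pmostcliques{0} are vacuous), then iterate Lemma~\ref{lem:main-iteration}, combining its goodness-preservation clause with its fixation clause once $\beta_i$ hits the floor $\floor{(2\log n)^2}$. The paper runs the iteration for $2n$ rounds (so that $i\ge n$ guarantees $\beta_i=\floor{(2\log n)^2}$ and the geometric term is $2^{-n}$), whereas you tighten this to $i^*+K=O(\log n)$ rounds with $K=\lceil\tfrac12\log_2 n\rceil$; your explicit observations that fixation implies goodness and that $V(\megastar[\ell])$ is absorbing for $X'$ are exactly what makes the telescoping clean, and the paper uses the equivalent device of tracking the events $\mathcal{E}_i\cap\overline{\mathcal{E}_i'}$ directly.
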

\addtocounter{theorem}{-1}
}
\begin{proof}
By the symmetry of the megastar, we may assume that $x_0\in R_1$. 

For a non-negative integer $i$, let $\mathcal{E}_i$ be the event that 
$\filt_{I_i^{-}}(X')$
 is good and let $\mathcal{E}_i'$ be the event that $X_{I_i^{+}}'=V(\megastar[\ell])$. To prove the lemma, it clearly suffices to show that 
\begin{equation}\label{eq:deikse24deikse}
\Pr(\cup^{2n}_{i=0}\, \mathcal{E}_i')\geq 1-\frac{42(\log n)^{2}}{\sqrt{n}}.
\end{equation}

By a union bound, we have
\begin{equation}\label{eq:simplemanip}
\Pr(\cup^{2n}_{i=0}\, \mathcal{E}_i')\geq 1-\Pr\Big(\cap^{2n}_{i=0} \big(\mathcal{E}_i\cap \overline{\mathcal{E}_i'}\big)\Big)-\Pr\big(\cup^{2n}_{i=0} \overline{\mathcal{E}_i}\big).
\end{equation}
For all $i \in [2n]$, $\filt_{I_i^-}(X')$ determines $\mathcal{E}_0 \cap \dots \cap \mathcal{E}_{i-1}$ and thus, by Lemma~\ref{lem:main-iteration}, we have
\begin{equation}\label{eq:timeiabv}
\Pr(\mathcal{E}_i \mid \mathcal{E}_0 \cap \dots \cap \mathcal{E}_{i-1}) \ge 1 - 15e^{-(\log n)^2}.
\end{equation}
Also, since $x_0\in R_1$, for $i=0$, we have by Lemma~\ref{lem:fill-all-cliques} that
\begin{equation}\label{eq:time0abv}
\Pr(\mathcal{E}_0)\geq 1-\frac{20(\log n)^2}{\ell}\geq 1-\frac{40(\log n)^2}{\sqrt{n}},
\end{equation}
since 
\Ptotalmut{0},
\Presmut{0} and \Pmostcliques{0} hold trivially  (from $\alpha_0= m$ and $\beta_0= \ell m$). Combining \eqref{eq:timeiabv} and \eqref{eq:time0abv} we obtain that 
\begin{equation}\label{eq:intermediate456}
\Pr\big(\cap^{2n}_{i=0} \mathcal{E}_i\big)\geq 1-\frac{41(\log n)^2}{\sqrt{n}}, \mbox{ so } \Pr\big(\cup^{2n}_{i=0} \overline{\mathcal{E}_i}\big)\leq \frac{41(\log n)^2}{\sqrt{n}}.
\end{equation}

For $i\geq n$, we have that $\beta_i=\floor{(2\log n)^{2}}$. Since $\filt_{I_i^-}(X')$ determines $\bigcap_{h=0}^{i-1}(\mathcal{E}_i \cup \overline {\mathcal{E}_i'})$ for all $i\in\{n+1,\hdots,2n\}$, it follows from Lemma~\ref{lem:main-iteration} that 
\[\Pr\Big(\big(\mathcal{E}_i\cap \overline{\mathcal{E}_i'}\big)\mid \big(\mathcal{E}_0\cap \overline{\mathcal{E}_0'}\big) \cap \dots \cap \big(\mathcal{E}_{i-1}\cap \overline{\mathcal{E}_{i-1}'}\big)\Big)\leq 1/2.\]
Hence, we obtain that  
\begin{equation}\label{eq:intermediate654}
\Pr\Big(\cap^{2n}_{i=0} \big(\mathcal{E}_i\cap \overline{\mathcal{E}_i'}\big)\Big)\leq 1/2^{n}.
\end{equation}
Plugging \eqref{eq:intermediate456} and \eqref{eq:intermediate654} in \eqref{eq:simplemanip} yields \eqref{eq:deikse24deikse}, as wanted.
\end{proof}

Recall that we already proved Theorem~\ref{thm:megastar} using Lemma \ref{lem:megastar-fixates} in  Section~\ref{sec:megastar-partial}. The remainder of Section~\ref{sec:megastar} will therefore focus on the proof of Lemma~\ref{lem:main-iteration}. The following stopping time will be important in what follows.

\begin{definition}\label{def:tdoom}
Consider $i\in \Zzero$. 
We define $\TTend{i}$ to be the first time $t \geq I_i^-$ such that one of the following holds.
\begin{enumerate}[\normalfont(D1)]
\item $t = I_i^+$.\label{D:end}
\item $v^*$ spawns $\beta_{i+1}$ non-mutants in the interval $(I_i^-,t]$.\label{D:beta}
\item For some $j \in [\ell]$, $v^*$ spawns $\alpha_{i+1}$ non-mutants onto vertices in $R_{j}$ in the interval $(I_i^-,t]$.\label{D:alpha}
\item For some $j \in [\ell]$, $|K_j \setminus X'_t| > 2\Delta$.\label{D:clique-empty}\defend{}
\end{enumerate}
\end{definition}
Note that 
if $\filt_{I_i^-}(X')$ is good then $|K_j \setminus X'_{I_i^-}| \leq 2\Delta$
so $\TTend{i} > I_i^-$. 

The crux of our argument will be a proof that if $\filt_{I_i^-}(X')$ is good, then $\TTend{i} = I_i^+$ with high probability (see the proof of Lemma~\ref{lem:branches-clear}).  

\subsubsection{Bounding the number of times cliques become active}\label{sec:clique-active}

In Section~\ref{sec:clique-active}, we   bound the number of times that cliques become active in the interval $(I^{-}_i, \TTend{i}]$ (see Lemma~\ref{lem:clique-active-bound}). We   require the following definitions.

\begin{definition}\label{defn:tmut}
Let $v \in V(\megastar[\ell])$ and 
$i\in \Zzero$. 
Let $\Tmut{v}{i}{-1} = I_i^-$.
We recursively define times $\Tnmut{v}{i}{h}$ and $\Tmut{v}{i}{h}$ for integers $h\ge 0$ as follows.
\begin{align*}
\Tnmut{v}{i}{h} &= 	 
		\min \{t \geq \Tmut{v}{i}{h-1} \mid v \notin X_t'\mbox{ or } t=\TTend{i}\}, \\
\Tmut{v}{i}{h} &= \min \{t \geq \Tnmut{v}{i}{h}\mid v \in X_t' \mbox{ or } t=\TTend{i}\}.	
\defenddisp{}
\end{align*} 
\end{definition}
The subscript ``$\mathsf m$'' stands for ``mutant'' and the subscript ``$\mathsf n$'' stands for ``non-mutant''.

\begin{definition}\label{def:wijh}
For each $i\in \Zzero$, let $I_i = (I_i^-,I_i^+]$.  Also, for
  $ h\in\Zzero$ and  $j \in [\ell]$, let 
  $\J{i,j}{h}$ be the interval $(\Tnmut{a_j}{i}{h}, \Tmut{a_j}{i}{h})$,
  let  
  $\W{i,j}{h}$ be the number of times at which $K_j$ becomes active in~$\J{i,j}{h}$,
  and let  $\Win{i,j}{h}$ be the number of times at which $K_j$ becomes inactive in~$\J{i,j}{h}$.
    \defend{}
\end{definition} 
Note that the intervals $\J{i,j}{h}$ are disjoint from each other and  they are all contained within
$[I_i^-,I_i^+]$.
Also, the interval $\J{i,j}{h}$ is empty if and only if $\Tnmut{a_j}{i}{h}=\TTend{i}$.
 The subscript ``$\mathsf a$''   stands for ``active'' and the subscript ``$\mathsf{in}$'' stands for
``inactive''.
The following lemmas,  Lemmas~\ref{lem:W-bound-1} and~\ref{lem:W-bound-2}, are stated only for 
$i\in \Zone$   --- we will deal with $i=0$ in the proof of Lemma~\ref{lem:clique-active-bound}.

\begin{lemma}\label{lem:W-bound-1}
\statelargeell  For $i\in\Zone$, let $f$ be a good possible value of $\filt_{I_i^-}(X')$. Let $j\in [\ell]$. Then,
\[\Pr\big(\W{i,j}{h}=0 \mbox{ for all } h \ge \floor{4\alpha_i\len{I_i}}+1\mid \filt_{I_i^-}(X') = f\big)\geq 1-e^{-n}.\]
\end{lemma}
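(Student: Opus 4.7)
The plan is to show that, with probability at least $1-e^{-n}$, the feeder vertex $a_j$ undergoes at most $\lfloor 4\alpha_i\len{I_i}\rfloor$ mutant-to-non-mutant transitions during $[I_i^-, \TTend{i}]$. Since $\W{i,j}{h}$ counts clique activations inside $\J{i,j}{h} = (\Tnmut{a_j}{i}{h}, \Tmut{a_j}{i}{h})$, and each non-empty $\J{i,j}{h}$ with $h \ge 1$ corresponds to a distinct mutant-to-non-mutant transition of $a_j$ (only $\J{i,j}{0}$ being possibly non-empty without such a transition, in the case $a_j \notin X'_{I_i^-}$), this bound forces $\J{i,j}{h}$ to be empty, and hence $\W{i,j}{h} = 0$, for every $h \ge \lfloor 4\alpha_i\len{I_i}\rfloor + 1$.

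Inspecting the five cases of Definition~\ref{def:megastar-process}, there are only two mechanisms by which $a_j$ can become a non-mutant: rule~(iv), in which some non-mutant $u \in R_j$ spawns onto $a_j$; and rule~(ii), in which the mutant $a_j$ spawns into an empty clique $K_j$, which as a side effect turns $a_j$ into a non-mutant. Let $N_1$ and $N_2$ denote the counts of events of these two types during $[I_i^-, \TTend{i}]$. For $N_2$, the mutant clocks with source $a_j$ have total rate $r$, so $N_2$ is stochastically dominated by a $\mathrm{Poisson}(r\len{I_i})$ random variable. For $N_1$, the hypothesis that $f$ is good (in particular~\Presmut{i}) together with the stopping condition~\myref{D}{D:alpha} forces $|R_j \setminus X'_t| \le \alpha_i + \alpha_{i+1} \le 2\alpha_i$ for every $t \in [I_i^-, \TTend{i}]$. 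Hence $N_1$ is a counting process whose intensity $|R_j \setminus X'_t|$ is deterministically at most $2\alpha_i$ on $[I_i^-, \TTend{i}]$, so its compensator at $\TTend{i}$ is bounded by $2\alpha_i\len{I_i}$. The standard exponential martingale $M(t) = \exp\!\bigl(\theta N_1(t) - (e^\theta-1)A(t)\bigr)$, together with optional stopping, then gives $\E\!\left[e^{\theta N_1(\TTend{i})}\right] \le e^{(e^\theta - 1)\cdot 2\alpha_i\len{I_i}}$, which is precisely the MGF of a $\mathrm{Poisson}(2\alpha_i\len{I_i})$ variable; consequently $N_1$ inherits the usual Chernoff upper-tail bound for this Poisson distribution.

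It then remains to combine the two tail bounds. Applying Lemma~\ref{lem:pchernoff} with $\rho = 2\alpha_i\len{I_i}$ and $y = 3\alpha_i\len{I_i}$ gives $\Pr(N_1 \ge 3\alpha_i\len{I_i}) \le e^{-(3\log(3/2) - 1)\alpha_i\len{I_i}}$, and Corollary~\ref{cor:pchernoff-3} with $\rho = r\len{I_i}$ and $y = \alpha_i\len{I_i}$ (valid once $\alpha_i \ge 8r$, which holds for large $\ell$) gives $\Pr(N_2 \ge \alpha_i\len{I_i}) \le e^{-\alpha_i\len{I_i}}$. Since $\alpha_i \ge (2\log n)^2$ and $\len{I_i} = n(\log n)^3$, we have $\alpha_i\len{I_i} \ge 4n(\log n)^5 \gg n$, so both probabilities are much smaller than $e^{-n}$; a union bound yields $\Pr(N_1 + N_2 > \lfloor 4\alpha_i\len{I_i}\rfloor) \le e^{-n}$, as required. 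The main technical obstacle is that the intensity of $N_1$ is driven by the evolving process $X'$ and is observed only up to the random stopping time $\TTend{i}$, which rules out a direct reduction to Chernoff bounds on an independent Poisson variable; the exponential-martingale argument above handles this cleanly by converting the almost-sure bound on the compensator into the needed MGF bound.
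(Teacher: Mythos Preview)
Your argument is correct and shares the paper's high-level strategy --- bound the number of times $a_j$ becomes a non-mutant in $[I_i^-,\TTend{i}]$ by $\lfloor 4\alpha_i\len{I_i}\rfloor$, which forces $\J{i,j}{h}$ to be empty for all $h \ge \lfloor 4\alpha_i\len{I_i}\rfloor + 1$ --- but the technical execution differs in two respects.

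First, the paper avoids martingales and the stopping time altogether by a more elementary counting argument. Rather than bounding the instantaneous intensity $|R_j \setminus X'_t|$, it observes the stronger fact that the set of vertices in $R_j$ that are \emph{ever} non-mutant during $(I_i^-,\TTend{i}]$ has size at most $\alpha_i + \alpha_{i+1} \le 2\alpha_i$ (those non-mutant at $I_i^-$, by $\Presmut{i}$, plus those receiving a non-mutant spawn from $v^*$, by \myref{D}{D:alpha}). It then applies Corollary~\ref{cor:pchernoff} to each fixed clock $\vnc{v}{a_j}$ over the \emph{fixed} interval $I_i$, obtaining at most $2\len{I_i}$ triggers per vertex, and union-bounds over all $v \in R_j$. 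The product $2\alpha_i \cdot 2\len{I_i}$ gives the same threshold. This keeps everything within the paper's toolkit of Chernoff bounds on independent Poisson variables over deterministic intervals; your exponential-martingale route handles the random horizon $\TTend{i}$ more directly but imports machinery the paper never uses.

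Second, your $N_2$ term is in fact zero and can be dropped. Since $f$ is good, $\Pcliquefull{i}$ gives $|K_j \setminus X'_{I_i^-}| \le \Delta$, and \myref{D}{D:clique-empty} then forces $|K_j \setminus X'_t| \le 2\Delta < k$ for every $t \in [I_i^-,\TTend{i}]$; hence $K_j$ is never empty on this interval and rule~(ii) of Definition~\ref{def:megastar-process} never fires. The paper uses this implicitly when it equates ``$a_j$ becomes a non-mutant'' with ``a non-mutant is spawned onto $a_j$.'' Including $N_2$ does no harm, but recognising that it vanishes makes the argument cleaner.
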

\begin{proof}
We will show that with high probability,  the feeder vertex~$a_j$ becomes a non-mutant  at most $\floor{4\alpha_i\len{I_i}}$ times within $(I_i^-, \TTend{i}]$. Let $\mathcal{E}$ be the event that for all $v \in R_{j}$, $\vnc{v}{a_j}$ triggers at most $2\len{I_i}$ times in $I_i$. Recall that $\len{I_i} = n(\log n)^3$, so by Corollary~\ref{cor:pchernoff} combined with a union bound over all $v \in R_{j}$, we have
\[\Pr(\mathcal{E} \mid \filt_{I_i^-}(X')=f) \ge 1-e^{-n}.\]

Suppose that $\mathcal{E}$ occurs. Since $\filt_{I_i^-}(X')$ is good, by 
\Presmut{i}   we have $|R_{j} \setminus X'_{I_i^-}| \le \alpha_i$. Moreover, by \myref{D}{D:alpha}, at most $\alpha_{i+1}$ non-mutants are spawned into $R_{j}$ over the course of $(I_i^-, \TTend{i}]$. Hence all but at most $\alpha_i+\alpha_{i+1} \le 2\alpha_i$ vertices in $R_{j}$ are mutants throughout $(I_i^-, \TTend{i}]$, and therefore do not spawn any non-mutants onto $a_j$ within $(I_i^-, \TTend{i}]$. Since $\mathcal{E}$ occurs, the remaining vertices in $R_{j}$ each spawn at most $2\len{I_i}$ non-mutants onto $a_j$ within $(I_i^-, \TTend{i}]$, and so at most $\floor{4\alpha_i\len{I_i}}$ non-mutants are spawned onto $a_j$ in total over the course of $(I_i^-, \TTend{i}]$. 

Now, for all $h > 0$, $\Tnmut{a_j}{i}{h} = \TTend{i}$ or $a_j$ becomes a non-mutant at time $\Tnmut{a_j}{i}{h}$. It therefore follows that for all $h \ge \floor{4\alpha_i\len{I_i}}+1$, $\Tnmut{a_j}{i}{h} = \Tmut{a_j}{i}{h} = \TTend{i}$ 
so the interval $\J{i,j}{h}$   is empty 
and hence $\W{i,j}{h} = 0$ as required.
\end{proof}

In Lemma~\ref{lem:W-bound-2} we will show that, with high probability, the sum
$\sum_{h=0}^{\floor{4\alpha_i\len{I_i}}} \W{i,j}{h}$ is small. 
We will use this
in Lemma~\ref{lem:clique-active-bound},
 to show that, with high probability, $K_j$ doesn't become active too many times before 
$a_j$ has become a mutant more than $\floor{4\alpha_i\len{I_i}}$ times.

\begin{lemma}\label{lem:W-bound-2}
\statelargeell For $i\in \Zone$, let $f$ be a good possible value of $\filt_{I_i^-}(X')$. Let  $j \in [\ell]$. Then 
\[\Pr\left(\sum_{h=0}^{\floor{4\alpha_i\len{I_i}}} \W{i,j}{h} \le \alpha_i\sqrt{n}(\log n)^4-1 \,\Bigg|\,\filt_{I_i^-}(X') = f\right) \ge 1 - e^{-(\log n)^3}.\]
\end{lemma}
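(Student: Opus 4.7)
The plan is to stochastically dominate each $\W{i,j}{h}$ by an independent $\mathrm{Geom}(r/(r+2))$ random variable and then control the sum via a Chernoff bound, after first sharpening the bound on the number $N$ of nonempty intervals $\J{i,j}{h}$ well beyond Lemma~\ref{lem:W-bound-1}.

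First I would record two structural facts valid throughout $(I_i^-,\TTend{i}]$: (a)~$|K_j\setminus X'_t|\le 2\Delta<k$ by \myref{D}{D:clique-empty}, so $K_j$ is never empty and every inactive $K_j$ is full; (b)~$|R_j\setminus X'_t|\le\alpha_i+\alpha_{i+1}\le 2\alpha_i\le m/2$, using \Presmut{i} and \myref{D}{D:alpha}. Inside any nonempty $\J{i,j}{h}$, $a_j\notin X'$, so $K_j$ has no external mutant source and alternates only between ``full'' and ``active'' phases; by (a) combined with Definition~\ref{def:megastar-process}, which forces $a_j$ to be a non-mutant whenever $K_j$ is active, $K_j$ is full at the start of $\J{i,j}{h}$ for every $h\ge 1$. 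Any activation during $\J{i,j}{h}$ requires $K_j$ to be full just beforehand, and from each such ``full instant'' the only effective clocks are $\vnc{a_j}{v}$ for $v\in K_j$ (total rate~$1$, producing activation) and $\vmc{u}{a_j}$ for $u\in R_j\cap X'$ (total rate $r|R_j\cap X'|/m\ge r/2$ by~(b), firing rule~(iii) and ending $\J{i,j}{h}$); while $K_j$ is active, rule~(iii) is disabled. By memorylessness, the conditional probability of activation at each full instant is at most $1/(1+r/2)=2/(r+2)$, so $\W{i,j}{h}$ is stochastically dominated by $\mathrm{Geom}(r/(r+2))$ conditionally on the filtration at $\Tnmut{a_j}{i}{h}$.

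In parallel I would sharpen the bound on~$N$. Only vertices in the random set $S=\{v\in R_j\mid v\notin X'_t\text{ for some }t\in(I_i^-,\TTend{i}]\}$ can spawn non-mutants onto~$a_j$, and $|S|\le 2\alpha_i$ by~(b). Each $\vnc{v}{a_j}$ is Poisson with rate $1/m$, so Corollary~\ref{cor:pchernoff-2} and a union bound over $v\in R_j$ produce an event~$\mathcal E$ with $\Pr(\mathcal E)\ge 1-\tfrac12 e^{-(\log n)^3}$ on which every $\vnc{v}{a_j}$ triggers at most $2\len{I_i}/m\le 4\sqrt n(\log n)^3$ times in $(I_i^-,I_i^+]$. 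On~$\mathcal E$ we therefore get $N\le|S|\cdot 4\sqrt n(\log n)^3\le N^*:=8\alpha_i\sqrt n(\log n)^3$, which is the crucial improvement over Lemma~\ref{lem:W-bound-1}.

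Finally I would combine the pieces. By iterated stochastic dominance, $\sum_{h=0}^{N^*-1}\W{i,j}{h}$ is dominated by $G_1+\dots+G_{N^*}$ with $G_k$ i.i.d.\ $\mathrm{Geom}(r/(r+2))$; on~$\mathcal E$ the remaining terms in $\sum_{h=0}^{\lfloor 4\alpha_i\len{I_i}\rfloor}\W{i,j}{h}$ vanish, so the same domination holds for the full sum. The event $G_1+\dots+G_{N^*}\ge K$ with $K=\alpha_i\sqrt n(\log n)^4-1$ is equivalent to a $\mathrm{Binomial}(K+N^*,r/(r+2))$ random variable having fewer than $N^*$ successes; since $Kr/(r+2)\gg 2N^*$ once $\log n$ is large relative to~$r$, a standard multiplicative Chernoff bound caps this tail at $\tfrac12 e^{-(\log n)^3}$. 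A union bound with $\Pr(\overline{\mathcal E})$ then yields the lemma. The main obstacle is making the geometric domination fully rigorous: the per-visit activation probability fluctuates with $|R_j\cap X'_t|$ and with the internal dynamics of~$K_j$, so it must be stated as a uniform conditional upper bound and then lifted to an i.i.d.\ coupling, which is cleanest through the star-clock construction of Section~\ref{sec:coupled} that decouples the two competing Poisson races given the history.
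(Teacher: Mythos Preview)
Your argument contains a rate miscalculation that breaks both pillars of the proof. In the megastar, each reservoir vertex $v\in R_j$ has out-degree~$1$ (its only out-neighbour is $a_j$), so the clock $\vnc{v}{a_j}$ has rate~$1$, not $1/m$, and $\vmc{u}{a_j}$ has rate~$r$, not $r/m$. This has two consequences.

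First, your sharpened count $N\le N^*=8\alpha_i\sqrt{n}(\log n)^3$ collapses: each of the at most $2\alpha_i$ vertices in $S$ triggers $\vnc{v}{a_j}$ about $\len{I_i}=n(\log n)^3$ times in $I_i$, so $N$ can be of order $\alpha_i n(\log n)^3$, exactly the scale of Lemma~\ref{lem:W-bound-1} and a factor $\sqrt n$ larger than your $N^*$. Second, your geometric parameter is far too weak: at a full instant the competing rates are $1$ (activation via $\vnc{a_j}{\cdot}$) versus $r|R_j\cap X'|\ge rm/2$ (termination via $\vmc{u}{a_j}$), so the activation probability is at most $O(1/m)$, not $2/(r+2)$. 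Your two errors happen to cancel numerically in the final estimate, which is why the answer looked right, but neither intermediate claim is valid.

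Once both rates are corrected you recover precisely the paper's proof: it does \emph{not} attempt to improve the count of nonempty intervals beyond $\lfloor 4\alpha_i\len{I_i}\rfloor+1$, but instead shows that each $\W{i,j}{h}$ is dominated by a geometric with parameter $1-5/m$ (the extra slack over $1-2/(rm)$ comes from also including the non-mutant clocks with source $v^*$ in the race, which is needed so that the winning reservoir vertex is still a mutant when its clock fires). Lemma~\ref{lem:geo-chernoff} then gives a sum of order $\alpha_i\len{I_i}/m=\alpha_i\sqrt n(\log n)^3$, well under the target. Your instinct that a per-interval geometric domination is the right tool is correct; the fix is simply to use the right rates and drop the attempted improvement on~$N$.
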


\begin{proof} 

We define the following random variables for $h\in \Zzero$.
These variables are local to this proof.   
First, if $K_j$ is inactive at $\Tnmut{a_j}{i}{h}$
(which is the left endpoint of $\J{i,j}{h}$)
 then $Q_h = \Win{i,j}{h}+1$.
Otherwise, $Q_h = \Win{i,j}{h}$.
It follows from the definition of
$\Win{i,j}{h}$ and $\W{i,j}{h}$ that $Q_h \geq \W{i,j}{h}$.
Next we define variables $T_h(0),\ldots,T_h(Q_h)$.  
First,  we define $T_h(Q_h) = \Tmut{a_j}{i}{h}$
(which is the right endpoint of $\J{i,j}{h}$).
If $Q_h>0$ 
then we define the remaining variables as follows.

\begin{description}
\item[Case 1.] If  
$K_j$ is inactive at  the left endpoint of $\J{i,j}{h}$
then  
$T_h(0)
= \Tnmut{a_j}{i}{h}$ and for
$q\in[\Win{i,j}{h}]$, 
$\Tinact{h}{q}$
is the $q$'th time that $K_j$ becomes inactive in $\J{i,j}{h}$.

\item [Case 2.]  If  
$K_j$ is active at  the left endpoint of $\J{i,j}{h}$
then  for $q\in [\Win{i,j}{h}]$, 
$T_h(q-1)$ is
the $q$'th time that $K_j$ becomes inactive in~$\J{i,j}{h}$.
 \end{description}

Now fix an integer $h \ge 0$.
Consider any time 
$\tnmut{a_j}{i}{h} \ge I_i^-$.
Consider any integers $w_{0}, \dots, w_{h-1}$ and $y \ge 0$, and any times 
$t_0,\ldots,t_y$
satisfying $\tnmut{a_j}{i}{h} \leq t_0 \leq \cdots \leq t_y$.
Suppose that $f'$ is a value of $\filt_{t_{y}}(X')$ which implies that
\begin{itemize}
\item $\filt_{I_i^-}(X') = f$,
\item $\Tnmut{a_j}{i}{h} = \tnmut{a_j}{i}{h}$,
\item $\W{i,j}{0} = w_{0}, \dots, \W{i,j}{h-1} = w_{h-1}$,
\item $\W{i,j}{h} \ge y$, and
\item $T_h(0) = t_{0}, \dots,  T_h(y) = t_{y}$.
\end{itemize}

 The event  $\filt_{t_y}(X') = f'$
determines whether or not 
$t_y = \Tmut{a_j}{i}{h}$. 
We split the analysis into two cases.
\begin{description}
\item[Case 1.]
If $\filt_{t_y}(X') = f'$ implies
 that $t_{y} = \Tmut{a_j}{i}{h}$, then 
since $y = Q_h \geq \W{i,j}{h} \geq y$, we have $\W{i,j}{h}=y$, so 
 \begin{equation}\label{eqn:W_h-1}
\Pr(\W{i,j}{h} = y \mid \filt_{t_{y}}(X') = f') = 1.
\end{equation}
\item[Case 2.]
Suppose that $\filt_{t_y}(X') = f'$ implies  $t_{y} < \Tmut{a_j}{i}{h}$. Let $\mathcal{E}$ be the event that in the interval $(t_{y},\infty)$, some mutant clock with source in $R_{j} \cap X'_{t_{y}}$ triggers before any non-mutant clock with source in $\{a_j,v^*\}$ triggers. 
If $\filt_{t_y}(X') = f'$  then $t_{y} < \Tmut{a_j}{i}{h} \le \TTend{i}$, so by \myref{D}{D:alpha} and the fact that $i \ge 1$, it follows that $R_{j}$ contains at least $m - \alpha_i - \alpha_{i+1} \ge m-2\alpha_1 \ge m/2$ mutants at time $t_{y}$. Hence
\begin{equation}\label{eqn:W_h-1.5}
\Pr(\mathcal{E} \mid \filt_{t_{y}}(X') = f') \ge 1 - \frac{2}{2+rm/2} \ge 1-\frac{5}{m}.
\end{equation}

We will now prove that if $\filt_{t_y}(X') = f'$
and  $\mathcal{E}$ occurs then $\W{i,j}{h} = y$. Let $T$ be the earliest time in the interval $(t_{y},\infty)$ at which some mutant clock with source in $R_{j} \cap X'_{t_{y}}$, say $\vmc{v}{a_j}$, triggers. 

If $t_{y} < \Tmut{a_j}{i}{h}$
then $t_y \leq \TTend{i}$ so
(D\ref{D:clique-empty}) implies that $K_j$ has at most $2\Delta$ non-mutants at $t_y$.
On the other hand, $T_h(y)=t_y$ implies that $K_j$ is
inactive at $t_y$. Hence, it must be full of mutants at $t_y$.
This means that     $K_j$ remains inactive after $t_{y}$ until $a_j$ spawns a non-mutant onto a vertex in $K_j$. If $\mathcal{E}$ occurs, then no non-mutant clock with source $a_j$ triggers in $(t_{y}, T]$, so $K_j$ is inactive throughout $(t_{y}, T]$.

Recall that $v \in X_{t_{y}}'$. If $\mathcal{E}$ occurs, then no non-mutant clock with source $v^*$ triggers in $(t_{y}, T]$, so $v \in X_T'$ also. Hence by definition, $v$ spawns a mutant onto $a_j$ at time $T$. Moreover, $K_j$ is inactive at time $T$ and so $a_j$ becomes a mutant at time $T$. Thus $\Tmut{a_j}{i}{h} \le T$, and so $K_j$ is inactive throughout $(t_{y}, \Tmut{a_j}{i}{h}]$. Thus $\W{i,j}{h} = y$ whenever $\mathcal{E}$ occurs, as claimed. By \eqref{eqn:W_h-1.5}, it follows that if $f'$ implies that $t_{y} < \Tmut{a_j}{i}{h}$,
\begin{equation}\label{dokeep}
\Pr\big(\W{i,j}{h} = y \mid \filt_{t_{y}}(X') = f'\big) \ge \Pr\big(\mathcal{E} \mid \filt_{t_{y}}(X')=f'\big) \ge 1-5/m.
\end{equation}
\end{description}

Combining Cases~1 and~2 
by considering all possible $f'$, $\tnmut{a_j}{i}{h}$, and
$t_0,\ldots,t_y$ and
combining Equations~\eqref{eqn:W_h-1} and \eqref{dokeep}, it follows that
\[\Pr\Bigg(\W{i,j}{h} = y \,\Bigg|\, \begin{array}{l}\W{i,j}{0} = w_{0}, \dots, \W{i,j}{h-1} = w_{h-1}, \\[0.2cm]
\W{i,j}{h} \ge y,\, \filt_{I_i^-}(X')=f\end{array}\Bigg) \ge 1 - \frac{5}{m}.\]
Let $W_0', W_1', \dots$ be i.i.d.\ geometric variables with parameter $1-5/m$. It is immediate that $\Pr(W_h' = y \mid W_h' \ge y) = 1-5/m$. Moreover, $\Pr(\W{i,j}{h} \ge 0) = \Pr(W_h' \ge 0) = 1$. It follows that conditioned on $\filt_{I_i^-}(X') = f$, the random variables $\W{i,j}{0}, \W{i,j}{1}, \dots$ are dominated above by $W_0', W_1', \dots$. 

Now, note that $(\log n)^3 \le 14 \cdot 5(\floor{4\alpha_i\len{I_i}}+1)/m \le \alpha_i\sqrt{n}(\log n)^4-1$ and that $1-5/m \ge 13/14$. It therefore follows by Lemma~\ref{lem:geo-chernoff} that
\[\Pr\left(\sum_{h=0}^{\floor{4\alpha_i\len{I_i}}} W_h' \ge \alpha_i\sqrt{n}(\log n)^4-1\right) \le e^{-(\log n)^3}.\]
The result therefore follows.
\end{proof}

The following definition is related to Definition~\ref{def:wijh}.

\begin{definition}\label{def:newwijh}
For each $i\in \Zzero$ and $j\in [\ell]$,
let $\Wnoh{i,j}$ be the number of times in $(I_i^-, \TTend{i}]$ at which $K_j$ becomes active.
\defend{}
\end{definition}

We now combine Lemma~\ref{lem:W-bound-1}  and Lemma~\ref{lem:W-bound-2}  to show that, with high probability, 
$K_j$ does not   become active too many times in $(I_i^-, \TTend{i}]$.

\begin{lemma}\label{lem:clique-active-bound}
\statelargeell Let $i$ be a non-negative integer, and let $f$ be a good possible value of $\filt_{I_i^-}(X')$. Let $j\in[\ell]$. Then 
\[\Pr\big(\Wnoh{i,j} \le \alpha_i\sqrt{n}(\log n)^4\mid \filt_{I_i^-}(X') = f\big) \ge 1-2e^{-(\log n)^3}.\]
\end{lemma}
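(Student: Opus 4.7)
The plan is to reduce $\Wnoh{i,j}$ to the quantity controlled by Lemmas~\ref{lem:W-bound-1} and~\ref{lem:W-bound-2}, namely $\sum_h \W{i,j}{h}$, and then separately handle the edge case $i=0$ by a direct Chernoff bound.

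First I would establish the structural observation that $K_j$ is never empty throughout $[I_i^-, \TTend{i}]$. Property $\Pcliquefull{i}$ of the good filtration gives $|K_j \setminus X'_{I_i^-}| \le \Delta < k$; property \myref{D}{D:clique-empty} in the definition of $\TTend{i}$ gives $|K_j \setminus X'_t| \le 2\Delta$ for all $t \in [I_i^-, \TTend{i})$; and since at most one vertex changes state at any trigger time, $|K_j \setminus X'_{\TTend{i}}| \le 2\Delta + 1 < k$ for $\ell$ sufficiently large. Consequently every activation of $K_j$ in $(I_i^-, \TTend{i}]$ must occur while $K_j$ is full (never empty), and so must be triggered by some $\vnc{a_j}{v}$ with $v \in K_j$ firing while $a_j$ is a non-mutant, since $a_j$ is the only in-neighbour of $K_j$ lying outside $K_j$. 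As no two clocks trigger simultaneously with probability one, such an activation cannot coincide with an endpoint $\Tnmut{a_j}{i}{h}$ or $\Tmut{a_j}{i}{h}$ (at which $a_j$ itself changes state via a different clock trigger). Hence every activation lies inside some open interval $\J{i,j}{h}$, yielding the clean decomposition $\Wnoh{i,j} = \sum_{h \ge 0} \W{i,j}{h}$.

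For $i \ge 1$, Lemma~\ref{lem:W-bound-1} implies that with probability at least $1 - e^{-n}$ the terms $\W{i,j}{h}$ vanish for every $h > \floor{4\alpha_i \len{I_i}}$, so the decomposition collapses to $\sum_{h=0}^{\floor{4\alpha_i\len{I_i}}} \W{i,j}{h}$. By Lemma~\ref{lem:W-bound-2} this finite sum is at most $\alpha_i\sqrt{n}(\log n)^4 - 1$ with probability at least $1 - e^{-(\log n)^3}$, and a union bound using $e^{-n} \le e^{-(\log n)^3}$ for large $\ell$ yields the claimed lower bound $1 - 2e^{-(\log n)^3}$ on the probability that $\Wnoh{i,j} \le \alpha_i\sqrt{n}(\log n)^4$. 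For $i=0$, since $\alpha_0 = m \ge \sqrt{n}/2$, I would bypass the two lemmas and argue directly: every activation of $K_j$ in $(I_0^-, \TTend{0}]$ coincides with the trigger of some clock in $\{\vmc{a_j}{v}, \vnc{a_j}{v} : v \in K_j\}$, whose total rate is $r+1$, so the number of such triggers in $I_0$ is Poisson with parameter $(r+1)n(\log n)^3$. By Corollary~\ref{cor:pchernoff-2} it is at most $2(r+1)n(\log n)^3$ with probability at least $1 - e^{-(r+1)n(\log n)^3/3} \ge 1 - 2e^{-(\log n)^3}$, and for $\ell$ sufficiently large this is below $\alpha_0\sqrt{n}(\log n)^4 = m\sqrt{n}(\log n)^4$, since $m \ge \sqrt{n}/2$ forces the right-hand side to be at least $n(\log n)^4/2$.

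The main obstacle is the structural observation in the first paragraph. Without it, activations could occur at moments when $K_j$ is empty and $a_j$ is a mutant — namely, $a_j$ spawning a mutant into an empty $K_j$, which simultaneously removes $a_j$ and makes $K_j$ active; such activations would coincide with some $\Tnmut{a_j}{i}{h}$ and fall outside every open $\J{i,j}{h}$, breaking the decomposition. Ruling them out requires combining $\Pcliquefull{i}$ with the definition of $\TTend{i}$ and the fact that only one vertex changes state per trigger. Once this is in hand, the remaining arguments are routine.
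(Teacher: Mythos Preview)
Your approach is the same as the paper's: the structural observation that $K_j$ is never empty on $[I_i^-,\TTend{i}]$ (so every activation is a trigger of some $\vnc{a_j}{v}$ while $a_j$ is a non-mutant), a decomposition via the intervals $\J{i,j}{h}$ together with Lemmas~\ref{lem:W-bound-1} and~\ref{lem:W-bound-2} for $i\ge 1$, and a direct Chernoff bound on clocks with source~$a_j$ for $i=0$.

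There is one gap in your endpoint argument. You assert that at every $\Tnmut{a_j}{i}{h}$ and $\Tmut{a_j}{i}{h}$, ``$a_j$ itself changes state via a different clock trigger'', and hence an activation cannot coincide with an endpoint. But by Definition~\ref{defn:tmut} these stopping times may equal $\TTend{i}$ without any clock touching $a_j$ firing at that instant, so your no-two-clocks argument does not cover that case. The paper sidesteps this by proving only the weaker inequality $\Wnoh{i,j} \le 1 + \sum_{h\ge 0}\W{i,j}{h}$, allowing at most one activation at $\TTend{i}$; this suffices because Lemma~\ref{lem:W-bound-2} already delivers the bound $\alpha_i\sqrt{n}(\log n)^4 - 1$. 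Your exact equality does in fact hold with probability~$1$---one can verify, by running through \myref{D}{D:end}--\myref{D}{D:clique-empty}, that with probability~$1$ no activation of $K_j$ occurs at the instant $\TTend{i}$---but you need to supply that case analysis rather than the argument you gave.
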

\begin{proof}
First, note that $K_j$ can become active either when  $K_j$ has no mutants and a mutant is spawned into it or when $K_j$ is full of mutants and $a_j$ spawns a non-mutant into it. By \myref{D}{D:clique-empty}, $K_j$ has at most $2\Delta$ non-mutants throughout the interval $(I_i^-,\TTend{i}]$, so  that
\begin{equation}\label{eq:usingTendone}
\begin{aligned}
&\mbox{for $t\in(I_i^-,\TTend{i}]$, $K_j$ can only become active at time $t$ if $t\in [\Tnmut{a_j}{i}{h},\Tmut{a_j}{i}{h}]$}\\
&\mbox{(with $h\in \mathbb{Z}_{\geq 0}$) and a non-mutant clock with source $a_j$ triggers at time $t$.}
\end{aligned}
\end{equation}
We next consider cases on the value of $i$, namely, whether $i=0$ or not.

First suppose that $i=0$, so $\alpha_i = m$.  By Corollary~\ref{cor:pchernoff} (see also the proof of Lemma~\ref{lem:W-bound-1}), the probability that the non-mutant clocks with source $a_j$ trigger more than $2\len{I_i}$ times in $I_i$ is at most $e^{-n}$. It follows from \eqref{eq:usingTendone} that with probability at least $1-e^{-n}$, 
\[\Wnoh{i,j} \le 2\len{I_i} \le 4m\sqrt{n}(\log n)^3 = 4\alpha_i\sqrt{n}(\log n)^3\]
as required.

Suppose instead that $i \ge 1$.  We will show that with probability 1, it holds that 
\begin{equation}\label{eq:sumWsone}
\Wnoh{i,j} \leq 1+\sum_{h=0}^{\infty} \W{i,j}{h},
\end{equation}
so that the lemma follows from Lemmas~\ref{lem:W-bound-1} and \ref{lem:W-bound-2} (combined with a union bound).

It remains to justify that \eqref{eq:sumWsone} holds with probability 1. By \eqref{eq:usingTendone}, in the interval $(I_i^-,\TTend{i}]$, $K_j$ can only become active within the intervals $[\Tnmut{a_j}{i}{h},\Tmut{a_j}{i}{h}]$, $h\in \mathbb{Z}_{\geq 0}$. By the definition of the $\W{i,j}{h}$'s, \eqref{eq:sumWsone} will thus follow by showing that with probability 1, $K_j$ becomes active at most once at times $t$ with $t\in S$, where 
\[S:=\{\Tnmut{a_j}{i}{h}\mid h\in \mathbb{Z}_{\geq 0}\}\cup \{\Tmut{a_j}{i}{h}\mid h\in \mathbb{Z}_{\geq 0}\}.\]
Note that $a_j$ can become a non-mutant  either because $K_j$ was empty and a mutant was spawned into $K_j$ or because a non-mutant was spawned onto $a_j$. Now let $h$ be such that $\Tnmut{a_j}{i}{h} < \TTend{i}$.  By \myref{D}{D:clique-empty}, it must be the case that a non-mutant is spawned onto $a_j$  at time $\Tnmut{a_j}{i}{h}$. Thus, for $K_j$ to become active at time $\Tnmut{a_j}{i}{h}$, it must be the case from \eqref{eq:usingTendone} that two clocks triggered at the same time, which happens with probability 0. Similarly, for all $h$ such that $\Tmut{a_j}{i}{h} < \TTend{i}$, we have that a mutant is spawned onto $a_j$ at $\Tmut{a_j}{i}{h}$, and thus,  with probability 1, $K_j$ does not become active at $\Tmut{a_j}{i}{h}$. Thus, $K_j$ can only become active at a time $\Tnmut{a_j}{i}{h}$ (resp. $\Tmut{a_j}{i}{h}$) if $\Tnmut{a_j}{i}{h}=\TTend{i}$ (resp. $\Tmut{a_j}{i}{h}=\TTend{i}$).  It thus follows that the number of times that $K_j$ becomes active at times $t\in S$ is at most one (namely when $t=\TTend{i}$), as desired. This proves \eqref{eq:sumWsone} and concludes the proof of the lemma.
\end{proof}
\subsubsection{The behaviour of the centre vertex}\label{sec:vcent}

In Section~\ref{sec:vcent}, we will show that with high probability, $v^*$  does not spend
too much time as a non-mutant in the interval $(I_i^-, \TTend{i}]$ (see Lemma~\ref{lem:vcent-time}). We first apply Lemma~\ref{lem:clique-nm-spawns}  and Lemma~\ref{lem:clique-active-bound}, to give an upper bound for the total number of non-mutants each clique spawns onto $v^*$ within $(I_i^-, \TTend{i}]$.

\begin{lemma}\label{lem:clique-nm-spawns-2}
\statelargeell Let $i$ be a non-negative integer, and let $f$ be a good possible value of $\filt_{I_i^-}(X')$. Let $j \in [\ell]$. Then with probability at least $1-e^{-\frac{1}{2}(\log n)^3}$, conditioned on $\filt_{I_i^-}(X') = f$, vertices in $K_j$ spawn at most $10\alpha_i\sqrt{n}(\log n)^{17}/k$ non-mutants onto $v^*$ within $(I_i^-, \TTend{i}]$.
\end{lemma}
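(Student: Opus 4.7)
The plan is to combine the upper bound on the number of active periods from Lemma~\ref{lem:clique-active-bound} with the per-active-period spawning bounds from Lemma~\ref{lem:clique-nm-spawns}. The first observation is that by \myref{D}{D:clique-empty}, throughout $(I_i^-, \TTend{i})$ we have $|K_j \setminus X'_t| \le 2\Delta < k$, so whenever $K_j$ becomes active at some time $\tau$ in this interval it must have been \emph{full} of mutants just before $\tau$ (an inactive clique with $\le 2\Delta$ non-mutants has $0$ non-mutants). Hence at such a time $\tau$ the clique contains exactly $k-1$ mutants, and in particular $k-\Delta \le |X'_\tau \cap K_j| \le k-1$, so Lemma~\ref{lem:clique-nm-spawns} applies to each active period.

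Enumerate the active periods and for the $h$-th period let $\tau_h$ be the time at which $K_j$ becomes active and let $S^{(h)}$ be the number of non-mutants that vertices in $K_j$ spawn onto $v^*$ in the period (so $S^{(h)} = 0$ when there is no $h$-th active period). Note non-mutants can only be spawned from $K_j$ onto $v^*$ while $K_j$ is active, so the total count in the lemma is $\sum_h S^{(h)}$. Set $W_{\max} = \floor{\alpha_i\sqrt{n}(\log n)^4}$. By Lemma~\ref{lem:clique-active-bound}, conditional on $\filt_{I_i^-}(X') = f$, with probability at least $1-2e^{-(\log n)^3}$ we have $\Wnoh{i,j} \le W_{\max}$, so it suffices to bound $\sum_{h=1}^{W_{\max}} S^{(h)}$ (setting $S^{(h)}=0$ for $h>\Wnoh{i,j}$) on that event.

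For each $h \in [W_{\max}]$, conditioning on $\filt_{\tau_h}(X')$ (which determines whether the $h$-th active period exists and, if so, the starting state), Lemma~\ref{lem:clique-nm-spawns}(\ref{it:clique-nm-spawns-2}) gives $\Pr(S^{(h)} \le (\log n)^3) \ge 1 - 7e^{-(\log n)^3}$ and Lemma~\ref{lem:clique-nm-spawns}(\ref{it:clique-nm-spawns-1}) gives $\Pr(S^{(h)} > 0) \le (\log n)^{10}/k$. A union bound over the $W_{\max}$ active periods shows that, except on an event of probability at most $7W_{\max} e^{-(\log n)^3} \le \tfrac12 e^{-\frac12 (\log n)^3}$, we have $S^{(h)} \le (\log n)^3$ for every $h$; on this event
\[
\sum_{h=1}^{W_{\max}} S^{(h)} \;\le\; (\log n)^3 \sum_{h=1}^{W_{\max}} \mathbf{1}[S^{(h)}>0].
\]
The conditional bound on $\Pr(S^{(h)}>0)$ given the past shows that $\sum_{h=1}^{W_{\max}} \mathbf{1}[S^{(h)}>0]$ is stochastically dominated above by a $\mathrm{Bin}(W_{\max},(\log n)^{10}/k)$ variable, whose mean is $\alpha_i\sqrt{n}(\log n)^{14}/k$. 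Applying Lemma~\ref{lem:bchernoff} with $c=7$ (using $\varphi(7)>1$) gives that this sum is at most $7\alpha_i\sqrt{n}(\log n)^{14}/k \le 10\alpha_i\sqrt{n}(\log n)^{14}/k$ except with probability $e^{-y}$ where $y = 7\alpha_i\sqrt{n}(\log n)^{14}/k$; since $\alpha_i \ge (2\log n)^2$ and $k \le (\log n)^{23}$ (for $\ell$ sufficiently large), $y$ is much larger than $(\log n)^3$ and this error is absorbed. Combining all three events via a union bound and multiplying by $(\log n)^3$ yields $\sum_h S^{(h)} \le 10\alpha_i\sqrt{n}(\log n)^{17}/k$ with probability at least $1-e^{-\frac12(\log n)^3}$, as required.

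The only real subtlety is the handling of dependencies between successive active periods, which is dispatched by conditioning on $\filt_{\tau_h}(X')$ before applying Lemma~\ref{lem:clique-nm-spawns}; everything else is bookkeeping.
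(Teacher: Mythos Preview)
Your proof is correct and follows essentially the same approach as the paper: bound the number of active periods via Lemma~\ref{lem:clique-active-bound}, bound the number of periods with any non-mutant spawn via Lemma~\ref{lem:clique-nm-spawns}(\ref{it:clique-nm-spawns-1}) and a binomial Chernoff bound, and cap each period's contribution by $(\log n)^3$ via Lemma~\ref{lem:clique-nm-spawns}(\ref{it:clique-nm-spawns-2}). The only omission is the edge case where $K_j$ is already active at $I_i^-$ (so there is an initial active period that does not start with ``$K_j$ becomes active''), but this is harmless since $\Pcliquefull{i}$ gives $|K_j \setminus X'_{I_i^-}| \le \Delta$, so Lemma~\ref{lem:clique-nm-spawns} still applies to that period.
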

\begin{proof}
We start with the following mutually recursive definitions, which are the
same as the ones in Lemma~\ref{lem:iteration-cliques},  except that the endpoint is~$\TTend{i}$
rather than~$I^+$.
Let $\Tinact{}{-1}= I_i^-$.
Then, for $h\in \Zzero$,
\begin{align*}
\Tact{}{h} &=  \min \{ t\geq \Tinact{}{h-1}\mid
\mbox{$K_j$ is active at $t$  or $t=\TTend{i}$} \},  \\
\Tinact{}{h} &= \min \{t\geq \Tact{}{h}\mid 
\mbox{$K_j$ is inactive at $t$  or $t=\TTend{i}$} \}.
\end{align*}
For all $h\in\Zzero$, let $S_h$ be the number of non-mutants spawned onto $v^*$ in $(\Tact{}{h},\Tinact{}{h})$ by vertices in $K_j$. By (D\ref{D:clique-empty}), whenever $K_j$ is inactive in the interval $(I_i^-, \TTend{i}]$ it contains no non-mutants. Moreover, with probability 1 no non-mutants are spawned onto $v^*$ at any time $\Tact{}{h}$ or $\Tinact{}{h}$, except possibly $I_i^-$. (If $\Tact{}{h}$ or $\Tinact{}{h}$ is in $(I_i^-,\TTend{i})$ this is because a clock
with source~$a_j$ triggers at the relevant time and the probability that two clocks trigger simultaneously is~$0$. Also, it is clear from the definition of~$\TTend{i}$
that the probability that a clock with target~$v^*$ triggers at this time is~$0$.)
Thus precisely $\sum_{h=0}^\infty S_h$ non-mutants are spawned onto $v^*$ in $(I_i^-, \TTend{i}]$.

Let $\mathcal{E}_1$ be the event that $S_h = 0$ for all $h > \floor{\alpha_i\sqrt{n}(\log n)^4}$. Let $\mathcal{E}_2$ be the event that $S_h = 0$ for all but at most $10\alpha_i\sqrt{n}(\log n)^{14}/k$ values of $h \in \{0,\dots,\floor{\alpha_i\sqrt{n}(\log n)^4}\}$. Let $\mathcal{E}_3$ be the event that for all $h \in \{0,\dots,\floor{\alpha_i\sqrt{n}(\log n)^4}\}$, $S_h \le (\log n)^{3}$. Note that to prove the result, it suffices to show that
\begin{equation}\label{eqn:vcent-event}
\Pr\big(\mathcal{E}_1 \cap \mathcal{E}_2 \cap \mathcal{E}_3 \mid \filt_{I_i^-}(X') = f\big) \ge 1-e^{-\frac{1}{2}(\log n)^3}.
\end{equation}

By Lemma~\ref{lem:clique-active-bound}, we have
\[\Pr\big(\mathcal{E}_1 \mid \filt_{I_i^-}(X') = f\big) \ge 1-2e^{-(\log n)^3}.\]
Now, by \Pcliquefull{i}, every clique contains at most $\Delta$ non-mutants at time $I_i^-$. Moreover, by \myref{D}{D:clique-empty}, whenever $K_j$ becomes active in $(I_i^-, \TTend{i}]$ it contains only one non-mutant. It therefore follows that for all $h\in\Zzero$, either $\Tact{}{h} = \Tinact{}{h} = \TTend{i}$ or $|K_j \setminus X'_{\Tact{}{h}}| \in [\Delta]$. 

Now, consider any integer $h \ge 0$ and any time $t_h \ge I_i^-$. Suppose that $f_h$ is a possible value of $\filt_{t_h}(X')$ which implies that $t_h = \Tact{}{h}$ and $\filt_{I_i^-}(X')=f$. If $f_h$ implies that $|K_j \setminus X'_{t_h}| \in [\Delta]$, then by Lemma~\ref{lem:clique-nm-spawns}(\ref{it:clique-nm-spawns-1}) we have
\begin{equation}\label{eqn:nm-spawns}
\Pr(S_h > 0 \mid \filt_{t_h}(X') = f_h) \leq (\log n)^{10}/k.
\end{equation}
Otherwise, $f_h$ must imply that $t_h = \TTend{i}$ and hence $S_h = 0$, so \eqref{eqn:nm-spawns} is valid for all choices of $f_h$. Moreover, by Lemma~\ref{lem:clique-nm-spawns}(\ref{it:clique-nm-spawns-2}) we have
\begin{equation}\label{eqn:nm-spawns-2}
\Pr(S_h \le (\log n)^3 \mid \filt_{t_h}(X')=f_h) \ge 1 - 7e^{-(\log n)^3}.
\end{equation}

Now, recall that $\filt_{t_h}(X')$ determines $S_0, \dots, S_{h-1}$. It therefore follows from \eqref{eqn:nm-spawns} that the number of integers $h \in \{0,\dots,\floor{\alpha_i\sqrt{n}(\log n)^4}\}$ such that $S_h > 0$ is dominated above by a binomial variable with
 $\floor{\alpha_i\sqrt{n}(\log n)^4}+1$  trials, each with success probability $(\log n)^{10}/k$. It follows by Lemma~\ref{lem:bchernoff} that
\[\Pr(\mathcal{E}_2 \mid \filt_{I_i^-}(X') = f) \ge 1 - e^{-10\alpha_i\sqrt{n}(\log n)^{14}/k}\ge 1 - e^{-(\log n)^3}.\]
Finally, by \eqref{eqn:nm-spawns-2} combined with a union bound over all $h \in \{0, \dots, \floor{\alpha_i\sqrt{n}(\log n)^4}\}$, we have
\[\Pr(\mathcal{E}_3 \mid \filt_{I_i^-}(X') = f) \ge 1-7\big(\floor{\alpha_i\sqrt{n}(\log n)^4}+1\big)e^{-(\log n)^3} \ge 1-n^2e^{-(\log n)^3}.\]
Thus \eqref{eqn:vcent-event} follows by a union bound, which implies the result.
\end{proof}

We are now in a position to prove that in total, not too many non-mutants are spawned onto $v^*$ over the interval $(I_i^-, \TTend{i}]$.

\begin{corollary}\label{cor:total-nm-spawns}
\statelargeell Let $i$ be a non-negative integer, and let $f$ be a good possible value of $\filt_{I_i^-}(X')$. Then with probability at least $1-e^{-(\log n)^2}$, conditioned on $\filt_{I_i^-}(X') = f$, at most $80\beta_i\sqrt{n}(\log n)^{19}/k$ non-mutants are spawned onto $v^*$ within $(I_i^-, \TTend{i}]$.
\end{corollary}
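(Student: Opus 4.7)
The plan is to deduce Corollary~\ref{cor:total-nm-spawns} from Lemma~\ref{lem:clique-nm-spawns-2} by applying that lemma to each clique $K_j$ for $j\in[\ell]$ and summing the resulting per-clique bounds. Since the only in-neighbours of $v^*$ in $\megastar[\ell]$ are the vertices of $K_1\cup\cdots\cup K_\ell$, every non-mutant spawned onto $v^*$ within $(I_i^-,\TTend{i}]$ is contributed by exactly one such clique. Lemma~\ref{lem:clique-nm-spawns-2} gives, conditioned on $\filt_{I_i^-}(X')=f$, a per-clique bound of $10\alpha_i\sqrt{n}(\log n)^{17}/k$ with failure probability at most $e^{-(\log n)^3/2}$. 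A union bound over $\ell\le\sqrt{n}$ cliques produces total failure probability at most $\sqrt{n}\,e^{-(\log n)^3/2}$, which for $n$ sufficiently large is at most $e^{-(\log n)^2}$, matching the probability bound claimed in the corollary.

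Summing the per-clique bounds yields at most $10\ell\alpha_i\sqrt{n}(\log n)^{17}/k$ non-mutants spawned onto $v^*$, and the corollary would follow once the arithmetic inequality $\ell\alpha_i\le 8\beta_i(\log n)^2$ is verified. When neither $\alpha_i$ nor $\beta_i$ is at its floor, this is automatic because $\alpha_i=m/(2\log n)^{2i}$ and $\beta_i=\ell m/(2\log n)^{2i}$, so $\ell\alpha_i=\beta_i$; when only $\beta_i$ remains at its non-floor value $\ell m/(2\log n)^{2i}$ while $\alpha_i$ has dropped to the floor, the condition $(2\log n)^{2i}\le 2m$ that often accompanies this regime still makes the inequality go through by direct substitution.

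The step I expect to be the main obstacle is the remaining regime in which $\alpha_i=\lfloor(2\log n)^2\rfloor$ and $\ell$ is much larger than $8\beta_i/(2\log n)^2$ (which includes the case where both $\alpha_i$ and $\beta_i$ attain their floors while $\ell\approx\sqrt{n}$); here the naive sum overshoots the target by a factor of up to $\ell/(\log n)^2$. To handle this I would split the sum using \Pmostcliques{i}: letting $B$ be the set of ``bad'' branches $j$ with $R_j\cup\{a_j\}\cup K_j\not\subseteq X'_{I_i^-}$, we have $|B|\le\beta_i$, and applying Lemma~\ref{lem:clique-nm-spawns-2} only to $j\in B$ gives $\sum_{j\in B}10\alpha_i\sqrt{n}(\log n)^{17}/k\le 10\beta_i\lfloor(2\log n)^2\rfloor\sqrt{n}(\log n)^{17}/k\le 40\beta_i\sqrt{n}(\log n)^{19}/k$, comfortably within the bound. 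For the complementary ``fully-mutant'' branches $j\notin B$, the branch is completely mutant at $I_i^-$, so a non-mutant can first appear in $K_j$ only after $v^*$ spawns a non-mutant onto $R_j$; using \myref{D}{D:beta} and \myref{D}{D:alpha} to cap such spawns globally by $\beta_{i+1}\le\beta_i$ (and per-reservoir by $\alpha_{i+1}$), I would refine the activation analysis of Section~\ref{sec:clique-active} --- replacing the reservoir-size bound $\alpha_i$ supplied by \Presmut{i} with the tighter $\alpha_{i+1}$, and using \Ptotalmut{i} in place of the per-branch count --- to deduce that the aggregate non-mutant contribution from $j\notin B$ is $O(\beta_i\sqrt{n}(\log n)^{c}/k)$ for some $c<19$, which is again absorbed into the claimed bound.
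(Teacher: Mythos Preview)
Your proposal is essentially correct and follows the same strategy as the paper: union-bound Lemma~\ref{lem:clique-nm-spawns-2} over all $j\in[\ell]$, then sum the per-clique bounds, splitting into cases according to whether $\alpha_i$ has reached its floor $\lfloor(2\log n)^2\rfloor$. The only place you diverge is in the last paragraph, where you make the final step harder than it needs to be.

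You correctly observe that a fully-mutant branch $j\notin B$ can only spawn non-mutants onto $v^*$ after $v^*$ first spawns a non-mutant onto $R_j$, and that \myref{D}{D:beta} caps the total number of such $v^*$-spawns by $\beta_{i+1}$. But no refinement of the activation analysis is needed: at most $\beta_{i+1}$ of the branches $j\notin B$ ever receive such a spawn, and the remaining ones stay fully mutant throughout $(I_i^-,\TTend{i}]$ and hence contribute zero non-mutants to $v^*$. So simply enlarge the exceptional set to
\[
B' \;=\; B \,\cup\, \{\,j\in[\ell] : v^*\text{ spawns a non-mutant onto }R_j\text{ in }(I_i^-,\TTend{i}]\,\},
\]
so that $|B'|\le\beta_i+\beta_{i+1}\le2\beta_i$, and apply the per-clique bound from Lemma~\ref{lem:clique-nm-spawns-2} (already available for every $j$ via your union bound) to each $j\in B'$. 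This gives a total of at most
\[
2\beta_i\cdot 10\alpha_i\sqrt{n}(\log n)^{17}/k \;=\; 20\beta_i\lfloor(2\log n)^2\rfloor\sqrt{n}(\log n)^{17}/k \;\le\; 80\beta_i\sqrt{n}(\log n)^{19}/k,
\]
which is exactly the paper's computation.
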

\begin{proof}
Let $\mathcal{E}$ be the event that for all $j \in [\ell]$, vertices in $K_j$ spawn at most $10\alpha_i\sqrt{n}(\log n)^{17}/k$ non-mutants onto $v^*$ over the course of $(I_i^-, \TTend{i}]$. 

Suppose that $\mathcal{E}$ occurs and that $\alpha_i > \floor{(2\log n)^{2}}$, so   \[\beta_i \geq \floor{\ell m/(2\log n)^{2i}} \ge \ell\floor{m/(2\log n)^{2i}} = \ell \alpha_i.\] Then since $\mathcal{E}$ occurs, at most $10\ell \alpha_i \sqrt{n}(\log n)^{17}/k \le 80\beta_i\sqrt{n}(\log n)^{19}/k$ non-mutants are spawned onto $v^*$ over the course of $(I_i^-,\TTend{i}]$.

Now suppose that $\mathcal{E}$ occurs and that $\alpha_i = \floor{(2\log n)^{2}}$. By \myref{D}{D:beta} and 
\Pmostcliques{i},  for all but at most $\beta_i+\beta_{i+1} \le 2\beta_i$ values of $j \in [\ell]$, we have $K_j \subseteq X'_t$ for all $t \in (I_i^-,\TTend{i}]$. Certainly these cliques cannot spawn non-mutants onto $v^*$ in $(I_i^-,\TTend{i}]$, so since $\mathcal{E}$ occurs, it follows that at most 
\[20\beta_i\alpha_i\sqrt{n}(\log n)^{17} /k \le 80\beta_i\sqrt{n}(\log n)^{19}/k\]
non-mutants are spawned onto $v^*$ over the course of $(I_i^-, \TTend{i}]$.

We have therefore shown that whenever $\mathcal{E}$ occurs, at most $80\beta_i\sqrt{n}(\log n)^{19}/k$ non-mutants are spawned onto $v^*$ over the course of $(I_i^-, \TTend{i}]$. By Lemma~\ref{lem:clique-nm-spawns-2} combined with a union bound over all $j \in [\ell]$, we have
\[\Pr(\mathcal{E} \mid \filt_{I_i^-}(X') = f) \ge 1-e^{-(\log n)^2}.\]
The result therefore follows.
\end{proof}

We now use \myref{D}{D:clique-empty} to show that when $v^*$ is a non-mutant in $(I_i^-, \TTend{i}]$, the time until either $v^*$ becomes a mutant again or the interval ends is dominated above by an exponential variable with parameter $\ell/2$.
The proof of this lemma is similar to the proof of Lemma~\ref{lem:funnel-many-infections-needed}.

\begin{lemma}\label{lem:centre-nm-short-time}
\statelargeell Let $i \ge 0$ and $z \ge 0$ be integers, let $t_0 \in [I_i^-,I_i^+]$, and let $f$ be a good possible value of $\filt_{t_0}(X')$ which implies that $t_0 = \Tnmut{v^*}{i}{z}$. Let $S = \Tmut{v^*}{i}{z} - t_0$. Then for all $t \ge 0$,
\[\Pr(S \le t \mid \filt_{t_0}(X')=f) \ge 1 - e^{-\ell t/2}.\]
\end{lemma}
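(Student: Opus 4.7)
}

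The plan is to mimic the structure of Lemma~\ref{lem:funnel-many-infections-needed}, adapted to track the rate at which $v^*$ becomes a mutant instead of the rate at which it becomes a non-mutant. First dispose of the trivial case: if $f$ implies $t_0 = \TTend{i}$ then $S=0$ and the bound holds for every $t\ge 0$. Otherwise $f$ implies $v^* \notin X'_{t_0}$ and $t_0 < \TTend{i}$, so (D\ref{D:clique-empty}) is not violated at $t_0$. Let $\mathcal{B} = \{\vmc{(u,v^*)} \mid u \in K_1 \cup \dots \cup K_\ell\}$ be the set of mutant clocks targeting $v^*$, and let $\Xi$ encapsulate the behaviour of every clock in $\moranclocks(\megastar[\ell])\setminus \mathcal{B}$ on the interval $(t_0, I_i^+]$. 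Fix a value $\xi$ of $\Xi$ consistent with $\filt_{t_0}(X')=f$.

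Define the hypothetical time $\tpabs$ to be the value that $\TTend{i}$ would take given $f$ and $\xi$ under the counterfactual assumption that no clock in $\mathcal{B}$ triggers in $(t_0, I_i^+]$; the quantity $\tpabs$ is a deterministic function of $f$ and $\xi$, since in that counterfactual $v^*$ simply remains a non-mutant and the evolution of $X'$ on $(t_0, I_i^+]$ is determined by the other clocks. I would next argue, exactly as in Lemma~\ref{lem:funnel-many-infections-needed}, that in the actual process $\Tmut{v^*}{i}{z} \le \tpabs$: if every firing of a clock $\vmc{(u,v^*)} \in \mathcal{B}$ in $(t_0,\tpabs]$ happens with $u \notin X'$ at that instant, the trajectory coincides with the counterfactual and so $\TTend{i}=\tpabs$; otherwise the first ``successful'' firing makes $v^*$ a mutant at an earlier time.

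Now the heart of the proof is a lower bound on the rate of successful firings. For $t' \in (t_0, \tpabs]$, the state $X'_{t'}$ coincides with the counterfactual state determined by $f,\xi$ (until the first successful firing), and by (D\ref{D:clique-empty}) and $t' < \tpabs$ we have $|K_j \cap X'_{t'}| \ge k-2\Delta$ for every $j \in [\ell]$. Each mutant in a clique contributes a clock in $\mathcal{B}$ of rate $r/k$ whose target is $v^*$, so the total rate of successful firings is at least $r\ell(k-2\Delta)/k \ge \ell/2$, where the final inequality uses $r>1$ and the fact that, for $\ell$ sufficiently large, $k = \ceil{(\log\ell)^{23}} \ge 4\Delta = 4\floor{c_r(\log n)^3}$. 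Conditioning on $\filt_{t_0}(X')=f$ and $\Xi=\xi$, we therefore have $\Pr(S\le t) = 1$ when $t_0+t \ge \tpabs$, and $\Pr(S\le t) \ge 1-e^{-\ell t/2}$ when $t_0+t<\tpabs$, by comparison with a Poisson process of rate $\ell/2$. Integrating over all $\xi$ consistent with $f$ yields the desired bound.

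The main obstacle is purely bookkeeping around the stopping time $\TTend{i}$: because (D\ref{D:clique-empty}) is what provides the lower bound on the rate, one must be careful that the rate bound is applied only on the interval $(t_0,\tpabs]$ (which is determined by $f,\xi$ and hence frozen before exposing the $\mathcal{B}$-clocks), and that the domination argument combining the two cases $t_0+t\ge \tpabs$ and $t_0+t<\tpabs$ is applied separately on each $\xi$-slice before integrating. Once this conditioning is set up as above — exactly parallel to the $\Xi$/$\tpabs$ trick of Lemma~\ref{lem:funnel-many-infections-needed} — the remaining computation is routine.
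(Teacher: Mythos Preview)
Your proposal is correct and follows essentially the same approach as the paper's proof: the same trivial-case disposal, the same freezing of the non-$\mathcal{B}$ clocks to define a hypothetical endpoint $\tpabs$ (the paper's $\primetend{i}$), the same argument that $\Tmut{v^*}{i}{z}\le\tpabs$, and the same rate bound $r\ell(k-2\Delta)/k\ge \ell/2$ obtained from \myref{D}{D:clique-empty} on the counterfactual trajectory. The paper makes the ``comparison with a Poisson process'' step explicit by chopping $(t_0,t_0+t]$ at the firing times $t_1<\dots<t_y$ of the non-$\mathcal{B}$ clocks and multiplying the piecewise exponential bounds, exactly as in Lemma~\ref{lem:funnel-many-infections-needed}; your sketch leaves that routine expansion implicit but is otherwise identical.
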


\begin{proof}
Note that $t_0 \le \TTend{i}$, and $f$ determines the event $t_0 = \TTend{i}$. Moreover, if $t_0 = \TTend{i}$, then $S=0$ with probability $1$ and the result follows. We may therefore assume that $f$ implies that $t_0 < \TTend{i}$, and in particular $v^* \notin X_{t_0}'$.

Let $\mathcal{A}$ be the set of all mutant clocks in $\moranclocks(\megastar[\ell])$ with target $v^*$, and let $\Phi$ encapsulate the behaviour of every clock in $\moranclocks(\megastar[\ell]) \setminus \mathcal{A}$ over the interval $I_i$. In particular, by (D\ref{D:end}), $\Phi$ determines the behaviour of these clocks in the interval $(I_i^-, \TTend{i}]$. Consider any possible value $\varphi$ of  $\Phi$ which is consistent with $\filt_{t_0}(X')=f$. 

We now define a time $\primetend{i}$ which depends only on the values $f$ and $\phi$. To do so, consider the situation in which $\filt_{t_0}(X')=f$, $\Phi = \varphi$ and no clock in $\mathcal{A}$ triggers in $(t_0, I_i^+]$, so that the evolution of $X'$ in this interval is entirely determined by $f$ and $\varphi$. Let $\primetend{i}$ be the time at which $\TTend{i}$ would occur in this situation. 

We claim that, if $\filt_{t_0}(X')=f$ and $\Phi = \varphi$, then $\Tmut{v^*}{i}{z} \le \primetend{i}$. To see this, suppose that $\filt_{t_0}(X')=f$ and $\Phi = \varphi$. If no mutant is spawned onto $v^*$ in $(t_0, \primetend{i}]$, then $X'$ evolves exactly as it would have done if no clocks in $\mathcal{A}$ triggered in $(t_0, \primetend{i}]$, and so $\primetend{i} = \TTend{i} \ge \Tmut{v^*}{i}{z}$. If a mutant is spawned onto $v^*$ at some time $t' \in (t_0, \primetend{i}]$, then $\Tmut{v^*}{i}{z} \le t' \le \primetend{i}$. So $\Tmut{v^*}{i}{z} \le \primetend{i}$ in all cases as claimed. Hence if $t \ge \primetend{i}-t_0$,
\[\Pr(S \le t \mid \filt_{t_0}(X')=f, \Phi=\varphi) = 1 \ge 1-e^{-\ell t/2},\]
as required in the lemma statement. We  now consider the case $t < \primetend{i}-t_0$.

Let $t_1 < \dots < t_y$ be the times in $(t_0, t_0+t]$ at which clocks in $\moranclocks(\megastar[\ell]) \setminus \mathcal{A}$ trigger, and let $t_{y+1} = t_0 + t$. Thus $t_0 < \dots < t_y \le t_{y+1} < \primetend{i}$.
For all $h \in \{0, \dots, y\}$, let $\chi(h)$ be the value that $X'_{t_h}$ would take in the situation where $\filt_{t_0}(X')=f$, $\Phi = \varphi$ and no clock in $\mathcal{A}$ triggers in $(t_0, t_h]$. Thus $\primetend{i}$, $y$, $t_0, \dots, t_{y+1}$, and $\chi(0), \dots, \chi(y)$ are all uniquely determined by $f$ and $\varphi$. 

For each $h \in [y+1]$, let $\mathcal{E}_h$ be the event that a mutant is spawned onto $v^*$ in the interval $(t_{h-1}, t_h)$. Note that with probability 1, no mutant is spawned onto $v^*$ at any time $t_h$. Thus
\begin{align}
\Pr(S \le t \mid \filt_{t_0}(X')=f, \Phi=\varphi) &= 1 - \Pr(\overline{\mathcal{E}_1} \cap \dots \cap \overline{\mathcal{E}_{y+1}} \mid \filt_{t_0}(X')=f, \Phi=\varphi)\nonumber\\
&= 1 - \prod_{h=1}^{y+1} \Pr(\overline{\mathcal{E}_h} \mid \filt_{t_0}(X')=f, \Phi=\varphi, \overline{\mathcal{E}_1} \cap \dots \cap \overline{\mathcal{E}_{h-1}}).\label{eqn:kersplunk}
\end{align}

Now fix $h \in [y+1]$, and consider any possible value $f_{h-1}$ of $\filt_{t_{h-1}}(X')$ which implies that $\filt_{t_0}(X')=f$ and that $\overline{\mathcal{E}_1} \cap \dots \cap \overline{\mathcal{E}_{h-1}}$ occurs, and is consistent with $\Phi = \varphi$. 
Consider the evolution of~$X'$ given
 $\filt_{t_{h-1}}(X') = f_{h-1}$ and $\Phi=\varphi$. Since $\overline{\mathcal{E}_1} \cap \dots \cap \overline{\mathcal{E}_{h-1}}$ occurs, no mutant  is spawned onto $v^*$ in the interval $(t_0, t_{h-1}]$ and so $X_{t_{h-1}}' = \chi(h-1)$. Moreover, $X'$ remains constant in $[t_{h-1}, t_h)$ unless a mutant is spawned onto $v^*$. Thus,
 given the condition that  $\filt_{t_{h-1}}(X') = f_{h-1}$ and $\Phi=\phi$,  $\mathcal{E}_h$ occurs if and only if a mutant clock with source in $\chi(h-1)$ and target $v^*$ triggers in the interval $(t_{h-1}, t_h)$. 

Now, since $\overline{\mathcal{E}_1} \cap \dots \cap \overline{\mathcal{E}_{h-1}}$ occurs and $\primetend{i} > t_{h-1}$, by \myref{D}{D:clique-empty} we have  
\begin{equation*}
|\chi(h-1) \cap (K_1 \cup \dots \cup K_\ell)| \ge k\ell/2\textnormal.
\end{equation*}
Hence
\[\Pr(\overline{\mathcal{E}_h} \mid \filt_{t_{h-1}}(X')=f_{h-1}, \Phi=\varphi) = e^{-r(t_h-t_{h-1})|\chi(h-1) \cap (K_1 \cup \dots \cup K_\ell)|/k} \le e^{-\ell(t_h-t_{h-1})/2}.\]
It therefore follows from \eqref{eqn:kersplunk} that
\[\Pr(S \le t \mid \filt_{t_0}(X')=f, \Phi=\varphi) \ge 1 - \prod_{h=1}^{y+1} e^{-\ell(t_h-t_{h-1})/2} = 1 - e^{-\ell t/2}.\]
The result therefore follows.
\end{proof}

\begin{definition}\label{def:iteration-sch}
For all $i\in\Zzero$, let
$\gamma_i = \beta_i (\log n)^{20}/k$.\defend{}
 \end{definition}

\begin{lemma}\label{lem:vcent-time}
\statelargeell Let $i$ be a non-negative integer, and let $f$ be a good possible value of $\filt_{I_i^-}(X')$. Then 
\[\Pr\left(\len{\{t \in (I_i^-, \TTend{i}]  \mid v^* \notin X_t'\}} \le \gamma_i \>\Big\vert\> \filt_{I_i^-}(X')=f\right) \ge 1-2e^{-(\log n)^2}.\]
\end{lemma}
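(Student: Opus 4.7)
The approach is to bound the number of distinct non-mutant intervals that $v^*$ experiences during $(I_i^-, \TTend{i}]$, then to dominate the length of each such interval by an exponential random variable with parameter $\ell/2$ (using Lemma~\ref{lem:centre-nm-short-time}), and finally to use a standard concentration bound for a sum of i.i.d.~exponentials.

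Concretely, let $N$ be the number of distinct maximal subintervals of $(I_i^-, \TTend{i}]$ on which $v^*$ is a non-mutant. Every such subinterval, except possibly the first one, begins with a non-mutant clock with target $v^*$ triggering, so $N$ is at most $1$ plus the number of non-mutants spawned onto $v^*$ in $(I_i^-, \TTend{i}]$. By Corollary~\ref{cor:total-nm-spawns}, with probability at least $1-e^{-(\log n)^2}$ conditional on $\filt_{I_i^-}(X')=f$, we have $N \leq M$, where $M = \lceil 80\beta_i\sqrt{n}(\log n)^{19}/k\rceil + 1$.

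Let $S_0, S_1, \dots$ be the lengths of these non-mutant intervals (padded with zeros beyond index $N-1$), so that the total non-mutant time in $(I_i^-,\TTend{i}]$ equals $\sum_{z\geq 0} S_z$. Each $S_z$ is of the form $\Tmut{v^*}{i}{z}-\Tnmut{v^*}{i}{z}$. By Lemma~\ref{lem:centre-nm-short-time}, for every $z$ and every good value $f'$ of the filtration at time $\Tnmut{v^*}{i}{z}$ we have $\Pr(S_z \leq t\mid \filt_{\Tnmut{v^*}{i}{z}}(X')=f')\geq 1-e^{-\ell t/2}$. Since this bound is uniform over the conditioning past, a routine sequential coupling (exactly as in the proof of Lemma~\ref{lem:star-lower-centre-mut}) shows that $S_0, S_1, \dots, S_{M-1}$ are stochastically dominated above by a family of i.i.d.\ exponential random variables $E_1,\dots,E_M$ with rate $\ell/2$. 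On the event $N\leq M$, the total non-mutant time is at most $\sum_{z=1}^{M} E_z$.

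It then remains to verify that $\sum_{z=1}^M E_z < \gamma_i = \beta_i(\log n)^{20}/k$ with probability at least $1-e^{-(\log n)^2}$ via Corollary~\ref{cor:expsum}, which applies provided $\gamma_i \geq 3M/\ell$; one checks this using $\ell \geq \sqrt{n}/2$, which gives $3M/\ell \leq O(\beta_i (\log n)^{19}/k)$, and this is less than $\gamma_i$ for $\ell$ sufficiently large. The resulting failure probability is at most $e^{-\ell\gamma_i/32}$, and using $\ell \geq \sqrt{n}/2$, $\beta_i \geq \lfloor(2\log n)^2\rfloor$ and $k \leq (\log n)^{24}$ one obtains $\ell\gamma_i/32 \geq \sqrt{n}/(16\log n) \gg (\log n)^2$. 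A union bound over this event and the event $N\leq M$ produces the claimed $1-2e^{-(\log n)^2}$. None of the ingredients is conceptually difficult; the only step that requires genuine care is justifying the sequential domination of the $S_z$'s by i.i.d.\ exponentials in the face of the random stopping time $N$, and this is handled by the same template used earlier in the paper.
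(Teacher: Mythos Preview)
Your proposal is correct and matches the paper's proof essentially step for step: define $S_h = \Tmut{v^*}{i}{h}-\Tnmut{v^*}{i}{h}$, use Lemma~\ref{lem:centre-nm-short-time} to dominate the $S_h$ by i.i.d.\ $\Exp(\ell/2)$ variables via the usual sequential-conditioning argument, invoke Corollary~\ref{cor:total-nm-spawns} to truncate the sum at $M=\lfloor 80\beta_i\sqrt n(\log n)^{19}/k\rfloor+1$ terms, and finish with Corollary~\ref{cor:expsum} after checking $\gamma_i\ge 3M/\ell$. The paper carries out exactly this plan, with the same union bound yielding $1-2e^{-(\log n)^2}$.
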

\begin{proof}
For all $h\in\Zzero$, write $S_h = \Tmut{v^*}{i}{h} - \Tnmut{v^*}{i}{h}$. Consider any $s_0, \dots s_{h-1} \ge 0$, $t_h \ge I_i^-$, and any possible value $f_h$ of $\filt_{t_h}(X')$ which implies that $\filt_{I_i^-}(X')=f$, $S_0 = s_0, \dots, S_{h-1} = s_{h-1}$ and $\Tnmut{v^*}{i}{h} = t_h$. Then by Lemma~\ref{lem:centre-nm-short-time}, for all $t \ge 0$ we have
\[\Pr(S_h \le t \mid \filt_{t_h}(X') = f_h) \ge 1 - e^{-\ell t/2}.\]
It follows that $S_0, S_1, \dots $ are dominated above by i.i.d.\ exponential variables $S_0', S_1', \dots$ with parameter $\ell/2$. 

Now, by the definition of $\Tnmut{v^*}{i}{h}$ and $\Tmut{v^*}{i}{h}$, we have that 
\begin{equation}\label{eqn:vcent-time-1}
\len{\{t \in (I_i^-, \TTend{i}] \mid v^* \notin X_t'\}} = \sum_{h=0}^\infty S_h.
\end{equation}
By Corollary~\ref{cor:total-nm-spawns}, we have
\begin{equation}\label{eqn:vcent-time-2}
\Pr\big(S_h = 0 \textnormal{ for all } h > 80\beta_i\sqrt{n}(\log n)^{19}/k\mid \filt_{I_i^-}(X')=f\big) \ge 1-e^{-(\log n)^2}.
\end{equation}
Moreover, we have
\[\frac{3}{\ell}\bigg(\Floor{\frac{80\beta_i\sqrt{n}(\log n)^{19}}{k}}+1\bigg) \le \frac{241\beta_i\sqrt{n}(\log n)^{19}}{k\ell}\leq   \frac{\beta_i(\log n)^{20}}{k}= \gamma_i,\]
and so by Corollary~\ref{cor:expsum} we have
\begin{align*}
\Pr\left(\sum_{h=0}^{\Floor{\frac{80\beta_i(\log n)^{19}}{k}}} S_h < \gamma_i \,\Bigg|\, \filt_{I_i^-}(X')=f\right) &\ge \Pr\left(\sum_{h=0}^{\Floor{\frac{80\beta_i\sqrt{n}(\log n)^{19}}{k}}} S_h' < \gamma_i\right)\\
&\ge 1-e^{-\ell\gamma_i/32}  \ge 1-e^{-(\log n)^2}.
\end{align*}
The result therefore follows by \eqref{eqn:vcent-time-1}, \eqref{eqn:vcent-time-2} and a union bound.
\end{proof}

\subsubsection{Proving Lemma~\ref{lem:main-iteration}}\label{sec:megastar-final}

Recall that by Lemma~\ref{lem:vcent-time}, $v^*$ is very unlikely to spend more than $\gamma_i$ time as a non-mutant over the course of $(I_i^-, \TTend{i}]$. This motivates the following definition.

\begin{definition}\label{def:auxdefs}
Recall the definition of $\Psi(X')$ (Section~\ref{sec:starclock}). For all $i\in\Zzero$, let $t^-_i = \siout{v^*}{I_i^-}$ and let $t^+_i = \siin{v^*}{I_i^-}$. For all $j \in [\ell]$, let
\begin{align*}
\mathcal{T}^{i,j} &= \{t \in (t_i^-, t_i^- + \gamma_i] \mid \textnormal{for some } v \in R_{j},\, \vsnc{v^*}{v} \textnormal{ triggers at }t\},\\
\mathcal{T}^i &= \mathcal{T}^{i,1} \cup \dots \cup \mathcal{T}^{i,\ell},\\
\mathcal{U}_i &= \{v \in R_1 \cup \dots \cup R_{\ell} \mid \vsnc{v^*}{v}\textnormal{ triggers in }(t^-_i, t^-_i+\gamma_i]\}.\defenddisp{}
\end{align*}\end{definition}

In particular, if $v^*$ spends at most $\gamma_i$ time as a non-mutant in $(I_i^-,\TTend{i}]$, then it only spawns non-mutants onto vertices in $\mathcal{U}_i$. 

\begin{lemma}\label{lem:star-clock-bound}
\statelargeell Let $i$ be a non-negative integer, and let $f^*$ be a 
possible value of $\filt_{I_i^-}(\Psi(X'))$. Then with probability at least $1-3e^{-(\log n)^2}$ conditioned on $\filt_{I_i^-}(\Psi(X'))=f^*$, the following statements all hold.
\begin{enumerate}[(i)]
\item $|\mathcal{U}_i| \le |\mathcal{T}^i| < \beta_{i+1}$.\label{it:star-clock-bound-1}
\item For all $j \in [\ell]$, $|\mathcal{U}_i \cap R_{j}| \le |\mathcal{T}^{i,j}| < \alpha_{i+1}$.\label{it:star-clock-bound-2}
\item For all $v \in R_1 \cup \dots \cup R_{\ell}$, $\vsmc{v^*}{v}$ triggers in $(t^+_i, t^+_i + \len{I_i}/3]$.\label{it:star-clock-bound-3}
\end{enumerate}
\end{lemma}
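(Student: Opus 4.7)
The plan is to observe that all three statements concern only star-clocks of the form $\vsnc{v^*}{v}$ or $\vsmc{v^*}{v}$ (i.e. clocks in $\mathcal{A}^*$ in the notation of Definition~\ref{def:Upsilon}), and to exploit the fact that in the star-clock process $P^*(\megastar[\ell])$ these clocks are mutually independent Poisson processes that run on their own local time-lines. The crucial observation is that $\filt_{I_i^-}(\Psi(X'))$ exposes the star-clock $\vsnc{v^*}{v}$ only up to its local time $\iout{X'}{v^*}{I_i^-}=t_i^-$ (since $\vsnc{e}\in\outclocks$ and its source $v^*$ is a non-mutant exactly on a set of measure $t_i^-$ inside $[0,I_i^-]$), and exposes $\vsmc{v^*}{v}$ only up to local time $\iin{X'}{v^*}{I_i^-}=t_i^+$. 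In particular, $t_i^-$ and $t_i^+$ are both functions of $f^*$, and, conditioned on $\filt_{I_i^-}(\Psi(X'))=f^*$, the triggers of each $\vsnc{v^*}{v}$ in the local-time interval $(t_i^-,t_i^-+\gamma_i]$ and of each $\vsmc{v^*}{v}$ in the local-time interval $(t_i^+,t_i^++\len{I_i}/3]$ are distributed as independent Poisson processes, independent of $f^*$.

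Once this independence is in hand the three claims reduce to elementary tail bounds. For (ii), each $\vsnc{v^*}{v}$ with $v\in R_j$ has rate $1/(\ell m)$, so $|\mathcal{T}^{i,j}|$ is Poisson with parameter $\rho_{i,j}=m\gamma_i/(\ell m)=\gamma_i/\ell=\beta_i(\log n)^{20}/(k\ell)$. Using $\beta_i\le\max\{(2\log n)^2,\ell m/(2\log n)^{2i}\}$ and $k\ge(\log\ell)^{23}\ge 8(\log n)^{22}$ for $\ell$ large enough, one checks that $\rho_{i,j}\le\alpha_{i+1}/8$, so Corollary~\ref{cor:pchernoff-2} gives $\Pr(|\mathcal{T}^{i,j}|\ge \alpha_{i+1})\le e^{-\alpha_{i+1}/16}\le e^{-(\log n)^2}$; the inequality $|\mathcal{U}_i\cap R_j|\le|\mathcal{T}^{i,j}|$ is immediate from the definitions. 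A union bound over $j\in[\ell]$ gives (ii) with failure probability at most $\ell e^{-(\log n)^2}$. For (i), the analogous argument gives that $|\mathcal{T}^i|$ is Poisson with parameter $\ell\rho_{i,j}=\gamma_i$, which is comparably smaller than $\beta_{i+1}$, so a single Chernoff bound handles it.

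For (iii), each $\vsmc{v^*}{v}$ is Poisson with rate $r/(\ell m)$, so the probability that it fails to trigger in the local interval $(t_i^+,t_i^++\len{I_i}/3]$ is $\exp(-r\len{I_i}/(3\ell m))=\exp(-rn(\log n)^3/(3\ell m))\le \exp(-r(\log n)^3/3)$, using $\ell m\le n$. A union bound over the at most $\ell m\le n$ vertices $v\in R_1\cup\dots\cup R_\ell$ shows (iii) holds with probability at least $1-n\exp(-r(\log n)^3/3)\ge 1-e^{-(\log n)^2}$. A final union bound over (i), (ii) and (iii) gives the claimed $1-3e^{-(\log n)^2}$ bound.

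The only real subtlety in the argument is the independence step, which must be justified carefully by tracking, via the coupling $\Psi(X')$ of Section~\ref{sec:coupled}, exactly which portions of each star-clock's timeline are encoded in $\filt_{I_i^-}(\Psi(X'))$; once this is done, the tail bounds are routine and the parameter inequalities amount to checking that $\gamma_i$, $\beta_{i+1}$ and $\alpha_{i+1}$ scale correctly in terms of $k$, $\ell$, $m$ and powers of $\log n$, which follows from the choice $k=\lceil(\log\ell)^{23}\rceil$ and $\ell=m$ in Definition~\ref{def:newmegas}.
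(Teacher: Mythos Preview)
Your approach is essentially identical to the paper's: observe that $t_i^-$ and $t_i^+$ are determined by $f^*$, that the relevant star-clocks are fresh Poisson processes beyond these local times, and then apply Poisson tail bounds with a union bound over the three items. One minor slip to fix: having shown $\rho_{i,j}\le\alpha_{i+1}/8$, you should invoke Corollary~\ref{cor:pchernoff-3} (giving $\Pr(|\mathcal{T}^{i,j}|\ge\alpha_{i+1})\le e^{-\alpha_{i+1}}$) rather than Corollary~\ref{cor:pchernoff-2}, since your stated bound $e^{-\alpha_{i+1}/16}\le e^{-(\log n)^2}$ fails when $\alpha_{i+1}=\lfloor(2\log n)^2\rfloor$ (it is only about $e^{-(\log n)^2/4}$), and then the union bound over $j\in[\ell]$ would not yield $e^{-(\log n)^2}$; with the correct corollary everything goes through exactly as in the paper.
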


\begin{proof}
Note that $t^-_i$ is determined by $f^*$, and that $f^*$ does not determine the behaviour of star-clocks $\vsnc{v^*}{v}$ in $(t_i^-, \infty)$. It follows that conditioned on $\filt_{I_i^-}(\Psi(X'))=f^*$, $|\mathcal{T}^i|$ follows a Poisson distribution with parameter $\gamma_i = \beta_i(\log n)^{20}/k \le \frac{1}{9}\frac{\beta_i}{(2\log n)^{2}}$.  

If $\beta_i \leq (2\log n)^{4}$, then this parameter is at most $\frac{1}{9}(2\log n)^2$. We also have that $\beta_{i+1}\ge \floor{(2\log n)^{2}}$. Using also Corollary~\ref{cor:pchernoff-3}, we thus obtain that 
\begin{align*}
\Pr(|\mathcal{T}^i| < \beta_{i+1} \mid \filt_{I_i^-}(\Psi(X'))=f^*) &\ge \Pr(|\mathcal{T}^i| < \floor{(2\log n)^{2}} \mid \filt_{I_i^-}(\Psi(X'))=f^*)\\
& \ge 1- e^{-\floor{(2\log n)^{2}}} \ge 1- e^{-(\log n)^2}.
\end{align*}
If instead $\beta_i> (2\log n)^{4}$, then $\beta_{i+1}\ge\floor{\frac{\beta_i}{(2\log n)^{2}}}\geq 8\gamma_i$ and so by Corollary~\ref{cor:pchernoff-3},
\begin{align*}
\Pr(|\mathcal{T}^i| < \beta_{i+1} \mid \filt_{I_i^-}(\Psi(X'))=f^*) &\ge 1-e^{-\beta_{i+1}} \ge 1-e^{-\floor{(2\log n)^2}} \ge 1-e^{-(\log n)^2}.
\end{align*}
Thus in all cases,
\begin{equation}\label{eqn:star-clock-bound-1}
\Pr(|\mathcal{T}^i| < \beta_{i+1} \mid \filt_{I_i^-}(\Psi(X')) = f^*)\ge 1-e^{-(\log n)^2}.
\end{equation}

By a similar argument to the above, we see that for all $j \in [\ell]$, $|\mathcal{T}^{i,j}|$ follows a Poisson distribution with parameter $\gamma_i/\ell$. Note that if $\beta_i = \floor{(2\log n)^2}$ then $\gamma_i/\ell \le 1/\ell$, and if $\beta_i = \floor{\ell m/(2\log n)^{2i}}$ then $\gamma_i/\ell \le \frac{1}{10}\frac{m}{(2\log n)^{2i+2}}$. Thus $\gamma_i/\ell \le \frac{1}{9}\frac{\alpha_i}{(2\log n)^{2}}$ in all cases. As in the above argument, it therefore follows from Corollary~\ref{cor:pchernoff-3} that
\begin{equation*}
\Pr(|\mathcal{T}^{i,j}| < \alpha_{i+1} \mid \filt_{I_i^-}(\Psi(X')) = f^*)\ge 1-e^{-\floor{(2\log n)^2}}.
\end{equation*}
A union bound over $j\in [\ell]$ thus gives  
\begin{equation}\label{eqn:star-clock-bound-2}
\Pr(|\mathcal{T}^{i,j}| < \alpha_{i+1} \textnormal{ for all }j\in[\ell]\mid \filt_{I_i^-}(\Psi(X'))=f^*) \ge 1-\ell e^{-\floor{(2\log n)^{2}}} \ge 1-e^{-(\log n)^2}.
\end{equation}

Finally, let $\mathcal{E}$ be the event that $\vsmc{v^*}{v}$ triggers in $(t^+_i, t^+_i+\len{I_i}/3]$ for all $v\in R_1 \cup \dots \cup R_{\ell}$. Note that $t_i^+$ is determined by $f^*$, and that $f^*$ does not determine the behaviour of star-clocks $\vsmc{v^*}{v}$ in $(t_i^+,\infty)$. It follows that conditioned on $\filt_{I_i^-}(\Psi(X'))=f^*$, for all $v \in R_1 \cup \dots \cup R_{\ell}$, the number of times $\vsmc{v^*}{v}$ triggers in $(t^+_i, t^+_i+\len{I_i}/3]$ is a Poisson variable with parameter $\len{I_i}/(3\ell m) \ge \frac{1}{3}(\log n)^3$. Hence
\[\Pr(\vsmc{v^*}{v}\textnormal{ triggers in }(t^+_i,t^+_i+\len{I_i}/3]\mid\filt_{I_i^-}(\Psi(X'))=f^*) \ge 1 - e^{-\frac{1}{3}(\log n)^3}.\]
Thus by a union bound over all $v \in R_1 \cup \dots \cup R_{\ell}$, we have
\begin{equation}\label{eqn:star-clock-bound-3}
\Pr(\mathcal{E}\mid \filt_{I_i^-}(\Psi(X'))=f^*) \ge 1-\ell m e^{-\frac{1}{3}(\log n)^3} \ge 1-e^{-(\log n)^2}.
\end{equation}
The result therefore follows from a union bound over \eqref{eqn:star-clock-bound-1}, \eqref{eqn:star-clock-bound-2} and \eqref{eqn:star-clock-bound-3}.
\end{proof}

The following lemma will be the heart of the proof of Lemma~\ref{lem:main-iteration}.

\begin{lemma}\label{lem:branches-clear}
\statelargeell Let $i$ be a non-negative integer, and let $f^*$ be a 
possible value of $\filt_{I_i^-}(\Psi(X'))$
such that the induced value~$f$ of $\filt_{I_i^-}(X')$ is good. 
Then with probability at least $1-10e^{-(\log n)^2}$ conditioned on $\filt_{I_i^-}(\Psi(X'))=f^*$, the following statements all hold.
\begin{enumerate}[(i)]
\item \label{it:branches-clear-1}$(R_1 \cup \dots \cup R_{\ell}) \setminus X'_{I_i^+} \subseteq \mathcal{U}_i$.
\item \label{it:branches-clear-2} If $i \ge 6$, then for all $j \in [\ell]$ such that $R_{j} \cap \mathcal{U}_i = \emptyset$ and all $t \in [I_i^+ - \len{I_i}/4, I_i^+]$, we have $R_{j} \cup \{a_j\} \cup K_j \subseteq X'_t$.
\end{enumerate}
\end{lemma}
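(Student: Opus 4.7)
The plan is to condition on a high-probability event that simultaneously controls the centre vertex's non-mutant time, the locations where $v^*$ can spawn non-mutants, and the fill-level of every clique, and then to read both conclusions off this conditioning. Concretely, I would union-bound over (a) the three conclusions of Lemma~\ref{lem:star-clock-bound} (failure probability $3e^{-(\log n)^2}$), (b) the event of Lemma~\ref{lem:vcent-time} that the measure of $\{t\in(I_i^-,\TTend{i}]\mid v^*\notin X'_t\}$ is at most $\gamma_i$ (failure probability $2e^{-(\log n)^2}$), and (c) for every $j\in[\ell]$ the three conclusions of Lemma~\ref{lem:iteration-cliques} applied to $I=I_i$ (total failure probability $\ell e^{-\frac12(\log n)^3}$). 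This comfortably fits within the $10 e^{-(\log n)^2}$ budget. Under these events I would first argue that $\TTend{i}=I_i^+$: condition (D\ref{D:clique-empty}) is killed by Lemma~\ref{lem:iteration-cliques}(\ref{it:iteration-cliques-1}); conditions (D\ref{D:beta}) and (D\ref{D:alpha}) are killed by Observation~\ref{obs:one}, which tells us that any non-mutant spawn by $v^*$ onto $v$ at time $t\in(I_i^-,\TTend{i}]$ comes from a trigger of $\vsnc{v^*}{v}$ at local time $\iout{X'}{v^*}{t}\in(t_i^-,t_i^-+\gamma_i]$, so the target is in $\mathcal{U}_i$ and is counted in $\mathcal{T}^i$ (and in $\mathcal{T}^{i,j}$ if $v\in R_j$), and Lemma~\ref{lem:star-clock-bound}(\ref{it:star-clock-bound-1}),(\ref{it:star-clock-bound-2}) provide the required upper bounds $|\mathcal{T}^i|<\beta_{i+1}$ and $|\mathcal{T}^{i,j}|<\alpha_{i+1}$.

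With $\TTend{i}=I_i^+$, part (\ref{it:branches-clear-1}) follows quickly. Lemma~\ref{lem:star-clock-bound}(\ref{it:star-clock-bound-3}) asserts that for every $v\in R_1\cup\dots\cup R_\ell$ the mutant star-clock $\vsmc{v^*}{v}$ triggers at some local time in $(t_i^+,t_i^++\len{I_i}/3]$. Since $v^*$ is mutant for at least $\len{I_i}-\gamma_i$ of $I_i$ and $\gamma_i\le n(\log n)^{20}/k \ll \len{I_i}$, this star-clock trigger corresponds to a global time $t_v\in I_i$ with $t_v \le I_i^-+\len{I_i}/3+\gamma_i$, at which $v^*$ spawns a mutant onto $v$. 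If further $v\notin\mathcal{U}_i$, the bijection of the previous paragraph says $v^*$ never spawns a non-mutant onto $v$ in $(I_i^-,I_i^+]$, and since $v$'s only in-neighbour is $v^*$, we get $v\in X'_t$ for all $t\ge t_v$, hence $v\in X'_{I_i^+}$.

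For part (\ref{it:branches-clear-2}), fix $j$ with $R_j\cap\mathcal{U}_i=\emptyset$. By the argument above every vertex of $R_j$ becomes and then remains mutant from a time $T_j\le I_i^-+\len{I_i}/3+\gamma_i$; in particular $T_j\le I_i^+-\len{I_i}/2$ (using $i\ge 6$, which makes $\gamma_i$ small enough to leave comfortable slack). Once $R_j\subseteq X'_{T_j}$, the sub-process on $\{a_j\}\cup K_j$ decouples from the rest of the graph: the only clocks that can change the status of $R_j\cup\{a_j\}\cup K_j$ are those internal to $\{a_j\}\cup K_j$ together with $\vmc{u}{a_j}$ for $u\in R_j$ (which can only make $a_j$ a mutant). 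Moreover, the configuration $R_j\cup\{a_j\}\cup K_j\subseteq X'_t$ is absorbing, because no clock in the above list can leave it. So it suffices to show that this configuration is reached before $I_i^+-\len{I_i}/4$.

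The main obstacle is establishing this last step with the required $1-O(e^{-(\log n)^2})$ probability, since the feeder vertex $a_j$ is forced to be a non-mutant while $K_j$ is active and must instead latch on precisely at a ``refill moment'' of $K_j$. My plan for it is as follows: by Lemma~\ref{lem:iteration-cliques}(\ref{it:iteration-cliques-3}) applied to $I_i$, every window of length $30c_r^2(\log n)^6$ inside $[T_j,I_i^+-30c_r^2(\log n)^6]$ contains a time at which $K_j$ is completely full. At each such refill time the state of the megastar process is either already the absorbing configuration (and we are done), or $a_j$ is a non-mutant and $K_j$ is full, in which case we race the total-rate-$rm$ family $\{\vmc{u}{a_j}:u\in R_j\}$ against the total-rate-$1$ family $\{\vnc{a_j}{v}:v\in K_j\}$; the former wins (carrying us into the absorbing state) with conditional probability at least $rm/(rm+1)\ge 1-O(1/\sqrt n)$. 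Iterating this race over the $\Omega(\len{I_i}/(\log n)^6)=\Omega(n/(\log n)^3)$ disjoint refill windows available inside $[T_j,I_i^+-\len{I_i}/4]$, the probability that every race fails is at most $(O(1/\sqrt n))^{\Omega(n/(\log n)^3)}$, which is vastly smaller than $e^{-(\log n)^2}$. A final union bound over the at most $\ell$ branches $j$ with $R_j\cap\mathcal{U}_i=\emptyset$ absorbs the contribution of part (\ref{it:branches-clear-2}) into the overall $10e^{-(\log n)^2}$ budget, completing the proof.
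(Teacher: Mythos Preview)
Your argument for $\TTend{i}=I_i^+$ and for part~(\ref{it:branches-clear-1}) is correct and coincides with the paper's: the paper also assembles the centre-time bound (its $\mathcal{E}_1$), Lemma~\ref{lem:star-clock-bound} (its $\mathcal{E}_4,\mathcal{E}_5,\mathcal{E}_6$), and Lemma~\ref{lem:iteration-cliques} (its $\mathcal{E}_2,\mathcal{E}_3$), then reads off (D\ref{D:beta})--(D\ref{D:clique-empty}) and~(\ref{it:branches-clear-1}) just as you do.

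The divergence is in part~(\ref{it:branches-clear-2}). The paper does \emph{not} race $\{\vmc{u}{a_j}\}$ against $\{\vnc{a_j}{v}\}$ at refill moments. Instead it declares two further events upfront: $\mathcal{E}_7$, from Lemma~\ref{lem:clique-active-bound}, bounding the number of times $K_j$ becomes active in $(I_i^-,\TTend{i}]$ by $\alpha_i\sqrt{n}(\log n)^4$; and $\mathcal{E}_8$, that some $\vmc{v}{a_j}$ triggers in every fixed sub-interval $J_h$ of length $(\log n)^7$. With $i\ge 6$ there are more $J_h$'s than activations, so by pigeonhole some $J_h$ has $K_j$ inactive (hence full) throughout, and $\mathcal{E}_8$ then forces $a_j$ to become a mutant in that window. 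Everything after ``$\mathcal{E}_1\cap\dots\cap\mathcal{E}_8$ occurs'' is a deterministic implication.

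Your race argument, by contrast, is probabilistic and is carried out \emph{after} you have conditioned on the conclusions of Lemma~\ref{lem:iteration-cliques} for the whole of $I_i$. That conditioning involves the future behaviour of $K_j$, and in particular the clocks $\vnc{a_j}{v}$ that participate in the race; so the clean assertion ``the former wins with conditional probability at least $rm/(rm+1)$'' does not follow from the strong Markov property at the refill time. There is also a second difficulty you do not address: the refill time $\tau_h$ in window $W_h$ is random, and if the first trigger among $\{\vmc{u}{a_j}\}\cup\{\vnc{a_j}{v}\}$ after $\tau_{h-1}$ happens to occur after $\tau_h$, then races $h-1$ and $h$ are decided by the \emph{same} trigger and are perfectly correlated, so the product bound $(O(1/\sqrt n))^{\Omega(n/(\log n)^3)}$ is unjustified as written. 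Both issues are repairable (e.g.\ by separating the union bound from the conditioning and adding an auxiliary event that each window contains a trigger of the relevant family), but the proposal as stated has a genuine gap here; the paper's pigeonhole via Lemma~\ref{lem:clique-active-bound} avoids these complications entirely.
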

\begin{proof}
We first define events as follows. For all $h \in \Zzero$, let $J_h^- = I_i^- + \len{I_i}/2 + h(\log n)^7$, let $J_h^+ = J_h^- + (\log n)^7$, and let $J_h = (J_h^-, J_h^+]$.
\begin{itemize}
\item $\mathcal{E}_1$: $\len{\{t \in (I_i^-, \TTend{i}] \mid v^* \notin X_t'\}} \le \gamma_i$.
\item $\mathcal{E}_2$: for all $t \in I_i$ and all $j \in [\ell]$, $|K_j \setminus X'_t| \le 2\Delta$.
\item $\mathcal{E}_3$: for all $j \in [\ell]$ and all $t \in [I_i^-, I_i^- + 2\len{I_i}/3]$, there exists $t' \in [t, t+(\log n)^7)$ such that $K_j \subseteq X_{t'}'$.
\item $\mathcal{E}_4$: $|\mathcal{T}^i| < \beta_{i+1}$.
\item $\mathcal{E}_5$: for all $j \in [\ell]$, $|\mathcal{T}^{i,j}| < \alpha_{i+1}$.
\item $\mathcal{E}_6$: for all $v \in R_1 \cup \dots \cup R_{\ell}$, $\vsmc{v^*}{v}$ triggers in $(t^+_i, t^+_i + \len{I_i}/3]$.
\item $\mathcal{E}_7$: for all $j \in [\ell]$, $\Wnoh{i,j} \le \alpha_i\sqrt{n}(\log n)^4$  
\item $\mathcal{E}_8$: for all $j \in [\ell]$ and all $h \in \{0, \dots, \floor{3\alpha_i\sqrt{n}(\log n)^4}\}$, some clock $\vmc{v}{a_j}$ triggers in the interval $J_h$.
\end{itemize}
Note that by Observation~\ref{obs:one}, $\filt_{I_i^-}(\Psi(X'))$ is uniquely determined by $\filt_{I_i^-}(X')$ and vice versa. Thus   
the value~$f$ from the statement of the lemma is the unique value of~$f$
such that $\filt_{I_i^-}(X') = f$ if and only if $\filt_{I_i^-}(\Psi(X'))=f^*$, allowing us to apply results like Lemma~\ref{lem:vcent-time} to $X'$ even though we are conditioning on $\filt_{I_i^-}(\Psi(X'))$ rather than $\filt_{I_i^-}(X')$.

Observe that by a union bound over all $j \in [\ell]$ and $h \in \{0, \dots, \floor{3\alpha_i\sqrt{n}(\log n)^4}\}$, 
\begin{equation}\label{eqn:E_7}
\Pr(\mathcal{E}_8 \mid \filt_{I_i^-}(\Psi(X')) = f^*) \ge 1 - \ell (\floor{3\alpha_i\sqrt{n}(\log n)^4}+1) e^{-rm(\log n)^7} \ge 1-e^{-(\log n)^2}.
\end{equation}
Moreover, by $\Pcliquefull{i}$, $f$ satisfies the conditions of Lemma~\ref{lem:iteration-cliques}, which together with a union bound over $j\in[\ell]$ implies that
\begin{equation}\label{eqn:E_2}
\Pr(\mathcal{E}_2 \cap \mathcal{E}_3 \mid \filt_{I_i^-}(\Psi(X')) = f^*) \ge 1 - \ell e^{-\frac{1}{2}(\log n)^3} \ge 1 - e^{-(\log n)^2}.
\end{equation}
Thus by Lemma~\ref{lem:vcent-time}, \eqref{eqn:E_2}, Lemma~\ref{lem:star-clock-bound}, Lemma~\ref{lem:clique-active-bound} and \eqref{eqn:E_7} (applied in order),
\begin{align}
\Pr\left(\bigcap_{s=1}^8 \mathcal{E}_s \,\Bigg|\, \filt_{I_i^-}(\Psi(X')) = f^* \right) 
&\ge  1 - \Pr(\overline{\mathcal{E}_1} \mid \filt_{I_i^-}(\Psi(X')) = f^*) 
        - \Pr(\overline{\mathcal{E}_2 \cap \mathcal{E}_3} \mid \filt_{I_i^-}(\Psi(X')) = f^*)\nonumber\\
& \qquad- \Pr(\overline{\mathcal{E}_4 \cap \mathcal{E}_5 \cap \mathcal{E}_6} \mid \filt_{I_i^-}(\Psi(X')) = f^*) 
        - \Pr(\overline{\mathcal{E}_7} \mid \filt_{I_i^-}(\Psi(X')) = f^*)\nonumber\\
& \qquad - \Pr(\overline{\mathcal{E}_8} \mid \filt_{I_i^-}(\Psi(X')) = f^*)\nonumber\\
&\ge 1 - 8e^{-(\log n)^2} - 2\ell e^{-(\log n)^3} \ge 1 - 10e^{-(\log n)^2}.\label{eqn:bigunionbound}
\end{align}
It therefore suffices to show that when $\mathcal{E}_1 \cap \dots \cap \mathcal{E}_8$ occurs, (\ref{it:branches-clear-1}) and~(\ref{it:branches-clear-2}) both hold.
 
Suppose $\mathcal{E}_1 \cap \dots \cap \mathcal{E}_8$ occurs. First note that since $\mathcal{E}_1$ occurs, by Observation~\ref{obs:one}, for all $t \in (I_i^-, \TTend{i}]$,
\[
\textnormal{$v^*$ spawns a non-mutant at time $t$ only if $\siout{v^*}{t} \in \mathcal{T}^i$.}
\]
Since $\mathcal{E}_4$ occurs, it follows that $v^*$ spawns  fewer than $\beta_{i+1}$ non-mutants in the interval $(I_i^-, \TTend{i}]$, and so \myref{D}{D:beta} does not hold at time $\TTend{i}$. Likewise, since $\mathcal{E}_1$ occurs, for all $t \in (I_i^-, \TTend{i}]$ and all $j \in [\ell]$,
\[
\textnormal{$v^*$ spawns a non-mutant onto a vertex in $R_{j}$ at time $t$ only if $\siout{v^*}{t} \in \mathcal{T}^{i,j}$.}
\]
and so since $\mathcal{E}_5$ occurs it follows that $v^*$ spawns fewer than $\alpha_{i+1}$ non-mutants onto vertices in $R_{j}$ in the interval $(I_i^-, \TTend{i}]$. Hence \myref{D}{D:alpha} does not hold at time $\TTend{i}$. Finally, since $\mathcal{E}_2$ occurs and $\TTend{i} \in I_i$, \myref{D}{D:clique-empty} does not hold at time $\TTend{i}$. Since none of \myref{D}{D:beta}--\myref{D}{D:clique-empty} hold at $\TTend{i}$, \myref{D}{D:end} must hold at $\TTend{i}$ and so $\TTend{i} = I_i^+$.

Since $\mathcal{E}_1$ occurs and $\TTend{i} = I_i^+$, it follows that 
\begin{equation}\label{eqn:mainprop}
\textnormal{Throughout $I_i$, $v^*$ only spawns non-mutants onto vertices in $\mathcal{U}_i$.}
\end{equation} 
Also since $\mathcal{E}_1$ occurs and $\TTend{i} = I_i^+$, we have 
\[\len{\{t \in (I_i^-, I_i^- + \len{I_i}/2] \mid v^* \in X'_t\}} \ge \frac{\len{I_i}}{2} - \gamma_i > \frac{\len{I_i}}{3}.\]
Thus since $\mathcal{E}_6$ occurs, by Observation~\ref{obs:one}, $v^*$ spawns a mutant onto every vertex in $(R_1 \cup \dots \cup R_{\ell}) \setminus \mathcal{U}_i$ in the interval $(I_i^-, I_i^- + \len{I_i}/2]$. Thus by \eqref{eqn:mainprop}, every vertex in $(R_1 \cup \dots \cup R_{\ell}) \setminus \mathcal{U}_i$ is a mutant throughout $[I_i^- + \len{I_i}/2, I_i^+]$, and in particular (\ref{it:branches-clear-1})~holds. 

To prove (\ref{it:branches-clear-2}), let $i \ge 6$ be an integer and let $j \in [\ell]$ be such that $R_{j} \cap \mathcal{U}_i = \emptyset$. By the above, we have $R_{j} \subseteq X'_t$ for all $t \in [I_i^- + \len{I_i}/2, I_i^+]$. 

Let
\begin{align*}
\mathcal{J} &= \{J_h \mid 0 \le h \le \floor{3\alpha_i\sqrt{n}(\log n)^4}, K_j \textnormal{ becomes active in $J_h$}\}\\
\mathcal{J}' &= \{J_h \mid 0 \le h \le \floor{3\alpha_i\sqrt{n}(\log n)^4}, \textnormal{there exists $t \in J_h$ such that $K_j$ is active at $t$.}\}
\end{align*}
From $i \ge 6$, we  have $\alpha_i \le \sqrt{n}/(\log n)^{12}$ and hence $J_h \subseteq (I_i^- + \len{I_i}/2, I_i^- + 2\len{I_i}/3]$ for all $h \le \floor{3\alpha_i\sqrt{n}(\log n)^4}$. Since $\mathcal{E}_3$ occurs, we have $|\mathcal{J}'| \le 2|\mathcal{J}|$. Since $\mathcal{E}_7$ occurs, it therefore follows that $|\mathcal{J}'| \le 2\alpha_i\sqrt{n}(\log n)^4$, and in particular there exists $h \in \{0, \dots \floor{3\alpha_i\sqrt{n}(\log n)^4}\}$ such that $K_j$ is inactive throughout $J_h$.  Moreover, since $\mathcal{E}_8$ occurs and $R_j \subseteq X_t$ for all $t \in J_h$, $a_j$ must become a mutant in $J_h$. Thus $R_{j} \cup \{a_j\} \subseteq X'_t$ for all $t \in [I_i^- + 2\len{I_i}/3, I_i^+]$. 

Finally, since $\mathcal{E}_3$ occurs, it follows that there exists $t \in [I_i^- + 2\len{I_i}/3, I_i^- - \len{I_i}/4]$ such that $K_j$ is full of mutants at time $t$. It follows that $R_{j} \cup \{a_j\} \cup K_j \subseteq X'_t$ for all $t \in [I_i^+ - \len{I_i}/4, I_i^+]$. Thus (\ref{it:branches-clear-2})~holds, and so the result follows from \eqref{eqn:bigunionbound}.
\end{proof}

We are now at last in a position to prove Lemma~\ref{lem:main-iteration}, which completes the proof of Theorem~\ref{thm:megastar}.

{\renewcommand{\thetheorem}{\ref{lem:main-iteration}}
\begin{lemma}
\statemainiteration
\end{lemma}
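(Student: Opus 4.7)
The plan is to assemble Lemmas~\ref{lem:branches-clear}, \ref{lem:star-clock-bound} and~\ref{lem:iteration-cliques}. Let $f^*$ be the (unique, by Observation~\ref{obs:one}) value of $\filt_{I_i^-}(\Psi(X'))$ whose induced filtration of $X'$ is $f$. Applying Lemma~\ref{lem:branches-clear} conditional on $f^*$ gives, with probability at least $1-10e^{-(\log n)^2}$, that $(R_1 \cup \cdots \cup R_{\ell}) \setminus X'_{I_i^+} \subseteq \mathcal{U}_i$ and (when $i \ge 6$) that $R_j \cup \{a_j\} \cup K_j \subseteq X'_t$ for all $t \in [I_i^+ - \len{I_i}/4, I_i^+]$ and all $j$ with $R_j \cap \mathcal{U}_i = \emptyset$. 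Lemma~\ref{lem:star-clock-bound} gives, with probability at least $1-3e^{-(\log n)^2}$, the bounds $|\mathcal{U}_i| \le |\mathcal{T}^i| < \beta_{i+1}$ and $|\mathcal{U}_i \cap R_j| \le |\mathcal{T}^{i,j}| < \alpha_{i+1}$ for every $j$. Finally, because \Pcliquefull{i} is part of the goodness of $f$ and $30c_r^2(\log n)^6 < \len{I_i} = n(\log n)^3 \le e^{(\log n)^2}$, Lemma~\ref{lem:iteration-cliques}(\ref{it:iteration-cliques-2}) applied to $I_i$ for each $j \in [\ell]$ followed by a union bound gives $|K_j \setminus X'_{I_i^+}| \le \Delta$ for every $j$ with probability at least $1 - \ell e^{-(\log n)^3/2} \ge 1 - e^{-(\log n)^2}$ (using $\log \ell \le \log n$ for $\ell$ large).

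On the intersection of these three events, the four goodness conditions for $\filt_{I_i^+}(X') = \filt_{I_{i+1}^-}(X')$ follow. Specifically, \Ptotalmut{i+1} and \Presmut{i+1} combine Lemma~\ref{lem:branches-clear}(\ref{it:branches-clear-1}) with the cardinality bounds from Lemma~\ref{lem:star-clock-bound}; \Pcliquefull{i+1} comes from Lemma~\ref{lem:iteration-cliques}; and \Pmostcliques{i+1}, when $i \ge 6$, comes from Lemma~\ref{lem:branches-clear}(\ref{it:branches-clear-2}) since the set of $j$ with $R_j \cap \mathcal{U}_i \ne \emptyset$ has size at most $|\mathcal{U}_i| < \beta_{i+1}$. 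For the low-index range $i < 6$, one observes that $\beta_{i+1} \ge \ell$ when $\ell$ is large enough: since $m = \ell$ and $n \le 2\ell^2$, $\beta_{i+1} \ge \ell^2 / (2\log n)^{12} \ge \ell$, which makes \Pmostcliques{i+1} vacuous. A union bound then yields the claimed failure probability of at most $15e^{-(\log n)^2}$.

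For the fixation clause, we work in the regime $\beta_i = \lfloor (2\log n)^2 \rfloor$. Because $m = \ell$ and $n \le 2\ell^2$, this equality forces $i$ to grow without bound as $\ell \to \infty$, and in particular $i \ge 6$ for $\ell$ sufficiently large, so Lemma~\ref{lem:branches-clear}(\ref{it:branches-clear-2}) applies to every reservoir. In this regime $\gamma_i = \beta_i (\log n)^{20}/k = O((\log n)^{22}/(\log n)^{23}) = O(1/\log n)$ since $k = \lceil (\log \ell)^{23} \rceil$. Conditional on $f^*$, the set $\mathcal{T}^i$ is generated by a Poisson process of parameter $\gamma_i$, so $\Pr(|\mathcal{T}^i| = 0 \mid \filt_{I_i^-}(\Psi(X')) = f^*) = e^{-\gamma_i} \ge 5/6$. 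On the intersection of $\{|\mathcal{T}^i|=0\}$ (whence $\mathcal{U}_i = \emptyset$) with the $1 - 10e^{-(\log n)^2}$ event from Lemma~\ref{lem:branches-clear}, we get $R_j \cup \{a_j\} \cup K_j \subseteq X'_t$ for every $j \in [\ell]$ and every $t \in [I_i^+ - \len{I_i}/4, I_i^+]$. Throughout this quarter-interval every in-neighbour of $v^*$ is a mutant, so no non-mutant clock targeting $v^*$ can spawn a non-mutant onto it; meanwhile the mutant clocks from cliques to $v^*$ carry total rate $r\ell$, so with probability at least $1 - e^{-r\ell\len{I_i}/4}$ at least one triggers and forces $v^* \in X'_{I_i^+}$. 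Combining these three bounds yields fixation probability comfortably exceeding $1/2$.

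The main obstacle in this plan is bookkeeping: threading the coupling between $\filt(X')$ and $\filt(\Psi(X'))$ through Observation~\ref{obs:one}, handling the low-index regime $i < 6$ separately for \Pmostcliques{i+1}, and converting the ``local-time'' information encoded in $\mathcal{T}^i$ and $\mathcal{U}_i$ into statements about the actual evolution of $X'$. The quantitative heavy lifting has already been absorbed into the cited lemmas, so once the accounting is set up correctly the assembly is conceptually straightforward.
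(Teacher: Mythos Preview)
Your proposal is correct and follows essentially the same approach as the paper's proof: both assemble Lemmas~\ref{lem:branches-clear}, \ref{lem:star-clock-bound}, and~\ref{lem:iteration-cliques} via a union bound to obtain the goodness of $\filt_{I_{i+1}^-}(X')$, handle the vacuity of \Pmostcliques{i+1} for small~$i$ by noting $\beta_{i+1}\ge\ell$, and for the fixation clause combine the Poisson estimate $\Pr(\mathcal{U}_i=\emptyset)=e^{-\gamma_i}$ with Lemma~\ref{lem:branches-clear}(\ref{it:branches-clear-2}) and a single mutant-clock trigger onto~$v^*$ in the final quarter of~$I_i$. Your treatment of the $i<6$ case is in fact slightly more explicit than the paper's (which writes $\beta_i>\ell$ where $\beta_{i+1}\ge\ell$ is what is actually needed).
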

\addtocounter{theorem}{-1}
}
\begin{proof}
Consider the process $\Psi(X')$. We define the following events.
\begin{itemize}
\item $\mathcal{E}_1$: for all $j \in [\ell]$, $|K_j \setminus X'_{I_i^+}| \le \Delta$.
\item $\mathcal{E}_2$: $|\mathcal{U}_i| < \beta_{i+1}$.
\item $\mathcal{E}_3$: for all $j \in [\ell]$, $|\mathcal{U}_i \cap R_{j}| < \alpha_{i+1}$.
\item $\mathcal{E}_4$: $(R_1 \cup \dots \cup R_{\ell}) \setminus X'_{I_i^+} \subseteq \mathcal{U}_i$.
\item $\mathcal{E}_5$: either $i < 6$ or for all $j \in [\ell]$ such that $R_{j} \cap \mathcal{U}_i = \emptyset$ and all $t \in [I_i^+ - \len{I_i}/4, I_i^+]$, we have $R_{j} \cup \{a_j\} \cup K_j \subseteq X'_t$.
\item $\mathcal{E}_6$: $\mathcal{U}_i = \emptyset$.
\item $\mathcal{E}_7$: for some $v \in K_1 \cup \dots \cup K_\ell$, $\vmc{v}{v^*}$ triggers in $(I_i^+ - \len{I_i}/4, I_i^+]$.
\end{itemize}

Note that by Observation~\ref{obs:one}, there exists a unique value of $f^*$ such that $\filt_{I_i^-}(\Psi(X')) = f^*$ if and only if $\filt_{I_i^-}(X')=f$.  By $\Pcliquefull{i}$, $f$ satisfies the conditions of Lemma~\ref{lem:iteration-cliques}, and so by Lemma~\ref{lem:iteration-cliques}(\ref{it:iteration-cliques-2}) and a union bound over $j\in [\ell]$ we have 
\begin{equation}\label{eq:zxasqw12}
\Pr(\overline{\mathcal{E}_1} \mid \filt_{I_i^-}(X') = f) \geq 1-\ell e^{-\frac{1}{2}(\log n)^3}\geq 1-e^{-(\log n)^2}.
\end{equation}

Thus, by \eqref{eq:zxasqw12}, Lemma~\ref{lem:star-clock-bound}(\ref{it:star-clock-bound-1}) and (\ref{it:star-clock-bound-2}), and Lemma~\ref{lem:branches-clear} (applied in order) together with a union bound, we have
\begin{align}\label{eqn:main-events}
\Pr\left(\bigcap_{s=1}^5 \mathcal{E}_s \,\Bigg|\, \filt_{I_i^-}(X') = f \right) 
&\ge  1 - \Pr(\overline{\mathcal{E}_1} \mid \filt_{I_i^-}(X') = f) 
        - \Pr(\overline{\mathcal{E}_2 \cap \mathcal{E}_3} \mid \filt_{I_i^-}(X') = f)\nonumber\\
&\qquad\qquad\qquad - \Pr(\overline{\mathcal{E}_4 \cap \mathcal{E}_5} \mid \filt_{I_i^-}(X') = f)\nonumber\\
&\ge 1 - 15e^{-(\log n)^2}.
\end{align}

Suppose that $\mathcal{E}_1 \cap \dots \cap \mathcal{E}_5$ occurs. We claim that $\filt_{I_{i+1}^-}(X')$ is good. Indeed, since $\mathcal{E}_2 \cap \mathcal{E}_4$ occurs, 
\Ptotalmut{i+1}  is satisfied. Likewise, since $\mathcal{E}_3 \cap \mathcal{E}_4$ occurs, 
\Presmut{i+1} is satisfied. Since $\mathcal{E}_1$ occurs, 
\Pcliquefull{i+1} is satisfied. Finally, since $\mathcal{E}_2$ occurs, we have
\[|\{j \in [\ell]\mid R_{j} \cap \mathcal{U}_i \ne \emptyset\}| \le |\mathcal{U}_i| \le \beta_{i+1}.\]
If $i < 6$, then $\beta_i > \ell$ and so 
\Pmostcliques{i+1}  holds vacuously. If instead $i \ge 6$, then 
\Pmostcliques{i+1}   holds since $\mathcal{E}_5$ occurs. Thus $\filt_{I_{i+1}^-}(X')$ is good, and so the first part of the result follows from \eqref{eqn:main-events}.

Now suppose $\beta_i = \floor{(2\log n)^{2}}$, so   $\gamma_i \le 4(\log n)^{22}/k$ and $i \ge 6$. Note that $t_i^-$ is determined by $f$, and that $f$ does not determine the behaviour of clocks $\vsnc{v^*}{v}$ in $(t_i^-, \infty)$. It follows that conditioned on $\filt_{I_i^-}(X')=f$, $|\mathcal{T}^i|$ follows a Poisson distribution with parameter $\gamma_i$. It therefore follows that
\[\Pr(\mathcal{E}_6 \mid \filt_{I_i^-}(X')=f) = e^{-\gamma_i} \ge 1 - \gamma_i \ge 1 - \frac{4(\log n)^{22}}{k}  \ge \frac{5}{6}.\]
Moreover, it is immediate that
\[\Pr(\mathcal{E}_7 \mid \filt_{I_i^-}(X')=f) = 1 - e^{-r\ell \len{I_i}/4} \ge 5/6.\]
By Lemma~\ref{lem:branches-clear}(\ref{it:branches-clear-2}) and a union bound, it therefore follows that
\begin{equation}\label{eqn:fixation-events}
\Pr(\mathcal{E}_5 \cap \mathcal{E}_6 \cap \mathcal{E}_7 \mid \filt_{I_i^-}(X')=f) \ge 1/2.
\end{equation}

Suppose that $\mathcal{E}_5 \cap \mathcal{E}_6 \cap \mathcal{E}_7$ holds. Since $\mathcal{E}_5 \cap \mathcal{E}_6$ holds and $i \ge 6$, for all $j \in [\ell]$ and all $t \in [I_i^+ - \len{I_i}/4, I_i^+]$ we have $K_j \cup \{a_j\} \cup R_{j} \subseteq X_t'$. Since $\mathcal{E}_7$ holds, it therefore follows that $V(\megastar[\ell]) \subseteq X_{I_i^+}'$. Thus the second part of the result follows from \eqref{eqn:fixation-events}.
\end{proof}

\section{An upper bound on fixation probability of megastars}
\label{sec:megaUB}

The following lemma is similar to Lemma~\ref{lem:star-lower-die-instant} for superstars. 
 
\begin{lemma}\label{lem:megastar-lower-1}
Let $r>1$, and let $k$, $\ell$ and $m$ be arbitrary positive integers. Let $x_0 \in \megastar$ be chosen uniformly at 
random. Let $X$ be a  Moran  process with $G(X) = \megastar$ and $X_0 = \{x_0\}$. Then $X$ goes extinct with 
probability at least $k/(2r(m+k))$. 
\end{lemma}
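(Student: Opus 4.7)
The plan is to closely imitate the proof of Lemma~\ref{lem:star-lower-die-instant} from the superstar section. The key structural observation is that every non-reservoir vertex of $\megastar$ has a set of in-neighbours whose reciprocal out-degrees sum to at least $1$, while reservoir vertices are rare enough that a uniformly random $x_0$ lands outside all reservoirs with probability at least $k/(m+k)$.

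First, I will bound the placement probability. Since $|V(\megastar)|=\ell(m+k+1)+1$, we have
\[
\Pr\bigl(x_0\in R_1\cup\cdots\cup R_\ell\bigr)=\frac{\ell m}{\ell(m+k+1)+1}\le\frac{m}{m+k+1},
\]
so $\Pr(x_0\notin R_1\cup\cdots\cup R_\ell)\ge (k+1)/(m+k+1)\ge k/(m+k)$ (the final inequality is $(k+1)(m+k)\ge k(m+k+1)$, i.e. $m\ge 0$).

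Next I will verify, by a small case analysis on the edges of $\megastar$, that every $v\notin R_1\cup\cdots\cup R_\ell$ satisfies $\sum_{u\in N^-(v)}1/d^+(u)\ge 1$. The three cases are: for $v\in K_i$, $N^-(v)=(K_i\setminus\{v\})\cup\{a_i\}$ and each $u\in N^-(v)$ has $d^+(u)=k$, so the sum is $k\cdot(1/k)=1$; for $v=a_i$, $N^-(v)=R_i$ and each $u\in R_i$ has $d^+(u)=1$, so the sum is $m\ge 1$; for $v=v^*$, $N^-(v)=K_1\cup\cdots\cup K_\ell$ and each $u$ has $d^+(u)=k$, so the sum is $\ell k/k=\ell\ge 1$.

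Now I will combine the two facts. Condition on $x_0\notin R_1\cup\cdots\cup R_\ell$. At time $0$ the only mutant is $x_0$, so every in-neighbour of $x_0$ is a non-mutant and, by the computation above, the total rate of non-mutant clocks with target $x_0$ is at least $1$, while the total rate of mutant clocks with source $x_0$ is exactly $r$. By the standard competing-exponentials argument, the probability that some non-mutant clock with target $x_0$ triggers before any mutant clock with source $x_0$ triggers is at least $1/(1+r)\ge 1/(2r)$ (using $r>1$). When this happens $x_0$ dies before ever spawning a mutant, so the process reaches the all-non-mutants absorbing state and goes extinct. Multiplying the two probabilities yields extinction probability at least $(k/(m+k))\cdot(1/(2r))=k/(2r(m+k))$, as required.

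There is essentially no obstacle here beyond the routine edge-counting; the proof is an immediate adaptation of the superstar argument and should fit in a few lines.
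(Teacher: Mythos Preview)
Your proof is correct and follows essentially the same approach as the paper: bound the probability that $x_0$ lands outside the reservoirs by $k/(m+k)$, then show that any non-reservoir vertex dies before spawning with probability at least $1/(1+r)\ge 1/(2r)$. The only cosmetic difference is that the paper computes the death-before-spawn probability separately for the three cases $x_0\in\{a_1,\dots,a_\ell\}$, $x_0\in K_1\cup\dots\cup K_\ell$, and $x_0=v^*$, whereas you unify these via the observation that $\sum_{u\in N^-(v)}1/d^+(u)\ge 1$ for every non-reservoir $v$.
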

\begin{proof}
We have
\[\Pr(x_0 \notin R_1 \cup \dots \cup R_\ell) = 1 - \frac{\ell m}{\ell(m+k+1)+1} \ge 1 - 
\frac{m}{m+k} 
= 
\frac{k}{m+k}.\]
Moreover, let $\mathcal{E}$ be the event that $x_0$ dies before spawning a mutant. Then we have
\begin{align*}
\Pr(\mathcal{E} \mid x_0 \in \{a_1, \dots, a_\ell\}) &= m/(m+r),\\
\Pr(\mathcal{E} \mid x_0 \in K_1 \cup \dots \cup K_\ell) &= 1/(1+r),\\
\Pr(\mathcal{E} \mid x_0 = v^*) &= \ell/(\ell + r).
\end{align*}
Thus we have
\[\Pr(\mathcal{E}) \ge \Pr(\mathcal{E} \mid x_0 \notin R_1 \cup \dots \cup R_\ell)\Pr(x_0 \notin R_1 \cup 
\dots \cup R_\ell) \ge \frac{1}{1+r} \cdot \frac{k}{m+k} \ge \frac{k}{2r(m+k)}.\]
Since $X$ goes extinct if $\mathcal{E}$ occurs, the result follows.
\end{proof}

The following lemma is similar to Lemma~\ref{lem:star-lower-die-before-path} for superstars.

\begin{lemma}\label{lem:megastar-lower-2}
Let $r>1$, and let $k$, $\ell$ and $m$ be arbitrary positive integers with  $m \ge 12r$. Let $x_0 \in R_1 \cup \dots 
\cup R_\ell$ be arbitrary. Let $X$ be  the Moran process with $G(X) = \megastar$ and $X_0 = \{x_0\}$. Then $X$ 
goes 
extinct with probability at least $1/(26r^2\ell)$.
\end{lemma}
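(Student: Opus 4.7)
The statement mirrors Lemma~\ref{lem:star-lower-die-before-path} for superstars almost verbatim, with the feeder vertex $a_j$ of the megastar playing the role of the path-vertex $v_1$, and the clique $K_j$ playing the role of the ``next'' path vertex $v_2$. My plan is therefore to recycle the same three-event scheme.

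Let $R_j$ be the reservoir containing $x_0$. Set $\xi=\lfloor m/(2r)\rfloor$, $t^*=m/(4r^2)$ and $J=[0,t^*]$, and define:
\begin{description}
\item[$\mathcal{E}^1$:] $\vnc{v^*}{x_0}$ triggers in $J$;
\item[$\mathcal{E}^2$:] $\vmc{x_0}{a_j}$ triggers at most $\xi$ times in $J$;
\item[$\mathcal{E}^3_t$:] $\min\{t'>t\mid \vnc{v}{a_j}\text{ triggers at }t'\text{ for some }v\in R_j\setminus\{x_0\}\} < \min\{t'>t\mid \vmc{a_j}{w}\text{ triggers at }t'\text{ for some }w\in K_j\}$;
\end{description}
and set $\mathcal{E}^3=\bigcap_{i=1}^{\xi}\mathcal{E}^3_{T^i_{x_0\to a_j}}$, where $T^i_{x_0\to a_j}$ is the $i$-th time $\vmc{x_0}{a_j}$ triggers. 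The first step is to show that on $\mathcal{E}^1\cap\mathcal{E}^2\cap\mathcal{E}^3$ the process $X$ reaches extinction: inductively, at every time $t\in J$ the mutants are contained in $\{x_0,a_j\}$. Indeed, since the only in-neighbours of reservoir vertices are $v^*$, and since (by induction) no clique has yet held a mutant, $v^*$ stays a non-mutant, so no reservoir vertex other than $x_0$ ever becomes a mutant and so the $\vnc{v}{a_j}$ triggers in $\mathcal{E}^3$ really are non-mutant spawn events. Event $\mathcal{E}^2$ bounds the number of times $a_j$ can become a mutant by $\xi$, and $\mathcal{E}^3$ ensures that each such episode ends (by $a_j$ being overwritten from $R_j$) before $a_j$ can push a mutant into $K_j$; thus no clique vertex, hence no $v^*$ mutation, is ever created. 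After $\mathcal{E}^1$ fires, $x_0$ is gone, and after $a_j$'s last death the whole graph is mutant-free.

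Next I compute the probabilities. The clock $\vnc{v^*}{x_0}$ has rate $1/(\ell m)$, so, exactly as in Lemma~\ref{lem:star-lower-die-before-path},
\[
\Pr(\mathcal{E}^1)=1-e^{-t^*/(\ell m)}\ge \frac{1}{8r^2\ell}
\]
using \eqref{eq:ebounds}. The clock $\vmc{x_0}{a_j}$ has rate $r$, so $\Pr(\mathcal{E}^2)\ge 1-e^{-m/(12r)}\ge 1-e^{-1}$ by Corollary~\ref{cor:pchernoff}. For $\mathcal{E}^3$, the key observation is that although $a_j$ has out-degree $k$ (into the clique) rather than $1$, the \emph{total} rate of the mutant clocks $\vmc{a_j}{w}$ ($w\in K_j$) is still $k\cdot(r/k)=r$, while the total rate of $\vnc{v}{a_j}$ over $v\in R_j\setminus\{x_0\}$ is $m-1$. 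So memorylessness and independence of clocks give $\Pr(\mathcal{E}^3_t\mid \filt_{t}(X)=f,\text{all trigger times in }J)\ge (m-1)/(m-1+r)\ge 1-r/m$ whenever $f$ is consistent with $\mathcal{E}^1\cap\mathcal{E}^2$, and a union bound over the $\le\xi$ relevant times yields $\Pr(\mathcal{E}^3\mid \mathcal{E}^1\cap\mathcal{E}^2)\ge 1-\xi r/m\ge 1/2$.

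Finally, $\mathcal{E}^1$ and $\mathcal{E}^2$ depend on disjoint clocks in fixed intervals and so are independent, giving
\[
\Pr(\mathcal{E}^1\cap\mathcal{E}^2\cap\mathcal{E}^3)\ \ge\ \frac{1}{8r^2\ell}\left(1-\frac{1}{e}\right)\frac{1}{2}\ \ge\ \frac{1}{26r^2\ell}.
\]
The only ``obstacle'' is checking that the appropriate clock rates survive the translation from superstar to megastar and that the conditioning argument for the events $\mathcal{E}^3_{T^i_{x_0\to a_j}}$ goes through unchanged; as noted above, this reduces to the identity $k\cdot(r/k)=r$, which is precisely what makes the bound come out to the same constant $1/(26r^2\ell)$ as for superstars.
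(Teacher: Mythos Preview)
Your proposal is correct and matches the paper's approach exactly: the paper defines the same three events (with $a_1$ in place of $v_1$ and the clique in place of $v_2$), notes that they force extinction as in Lemma~\ref{lem:star-lower-die-before-path}, and observes that the relevant clock rates---in particular the total mutant-clock rate $k\cdot(r/k)=r$ out of the feeder vertex---coincide with those in the superstar, so the probabilities are identical. Your write-up in fact supplies more detail than the paper's proof, which simply cites Lemma~\ref{lem:star-lower-die-before-path} for both the extinction argument and the probability computation.
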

\begin{proof}
Assume, without loss of generality, that $x_0\in R_1$. Let $\xi=\lfloor m/(2r) \rfloor$, $t^* = m/(4r^2)$ and  
 $J = [0,t^*]$.
For all $t\geq0$,
let $\calE^1$, $\calE^2$ and $\calE^3_t$  be events defined as follows. 
\begin{description}
\item $\mathcal{E}^1$:  $\vnc{v^*}{x_0}$   triggers in   $J$.
\item $\mathcal{E}^2$:  $\vmc{x_0}{a_1}$ triggers at most $\xi$ times in~$J$.
\item $\mathcal{E}^3_t$:   
$\min \{ t' > t \mid \mbox{for some $u \neq x_0$, $\vnc{u}{a_1}$  triggers at $t'$} 
\}
 <  \\ \mbox{\qquad\qquad\qquad} \min \{t' > t \mid \mbox{some clock $\vmc{a_1}{v}$  triggers at $t'$}\}$.
\end{description}
Finally, let 
 $\Tvone{i}$ be the $i$'th time at which the clock $\vmc{x_0}{a_1}$ triggers and define
$\mathcal{E}^3 =  \bigcap_{i=1}^{ \xi} \mathcal{E}^3_{\Tvone{i}}$.

As in the proof of Lemma~\ref{lem:star-lower-die-before-path} $X$ goes extinct if the events $\mathcal{E}^1$, 
$\mathcal{E}^2$ and $\mathcal{E}^3$ occur. Furthermore, these events have exactly the same probability as 
the corresponding events in the proof of Lemma~\ref{lem:star-lower-die-before-path}, so the result follows.
\end{proof}

Our upper bound on fixation probability now follows easily from Lemmas~\ref{lem:megastar-lower-1} and 
\ref{lem:megastar-lower-2}. 

\begin{theorem}\label{lem:megastar-lower-final}
Let $r>1$, and let $k$, $\ell$ and $m$ be arbitrary positive integers. Let $x_0 \in V(\megastar)$ be chosen uniformly 
at 
random. Let $X$ be  the Moran process with $G(X) = \megastar$ and $X_0 = \{x_0\}$. Then  $X$ fixates with 
probability at most $1-1/(52r^2\sqrt{n})$,
where $n= |V(\megastar)|$.
\end{theorem}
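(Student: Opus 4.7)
The theorem asserts an upper bound on the fixation probability, which by the strong connectedness of $\megastar$ is equivalent to the lower bound $\Pr(X\text{ goes extinct}) \ge 1/(52 r^2 \sqrt{n})$. My plan is therefore to derive this lower bound by a short case analysis that pipes the two preceding ``easy regime'' lemmas, Lemma~\ref{lem:megastar-lower-1} and Lemma~\ref{lem:megastar-lower-2}, directly. There is nothing genuinely hard here: these lemmas are already tight enough, provided the cases are split at the right threshold in $m$.

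The split will be based on the size of $m$ relative to $\sqrt{n}$. Recall $n = 1+\ell(m+k+1)$, so $\Pr(x_0\in R_1\cup\cdots\cup R_\ell) = \ell m / n$. In the first case, namely $m \ge \max\{12r,\sqrt{n}/2\}$, Lemma~\ref{lem:megastar-lower-2} applies to every reservoir vertex, so integrating over the uniform choice of $x_0$ gives
\[
\Pr(X\text{ extinct}) \;\ge\; \frac{\ell m}{n}\cdot \frac{1}{26 r^2 \ell} \;=\; \frac{m}{26 r^2 n} \;\ge\; \frac{1}{52 r^2 \sqrt{n}},
\]
where the last inequality uses $m \ge \sqrt{n}/2$. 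In the remaining case, where $m < 12r$ or $m < \sqrt{n}/2$, I will use Lemma~\ref{lem:megastar-lower-1}, which gives the unconditional bound $\Pr(X\text{ extinct}) \ge k/(2r(m+k))$. What has to be checked is the elementary inequality
\[
\frac{k}{2r(m+k)} \;\ge\; \frac{1}{52 r^2 \sqrt{n}},
\]
equivalently $26 r k \sqrt{n} \ge m+k$. If $m < 12r$, then since $r>1$ and $k\ge 1$ we have $26 r k \sqrt{n} \ge 26 r \ge 12r + 1 \ge m + 1 \ge (m+k)/k \cdot k$ once one checks the small-$k$ cases (essentially, $26r \ge 12r + k$ fails only when $k$ is large, in which case the bound $k/(m+k) \ge 1/2$ trumps everything). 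If instead $12r \le m < \sqrt{n}/2$, then $m < \sqrt{n}/2 \le (26r\sqrt{n} - 1)k$ for any $k \ge 1$, so again the inequality holds.

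The main obstacle is really just bookkeeping at the boundary between the two cases: one must verify that the constant $52$ (rather than some smaller constant) suffices uniformly in $r$, $k$, $\ell$, $m$, and one must be slightly careful that in the $m\ge 12r$ regime of Case~A the conditioning on $x_0\in R_1\cup\cdots\cup R_\ell$ is handled correctly so that the $\ell m / n$ factor appears. Given the cleanness of Lemmas~\ref{lem:megastar-lower-1} and~\ref{lem:megastar-lower-2}, the whole argument should fit in a few lines, and no new probabilistic machinery is required.
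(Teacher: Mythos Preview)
Your proposal is correct and follows essentially the same strategy as the paper: a short case split that feeds Lemma~\ref{lem:megastar-lower-1} when $m$ is small and Lemma~\ref{lem:megastar-lower-2} when $m$ is large. The paper's split is at $m\lessgtr k\sqrt{n}$ (with an extra small-$n$ case) and in the large-$m$ branch uses $\Pr(x_0\in\bigcup_j R_j)\ge 1/2$ together with $\ell\le\sqrt{n}$, whereas you split at $m\lessgtr\sqrt{n}/2$ and instead multiply the exact probability $\ell m/n$ into the bound from Lemma~\ref{lem:megastar-lower-2}; your version trades the separate small-$n$ case for a short $k$-versus-$m$ sub-case check in Case~B, and both routes land on the same constant $52$.
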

\begin{proof}
We prove the result by dividing into three cases.

\medskip\noindent\textbf{Case 1: $\boldsymbol{n \le 144r^2}$.} In this case, $x_0$ dies with rate at least $1/n$ and 
spawns a mutant with rate $r$, so $x_0$ dies before spawning a mutant with probability at least
\begin{equation}\label{eqn:megastar-lower-final-1}
\frac{\frac{1}{n}}{\frac{1}{n}+r} \ge \frac{1}{2rn} \ge \frac{1}{24r^2\sqrt{n}}.
\end{equation}

\medskip\noindent\textbf{Case 2: $\boldsymbol{n > 144r^2}$ and $\boldsymbol{m \le k\sqrt{n}}$.} In this case, by 
Lemma~\ref{lem:megastar-lower-1}, $X$ goes extinct with probability at least
\begin{equation}\label{eqn:megastar-lower-final-2}
\frac{k}{2r(m+k)} \ge \frac{1}{2r(\sqrt{n}+1)} \ge \frac{1}{3r\sqrt{n}}.
\end{equation}

\medskip\noindent\textbf{Case 3: $\boldsymbol{n > 144r^2}$, $\boldsymbol{m > k\sqrt{n}}$ and $\boldsymbol{\ell \le 
\sqrt{n}}$.} In this case, we have
\[\Pr(x_0 \in R_1 \cup \dots 
\cup R_\ell) = \frac{\ell m}{\ell(m+1+k)+1} \ge \frac{m}{m+3k} \ge \frac{\sqrt{n}}{\sqrt{n}+3} \ge \frac{1}{2}.\]
Moreover, we have $m \ge k\sqrt{n} \ge 12r$. Hence by Lemma~\ref{lem:megastar-lower-2}, $X$ goes extinct with 
probability at least
\begin{equation}\label{eqn:megastar-lower-final-3}
\frac{1}{2}\cdot \frac{1}{26r^2\ell} \ge \frac{1}{52r^2\sqrt{n}}.
\end{equation}

Since $n \ge \ell m$, we have either $m\le \sqrt{n} \le k\sqrt{n}$ or $\ell \le \sqrt{n}$, and so the above cases are 
exhaustive. The result therefore follows from \eqref{eqn:megastar-lower-final-1}--\eqref{eqn:megastar-lower-final-3}.
\end{proof}

\section{A clarification of the isothermal theorem}
\label{sec:isothermal}

Lieberman, Hauert and Nowak~\cite{LHN2005:EvoDyn} define the Moran
process in the more general setting of weighted graphs.  In this
section, we consider this generalisation.

In the ordinary Moran process, the offspring produced by reproduction
is equally likely to be placed at each neighbour of the reproducing
vertex.  Instead, given a graph~$G$, we may assign to each
edge~$(u,v)$ a weight~$w_{uv}$, taking $w_{uv}=0$ if there is no such
edge.  Without loss of generality, we   require that, for each
vertex~$u$, $\sum_{v\in V(G)} w_{uv} = 1$, so the weights are
probabilities.  When a vertex~$u$ is chosen to reproduce in a weighted
graph, its offspring goes to vertex~$v$ with probability~$w_{uv}$; the
process is, otherwise, identical to the process on unweighted graphs.
Note that the unweighted process is recovered by assigning
$w_{uv} = 1/d^+(u)$ for every edge~$(u,v)$.

A weighted graph~$G$ whose weights are probabilities as above is known
as an \emph{evolutionary graph} and is said to be
\emph{isothermal}~\cite{LHN2005:EvoDyn,SRJ2012:Review} if, for all
vertices~$u$, $\sum_{v\in V(G)} w_{vu}   = 1$.
This corresponds to the 
condition that the 
weighted adjacency matrix  of~$G$ is doubly-stochastic.  Broom and
Rycht\'ar~\cite{BR2008:FixProb} show that an undirected graph, when
considered as a weighted graph with edge weights $w_{uv} = 1/d^+(u)$,
is isothermal if and only if it is regular.

In the supplementary material to~\cite{LHN2005:EvoDyn}, Lieberman et
al.\@ state and prove the ``isothermal theorem'', which states that an
evolutionary graph is ``$\rho$-equivalent to the Moran process'' if
and only if it is isothermal.  Being $\rho$-equivalent to the Moran
process means that, for all sets $X\subseteq V(G)$, the probability of
reaching fixation from the state in which the set of mutants is~$X$ is
$(1-1/r^{|X|})/(1-1/r^n)$.  In particular, this condition implies that
the fixation probability given a single initial mutant placed
uniformly at random is $\rhoreg(r,n) = (1-1/r)/(1-1/r^n)$.

The isothermal theorem has been incorrectly described in the
literature.  This may stem from the ambiguity of the informal statement
of the theorem in the main text of~\cite[p.~313]{LHN2005:EvoDyn}.
Shakarian, Roos and Johnson~\cite{SRJ2012:Review} state the theorem in
the following form, which is very similar to the informal statement by
Lieberman et al.

\begin{proposition} (Theorem 1,\cite{SRJ2012:Review})
\label{prop:isothermal}
    An evolutionary graph is isothermal if and only if the fixation
    probability of a randomly placed mutant is $\rhoreg(r,n)$.
\end{proposition}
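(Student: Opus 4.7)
The plan is to prove the two implications of Proposition~\ref{prop:isothermal} separately. For the forward direction (isothermal implies average fixation probability $\rhoreg(r,n)$), I would invoke the full isothermal theorem of Lieberman, Hauert and Nowak as stated precisely in the supplementary material of~\cite{LHN2005:EvoDyn}: an isothermal evolutionary graph is $\rho$-equivalent to the Moran process, meaning that from every configuration $X \subseteq V(G)$ the fixation probability equals $(1-r^{-|X|})/(1-r^{-n})$. Averaging the case $|X|=1$ over the $n$ singletons then yields average fixation probability exactly $\rhoreg(r,n)$. If a self-contained proof is preferred, the idea is to show that on an isothermal graph the number of mutants forms a birth--death chain on $\{0,1,\ldots,n\}$ with up-probability $r/(r+1)$: double-stochasticity forces the total weighted count of mutant-to-non-mutant edges to equal the total weighted count of non-mutant-to-mutant edges in every intermediate configuration, so the ratio of up-rate to down-rate is always~$r$. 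The classical gambler's ruin analysis in Lemma~\ref{lem:gambler} then produces~$\rhoreg(r,n)$ from a single initial mutant.

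For the converse (average fixation probability $\rhoreg(r,n)$ implies isothermal), the plan is to work with the configuration-level harmonic function $h(X) := \Pr(\text{fixation} \mid X_0 = X)$, which satisfies $h(\emptyset)=0$, $h(V(G))=1$, and the one-step balance identity $h(X) = \sum_Y p(X \to Y)\, h(Y)$ for the weighted Moran chain. I would expand $\tfrac{1}{n}\sum_v h(\{v\})$ using this balance identity and try to extract linear constraints on the column sums $\sum_v w_{vu}$ from the assumption that the expansion equals $\rhoreg(r,n)$. A plausible refinement is a perturbative argument in~$r$: at $r=1$ every evolutionary graph has fixation probability $1/n$ from any singleton, so differentiating the balance identity at $r=1$ might isolate the isothermal constraint $\sum_v w_{vu}=1$ as a necessary first-order condition on the column sums.

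The hard part will be the converse. The probabilities $h(X)$ are opaque outside the isothermal setting, being determined only through a large linear system whose coefficients depend on the full joint structure of the process, and averaging over initial singletons compresses this structure considerably: over- and under-contributions from different vertices can in principle cancel in the average without forcing the individual in-weight sums to be~$1$. The main obstacle is therefore to exhibit a test functional of $(h(\{v\}))_{v\in V(G)}$ whose value equals $n\rhoreg(r,n)$ if and only if $G$ is isothermal. Any honest attempt at the converse must confront the possibility that no such functional exists --- that is, the possibility that two different weight matrices, one isothermal and one not, yield the same average fixation probability. Ruling this out appears to be the crux; if the argument instead leads to such a pair of weight configurations, that would indicate that Proposition~\ref{prop:isothermal} is not provable as stated and the proposition must be weakened, for instance to full $\rho$-equivalence as in the original formulation of Lieberman et al.
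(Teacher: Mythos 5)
The converse direction of this proposition is false, and the paper's treatment of Proposition~\ref{prop:isothermal} is not a proof but a refutation: the statement is quoted from~\cite{SRJ2012:Review} precisely in order to be disproved by Proposition~\ref{prop:counterexample}. The cancellation you flag at the end of your proposal as a possible obstruction is exactly what happens. The paper exhibits a $3$-vertex weighted digraph (Figure~\ref{fig:non-iso-graph}) with $w_{01}=w_{02}=\tfrac12$, $w_{10}=w_{20}=\tfrac34$, $w_{12}=w_{21}=\tfrac14$. This graph is not isothermal, since for instance $w_{10}+w_{20}=\tfrac32\neq 1$, yet solving the linear system for the harmonic function $h$ on all $2^3$ configurations (using the symmetry $p_1=p_2$, $p_{01}=p_{02}$) gives average fixation probability $\tfrac13(p_0+2p_1)=r^2/(r^2+r+1)=\rhoreg(r,3)$ for every $r$. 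So no test functional of the singleton fixation probabilities can characterise isothermality, and your perturbative idea at $r=1$ also cannot work: the counterexample matches $\rhoreg(r,3)$ to all orders in $r$, not merely to first order. Averaging over the initial vertex genuinely loses the information that distinguishes isothermal from non-isothermal weight matrices.

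Your forward direction is sound and matches what the paper asserts (isothermal graphs are $\rho$-equivalent to the Moran process by the correctly stated isothermal theorem in the supplementary material of~\cite{LHN2005:EvoDyn}, hence have average fixation probability $\rhoreg(r,n)$). The correct conclusion, and the one the paper draws, is the one you anticipate in your final sentence: the proposition as stated must be rejected, and the true equivalence is between isothermality and full $\rho$-equivalence (i.e.\ the fixation probability from \emph{every} configuration $X$ being $(1-r^{-|X|})/(1-r^{-n})$), not between isothermality and the averaged single-mutant fixation probability alone.
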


It is true that all $n$-vertex connected isothermal graphs do have fixation
probability~$\rhoreg(r,n)$. 
However, the converse direction of the proposition does not hold.
We prove the following.

{\renewcommand{\thetheorem}{\ref{prop:counterexample}}
\begin{proposition}
\stateexamplethm
\end{proposition}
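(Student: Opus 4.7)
The plan is to construct a one-parameter family of non-isothermal three-vertex evolutionary graphs and use the intermediate value theorem to extract one whose singleton-averaged fixation probability is exactly $\rhoreg(r,n)$. For each $\alpha\in(0,1)$, let $G(\alpha)$ be the evolutionary graph on $\{v_1,v_2,v_3\}$ whose only non-zero edge weights are
\[
w_{v_1v_2}=1,\qquad w_{v_2v_1}=\alpha,\qquad w_{v_2v_3}=1-\alpha,\qquad w_{v_3v_2}=1.
\]
Every $G(\alpha)$ is strongly connected, and the in-weight of $v_2$ is $w_{v_1v_2}+w_{v_3v_2}=2\neq 1$, so $G(\alpha)$ is not isothermal for any $\alpha\in(0,1)$.

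The first step is to set up and solve the standard first-step system for the Moran process on $G(\alpha)$ with mutant fitness $r$. This produces six linear equations in the six fixation probabilities $f_X$, one for each non-empty proper subset $X\subseteq\{v_1,v_2,v_3\}$, whose coefficients are rational in $r$ and $\alpha$. Solving (via manipulations in the spirit of Lemma~\ref{lem:gambler}) gives explicit rational expressions for $f_{\{v_1\}}$, $f_{\{v_2\}}$ and $f_{\{v_3\}}$, and hence for
\[
F(\alpha,r)=\tfrac13\bigl(f_{\{v_1\}}(\alpha,r)+f_{\{v_2\}}(\alpha,r)+f_{\{v_3\}}(\alpha,r)\bigr),
\]
which is a continuous (in fact rational) function of $\alpha$ on $(0,1)$.

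The second step is to fix $r=2$, so that $\rhoreg(2,3)=4/7$, and evaluate $F$ at two convenient values. The swap $v_1\leftrightarrow v_3$ is an automorphism of $G(1/2)$; exploiting this symmetry reduces the system to a small one that solves to $f_{\{v_1\}}=f_{\{v_3\}}=2/3$ and $f_{\{v_2\}}=5/12$, giving $F(1/2,2)=7/12>4/7$. A direct substitution at $\alpha=1/4$ gives $F(1/4,2)=817/1479<4/7$. Continuity of $F(\cdot,2)$ on $[1/4,1/2]$ and the intermediate value theorem then produce some $\alpha^{\star}\in(1/4,1/2)$ with $F(\alpha^{\star},2)=4/7=\rhoreg(2,3)$. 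The graph $G(\alpha^{\star})$ is then the desired counterexample: non-isothermal, but with fixation probability $\rhoreg(r,n)$ for $r=2$ and $n=3$.

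The main obstacle is purely algebraic: grinding through the six-by-six linear system cleanly enough to evaluate $F$ at $\alpha=1/4$ and $\alpha=1/2$, and verifying the two inequalities against $4/7$. The $v_1\leftrightarrow v_3$ symmetry of the family effectively splits the system into two smaller blocks, so the computation remains manageable; the intermediate value step and the isothermality check are immediate.
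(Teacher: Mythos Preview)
Your approach differs from the paper's and, as written, establishes a weaker statement than the paper proves.

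The paper exhibits a single explicit three-vertex evolutionary graph (with fixed numerical weights, independent of~$r$) and solves the resulting linear system symbolically in~$r$, obtaining fixation probability exactly $\rhoreg(r,3)$ \emph{for every} $r>1$. Your construction, by contrast, fixes $r=2$ and uses the intermediate value theorem to produce a non-explicit $\alpha^\star\in(1/4,1/2)$ with $F(\alpha^\star,2)=\rhoreg(2,3)$. There is no reason the same $G(\alpha^\star)$ should have fixation probability $\rhoreg(r,3)$ for any $r\neq 2$: the matching value of $\alpha$ will in general move with~$r$. Since the proposition carries $r$ as a free parameter and the paper clearly intends (and proves) the identity for all~$r$, your argument leaves a genuine gap. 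If the statement were read as ``for some~$r$'' your plan would be fine as a counterexample to the misstated isothermal theorem at that particular~$r$, but that is not what the paper establishes.

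It is also worth noting that the paper's direct route is no harder than yours: after exploiting one symmetry, the system has four unknowns and solves in closed form in~$r$, yielding an explicit graph and an identity valid for all~$r$. Your route requires solving essentially the same system (to evaluate $F(1/4,2)$), adds an IVT step, loses explicitness, and still only covers one value of~$r$. If you want to salvage your family, a cleaner fix is to solve your system symbolically in both $\alpha$ and $r$ and look for an $\alpha$ (independent of $r$) at which $F(\alpha,r)=\rhoreg(r,3)$ identically; but at that point you are doing exactly what the paper does, just with a different family.
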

\addtocounter{theorem}{-1}
}

\begin{figure}
\begin{center}
\begin{tikzpicture}[scale=0.8]
    \tikzstyle{vertex}=[fill=black, draw=black, circle, inner sep=2pt]

    \node[vertex] (v0) at  (0,0) [label=240:{$0$}] {};
    \node[vertex] (v1) at  (5,0) [label=300:{$1$}] {};
    \node[vertex] (v2) at (60:5) [label= 90:{$2$}] {};

    \begin{scope}[decoration={markings, mark=at position 0.52 with {\arrow[scale=2]{>}}}]
        \draw[postaction=decorate] (v0) arc (255.52:284.48:10);
        \draw[postaction=decorate] (v1) arc (15.52:44.48:10);
        \draw[postaction=decorate] (v2) arc (135.52:164.48:10);

        \draw[postaction=decorate] (v1) arc ( 75.52:104.48:10);
        \draw[postaction=decorate] (v2) arc (195.52:224.48:10);
        \draw[postaction=decorate] (v0) arc (315.52:344.48:10);
    \end{scope}

    \node at ($(v0)!0.5!(v1)+(270:0.9)$) {$\tfrac12$};
    \node at ($(v0)!0.5!(v2)+(330:0.75)$) {$\tfrac12$};

    \node at ($(v1)!0.5!(v0)+( 90:0.9)$) {$\tfrac34$};
    \node at ($(v1)!0.5!(v2)+( 30:0.75)$) {$\tfrac14$};

    \node at ($(v2)!0.5!(v0)+(150:0.75)$) {$\tfrac34$};
    \node at ($(v2)!0.5!(v1)+(210:0.75)$) {$\tfrac14$};
\end{tikzpicture}
\end{center}

\caption{A $3$-vertex graph which is not isothermal but has
  fixation probability~$\rhoreg(r,3)$.}
\label{fig:non-iso-graph}
\end{figure}
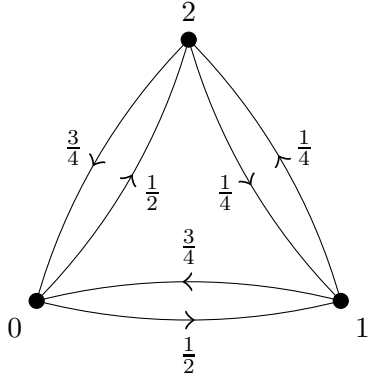

 A counterexample 
to Proposition~\ref{prop:isothermal},
  proving Proposition~\ref{prop:counterexample},
  is the graph shown in
Figure~\ref{fig:non-iso-graph}.  This is an evolutionary graph:
the total weight of outgoing edges from each vertex is~$1$.  It is
not isothermal, since, for example, $w_{10}+w_{20}\neq 1$.  However,
it can be shown that the fixation probability of a randomly placed
mutant with fitness~$r$ is, nonetheless, $\rhoreg(r,n)$.

Towards calculating the fixation probability of the graph shown in
Figure~\ref{fig:non-iso-graph}, let $p_i$ be the probability of
reaching fixation when the initial mutant is at vertex~$i$, for
$0\leq i\leq 2$.  For $0\leq i<j\leq 2$, let $p_{ij}$ be the
probability of reaching fixation from the configuration with mutants
at $i$ and~$j$ and a non-mutant at the remaining vertex.  Observe
that, by symmetry of the graph, $p_1 = p_2$ and $p_{01} = p_{02}$.
The fixation probability of the graph is given by
$\tfrac13(p_0 + p_1 + p_2) = \tfrac13(p_0 + 2p_1)$.

We obtain the following equations:
\begin{align*}
    p_0 &= \tfrac1{r+2}\big(
                  r\,\big(\tfrac12\,p_{01} + \tfrac12\,p_{02}\big)
                  + 2\cdot \tfrac14\, p_0
           \big)
         = \tfrac1{r+2}\big(rp_{01} + \tfrac12p_0\big)\,, \\[0.5ex]
    p_1 &= \tfrac1{r+2}\big(
               r\,\big(\tfrac34\,p_{01} + \tfrac14\,p_{12}\big)
               + \big(\tfrac12 + \tfrac34\big)\,p_1
           \big)\,, \\[0.5ex]
    p_{01} &= \tfrac1{2r+1}\big(
               r\,\big(\tfrac12 + \tfrac14
                       + \big(\tfrac12 + \tfrac34\big)\,p_{01}\big)
               + \tfrac14\,p_0 + \tfrac34\,p_1
           \big) \\
         &\qquad= \tfrac1{2r+1}\big(\tfrac34\,r + \tfrac54\,rp_{01}
               + \tfrac14\,p_0 + \tfrac34\,p_1\big)\,,\\[0.5ex]
    p_{12} &= \tfrac1{2r+1}\big(
               r\,\big(\tfrac34 + \tfrac34 + 2\cdot \tfrac14\, p_{12}\big)
               + \tfrac12 p_1 + \tfrac12 p_2
             \big) \\
         &\qquad= \tfrac1{2r+1}\big(r\,\big(\tfrac32 +
                                        \tfrac12\,p_{12}\big) + p_1\big)\,.
\end{align*}
Rearranging these gives
\begin{align*}
    (2r+3)p_0 &= 2rp_{01}          & (3r+4)p_{01} &= 3r + p_0 + 3p_1  \\
    (4r+3)p_1 &= 3rp_{01} + rp_{12} & (3r+2)p_{12} &= 3r + 2p_1\,.
\end{align*}
Routine solution of this linear system gives
\begin{equation*}
    p_0 = \frac{r^2(2r+1)}{2(r+1)(r^2+r+1)} \qquad
    p_1 = \frac{r^2(4r+5)}{4(r+1)(r^2+r+1)}\,,
\end{equation*}
which yields the fixation probability
\begin{align*}
    \frac13(p_0 + 2p_1) = \frac{r^2(2r+1+4r+5)}{6(r+1)(r^2+r+1)}
    = \frac{r^2}{r^2+r+1} = \frac{r^2(r-1)}{r^3-1} = \rhoreg(r,3)\,.
\end{align*}

\section{Heuristic analysis of superstars}
\label{sec:extra}
  
   As noted in Section~\ref{sec:JLH}, Jamieson-Lane and Hauert\cite{JLH2015:Superstars}  have
already provided a  heuristic analysis of the fixation probability of superstars.
The heuristic analysis contains good intuition.
The purpose of this final section is to explain some of the difficulties that arise
when converting such a heuristic argument to a rigorous proof.
This section is primarily for readers who are already familiar with the argument of~\cite{JLH2015:Superstars}.
This should not be regarded as a criticism of \cite{JLH2015:Superstars} --- that paper
provides an excellent heuristic analysis, so it does what it intends.
Rather, the purpose of this section is to   illustrate the complicating factors that arise in rigorous proofs
(these help to explain why our paper is so long!)
 
The evolution of the discrete-time Moran process on a superstar
is extremely complicated.
To avoid detailed analysis,  Jamieson-Lane and Hauert  
\cite[Appendix~E.2]{JLH2015:Superstars} use a simple random walk to
stochastically 
dominate  (from below)  the 
number of mutants in  reservoirs.  
They
claim that   there is always some forward bias in the actual process, in the sense that 
the number of mutants in reservoirs is always more likely to increase than 
to decrease. They say that the forward bias gets harder to 
quantify as the number of mutants increases, but that it is always positive.
Thus, the dominating walk~$Q$ that they consider   is forward-biased
when there are relatively few mutants, and unbiased when there are more.
More specifically, the dominating chain
$Q(h)$ is a simple random walk on $\{0,\ldots,m \ell\}$.
If $h$ is below some threshold~$\delta$
then $Q$  increases by~$1$ during each (discrete) step with probability 
$\gamma/(1+\gamma)$ (for some $\gamma>1$) and decreases  by~$1$
with probability $1/(1+\gamma)$.
If $h$ is above $\delta$ then it  increases or decreases (by~$1$) with probability~$1/2$.
 
There are several problems with this domination.
 First, the domination is invalid because
 there are actually configurations in which the number of mutants is more likely
to decrease than to increase.   
One such configuration is the configuration in which each reservoir contains 
$\tfrac{m}2$~mutants and
$\tfrac{m}2$~non-mutants
and the  centre
vertex~$v^*$ and all  path vertices are non-mutants.  
It is easy to see that the number of mutants is more likely to decrease than
to increase from this configuration, and even that
the number of mutants is likely to decrease  at least $k/(16 r)$ times before it ever increases.
Here is the  idea.
Before the
mutant population of the reservoirs can possibly increase,
$v^*$~must become a mutant.  
 This takes at least $k$~reproductions: a mutant  in a
reservoir must reproduce and a chain of~$k$ reproductions must  move the mutant down the corresponding path to the centre.  This is
very likely to take at least $nk/(2r)$ steps of the process.  
But during these
$nk/(2r)$ steps, $v^*$~is very likely to be chosen for reproduction
at least $k/(4r)$ times.  Since $v^*$~is a non-mutant throughout
this period, it must send a non-mutant into some reservoir each time
it reproduces.  Since half the reservoir vertices are mutants, it is
very likely that these $k/(4r)$ reproductions of the centre will cause the
mutant population of the reservoirs to decrease, not just once, but at
least $k/(16r)$ times before  it can  even go up at all.
Therefore, the
assumption that the mutant population of the reservoirs is as likely
to increase as to decrease does not hold for all configurations of
mutants. A rigorous proof needs to cover all such possibilities.
 
Even in the early evolution of the process, when there are few 
mutants in reservoirs, there are still problems with making the domination rigorous.
Jamieson-Lane and Hauert say \cite[Section 3.5]{JLH2015:Superstars}
(translating their variable names to ours and adding a little notation for future
reference)
\begin{quote}
At any given time step, the probability of losing the initial mutant in the reservoir   is 
$p_1:=1/(F_t \ell m)$. Based on the dynamics in the path, we derive the per time step probability
that a second mutant is generated in any reservoir  as the product of the
probability that a ``train'' is generated and the probability that the train succeeds
in producing a second mutant, which yields approximately $p_2:= r^4 T/(F_t m \ell)$.
\end{quote}
Here, $F_t$ is taken to be the overall fitness (sum of individual fitnesses)  in the configuration~$X_t$
 and $T$ is the expected  length of a ``train'' which is a chain of mutants at
the end of path.
The dominating Markov chain $Q$ is applied
with $\gamma \sim p_2/p_1 \sim r^4 T$.
Since $Q$ is a Markov chain, the domination is only valid if
it applies step-by-step to every configuration.
It applies, if, from every fixed configuration,
the probability that the number of mutants next goes down is
proportional to~$p_1$ and the probability that
it next goes up is proportional to~$p_2$.
But this is not proved. 
First, note that the event that the number of reservoir mutants
increases does not occur with probability~$p_2$ \emph{at any particular step} 
(conditioned on the configuration prior to the step).  
  Instead,
  the expression given in~$p_2$ is a heuristic  aggregated probability  which may, roughly, apply
 at some step or block of steps in the future.
 In order to rigorously dominate the number of reservoir mutants  using the Markov chain~$Q$
it is necessary to split the process into discrete pieces (whose length 
may be a random variable) so that
the number of reservoir mutants  decreases by at most one in each piece.
 It is important that, conditioned on  any  configuration at the start of any piece,
the probability that the number of reservoir mutants goes up must
be at least $p_2/p_1$ times the probability that it goes down.
The paper does not provide  such a domination.
Nevertheless, we do believe that there is an infinite family of
superstars  that is strongly amplifying.
Our  Theorem~\ref{thm:maintwo} in Section~\ref{sec:megastar}  
demonstrates strong amplification  for megastars.
A similar approach would presumably work for superstars, though of course it would not
guarantee  as strong amplification as Theorem~\ref{thm:maintwo}, 
since this is impossible by Theorem~\ref{thm:mainsuperstar}.

\bibliographystyle{plain}
\bibliography{\jobname}

\end{document}